\documentclass[11pt]{article}

\usepackage[utf8, applemac]{inputenc}
\usepackage{fontenc}
\usepackage[italian,english]{babel}
\usepackage{amsmath}
\usepackage{amsthm}
\usepackage{mathrsfs}
\usepackage{mathtools}
\usepackage{amsfonts}
\usepackage{graphicx}
\usepackage{setspace}
\usepackage{physics}
\usepackage{comment}
\usepackage{amsfonts}
\usepackage{layout}
\usepackage{booktabs,array}
\usepackage{epsfig}	
\usepackage{dsfont}	
\usepackage{slashed}
\usepackage{amssymb}
\usepackage{tensor}
\usepackage{fancyhdr} 
\usepackage{tikz-cd}
\usepackage{booktabs}
\usepackage{colonequals}
\usepackage{tabularx}
\usepackage{tikz}
\usepackage{csquotes}
\usepackage{subcaption}
\usepackage[export]{adjustbox}

\usepackage{jones}
\usepackage{xcolor}

\usetikzlibrary{calc,fit,backgrounds,positioning}

\newcommand{\eqinf}{\overset{\infty}{=}}

\newcommand{\PM}{\mathcal{M}_{\textnormal{perf}}}
\newcommand{\EE}{\mathbb{E}}
\newcommand{\into}{\hookrightarrow}

\newcommand{\loc}{\mathrm{loc}}
\newcommand{\nonloc}{\mathrm{nonloc}}

\newcommand{\samp}{\mathrm{samp}}
\newcommand{\Unif}{\mathrm{Unif}}

\newcommand{\Loc}{\mathrm{Loc}}

\DeclareMathOperator{\CC}{CC}
\DeclareMathOperator{\GCC}{GCC}

\DeclareMathOperator{\type}{type}
\DeclareMathOperator{\cactus}{cactus}

\newcommand{\cyc}{\mathrm{cyc}}

\newcommand{\rt}{\mathrm{root}}
\newcommand{\NC}{\mathrm{NC}}

\newcommand{\core}{\textnormal{core}}
\newcommand{\roott}{\textnormal{root}}
\newcommand{\conn}{\mathrm{conn}}

\DeclareMathOperator{\cycles}{cycles}

\newcommand{\rrom}{\textsf{\upshape r-ROM}\xspace}
\newcommand{\rom}{\textsf{\upshape ROM}\xspace}
\newcommand{\goe}{\textsf{\upshape GOE}\xspace}
\newcommand{\GOE}\goe
\newcommand{\bGOE}{\textsf{\upshape BlockGOE}\xspace}

\DeclareMathOperator{\block}{block}

\title{Universality of first-order methods on random and \\ deterministic matrices}
\author{Nicola Gorini\thanks{Bocconi University. \texttt{nicola.gorini@phd.unibocconi.it}}
\and Chris Jones\thanks{UC Davis. \texttt{chijones@ucdavis.edu}}
\and Dmitriy Kunisky\thanks{Johns Hopkins University. \texttt{kunisky@jhu.edu}}
\and Lucas Pesenti\thanks{ETH Z{\"u}rich. \texttt{lpesenti@ethz.ch}}}
\date{\today}

\begin{document}

\maketitle
\thispagestyle{empty}

\begin{abstract}
    General first-order methods (GFOM) are a flexible class of iterative algorithms which update a state vector by matrix-vector multiplications and entrywise nonlinearities.
    A long line of work has sought to understand the large-$n$ dynamics of GFOM, mostly focusing on ``very random'' input matrices and the approximate message passing (AMP) special case of GFOM whose state is asymptotically Gaussian.
    Yet, it has long remained unknown how to construct iterative algorithms that retain this Gaussianity for more structured inputs, or why existing AMP algorithms can be as effective for some deterministic matrices as they are for random matrices.
    
    We analyze diagrammatic expansions of GFOM via the limiting \emph{traffic distribution} of the input matrix, the collection of all limiting values of permutation-invariant polynomials in the matrix entries, to obtain the following results:
    \begin{enumerate}
    \item We calculate the traffic distribution for the first non-trivial deterministic matrices,
    including (minor variants of) the Walsh--Hadamard and discrete sine and cosine transform matrices.
    This determines the limiting dynamics of GFOM on these inputs, resolving parts of longstanding conjectures of Marinari, Parisi, and Ritort (1994).
    \item We design a new AMP iteration which unifies several previous AMP variants and generalizes to new input types,
    whose limiting dynamics are Gaussian conditional on some latent random variables.
    The asymptotic dynamics hold for a large and natural class of traffic distributions (encompassing both random and deterministic input matrices) and the algorithm's analysis gives a simple combinatorial interpretation of the Onsager correction, answering questions posed recently by Wang, Zhong, and Fan (2022).
    \end{enumerate}
\end{abstract}

\clearpage

\setcounter{tocdepth}{2}
\pagestyle{empty}
\tableofcontents

\clearpage

\pagestyle{plain}
\pagenumbering{arabic}

\section{Introduction}
\label{sec:intro}

Complex systems with a large number of simply interacting pieces underlie many natural processes and, more recently,
have been studied in computer science in an effort to make sense of how simple machine learning algorithms can learn complex structures latent in large, semi-random input data.
Iterative optimization algorithms making sequential updates can be viewed as dynamical systems,
with the main task being to understand how the algorithm evolves over time and what properties the eventual output will have.

When the size of these systems grows very large, a key insight from statistical physics is that 
the macroscopic properties of the system can simplify dramatically:

\begin{quote}
    \textit{As the size of a random, smoothly-interacting dynamical system grows, the effect of individual particles averages out, and the dynamical system's trajectory approximately follows an asymptotic distributional equation.}
\end{quote}

We refer to these distributional equations as \emph{(asymptotic) effective
dynamics}.
We seek to prove this kind of theorem for discrete-time nonlinear iterative algorithms such as those used in modern optimization, statistics, and machine learning.
Concretely, we study \emph{general first-order methods} (GFOM) \cite{celentano2020estimation, montanari2022statistically} which take as input a symmetric matrix $\bA \in \R^{n \times n}$, maintain a vector state $\bmx \in \R^n$, and at each step can perform one of two possible operations:
\begin{enumerate}[1.]
    \item either multiply the state by $\bA$:
        \[ \bmx_{t+1} = \bA \bmx_t\,, \]
    \item or apply a function $f_t : \R^{t+1} \to \R$ componentwise to the previous states:
        \[ \bmx_{t+1} = f_t(\bmx_t, \dots, \bmx_0), \,\,\,\, \text{i.e.,} \,\,\,\, \bmx_{t + 1}[i] = f_t(\bmx_{t}[i], \dots, \bmx_{0}[i]) \text{ for each } i \in [n]\,. \]
\end{enumerate}
The initial state will be either the deterministic all-ones vector $\bmx_0 = \mathbf{1}$, or a random Gaussian vector $\bmx_0 \sim \calN(\mathbf{0}, \bI)$ independent of $\bA$.
Without loss of generality, we may assume that these operations alternate, giving an iteration of the form
\[ \bx_{t + 1} = \bA f_t(\bx_t, \dots, \bx_0)\,. \]
We fix some number of iterations $t$ and view $\bx_t = \bx_t(\bA)$ as the output of the algorithm.

GFOM is a flexible computational model which is expressive enough to capture many types of gradient descent~\cite{celentano2020estimation, gerbelot2022rigorous} and message passing algorithms~\cite{feng2022unifying}.
It may be viewed as a nonlinear version of the power method for estimating top eigenvectors. The alternation of linear and nonlinear steps also closely matches the structure of a feedforward neural network~\cite{cirone2024graph}.
One may view the structural restriction on GFOM as forcing $\bm x_t$ viewed as a function of $\bm A$ to be \emph{permutation-equivariant}: if we apply the same permutation to the rows and columns of $\bm A$, then $\bm x_t$ undergoes the same permutation, a natural condition of an algorithm's not depending on the particular indexing of its inputs.

GFOM and their special case of \emph{approximate message passing} (AMP) are very popular algorithms for many statistical inference tasks and are known to perform optimally in various such settings~\cite{donoho2009message,rangan2011generalized,montanari2012graphical,rangan2016inference,bayati2011dynamics,feng2022unifying}.
In these cases, an algorithm takes as input not an arbitrary matrix $\bA$, but one that contains a corrupted observation of a signal (in a common example, the input $\bA$ is a low-rank $\by\by^{\top}$ plus independent random noise).

GFOM have also been used as optimization algorithms in average-case settings without any such planted structures.
For instance, they are the best known algorithms for optimizing quadratic forms with random coefficients over the non-negative orthant \cite{MR-2015-NonNegative} (the \emph{non-negative PCA} objective function), other convex cones \cite{DMR-2014-ConePCA}, and the hypercube \cite{montanari2021optimization} (the \emph{Sherrington--Kirkpatrick Hamiltonian}), all of which are NP-hard problems in the worst case.
This situation is the main target of our analysis. We receive an input matrix $\bA$ without any particular ``signal'' and wish to output $\bx$ approximately solving an optimization problem parametrized by $\bA$, such as 
\begin{equation}
\label{eq:quad-opt}
\begin{array}{ll} \text{maximize} & \langle \bx, \bA\bx\rangle \\ \text{subject to} & \bx \in S \end{array}
\end{equation}
studied in the above references for various choices of the constraint set $S \subseteq \R^n$.

To view GFOM as an instance of the physical setting sketched above, we consider a growing sequence of matrices $\bA = \bA^{(n)} \in \R^{n \times n}$, and think of the ``particles'' as being the coordinates $\bmx_{t}[i]$ of $\bm x_t \in \R^n$.
To keep notation reasonable, while all of these objects depend on $n$, we omit the $(n)$ superscript whenever possible.
We analyze the \emph{empirical distribution} of our particles, accessed by sampling a random coordinate of a vector:
\[ \samp(\bm x) \colonequals \bmx[i] \in \R \text{ for } i \sim \Unif([n])\,. \]
In order to study a particle's entire trajectory more generally, we may ``stack'' several vectors
and define $\samp((\bx_0, \dots, \bx_t)) := \samp(\bx_0, \dots, \bx_t) \colonequals (\bmx_0[i],\dots, \bmx_t[i]) \in \R^{t + 1}$ for $i \sim \Unif([n])$.

The analysis of GFOM hinges on the observation that these random variables often converge in distribution to certain limiting distributions.
That is, for suitably nice test functions $\varphi: \R^{t + 1} \to \R$,
\[ \lim_{n \to \infty} \E \varphi\big(\samp(\bx_0^{(n)}, \dots, \bm x_t^{(n)})\big) = \lim_{n \to \infty} \E \frac{1}{n} \sum_{i = 1}^n \varphi(\bmx_{0}^{(n)}[i], \dots, \bmx_{t}^{(n)}[i]) = \int \varphi \,\d\nu_{\leq t}^{\infty}\,, \]
for some probability measures $\nu_{\leq t}^{\infty}$.
For example, we can analyze the objective function of a problem like~\cref{eq:quad-opt} in this way: given a GFOM to run for $t$ iterations producing $\bx_t = \bx_t(\bA)$, we extend it to $\bx_{t + 1} = \bA \bx_t$ so that
\[ \E \frac{1}{n}\langle \bx_t, \bA \bx_t\rangle = \EE \frac{1}{n}\langle \bx_t, \bx_{t + 1}\rangle = \EE \frac{1}{n}\sum_{i = 1}^n \bmx_{t}[i] \bmx_{t + 1}[i]\,, \]
a quantity accessible in the above formalism by a suitable choice of $\varphi$.
We can also study the algorithm's convergence by expanding $\frac{1}{n}\|\bx_t - \bx_{t - 1}\|^2_2$ in the same way.

The goal of an asymptotic effective dynamics is then to identify the asymptotic measures $\nu_{\leq t}^{\infty}$\,.
Such a description is a natural first step to designing {\em optimal} GFOM for optimization problems: given an explicit description of the limiting performance of any GFOM, we then optimize the performance over all GFOM~\cite{celentano2020estimation, AMS20:pSpinGlasses, montanari2022equivalence,pesentiThesis}.

The goal of this paper is to study the following three questions regarding effective dynamics:
\begin{enumerate}[1.]
    \item \textbf{Existence:} What are minimal assumptions on the input matrices and the algorithm that ensure the existence of asymptotic effective dynamics?
    \item \textbf{Universality:} What properties of the sequence of input matrices $\bA^{(n)}$ determine the asymptotic effective dynamics?
    In particular, how can we show that two sequences of $\bA^{(n)}$ share the same dynamics?
    \item \textbf{Explicit Calculation:} What are the effective dynamics? In particular, for a given algorithm, how can one describe $\nu_{\leq t}^{\infty}$ for each fixed $t \in \mathbb{N}$?
\end{enumerate}

\subsection{Approximate message passing and simple effective dynamics}

The majority of results to date on effective dynamics for GFOM, including ours, are most useful for \emph{Approximate Message Passing}~(AMP) algorithms. Originating from physicists' work on mean-field spin glass models \cite{mezard1987spinglasstheoryandbeyond,donoho2009message},
AMP algorithms are a special case of GFOM with very simple effective dynamics: each distribution $\nu_t^{\infty}$ (the marginal distribution of $\nu_{\leq t}^{\infty}$ above on the last coordinate) is a Gaussian distribution,
\[ \nu_t^{\infty} = \mathcal{N}(\mu_t, \sigma_t^2)\,, \]
and the effective dynamics gives $(\mu_{t+1}, \sigma_{t+1}^2)$ in terms of $(\mu_{t}, \sigma_{t}^2), \dots, (\mu_{0}, \sigma_0^2)$ via a formula known as the \emph{state evolution} equation.
This gives a simple yet complete description of the leading-order behavior of an algorithm as $n \to \infty$.
In part due to the power afforded by such a description, AMP (and the closely related \emph{belief propagation}, of which AMP is a limit in a suitable sense)
has taken on an indispensable role in statistical physics \cite{mezard1987spinglasstheoryandbeyond, MezardMontanari, charbonneau2023spin} and, more recently, in computational statistics \cite{zdeborova2016statistical, feng2022unifying}.

In fact, while the original appearances of AMP in statistical physics were intrinsically motivated, for statistics applications the simplicity of state evolution is so useful that a line of work has emerged trying to \emph{design} GFOM that have Gaussian $\nu_t^{\infty}$ and effective dynamics given by state evolution~\cite{javanmard2013state,barbierSpatial,vila2015adaptive,fan2022approximate,zhong2024approximate, lovig2025universality}.
The term ``AMP'' is now often used to describe any choice of GFOM for a given family of inputs $\bA^{(n)}$ that has these properties.
While it is not clear that this should be the case \emph{a priori}, a common fortuitous coincidence is that, for various problems, the best GFOM algorithms (in the sense of achieving optimal rates in estimation or inference tasks) happen to be in the special class of AMP.
That is, in many cases, the GFOM with the simplest asymptotic effective dynamics are also the most useful in applications.

Given the successes of AMP, it is a longstanding goal in the literature to identify AMP-like algorithms for as many different choices of inputs and input distributions as possible.
Yet, even to go slightly beyond the simplest choices of matrices $\bA^{(n)}$ has proved challenging and subtle (e.g., random matrices with i.i.d.\ entries~\cite{javanmard2013state, bayati2015universality},
orthogonally invariant distributions~\cite{fan2022approximate}, or
semi-random ensembles~\cite{dudeja2023universality,wang2022universality}).
Constructing AMP algorithms in such settings involves carefully inserting so-called \emph{Onsager correction terms} into the nonlinearities $f_t$ in ways that remain somewhat mysterious yet are crucial to obtain Gaussian limiting behavior.

Here, we will present an approach to the analysis of GFOM that re-derives different existing variants of AMP in a unified way, derives AMP algorithms for new inputs (both random and deterministic), and offers new conceptual insights into the design of these algorithms and into the proof of their asymptotic effective dynamics, in particular giving a clear combinatorial explanation for the Onsager corrections mentioned above.

\subsection{Our contributions: Combinatorial method for GFOM}
\label{sec:results}

We study GFOM
by expressing them as vectors of polynomials in the entries of the input matrix.
For this reason we focus on polynomial $f_t$; it is likely possible to treat more general nonlinearities by approximating them by polynomials (see~\Cref{sec:related} for some discussion).
\begin{definition}
    \label{def:pgfom}
    We call a GFOM as described above a \emph{polynomial GFOM} (pGFOM) if all nonlinearities $f_t: \R^{t + 1} \to \R$ are polynomials.
\end{definition}

Our approach is divided into two parts.
The first is a ``static'' analysis of certain symmetric polynomials in the entries of the input $\bA$.
The second translates this to ``dynamic'' information about vector-valued functions, allowing us to calculate effective dynamics for $O(1)$ iterations of GFOM in a general way.

\subsubsection{Statics of graph polynomials: Traffic distributions and universality}

The basic objects of study for our static analysis are the following \emph{graph polynomials}.
\begin{definition}[Diagram classes]
    We write $\calA = \calA_0$ for the set of finite, undirected, connected (multi)graphs.
    We also write $\calE = \calE_0 \subseteq \calA_0$ for the set of 2-edge-connected (multi)graphs (ones that cannot be disconnected by removing any single edge) and $\calC = \calC_0 \subseteq \calE_0 \subseteq \calA_0$ for the set of \emph{cactus} graphs, ones where every edge belongs to exactly one simple cycle.\footnote{This notion is sometimes more specifically called a \emph{bridgeless cactus}; in this paper we take this to be part of the definition of a cactus.} See~\Cref{fig:cactus}.
\end{definition}
\noindent
The optional subscript ``0'' of the diagram classes refers to the outputs of the polynomials being 0-dimensional, i.e., scalars, which will be useful to distinguish them from vector- and matrix-valued polynomials to be defined later (with subscript ``1'' and ``2'', respectively).

\begin{figure}[ht]
    \centering
    \includegraphics[width=0.4\linewidth]{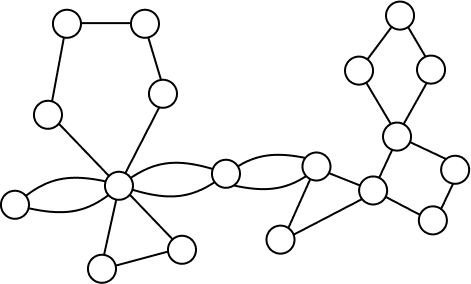}
    \caption{A cactus graph in $\cC$. Intuitively, a cactus is a ``tree of cycles''.}
    \label{fig:cactus}
\end{figure}

\begin{definition}[Scalar graph polynomials]\label{def:w}
    Given $\al \in \calA$ and $\bA \in \R^{n \times n}_{\sym}$\,, define polynomials $w_\al(\bA), z_\al(\bA) \in \R[\bA]$ by:
    \begin{align*}
        w_\al(\bA) &= \sum_{i: V(\al) \to [n]} \prod_{\{u,v\} \in E(\al)} \bA[i(u),i(v)]\,,\\
        z_\al(\bA) &= \sum_{i: V(\al) \embeds [n]} \prod_{\{u,v\} \in E(\al)} \bA[i(u),i(v)]\,.
    \end{align*}
\end{definition}

\noindent That is, $w_\al(\bA)$ and $z_\al(\bA)$ are each multivariate polynomials in the $\frac{n(n + 1)}{2}$ entries on and above the diagonal of the matrix $\bA$
obtained by summing over all labelings of the vertices of $\al$ by $[n] = \{1,2,\dots, n\}$ and with each edge corresponding to an entry of $\bA$.
The only difference between $w_\al(\bA)$ and $z_\al(\bA)$ is that the vertex labeling for $z_\al(\bA)$ is restricted to be injective by the notation $i : V(\al) \embeds [n]$ whereas labels in $w_\al(\bA)$ are allowed to repeat.

Each monomial in the entries of $\bA$ can be represented as a multigraph on $\{1,2,\dots, n\}$.
By summing all monomials with the same ``shape'', the $w_\al(\bA)$ and $z_\al(\bA)$ give two different spanning sets for a subspace of the $S_n$-invariant polynomials in the entries of $\bA$, where $S_n$ acts on $\bA$ by permuting the rows and columns simultaneously.
There are only a few possible distinct shapes for monomials with low degree, so analysis on the $w$ or $z$ polynomials is a highly compressed way to analyze $S_n$-invariant low-degree polynomial functions of $\bA$.

The limiting values of the graph polynomials are a basic set of parameters for the sequence of matrices $\bA^{(n)}$, introduced in random matrix theory by Male \cite{male2020traffic}, who termed them the \emph{traffic distribution}.

\begin{definition}[Traffic distribution]
\label{def:traffic-distribution}
    For a sequence of random\footnote{Deterministic matrices are also allowed just by taking a constant distribution.} matrices $\bA = \bA^{(n)} \in \R^{n \times n}_{\sym}$ we say that
    $\calD: \calA \to \R$ is the \emph{(limiting) traffic distribution} of $\bA$ if 
    \begin{equation}
    \lim_{n \to \infty} \frac{1}{n} \E_\bA w_{\alpha}(\bA) = \calD(\alpha) \text{ for all } \alpha \in \calA. \label{eq:traffic-conv}
    \end{equation}
    We say the (limiting) traffic distribution exists if the limit exists for all $\al \in \calA$.\footnote{
    Note that the diagram $\al$ cannot depend on $n$. It has constant size as $n \to \infty$.
    }
\end{definition}

When the limiting traffic distribution exists, it is easy to show that it determines the
asymptotic behavior of all constant-time GFOM algorithms with input $\bA$:
\begin{claim}
\label{prop:gfom-polynomial}
    Assume that $\bA = \bA^{(n)} \in \R^{n \times n}_{\sym}$ have traffic distribution $\cD$, and that a pGFOM defines $\bx_t = \bx_t(\bA)$ with $\bx_0 = \bm 1$.
    Then, for any fixed $t$ and polynomial $\varphi \in \R[x]$,
    \[ \lim_{n \to \infty} \E_\bA  \frac{1}{n}\sum_{i=1}^n \varphi(\bmx_{t}[i]) = C, \]
    where $C$ is a constant depending only on $\cD$, $(f_s)_{1 \leq s \leq t}$, and $\varphi$.
\end{claim}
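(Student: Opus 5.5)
The plan is to show by induction on $t$ that each coordinate $\bmx_t[i]$ is a fixed (independent of $n$) finite linear combination of vector-valued graph polynomials. For a rooted connected multigraph $\al$ with a distinguished vertex $\rt$, define
\[
\bm w_\al(\bA)[i] = \sum_{\substack{j : V(\al)\to[n]\\ j(\rt)=i}} \ \prod_{\{u,v\}\in E(\al)} \bA[j(u),j(v)].
\]
I will then reduce $\frac1n\sum_i \varphi(\bmx_t[i])$ to a fixed linear combination of $\frac1n w_\beta(\bA)$ for scalar diagrams $\beta\in\calA$. Applying $\lim_n \E$ and the definition of the traffic distribution in \cref{def:traffic-distribution} then gives a limit $C$ that is a fixed linear combination of values $\cD(\beta)$. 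Its coefficients depend only on the $f_s$ and $\varphi$.

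\textbf{Induction.} The base case is $\bx_0=\mathbf 1 = \bm w_{\bullet}$, the single-vertex rooted graph. There are two kinds of step.
\begin{itemize}
\item \emph{Entrywise products.} Since $\bm w_\al[i]\,\bm w_{\al'}[i] = \bm w_{\al\cdot\al'}[i]$, where $\al\cdot\al'$ glues the two roots together, the product stays within the class. Because $f_t$ is a polynomial with fixed coefficients, $f_t(\bx_t,\dots,\bx_0)$ is again a fixed finite linear combination of rooted connected diagrams.
\item \emph{Multiplication by $\bA$.} $(\bA\bm w_\al)[i]=\sum_k \bA[i,k]\,\bm w_\al[k] = \bm w_{\al'}[i]$, where $\al'$ attaches a new root vertex by an edge to the old root. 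This preserves connectedness.
\end{itemize}
Hence for each fixed $t$ there is an $n$-independent finite set of rooted connected diagrams $\al_1,\dots,\al_m$ and coefficients $c_1,\dots,c_m$ with $\bx_t=\sum_k c_k \bm w_{\al_k}$. The coefficients depend only on $(f_s)_{s\le t}$.

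\textbf{Applying $\varphi$.} Apply the gluing rule once more to write $\varphi(\bx_t)$ as a fixed linear combination $\sum_\ell d_\ell \bm w_{\gamma_\ell}$. Then
\[
\tfrac1n\sum_i \bm w_{\gamma}[i] = \tfrac1n w_{\gamma^\circ}(\bA),
\]
where $\gamma^\circ\in\calA$ is $\gamma$ with the root forgotten, because summing over the root label turns the restricted sum into the unrestricted one.

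\textbf{Subtleties to check.} There is no real obstacle here; the point needing care is bookkeeping.
\begin{itemize}
\item \emph{Multigraph conventions.} Diagrams may acquire parallel edges and loops from diagonal entries. They must be allowed as elements of $\calA$, which the definition as multigraphs permits.
\item \emph{Finiteness.} The expansion must be finite and independent of $n$. This holds since the degrees of the $f_s$ and $\varphi$ are fixed, and no normalization in $n$ enters except the single outer $\frac1n$.
\item \emph{Well-definedness of expectations.} The existence of the limits for each $\al\in\calA$ is assumed, so linearity of expectation and of limits finishes the proof with
\[
C=\sum_\ell d_\ell\, \cD(\gamma_\ell^\circ).
\]
\end{itemize}
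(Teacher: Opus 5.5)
Your proposal is correct and follows essentially the argument the paper has in mind: the paper calls the claim easy and uses the same diagrammatic expansion of the iterates together with the root-forgetting step, as in the proof of \Cref{claim:existence-joint-moments}. Working directly in the $w$-basis, as you do, is actually the cleanest version, since the traffic distribution is defined in that basis and so no M\"obius inversion is needed. Also, starting from $\bx_0=\mathbf 1$, gluing roots and attaching a new root by an edge only ever produce rooted trees, so your caveat about loops and multi-edges never comes into play.
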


Because of this observation, the traffic distribution is a natural way both to show existence of effective dynamics for constant-time GFOM (when the traffic distribution exists then so do effective dynamics) and to characterize the \emph{universality class} of GFOM
(when two sequences of matrices have the same traffic distribution then they have the same effective dynamics).

We now reach our first main contribution: by calculating their limiting traffic distributions, we obtain the first analysis of GFOM on non-trivial completely deterministic inputs.
Namely, we prove that any delocalized orthogonal matrix, after a slight modification, has the same traffic distribution as a corresponding random matrix model, the \emph{regular random orthogonal model} (\rrom; see~\Cref{def:rom}).

\begin{theorem}[See~\Cref{thm:universality-orthogonal}]
    \label{thm:intro-hadamard}
    Let $\bm \Pi = \bm \Pi^{(n)} = \bI - \frac 1n \bm 1 \bm 1^\top$ and $\bH = \bH^{(n)}\in\R_{\sym}^{n\times n}$ be a sequence of orthogonal matrices such that
    \begin{align}
        \max_{1\le i, j\le n} |\bmH[i,j]|\le n^{-\frac 12 + o(1)}\,.\label{eq:delocalization}
    \end{align}
    Then, the traffic distribution of $\mathbf \Pi \bmH \mathbf \Pi$
    exists and equals that of the \rrom.
\end{theorem}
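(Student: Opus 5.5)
We must show that $\frac1n w_\alpha(\bm\Pi\bH\bm\Pi)$ converges to the \rrom value for every fixed connected multigraph $\alpha$. The \rrom is (as in \Cref{def:rom}) of the form $\bm\Pi\bm O\bm\Pi$ with $\bm O$ Haar-type orthogonal under the appropriate symmetry. So the plan is to compute both sides in a common combinatorial basis and check that they agree. That basis is the injective polynomials $z_\beta$. M\"obius inversion over partitions of $V(\alpha)$ writes $w_\alpha=\sum_{\pi} z_{\alpha/\pi}$, so it suffices to determine $\lim \frac1n z_\beta(\bm\Pi\bH\bm\Pi)$ for every fixed $\beta$.

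\textbf{Step 1: remove the projection.} Write $\bm\Pi\bH\bm\Pi=\bH-\frac1n\bm1\bm1^\top\bH-\frac1n\bH\bm1\bm1^\top+\frac{s}{n^2}\bm1\bm1^\top$, where $s=\bm1^\top\bH\bm1$. Expanding each edge of $\alpha$ produces graph sums in which some edges are replaced by rank-one factors. These factors split the diagram into pieces that carry extra vertex sums and powers of $1/n$. We control every such piece using only orthogonality ($\bH^2=\bI$) and the delocalization bound \eqref{eq:delocalization}. The main effect of the projection is that $\bm\Pi\bH\bm\Pi$ has no component along $\bm1$. As a result, sums of the form $\sum_j(\bm\Pi\bH\bm\Pi)[i,j]$ vanish identically.

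\textbf{Step 2: power counting for injective sums.} For a 2-edge-connected or general $\beta$, the leading-order sums should be those in which $\beta$ is a cactus whose cycles have length $2$, i.e.\ doubled trees. The reason is that $\sum_j \bH[i,j]^2=1$ exactly, and such terms contribute $O(1)$ after normalization. For every other shape, we bound $|z_\beta|$ by inclusion–exclusion between $z$ and $w$. The $w$-sums reduce, via $\bH^2=\bI$, to contractions along cycles. The remaining sums with entries bounded by $n^{-1/2+o(1)}$ are then controlled by a spanning-tree/cycle-space counting argument: each independent cycle that is not a double edge costs a factor $n^{-1/2+o(1)}$ relative to the trivial count. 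The injectivity constraint is what forces the cancellations. For example, $\sum_{j\ne i}\bH[i,j]\bH[j,k]=\delta_{ik}-\bH[i,i]\bH[i,k]$, and the correction term is small by delocalization. For the projected matrix, the constraint "no vertex of degree $1$ along $\bm1$" kills the tree-like pendant contributions. The main technical work is an induction on $|E(\beta)|$ that peels off a vertex and uses these exact identities plus the $n^{o(1)}$ slack.

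\textbf{Step 3: matching \rrom.} We run the same computation for $\bm\Pi\bm O\bm\Pi$ with $\bm O$ random, which is itself delocalized with high probability. This yields the same limiting values of $\frac1n z_\beta$: one for doubled-tree shapes (with the appropriate normalization) and zero otherwise. It also makes the result consistent with whatever explicit formula the paper gives for the \rrom. Summing over partitions then gives equality of traffic distributions.

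\textbf{Main obstacle.} Step 2 is the hard part. Naive bounds using only $|\bH[i,j]|\le n^{-1/2+o(1)}$ are far too weak for diagrams with many cycles, because they give $n^{|V|-|E|/2+o(1)}$, which is not $o(n)$ in general. So the vanishing must come from exact algebraic cancellations, specifically orthogonality applied at vertices of degree $2$, combined iteratively with the injectivity corrections. My first attempt would be an induction that repeatedly contracts a degree-2 vertex (or, in a 2-edge-connected $\beta$, uses an ear decomposition), bounding the error terms with Cauchy–Schwarz and operator norm $\|\bH\|=1$. The step I expect to be most delicate is showing that diagrams with all degrees $\ge3$ are $o(n)$. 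For these I would use the bound $|z_\beta|\le n^{o(1)}\cdot$(operator-norm bound on a matrix flattening of $\beta$).
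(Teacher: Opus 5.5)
There is a genuine gap. The limiting values you set out to prove in Steps~2--3 (value $1$ on doubled trees, $0$ for every other $z_\beta$) are the traffic distribution of the \goe, not of the \rrom, so the induction in Step~2 is aimed at false statements.

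\textbf{The targets fail for $\bH$ itself.} Since $\bH^2=\bI$, we have $\frac1n\Tr\bH^4=1$ exactly, whereas a $z$-limit supported on doubled trees would force $\frac1n\Tr\bH^4\to\textnormal{Cat}(2)=2$. Expanding $w_{C_4}$ over partitions of the $4$-cycle makes this concrete. Two contractions give a cactus of two $2$-cycles with limit $1$, and all others vanish, so $\frac1n z_{C_4}(\bH)\to 1-2=-1=\kappa_4$. In general, every cactus whose cycles are all even carries the nonzero \rom free cumulants $\kappa_q=(-1)^{q/2-1}\textnormal{Cat}(q/2-1)$. Your heuristic that each cycle other than a double edge costs $n^{-1/2+o(1)}$ is therefore false: these order-one values come from exact cancellations forced by $\bH^2=\bI$, not from entry sizes.

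\textbf{The targets also fail after puncturing.} The claim ``zero otherwise'' breaks down for diagrams with bridges. Take the two-edge path $P_2$ and $\bA=\bm\Pi\bH\bm\Pi$. Then $w_{P_2}(\bA)=\bm 1^\top\bA^2\bm 1=0$, $\sum_{i,k}\bA[i,i]\bA[i,k]=0$, $\Tr\bA^2=n-O(1)$, and $\sum_i\bA[i,i]^2=n^{o(1)}$, so inclusion--exclusion gives $\frac1n z_{P_2}(\bA)\to-1$. The identity $\bA\bm 1=0$ kills pendant pieces only in the $w$-basis. Punctured matrices do not have the strong cactus property, so a pure $z$-basis characterization of the \rrom would require computing these nonzero bridged limits.

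\textbf{What the argument actually needs} is a characterization that mixes the two bases (\Cref{lem:eqDiffMobius}):
\begin{itemize}
\item[(a)] $\frac1n z_\alpha\to0$ only for $2$-edge-connected non-cactus $\alpha$;
\item[(b)] $\frac1n w_\alpha(\bA)\to0$ for every $\alpha$ with a bridge;
\item[(c)] $\frac1n w_\sigma\to1$ or $0$ on cactuses, according to whether all cycles are even. This holds because every open-cactus matrix of $\bH$ is $\bI$, $\bH$, or has operator norm $n^{-1/2+o(1)}$.
\end{itemize}

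For (a), the mechanism is not cycle-space power counting. A $2$-edge-connected non-cactus contains an open cactus whose removal leaves it $2$-edge-connected (an ear-decomposition argument). Injectivity of the two endpoints means only off-diagonal entries of that open-cactus matrix appear, and these are $n^{-1/2+o(1)}$. The rest is controlled by the operator-norm bound for $2$-edge-connected graph sums (\Cref{thm:fundamental-theorem}) through a lift and Cauchy--Schwarz. Puncturing changes open-cactus matrices only by $O(1)$ in Frobenius norm.

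For (b), vanishing at leaves is not enough. Across an arbitrary bridge you need $\|\bA\bw_\alpha(\bA)\|_2\le n^{o(1)}$. This rests on vector cactus diagrams of $\bH$ being nearly constant, i.e.\ $\|\bm\Pi\bw_\sigma(\bH)\|_2\le n^{o(1)}$.

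Your instinct to use ear decompositions and operator norms points the right way. Without the correct targets and this mixed-basis bookkeeping, however, the proof cannot be completed.
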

\noindent
The motivating examples for~\cref{thm:intro-hadamard} are ``Fourier transform matrices'' such as the Walsh--Hadamard matrix (\Cref{def:hadamard}), discrete sine transform matrix, or discrete cosine transform matrix (\Cref{def:dst}).
We call conjugating by the projection matrix $\matPi$ \emph{puncturing} the matrix.
\Cref{thm:intro-hadamard} implies
that, after puncturing, the effective dynamics of GFOM
on these matrices are the same as those for the \rrom, which itself is a punctured version of the \emph{random orthogonal model} (\rom) of~\cite{marinari1994replicaI}.
Explicit state evolution equations for these dynamics are given in~\Cref{prop:hadamard-explicit-state}.

Puncturing is necessary in~\Cref{thm:intro-hadamard} and is natural for Fourier transform matrices. For the Walsh--Hadamard matrix, puncturing removes the first row and column, all of whose entries are identically $1/\sqrt{n}$.
This row/column makes $\bH\bm{1}$ have a single large entry; because of that imbalance, without puncturing the traffic distribution of $\bH$ does not exist\footnote{For example, when $\bH$ is the Walsh--Hadamard matrix, the degree-$D$ star diagram $\sigma_D$ satisfies $\frac 1n |w_{\sigma_D}(\bH)| = \Theta(n^{D/2-1})$, which diverges for $D>2$.} and some GFOMs do not have well-defined asymptotic dynamics.
This phenomenon has also been observed experimentally: \cite{schniter2020simple} writes that ``structured matrices (e.g., DCT, Hadamard, Fourier) should work as
well as i.i.d.\ random ones. But, in practice, AMP often diverges with such structured matrices.''
We propose, and our results corroborate, that it is precisely alignment with the all-ones vector that causes this behavior.

Showing that Fourier transform matrices are pseudorandom orthogonal matrices has been a longstanding folklore open problem
in the statistical physics and AMP literature.
It seems to originate in the work of \cite{marinari1994replicaI,marinari1994replicaII,parisi1995mean} in statistical physics, who proposed these matrices as couplings for spin glass models.
Recently (nearly 30 years later),~\cite{dudeja2023universality}
summarized the situation as follows:
\begin{displayquote}
    More generally, numerical studies reported in the literature \emph{[...]}
    suggest that AMP algorithms exhibit universality properties as long as the
    eigenvectors are generic. Formalizing this conjecture remains squarely beyond
    existing techniques, and presents a fascinating challenge.
\end{displayquote}
Similar comments have been made in \cite{subsamplingJavanmard,rangan2019convergence,barbierSpatial}, and relevant numerical experiments can be found in \cite{CO-2019-TAPEquationAMPInvariant,abbara2020universality,dudeja2023universality}. 
Fourier transform matrices are also
favored in compressed sensing applications since they admit fast
multiplications via the Fast Fourier Transform~\cite[Example 2.26]{wang2022universality}.

Although \cref{thm:intro-hadamard} concerns orthogonal 
matrices, we also prove generally that after puncturing, any sequence of delocalized matrices has the same traffic distribution as the orthogonally invariant ensemble with the same eigenvalue distribution, assuming stronger delocalization properties than~\cref{eq:delocalization}. See~\Cref{thm:universality-new} for the formal statement.

\subsubsection{Cactus properties: conditions for simple traffic distributions}

The traffic distribution is a complicated object in general, just because its indexing set $\mathcal{A}$ is very large.
Fortunately, traffic distributions of many common matrices are much simpler.
Specifically, they often satisfy a \emph{cactus property}:
almost all of the graph polynomials $z_\al(\bA)$ are asymptotically negligible as $n \to \infty$, with the only exceptions being the cactus graphs $\al \in \cC \subsetneq \mathcal{A}$ (in the $z$ basis, but \emph{not} in the $w$ basis).

\begin{definition}[Cactus properties and cactus type]\label{def:cactus-property}
    For a sequence of symmetric matrices $\bA = \bA^{(n)} \in \R^{n \times n}$, we say that:
    \begin{enumerate}[(i)]
        \item $\bA$ has the \emph{strong cactus property} if $\lim_{n \to \infty} \frac{1}{n}\E_\bA z_{\alpha}(\bA) = 0$ for all $\alpha \in \calA \setminus \calC$.
        \item $\bA$ has the \emph{weak cactus property} if $\lim_{n \to \infty} \frac{1}{n} \E_\bA z_{\alpha}(\bA) = 0$ for all $\alpha \in \calE \setminus \calC$.
        \item $\bA$ has the \emph{factorizing (strong or weak) cactus property} if it has the (strong or weak) cactus property, and for each $\sig \in \cC$ we have $\lim_{n \to \infty}\frac 1n \E_\bA z_\sig(\bA) = \prod_{\rho \in \cyc(\sig)} \kappa_{|\rho|}$ for some real numbers $\kappa_q$, where $\cyc(\sig)$ is the set of cycles of a cactus and $|\rho|$ is the length of a cycle.\footnote{In the traffic probability literature, the factorizing strong cactus property has been referred to as a traffic distribution being \emph{of cactus type} \cite{cebron2024traffic}. The parameters $\kappa_q$ are the \emph{free cumulants} appearing in free probability theory.}
    \end{enumerate}
\end{definition}

The idea that the non-negligible diagrams for many random matrix models are cactuses appeared in the physics literature as early as the 1990s \cite{parisi1995mean, MFCKMZ-2019-PlefkaExpansionOrthogonalIsing}
and we will show in~\Cref{sec:feynman-physics} how it can be derived from the \emph{Feynman diagram expansion} widely used in physics.
More recent mathematical work~\cite{male2020traffic,cebron2024traffic} reviewed in~\cref{sec:trafficRandom} has rigorously established the strong cactus property for Wigner matrices and unitarily invariant matrices whose eigenvalue distributions converge weakly.
In fact,  
the factorizing strong cactus property is essentially {equivalent} to $\bA$ having the same limiting traffic distribution as some orthogonally invariant random matrix model.

The strong cactus property implies that the traffic distribution is specified only by the limiting values associated to $\sig \in \mathcal{C}$, a much smaller set of graphs than $\mathcal{A}$.
Another way to say this is that, under the strong cactus property, the traffic distribution contains no extra information beyond the considerably simpler \emph{diagonal distribution}, introduced by \cite{wang2022universality}.

\begin{definition}[Diagonal distribution]\label{def:diagonal-distribution}
    For a sequence of symmetric matrices $\bA = \bA^{(n)} \in \R^{n \times n}$, we say that
    $\calD: \calC \to \R$ is the \emph{limiting diagonal distribution} of $\bA$ if
    \[ \lim_{n \to \infty} \frac{1}{n}\E_\bA  w_{\sig}(\bA) = \calD(\sig) \text{ for all } \sig \in \calC. \]
    We say the diagonal distribution exists if the limit exists for all $\sig \in \calC$.
\end{definition}

Let us make several important observations about the definitions of the
traffic distribution, the diagonal distribution, and the cactus
properties.

First, note that~\cref{def:cactus-property} is stated in the $z$-polynomial basis, whereas~\cref{def:traffic-distribution,def:diagonal-distribution} are stated in the $w$-polynomial
basis.
Throughout the paper, it will be helpful to move back and forth
between these bases, since some properties are most natural (or even are
only true) in one basis or the other.
This can be done via M{\"o}bius inversion, as described
in~\Cref{sec:mobius}.

Second, neither the diagonal distribution nor the traffic
distribution is an actual probability distribution.
Instead, they should be interpreted as specifying limiting moments of
certain empirical distributions, namely, the empirical distributions
of the entries of vector graph polynomials.\footnote{
The reason for the name of the diagonal distribution $\calD$ is that it can also be interpreted as specifying the moments of the empirical distribution over the diagonal of certain matrices, namely those that can be formed from $\bA$ by matrix multiplication and the operation of zeroing out the off-diagonal entries of a matrix \cite{wang2022universality}.}

Third, one can view the diagonal and traffic distributions as
generalizations of the limiting spectral distribution of a sequence of matrices.
The spectral moments are $\frac1n \Tr(\bA^q)=\frac1n w_\alpha(\bA)$,
where $\alpha$ is the $q$-cycle diagram, so they are included in both
the diagonal and traffic distributions:
\[ \text{`` spectral distribution} \,\, \subsetneq \,\, \text{diagonal distribution} \,\, \subsetneq \,\, \text{traffic distribution ''} \]

Just as the empirical spectral distribution characterizes the
limiting behavior of all polynomials in $\bA$ that are invariant under
the action of the orthogonal group $O(n)$
(acting by $\bQ \cdot \bA = \bQ\bA\bQ^{\top}$), the traffic distribution characterizes the limiting behavior of the larger space of
polynomials invariant under the smaller symmetric group
$S_n$, i.e.,\ where $\bQ$ is restricted to be a permutation matrix.

Finally, the strong cactus properties describe when these inclusions can
be reversed:
if the strong cactus property holds, then the traffic distribution
contains no more information than the diagonal distribution. If the
factorizing strong cactus property holds, then the diagonal
distribution, in turn, contains no more information than the spectral
distribution.

Due to the effect of the puncturing operation, the strong cactus property actually is \emph{not} satisfied by the pseudorandom
matrices or \rrom matrices appearing in our~\Cref{thm:intro-hadamard}.
But, these matrices satisfy the weak cactus property,
and establishing this is a key step in the analysis of these matrices (in fact, the weak cactus property holds for the Fourier transform matrices without puncturing, as we show in Part 1 of \cref{thm:universality-new}).

\subsubsection{Dynamics of graph polynomials: asymptotic GFOM state and treelike AMP}

Recall that our final goal is to describe the state $\bm x_t = \bm x_t(\bm A)$ of a GFOM.
Since $\bm x_t \in \R^n$ we use vector diagrams for this task.
Compared to the scalar diagrams in $\cA_0$, the only extra information in these diagrams is that one of the vertices is specially marked as the ``root'', whose label specifies the coordinate of the vector output.

\begin{definition}[Vector diagram classes]
    We write $\cA_1$ and $\cC_1$ for the set of graphs in $\cA$ and $\cC$ respectively, further decorated with a distinguished root vertex.
    For $\alpha \in \cA_1$, we write $\rt(\alpha) \in V(\alpha)$ for its root vertex.
\end{definition}

\begin{definition}[Vector graph polynomials]
        Given $\al \in \calA_1$ and $\bA \in \R^{n \times n}_{\sym}$\,, define vectors of polynomials $w_\al(\bA), z_\al(\bA) \in (\R[\bm A])^n$ by,
    \begin{align*}
        \bm w_\al(\bA)[i] &\defeq \sum_{\substack{j: V(\al) \to [n] \\ j(\rt(\al)) = i}} \prod_{\{u,v\} \in E(\al)} \bA[j(u),j(v)]\,,\\
        \bm z_\al(\bA)[i] &\defeq \sum_{\substack{j: V(\al) \embeds [n] \\ j(\rt(\al)) = i}} \prod_{\{u,v\} \in E(\al)} \bA[j(u),j(v)]\,,
    \end{align*}
    for all $i\in [n]$.
\end{definition}

To analyze the vector graph polynomials, we compute the moments of the empirical distribution of their entries.
We will see that these are matched (asymptotically) by a family of scalar random variables $Z_{\alpha}^{\infty}$, so the empirical distribution of the entries of $\bm z_{\alpha}(\bA)$ converges in a suitable sense to $Z_\al^\infty$ as $n\to\infty$.
Further, when $\bA$ has the strong cactus property, an analogous property is inherited by these limiting distributions, only a small number of $\alpha \in \cA_1$ having a non-negligible limit.
\begin{definition}[Treelike diagrams]\label{def:treelike}
    We say that $\alpha \in \cA_1$ is \emph{treelike} if it is a tree with \emph{hanging cactuses} attached to the leaves of the tree (see~\Cref{fig:treelike}).
    We denote the set of treelike diagrams by $\cT_1$, and denote by $\cG_1 \subseteq \cT_1$ the set of treelike diagrams in which, after removing
    hanging cactuses, the root has degree exactly 1. 
\end{definition}

\begin{figure}[ht]
    \centering

    \begin{subfigure}[t]{0.31\linewidth}
        \centering
        \begin{minipage}[c][1.8in][c]{\linewidth}
            \centering
            \includegraphics[height=1.8in]{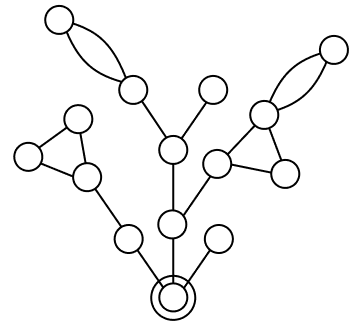}
        \end{minipage}
        \caption{A treelike diagram in $\calT_1$}
    \end{subfigure}
    \hfill
    \begin{subfigure}[t]{0.31\linewidth}
        \centering
        \begin{minipage}[c][1.8in][c]{\linewidth}
            \centering
            \includegraphics[height=1.5in]{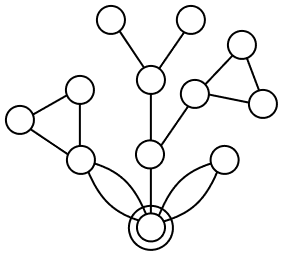}
        \end{minipage}
        \caption{A Gaussian diagram in $\calG_1$}
    \end{subfigure}
    \hfill
    \begin{subfigure}[t]{0.31\linewidth}
        \centering
        \begin{minipage}[c][1.8in][c]{\linewidth}
            \centering
            \includegraphics[height=1.5in]{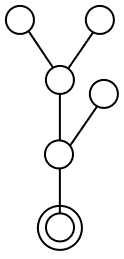}
        \end{minipage}
        \caption{The diagram in $\calG_1$ after removing its hanging cactus.}
    \end{subfigure}

    \caption{Examples of treelike and Gaussian diagrams. The root vertex is
    circled.}
    \label{fig:treelike}
\end{figure}

\begin{theorem}[Vector polynomial limits; see~\cref{thm:convergence-distribution}]
\label{prop:convergence-distribution}
    Assume that $\bA = \bA^{(n)}$ has the strong cactus property with limiting diagonal distribution $\cD$. 
    Assume also that the sequence of random variables 
    $(\|\bm A^{(n)}\|)_{n\ge 1}$ is \emph{tight},\footnote{If the matrices $\bm A^{(n)}$ are deterministic, this should be understood as $(\|\bm A^{(n)}\|)_{n\ge 1}$ being bounded.} i.e., that
    \begin{equation}
        \text{for all } \varepsilon > 0 \text{ there exists } K > 0 \text{ such that } \sup_{n\ge 1}\Pr(\|\bA^{(n)}\|>K)\le \varepsilon\,.\label{eq:tightnessNorm}
    \end{equation}
    Write $\bz_{\cA_1}(\bA) \in (\R^{\cA_1})^n$ for the stacking of values of all $\bz_{\alpha}(\bA)$ for $\alpha \in \cA_1$.
    Then,
    \[ \samp(\bz_{\cA_1}(\bA)) \xrightarrow[n \to \infty] {\textnormal{(d)}} (Z_{\alpha}^{\infty})_{\alpha \in \cA_1}\,, \]
    for a family of (partially dependent) random variables $(Z_{\alpha}^{\infty})_{\alpha \in \cA_1}$ such that $Z_{\alpha}^{\infty} = 0$ for all $\alpha$ not treelike, and which can be sampled as follows for $\al \in \calT_1$:
    \begin{enumerate}
        \item Draw $(Z_{\sig}^{\infty})_{\sig \in \cC_1}$ from a distribution determined by $\cD$.
        \item Draw $(Z_{\gamma}^{\infty})_{\gamma \in \cG_1} \sim \calN(\bm 0, \bm\Sigma^\infty)$ from a centered Gaussian distribution with countably infinite covariance matrix $\bm\Sigma^\infty$ depending on $(Z_{\sigma}^{\infty})_{\sigma\in\calC_1}$.
        \item Set $(Z_{\alpha}^{\infty})_{\alpha \in \cT_1  \setminus (\cG_1 \cup \cC_1)}$ to be certain deterministic polynomial functions of $(Z_{\alpha}^{\infty})_{\alpha\in \calG_1\cup \calC_1}$.
    \end{enumerate}
\end{theorem}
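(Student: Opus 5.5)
The plan is the method of moments, followed by a combinatorial identification of the limiting moments. Fix finitely many diagrams $\alpha_1,\dots,\alpha_k \in \cA_1$ and exponents $m_1,\dots,m_k$. We compute
\[ \E \frac1n \sum_{i=1}^n \prod_{j} \bz_{\alpha_j}(\bA)[i]^{m_j}. \]
This is a scalar graph polynomial quantity: glue $m_j$ copies of each $\alpha_j$ at their roots to obtain a connected scalar diagram $\beta$. Its vertex labelling is injective within each copy but not across copies. Expanding the non-injectivity across copies as a sum over partitions $\pi$ of the union of vertex sets that identify only vertices from different copies (roots all identified) gives
\[ \frac1n \E \sum_i \prod_j \bz_{\alpha_j}[i]^{m_j} \;=\; \sum_{\pi} \frac1n \E z_{\beta/\pi}(\bA), \]
a finite sum of injective scalar polynomials. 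The strong cactus property then kills every term except those with $\beta/\pi \in \cC$. Each surviving term converges to a value determined by $\cD$, using M\"obius inversion between the $z$ and $w$ bases (\Cref{sec:mobius}) to express limits of $\frac1n z_\sigma$, $\sigma\in\cC$, through the diagonal distribution. So every joint moment converges to an explicit limit $M(\alpha,m)$ depending only on $\cD$.

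Second, we identify these limits with the described sampling procedure. If some $\alpha_j$ is not treelike, we show that no quotient $\beta/\pi$ containing it can be a cactus, since identifications across copies cannot repair a non-treelike core. Two cases arise:
\begin{enumerate}
\item If $\alpha_j$ has a cycle through its tree part not isolated as a hanging cactus, merging it with other copies creates an edge lying on two cycles.
\item If a bridge cannot be doubled appropriately, the result is not bridgeless.
\end{enumerate}
This gives $Z^\infty_{\alpha}=0$ in distribution, since all its moments vanish. For treelike diagrams, the cactus quotients are exactly those that pair up the tree edges of the $\cG_1$-pieces across copies, forming $2$-cycles or larger cycles that must close up into a cactus. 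The hanging cactuses are merged arbitrarily among themselves.

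Organizing the count, the contribution factors in three layers. The first is the joint moments of the $\cC_1$ components, which define the law of $(Z_\sigma^\infty)$. The second is a Wick-type pairing of the $\cG_1$ trunks, where each pair of isomorphic-after-gluing branches contributes a covariance entry that is itself a polynomial in $\cC_1$-values at the attachment points. This gives a conditionally Gaussian family with covariance $\bm\Sigma^\infty$ depending on $(Z_\sigma^\infty)$. The third is that a general treelike diagram whose root has degree $\ge2$ behaves, at leading order, as the product (a Hermite-like polynomial) of its root branches. The key identity to verify is that $\bz_\alpha[i]$ for $\alpha\in\cT_1$ equals a fixed polynomial in $\bz_\gamma[i]$ ($\gamma\in\cG_1$) and $\bz_\sigma[i]$ ($\sigma\in\cC_1$), plus terms whose moments vanish. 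The correction terms come from the injectivity constraint across branches, and they can again be written as $z$-polynomials of quotient diagrams.

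Third, we pass from moment convergence to convergence in distribution. This requires the limit law to be determined by its moments. Tightness of $\|\bA\|$ bounds the $\cC_1$ variables, for example by bounding $w$-versions through powers of the operator norm, and the conditionally Gaussian part then has moments growing like $(2k)!!$ times bounded quantities. The Carleman condition holds, and finite-dimensional convergence for every finite subfamily gives convergence of $\samp(\bz_{\cA_1})$ in the product topology.

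The main obstacle is the second step: the precise combinatorial classification of cactus quotients of glued treelike diagrams and its matching with Wick's formula conditional on the cactus variables. In particular, we must show that the non-Gaussian corrections at the root collapse exactly into the prescribed deterministic polynomials. I would first handle the case of no hanging cactuses, which is the Wigner-like case with $2$-cycles. I would then show that attaching hanging cactuses only multiplies contributions by cactus moments at the attachment vertices.
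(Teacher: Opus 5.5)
Your architecture is the paper's: moment convergence from the strong cactus property and the diagonal distribution, vanishing of non-treelike diagrams, Wick pairing of the $\cG_1$-pieces via cactus quotients, and a Hermite-type identity for the remaining treelike diagrams. The passage from moments to distribution, however, fails as written. Tightness of $\|\bA^{(n)}\|$ bounds the cactus variables only in probability. Carleman, by contrast, concerns the limiting moments $\lim_n\frac1n\E z_{\sigma_1\oplus\cdots\oplus\sigma_k}(\bA)$, which are expectations and can be dominated by rare events on which $\|\bA\|$ is large.

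The pointwise bound $|\samp(\bz_\sigma(\bA))|\le C_\sigma(K)$ for $\sigma\in\cC_1$ comes from \Cref{thm:fundamental-theorem} applied to all 2-edge-connected contractions of $\sigma$ (\Cref{lem:2ec-bounded}). It turns into moment growth $C_\sigma^{2p}$ when $\sup_n\|\bA^{(n)}\|\le K$ almost surely, and that is the case the paper actually proves. The paper then invokes \Cref{lem:truncation}, but that lemma needs convergence in distribution conditionally on $\{\|\bA\|\le K\}$. Hypotheses phrased through unconditional expectations do not give this.

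In fact the statement under mere tightness is false. Let $\bA^{(n)}=X_n\bm G^{(n)}$, with $\bm G^{(n)}$ a GOE matrix and $X_n$ independent, distributed as $\mu$ for even $n$ and $\nu$ for odd $n$. Here $\mu\ne\nu$ are laws on $(0,\infty)$ with the same finite moments, e.g.\ the lognormal law and its Stieltjes perturbation with density $f(x)(1+\sin(2\pi\log x))$. Then $\frac1n\E z_\alpha(\bA)=\E[X_n^{|E(\alpha)|}]\,\frac1n\E z_\alpha(\bm G)$ with $\E[X_n^k]$ independent of $n$. So \Cref{thm:goe} gives the strong cactus property and a diagonal distribution, and $\|\bA^{(n)}\|$ is tight. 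Yet for the rooted $2$-cycle $\rho$ one has $\samp(\bz_\rho(\bA))=X_n^2S_n$ with $S_n\to1$ in probability. Along even and odd $n$ it therefore converges to the two different laws of $X^2$ under $\mu$ and $\nu$.

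So you must assume almost-sure boundedness, as for deterministic inputs. Even then, Carleman covers only $\cG_1\cup\cC_1$: Wick products of degree $\ge3$ violate it, and the cube of a Gaussian is moment-indeterminate. The other treelike coordinates should therefore come from your polynomial identity via continuous mapping and Slutsky, as in the paper.

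Second, your bookkeeping of hanging cactuses is wrong in two ways. First, they are never merged. Identifying a non-attachment vertex of a hanging cactus, or a non-root vertex of a $\cC_1$ factor, with any vertex of another copy creates three edge-disjoint paths, so the quotient is not a cactus (\Cref{lem:prod-cactus-cactus}, \Cref{claim:hanging-own}). Allowing such merges overcounts the limiting moments.

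Nor do they ``multiply contributions by cactus moments at the attachment vertices''. That is the factorizing cactus property, which fails under the strong property alone. For block matrices (\Cref{lem:block-matrix-cactus-limits}), the root value of a $2$-cycle with a $2$-cycle hanging at its other vertex is $\sum_c\kappa_2^{\{r,c\}}Z^\infty_\rho(c)$. This depends on the block of the attachment vertex and does not split as (base cycle)$\times$(hanging cycle). Moreover, values at non-root vertices are not functions of the sampled coordinate, so they cannot produce a covariance conditional on $Z^\infty_{\cC_1}$.

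The correct mechanism is that each homeomorphic gluing $\gamma_P$, hanging cactuses included, is itself a rooted cactus. Grafting rooted cactuses at the root equals their product modulo non-cactus diagrams. Hence $\bm\Sigma^\infty[\gamma,\gamma']=\sum_{P\in H(\gamma,\gamma')}Z^\infty_{\gamma_P}$ is linear in $Z^\infty_{\cC_1}$. The limiting moment of $\prod_i\bz_{\sigma_i}\prod_j\bz_{\gamma_j}$ is then $\E\bigl[\prod_iZ^\infty_{\sigma_i}\sum_{M}\prod_{xy\in M}\bm\Sigma^\infty[\gamma_x,\gamma_y]\bigr]$ (\Cref{lem:treelike-gaussian}).
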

\noindent
We note that $\samp(\bz_{\cA_1}(\bA))$ is a random variable taking values in $\R^{\cA_1}$, a countable product space.
Thus, its convergence in distribution is the same as convergence in distribution of any finite-dimensional projection; see~\Cref{sec:convergence-process}.

The application to pGFOM is as follows.
Analogously to~\Cref{prop:gfom-polynomial},
it is easy to see that the iterates $\bmx_t(\bmA)$
of a pGFOM admit a diagrammatic expansion
of the form
\begin{equation}
\bm x_t(\bm A) = \sum_{\alpha \in \cA_1} c_{t, \alpha} \bm z_{\alpha}(\bm A)\,, \label{eq:z-expansion-of-x}
\end{equation}
for finitely supported coefficients
$(c_{t,\alpha})_{\alpha\in\calA_1}$.
Given the limits of the individual diagrams above, for a given GFOM, number of iterations $t$, and coefficients as in~\cref{eq:z-expansion-of-x}, we write
\begin{equation*}
    (X_0^{\infty}, \dots, X_t^{\infty}) \defeq \left(\sum_{\alpha \in \cA_1} c_{0, \alpha} Z_{\alpha}^{\infty}, \dots, \sum_{\alpha \in \cA_1} c_{t, \alpha} Z_{\alpha}^{\infty}\right)\,,\label{eq:asymptoticStateIntro}
\end{equation*}
a random variable in $\R^{t + 1}$ that describes the joint empirical distribution of the first $t$ steps of the GFOM.
We call this the \emph{asymptotic state} of a GFOM (\Cref{def:asymptotic-state}). By~\Cref{prop:convergence-distribution}, the asymptotic state describes limiting empirical averages over the GFOM states, in the sense that
\[ \lim_{n \to \infty} \E_{\bA} \frac{1}{n}\sum_{i = 1}^n  \varphi(\bx_0[i], \dots, \bx_t[i]) = \EE\, \varphi(X_{0}^{\infty}, \dots, X_t^{\infty}) \]
for any $\varphi: \R^{t + 1} \to \R$ either a polynomial or a bounded continuous function (\Cref{claim:cvImpliesState}).

In particular, if the only nonzero $c_{t, \alpha}$ in~\cref{eq:z-expansion-of-x} are non-treelike $\al$ or treelike $\alpha \in \cG_1$, then the GFOM has an asymptotic state that is Gaussian conditional on $(Z_{\sigma}^{\infty})_{\sigma\in\calC_1}$.
This observation leads to our second main contribution: a new family of \emph{treelike AMP} algorithms simultaneously generalizing Orthogonal Approximate Message Passing (OAMP) algorithms~\cite{rangan2019vector, fan2022approximate} for orthogonally invariant matrices, and Generalized Approximate Message Passing (GAMP) algorithms~\cite{rangan2011generalized, javanmard2013state} for matrices with independent entries that are not necessarily identically distributed.\footnote{The second comparison is with the caveat that GAMP uses a certain class of ``non-separable'' nonlinearities (applying a different function $f_t$ to each coordinate of $\bmx_t$) which are not directly covered by our result \cite{rangan2011generalized, javanmard2013state}.}

\begin{theorem}[Treelike AMP; see~\Cref{thm:full-onsager}]
\label{thm:intro-onsager}
    Assume that $\bA = \bA^{(n)}$ satisfies the assumptions of \Cref{prop:convergence-distribution}.
    Given polynomial functions $f_t : \R \to \R$, define the pGFOM:
    \begin{equation*}
    \begin{alignedat}{2}
        &\bmx_0 \defeq \bm{1}\,, \qquad
        &&\bmx_t \defeq \bA \bmf_{t-1}
          - \sum_{s=0}^{t-1}\bmb_{s,t} \cdot \bmf_s \,,
          \qquad
          \text{(The product $\bmb_{s,t} \cdot \bmf_s$ is entrywise.)}
        \\
        &\bmf_t \defeq f_t(\bmx_t)\,, \qquad
        &&\bmf'_t \defeq f'_t(\bmx_t)\,.
    \end{alignedat}
    \end{equation*}
    \begin{equation*}
    \begin{aligned}
        \bmb_{s,t}[i]
        &\defeq
        \sum_{\substack{
            i_s,\dots,i_{t-1}=1\\
            \textnormal{distinct}\\
            i_s=i
        }}^n
        \bA[i_s,i_{t-1}] \bmf'_{t-1}[i_{t-1}]
        \bA[i_{t-1},i_{t-2}] \bmf'_{t-2}[i_{t-2}]
        \cdots
        \bmf'_{s+1}[i_{s+1}]
        \bA[i_{s+1},i_s]\,.
    \end{aligned}
    \end{equation*}
    Then, for any fixed $t$ as $n \to \infty$, the asymptotic state $(X^\infty_1,\ldots,X^\infty_t)$, conditional on $(Z_{\sigma}^{\infty})_{\sigma\in\calC_1}$, is a centered Gaussian vector.
    A formula for its covariance is given in~\cref{lem:cov-cactus}.
\end{theorem}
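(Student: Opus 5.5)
The plan is to show, by induction on $t$, that the treelike AMP iterate has a $z$-expansion of the form \cref{eq:z-expansion-of-x}, $\bx_t = \sum_{\alpha} c_{t,\alpha}\bz_\alpha(\bA)$. The goal is that every diagram with nonzero coefficient is either non-treelike or lies in $\cG_1$, possibly up to terms that vanish in the limit. \Cref{prop:convergence-distribution} then finishes the argument. Non-treelike diagrams have $Z^\infty_\alpha = 0$, and $(Z^\infty_\gamma)_{\gamma\in\cG_1}$ is centered Gaussian conditional on $(Z^\infty_\sigma)_{\sigma\in\cC_1}$. Hence $(X^\infty_1,\dots,X^\infty_t)$, being a finite linear combination of these, is conditionally centered Gaussian, with covariance read off from $\bm\Sigma^\infty$ as in \cref{lem:cov-cactus}. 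I will assume $f_t$ is polynomial, so all expansions are finite.

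For the inductive step, I first expand $\bmf_{t-1} = f_{t-1}(\bx_{t-1})$. A product of $z$-polynomials rooted at a common vertex is a sum over ways of identifying non-root vertices of the factors: gluing them at the root gives the main term, and merging further vertices gives the rest. By induction each factor lies in $\cG_1$ or is non-treelike. A product of $k$ diagrams from $\cG_1$ glued at the root gives a root of tree-degree $k$, with hanging cactuses possibly attached at the root. Identifications between the branches typically create non-treelike diagrams. The exception is identifications that close a cycle through the root, which produce treelike diagrams with a cactus at the root.

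Next I multiply by $\bA$. The product $\bA\bz_\beta$ equals $\bz_{\beta^+}$ plus correction terms. Here $\beta^+$ appends a new root adjacent to the old root, and the corrections come from identifying the new root with some existing vertex $v$ of $\beta$. The main term $\beta^+$ has a root of tree-degree one, so it lies in $\cG_1$ whenever $\beta$ is treelike. The dangerous terms are the identifications that remain treelike, namely those for which the new edge closes a cycle that becomes a hanging cactus, or doubles an edge along a path back to the root.

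The key combinatorial step, which I expect to be the main obstacle, is to show that these surviving treelike non-$\cG_1$ terms are exactly cancelled by $\sum_s \bmb_{s,t}\cdot\bmf_s$. Expanding $\bmb_{s,t}$ with distinct indices gives a path $i_s\to i_{t-1}\to\cdots\to i_{s+1}\to i_s$, that is, a cycle through the root decorated with $\bmf'_r$ at its vertices. Since $f'_r$ is the derivative of $f_r$, the term $\bmf'_r$ accounts for choosing which occurrence of the branch $\bx_r$ inside $f_r(\bx_r)$ is folded back along the cycle. I would prove the cancellation by tracking, in the $z$-basis, a bijection between treelike identifications, which are cycles returning to the root through the nested structure of $\bx_{t-1},\dots,\bx_{s}$, and terms of $\bmb_{s,t}\bmf_s$. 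The argument would also use the strong cactus property to discard all terms that are not cactus or treelike. I would first check the case $t=2$ by hand, where the correction is $\bmb_{0,2}\bmf_0$, a 2-cycle, before organizing the general bijection by the depth $s$ at which the cycle closes.

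Finally, tightness of $\|\bA\|$ and \Cref{claim:cvImpliesState} transfer the convergence in distribution to the asymptotic state.
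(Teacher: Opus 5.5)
Your outer reduction is fine: once $\bx_t$ is shown to lie in $\spn(\bz_{\cG_1\cup(\cA_1\setminus\cT_1)})$, \Cref{prop:convergence-distribution} and \Cref{claim:cvImpliesState} give conditional Gaussianity, as in the paper. The gap is the step you flag as the main obstacle: that the treelike non-$\cG_1$ terms created when $\bA$ hits $\bmf_{t-1}$ are exactly cancelled by $\sum_s\bmb_{s,t}\cdot\bmf_s$. This is the whole content of the theorem, and your induction hypothesis is too weak to carry it out.

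Knowing only that the treelike part of $\bx_{t-1}$ lies in $\spn(\bz_{\cG_1})$ tells you nothing about the coefficients of those diagrams. It also does not tell you which cactuses hang at which vertex of the path, or at which ``time'' each vertex was visited. But $\bmb_{s,t}\cdot\bmf_s$ is a cycle of length $t-s$ carrying $\bmf'_r$ at the vertex visited at time $r$, grafted onto $\bmf_s$, so matching coefficients needs exactly that information. Moreover, the matching is not term-by-term in $s$. A return of the new edge to the vertex reached at time $r$, on a walk that started at the nonlinear part of $\bmf_{s'}$ with $s'<r$, produces $\bmb_{r,t}\cdot(\bmc_r\cdot\bmB_{s',r}\bmf_{s'}^{\neq1})$. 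This is only a piece of $\bmb_{r,t}\cdot\bmf_r$. You recover $\bmb_{r,t}\cdot\bmf_r$ only after summing over all $s'<r$ and adding the return to the root of $\bmf_r^{\neq1}$.

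Two ingredients are missing.
\begin{enumerate}
\item \Cref{lem:f-linear}: for $\bx\in\spn(\bz_{\cG_1})$, $\textnormal{gaussian}(f(\bx))\eqinf\cactus(f'(\bx))\cdot\bx$. It follows from \Cref{lem:product-gaussian}: in $\bx^\ell$ one factor continues the path, and the other $\ell-1$ must be paired by homeomorphic matchings into a cactus at the root. This is the precise reason $f'_r$ appears in the Onsager term; your ``which occurrence is folded back'' heuristic only gestures at it.
\item The strengthened invariant of \Cref{lem:memory-formula}: $\bx_t\eqinf\sum_{s<t}\bmB_{s,t}\bmf_s^{\neq1}$ and $\bmf_t\eqinf\bmc_t\cdot\bx_t+\bmf_t^{\neq1}$. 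Here $\bmB_{s,t}$ is the self-avoiding walk matrix decorated by $\bmc_r=\cactus(f'_r(\bx_r))$.
\end{enumerate}

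With the invariant, in the step from $t$ to $t+1$ the product $\bA\bmC_t\bmB_{s,t}$ splits into $\bmB_{s,t+1}$ plus returns to $i_r$, for $s\le r\le t$:
\begin{itemize}
\item the return to $i_s$ gives $\bmb_{s,t+1}$;
\item the return to $i_r$ with $r>s$ factors, modulo non-treelike diagrams, as $\bmb_{r,t+1}\cdot\bmC_r\bmB_{s,r}\bmf_s^{\neq1}$;
\item returns into hanging cactuses, or into non-root vertices of $\bmf_s^{\neq1}$, are non-treelike.
\end{itemize}
The invariant for $\bmf_r$ then reassembles everything into $\sum_r\bmb_{r,t+1}\cdot\bmf_r$, which is exactly what is subtracted. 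You also need that replacing $\bmf'_r$ by $\bmc_r$ inside $\bmb_{s,t}$ changes only non-treelike terms. Note that this is an exact finite-$n$ identity modulo non-treelike diagrams. Neither the strong cactus property nor ``terms that vanish in the limit'' is needed at this stage; they enter only through \Cref{prop:convergence-distribution}.
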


The subtracted terms $\sum_{s=0}^{t-1} \bm b_{s,t} \cdot \bm f_s$ generalize the ``Onsager correction terms'' appearing in different variants of AMP.
\cref{thm:intro-onsager} and its proof address two questions posed in~\cite{wang2022universality},
namely (1)~to obtain a combinatorial interpretation of the Onsager correction for OAMP algorithms,
and (2)~to identify a more general class of AMP algorithms whose state evolution is characterized by the diagonal distribution of the input matrix.
\cref{thm:intro-onsager} shows that (2) is possible for arbitrary matrices satisfying the strong cactus property, and explicitly describes such an algorithm and its conditionally Gaussian asymptotic states. We
show in~\Cref{sec:se-examples} how the treelike
AMP iteration simultaneously generalizes several
variants of AMP introduced in prior work.

We emphasize that, in contrast to all existing state evolution results we are aware of, we derive an Onsager correction and state evolution formula \emph{without assuming an explicit random model for $\bA$}.
The iteration in~\Cref{thm:intro-onsager} is the same regardless of the limiting diagonal distribution of $\bA$, provided that these matrices (random or deterministic) satisfy the strong cactus property and have \emph{some} limiting diagonal distribution (which will affect the covariance formula in~\cref{lem:cov-cactus}).
Note that the matrices in our universality result (\cref{thm:intro-hadamard}) and their random counterparts (the \rrom), satisfy the weak cactus property instead of the strong cactus one.
Nevertheless, the Onsager correction and the state evolution can still be determined by a reduction to the strong-cactus-property setting, as we explain in~\Cref{sec:dynamics-reg}.

\subsection{Related work}
\label{sec:related}

\paragraph{Moment method for AMP.}
Our overall approach to graph polynomials generalizes prior work for the case of Wigner matrices~\cite{jones2025fourier}.
Similar techniques have also appeared in prior works using the moment method to study AMP algorithms~\cite{bayati2015universality, wang2022universality, montanari2022equivalence, dudeja2023universality, ivkov2023semidefinite, dudeja2024spectral}.
The $w$ and $z$ polynomials are rather fundamental objects which,
along with their vector, matrix, and tensor generalizations,
have variously been called
``graph monomials'' or ``traffics'' in free probability,
``graph matrices'' in computer science,
``graph homomorphism polynomials'' in combinatorics,
and are also related to ``tensor networks'' and ``Feynman diagrams'' in physics.

\paragraph{Polynomial vs.\ non-polynomial GFOM.}
    In random and semi-random models, general
    first-order methods with a constant number of
    iterations using (1) only polynomial
    nonlinearities or (2) arbitrary Lipschitz
    nonlinearities are generally 
    expected to have the same
    computational power. Using polynomial approximation arguments, this has been
    made precise in several previous works~\cite{montanari2022equivalence,ivkov2023semidefinite,wang2022universality}. For example,
    \cite[Lemma 2.12]{wang2022universality} gives
    an abstract reduction
    showing
    that if state evolution for AMP on rotationally-invariant matrices holds for polynomial
    nonlinearities, then it also
    holds for arbitrary Lipschitz nonlinearities.
    While we study more general matrix models, we expect the assumption of polynomial nonlinearities is not essential.

\paragraph{AMP vs.\ GFOM.}
A simple reduction shows that every algorithm in the GFOM class can be expressed as a certain post-processing of an AMP algorithm (allowing ``memory terms'')~\cite{celentano2020estimation}.
Therefore, these two classes of algorithms are equivalent from the standpoint of computational power.
In our analysis, this is mirrored by the fact that, in~\Cref{prop:convergence-distribution}, all possible non-Gaussian limits after conditioning on the draw of $(Z^\infty_{\sigma})_{\sigma\in \calC_1}$ are  deterministic functions of the possible Gaussian limits.

\paragraph{GFOM on independent entry matrices.}
The analysis of GFOM and AMP on Wigner matrices or inhomogeneous versions thereof was the first case widely considered in the literature, and goes back to the origins of the mathematical analysis of AMP in the statistical physics literature on spin glasses~\cite{bolthausen2014iterative,donoho2009message,bayati2011dynamics,montanari2012graphical,barbierSpatial,rush2018finite,LW-2022-NonAsymptoticAMPSpiked}. See~\cite{feng2022unifying} for a survey of many of these works.
Further, see~\cite{bayati2015universality,chen2021universality} for universality results over such models allowing for different entry distributions (but still requiring entrywise independence),~\cite{donoho2013information,javanmard2013state} for results on block-structured variance profiles along the lines of our block GOE model, and~\cite{gueddari2025approximate,bao2025leave} for recent progress on more general variance profiles.

\paragraph{GFOM on orthogonally invariant matrices.} The correct form of AMP (to ensure Gaussian limiting distributions)
in orthogonally invariant models was first predicted non-rigorously for physics applications by~\cite{opper2016theory} using dynamical mean-field theory (DMFT), and then proved by~\cite{fan2022approximate}. Precursors for special ``divergence-free'' forms of AMP were also obtained by~\cite{CO-2019-TAPEquationAMPInvariant,ma2017orthogonal,rangan2019vector,takeuchi2019rigorous} under the names of \emph{Vector AMP} and \emph{Orthogonal AMP}.
Related calculations for a more general statistical physics framework subsuming these AMP variants are carried out in~\cite{MFCKMZ-2019-PlefkaExpansionOrthogonalIsing}; in particular, this work includes special cases of and discusses the more general form of the calculations we detail in~\Cref{app:weingarten}.
See the discussion in~\cite{fan2022approximate} for a more thorough overview of these distinctions.

\paragraph{Universality principles for GFOM.}
Beyond the above results, the main ones we are aware of that reduce the amount of randomness required for AMP are the recent works ~\cite{wang2022universality,dudeja2023universality}, which, modulo technical differences, both prove universality results over random matrices whose distribution is invariant under signed permutations.
In other words, they treat broad classes of matrices provided that these are conjugated by random signed permutations, a considerable reduction in randomness from, e.g., conjugating by random Haar-distributed orthogonal or unitary matrices as in OAMP.
Numerous experimental works have found universality phenomena for ``sufficiently pseudorandom'' deterministic matrices, but we are not aware of any rigorous results for completely deterministic matrices prior to our work.
See discussion in \cite{CO-2019-TAPEquationAMPInvariant,schniter2020simple,abbara2020universality,dudeja2023universality}.

\subsection{Organization of the paper}

We give preliminaries on the matrices considered
in this work and modes of convergence for
our limiting theorem in~\Cref{sec:prelims}.
We introduce our definitions of diagrams and
consequences of M{\"o}bius inversions for the
traffic distribution in~\Cref{sec:diagrams}. In~\Cref{sec:trafficRandom}, to build intuition on
traffic distributions,
we describe them for several random matrix 
ensembles.
\Cref{sec:universality} is dedicated to the proof
of our first main result, the polynomial
universality of delocalized deterministic matrices (\Cref{thm:intro-hadamard}).
\Cref{sec:dynamics} details and proves
the effective dynamics of GFOM under the 
strong cactus property (\Cref{prop:convergence-distribution,thm:intro-onsager}).

We illustrate
two viable approaches to computing the 
traffic distribution
of orthogonally invariant matrix models:~\Cref{sec:feynman-physics} is based on Feynman
diagrams and~\Cref{app:weingarten} relies on
Weingarten calculus. \Cref{sec:convergence-process} provides background on convergence
of stochastic processes, and~\Cref{sec:omitted} contains omitted proofs.

\subsection{Acknowledgments}

Thanks to Zhou Fan, Cynthia Rush, and Subhabrata Sen for helpful discussions over the course of this project.
CJ was supported in part by the European Research Council (ERC) under the European Union's Horizon 2020 research and innovation programme (grant agreement No.
101019547).
LP's work was supported by the Swiss National Science Foundation (SNSF), grant no. 10004947.

\newpage
\section{Preliminaries}
\label{sec:prelims}

\subsection{Matrix notation}

Given matrices
$\bA, \bB \in \R^{n\times n}$, we will use:
\begin{itemize}
    \item $\bA \in \R^{n\times n}_\sym$ to
    specify that $\bA$ is symmetric.
    \item $\bA \in O(n) \subseteq \R^{n \times n}$ to specify that $\bA$ is orthogonal.
    \item $\bA[i,j]$ to denote its $(i,j)$-th
    entry for $i,j\in [n] \defeq \{1,\ldots,n\}$.
    \item $\|\bA\| \defeq \max_{\|\bm x\|_2=1} \|\bA \bm x\|_2$ to denote its spectral or operator
    norm.
    \item $\|\bA\|^2_\frob \defeq \sum_{i,j=1}^n \bA[i,j]^2$ to denote its
    Frobenius norm.
    \item $\Tr(\bm A) \defeq \sum_{i=1}^n \bm A[i,i]$ to denote its trace.
    \item $\lambda_1(\bA)\ge \ldots \ge \lambda_n(\bA)$ to
    denote its eigenvalues when $\bm A$ is symmetric.
    \item $\bA \odot \bB$ to denote the entrywise or Hadamard product with entries $(\bm A[i,j]\bm B[i,j])_{i,j\in [n]}$.
\end{itemize}

\begin{definition}[Puncturing]\label{def:puncturing}
    Let $\bH\in \R_\sym^{n\times n}$ and $\bm \Pi \defeq \bm I - \frac 1n \bm 1 \bm 1^\top$ be
    the projection orthogonal to the all-ones
    direction. The {puncturing} of $\bH$ is
    the matrix $\bA = \bm \Pi \bmH \bm \Pi$.
\end{definition}

\begin{definition}[\goe]
\label{def:goe}
    The (normalized) {Gaussian Orthogonal Ensemble} \goe
    is the distribution of random matrices $\bA\in\R_\sym^{n\times n}$
    with $\bA[i,j]=\bA[j,i]\sim \calN(0,1/n)$ independently for all $1\le i<j\le n$, and
    $\bA[i, i]\sim \calN(0,2/n)$ independently
    for all $i\in [n]$.
\end{definition}

\begin{definition}[Hadamard matrices]
    \label{def:hadamard}
    When $n$ is a power of $2$, the (normalized) Walsh--Hadamard matrix $\bH_{\textnormal{had}}^{(n)}\in\R_\sym^{n\times n}$ is defined 
    recursively by
    \[
        \bH_{\textnormal{had}}^{(1)}=\begin{bmatrix}1\end{bmatrix}\,,\qquad 
        \bH_{\textnormal{had}}^{(2n)} \defeq \frac{1}{\sqrt{2}}
        \begin{bmatrix}
        \bH_{\textnormal{had}}^{(n)} & \bH_{\textnormal{had}}^{(n)}\\
        \bH_{\textnormal{had}}^{(n)} & -\bH_{\textnormal{had}}^{(n)}
        \end{bmatrix}.
    \]
\end{definition}
\noindent
$\bH_{\textnormal{had}}^{(n)}$ is a 
symmetric orthogonal matrix with entries in $\pm 1/\sqrt n$.

\begin{definition}[DST and DCT matrices]
    \label{def:dst}
    The discrete sine transform matrices
    $\bH_{\sin}^{(n)}\in\R^{n\times n}_\sym$ are
    \[
        \bH_{\sin}^{(n)}[i,j] \defeq \sqrt{\frac 2 {n+1}} \sin\left(\frac {\pi ij}{n+1}\right)\quad \forall i,j\in [n]\,.
    \]
    The discrete cosine transform matrices
    $\bH_{\cos}^{(n)}\in\R^{n\times n}_\sym$ are 
    \[
        \bH_{\cos}^{(n)}[i,j]\defeq \sqrt{\frac 2n}\cos\left(\frac{\pi(i-\tfrac12)(j-\tfrac12)}{n}\right)\quad \forall i,j\in [n]\,.
    \]
\end{definition}
\noindent
$\bH_{\cos}^{(n)}$ and $\bH_{\sin}^{(n)}$ are
symmetric orthogonal matrices with entries
at most $O(1/\sqrt n)$ in magnitude.

\begin{definition}[\rom and \rrom]
\label{def:rom}
    The {Random Orthogonal Model} \rom is the
    distribution of random matrices $\bH = \bm Q \bm D \bm Q^\top$, where $\bm Q\in O(n)$ is
    Haar-distributed, and $\bm D$ is a diagonal
    matrix with i.i.d. $\textnormal{Unif}(\{-1,1\})$ entries, independent from $\bm Q$.
    The {Regular Random Orthogonal Model} \rrom is the distribution of the puncturing of
    $\bH$, when $\bH$ is sampled from the \rom.
\end{definition}
\noindent
Random matrices from the \rom are symmetric orthogonal matrices, satisfying $\bH^2 = \bm I$.
They are a special case of the
orthogonally invariant models we discuss in~\Cref{sec:rot-inv}.

\subsection{Modes of convergence}
\label{sec:prelim-modes}

We will use a few standard modes of convergence from scalar-valued probability theory.
\begin{definition}[Modes of convergence: scalars]
    For a sequence of random variables $x^{(n)}\in \R$, we say that:
    \begin{itemize}
        \item $x^{(n)}$ \emph{converge in expectation} if, for some $c \in \R$, $\lim_{n \to \infty} \EE x^{(n)} = c$.
        \item $x^{(n)}$ \emph{converge in probability} if, for some $c \in \R$, for all $\varepsilon > 0$, $\lim_{n \to \infty} \PP[|x^{(n)} - c| > \varepsilon] = 0$.
        \item $x^{(n)}$ \emph{converge in $L^2$} if they converge in expectation and $\lim_{n \to \infty} \EE(x^{(n)} - c)^2 = 0$, or equivalently if they converge in expectation and $\lim_{n \to \infty} \Var x^{(n)} = 0$.
    \end{itemize}
    We write a symbol $\cM \in \{\EE, \PP, L^2\}$ to indicate these modes of convergence, and in this notation say that the $x^{(n)}$ \emph{converge in $\cM$}.
\end{definition}

Moreover, we say a sequence of random vectors
$\bx^{(n)}\in\R^d$ in fixed dimension
$d\ge 1$ {\em converges in distribution} to a random vector
$\bx\in\R^d$ if for every bounded continuous
function $\varphi \colon \R^d\to \R$,
\[
    \E \varphi(\bx^{(n)})\underset{n\to\infty}\longrightarrow \E \varphi(\bx)\,,
\]
in which case we write $\bx^{(n)}\tod \bx$.
See~\Cref{sec:convergence-process} for a
generalization to random variables
indexed by a countably infinite index
set.

\begin{definition}[Modes of convergence: tracial moments]
    For a mode of convergence $\cM$, we say that a sequence of random matrices $\bA \in \R^{n \times n}$ \emph{converges in tracial moments in $\cM$} if, for every $k \geq 1$, $\frac{1}{n}\Tr \bA^k$ converges in $\cM$.
    We say that it converges in tracial moments in $\cM$ to a probability measure $\mu$ over $\R$ if \[\frac{1}{n}\Tr \bA^k \to \int x^k\, \d\mu(x)\]in the mode of convergence $\cM$.
\end{definition}

\subsection{Matchings and Wick calculus}

Given a set $S$, let $\calM(S)$ denote the set of matchings on $S$.
Let $\PM(S)$ denote the subset of perfect matchings.
The elements of $M \in \calM(S)$ are written as pairs $\{i,j\} \subseteq S$.
For several sets $S_1, \dots, S_k$, denote by $\calM(S_1, \dots, S_k)$ the set of matchings on the disjoint union $S_1 \sqcup \cdots \sqcup S_k$ that do not match any two elements of the same $S_i$.
For two sets $S_1, S_2$ of the same size, denote by $\PM(S_1, S_2)$ the bipartite perfect matchings of $S_1 \sqcup S_2$ that only match elements of $S_1$ to ones of $S_2$.
We will abbreviate $\calM(\{1,2,\dots, q\})$ as $\calM(q)$.

\begin{lemma}[Wick lemma]\label{lem:wick}
    Let $X_1, \dots, X_q$ be jointly Gaussian random variables with mean zero. Then:
    \begin{equation*}
        \E[X_1\cdots X_{q}] = \sum_{M \in \PM(q)} \prod_{ij \in M} \E[X_iX_j]\,.
    \end{equation*}
\end{lemma}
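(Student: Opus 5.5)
The statement to prove is the Wick lemma, \Cref{lem:wick}. I would prove it by comparing generating functions. Let $\bm\Sigma$ with entries $\Sigma_{ij} = \E[X_iX_j]$ be the covariance matrix of $(X_1,\dots,X_q)$. Because the vector is centered Gaussian, its moment generating function is
\[ M(\bm t) = \E \exp\Big(\sum_i t_i X_i\Big) = \exp\Big(\tfrac12 \bm t^\top \bm\Sigma \bm t\Big) \]
for all $\bm t\in\R^q$. This holds even when $\bm\Sigma$ is singular, since $\sum_i t_iX_i \sim \calN(0,\bm t^\top\bm\Sigma\bm t)$ by definition of jointly Gaussian. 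Both sides are entire in $\bm t$, and all moments of $X$ are finite.

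Next I would extract the mixed moment $\E[X_1\cdots X_q]$ as $\partial_{t_1}\cdots\partial_{t_q} M(\bm t)|_{\bm t=0}$. The interchange of differentiation and expectation is justified by dominated convergence, using $\E e^{c\sum|X_i|}<\infty$ on a neighborhood of $0$. On the right-hand side, I would expand
\[ \exp\Big(\tfrac12 \bm t^\top\bm\Sigma\bm t\Big)=\sum_{m\ge0}\frac{1}{m!\,2^m}\Big(\sum_{i,j}\Sigma_{ij}t_it_j\Big)^m . \]
The coefficient of the multilinear monomial $t_1t_2\cdots t_q$ is nonzero only when $q=2m$, and only the $m$-th term contributes to it.

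The remaining step, and the only one requiring care, is the counting. Expanding $(\sum_{i,j}\Sigma_{ij}t_it_j)^m$ as a sum over ordered sequences of ordered pairs $((i_1,j_1),\dots,(i_m,j_m))$, the terms that produce $t_1\cdots t_q$ are exactly those in which the $2m$ indices form a permutation of $[q]$. Each perfect matching $M\in\PM(q)$ arises from exactly $m!\,2^m$ such sequences: $m!$ orderings of the pairs and $2$ orientations of each pair. Each of these sequences contributes $\prod_{\{i,j\}\in M}\Sigma_{ij}$, so the factor $m!\,2^m$ cancels the prefactor. Differentiating the monomial $t_1\cdots t_q$ gives coefficient $1$, so
\[ \E[X_1\cdots X_q]=\sum_{M\in\PM(q)}\prod_{\{i,j\}\in M}\E[X_iX_j]. \]
When $q$ is odd, $\PM(q)$ is empty and both sides vanish, which is consistent.

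An alternative route, which handles repeated variables the same way, is induction via Gaussian integration by parts. One uses $\E[X_1 F(X)]=\sum_j \Sigma_{1j}\E[\partial_j F(X)]$ with $F=X_2\cdots X_q$, which matches $1$ with each $j$ in turn and recurses on the remaining $q-2$ variables. I expect no real obstacle in either approach beyond the bookkeeping of the $m!\,2^m$ count.
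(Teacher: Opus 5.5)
Your proof is correct. The paper does not prove \Cref{lem:wick}: it quotes the lemma as classical background, next to the Wick-product identities taken from Janson, so there is no in-paper argument to match.

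Your moment-generating-function argument is complete and self-contained:
\begin{itemize}
\item The identity $\E e^{\sum_i t_iX_i}=\exp(\tfrac12\bm t^\top\bm\Sigma\bm t)$ holds even for singular $\bm\Sigma$.
\item The mixed partial at $\bm t=0$ picks out exactly the coefficient of the multilinear monomial $t_1\cdots t_q$.
\item Your count of $m!\,2^m$ ordered sequences per perfect matching is right because the $2m$ indices are distinct. So each pair has exactly two orientations, and no diagonal term $\Sigma_{ii}t_i^2$ can contribute.
\end{itemize}

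Within the paper's own framework, the lemma would instead follow by taking expectations in \Cref{lem:wick-recursion}. Each Wick product $\He_{U(M)}$ with $U(M)\neq 0$ has mean zero (this is the orthogonality property with $\beta=0$), so only the perfect matchings $M$ survive. That route is shorter once the Wick-algebra machinery is available. However, it is non-circular only if those Wick-product identities are taken as established independently, and their proofs ultimately rest on a Gaussian moment computation like yours. Your route needs nothing beyond the Gaussian MGF. The integration-by-parts alternative you sketch is also valid, including for degenerate covariance.
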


The Wick products are the multivariate generalization of the Hermite polynomials to correlated Gaussians~\cite[Chapter 3]{Janson:GaussianHilbertSpaces}.

\begin{definition}[Wick product]
\label{def:wick}
    Let $I$ be an index set, $\bX = (X_i)_{i \in I}$ be formal variables, and $\matSig \in \R_{\sym}^{I \times I}$. The {\em Wick products} are defined by, for each finitely supported $\alpha \in \N^I$,
    \[
       \He_{\al}(\bX\,;\, \matSig) \defeq \sum_{M \in \calM(\al)} (-1)^{|M|} \prod_{uv \in M}\matSig[u,v] \prod_{u\notin M} X_u\,,
    \]
    where $\calM(\al)$ denotes the set of matchings on a collection consisting of $\al_i$ copies of each $i \in I$.
\end{definition}

\noindent When $|I| = 1$, $X\sim \calN(0,1)$, and $\Sig = 1$, then $\He_{(p)}(X\,;\, \Sig)$ equals the $p$th
Hermite polynomial.

When the $X_i$ are mean-zero Gaussian random variables and $\matSig$
is their covariance matrix, the Wick products satisfy the
(partial) orthogonality property that for each
finitely supported $\alpha, \beta\in\N^I$ with $\sum_i \alpha_i \neq \sum_i \beta_i$,
\[
    \E\left[\He_{\al}(\bX\,;\, \matSig) \He_{\beta}(\bX\,;\, \matSig)\right] = 0\,.
\]
In general, we have
\[ \E\left[\He_{\al}(\bX\,;\, \matSig) \He_{\beta}(\bX\,;\, \matSig)\right] = \sum_{M \in \PM(\alpha, \beta)} \prod_{uv \in M} \bm\Sigma[u, v]\,. \]
Since by the Wick lemma $\E\left[\prod_{i\in \alpha}X_i \cdot \prod_{j\in \beta} X_j\right]$ equals the same sum over all matchings of $\alpha \sqcup \beta$, the Wick products achieve a general ``partial orthogonalization'' that removes all terms from this covariance where any pairs within $\alpha$ or within $\beta$ are matched.

For each choice of $\matSig \in \R^{I \times I}_\sym$,
the Wick products are a basis for polynomials in the $X_i$.
Multiplication of polynomials gives an algebra structure to this space
which we call the {\em Wick algebra} of $\bX$.
Below is a combinatorial formula for multiplication in the Wick algebra.

\begin{proposition}[{\cite[Theorem 3.15]{Janson:GaussianHilbertSpaces}}]
\label{prop:wick}
    Let $I$ be an index set, $\bX = (X_i)_{i \in I}$ be formal variables, and $\matSig \in \R_\sym^{I \times I}$.
    Let $\al^1,\dots, \al^k \in \N^I$. Then:
    \begin{align*}
        \prod_{j=1}^k \He_{\al^j}(\bX; \matSig) = \sum_{M \in \calM( \al^1, \dots,  \al^k)} \prod_{uv \in M} \matSig[u, v] \He_{U(M)}(\bX\,;\, \matSig)\,,
    \end{align*}
     where $\al^j$ is a multiset of size $|\alpha^j|$ with $\al^j_{i}$ copies of each $i \in I$.
     Here $U(M) \in \N^I$ for $M$ a matching of $\al^1 \sqcup \cdots \sqcup \al^k$ counts the number of unmatched elements of each type.
\end{proposition}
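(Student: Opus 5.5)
The plan is to prove the identity as a purely formal polynomial identity, valid for any symmetric $\matSig$ with no positive-semidefiniteness needed, by means of generating functions. First I will reduce to the \emph{multilinear} case, where every index appears at most once overall. Replace the multisets $\al^1,\dots,\al^k$ by a disjoint union $S = S_1 \sqcup \cdots \sqcup S_k$ of distinct labels, with $|S_j| = |\al^j|$ and a type map $\tau\colon S \to I$. Introduce fresh formal variables $(Y_u)_{u \in S}$ with covariance $\widetilde{\matSig}[u,v] \defeq \matSig[\tau(u),\tau(v)]$. Because \Cref{def:wick} is already phrased as a sum over matchings of the collection of copies, $\He_{\al^j}(\bX;\matSig)$ is exactly the image of $\He_{S_j}(\bm Y;\widetilde{\matSig})$ under the substitution $Y_u \mapsto X_{\tau(u)}$. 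Matchings in $\calM(\al^1,\dots,\al^k)$ are likewise by definition matchings of the labeled copies. Therefore it suffices to prove the statement for distinct labels and then apply the substitution homomorphism. This sidesteps all multiplicity and factorial bookkeeping, which I expect to be the only real source of error.

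In the multilinear setting, introduce commuting formal parameters $(t_u)_{u \in S}$ and, for each $j$, the Wick exponential
\[ E_j \defeq \exp\Big(\sum_{u \in S_j} t_u Y_u - \tfrac12 \sum_{u,v \in S_j} t_u t_v \widetilde{\matSig}[u,v]\Big) \]
as a formal power series. Expanding $\exp(\sum t_u Y_u)\exp(-\frac12 \sum t_u t_v \widetilde\matSig[u,v])$ and extracting the coefficient of $\prod_{u \in T} t_u$ for $T \subseteq S_j$ yields $\sum_{M \in \calM(T)} (-1)^{|M|}\prod_{uv\in M}\widetilde\matSig[u,v]\prod_{u \notin M} Y_u = \He_T(\bm Y;\widetilde\matSig)$. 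The factor $\frac12$ cancels against the two orderings of each pair, and the $1/m!$ in the exponential cancels against orderings of the $m$ chosen pairs. This holds for any $T\subseteq S$ by the same computation, so the coefficient of $\prod_{u\in S_j} t_u$ in $E_j$ is $\He_{S_j}$.

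Next, multiply the generating functions:
\[ \prod_{j=1}^k E_j = \exp\Big(\sum_{u\in S} t_uY_u - \tfrac12\sum_{u,v\in S} t_ut_v\widetilde\matSig[u,v]\Big)\cdot \exp\Big(\sum_{j<l}\sum_{u\in S_j, v \in S_l} t_ut_v\widetilde\matSig[u,v]\Big). \]
Here the cross-block terms of the full quadratic form were added and subtracted. Take the coefficient of $\prod_{u\in S}t_u$ on both sides. On the left it is $\prod_j \He_{S_j}$, since the blocks use disjoint parameters. On the right, multilinearity forces each $t_u$ to be used once. The second exponential then contributes a set of disjoint cross-block pairs $M$, each with weight $\widetilde\matSig[u,v]$ and sign $+1$; the factorials again cancel as before, and these $M$ are exactly the elements of $\calM(S_1,\dots,S_k)$. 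The first exponential contributes the coefficient of $\prod_{u\in U(M)}t_u$, which is $\He_{U(M)}(\bm Y;\widetilde\matSig)$ by the previous paragraph. Summing over $M$ gives the claimed formula in the multilinear case, and substituting $Y_u\mapsto X_{\tau(u)}$ finishes the proof.

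The main point requiring care is the coefficient extraction in the two exponentials: it must produce each matching exactly once with the correct sign. Multilinearization makes this a direct check, since each pair $\{u,v\}$ arises from the quadratic form in exactly two ordered ways within a block (cancelling the $\frac12$) and in one way across blocks (from the sum over $j<l$).
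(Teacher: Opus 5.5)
Your proof is correct. The multilinearization is justified because \Cref{def:wick} already sums over matchings of labeled copies. The split of the quadratic form into $-\tfrac12\sum_{S\times S}$ plus the cross-block sum uses only the symmetry of $\widetilde\Sigma$. Extracting the coefficient of $\prod_{u\in S}t_u$ from the product of the two exponentials then correctly separates the cross-block matched set from its complement. The diagonal terms $t_u^2\widetilde\Sigma[u,u]$ never reach a multilinear coefficient; that is worth saying explicitly.

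The paper gives no argument of its own and only cites Janson's Theorem 3.15. That result is set in a Gaussian Hilbert space, where Wick products are orthogonal projections onto Wiener chaoses and covariances are automatically positive semidefinite.

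Your generating-function route gives a self-contained, purely algebraic proof of the statement exactly as the paper formulates it: formal variables and an arbitrary symmetric $\Sigma$, and indeed coefficients in any commutative $\mathbb{Q}$-algebra. This is the generality the paper actually uses later. For example, in the proof of \Cref{lem:treelike-hermite}, \Cref{lem:wick-recursion} is applied coordinatewise to the vectors $\bz_{\calG_1}$. The finite-$n$ ``covariance'' there is $\bm\Sigma[\gamma,\gamma'] = \sum_{P} \bz_{\gamma_P}$, which has no reason to be positive semidefinite.

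Deriving this from the citation needs a small extra step that your route avoids. Both sides are integer-coefficient polynomials in the $X_i$ and the finitely many relevant entries of $\Sigma$. For positive definite $\Sigma$ the identity holds almost surely for a nondegenerate Gaussian vector, hence identically as polynomials in $\bX$. Positive definite matrices are Zariski dense among symmetric ones, so the identity holds for all symmetric $\Sigma$.

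The Hilbert-space route does supply something yours does not: the probabilistic meaning of $\He$ and the partial orthogonality relations quoted just before the proposition. Your formal argument does not need these for the identity itself.
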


\noindent In the special case where each
group $\alpha^j$ consists of a
single element, we obtain:

\begin{corollary}\label{lem:wick-recursion}
    For every $i_1, \ldots, i_k\in I$,
    \[
        \prod_{j=1}^k X_{i_j} = \sum_{M\in \calM(k)} \prod_{uv\in M} \bm \Sigma[i_u,i_v] \He_{U(M)}(\bm X\,;\,\bm \Sigma)\,.
    \]
\end{corollary}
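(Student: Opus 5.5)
The corollary is the special case of \Cref{prop:wick} in which every group $\alpha^j$ is a single element. To apply it, I first identify each variable as a Wick product of degree one: for $i \in I$, let $e_i \in \N^I$ be the indicator of $i$. The only matching on a single element is the empty one, so \Cref{def:wick} gives $\He_{e_i}(\bX;\matSig) = X_i$. Hence the left-hand side factors as
\[
\prod_{j=1}^k X_{i_j} = \prod_{j=1}^k \He_{e_{i_j}}(\bX;\matSig)\,.
\]
I then invoke \Cref{prop:wick} with $\alpha^j = e_{i_j}$. Each multiset $\alpha^j$ is the single element $i_j$, so the disjoint union $\alpha^1 \sqcup \cdots \sqcup \alpha^k$ is identified with $\{1,\dots,k\}$, where position $j$ carries the label $i_j$.

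Under this identification, the constraint in $\calM(\alpha^1,\dots,\alpha^k)$ says no two elements of the same $\alpha^j$ are matched. This is vacuous, since each $\alpha^j$ has only one element. So this set of matchings is exactly $\calM(k)$. A matched pair $uv$ contributes $\matSig[i_u,i_v]$, and $U(M)$ counts the unmatched positions by their labels, namely $U(M) = \sum_{u \notin M} e_{i_u}$. Substituting these into \Cref{prop:wick} gives the stated formula.

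The only step needing any care is this bookkeeping between positions and labels. When several $i_j$ coincide, distinct positions with equal labels must be treated as distinct elements that may be matched to each other. The multiset convention in \Cref{prop:wick} ensures this, since $\alpha^j$ lists copies of indices. So there is no real obstacle here.
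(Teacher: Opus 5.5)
Your proposal is correct and matches the paper, which obtains this corollary exactly as the special case of \Cref{prop:wick} where each $\alpha^j$ is a single element. Your bookkeeping checks are right: $\He_{e_i}(\bX;\matSig)=X_i$, the same-group constraint is vacuous so the matching set is $\calM(k)$, and equal labels at distinct positions remain distinct elements.
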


\section{Diagrams and the \texorpdfstring{$w$- and $z$-Bases}{w- and z-Bases} of Polynomials}
\label{sec:diagrams}

All graphs considered in this paper are {\em multigraphs} (loops and multiedges are allowed) and will be
denoted by Greek letters ($\alpha, \beta, \gamma,\ldots$). 
We use the terms {\em graphs} and {\em diagrams}
interchangeably in this paper.
Given a diagram $\alpha$, we use $V(\alpha)$
to denote its vertex set and $E(\alpha)$ to denote
its edge set. We 
denote by $\alpha[S]$ the subgraph of $\alpha$
induced by $S\subseteq V(\alpha)$.
We count self-loops as contributing 2 to the degree of a vertex.

\subsection{Classes of diagrams}

Each diagram can have either $0$, $1$, or an ordered pair of $2$ special vertices called its \emph{root(s)}. With the exception of the class of graphs defined in~\Cref{def:open-cactus}, the roots of a graph can be arbitrary vertices (in particular, they might be equal if there are two of them).

\begin{notation}
    Let $\calA=\calA_0$ (resp.\ $\calA_1$ or $\calA_2$) be the set of all connected graphs with no 
    root (resp.\ $1$ root or $2$ roots). We also
    refer to such graphs as scalar (resp. vector or matrix) diagrams.
\end{notation}

Given $\alpha\in\calA$, an edge
$e\in E(\alpha)$ is a {\em bridge} of $\alpha$ if deleting $e$
would disconnect the graph. $\alpha\in\calA$ is {\em 2-edge-connected} if it contains no bridge. In general, $\alpha\in\calA$ can be decomposed into 
a tree of 2-edge-connected components connected by
bridges.

\begin{notation}
    Let $\calE=\calE_0\subseteq\calA$ (resp.\ $\calE_1\subseteq \calA_1$ or $\calE_2\subseteq \calA_2$) be the set of all 2-edge-connected
    scalar  (resp.\ vector or matrix) diagrams.
\end{notation}

Given $\alpha\in\calA$, a vertex $u\in V(\alpha)$ 
is an {\em articulation point} of $\alpha$ if removing $u$ and its incident edges
disconnects the graph. 
$\alpha$ is 
{\em 2-vertex-connected} if it has no articulation point.
Any $\alpha\in\calA$ decomposes into its 2-vertex-connected components
(blocks), which refine the 2-edge-connected components. The block-cut
graph (whose vertices are the articulation points and the blocks, with edges for
incidence) is a tree.

A connected graph is a {\em cactus} if every edge
lies on exactly one simple cycle.
Thus, cactuses are in a sense the minimal 2-edge-connected graphs.

\begin{notation}
    Let $\calC=\calC_0\subseteq \calA$ (resp. $\calC_1\subseteq \calA_1$) be
    the set of all scalar (resp. vector) cactus diagrams.
\end{notation}

\noindent For a cactus $\sigma$, we will denote
by $\cyc(\sigma)$ the set of (unrooted)
cycles of $\sigma$. 

Finally, as in~\Cref{def:treelike}, we will denote the treelike diagrams by $\calT_1$ and the treelike diagrams such that the root has degree 1 after deleting all hanging cactuses by $\calG_1$.

\subsection{Graph polynomials}
\label{sec:polynomial-definitions}

Each diagram represents different scalar-, vector-, or matrix-valued polynomials 
in a matrix input, depending on whether it is viewed
in the $w$-basis or the $z$-basis. In the following
definitions,
we fix $\bA\in \R_\sym^{n\times n}$, $\alpha$ to be a scalar, vector, or matrix diagram, and
$i,j\in [n]$.

\begin{definition}
    \label{def:w-basis}
    Define $w_\alpha(\bA)
    \in \R$, $\bw_\alpha(\bA)\in \R^n$, and $\bW_\alpha(\bA)\in \R^{n\times n}$ by
    \begin{alignat*}{2}
        w_\alpha(\bA) &= \sum_{\substack{\varphi\colon V(\alpha)\to [n]}} \prod_{\{u,v\}\in E(\alpha)} \bA[\varphi(u),\varphi(v)] \quad&& \text{if $\alpha$ is a scalar diagram,}\\[2mm]
        \bw_\alpha(\bA)[i] &= \sum_{\substack{\varphi\colon V(\alpha)\to [n]\\\varphi(r) = i}} \prod_{\{u,v\}\in E(\alpha)} \bA[\varphi(u),\varphi(v)] \quad&&\text{if $\alpha$ is a vector diagram with root $r$,}\\
        \bW_\alpha(\bA)[i,j] &= \sum_{\substack{\varphi\colon V(\alpha)\to [n]\\\varphi(r_1)=i,\varphi(r_2)=j}} \prod_{\{u,v\}\in E(\alpha)} \bA[\varphi(u),\varphi(v)] \quad&&\text{if $\alpha$ is a matrix diagram with roots $(r_1,r_2)$.}
    \end{alignat*}   
\end{definition}

\begin{definition}
    \label{def:z-basis}
    Define $z_\alpha(\bA)
    \in \R$, $\bz_\alpha(\bA)\in \R^n$, and $\bZ_\alpha(\bA)\in \R^{n\times n}$ by
    \begin{alignat*}{2}
        z_\alpha(\bA) &= \sum_{\substack{\varphi\colon V(\alpha)\hookrightarrow [n]}} \prod_{\{u,v\}\in E(\alpha)} \bA[\varphi(u),\varphi(v)] &&\text{if $\alpha$ is a scalar diagram,}\\[2mm]
        \bz_\alpha(\bA)[i] &= \sum_{\substack{\varphi\colon V(\alpha)\hookrightarrow [n]\\\varphi(r) = i}} \prod_{\{u,v\}\in E(\alpha)} \bA[\varphi(u),\varphi(v)] && \text{if $\alpha$ is a vector diagram with root $r$,}\\
        \bZ_\alpha(\bA)[i,j] &= \sum_{\substack{\varphi\colon V(\alpha)\hookrightarrow [n]\\\varphi(r_1)=i,\varphi(r_2)=j}} \prod_{\{u,v\}\in E(\alpha)} \bA[\varphi(u),\varphi(v)] \quad&& \text{if $\alpha$ is a matrix diagram with roots $(r_1,r_2)$.}
    \end{alignat*}    
\end{definition}

\noindent The only difference between the $w$- and $z$-bases
is the summation domain: \Cref{def:z-basis} sums over injective embeddings $\varphi$, whereas \Cref{def:w-basis} sums over all
embeddings.

Finally, we define two extensions of~\Cref{def:w-basis}
that we will need in the proofs.
The following allows us to use a different matrix on
each edge of the graph:

\begin{definition}
    \label{def:non-uniform-label}
    Let $\alpha$ be a matrix diagram with roots $(r_1,r_2)$ and $\bm \calA = (\bmA_e)_{e\in E(\alpha)}$ be
    such that $\bmA_e\in \R_\sym^{n\times n}$ for
    all $e\in E(\alpha)$. Define $\bW_\alpha(\bm \calA)\in \R^{n\times n}$ by
    \[
        \bW_\alpha(\bm \calA)[i,j] = \sum_{\substack{\varphi\colon V(\alpha)\to [n]\\\varphi(r_1)=i,\varphi(r_2)=j}} \prod_{e=\{u,v\}\in E(\alpha)} \bA_e[\varphi(u),\varphi(v)]\,.
    \]
\end{definition}

The following is an intermediate quantity
between~\Cref{def:w-basis} and~\Cref{def:z-basis} which only restricts the sum
over injective labelings on two vertices:

\begin{definition}
    Let $\bA\in \R_\sym^{n\times n}$, $\alpha$ be a scalar/vector/matrix diagram,
    $i,j\in [n]$, and $s,t\in V(\alpha)$. Define $w_\alpha^{s\neq t}\in \R$, $\bw_\alpha^{s\neq t}\in \R^n$, and $\bW_\alpha^{s\neq t}\in \R^{n\times n}$ by
    \begin{alignat*}{2}
        w_\alpha^{s\neq t}(\bA) &= \sum_{\substack{\varphi\colon V(\alpha)\to [n]\\\varphi(s)\neq \varphi(t)}} \prod_{\{u,v\}\in E(\alpha)} \bA[\varphi(u),\varphi(v)] && \text{if $\alpha$ is a scalar diagram,}\\
        \bw^{s\neq t}_\alpha(\bA)[i] &= \sum_{\substack{\varphi\colon V(\alpha)\to [n]\\\varphi(s)\neq \varphi(t)\\\varphi(r) = i}} \prod_{\{u,v\}\in E(\alpha)} \bA[\varphi(u),\varphi(v)] && \text{if $\alpha$ is a vector diagram with root $r$,}\\
        \bW^{s\neq t}_\alpha(\bA)[i,j] &= \sum_{\substack{\varphi\colon V(\alpha)\to [n]\\\varphi(s)\neq \varphi(t)\\\varphi(r_1)=i\\\varphi(r_2)=j}} \prod_{\{u,v\}\in E(\alpha)} \bA[\varphi(u),\varphi(v)] \quad && \text{if $\alpha$ is a matrix diagram with roots $(r_1, r_2)$.}
    \end{alignat*}
\end{definition}

\subsection{Partitions, change of basis, and M{\"o}bius inversion}
\label{sec:mobius}

While $(z_\alpha(\bA))_{\alpha\in \cA}$ and
$(w_\alpha(\bA))_{\alpha\in \cA}$ span the
same space of $S_n$-invariant polynomials in the entries of $\bm A$, some
properties are better expressed in one basis
than the other. 
Here we take a closer look at these bases and derive
change-of-basis formulas.

Given a set $S$, let $\calP(S)$ denote the set of all \emph{partitions} of $S$, sets of non-empty disjoint subsets of $S$ whose union is all of $S$.
We call the parts of a partition \emph{blocks}.
Each block is a set, and $P$ is the set of blocks, so we denote the blocks by $b \in P$.

For a (scalar, vector, or matrix) diagram $\al$ and a partition $P \in \calP(V(\al))$,
we define a new diagram $\al_P$
by identifying the vertices within each block of $P$ into a single vertex.
The vertices of $\al_P$ may thus be identified with the blocks of $P$.
$\al_P$ retains all edges of $\al$, which may become multiedges or self-loops.
The status of being one of the (0, 1, or 2) roots of $\al$ is inherited by the block containing that root. 

To change from the $w$- to the $z$-basis, we then simply sum over all partitions:

\begin{claim}
    \label{claim:wtoz}
    For all (scalar, vector, or matrix) diagrams $\alpha$, 
    \[
        w_{\alpha}(\bA) = \sum_{P \in \calP(V(\alpha))} z_{\alpha_P}(\bA)\,.
    \]
\end{claim}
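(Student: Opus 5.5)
The plan is to partition the summation domain of $w_\alpha(\bA)$ according to the fibers of the labeling, and to identify each piece with a $z$-polynomial of a quotient diagram. Every map $\varphi\colon V(\alpha)\to[n]$ induces a partition $P_\varphi \in \calP(V(\alpha))$ whose blocks are the nonempty fibers $\varphi^{-1}(k)$, $k\in[n]$. Grouping the terms of $w_\alpha(\bA)$ by $P_\varphi$ gives
\[
w_\alpha(\bA) = \sum_{P\in\calP(V(\alpha))} \sum_{\substack{\varphi\colon V(\alpha)\to[n]\\ P_\varphi = P}} \prod_{\{u,v\}\in E(\alpha)} \bA[\varphi(u),\varphi(v)]\,.
\]
The same decomposition applies verbatim to the vector and matrix cases, where the root constraints $\varphi(r)=i$ (resp.\ $\varphi(r_1)=i,\varphi(r_2)=j$) are simply carried along.

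Next, fix $P$ and set up a bijection between maps $\varphi$ with $P_\varphi=P$ and injective maps $\psi\colon V(\alpha_P)\embeds[n]$. Since vertices of $\alpha_P$ are the blocks $b\in P$, we define $\psi(b) = \varphi(u)$ for any $u\in b$. This is well defined because $\varphi$ is constant on blocks, and it is injective because distinct blocks are distinct fibers. Conversely, any injective $\psi$ defines $\varphi(u)=\psi(b(u))$, where $b(u)$ is the block containing $u$, and this $\varphi$ has fibers exactly the blocks of $P$.

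Under this bijection each edge $\{u,v\}\in E(\alpha)$ corresponds to the edge $\{b(u),b(v)\}$ of $\alpha_P$, possibly as a self-loop or multiedge, since $\alpha_P$ retains all edges. Hence the edge products agree: $\bA[\varphi(u),\varphi(v)]=\bA[\psi(b(u)),\psi(b(v))]$. The root constraints also transfer, because the root of $\alpha_P$ is the block $b(r)$ and $\varphi(r)=\psi(b(r))$. The inner sum is therefore exactly $z_{\alpha_P}(\bA)$ (resp.\ $\bz_{\alpha_P}(\bA)[i]$, $\bZ_{\alpha_P}(\bA)[i,j]$), which proves the claim.

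There is no real obstacle here. The only point needing care is the bookkeeping: loops and multiedges must be preserved in $\alpha_P$, including a loop at a vertex where the two roots of a matrix diagram merge, and partitions with more than $n$ blocks must be allowed. Those partitions contribute zero, since no injective $\psi$ exists, so including them is harmless.
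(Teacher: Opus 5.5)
Your proof is correct and is the fiber-partition argument the paper leaves implicit; the paper states the claim without proof beyond remarking that one ``simply sum[s] over all partitions.'' You group labelings $\varphi$ by their partition into fibers, identify each class bijectively with injective labelings of $\alpha_P$, and handle roots, loops and multiedges, and partitions with more than $n$ blocks correctly.
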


Define the relation $\al \psdleq \beta$ on scalar diagrams if there exists a partition $P \in \calP(V(\beta))$
such that $\al = \beta_P$.
It is easy to check that this relation gives a partial ordering, inherited from the standard partial ordering on partitions.
We write $\alpha\prec \beta$ as a shorthand for
$\alpha\preceq \beta$ and $\alpha\neq \beta$.

\begin{lemma}
    \label{claim:mobius}
    There exist $(c_{\alpha, \beta})_{\alpha, \beta \in \calA}$ and $(c'_{\alpha, \beta})_{\alpha,\beta\in\calA}$ not depending on $n$ such that $c_{\alpha, \beta} \in \N$, $c'_{\al,\beta} \in \Z$ and for any $\alpha,\beta\in\calA$, 
    \[
        w_\beta(\bA) = \sum_{\al \preceq \beta} c_{\alpha, \beta} z_\alpha(\bA)\,,\qquad 
        z_{\beta}(\bA) = \sum_{\alpha \preceq \beta} c'_{\alpha,\beta} w_{\alpha}(\bA)\,.
    \]
\end{lemma}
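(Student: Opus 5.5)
The first identity comes directly from \Cref{claim:wtoz}. A labeling $\varphi\colon V(\beta)\to[n]$ determines a partition $P$ of $V(\beta)$, namely the partition into its fibers. Grouping the labelings according to $P$ writes $w_\beta(\bA)$ as the sum of $z_{\beta_P}(\bA)$ over $P\in\calP(V(\beta))$. Each $\beta_P$ is again connected, so it lies in $\calA$, and $\beta_P\preceq\beta$ by definition of the order. Collecting partitions that produce isomorphic diagrams gives
\[
c_{\alpha,\beta}=\#\{P\in\calP(V(\beta)) : \beta_P\cong\alpha\}\in\N .
\]
This count depends only on the combinatorics of $\beta$, not on $n$ or $\bA$. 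It is finite because $V(\beta)$ is finite.

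For the second identity, I will invert the unitriangular system. The only partition giving $\beta_P=\beta$ up to isomorphism is the discrete one, since any nontrivial partition strictly reduces the number of vertices. Hence $c_{\beta,\beta}=1$. The relation $\preceq$ is a partial order in which every down-set $\{\alpha:\alpha\preceq\beta\}$ is finite, and strict descent decreases $|V|$. I will therefore induct on $|V(\beta)|$. The base case $|V(\beta)|=1$ gives $z_\beta=w_\beta$. For the inductive step,
\[
z_\beta(\bA)=w_\beta(\bA)-\sum_{\alpha\prec\beta}c_{\alpha,\beta}\,z_\alpha(\bA).
\]
Each $z_\alpha$ with $\alpha\prec\beta$ has fewer vertices, so by induction it is an integer combination of $w_\gamma$ with $\gamma\preceq\alpha\preceq\beta$. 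Transitivity keeps every term in the required range. The resulting coefficients $c'_{\alpha,\beta}$ are integers independent of $n$, because they are built from the $c$'s by integer arithmetic.

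The only point needing care is well-definedness up to isomorphism and transitivity of $\preceq$: if $\alpha=\beta_P$ and $\gamma=\alpha_Q$, then $\gamma=\beta_{P'}$ where $P'$ is the coarsening of $P$ obtained by merging the blocks of $P$ according to $Q$. Roots are inherited consistently in the vector and matrix cases, so the same argument applies verbatim to $\calA_1$ and $\calA_2$. The explicit Möbius function of the partition lattice is not needed. If one wants it anyway, $c'$ could be written as $\sum_{P:\beta_P\cong\alpha}\mu(\hat0,P)$ with $\mu(\hat 0,P)=\prod_{b\in P}(-1)^{|b|-1}(|b|-1)!$, by Möbius inversion on $\calP(V(\beta))$ applied to $w_{\beta}^{(Q)}=\sum_{P\ge Q}z_{\beta_P}$.
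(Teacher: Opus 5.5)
Your proposal is correct and follows the paper's proof: $c_{\alpha,\beta}$ counts the partitions $P\in\calP(V(\beta))$ with $\beta_P\cong\alpha$, and the second identity comes from inverting this unitriangular system over the poset $\preceq$, which your induction on $|V(\beta)|$ spells out explicitly. Your closing formula $c'_{\alpha,\beta}=\sum_{P:\beta_P\cong\alpha}\mu(\hat 0,P)$, obtained by M{\"o}bius inversion on the partition lattice $\calP(V(\beta))$, is also correct, and it gives the explicit expression that the paper says exists but does not write down.
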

\begin{proof}
    The coefficients in the left equation count symmetries in \cref{claim:wtoz}, i.e.,
    $c_{\al,\beta}$ equals the number of ways to choose a partition $P \in \calP(V(\beta))$
    such that $\beta_P$ is isomorphic to $\alpha$.
    Reciprocally, since $\preceq$ is a partial ordering, this
    transformation can be inverted using M{\"obius} inversion~\cite{Rota-1964-Foundations} on this poset.
    Although an explicit formula for $c'_{\al, \beta}$
    is available in terms of the
    combinatorial structure of the graphs, we will not need it in this paper.
\end{proof}

\subsection{The example of cycles: Moments versus free cumulants}
\label{sec:moments-free-cumulants}

The difference
between the $w$- and $z$-bases is illustrated nicely by the
special case of the diagrams $\sigma_q$ which are cycles of length $q\ge 1$.
In this case, $\frac 1n w_{\sigma_q}(\bA)$ and $\frac 1n z_{\sigma_q}(\bA)$ are versions of the limiting spectral moments and free cumulants, respectively, for finite-dimensional matrices.

Let $\calP(q)$ denote the set of partitions of $\{1,2,\dots, q\}$
and let $\textnormal{NC}(q)$ denote the subset of non-crossing partitions (partitions such that there does not exist $i < j < k < \el$ with $i,k$ in the same block and $j, \el$ in the same block, different from the one $i,k$ are in).
It is convenient to view these as partitions of the vertices of the $q$-cycle
so that the term \emph{non-crossing} may be interpreted visually: in a non-crossing partition, the blocks do not intersect one another when drawn as ``blobs'' inside the cycle.

In the $w$-basis, we have
\begin{align}
    \frac 1n w_{\sigma_q}(\bA) = \frac 1n \sum_{i_1,\ldots,i_q=1}^n \bA[i_1, i_2] \bA[i_2, i_3] \ldots \bA[i_q,i_1] = \frac 1n \Tr(\bA^q) = \frac 1n \sum_{i=1}^n \lambda_i(\bA)^q\,.\label{eq:empiricalDist}
\end{align}
Suppose that the expression in~\cref{eq:empiricalDist} converges as
$n\to\infty$ to the $q$th moment
$m_q\in \R$ of a limiting spectral distribution, $m_q = \int \lambda^q\,\d \mu(\lambda)$.

The free cumulants
are defined from the moments by a formula similar to
the classical cumulants vis-{\`a}-vis the moments of a random variable:

\begin{definition}[Free cumulant]\label{def:free-cumulant}    
    The free cumulants $(\kappa_q)_{q\ge 1}$ corresponding to $(m_q)_{q\ge 1}$ are defined implicitly by:
\begin{equation}\label{eq:free-cumulant}
    m_q = \sum_{\sig \in \textnormal{NC}(q)} \prod_{b \in \sig} \kappa_{|b|}\,.
\end{equation}
\end{definition}
\noindent The $\kappa_q$ can be computed explicitly in terms of the $m_q$ by applying M{\"o}bius inversion to~\cref{eq:free-cumulant}; see~\cref{eq:free-cumulant-explicit}.

Analogous to \cref{eq:empiricalDist} which is in the $w$-basis,
it appears to be folklore\footnote{This is for example explicitly stated in \cite[Theorem 1 and Appendix D.1]{MFCKMZ-2019-PlefkaExpansionOrthogonalIsing}.} that if $\bA$ is drawn
from an orthogonally invariant matrix ensemble
with free
cumulants $(\kappa_q)_{q\ge 1}$, then
\begin{align}
    \frac 1n \E z_{\sigma_q}(\bA) \underset{n\to\infty}{\longrightarrow} \kappa_q\,.\label{eq:freeCumulantRelation}
\end{align}
The quantity $\frac 1n z_{\sigma_q}(\bA)$ has also been called the $q$th \emph{injective trace} of $\bA$.
Below in \cref{lem:diagonal-w-z}, we prove \cref{eq:freeCumulantRelation} using a change of basis from $w$ to $z$.

For example, below are the parameters $m_q$ and $\kappa_q$
for the \goe and the \rom, whose limiting empirical spectral distribution
are the Wigner semicircle distribution and the Rademacher distribution, respectively.

\begin{claim}\label{claim:rom-cumulants}
    Let $\textnormal{Cat}(k) \colonequals \frac{1}{k + 1}\binom{2k}{k}$ be the $k$th Catalan number.
    For the \goe, the limiting spectral moments and free cumulants are:
    \begin{align*}
        m_q = \left\{\begin{array}{ll}
            \textnormal{Cat}(q/2) & \textnormal{if $q$\text{ is even}}\\
            0 & \textnormal{if $q$\text{ is odd}}
        \end{array}
        \right\}, \qquad 
        \kappa_q = \left\{\begin{array}{ll}
            1 & \textnormal{if $q = 2$}\\
            0 & \textnormal{otherwise}
        \end{array}\right\}.
    \end{align*}        
    For the \rom, the limiting spectral moments and free cumulants are:
    \begin{align}
    m_q = \left\{\begin{array}{ll}
        1 & \textnormal{ if $q$ is even}\\
        0 & \textnormal{ if $q$ is odd}
    \end{array}\right\}, \qquad
    \kappa_q = \left\{\begin{array}{ll}
        (-1)^{q/2-1} \textnormal{Cat}(q/2 - 1) & \textnormal{ if $q$ is even}\\
        0 & \textnormal{ if $q$ is odd}
    \end{array}\right\}.\label{eq:freeCumulantROM}
\end{align}
\end{claim}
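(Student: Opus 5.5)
The plan is to treat the two ensembles separately, in each case computing the limiting spectral moments $m_q$ directly and then extracting the free cumulants $\kappa_q$ from the defining relation \cref{eq:free-cumulant}. Free cumulants are uniquely determined by the moments via Möbius inversion on $\NC(q)$. So it suffices to exhibit a candidate sequence $(\kappa_q)$ and check that summing over non-crossing partitions reproduces $(m_q)$.

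\textbf{GOE.} The limiting spectral distribution is the semicircle law on $[-2,2]$. Its moments are $m_q = \textnormal{Cat}(q/2)$ for even $q$ and $0$ for odd $q$. I will take this as the standard Wigner semicircle computation: count closed walks whose underlying labelled graph is a tree traversed twice, which are in bijection with Dyck paths. For the cumulants, I plug $\kappa_2 = 1$ and $\kappa_q = 0$ for $q \neq 2$ into \cref{eq:free-cumulant}. The right-hand side then counts non-crossing partitions of $[q]$ all of whose blocks have size $2$, that is, non-crossing perfect matchings. Their number is $\textnormal{Cat}(q/2)$ for even $q$ and zero for odd $q$, which matches $m_q$.

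\textbf{ROM.} Here $\bH = \bm Q \bm D \bm Q^\top$ satisfies $\bH^2 = \bI$. Hence $\frac1n \Tr \bH^q = 1$ for even $q$. For odd $q$, $\frac1n\Tr\bH^q = \frac1n \Tr \bm D = \frac1n\sum_i \bm D[i,i]$, which tends to $0$ in $L^2$ by the law of large numbers. So the limiting measure is the Rademacher law $\frac12(\delta_{-1}+\delta_1)$, with $m_q = \mathbb{1}[q \text{ even}]$.

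For the cumulants I will use the $R$-transform (equivalently, the functional form of \cref{eq:free-cumulant}). Let $M(z) = 1+\sum_{q\ge1} m_q z^q$ and $\mathcal{R}(z) = \sum_{q\ge1}\kappa_q z^q$. The moment–cumulant relation is equivalent to $M(z) = 1 + \mathcal{R}(zM(z))$. Here $M(z) = 1/(1-z^2)$. Set $w = zM(z) = z/(1-z^2)$; then $\mathcal{R}(w) = z^2/(1-z^2) = zw$. Solving $w z^2 + z - w = 0$ for the branch with $z \to 0$ as $w \to 0$ gives $z = (\sqrt{1+4w^2}-1)/(2w)$, and therefore
\[
\mathcal{R}(w) = \tfrac12\left(\sqrt{1+4w^2}-1\right).
\]
I then expand using $\sqrt{1+4x} = 1 + 2\sum_{k\ge1} (-1)^{k-1}\textnormal{Cat}(k-1)x^k$. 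This gives $\mathcal{R}(w) = \sum_{k\ge1}(-1)^{k-1}\textnormal{Cat}(k-1) w^{2k}$, i.e., $\kappa_{2k} = (-1)^{k-1}\textnormal{Cat}(k-1)$ and odd cumulants vanish, as claimed in \cref{eq:freeCumulantROM}.

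The only mildly delicate step is justifying the functional equation $M(z) = 1+\mathcal{R}(zM(z))$ from \cref{eq:free-cumulant}. This is the standard first-block decomposition of a non-crossing partition: the block containing $1$, of size $s$, splits the remaining elements into $s$ independent non-crossing intervals. I will cite it as standard, with the branch choice fixed by matching $\kappa_2 = m_2 = 1$.
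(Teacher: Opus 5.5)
Your proposal is correct. The paper does not prove this claim at all: it states it as a standard fact and only invokes it later, e.g.\ together with Lemma~\ref{lem:diagonal-w-z}. So your argument is the standard derivation, not an alternative to one in the text.

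I checked the ROM computation. With $M(z)=1/(1-z^2)$, the relation $M(z)=1+\mathcal{R}(zM(z))$ gives $\mathcal{R}(w)=\tfrac12\bigl(\sqrt{1+4w^2}-1\bigr)=w^2-w^4+2w^6-5w^8+\cdots$, which is $\kappa_{2k}=(-1)^{k-1}\textnormal{Cat}(k-1)$. The GOE part, via counting non-crossing pairings, is also fine.

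Two cosmetic points:
\begin{enumerate}
\item The branch is already forced by taking the formal compositional inverse of $z\mapsto z/(1-z^2)$, whose linear coefficient is $1$. The appeal to $\kappa_2=m_2$ is therefore unnecessary.
\item For odd $q$, $\mathbb{E}\,\tfrac1n\operatorname{Tr}\mathbf{H}^q=0$ holds exactly for every $n$. The law of large numbers is needed only if you want convergence in $L^2$ rather than in expectation.
\end{enumerate}
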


\subsection{Solving equations in the traffic distribution}

The traffic distribution is defined as the limiting values of all $w$-basis polynomials, but we show now how it can be derived from various combinations of limits of $w$- and $z$-basis polynomials.
In our other arguments, we will also find it convenient to describe
the traffic distribution of sequences of matrices (random or deterministic)
using the two bases simultaneously.
While~\Cref{claim:mobius} shows
that we could in principle express all these results
in a single basis, this would involve
precisely tracking 
very complicated combinatorial coefficients
(in fact, this was a major technical obstacle in
previous diagrammatic analyses of AMP).

As we have discussed, when a matrix satisfies the strong cactus property,
its traffic distribution is determined by its values on the cactus diagrams (equivalently, by the diagonal distribution),
and when it satisfies the factorizing strong cactus property,
its traffic distribution is determined by the spectral distribution.
We show that one can use either the $w$-basis or $z$-basis for these determinations.

\begin{lemma}
    \label{lem:diagonal-w-z}
    Suppose that $\bA = \bA^{(n)}$ satisfies the
    weak cactus property, i.e., for all $\alpha\in \calE\setminus \calC$,
    \[
        \frac 1n \E_{\bmA} z_\alpha(\bmA)\underset{n\to\infty}{\longrightarrow} 0\,.
    \]
    Then the following are equivalent:
    \begin{enumerate}[(i)]
        \item For all $\sigma \in \calC$ there exists $m_\sigma \in \R$ such that $\frac 1n \E_\bA w_\sigma(\bA) \underset{n\to\infty}{\longrightarrow} m_\sigma$.
        \item For all $\sigma \in \calC$ there exists $k_\sigma \in \R$
        such that $\frac 1n \E_\bA z_\sigma(\bA) \underset{n\to\infty}{\longrightarrow} k_\sigma$.
    \end{enumerate}
    Furthermore, when they exist, $(m_\sigma)_{\sigma \in \calC}$ and $(k_\sigma)_{\sigma \in \cC}$ determine each other.
    The following are also equivalent:
    \begin{enumerate}[(i)]
        \item There exist real numbers $(m_q)_{q \in \N}$ such that for all $\sig \in \calC$, $\frac 1n \E_\bA w_\sig(\bA) \underset{n\to\infty}{\longrightarrow} \prod_{\rho \in \cyc(\sig)} m_{|\rho|}$.
        \item There exist real numbers $(\kappa_q)_{q \in \N}$ such that for all $\sig \in \calC$, $\frac 1n \E_\bA z_\sig(\bA) \underset{n \to \infty}{\longrightarrow} \prod_{\rho \in \cyc(\sig)} \kappa_{|\rho|}$.
    \end{enumerate}
    Furthermore, when they exist, $(m_q)_{q \in \N}$ and $(\kappa_q)_{q \in \N}$ are related by \cref{eq:free-cumulant}.
\end{lemma}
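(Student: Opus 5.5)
The plan is to expand each cactus in the $w$-basis through \Cref{claim:wtoz}, $w_\sigma(\bA)=\sum_{P\in\calP(V(\sigma))} z_{\sigma_P}(\bA)$, and then sort the quotient diagrams $\sigma_P$. The first step is to note that quotienting vertices never creates bridges: every edge of $\sigma$ lies on a cycle, and the image of a cycle is a closed walk, so each $\sigma_P$ lies in $\calE$. The weak cactus property therefore lets me drop every $P$ with $\sigma_P\notin\calC$, giving
\[
\frac1n\E w_\sigma(\bA)=\sum_{P:\ \sigma_P\in\calC}\frac1n\E z_{\sigma_P}(\bA)+o(1).
\]
In the other direction, \Cref{claim:mobius} gives $z_\sigma=\sum_{\alpha\preceq\sigma}c'_{\alpha,\sigma}w_\alpha$. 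For the equivalence of (i) and (ii) I would rather argue by induction on the number of vertices, using the displayed identity. Every $\sigma_P\in\calC$ with $P$ nontrivial has strictly fewer vertices than $\sigma$, so the identity writes $\frac1n\E w_\sigma$ as $\frac1n\E z_\sigma$ plus a fixed finite integer combination of $\frac1n\E z_\tau$ over cactuses $\tau$ with fewer vertices. This relation is triangular, so limits for all $z$'s give limits for all $w$'s and conversely. The same triangular system shows that $(m_\sigma)$ and $(k_\sigma)$ determine each other.

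For the factorized statement, I would identify which partitions $P$ make $\sigma_P$ a cactus. The claim to prove is that $\sigma_P\in\calC$ exactly when $P$ restricts, on each cycle $\rho$ of $\sigma$, to a non-crossing partition of the vertices of $\rho$. The cycles of $\sigma_P$ are then the blocks induced on each $\rho$, with a block $b$ yielding a cycle of length $|b|$, in the free-cumulant sense of the Kreweras-type correspondence. I would also need the condition that identifications never merge vertices in a way that fuses two distinct cycles into a non-cactus structure, so that $\sigma_P$ stays a ``tree of cycles''.

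Granting this characterization, the computation factorizes. If (ii) holds, then
\[
\lim\frac1n\E w_\sigma=\prod_{\rho\in\cyc(\sigma)}\ \sum_{\pi\in\NC(|\rho|)}\prod_{b\in\pi}\kappa_{|b|}=\prod_\rho m_{|\rho|},
\]
with $m_q$ defined by \cref{eq:free-cumulant}, which gives (i). For the converse, the relation \cref{eq:free-cumulant} between $(m_q)$ and $(\kappa_q)$ is triangular and invertible, so I would define $\kappa_q$ from $m_q$. The first part already says the $(k_\sigma)$ are uniquely determined by the $(m_\sigma)$. Since the product form $k_\sigma=\prod\kappa_{|\rho|}$ reproduces $m_\sigma=\prod m_{|\rho|}$, uniqueness forces the $z$-limits to factorize.

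The main obstacle is this combinatorial characterization of the partitions with cactus quotient, including the check that the induced cycle lengths have the right form. Here I expect the accurate statement to be this: $P$ must not merge vertices from different cycles (or across articulation points) except in ways that keep the quotient tree-structured, and within a single $q$-cycle, identifying vertices according to a non-crossing partition produces a cactus whose cycles correspond to the Kreweras complement. I would prove the single-cycle case via the standard fact that a quotient of a cycle is a cactus iff the partition is non-crossing, using a genus/Euler-characteristic count. I would then reduce the general cactus to its cycles using the block-cut tree, checking that any identification across two blocks creates an edge lying on two simple cycles. The matching of $\prod_b\kappa_{|b|}$ with the sum over $\NC(q)$ then uses the Kreweras bijection on $\NC(q)$.
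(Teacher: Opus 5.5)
Your proposal is correct and shares the paper's skeleton. You expand $w_\sigma=\sum_P z_{\sigma_P}$ via \cref{claim:wtoz} and observe that every quotient stays 2-edge-connected; your closed-walk argument replaces the Menger argument of \cref{lem:contract-2ec}. You then discard the non-cactus quotients using the weak cactus property, and handle the factorized statement with per-cycle non-crossing partitions and the Kreweras bijection.

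Two steps are organized differently from the paper. For (i)$\Rightarrow$(ii), you invert the single identity directly by induction on $|V(\sigma)|$, isolating the trivial partition. The paper instead passes through the M\"obius coefficients $c'_{\beta,\sigma}$ and then re-expands the non-cactus $w_\beta$ in the $z$-basis recursively. This is the same unit-triangular inversion, but yours avoids the double expansion.

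For the factorized (i)$\Rightarrow$(ii), you define $(\kappa_q)$ by inverting \cref{eq:free-cumulant} and conclude by uniqueness. Both the true limits $(k_\tau)$ and the products $\prod_{\rho\in\cyc(\tau)}\kappa_{|\rho|}$ solve the system $m_\sigma=\sum_{P:\,\sigma_P\in\mathcal{C}}x_{\sigma_P}$, and this system has a unique solution by induction on the number of vertices. The paper only asserts that its recursive analysis can be carried out separately inside each cycle. Your uniqueness argument is cleaner and fully rigorous, because it reuses the forward computation instead of redoing the inversion cycle by cycle.

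The step you flag as the main obstacle needs a sharper statement than your first formulation.
\begin{itemize}
\item Non-crossing restrictions to every cycle are not sufficient. Take the bowtie with triangles $c a_1 a_2$ and $c b_1 b_2$, and identify $a_1$ with $b_1$. Both restrictions are trivial, yet this produces four edge-disjoint paths from $c$ to the merged vertex, so the quotient is not a cactus.
\item Conversely, an identification across two blocks is not automatically fatal. The partition $\{c,a_1,b_1\},\{a_2\},\{b_2\}$ merges vertices of different blocks and still yields a cactus: two loops and two $2$-cycles at $c$.
\end{itemize}
The correct characterization is that $\sigma_P\in\mathcal{C}$ if and only if $P$ is the join of partitions $P_\rho\in\NC(V(\rho))$, one per cycle.
\begin{itemize}
\item The block-cut tree makes the map $(P_\rho)_\rho\mapsto P$ injective: restricting the join to $V(\rho)$ returns $P_\rho$.
\item The cycles of $\sigma_P$ are the disjoint union of the cycles of the $\rho_{P_\rho}$. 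These correspond to the blocks of $K(P_\rho)$, not of $P_\rho$ as your first formulation says, though you correct this later.
\end{itemize}
This bijection is exactly what makes the sum factor as $\prod_\rho\sum_{\pi\in\NC(|\rho|)}\cdots$. The paper's sentence has the same looseness, but its displayed sum runs over exactly these joins. With this correction your argument goes through.
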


We use the
following observation which will be used repeatedly in \cref{sec:universality}:

\begin{lemma}\label{lem:contract-2ec}
    If $\al \in \calE$ and $\beta \psdleq \al$, then $\beta\in\calE$.
\end{lemma}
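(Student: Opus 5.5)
The plan is to show that contracting vertices cannot create a bridge. Write $\beta = \al_P$ for some partition $P \in \calP(V(\al))$. Since $\al_P$ keeps every edge of $\al$ and only identifies vertices, there is a natural bijection $E(\al) \to E(\beta)$, with each edge $\{u,v\}$ sent to the edge between the blocks containing $u$ and $v$. Connectivity passes to the quotient: any path in $\al$ maps to a walk in $\beta$, and every block of $P$ is nonempty. So $\beta$ is connected, and it remains to check that $\beta$ has no bridge.

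The argument is by contrapositive. Suppose an edge $e$ of $\beta$ were a bridge, and let $e'$ be the corresponding edge of $\al$ under the bijection. If $e'$ is a self-loop, then so is $e$, and a self-loop is never a bridge. Otherwise, the endpoints $u,v$ of $e'$ lie in distinct blocks $b_u, b_v$. Since $\al \in \calE$, the graph $\al - e'$ is still connected. Pushing $\al - e'$ through the quotient map yields exactly $\beta - e$, because the edge bijection removes precisely $e$. The quotient of a connected graph is connected, so $\beta - e$ is connected. This contradicts $e$ being a bridge.

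The same reasoning covers vector and matrix diagrams, because roots only label vertices and do not affect edge-connectivity. There is no real obstacle here. The one point that needs care is the bookkeeping: the contraction $\al \mapsto \al_P$ must preserve edges bijectively, including multiedges and loops created by the identification, so that deleting an edge commutes with taking the quotient. This holds by the definition of $\al_P$ in Section~\ref{sec:mobius}.
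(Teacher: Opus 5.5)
Your proof is correct, but it argues differently from the paper.

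\textbf{The paper's route.} It is a one-line appeal to Menger's theorem. 2-edge-connectivity of $\alpha$ gives two edge-disjoint paths between any pair of distinct vertices. Their images under the contraction are two edge-disjoint walks between the corresponding blocks of $\beta=\alpha_P$, so no edge of $\beta$ can be a bridge.

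\textbf{Your route.} You work straight from the definition of a bridge. Since $\alpha_P$ keeps the edges of $\alpha$ in bijection, edge deletion commutes with contraction: $(\alpha-e')_P=\beta-e$. Combined with the fact that a quotient of a connected graph is connected, this gives the result.

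\textbf{Comparison.} Your argument is more elementary and self-contained. It avoids the nontrivial direction of Menger's theorem, which the paper needs to obtain the two edge-disjoint paths in $\alpha$. The paper's route fits the edge-disjoint-path viewpoint it reuses later, for example when it detects non-cactus and non-treelike diagrams via three edge-disjoint paths.

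\textbf{A small slip.} In your case split, a non-loop edge $e'$ can still have both endpoints in the same block of $P$. In that case $e$ is a loop, and your claim that the endpoints lie in distinct blocks $b_u\neq b_v$ fails. Your main argument never uses $b_u\neq b_v$, so you can drop the case split entirely: the identity $(\alpha-e')_P=\beta-e$ handles every edge uniformly, loops included.
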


\begin{proof}[Proof of~\Cref{lem:contract-2ec}]
    By Menger's theorem, a graph is 2-edge-connected 
    if and only if there exist two edge-disjoint 
    paths between every pair of distinct vertices.
    These paths are maintained when $\al$ is contracted into $\beta$.
\end{proof}

\begin{proof}[Proof of~\Cref{lem:diagonal-w-z}]
    {\em (ii)} $\implies$ {\em (i)}. Using \Cref{claim:wtoz},
    \[
        \frac 1n \E_{\bA} w_\sigma(\bA) = \frac 1n \sum_{\beta \psdleq \sigma} c_{\beta, \sigma} \E_{\bA} z_\beta(\bA)\,.
    \]
    Every diagram $\beta \psdleq \sigma$ remains 2-edge-connected by \cref{lem:contract-2ec}.
    There are only finitely many terms in the
    sum, so we can directly
    take the $n\to\infty$ limit
    and use the assumptions
    to obtain that $\frac 1n \E_\bA w_\sigma(\bA)$ converges to $\sum_{\beta \psdleq \sig} c_{\beta, \sig} k_\beta$.

    Note that by the weak cactus property,
    the only asymptotically nonzero $\beta \preceq \sig$ are when $\beta$ is a cactus.
    Assuming furthermore that $k_\beta = \prod_{\rho \in \cyc(\beta)} \kappa_{|\rho|}$ factors over the cycles of each cactus $\beta$ we will derive the second part of the lemma.

    Using the more specific result of \Cref{claim:wtoz}, we have
    \begin{align*}
        \lim_{n \to \infty} \frac{1}{n} \E_{\bA} w_{\sigma}(\bA)
        &= \sum_{P \in \calP(V(\sigma))} \lim_{n \to \infty} \frac{1}{n} \E_{\bA} z_{\sigma_P}(\bA)
        \intertext{Since $\bA$ has the weak cactus property and $\sigma$ is a cactus, only the terms where $\sigma_P$ is a cactus contribute. These are precisely the terms where $P$ restricted to each cycle of $\sigma$ is non-crossing. Given $P_{\rho} \in \NC(V(\rho))$ for each $\rho \in \cyc(\sigma)$, let us write $P(P_{\rho}: \rho \in \cyc(\sigma))$ for the partition obtained by composing these partitions of each cycle, and let us write, following our previous notation, $\cyc(\rho_{P_{\rho}})$ for the set of cycles created when the single cycle $\rho$ is contracted according to $P_{\rho}$. Then, we have}
        &= \sum_{\substack{P_{\rho} \in \NC(V(\rho)) \\ \text{for each } \rho \in \cyc(\sigma)}} \lim_{n \to \infty} \frac{1}{n} \E_{\bA} z_{\sigma_{P(P_{\rho}: \rho \in \cyc(\sigma))}}(\bA) \\
        &= \sum_{\substack{P_{\rho} \in \NC(V(\rho)) \\ \text{for each } \rho \in \cyc(\sigma)}} \prod_{\rho \in \cyc(\sigma)} \prod_{\pi \in \cyc(\rho_{P_{\rho}})} \kappa_{|\pi|} \\
        &= \prod_{\rho \in \cyc(\sigma)} \left(\sum_{P \in \NC(V(\rho))} \prod_{\pi \in \cyc(\rho_P)} \kappa_{|\pi|}\right) \\
        &= \prod_{\rho \in \cyc(\sigma)} m_{|\rho|}.
    \end{align*}
    Thus we have the claimed factorization.
    Further, the coefficients $m_q$ and $\kappa_q$ indeed have the relation between moments and free cumulants from~\cref{eq:free-cumulant}:
    \[
        m_q = \sum_{\sig \in \NC(q)} \prod_{b \in \sig} \kappa_{|b|}\,.
    \]
    
    \noindent {\em (i) $\implies$ (ii)}. This direction uses a recursive change of basis technique that will be very useful in~\Cref{sec:universality}.
    Using~\Cref{claim:mobius} in both directions, we get
    \begin{align*}
        \frac 1n \E_{\bmA} z_\sigma(\bmA) &= \frac 1n \sum_{\substack{\beta\preceq \sigma\\\beta\in\calC}} c'_{\beta,\sigma} \E_{\bmA} w_\beta(\bmA) + \frac 1n \sum_{\substack{\beta\prec \sigma\\\beta\in\calE\setminus\calC}} c'_{\beta,\sigma} \E_{\bmA} w_\beta(\bmA)\\
        &= \frac 1n \sum_{\substack{\beta\preceq \sigma\\\beta\in\calC}} c'_{\beta,\sigma} \E_{\bmA} w_\beta(\bmA) + \frac 1n \sum_{\substack{\beta\prec \sigma\\\beta\in\calE\setminus\calC}} c'_{\beta,\sigma} 
        \sum_{\alpha\preceq \beta} c_{\alpha,\beta} \E_{\bmA} z_\alpha(\bmA)\\
        &= \frac 1n \sum_{\substack{\beta\preceq \sigma\\\beta\in\calC}} c'_{\beta,\sigma} \E_{\bmA} w_\beta(\bmA) +  \frac 1n  \sum_{\alpha\prec \sigma}\left(\sum_{\substack{\beta\in\calE\setminus\calC\\ \alpha \preceq \beta \prec \sigma}} c'_{\beta,\sigma} c_{\alpha,\beta}\right) \E_{\bmA} z_\alpha(\bmA)
    \end{align*}
    Note that every diagram in this expansion
    remains 2-edge-connected by~\Cref{lem:contract-2ec}.
    
    Every contraction identifying
    a non-empty subset of vertices decreases
    the number of vertices in the graph, and
     the $w$ and $z$ bases coincide for 1-vertex
    graphs. Therefore, we can
    apply the same steps inductively on terms
    for which $\alpha\in\calC$ to finally obtain
    \[
        \frac 1n \E_{\bmA} z_\sigma(\bmA) = \frac 1n \sum_{\substack{\beta\preceq \sigma\\\beta\in \calC}} c''_{\beta,\sigma} \E_{\bmA} w_\beta(\bmA) + \frac 1n \sum_{\substack{\beta\preceq \sigma\\\beta\in \calE\setminus \calC}} c''_{\beta,\sigma} \E_{\bmA} z_\beta(\bmA)\,.
    \]
    for some coefficients $\{c''_{\alpha,\beta}\}$ independent of $n$.
    Take the $n\to\infty$ limit
    to obtain
    \[
        \lim_{n\to\infty} \frac 1n \E_{\bmA} z_\sigma(\bmA) = \sum_{\substack{\beta\preceq \sigma\\\beta\in \calC}} c''_{\beta,\sigma} m_\beta\,,
    \]
    which finishes the proof of the first equivalence.
    Assuming furthermore that $m_\beta$ factors over the cycles of each cactus $\beta$,
    then $\frac 1n \E_\bA z_\sig(\bA)$
    also asymptotically factors over its cycles: $\frac 1n \E_\bA z_\sig(\bA) \longrightarrow \prod_{\rho \in \cyc(\sig)} \kappa_{|\rho|}$ for some numbers $\kappa_q$.
    This is because the cactuses $\beta \psdleq \sig$ still only arise by contracting a separate non-crossing partition for each cycle of $\sig$, and so we can perform the above recursive analysis separately inside each cycle.
\end{proof}

The following lemma shows that the properties
of graph polynomials
we will establish for delocalized deterministic matrices
in~\Cref{sec:universality} characterize their traffic distribution.
We emphasize our use of a combination of assumptions on limits of the $w$- and $z$-bases that makes this formulation convenient.

\begin{lemma}
    \label{lem:eqDiffMobius}
    Suppose that $\bA = \bA^{(n)}$ satisfies:
    \begin{enumerate}
        \item The weak cactus property, i.e., that for all $\alpha\in \calE\setminus \calC$, $\frac 1n \E_{\bmA} z_\alpha(\bmA)\underset{n\to\infty}{\longrightarrow} 0$.
        \item For all $\alpha\in \calA\setminus \calE$, $\frac 1n \E_{\bmA} w_\alpha(\bmA)\underset{n\to\infty}{\longrightarrow} 0$.
        \item For all $\sigma\in \calC$,
        there exists $m_\sigma\in \R $ such that
        $\frac 1n \E_{\bmA} w_\sigma(\bmA)\underset{n\to\infty}{\longrightarrow} m_\sigma$.
    \end{enumerate}
    Then the traffic distribution of $\bmA$
    exists
    and only depends on $\{m_\sigma : \sigma\in\calC\}$.
\end{lemma}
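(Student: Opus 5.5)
We must show that $\lim_{n\to\infty}\frac1n\E_\bA w_\alpha(\bA)$ exists for every $\alpha\in\calA$ and is a function of $\{m_\sigma\}_{\sigma\in\calC}$ alone. We split according to whether $\alpha$ is 2-edge-connected.

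If $\alpha\in\calA\setminus\calE$, assumption 2 gives the limit $0$ directly, and there is nothing more to do. The substantive case is $\alpha\in\calE$.

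For $\alpha\in\calE$ we expand in the $z$-basis using \Cref{claim:mobius}:
\[
\frac1n\E_\bA w_\alpha(\bA)=\sum_{\beta\preceq\alpha}c_{\alpha\text{-}\beta}\,\frac1n\E_\bA z_\beta(\bA),
\]
where we write $c_{\beta,\alpha}$ for the nonnegative integer coefficients. The sum is finite and the coefficients do not depend on $n$. By \Cref{lem:contract-2ec}, every $\beta\preceq\alpha$ lies in $\calE$, which leaves two kinds of terms:
\begin{itemize}
\item if $\beta\in\calE\setminus\calC$, then $\frac1n\E z_\beta(\bA)\to0$ by the weak cactus property (assumption 1);
\item if $\beta\in\calC$, then assumption 3 together with the weak cactus property puts us exactly in the hypotheses of the first equivalence of \Cref{lem:diagonal-w-z}. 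That lemma gives $\frac1n\E z_\beta(\bA)\to k_\beta$, where $k_\beta$ is determined by $(m_\sigma)_{\sigma\in\calC}$.
\end{itemize}
Taking $n\to\infty$ term by term therefore yields
\[
\lim_{n\to\infty}\frac1n\E_\bA w_\alpha(\bA)=\sum_{\beta\preceq\alpha,\ \beta\in\calC}c_{\beta,\alpha}\,k_\beta .
\]
This limit exists and depends only on the $m_\sigma$.

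The step needing the most care is checking that \Cref{lem:diagonal-w-z} really applies, namely that the $k_\beta$ are well-defined functions of the $m_\sigma$. This holds because the recursive change-of-basis argument in its (i)$\Rightarrow$(ii) direction uses only the weak cactus property and the cactus $w$-limits, both of which we assume. No $w$-limits of non-2-edge-connected diagrams enter that argument, so assumption 2 is not needed there; it is only used in the first case above.
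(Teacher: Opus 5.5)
Your proof is correct and follows essentially the same route as the paper. You expand $w_\alpha$ in the $z$-basis via \Cref{claim:mobius}, use \Cref{lem:contract-2ec} to stay inside $\calE$, eliminate non-cactus terms by the weak cactus property, and identify the cactus terms through \Cref{lem:diagonal-w-z}; the only cosmetic difference is that the paper runs the expansion only for $\alpha\in\calE\setminus\calC$ (cactuses being given directly by assumption 3), whereas you also run it for cactus $\alpha$, which is harmless.
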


\begin{proof}
    We want to show that for every $\alpha\in \calA$, $ \lim_{n\to\infty} \frac 1n \E_{\bmA} w_\alpha(\bmA)$ exists and
    only depends on $\{m_\sigma : \sigma\in\calC\}$. By assumption, it suffices to prove it for $\alpha\in \calE\setminus \calC$. By~\Cref{claim:mobius},
    \[
        \frac 1n \E_{\bmA} w_\alpha(\bmA) = \frac 1n \sum_{\beta\preceq \alpha} c_{\beta,\alpha} \E_{\bmA} z_\beta(\bmA)\,.
    \]
    By~\Cref{lem:contract-2ec}, every $\beta$
    in the support of the sum is 2-edge-connected. If $\beta\in\calC$, then the value of $\lim_{n\to\infty} \frac 1n  \E_{\bmA} z_\beta(\bmA)$ exists and only
    depends on $\{m_\sigma : \sigma\in\calC\}$
    by~\Cref{lem:diagonal-w-z}. Otherwise,
    $\beta\in\calE\setminus \calC$, and $\lim_{n\to\infty} \frac 1n  \E_{\bmA} z_\beta(\bmA) = 0$ by assumption. This
    implies that $\lim_{n\to\infty}\frac 1n \E_{\bmA} w_\alpha(\bmA)$ exists and 
    only depends on $\{m_\sigma : \sigma\in\calC\}$, which concludes the proof.
\end{proof}
\noindent
Note that, more generally, by~\Cref{lem:diagonal-w-z}, the same statement will hold with Condition~3 of~\Cref{lem:eqDiffMobius} taken in terms of either the $w$- or $z$-basis.

\subsection{Products and concentration of traffic observables}

Recall that the traffic distribution specifies the limits of $\frac 1n \E_\bA w_\al(\bA)$
for all $\al \in \calA$.
In all of the random matrix models we consider,
these expectations are highly concentrated.
We say that {\em the traffic distribution concentrates for $\bA$} if the
following property holds, studied in~\cite{male2020traffic}.

\begin{definition}\label{def:traffic-concentration}
    Let $\bA = \bA^{(n)} \in \R^{n \times n}_\sym$ and assume that the traffic distribution of $\bA$ exists. We say that the traffic distribution concentrates for $\bA$
    if for all $k \geq 2$ and $\al_1, \dots, \al_k \in \calA$,
    \[
        \lim_{n \to \infty} \E_{\bA} \left[ \prod_{j=1}^k \frac 1n w_{\alpha_j}(\bA)\right] = \prod_{j=1}^k \lim_{n \to \infty} \frac 1n \E_{\bA} w_{\alpha_j}(\bA)\,.
    \]
\end{definition}
\noindent
The case $k = 2$ and $\alpha_1 = \alpha_2 = \alpha$ of the definition specializes to the statement:
\begin{lemma}\label{lem:traffic-l2}
    Let $\bA = \bA^{(n)} \in \R^{n \times n}_{\sym}$ have traffic distribution $\calD$. 
    If the traffic distribution concentrates for $\bA$, then $\frac 1n \E_{\bA} w_\al(\bA)$ converges to $\calD(\al)$ in $L^2$.
\end{lemma}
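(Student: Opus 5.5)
The plan is to apply the definition of concentration with $k=2$ and $\al_1=\al_2=\al$, and then expand a square. The statement should be read as saying that the random variables $x^{(n)} \defeq \frac 1n w_\al(\bA^{(n)})$ converge in $L^2$ to $\calD(\al)$; the expectation only enters through the mode of convergence. By the definition of $L^2$ convergence in \Cref{sec:prelim-modes}, we must check two things: that $x^{(n)}$ converges in expectation to $c = \calD(\al)$, and that $\E(x^{(n)} - c)^2 \to 0$.

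Convergence in expectation is exactly the hypothesis that $\bA$ has traffic distribution $\calD$, by \Cref{def:traffic-distribution}: $\E x^{(n)} = \frac 1n \E_\bA w_\al(\bA) \to \calD(\al)$.

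For the second moment, we invoke \Cref{def:traffic-concentration} with $k=2$ and $\al_1 = \al_2 = \al$. This gives
\[
\lim_{n\to\infty} \E\big[(x^{(n)})^2\big] = \Big(\lim_{n\to\infty} \tfrac 1n \E_\bA w_\al(\bA)\Big)^2 = \calD(\al)^2 .
\]
Expanding the square and using linearity of expectation then yields
\[
\E\big(x^{(n)} - \calD(\al)\big)^2 = \E\big[(x^{(n)})^2\big] - 2\,\calD(\al)\,\E x^{(n)} + \calD(\al)^2 \longrightarrow \calD(\al)^2 - 2\calD(\al)^2 + \calD(\al)^2 = 0 .
\]
Equivalently, $\Var x^{(n)} = \E[(x^{(n)})^2] - (\E x^{(n)})^2 \to 0$.

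There is no real obstacle in this argument. The only point needing care is the interpretation noted above, namely that the $L^2$ statement concerns $\frac 1n w_\al(\bA)$ itself rather than its (deterministic) expectation.
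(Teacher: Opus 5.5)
Your proof is correct and matches the paper. The paper gives no separate argument beyond noting that the lemma is the case $k=2$, $\alpha_1=\alpha_2=\alpha$ of the concentration definition, and your expansion of $\E\big(\tfrac1n w_\alpha(\bA)-c\big)^2$ simply makes that specialization explicit.

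Your reading of the statement, as concerning the random variable $\tfrac1n w_\alpha(\bA)$ rather than its deterministic expectation, is also the one the paper relies on later in \Cref{lem:asymptotic-state-empirical-average}.
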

The full condition may be viewed as a strengthening of this straightforward notion of concentration.
We note that the product of several $w$-basis polynomials is equivalent to taking the disjoint union of their diagrams:
\[ w_{\alpha_1}(\bA) \cdots w_{\alpha_k}(\bA) = w_{\alpha_1 \sqcup \cdots \sqcup \alpha_k}(\bA).\]
Therefore, \cref{def:traffic-concentration} says that the values of disconnected diagrams asymptotically factor over the components.
This justifies defining
$\cA$ and the traffic distribution to include only connected diagrams.
The following shows that concentration may equally well be considered in the $z$-basis.

\begin{lemma}[{\cite[Lemma 2.9]{male2020traffic}}] \label{lem:concentration-z}
    Let $\bA = \bA^{(n)} \in \R^{n \times n}_\sym$ and assume that the traffic distribution of $\bA$ exists. The traffic distribution concentrates for $\bA$ if and only if, for all $k \geq 2$ and $\al_1, \dots, \al_k \in \calA$,
    \[
        \lim_{n \to \infty} \E_{\bA} \left[ \prod_{j=1}^k \frac 1n z_{\alpha_j}(\bA)\right] = \prod_{j=1}^k \lim_{n \to \infty} \frac 1n \E_{\bA} z_{\alpha_j}(\bA)\,.
    \]
\end{lemma}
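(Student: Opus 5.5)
The plan is to pass between products of $w$-polynomials and products of $z$-polynomials, using the same change-of-basis strategy as in \Cref{lem:diagonal-w-z}, and then compare the limits on the two sides.

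First, rewrite products as single polynomials of disjoint unions. We already have $\prod_j w_{\alpha_j}(\bA) = w_{\alpha_1\sqcup\cdots\sqcup\alpha_k}(\bA)$. For $z$, a product $\prod_j z_{\alpha_j}(\bA)$ is not $z_{\sqcup_j\alpha_j}$, because labels may coincide across different components. Splitting the sum according to which vertices of different components share a label gives
\[
\prod_{j=1}^k z_{\alpha_j}(\bA) = \sum_{Q} z_{(\sqcup_j \alpha_j)_Q}(\bA),
\]
where $Q$ ranges over partitions of $V(\sqcup_j\alpha_j)$ whose blocks meet each component in at most one vertex. The term with $Q$ discrete is $z_{\sqcup_j\alpha_j}$. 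Every other term is a $z$-polynomial of a diagram with strictly fewer connected components, since at least two components are merged.

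Second, show that the merged terms are negligible at scale $n^{-k}$. A diagram $\beta$ with $c<k$ components has a $z$-polynomial that is a finite linear combination of $w$-polynomials of its contractions, by \Cref{claim:mobius}. Each such $w$-polynomial factors as a product over components of connected $w$-polynomials, and each factor has expectation of order $n$. A product of $c$ of them would then have order $n^{c}=o(n^k)$.

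This last step needs care: the expectation of a product is not automatically the product of expectations. I would therefore argue by induction on the total number of vertices, proving both directions of the equivalence together.

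\emph{Forward direction.} Assume concentration in the $w$-basis. Then for any disconnected diagram with components $\gamma_1,\dots,\gamma_c$, the quantity $\E\prod_i \frac1n w_{\gamma_i}$ converges. Hence $n^{-c}\E w_{\sqcup\gamma_i}=O(1)$. Via \Cref{claim:mobius}, with contractions only lowering the component count, this gives $n^{-c}\E z_\beta = O(1)$ for every diagram $\beta$ with $c$ components, so $n^{-k}\E z_\beta\to0$ whenever $c<k$. Consequently $\lim n^{-k}\E\prod_j z_{\alpha_j} = \lim n^{-k}\E z_{\sqcup\alpha_j}$.

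Next expand each $z_{\alpha_j}=\sum c'\,w_{\alpha'_j}$ with $\alpha'_j\preceq\alpha_j$; these contractions stay within a component. Then $z_{\sqcup\alpha_j}$ is a sum over tuples of contractions, up to further cross-component merges, which are again negligible. This gives
\[
n^{-k}\E\prod_j z_{\alpha_j} = \sum_{\alpha'_1,\dots,\alpha'_k}\prod_j c'_{\alpha'_j,\alpha_j}\, n^{-k}\E\prod_j w_{\alpha'_j} + o(1).
\]
Applying $w$-concentration factors this as $\prod_j \lim\frac1n\E z_{\alpha_j}$.

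\emph{Converse.} The converse is symmetric, now using $w=\sum_P z_{\alpha_P}$ from \Cref{claim:wtoz}. The a priori bound $n^{-c}\E z_\beta=O(1)$ for diagrams with $c$ components comes from the assumed $z$-factorization: in $\prod_i z_{\gamma_i}$ the term $z_{\sqcup\gamma_i}$ is isolated, and the remaining terms are handled by induction on $c$.

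The main obstacle is exactly this a priori bound $O(n^{c})$ for $c$-component diagrams, i.e.\ that merged terms really are lower order. It has to be bootstrapped from the concentration hypothesis itself, using finite-dimensional triangular change of basis and induction on the number of components, rather than taken for granted.
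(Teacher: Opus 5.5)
The paper does not prove this lemma; it cites \cite[Lemma 2.9]{male2020traffic}. Your argument is correct, but it works much harder than needed, and the ``main obstacle'' you identify does not actually arise.

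Contracting a connected diagram keeps it connected. So \Cref{claim:mobius} writes each $\frac1n z_{\alpha_j}(\bA)$ as a finite combination $\sum_{\beta\preceq\alpha_j} c'_{\beta,\alpha_j}\frac1n w_\beta(\bA)$, with every $\beta\in\calA$ and coefficients independent of $n$. Multiply these $k$ expansions factor by factor. This gives your final displayed formula \emph{exactly}, with no $o(1)$ term and no cross-component merges.

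Those merges only appear because you first rewrote $\prod_j z_{\alpha_j}$ as $z_{\alpha_1\sqcup\cdots\sqcup\alpha_k}$ plus merged terms, and then changed basis on the disjoint union. If you change basis factor by factor instead, nothing merges. Take $\E$ and $n\to\infty$ term by term, apply $w$-concentration to each tuple $(\beta_1,\dots,\beta_k)$, and regroup by multilinearity. This yields $\prod_j \lim \frac1n \E z_{\alpha_j}$.

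The converse is the same computation with $w_\alpha=\sum_P z_{\alpha_P}$ from \Cref{claim:wtoz}. Here $\lim\frac1n\E z_\beta$ exists for connected $\beta$, again by \Cref{claim:mobius}. So you need no a priori bound $n^{-c}\E z_\beta = O(1)$ and no induction on vertices or components.

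Your detour does correctly establish an extra fact. Under concentration, $n^{-c}\E z_\beta$ and $n^{-c}\E w_\beta$ stay bounded for $c$-component diagrams. Consequently $\lim n^{-k}\E\prod_j z_{\alpha_j}=\lim n^{-k}\E z_{\alpha_1\sqcup\cdots\sqcup\alpha_k}$. The paper makes a comparison of this type in the proof of \Cref{lemma:full-factorization}, but there the negligibility of the merged terms is proved directly via \Cref{prop:invariant-disconnected}, in order to deduce $z$-concentration. It is not needed for the present lemma.

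Two minor points. First, \Cref{claim:mobius} is stated only for connected diagrams, so applying it to disjoint unions needs the general M\"obius inversion on the partition lattice. That is fine, but the direct argument avoids it. Second, your announced ``induction on the total number of vertices, proving both directions together'' is not what you then carry out. It also does not make sense as stated, since the two directions have different hypotheses.
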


For vector diagrams, the componentwise or Hadamard product is
\[
    \bmw_{\al_1}(\bA) \cdots \bmw_{\al_k}(\bA) = \bw_{\al_1 \oplus \cdots \oplus \al_k}(\bA)\,,
\]
where $\al_1 \oplus \cdots \oplus \al_k$ is the diagram formed by taking the disjoint union of $\al_1$ through $\al_k$ and then identifying the roots together into a single root.
We sometimes refer to this operation as \emph{grafting} $\al_1, \dots, \al_k$ at the root.

\section{Traffic Distributions of Random Matrices}
\label{sec:trafficRandom}

As both a technical preliminary for our results and useful background, this section describes the traffic distributions of several common random matrix ensembles. A common theme is that all of these classical models satisfy the strong cactus property. Most of these results have appeared previously in the literature, though we provide some extensions and new interpretations.

\subsection{Wigner random matrices}
\label{sec:traffic-wigner}

A \emph{Wigner matrix} is
a random symmetric matrix with i.i.d.\ entries on and above the diagonal.
Changes to the diagonal entries such as setting them to zero (which is the convention used in some works), or taking the diagonal variances to be twice the off-diagonal ones (as in the GOE model), do not affect the results.

The limiting traffic distribution of a sequence of Wigner matrices was derived by Male~\cite{male2020traffic}, by generalizing the combinatorial
proof of the semicircle limit theorem for the limiting spectral distribution~\cite{AGZ-2010-RandomMatrices}.
The same result was re-discovered in~\cite{jones2025fourier} in the context of analyzing
pGFOM on such matrices.
\begin{theorem}[Traffic distribution of Wigner matrices]
\label{thm:goe}
Let $\nu$ be a probability measure on $\R$ with all moments finite, mean 0, and variance 1.
For all $n\ge 1$, let $\widetilde{\bA}^{(n)} \in \R^{n \times n}_{\sym}$ have entries on and above the diagonal drawn i.i.d.\ from $\nu$.
Define $\bA^{(n)} \defeq \frac{1}{\sqrt{n}} \widetilde{\bA}^{(n)}$.
Then, for all $\al \in \calA$,
\[
    \lim_{n \to \infty}\frac{1}{n} \E z_\al(\bA^{(n)}) = \begin{cases}
        1 & \text{if $\al$ is a cactus of 2-cycles}, \\
        0 & \text{otherwise}.
    \end{cases}
\]
\end{theorem}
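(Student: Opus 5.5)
We prove the statement by a direct moment computation in the $z$-basis, exploiting that injective labelings turn each monomial into a product over \emph{distinct} entries of $\widetilde{\bA}$. Fix $\al \in \calA$ with $v = |V(\al)|$ vertices and $m = |E(\al)|$ edges. For an injective $\varphi\colon V(\al)\embeds[n]$, edges of $\al$ joining the same unordered pair of vertices (including parallel self-loops at one vertex) map to the same matrix entry, while distinct vertex pairs map to distinct entries. Let $\bar\al$ be the simple graph (loops allowed) of distinct vertex pairs, and for each $e\in E(\bar\al)$ let $m_e$ be its multiplicity in $\al$. By independence,
\[
\frac1n \E z_\al(\bA) = \frac{(n)_v}{n}\, n^{-m/2}\prod_{e\in E(\bar\al)} \mu_{m_e},
\]
where $(n)_v = n(n-1)\cdots(n-v+1)$ and $\mu_k$ is the $k$th moment of $\nu$. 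Since $\nu$ has mean $0$, the expression vanishes identically unless every $m_e\ge 2$. In that case $m\ge 2|E(\bar\al)|$, and because $\al$ is connected, $|E(\bar\al)|\ge v-1$. Hence $m/2 \ge v-1$, and the expression is $O(n^{v-1-m/2})$.

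It follows that the limit is nonzero only if $m=2(v-1)$ exactly. This forces two things. First, $|E(\bar\al)| = v-1$, so $\bar\al$ is a spanning tree; in particular it has no self-loops, since a loop would be an extra element of $E(\bar\al)$ beyond the tree edges. Second, every $m_e=2$. Conversely, in that case the limit equals $\prod_e \mu_2 = 1$, because $(n)_v/n^{v}\to1$. So the limit is $1$ exactly when $\al$ is obtained from a tree by doubling every edge, and $0$ otherwise.

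It remains to show that such "doubled trees" are exactly the cactuses all of whose cycles have length $2$. In a doubled tree, each pair of parallel edges forms a 2-cycle. Any simple cycle in $\al$ projects to a closed walk in the tree $\bar\al$, so it can only be one of these 2-cycles. Hence every edge lies on exactly one simple cycle, of length 2, and $\al$ is a cactus of 2-cycles. Conversely, take a cactus whose cycles all have length 2. Each cycle is a doubled edge. Collapsing each cycle to a single edge gives a connected graph, and it is acyclic because any cycle in it would lift to a simple cycle of $\al$ of length greater than 2. So the collapsed graph is a tree, and $\al$ is its doubling.

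A few points need care. The diagonal entries are allowed a different law, as in the GOE convention, but since the surviving diagrams have no loops, the diagonal law only affects terms that already vanish; we only need its moments to be finite, which gives uniformity. For the GOE itself, the factor $\mu_2=1$ is used only on off-diagonal entries. The main point to get right is the bookkeeping between multiedges of $\al$ and the distinct entries of $\widetilde{\bA}$ under injective labelings. This is exactly why the $z$-basis is used: in the $w$-basis, coincident labels would merge vertices and would require summing over all contractions.
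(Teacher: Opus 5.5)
Your proof is correct. The exact formula $\frac1n\E z_\al(\bA)=\frac{(n)_v}{n}\,n^{-m/2}\prod_{e}\mu_{m_e}$, the mean-zero elimination of single edges, the count $m/2\ge |E(\bar\al)|\ge v-1$ with equality exactly for doubled trees, and the identification of doubled trees with cactuses of 2-cycles (the edgeless singleton included) are all sound. The paper does not prove this theorem itself; it defers to Male~\cite{male2020traffic} and Jones--Pesenti~\cite{jones2025fourier}, whose argument is essentially this injective-labeling count phrased in doubled-tree language, so your route matches the intended one.
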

\noindent
The same result holds for normalized GOE matrices.
Note that a cactus of 2-cycles may equivalently be viewed as a ``doubled tree'', a tree where every edge is repeated exactly twice, which is the formulation used in the previous works~\cite{male2020traffic,jones2025fourier}.

Thus, sequences of Wigner matrices have the factorizing strong cactus property, with the especially simple sequence of free cumulants $\kappa_2 = 1$ and $\kappa_q = 0$ for all $q \neq 2$. These are also the free cumulants of the semicircle law, which is the limiting eigenvalue distribution of $\bA^{(n)}$.

\subsection{Orthogonally invariant random matrices}
\label{sec:rot-inv}

Let the orthogonal group $O(n)$ act on $\R^{n \times n}_{\sym}$ by conjugation, with $\bQ \in O(n)$ acting as $\bQ \cdot \bA \defeq \bQ^{\top}\bA\bQ$.
Let $\mu$ denote a probability measure on $\R^{n \times n}_{\sym}$ that is invariant under this action of $O(n)$.
In this case, we call $\bA \sim \mu$ an \emph{orthogonally invariant random matrix}.

If $\mu$ has a density on $\R^{n \times n}_{\sym}$, an equivalent condition is that the density at $\bA \in \R^{n \times n}_{\sym}$ depends only on the unordered multiset of eigenvalues of $\bA$.
An important class of
examples in physics is
given by {\em matrix models
with potential} $V:\R\to\R$,
whose density
is proportional to
$\exp(-\Tr V(\bm A))$.
For example, the
GOE model corresponds to $V(t)=t^2/2$.
We will come back to these examples 
in~\Cref{sec:feynman-physics}.

For the
complex-valued variant
where $O(n)$ is replaced by the unitary group $U(n)$, the limiting traffic distribution of such \emph{unitarily invariant} random matrices is described in~\cite[Theorem~1.1]{cebron2024traffic}.
The same description holds in the orthogonal case.
The proof is a straightforward generalization of the unitarily invariant case, but for the sake of completeness we present it in detail in \Cref{app:weingarten}.

\begin{theorem}[Traffic distribution of orthogonally invariant random matrices]
    \label{thm:moments}
    Let $\bA^{(n)} \in \R^{n \times n}_\sym$ be a sequence of orthogonally invariant random matrices that converges in tracial moments in $L^2$ to a probability measure $\mu$.
    Then, for all $\al \in \calA$,
    \begin{equation}
        \lim_{n \to \infty} \frac{1}{n} \E z_\al(\bA^{(n)}) = \begin{cases}
            \displaystyle\prod_{\sig \in \cyc(\al)} \kappa_{|\sig|} & \text{if }\al\in\cC, \\
            0 & \text{otherwise}.
        \end{cases}
        \label{eq:traffic-dist-invar}
    \end{equation}
    where $\kappa_q$ is the $q$th \emph{free cumulant} of $\mu$ (\Cref{def:free-cumulant}),
    and $|\sig|$ denotes the length of the cycle.
\end{theorem}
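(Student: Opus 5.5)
We would prove \Cref{thm:moments} by Weingarten calculus, following the unitary argument of \cite{cebron2024traffic}. First we reduce to a fixed spectrum. Write $\bA = \bQ \bD \bQ^\top$ with $\bQ$ Haar on $O(n)$, independent of the diagonal matrix $\bD$ of eigenvalues; orthogonal invariance permits this. Conditional on $\bD$, we compute $\E_{\bQ} \frac1n w_\al(\bA)$ exactly. We work in the $w$-basis and convert to $z$ at the end via \Cref{claim:mobius} and \Cref{lem:diagonal-w-z}.

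Expanding each edge $\{u,v\}$ as $\sum_{k} \bQ[i(u),k]\,\bD[k,k]\,\bQ[i(v),k]$ turns $w_\al(\bA)$ into a sum of degree-$2|E(\al)|$ monomials in the entries of $\bQ$. The orthogonal Weingarten formula gives
\[
\E \prod_{a=1}^{2m} \bQ[i_a,k_a] = \sum_{\pi,\pi' \in \PM(2m)} \delta_\pi(i)\,\delta_{\pi'}(k)\,\mathrm{Wg}_n(\pi,\pi'),
\]
where $m = |E(\al)|$. Summing over the vertex labels $i$ produces $n^{\#\text{components of }\al\text{ glued by }\pi}$. Summing over the eigenvalue indices $k$ produces products of normalized traces $\frac1n\Tr \bD^{\ell}$, one factor for each cycle of $\pi'$ acting on the edges, with $\ell$ the cycle length. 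We use the standard asymptotics $\mathrm{Wg}_n(\pi,\pi') = n^{-2m+\#(\pi\vee\pi')}\,(\mathrm{Moeb}(\pi,\pi') + o(1))$. These express each term as a power of $n$ times moments of $\mu$. Convergence in tracial moments in $L^2$ (together with concentration, so that products of traces converge in expectation to products of moments) then lets us pass to the limit.

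The main step is the power counting: identify which pairs $(\pi,\pi')$ attain the maximal exponent $n^1$ after normalization by $\frac1n$. We plan to show, via a genus or Euler-characteristic type inequality on the graph built from $\al$, $\pi$ and $\pi'$, that the exponent is at most $1$. We then show that equality forces the vertex-level identifications to be ``cactus-like'': each resulting contributing structure corresponds to a cactus quotient $\beta \preceq \al$, with non-crossing data on each cycle. Summing the leading Möbius and Weingarten coefficients over each cycle should reproduce the moment–free cumulant relation \cref{eq:free-cumulant}, giving
\[
\lim \tfrac1n \E w_\al = \sum_{\beta\preceq\al,\ \beta\in\cC} c_{\beta,\al}\prod_{\rho\in\cyc(\beta)}\kappa_{|\rho|}.
\]
Since this holds for every $\al$, Möbius inversion (\Cref{claim:mobius}) isolates the $z$-values. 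This gives the cactus formula \cref{eq:traffic-dist-invar} and zero for all non-cactus diagrams, because the $w$-values are exactly the $c$-weighted sums of that proposed $z$-assignment.

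We expect the obstacle to be the combinatorial power-counting bound and the characterization of its equality cases. Orthogonal, as opposed to unitary, Weingarten involves pair partitions rather than permutations, so non-orientable gluings appear. We would first try to show these are suppressed by at least $n^{-1}$, by adapting the unitary genus bound to the orientation double cover.
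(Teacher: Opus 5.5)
Your plan follows the paper's route in outline: orthogonal Weingarten calculus as in C\'ebron--Dahlqvist--Male, then power counting, then Kreweras complements to turn leading Weingarten coefficients into free cumulants. It can be completed. But the step you defer is essentially the whole proof, and the tool you reach for (a genus bound, with an orientation-double-cover argument for non-orientable gluings) is not what is needed.

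The missing idea is metric. On $\PM(\mathrm{HE})$ with $|\mathrm{HE}|=2|E|$, the quantity $\Delta(\pi,\pi')=|E|-|\cyc(\pi,\pi')|$ is a metric (the swap distance).
\begin{itemize}
\item In your bookkeeping, the $(\pi,\pi')$ term of $\frac1n\E w_\alpha$ is of order $n^{c(\pi)-1-\Delta(\pi,\pi')-\Delta(\pi',\widetilde{\alpha})}$. Here $c(\pi)$ is the number of blocks of the vertex partition $P_\pi$ generated by $\pi$. The triangle inequality bounds this exponent by $c(\pi)-1-\Delta(\pi,\widetilde{\alpha})$.
\item Since $\pi$ only pairs half-edges at a common vertex of the quotient $\alpha_{P_\pi}$, the cycles of $\widetilde{\alpha}\cup\pi$ partition the edges of the connected graph $\alpha_{P_\pi}$ into closed trails. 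Adding them one at a time, each meeting the previous ones, shows there are at most $|E|-c(\pi)+1$ of them. Equality holds if and only if $\alpha_{P_\pi}$ is a cactus and $\pi$ is its unique matching of half-edges along cycles.
\item Hence the exponent is at most $0$. Nothing orientation-specific is required: every configuration off the equality locus of these two inequalities simply loses a power of $n$.
\item On the equality locus, $\pi'$ lies on a geodesic from $\pi$ to $\widetilde{\alpha}$, and such $\pi'$ factor over the cycles. On each cycle, $\cyc(\pi',\widetilde{\alpha})$ gives a non-crossing partition $\theta$, weighted by moments. Its Kreweras complement $K(\theta)$ comes from $\cyc(\pi,\pi')$, weighted by $\mu_{\NC}(\underline{0},K(\theta))=\mu_{\NC}(\theta,\underline{1})$. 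Summing over $\theta$ gives $\kappa_{|\rho|}$.
\end{itemize}

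The one real difference from the paper is the basis. The paper averages $z_\alpha(\bQ\bA\bQ^\top)$ over an independent Haar $\bQ$, so no diagonalization is needed, and stays in the $z$-basis. There, injectivity of the vertex labels kills every vertex-side matching that is not $\alpha$-local. So the bound is applied to $\alpha$ itself, the extremal matching is unique, non-cactus $\alpha$ vanish directly, and no inversion is needed.

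In the $w$-basis you must range over all $\pi$ and pass to quotients as above. You also need one extra check to get the multiplicities right: for a cactus quotient $\alpha_P$, its cycle matching generates exactly $P$. Otherwise it would be local for a strictly finer, still connected quotient with more vertices, violating the bound. With that you get $\lim\frac1n\E w_\alpha=\sum_{P:\,\alpha_P\text{ cactus}}\prod_{\rho}\kappa_{|\rho|}$. Unitriangular inversion then gives the $z$-values, so your version works, just slightly more heavily.

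Finally, convergence of expected products of normalized traces is not an extra hypothesis. It follows from $L^2$ convergence of tracial moments (\Cref{lem:factorization}).
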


\cref{eq:traffic-dist-invar} shows
that the factorizing strong cactus property holds for orthogonally invariant random matrices,
and in particular their limiting traffic distribution is supported only on cactus diagrams in the $z$-basis.

Actually, in this case the strong cactus
property is non-trivial only
for the Eulerian diagrams,
since the non-Eulerian ones have identically zero expectation for each fixed dimension $n$:
\begin{claim}\label{lem:eulerian}
    Let $\bA^{(n)} \in \R^{n \times n}_\sym$ be an orthogonally invariant random matrix.
    Then for all $\al \in \calA$ which are not Eulerian,
    $\E z_\al(\bA^{(n)}) = 0$.
\end{claim}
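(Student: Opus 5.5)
The idea is to exploit invariance under a very small subgroup of $O(n)$: the diagonal sign matrices. Let $\bS = \mathrm{diag}(s_1,\dots,s_n)$ with $s_i \in \{\pm 1\}$. Then $\bS \in O(n)$, and orthogonal invariance of the law of $\bA = \bA^{(n)}$ gives $\bS\bA\bS \overset{d}{=} \bA$. Hence
\[ \E z_\al(\bA) = \E z_\al(\bS\bA\bS) \]
for every fixed choice of signs. Averaging over $\bS$ with i.i.d.\ uniform signs drawn independently of $\bA$, we also get $\E z_\al(\bA) = \E_{\bA}\E_{\bS} z_\al(\bS\bA\bS)$.

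The next step is to expand $z_\al(\bS\bA\bS)$ monomial by monomial. Fix an injective labeling $\varphi: V(\al) \embeds [n]$. Its term is
\[ \prod_{\{u,v\}\in E(\al)} s_{\varphi(u)} \bA[\varphi(u),\varphi(v)]\, s_{\varphi(v)} = \Big(\prod_{u \in V(\al)} s_{\varphi(u)}^{\deg(u)}\Big) \prod_{\{u,v\}\in E(\al)} \bA[\varphi(u),\varphi(v)]\,. \]
Here $\deg(u)$ counts a self-loop twice, matching the paper's degree convention; a self-loop at $u$ contributes $s_{\varphi(u)}^2 = 1$. Because $\varphi$ is injective, the labels $\varphi(u)$ are distinct, so the sign variables appearing are independent. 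Therefore
\[ \E_{\bS} \prod_{u} s_{\varphi(u)}^{\deg(u)} = \prod_u \E\, s^{\deg(u)}, \]
which equals $0$ as soon as some vertex has odd degree. Injectivity is exactly what makes this clean; in the $w$-basis, coinciding labels could merge two odd-degree vertices into an even-degree one.

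A connected multigraph is Eulerian if and only if every vertex has even degree. So if $\al \in \calA$ is not Eulerian, some vertex has odd degree and every term vanishes after averaging over $\bS$. This gives $\E z_\al(\bA^{(n)}) = 0$ exactly, for every $n$.

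The only point needing care is integrability: $\E z_\al(\bA)$ should be finite, so that swapping expectations and using equality in distribution is legitimate. This is implicitly assumed whenever these expectations are written, and I would note it as a standing assumption. Given that, the argument is a few lines and I do not expect any real obstacle.
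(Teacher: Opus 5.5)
Your proof is correct, but it takes a different route from the paper's. The paper obtains the claim as a by-product of its Weingarten computation in Appendix~\ref{app:weingarten}. There it averages over a Haar-distributed $\bQ\in O(n)$ and expands $\bar z_\al(\bA)=\E_{\bQ} z_\al(\bQ\bA\bQ^\top)$ as a sum over pairs of perfect matchings $(\beta,\gamma)$ of the half-edges of $\al$. Because the vertex labeling $i$ is injective, the factor $\sum_i \delta_\beta(i')$ vanishes unless $\beta$ is $\al$-local, meaning it only matches half-edges at the same vertex. Such local matchings exist only if every vertex has even degree. So for non-Eulerian $\al$ the sum is empty, $\bar z_\al(\bA)\equiv 0$ identically, and $\E z_\al(\bA)=\E \bar z_\al(\bA)=0$.

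You instead average only over the finite subgroup of diagonal sign matrices $\bm S$, where the group moments are trivial. Injectivity makes the signs at distinct vertices independent, and each vertex contributes $\E s^{\deg(u)}$. The mechanism is the same in both proofs: a group average makes the polynomial vanish pointwise, and injectivity of labels is essential.

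Your version is more elementary, since it needs no Weingarten function. It is also more general: it uses only invariance of the law under $\bA\mapsto \bm S\bA\bm S$. It therefore also applies, for example, to matrices conjugated by random signed permutations, as in the semi-random universality literature.

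The paper's route gains nothing extra for this particular claim. Its advantage is that the same Haar-averaged expansion is needed anyway to evaluate the cactus diagrams in Theorem~\ref{thm:moments}, so the Eulerian vanishing there is just the case $\Loc(\al)=\varnothing$.
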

\noindent 
We show this at the beginning of our proof in~\Cref{app:weingarten}.

Both the proof of~\cite[Theorem 1.1]{cebron2024traffic} and our proof of~\Cref{thm:moments} are based on the \emph{Weingarten calculus}, a combinatorial description of the entrywise moments of Haar-distributed matrices from a matrix group.
In~\cref{sec:feynman-physics}, we present an alternative (albeit non-rigorous) derivation of~\cref{thm:moments} using the {\em Feynman diagram} method from physics.
Arguably, the combinatorics of the Feynman diagram method is simpler than that of the Weingarten calculus proof.

\subsection{Block-structured random matrices}
\label{sec:wigner-block}

Wigner random matrices and orthogonally invariant random matrices both
extend the GOE in different directions, while still satisfying the
factorizing strong cactus property.
We now consider a third
generalization, block matrices, which typically do
{\em not} satisfy
the factorizing property.

Fix $q\in \N$. For $r,c \in [q]$, let
$\bm A_{r,c} = \bA_{r,c}^{(n)} \in \R_{\sym}^{n/q \times n/q}$ be a sequence of
random matrices with $\bm A_{r,c} = \bm A_{c,r}$.
The corresponding {\em block matrix model} is the symmetric $n$-by-$n$ matrix
whose rows and columns are partitioned into
blocks of sizes $n/q$ which has blocks $(\bA_{r,c})_{r,c \in [q]}$.
We let $\block(i) \in [q]$ denote the block label of
$i \in [n]$.

The simplest example of a block matrix model is the {\em block GOE} model,
which has previously been studied in the context of the Generalized AMP algorithm~\cite{javanmard2013state}.\footnote{In this paper, we study a slightly more symmetric variant, in which the blocks themselves are symmetric. This modification is made purely for technical reasons, since we work in our other definitions only with symmetric matrices.}

\begin{definition}[Block GOE model]\label{def:blockgoe}
    Let $q\in \N$ and let $\matSig \in \R^{q \times q}$ be a symmetric with nonnegative entries.
    For $1 \leq r \leq c \leq q$, let $\bm A_{r,c}\in \R_\sym^{n/q \times n/q}$ be a symmetric random matrix whose entries on and above the diagonal are independent Gaussians with mean $0$ and variance $\bm\Sigma[r,c]/n$, and let $\bm A_{r,c}=\bm A_{c,r}$ for $q \geq r > c \geq 1$.
    The \emph{block GOE} model $\bA \sim \bGOE(n,\bm\Sigma)$ is the block matrix with blocks $(\bm A_{r,c})_{r,c\in[q]}$.
\end{definition}

Following the arguments of~\cite{male2020traffic,jones2025fourier}, one can prove that the block GOE model with fixed parameter $\bm\Sigma$ satisfies the strong cactus property.
Indeed, as in~\Cref{thm:goe}, it is still only the doubled trees or cactuses of 2-cycles that have non-zero value in the traffic distribution.
However, these values depend non-trivially on $\bm\Sigma$, and in general the block GOE model does {not} satisfy the factorizing strong cactus property.\footnote{If the row sums of $\bm\Sigma$ are constant, yielding what is sometimes called a \emph{generalized Wigner matrix}, then up to rescaling the traffic distribution is again that of the GOE and the factorizing property does hold.}

\paragraph{Traffic independence.} 
We study block models through the notion of {\em traffic independence}.
Traffic independence was
introduced by Male~\cite{male2020traffic}
as a generalization of
free independence of matrices. Free
independence is a property
of the mixed traces of
several random matrices
(in our notation, these traces are
represented by cycle diagrams),
whereas traffic independence
is a property of {\em all}
diagrams. Using this concept, below we 
prove a general result that 
block-structured matrices have the
strong cactus property 
provided that {\em (i)}
each of the blocks separately has
the strong cactus property,
and {\em (ii)} those blocks
are asymptotically traffic
independent.

For a sequence of symmetric matrices $(\bA_1, \dots, \bA_k) \in (\R^{n \times n}_{\sym})^k$, we generalize the graph polynomials to 
$w_\al(\bA_1, \dots, \bA_k)$ and $z_\al(\bA_1, \dots, \bA_k)$, where $\al$ is a multigraph whose edges are additionally colored by $\bA_1, \dots, \bA_k$.
The graph polynomial defined by $\al$ uses the entries of $\bA_i$ on each edge whose color is $\bA_i$, as in~\cref{def:non-uniform-label}.

Define a \emph{colored component} to be a maximal connected subgraph of $\al$ whose edges all have the same label $\bA_i$.
Let $\CC(\al)$ denote the set of colored components.
Define the \emph{graph of colored components} $\GCC(\al)$ to be the bipartite graph $\chi$ with:
\begin{align*}
    V(\chi) &= \CC(\al) \cup \{u \in V(\al): u \text{ belongs to at least two colored components}\}, \\
    E(\chi) &= \{(\calC, u): u \text{ belongs to the colored component }\calC\}.
\end{align*}

\begin{definition}[Traffic independence]
    \label{def:traffic independence}
    Let $(\bA_1, \dots, \bA_k) = (\bA_1^{(n)},\dots, \bA_k^{(n)}) \in (\R^{n \times n}_{\sym})^k$ be sequences of symmetric random matrices, with respective limiting traffic distributions
    $\cD_1, \ldots, \cD_k$.
    We say that $\bA_1,\dots, \bA_k$ are {asymptotically traffic independent} if,
    for all connected undirected multigraphs $\al$ with edges labeled by $\bA_1, \dots, \bA_k$,
    \[
        \lim_{n \to \infty} \frac 1n \E_{\bA_1, \dots, \bA_k}z_\al(\bA_1, \dots, \bA_k) = \begin{cases}
            \displaystyle\prod_{\calC \in \CC(\al)} \cD_{i(\calC)}(\calC) & \textnormal{if GCC($\al$) is a tree}\\
            0 & \textnormal{otherwise}
        \end{cases}
    \]
    Here, $i(\calC)$ denotes the matrix label associated with the colored component $\calC$.
\end{definition}

Next, we prove that traffic independence of the blocks preserves the strong cactus property:

\begin{proposition}
    \label{prop:block-matrix-strong-cactus}
    Let $q \in \N$. For $r,c\in [q]$, let $\bA_{r,c} = \bA_{r,c}^{(n)} \in \R^{n/q \times n/q}_\sym$ be a sequence of symmetric random matrices such that $\bm A_{r,c} = \bm A_{c,r}$.
    Assume that each $\bm A_{r,c}$ has a limiting traffic distribution that satisfies the strong cactus property and $(\bm A_{r,c})_{1 \leq r\leq c \leq q}$
    are asymptotically traffic independent. Then, the block matrix $\bmA\in \R^{n\times n}_{\sym}$ with blocks $(\bA_{r,c})_{r,c\in [q]}$ also has a limiting traffic distribution that satisfies the strong cactus property.
\end{proposition}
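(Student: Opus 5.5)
We expand $z_\alpha(\bA)$ for the block matrix according to the blocks the vertices land in. For $\alpha\in\calA$ and an injective labeling $\varphi\colon V(\alpha)\hookrightarrow[n]$, let $b=\block\circ\varphi\colon V(\alpha)\to[q]$. Grouping by $b$ gives
\[
  z_\alpha(\bA)=\sum_{b\colon V(\alpha)\to[q]} z^{(b)}_\alpha ,
\]
where $z^{(b)}_\alpha$ is the injective sum over labelings with $\varphi(u)$ in block $b(u)$. In such a term each edge $\{u,v\}$ carries the entry of $\bA_{b(u),b(v)}$. We identify each of the $q$ blocks with $[n/q]$. Injectivity on $[n]$ then only constrains vertices in the same block, and across blocks labels are unconstrained. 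So $z^{(b)}_\alpha$ is the product over blocks of injective sums, which we rewrite via Möbius inversion (\Cref{claim:mobius}) or inclusion–exclusion over partitions identifying vertices across different blocks. Since there are only finitely many $b$ and partitions, this expresses $z_\alpha(\bA)$ as a finite $n$-independent combination of colored injective polynomials $z_{\alpha'}((\bA_{r,c})_{r\le c})$ on $[n/q]$. Here the $\alpha'$ are quotients of $\alpha$ obtained by identifying vertices in distinct blocks, and each edge is colored by the pair $\{b(u),b(v)\}$. The normalization $\frac1n$ versus $\frac{1}{n/q}$ only costs a factor $q$.

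We then apply asymptotic traffic independence (\Cref{def:traffic independence}) to each colored term. Its limit is zero unless $\GCC(\alpha')$ is a tree. When it is a tree, the limit is the product over colored components $\calC$ of $\cD_{r,c}(\calC)$, where $\cD$ is taken in the $w$-basis, i.e. the traffic distribution. To use the strong cactus property of each block, we convert each factor to the $z$-basis. We will in fact argue directly in the $z$-basis, checking that the tree condition and the cactus conditions combine. Since a tree product of $z$-limits of the components is nonzero only if every component is a cactus, the first step is to choose the formulation of traffic independence in the $z$-basis. Following Male, this is the natural one: the limit of $z_\alpha$ equals the product over components of their $z$-limits when $\GCC$ is a tree. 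Existence of the limit of $\frac1n\E w_\alpha(\bA)$ then follows by \Cref{claim:wtoz}, since everything is a finite combination of convergent quantities.

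For the strong cactus property we show that $\lim\frac1n\E z_\alpha(\bA)=0$ when $\alpha\notin\calC$. Combinatorially, $\alpha$ is obtained from $\alpha'$ by un-identifying vertices; in the direct term with no identifications, $\alpha'=\alpha$ with a coloring. If $\GCC(\alpha)$ is a tree and each colored component is a cactus, then $\alpha$ is a cactus. This holds because gluing cactuses along single vertices in a tree pattern yields a cactus: every cycle lies within one component, since a cycle crossing components would give a cycle in $\GCC$. So non-cactus $\alpha$ kill all direct terms.

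\textbf{Main obstacle.} The hard part is the cross-block identification terms from inclusion–exclusion, where $\alpha'$ is a proper quotient of $\alpha$ that may well be a cactus even when $\alpha$ is not. We would try to avoid these terms altogether by noting that vertices in different blocks automatically receive distinct indices of $[n]$. Thus $z^{(b)}_\alpha$ equals exactly a colored $z$-polynomial of $\alpha$ itself in which vertices of different blocks range independently over $[n/q]$. This is not the usual injective colored $z$ on a single index set. To handle it, we would treat the $q$ blocks as index sets of a single $n/q$-dimensional family, restricting injectivity to within-block pairs only. We would then check that Male's traffic-independence limit still applies under this partial injectivity constraint, by expanding the missing cross-block distinctness constraints as additional identifications. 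Those extra identifications merge vertices carrying different color-class patterns and so, we expect, create cycles in $\GCC$, killing them asymptotically. Verifying this last point carefully is the step we expect to require the most work.
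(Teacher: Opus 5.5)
Your block expansion and your handling of the direct terms (tree $\GCC$ plus cactus components yields a cactus) match the paper's argument. You have also located exactly the weak point. However, the step you postpone cannot be carried out, because cross-block identifications do \emph{not} in general create a cycle in $\GCC$.

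For a fixed block assignment $\chi\colon V(\alpha)\to[q]$, the sum is injective only within each class $\chi^{-1}(r)$. It therefore equals $\sum_P z_{(\alpha_\chi)_P}((\bA_{r,c}))$, summed over partitions $P$ of $V(\alpha)$ each of whose parts meets every class at most once. Now take $\alpha$ a single edge $\{u,v\}$ with $\chi(u)=r\neq c=\chi(v)$, and $P=\{\{u,v\}\}$. The quotient is one self-loop of colour $\{r,c\}$, and its $\GCC$ is a single vertex. Traffic independence gives this term the limit $\lim \frac qn \E \Tr \bA_{r,c}$, which has no reason to vanish.

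In fact this refutes the statement as written. Take $q=2$, $\bA_{1,1}=\bA_{2,2}=0$ and $\bA_{1,2}=\bA_{2,1}=\bI_{n/2}$.
\begin{itemize}
\item Each block has the strong cactus property: the only nonzero $z$-values of $\bI$ are on a single vertex carrying loops, which is a cactus.
\item The three blocks are traffic independent, as one checks directly (or by Male's theorem, since they are deterministic and $S_n$-invariant).
\item Yet $\bA$ is the permutation matrix of a fixed-point-free involution, so $\frac1n z_\alpha(\bA)=\frac1n\sum_{i\neq j}\bA[i,j]=1$ for the single-edge diagram $\alpha\notin\calC$.
\end{itemize}

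The paper's proof does not escape this either. It writes $\frac1n z_\alpha(\bA)=\sum_\chi \frac1n z_{\alpha_\chi}((\bA_{r,c}))$, which also imposes distinctness of indices across different blocks. It thereby silently discards exactly the terms you worried about.

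What your approach does prove is a corrected statement. Suppose a term with $P$ nontrivial survives. Then $\GCC(\beta)$ is a tree for $\beta=(\alpha_\chi)_P$, so every cycle of $\beta$ lies in a single coloured component.
\begin{itemize}
\item Choose distinct $u,v$ in a common part of $P$ at minimal distance in $\alpha$, and take a shortest $u$--$v$ path.
\item By minimality, its image in $\beta$ is a simple cycle, hence monochromatic.
\item Its first edge contains $\chi(u)$ and its last edge contains $\chi(v)$, so its colour is $\{\chi(u),\chi(v)\}$, with $\chi(u)\neq\chi(v)$.
\item Hence its vertices alternate between the two blocks, and its length is odd.
\end{itemize}
So add the hypothesis that, for $r\neq c$, the $z$-limits of $\bA_{r,c}$ vanish on every cactus containing an odd cycle. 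This holds, for example, for factorizing off-diagonal blocks with vanishing odd free cumulants, as in the block GOE. Under it, all cross terms vanish and your direct-term argument completes the proof. Without some such hypothesis, no argument can.
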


\begin{proof}
    Let $\al \in \calA$.
    In the graph polynomial $z_\al(\bA)$ we partition the sum based on the block of each vertex:
    \begin{align*}
        \frac 1n z_\al(\bA) &= \frac 1n \sum_{\chi : V(\al) \to [q]} \sum_{\substack{i : V(\al) \to [\frac nq]}} \prod_{uv \in E(\al)} \bA_{\chi(u),\chi(v)}[i(u), i(v)]\,.
    \end{align*}
    We can interpret the inner summation as a generalized graph polynomial whose edges are labeled by the matrices $\bA_{r,c}$.
    Call this diagram $\al_\chi$ and write:
    \[
        \frac 1n z_\al(\bA) = \sum_{\chi : V(\al) \to [q]} \frac 1n z_{\al_\chi}((\bA_{r,c})_{r,c\in [q]})\,.
    \]
    Taking the expectation and the limit $n\to \infty$, by traffic independence, all limits exist (so the block matrix has a limiting traffic distribution), and the nonzero terms on the right-hand side are those for which $\GCC(\al_\chi)$ is a tree.
    By the strong cactus property for each $\bA_{rc}$, each colored component must be a cactus.
    Therefore, any nonzero $\al$ is formed by gluing several cactuses along a tree, which forms a bigger cactus.
\end{proof}

Finally, 
traffic independence is shown in~\cite{male2020traffic} to hold quite generally for independent random matrices $\bA_i$, each of which has a permutation-invariant distribution.

\begin{theorem}[{\cite[Theorem 1.8]{male2020traffic}}]
\label{thm:male-traffic-independence}
    Let $\bA_1, \dots, \bA_k \in \R_\sym^{n \times n}$ be independent random matrices such that for each $i\in [k]$,
    \begin{enumerate}[(i)]
        \item The law of $\bA_i \in \R^{n \times n}_\sym$ is $S_n$-invariant (i.e., invariant under the simultaneous action of $S_{n}$ on the rows and columns of $\bA_i$).
        \item The limiting traffic distribution of $\bA_i$ exists.
        \item The traffic distribution concentrates for $\bA_i$ (\cref{def:traffic-concentration}).
    \end{enumerate}
    Then $\bA_1, \dots, \bA_k$ are asymptotically traffic independent.
\end{theorem}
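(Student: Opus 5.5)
The plan is to exploit permutation invariance by conjugating each matrix by its own independent uniformly random permutation, which decouples the colors exactly at finite $n$. Let $\bP_1,\dots,\bP_k$ be independent uniform permutation matrices, independent of the $\bA_i$. By (i) and the independence of the $\bA_i$, the tuple $(\bA_1,\dots,\bA_k)$ has the same joint law as $(\bP_1\bA_1\bP_1^\top,\dots,\bP_k\bA_k\bP_k^\top)$. So for a connected edge-colored $\al$ I will compute $\E z_\al$ for the conjugated tuple, first conditioning on the $\bA_i$ and averaging over the $\bP_i$.

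\textbf{Step 1: exact decoupling formula.} Fix an injective labeling $\varphi : V(\al)\embeds[n]$. For each color $i$, let $V_i$ be the set of vertices incident to an edge of color $i$, and let $\al_i$ be the (possibly disconnected) subgraph of color-$i$ edges. The color-$i$ factor depends only on $\sigma_i\circ\varphi|_{V_i}$, which is a uniformly random injection $V_i\embeds[n]$. These injections are independent across $i$ and independent of $\varphi$. Hence
\[
\E z_\al = n^{\underline{|V(\al)|}} \prod_{i=1}^k \frac{\E z_{\al_i}(\bA_i)}{n^{\underline{|V_i|}}}\,,
\]
where $n^{\underline{m}}$ denotes the falling factorial.

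\textbf{Step 2: disconnected color subgraphs.} Write $\al_i$ as the disjoint union of its colored components $C_1,\dots,C_{c_i}$. I will show
\[
\E z_{\al_i}(\bA_i) = n^{c_i}\Bigl(\prod_{j} \cD_i(C_j)+o(1)\Bigr)\,.
\]
The product $\prod_j z_{C_j}$ equals $z_{\al_i}$ plus a finite sum of $z_\beta$, where each $\beta$ is obtained by identifying vertices across different components. Each such $\beta$ has at most $c_i-1$ connected components. By induction on the number of components, together with the fact that $\frac1n\E z$ of a connected diagram converges (by (ii) and Lemma~\ref{claim:mobius}), those terms are $O(n^{c_i-1})$. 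The leading term then comes from $\E\prod_j z_{C_j}$. By (iii) and Lemma~\ref{lem:concentration-z}, this is $n^{c_i}\bigl(\prod_j \lim\frac1n\E z_{C_j}+o(1)\bigr)$, and the limits $\lim\frac1n\E z_{C_j}$ are exactly the values $\cD_i(C_j)$ in the $z$-basis normalization used in Definition~\ref{def:traffic independence}.

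\textbf{Step 3: exponent count.} Combining the two steps, $\frac1n\E z_\al = n^{e}\bigl(\prod_{C\in\CC(\al)}\cD_{i(C)}(C)+o(1)\bigr)$ with
\[
e = |V(\al)|-1-\sum_{C\in\CC(\al)}\bigl(|V(C)|-1\bigr)\,.
\]
Let $U$ be the set of vertices lying in at least two colored components, and let $\chi=\GCC(\al)$. Then $\sum_C |V(C)| = |V(\al)| + \sum_{u\in U}(\deg_\chi u - 1)$, and $\sum_{u\in U}\deg_\chi u = |E(\chi)|$. Therefore
\[
e = |\CC(\al)| + |U| - 1 - |E(\chi)| = |V(\chi)| - |E(\chi)| - 1\,.
\]
Since $\al$ is connected, $\chi$ is connected, so $e$ is minus the cyclomatic number of $\chi$. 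Thus $e=0$ exactly when $\GCC(\al)$ is a tree, giving the product formula, and $e<0$ otherwise, giving limit $0$. This is precisely Definition~\ref{def:traffic independence}.

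The main obstacle is Step 2: justifying that the cross-identification corrections are lower order, which needs an inductive bound on $\E$ of $z$ for disconnected diagrams. It also requires carefully matching the concentration hypothesis, via Lemma~\ref{lem:concentration-z}, to the $z$-basis factorization. Step 1 is exact and Step 3 is pure counting.
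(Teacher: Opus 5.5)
Your proof is correct. The paper gives no argument for this statement; it quotes it from \cite[Theorem 1.8]{male2020traffic}. So there is no internal proof to compare against, and your three steps form a complete, self-contained proof.

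\textbf{Step 1.} This is an exact finite-$n$ identity. For fixed injective $\varphi$, the maps $\sigma_i\circ\varphi|_{V_i}$ are independent uniform injections, and independence of the $\bA_i$ then factors the outer expectation.

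\textbf{Step 2.} The induction on the number of connected components is sound. It uses (ii) together with M\"obius inversion for the connected pieces, and it uses (iii), via \Cref{lem:concentration-z}, only for colors that have at least two colored components.

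\textbf{Step 3.} The identity $e=|V(\chi)|-|E(\chi)|-1$, applied to the connected graph $\chi=\GCC(\alpha)$, gives exactly the tree versus non-tree dichotomy.

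Your reading of $\cD_i(C)$ as $\lim_n\frac1n\E z_C(\bA_i)$ is the only consistent one. For a single-colored $\alpha$ the definition must reduce to this quantity, and it is how the paper uses the definition in the proof of \Cref{prop:block-matrix-strong-cactus}.

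Compared with the citation, your route makes it transparent where each hypothesis enters. It also shows that one family may be left unconjugated. The injective sum of that family's edges over all of $V(\alpha)$ equals $\frac{n^{\underline{|V(\alpha)|}}}{n^{\underline{|V_k|}}}\, z_{\alpha_k}(\bA_k)$, which yields the same formula. Hence permutation invariance is only needed for $k-1$ of the families.

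The citation, in turn, places the result inside Male's general framework, which also covers non-symmetric matrices. The paper does not need that generality here.
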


Together with~\Cref{prop:block-matrix-strong-cactus}, \Cref{thm:male-traffic-independence}
implies that block-structured
matrices with independent blocks, each
satisfying the strong cactus property and Conditions {\em (i), (ii), (iii)} also satisfy
the strong cactus property (such as the block
GOE matrix).
We note that Condition~{\em (i)} can be ensured by applying an independent random permutation to the rows and columns of each $\bA_i$.
Condition~{\em (iii)} is proven for orthogonally invariant random matrices in~\Cref{lemma:full-factorization}.

\section{Universality for Deterministic Matrices}
\label{sec:universality}

Recall the definition of puncturing (\Cref{def:puncturing}) and
of the \rrom~(\Cref{def:rom}).
Our main theorem in this section is:

\begin{theorem}
    \label{thm:universality-orthogonal}
    Let $\bH = \bH^{(n)}\in\R_{\sym}^{n\times n}$ be a sequence of symmetric orthogonal matrices such that
    \begin{align}
        \max_{1\le i\le j\le n} |\bmH[i,j]|\le n^{-\frac 12 + o(1)}\,.\label{eq:delocOrtho}
    \end{align}
    Then, the limiting traffic distribution of the puncturing of $\bm H$ 
    exists and equals that of the \rrom.
\end{theorem}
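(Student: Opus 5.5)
Our plan is to reduce everything to \Cref{lem:eqDiffMobius}, applied to the punctured matrix $\bA = \bm\Pi\bH\bm\Pi$. We verify its three hypotheses for $\bA$ using only $\bH^2 = \bI$, $\bH$ symmetric, and the delocalization bound~\cref{eq:delocOrtho}. We then check that the resulting cactus values $m_\sigma$ agree with those of the \rrom. Since the \rrom is the puncturing of a \rom matrix, which is itself a symmetric orthogonal matrix that is delocalized with high probability, a cleaner route is to show that the $m_\sigma$ depend only on the structure $\bH^2=\bI$ plus delocalization. Then the same computation applies verbatim to the \rom, conditionally on a high-probability delocalization event, using boundedness of entries to control the complement. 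This avoids any explicit Weingarten calculation.

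\textbf{Step 1: cactus values.} For a cactus $\sigma$, $w_\sigma(\bA)$ is obtained by gluing cycles at cut vertices. Each cycle of length $q$ rooted at a vertex contributes the diagonal entry $\bA^q[i,i]$. Since $\bm\Pi$ commutes with $\bH$ only up to a rank-one correction, we expand $\bA^q = \bm\Pi(\bH\bm\Pi)^{q}$, where $\bm\Pi = \bI - \frac1n\bm1\bm1^\top$. Using $\bH^2=\bI$, this reduces $\bA^q$ to $\bm\Pi$ or $\bm\Pi\bH\bm\Pi$ plus a bounded number of terms involving $\bm v\defeq \bH\bm 1/\sqrt n$ and scalars like $\langle \bm1,\bH\bm1\rangle/n$. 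The key fact is that delocalization forces the diagonal entries $\bA^q[i,i]$ to equal $\mathbb 1[q \text{ even}]$ up to $o(1)$, uniformly in $i$, with the $o(1)$ small enough for products over cycles.

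Concretely, $\bH[i,i]$ is small, and $(\bm\Pi\bH\bm\Pi)[i,i] = \bH[i,i] - \frac{2}{n}(\bH\bm1)[i] + \frac1{n^2}\bm1^\top\bH\bm1$. Here $|(\bH\bm1)[i]|\le n^{1/2+o(1)}$, and $\bm1^\top\bH\bm1\le n$. A possibly large single entry of $\bH\bm 1$ only affects the $w_\sigma$ average on $o(n)$ coordinates. To handle this rigorously we bound the relevant sums $\frac1n\sum_i\prod(\cdot)$ via $\|\bH\bm1\|_2=\sqrt n$. This yields a limit $m_\sigma=\prod_{\rho}\mathbb 1[|\rho|\text{ even}]$, which matches the \rom/\rrom factorized moments of \Cref{claim:rom-cumulants}.

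\textbf{Step 2: non-2-edge-connected diagrams.} For $\alpha\in\calA\setminus\calE$ with a bridge $e=\{u,v\}$, we write $w_\alpha(\bA)$ as $\bm x^\top \bA \bm y$. Here $\bm x,\bm y$ are the vector $w$-polynomials of the two sides rooted at $u,v$. Puncturing gives $\bA\bm 1=0$, so we may replace $\bm x,\bm y$ by their centered versions. We then show $\frac1n\langle \bm\Pi\bm x,\bH\bm\Pi\bm y\rangle\to0$. The vector polynomials have entries that are approximately constant by a Step 1 type argument (each side reduces to products of near-constant diagonal entries times smaller corrections), so their centered parts have $\ell_2$ norm $o(\sqrt n)$. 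Since $\|\bH\|=1$, the bound follows.

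\textbf{Step 3: the weak cactus property, the main obstacle.} We must show $\frac1n z_\alpha(\bA)\to0$ for every 2-edge-connected non-cactus $\alpha$. Our approach is induction via Möbius inversion as in the proof of \Cref{lem:diagonal-w-z}: it suffices to show $\frac1n w_\alpha(\bA)$ equals a prescribed combination of cactus values.

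We first try an ear decomposition. A 2-edge-connected non-cactus graph contains two cycles sharing an edge, i.e., a theta subgraph with three internally disjoint paths between $s$ and $t$. In the $w$-basis the theta contributes $\sum_{s,t}\bA^{a}[s,t]\bA^{b}[s,t]\bA^{c}[s,t]\cdot(\text{rest})$. Each $\bA^{a}[s,t]$ with $s\ne t$ is, by Step 1 reasoning, $\pm\bH[s,t]$ or $O(1/n)$, hence of size $n^{-1/2+o(1)}$. The triple product is then $n^{-3/2+o(1)}$ against only $n^2$ pairs, while the two-path part already saturates via $\sum_t \bH[s,t]^2=1$. This gains $n^{-1/2+o(1)}$ per extra off-diagonal path, which is precisely what distinguishes cactuses from non-cactuses.

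Making this rigorous needs the mixed $w^{s\ne t}$ polynomials to separate diagonal ($s=t$, which produce cactus contractions) from off-diagonal labelings. It also needs careful handling of the rest of the graph, which may contain long paths of $\bH$ products that we collapse using $\bH^2=\bI$. We expect this bookkeeping, together with ensuring the $n^{o(1)}$ losses don't accumulate beyond a fixed power for fixed $\alpha$, to be the hardest part.
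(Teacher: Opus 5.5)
Your outer structure matches the paper's: verify the three hypotheses of \Cref{lem:eqDiffMobius} for $\bA=\bm\Pi\bH\bm\Pi$, then transfer to the \rrom by conditioning on a delocalization event. But the core of the theorem, the weak cactus property in your Step 3, is only a heuristic, and that heuristic rests on a false claim.

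Off-diagonal entries of powers of $\bA$ are \emph{not} of the form $\pm\bH[s,t]$ or $O(1/n)$. One has $\bA^2=\bm\Pi-\frac1n\bm u\bm u^\top$ with $\bm u=\bm\Pi\bH\bm 1$, and delocalization does not prevent $O(1)$ coordinates of $\bm u$ from being of order $\sqrt n$. For the DST matrix the paper shows $\bA^2[i,j]\to-8/(ij\pi^2)$ for fixed odd $i\neq j$. Open-cactus matrices of $\bA$ are off-diagonally small only in Frobenius norm. This is why the paper imposes entrywise smallness only on $\bH$ (\cref{eq:assOpen}, checked via \Cref{lem:diagonal-hadamard}) and controls $\bW_\sigma(\bA)-\bW_\sigma(\bH)$ separately in Frobenius norm (\Cref{prop:to-punctured,lem:low-rank}).

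More fundamentally, your count does not control ``the rest of the graph.'' Naive entrywise counting gives only $n^{|V|-|E|/2+o(1)}$, which is not $o(n)$ once $\alpha$ has several degree-3 vertices (already for $K_4$). Collapsing paths via $\bH^2=\bI$ is unavailable under injective labelings, and for $\bA$ it produces rank-one terms that are not entrywise small. The missing ingredients are:
\begin{itemize}
\item \Cref{prop:open-cactus-decomposition}: a non-cactus $\alpha\in\calE_1$ contains an induced open cactus with endpoints $s\neq t$ and internal vertices avoiding the root, whose removal leaves a 2-edge-connected graph. An arbitrary theta subgraph does not have this property.
\item The lift and Cauchy--Schwarz argument of \Cref{prop:Z-support-vector,lem:abs-value-pushed}, which gives $\|\bw_\alpha^{s\neq t}\|_2^2\le O(1+\varepsilon\sqrt n)\,\|\bw_\alpha^{s\neq t}\|_2$.
\item \Cref{thm:fundamental-theorem}, which bounds every remaining 2-edge-connected piece by operator norms at most $1$. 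As a result $\varepsilon$ enters exactly once and no $n^{o(1)}$ losses accumulate.
\end{itemize}

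Step 2 depends on the same missing input. Your claim that the vectors on either side of a bridge have approximately constant entries is justified only when their 2-edge-connected components are cactuses. For a non-cactus component it is precisely a vector form of Step 3; this is how \Cref{lem:AW-2ec} uses \Cref{prop:Z-support-vector}. Moreover, the bound $\|\bm\Pi\bw\|_2=o(\sqrt n)$ is too weak once bridges are nested. Several branches $\bA\bw_{\beta_i}(\bA)$ meeting at a vertex are multiplied entrywise, so you need their sup norms, hence their $\ell_2$ norms, to be $n^{o(1)}$. With only $o(\sqrt n)$ each, three such branches already leave you with the useless bound $\frac1n|w_\alpha(\bA)|=o(\sqrt n)$. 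The paper obtains $\|\bA\bw_\alpha(\bA)\|_2\le n^{o(1)}$ by induction in \Cref{lem:AW-all}, feeding $\mathrm{diag}(\bA\bw_{\beta_i}(\bA))$ as edge weights into \Cref{thm:fundamental-theorem}.

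Lastly, in Step 1 a cactus with nested cycles contributes diagonals of products such as $\bA\bm D_1\bA\bm D_2\cdots\bA$, not products of $\bA^q[i,i]$. This is repairable: compute for $\bH$ via \Cref{lem:diagonal-hadamard} and transfer with \Cref{cor:cactus-puncturing-same}. As written, however, it only covers bouquets of cycles.
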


\noindent \Cref{thm:universality-orthogonal}
directly applies to $\bH$ being the sequence
of Walsh-Hadamard matrices, discrete sine
transform matrices, or discrete cosine 
transform matrices.
\Cref{thm:universality-orthogonal}
follows from the more general~\Cref{thm:universality-new} below, which applies
to symmetric matrices that are not
necessarily orthogonal, but have a
limiting diagonal distribution and 
satisfy
a generalized delocalization assumption.

\begin{assumption}
    \label{assumption:punctured}
    Let $\bH=\bH^{(n)} \in \R^{n \times n}_{\sym}$ and $\eps = \eps^{(n)}>0$.
    We introduce the assumptions:
    \begin{alignat}{2}
        \|\bH\|&\le 1, &&\label{eq:assNorm}\\[3.5mm]
        \max_{1\le i<j\le n} |\bW_\alpha(\bH)[i,j]|&\le \eps&\qquad&\text{for each open cactus $\alpha$ (\Cref{def:open-cactus})}, \label{eq:assOpen}\\
        \frac 1 {\sqrt n}\|\mathbf \Pi \bmw_\sigma(\bH) \|_2&\le \eps&\qquad&\text{for all $\sigma\in\calC_1$}, \label{eq:assOne}
    \end{alignat}
    where $\mathbf \Pi=\mathbf \Pi^{(n)} = \mathbf I - \frac 1n \bm 1 \bm 1^\top$ denotes the projection orthogonal to the all-ones direction. 
\end{assumption}

\noindent For example, one of the constraints of~\cref{eq:assOpen} is that $|\bH^k[i,j]| \leq \eps$ uniformly for all $k,n \in \N$ and distinct $i,j \in [n]$ (a bound which is uniform in $n,i,j$ but may depend on $k$ would also be sufficient, but we omit this for simplicity).

\begin{theorem}[Universality]
    \label{thm:universality-new}
    Let $\bH = \bH^{(n)}\in\R^{n\times n}_\sym$, $\bm A$ be the puncturing
    of $\bH$, and $\eps^{(n)} > 0$.
    \begin{enumerate}
        \item If $\bH$ satisfies~\cref{eq:assNorm,eq:assOpen}, then for all $\alpha\in \calE\setminus \calC$, 
        \[
            \frac 1n |z_\alpha(\bH)|\le O_\alpha\left( \varepsilon^{(n)} + \frac 1 {\sqrt n}\right)\qquad\text{and}\qquad\frac 1n |z_\alpha(\bA)|\le O_\alpha\left( \varepsilon^{(n)} + \frac 1 {\sqrt n}\right)\,.
        \]
        In particular, if $\eps^{(n)}=o(1)$, then
        both $\bH$ and $\bA$ satisfy the weak cactus property.
        \item If $\bH$ satisfies~\cref{eq:assNorm,eq:assOpen,eq:assOne}, then for all $\alpha\in \calA\setminus \calE$, 
        \[
            \frac 1n |w_\alpha(\bA)|\le \frac 1 {\sqrt n}\cdot \left(1 + \varepsilon^{(n)} \sqrt n\right)^{O_\alpha(1)}\,.
        \]
        In particular, if $\eps^{(n)} = n^{-\frac 12 + o(1)}$, then the right-hand side is $n^{-\frac 12 + o_\alpha(1)}$.
    \end{enumerate}
    Hence, if $\bH$ satisfies~\cref{eq:assNorm,eq:assOpen,eq:assOne} with $\eps^{(n)} = n^{-\frac 12 + o(1)}$, and the diagonal distribution of $\bH$ exists, then the traffic distribution of $\bA$ exists
    and is determined by the diagonal distribution of $\bH$.
\end{theorem}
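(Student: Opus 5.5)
The plan is to prove the three parts in order and then assemble them with \Cref{lem:eqDiffMobius}. Throughout, I take an open cactus (\Cref{def:open-cactus}) to be a matrix diagram obtained from a cactus by cutting one cycle open at its two roots. The one-dimensional perturbation $\bA-\bH = -\frac1n(\bm 1\bm 1^\top\bH+\bH\bm 1\bm 1^\top)+\frac{1}{n^2}\bm 1\bm 1^\top\bH\bm 1\bm 1^\top$ is handled by expanding each edge of a diagram into the four terms of $\matPi\bH\matPi$. Each $\frac1n\bm 1\bm 1^\top$ factor cuts an edge and replaces it by a $\frac1n$ times a sum over a fresh free vertex, i.e.\ a product of vector diagrams.

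\textbf{Part 1 (weak cactus property).} Let $\al\in\calE\setminus\calC$. A 2-edge-connected graph that is not a cactus contains an edge lying on two distinct simple cycles. Hence it contains a ``theta'': two distinct vertices $s,t$ joined by three edge-disjoint paths.
\begin{itemize}
\item I will first bound the relaxed sum $w^{s\neq t}_\al(\bH)$ and then recover $z_\al$. The recovery uses the recursive change of basis from the proof of \Cref{lem:diagonal-w-z}: $z_\al$ equals $w^{s\neq t}_\al$ minus contractions that keep $s,t$ apart. By \Cref{lem:contract-2ec} these contractions stay in $\calE$ and have fewer vertices.
\item For the relaxed sum, I cut $\al$ at $s$ and $t$ and write $w^{s\ne t}_\al(\bH)=\sum_{i\neq j}\prod_{k}\bW_{\beta_k}(\bH)[i,j]$, where the $\beta_k$ are matrix diagrams rooted at $(s,t)$ and at least three of them carry the three paths. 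By \Cref{eq:assNorm} and 2-edge-connectivity of each piece, each $\bW_{\beta_k}$ has operator norm $O(1)$.
\item I choose the theta so that one piece $\beta_3$ is an open cactus. Then $|\bW_{\beta_3}[i,j]|\le\eps$ by \cref{eq:assOpen}.
\item This gives $\bigl|\sum_{i\ne j}\prod_k\bW_{\beta_k}[i,j]\bigr|\le \eps\,\|\bW_{\beta_1}\|_F\|\bW_{\beta_2}\|_F\prod_{k\ge4}\max|\bW_{\beta_k}| \le O(\eps n)$, using $\|\cdot\|_F\le\sqrt n\|\cdot\|$.
\end{itemize}
If no such choice is possible, the structure must be degenerate (e.g.\ a $\bW_{\beta}$ evaluated only on diagonal-type identifications). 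I expect that case to cost a factor $1/\sqrt n$ from a Cauchy--Schwarz with a rank-one or trace term, which is the origin of the $\frac1{\sqrt n}$ in the bound. For $\bA$, the $\frac1n\bm 1\bm 1^\top$ terms produce diagrams with extra cut edges; these either fall into the same estimate or carry an explicit $\frac1n$ that beats the extra free vertex.

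\textbf{Part 2 (diagrams with a bridge).} For $\al\in\calA\setminus\calE$, choose a bridge $e$ splitting $\al$ into vector diagrams $\beta_1,\beta_2$ rooted at the endpoints of $e$. Then $w_\al(\bA)=\langle \bw_{\beta_1}(\bA),\bA\,\bw_{\beta_2}(\bA)\rangle=\langle\matPi\bw_{\beta_1}(\bA),\bH\matPi\bw_{\beta_2}(\bA)\rangle$, so by \cref{eq:assNorm}
\[
\tfrac1n|w_\al(\bA)|\le\tfrac1n\|\matPi\bw_{\beta_1}(\bA)\|_2\,\|\matPi\bw_{\beta_2}(\bA)\|_2 .
\]
It then suffices to prove by induction on the number of bridges that $\|\bw_\beta(\bA)\|_2\le\sqrt n(1+\eps\sqrt n)^{O(1)}$ for every vector diagram $\beta$. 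At least one of the two projected factors should additionally satisfy $\|\matPi\bw_\beta(\bA)\|_2\le(1+\eps\sqrt n)^{O(1)}$. For vector cactuses this comes from \cref{eq:assOne}, after expanding $\bA$ into $\bH$ and $\frac1n\bm 1\bm 1^\top$ pieces; the latter split $\beta$ into lower-order diagrams times scalars already controlled. A general $\beta$ decomposes into 2-edge-connected blobs joined by bridges. Each bridge is again an $\bA$-multiplication, which kills the $\bm 1$-component, so the recursion closes. Blobs that are not cactuses are negligible by the Part 1 argument.

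\textbf{Conclusion and main obstacle.} With $\eps=n^{-1/2+o(1)}$, Parts 1 and 2 give conditions 1 and 2 of \Cref{lem:eqDiffMobius} for $\bA$. For condition 3, I expand $w_\sigma(\bA)$ for $\sigma\in\calC$ via $\matPi=\bI-\frac1n\bm 1\bm 1^\top$. The $\bI$ term gives $w_\sigma(\bH)$. Each $\frac1n \bm 1\bm 1^\top$ term factorizes into inner products of vector cactus polynomials of $\bH$, whose normalized limits exist by \cref{eq:assOne} and the diagonal distribution of $\bH$. Hence $m_\sigma$ exists and is determined by the diagonal distribution of $\bH$. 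The step I expect to be hardest is Part 1: showing that every non-cactus 2-edge-connected diagram admits a cut into $(s,t)$-pieces with one genuine open cactus, and controlling the degenerate sub-cases uniformly, with the $\frac1{\sqrt n}$ loss. I would attempt it by induction on the number of vertices, peeling an ear from an ear decomposition.
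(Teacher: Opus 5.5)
There is a genuine gap in Part~1, at the step you yourself flagged as hardest. Your estimate needs a pair $s\neq t$ at which $\alpha$ splits into at least three pieces meeting only at $s,t$, one of them an open cactus. The other two are then paired by Cauchy--Schwarz as $\|\bW_{\beta_1}\|_{\mathrm F}\|\bW_{\beta_2}\|_{\mathrm F}\le n$. Such a pair need not exist, and the failures are generic rather than degenerate. For $\alpha=K_4\in\calE\setminus\calC$, deleting any two vertices leaves a connected graph, so the only pieces at $\{s,t\}$ are the edge $st$ and one remaining piece $\gamma$. From $\|\bW_\gamma(\bH)\|\le 1$ alone you then get only $\eps\sum_{i,j}|\bW_\gamma(\bH)[i,j]|\le \eps n^{3/2}$, i.e.\ $\frac1n|w^{s\neq t}_\alpha(\bH)|\le \eps\sqrt n=n^{o(1)}$, which is useless.

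The ear decomposition does produce the right object: an open cactus whose removal leaves a 2-edge-connected remainder (\Cref{prop:open-cactus-decomposition}). But that remainder is an arbitrary 2-edge-connected matrix diagram, and what is missing is an entrywise $\ell_1$ bound $\sum_{i,j}|\bW_\gamma[i,j]|\le n$ for such diagrams. The paper obtains this in \Cref{lem:abs-value-pushed}. It picks an $s$--$t$ path $\pi$ and splits into the component of $s$ in $\gamma\setminus E(\pi)$ and the rest. It then takes absolute values only after summing the labels outside the shared vertex set $S$, and applies Cauchy--Schwarz to two lifts that remain 2-edge-connected (\Cref{lem:structural-2ec}).

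With that lemma your scalar route closes for $\bH$, but $\bA$ needs separate treatment. It does not satisfy \cref{eq:assOpen}: off-diagonal entries of $\bA^2$ are $\Omega(1)$ for the DST. So you must write $\bW_\sigma(\bA)=\bW_\sigma(\bH)+(\bW_\sigma(\bA)-\bW_\sigma(\bH))$ and pair $\|\bW_\sigma(\bA)-\bW_\sigma(\bH)\|_{\mathrm F}\le 3|E(\sigma)|$ (\Cref{prop:to-punctured}) with $\|\bW_\gamma(\bA)\|_{\mathrm F}\le\sqrt n$. This, not a degenerate sub-case, is the source of the $\frac1{\sqrt n}$; for $\bH$ itself the argument gives $O(\eps)$. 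Your edge-by-edge expansion into $\frac1n\bm 1\bm 1^\top\bH$ terms is only sketched, and it produces exactly the bridged $\bH$-diagrams on which the hypotheses give no control.

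Part~2 has the right architecture, matching \Cref{lem:AW-all}: split at a bridge, bound the leaf 2-edge-connected side by $\sqrt n$, and show $\|\bA\bw_\beta(\bA)\|_2\le(1+\eps\sqrt n)^{O(1)}$ on the other side. However, ``non-cactus blobs are negligible by Part~1'' does not supply the needed input. Take $\alpha$ to be two copies of $K_4$ joined by a bridge, so $w_\alpha(\bA)=\langle\bw_{K_4}(\bA),\bA\bw_{K_4}(\bA)\rangle$. After the contraction recursion you need something like $\|\bw^{s\neq t}_{K_4}(\bA)\|_2\le O(1+\eps\sqrt n)$, an $\ell_2$ bound on the whole vector, whereas Part~1 only controls $\langle\bm 1,\bw^{s\neq t}\rangle$.

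The paper proves this vector bound in \Cref{prop:Z-support-vector} by a self-bounding argument. It squares $\bw^{s\neq t}_\alpha$, lifts at the root, deletes one copy of the open cactus, and bounds the two resulting terms by $\eps\sqrt n\,\|\bw^{s\neq t}_\alpha\|_2$ and $O(1)\,\|\bw^{s\neq t}_\alpha\|_2$. You also need a mechanism for bridges hanging off non-root vertices of the root's blob. The paper folds them into diagonal edge weights inside a 2-edge-connected diagram (\Cref{lem:new-2ec-construction}) and then applies the fundamental theorem.

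Finally, in checking condition~3 your description of the correction terms in $w_\sigma(\bA)$ is wrong. They are quantities like $\bm 1^\top\bW_\rho(\bH)\bm 1$ for open cactuses $\rho$, not inner products of vector cactuses; an example is $\frac{2}{n^2}\|\bH\bm 1\|_2^2$ in $\frac1n\Tr(\bA^2)$. Their normalized values need not converge under the hypotheses. What is true, and what you need, is that they vanish: $\frac1n|w_\sigma(\bA)-w_\sigma(\bH)|=O(n^{-1/2})$ (\Cref{cor:cactus-puncturing-same}).
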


\noindent We emphasized in the statement 
that all constants in the $O$
notations depend on $\alpha$.
We will drop this dependency in the rest
of the section.

\paragraph{Comparison with prior work.}
    In~\cite[Theorem 2.8]{wang2022universality}, the authors assume {\em (i)} delocalization of open cactuses
    (\cref{eq:assOpen}) and {\em (ii)} the existence of a limiting diagonal
    distribution. They show that, after conjugation by a randomly signed
    permutation matrix, the resulting ``semi-random'' matrix lies in the same
    universality class (in the sense of AMP dynamics) as an orthogonally
    invariant matrix with the same diagonal distribution.
    \Cref{thm:universality-new} shows that the same conclusion holds for
    deterministic matrices, if we replace random conjugation with puncturing.

    The universality result of \cite{wang2022universality} can also be extended in a black-box way to
    deterministic matrices, but only for GFOM with {\em odd} nonlinearities~\cite{dudeja2023universality,
    zhong2024approximate}. 
    This assumption lets one only consider the limiting traffic distribution evaluated on {\em Eulerian} diagrams. Under the same
    assumption, our proof would also 
    significantly simplify. Indeed, in~\Cref{thm:universality-orthogonal}, the number of monomials appearing in $w_\al(\bH)$ is $O(n^{|V(\al)|})$, and each term has magnitude $\max_{i,j\in [n]}|\bH[i,j]|^{|E(\al)|}\le n^{-|E(\al)|/2+o(1)}$, giving the upper bound $|w_\al(\bH)| \leq n^{o(1)}$ if $\al$ has minimum degree 4. It only remains to incorporate paths of degree-2 vertices, which simply compute $\bH^k\in \{\bm I, \bm H\}$ for some $k\ge 1$.
    
\subsection{Calculation of cactus diagrams and diagonal distribution}
\label{sec:diagonal-hadamard}

To apply~\Cref{thm:universality-new}, one needs to
compute the diagonal distribution of $\bH$ and small strengthenings of it
in order to verify~\Cref{assumption:punctured}.
Notice that the only diagrams involved in the assumptions are cactuses, so this is a much simpler task than calculating the entire traffic distribution.
In this subsection, we do this calculation directly to prove~\cref{thm:universality-orthogonal} assuming~\cref{thm:universality-new}.

Let $\bH$ be a delocalized orthogonal matrix
satisfying the assumption of~\cref{thm:universality-orthogonal}. Note that it
satisfies $\bH^2 = \bI$. Hence,~\cref{eq:assNorm} is automatic.
Next, we define the notion of {\em open cactus} appearing in~\cref{eq:assOpen}. An {open cactus} is
a matrix diagram with two roots such that merging the roots yields a cactus.
\begin{definition}
    \label{def:open-cactus}
    An { open cactus} is a graph obtained from
    a simple path by
    attaching vertex-disjoint cactuses to 
    each vertex of the path.
    Formally, $\alpha=(V(\alpha),E(\alpha))$
    is an open cactus if there exist $k\ge 2$,
    vertex-disjoint cactuses $\beta_1, \ldots, \beta_{k}$, and
    distinct vertices $u_1\in V(\beta_1), \ldots, u_{k}\in V(\beta_{k})$ with
    \[
        V(\alpha) = \bigcup_{i=1}^{k} V(\beta_i)\,,\quad E(\alpha) = \{\{u_i,u_{i+1}\}:i\in \{1,\ldots, k-1\}\}\cup \bigcup_{i=1}^{k} E(\beta_i)\,.
    \]
    We call $(u_1,u_{k})$ the {endpoints} of $\alpha$, and $(u_1, \ldots, u_k)$ the {base
    path} of $\alpha$.
    Unless specified otherwise, we will view
    an open cactus $\alpha\in\calA_2$ as a
    matrix diagram rooted at its two ordered endpoints.
\end{definition}

In general, if $\al$ is a matrix diagram and $\al^{\prime}$ is the scalar diagram formed by merging the roots of $\al$, then $\Tr(\bW_{\al}(\bA)) = w_{\al^{\prime}}(\bA)$.
For an open cactus $\al$, this $\al^{\prime}$ is a cactus, and so $w_{\al^{\prime}}(\bA)$ is one of the quantities whose limit is included in the diagonal distribution of $\bA$; further, all values of the diagonal distribution can be obtained in this way from the \emph{diagonal} entries of open cactus matrices.
From this perspective,
\cref{eq:assOpen} is a natural counterpart to the diagonal distribution since it concerns all of the \emph{off-diagonal} entries of the open cactus matrices.

We compute the open cactus matrices for $\bH$ in the following lemma.

\begin{lemma}\label{lem:diagonal-hadamard}
    Let $\sigma$ be an open cactus and let $\bH$ satisfy~\cref{eq:delocOrtho}.
    If all cycles in all of the hanging cactuses have even length, then $\bW_\sig(\bH) = \bI$ if the base path has even length and $\bW_\sig(\bH) = \bH$ if the base path has odd length. Otherwise, $\norm{\bW_\sig(\bH)} \leq n^{-\frac 12 + o(1)}$.
\end{lemma}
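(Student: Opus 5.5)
We argue by structural induction on the open cactus $\sigma$, peeling hanging cactuses off the base path. The key observation is that $\bW_\sigma(\bH)$ factors as a product along the base path:
\[
\bW_\sigma(\bH) = \bD_1 \bH \bD_2 \bH \cdots \bH \bD_k\,,
\]
where $\bD_i = \mathrm{diag}(\bw_{\beta_i}(\bH))$ is the diagonal matrix formed from the vector diagram of the hanging cactus $\beta_i$ rooted at $u_i$. Thus it suffices to understand the vectors $\bw_{\beta}(\bH)$ for rooted cactuses $\beta\in\calC_1$. We aim to show the following. If every cycle of $\beta$ has even length, then $\bw_\beta(\bH)=\bm 1$. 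Otherwise, $\|\bw_\beta(\bH)\|_\infty \le n^{-1/2+o(1)}$. Given this dichotomy, the lemma follows. In the all-even case every $\bD_i=\bI$, so $\bW_\sigma(\bH)=\bH^{k-1}$, which is $\bI$ or $\bH$ according to the parity of $k-1$ because $\bH^2=\bI$. Otherwise some $\bD_i$ has operator norm $\le n^{-1/2+o(1)}$, and we bound
\[
\|\bW_\sigma(\bH)\|\le \prod_i\|\bD_i\|\,\|\bH\|^{k-1}\,,
\]
using $\|\bH\|=1$ and $\|\bD_j\|\le \|\bw_{\beta_j}(\bH)\|_\infty$. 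This requires a uniform $\ell_\infty$ bound $O(1)$ on the remaining factors, which also follows from the dichotomy.

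To prove the dichotomy, we induct on the number of cycles. A rooted cactus is obtained by grafting at the root (Hadamard product) several cactuses, each of whose root lies on a single cycle $\rho$. Since $\ell_\infty$ bounds and the all-ones property are multiplicative under grafting, it suffices to treat a root on a single cycle $\rho$ of length $q$ with cactuses $\beta'_j$ hanging at its other vertices. Then
\[
\bw_\beta(\bH)[i] = \big(\bH\bD'_1\bH\cdots \bD'_{q-1}\bH\big)[i,i]\,,
\]
with each $\bD'_j$ diagonal and built from strictly smaller cactuses, so the induction hypothesis applies to them.

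If all of these smaller cactuses are all-even, then each $\bD'_j=\bI$ and $\bw_\beta(\bH)[i]=\bH^q[i,i]$. This equals $1$ for $q$ even, and $\bH[i,i]$, of size $\le n^{-1/2+o(1)}$, for $q$ odd.

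If some $\bD'_j$ is small, we bound the diagonal entry $|\bm e_i^\top \bM \bm e_i|$ by $\|\bM\|$ as above, which gives $n^{-1/2+o(1)}$. Here we must make sure the remaining diagonal factors have $\ell_\infty$ norm $O(1)$. By the induction hypothesis each is either $\bm 1$ or $n^{-1/2+o(1)}$, so this holds. Loops, i.e.\ cycles of length $1$, fit the same scheme: they contribute the factor $\bH[i,i]$, which is small. Length-2 cycles contribute $(\bH^2)[i,i]=1$.

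The main obstacle is bookkeeping rather than an analytic difficulty: setting up the factorization correctly when cactuses hang at the endpoints of the base path and at the root, and checking that the induction on the number of cycles is well-founded. No cancellation estimates are needed, since $\bH^2=\bI$ and $\|\bH\|=1$ carry the whole argument. If one wants a bound sharper than the operator norm for non-even cactuses, the entrywise delocalization \cref{eq:delocOrtho} is used only at the base case of an odd cycle, through $\bH[i,i]$.
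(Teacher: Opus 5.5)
Your proposal is correct and follows essentially the same route as the paper: you factor $\mathbf{W}_\sigma(\mathbf{H})$ along the base path into diagonal matrices coming from the hanging cactuses, interleaved with copies of $\mathbf{H}$; you use $\mathbf{H}^2=\mathbf{I}$ in the all-even case; and otherwise you combine submultiplicativity of the operator norm with the smallness of the diagonal entries $\mathbf{H}[i,i]$. The only difference is bookkeeping. You run the induction on rooted vector cactuses, using $\ell_\infty$ bounds and Hadamard products at the root, whereas the paper splits each hanging cactus at its attachment vertex into an open cactus and inducts on operator norms; in doing so you state explicitly the $O(1)$ bound on the remaining factors, which the paper uses only implicitly.
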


\begin{proof}
    First, the
    leaf 2-vertex-connected components of 
    $\sigma$
    consisting of cycles of even length
    can be iteratively removed without
    changing the value of $\bW_\sigma(\bH)$.
    This is because a hanging cycle of
    even length $k$ contributes $\text{diag}(\bH^{k}) = \text{diag}(\bI) = \bm 1$ in the 
    definition of $\bW_\sigma$.
    Therefore, if all cycles in all hanging cactuses have even length, then $\bmW_\sig(\bH) = \bH^\ell \in \{\bI, \bH\}$ where $\ell$ is the length of the base path.

    In the remaining case where $\sig$ has an odd cycle, we use induction.
    Let $\beta_1, \dots, \beta_k$ be the hanging cactuses of $\sig$.
    We convert each $\beta_i$
    into an open cactus diagram $\beta'_i$ by splitting the vertex at which $\beta_i$ meets $\sig$.
    With this notation, we have the matrix factorization:
    \begin{equation*}\label{eq:hadamard-cactus}
        \bW_{\sig}(\bH) = \diag(\bmW_{\beta'_1}(\bH)) \bH \diag(\bW_{\beta'_2}(\bH)) \bH  \ldots \bH\text{diag}(\bW_{\beta'_{k}}(\bH))\,.
    \end{equation*}

    The odd cycle in $\sig$ has either become an odd-length base path in some $\beta'_i$ or it continues
    to be an odd cycle in some $\beta'_i$.
    In the second case, by sub-multiplicativity of the spectral norm,
    \[
        \norm{\bW_\sig(\bH)} \leq \norm{\diag(\bW_{\beta'_i}(\bH))} \leq \norm{\bW_{\beta'_i}(\bH)} \le n^{-\frac 12 + o(1)}
    \]
    with the last inequality by induction.
    In the first case, we have $\bW_{\beta'_i}(\bH) = \bH$.
    Then 
    \[ \norm{\diag(\bW_{\beta'_i}(\bH))} = \norm{\diag(\bH)} \leq n^{-\frac 12 + o(1)}\] 
    by the delocalization assumption, and this case is also complete.
\end{proof}

We use the lemma to complete the proof of~\cref{thm:universality-orthogonal}.
\begin{proof}[Proof of~\Cref{thm:universality-orthogonal} from~\Cref{thm:universality-new}]
    \cref{eq:assNorm} holds automatically for $\bH$ a symmetric orthogonal matrix.
    Verifying~\cref{eq:assOpen},~\Cref{lem:diagonal-hadamard} implies that the off-diagonal entries of all open cactus matrices satisfy
    \[
        \max_{1\le i<j\le n} \left|\bW_\sig(\bH)[i,j]\right|\le \|\bW_\sig(\bH)\|\le n^{-\frac 12+o(1)}
    \]
    when $\sig$ has an odd cycle, and the remaining cases $\bW_\sig(\bH) = \bH$ or $\bW_\sig = \bI$ are easily checked.

    Next, each vector cactus diagram $\sig \in \cC_1$ satisfies $\bmw_\sig(\bH) = \diag(\bW_{\sig'}(\bH))$ where $\sig'$ is an open cactus obtained by splitting the root of $\sig$.
    By~\cref{lem:diagonal-hadamard} the diagonal of an open cactus matrix is either $\bm 1$ (in which case~\cref{eq:assOne} is satisfied with $\eps = 0$) or it satisfies
    \[
        \frac{1}{\sqrt{n}} \norm{\diag(\bW_{\sig'}(\bH))}_2 \le \norm{\diag(\bW_{\sig'}(\bH))}_\infty \leq n^{-\frac 12 + o(1)}\,,
    \]
    in which case~\cref{eq:assOne} is satisfied with $\eps = n^{-\frac 12 +o(1)}$.
    
    The diagonal distribution is computed by averaging the diagonal entries of open cactus matrices:
    \[
        \lim_{n\to\infty} \frac 1n w_\sigma(\bH) = \lim_{n\to\infty} \frac 1n \sum_{i=1}^n \bmW_{\sig'}[i,i] = \begin{cases}
            1 & \text{ if all cycles in $\sigma$ have even length}\\
            0 & \text{ otherwise}
        \end{cases}
    \]
    where on the left-hand side, we convert $\sigma\in\calC_0$ to an open cactus
    diagram $\sig'$ by rooting it arbitrarily and splitting the root.
    The right-hand side is by~\cref{lem:diagonal-hadamard}.
    That is, the diagonal distribution of $\bH$ is just the indicator function that all cycles of the cactus are even.

    Thus, we showed that~\cref{eq:assNorm,eq:assOpen,eq:assOne} hold and the diagonal distribution converges
    to the same fixed limit for any orthogonal matrix with delocalized entries. By~\Cref{thm:universality-new}, the
    traffic distribution of such matrices exists
    and is always the same.
    
    Finally, we show that the \rrom is also in this class, by showing that, after conditioning on a suitable high-probability event, the above argument applies to an \rrom matrix as well. 
    Let
    $\bH_{\rom} = \bQ \bD \bQ^\top$, where $\bQ$ is Haar-distributed and $\bD$ is diagonal
    with i.i.d. $\pm 1$ entries, independent of $\bQ$.
    \begin{claim}
        There exists $c>0$ such that for any $t>0$, 
        \begin{align}
            \max_{i,j\in [n]} |\bH_{\textnormal{\rom}}[i,j]|\le t^2 n^{-\frac 12}\label{eq:event-small-norm}
        \end{align}
        holds with probability at least $1-n^2 e^{-ct^2}$\,.
    \end{claim}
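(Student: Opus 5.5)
The plan is to write each entry of $\bH_{\rom}=\bQ\bD\bQ^\top$ as a bilinear form in rows of the Haar matrix, and to combine concentration of measure on the sphere with a union bound over the $n^2$ pairs $(i,j)$ (pairs with $i=j$ included). Writing $\bm q_i$ for the $i$th row of $\bQ$, we have $\bH_{\rom}[i,j]=\sum_{k}\bD[k,k]\,\bm q_i[k]\bm q_j[k]$. Each row $\bm q_i$ is uniformly distributed on the unit sphere $S^{n-1}$. The bound is trivial when $t^2\ge \sqrt n$, since then $t^2n^{-1/2}\ge 1\ge |\bH_{\rom}[i,j]|$ because $\|\bH_{\rom}\|=1$. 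It is therefore enough to prove, for each fixed pair, a bound $\Pr(|\bH_{\rom}[i,j]|>t^2n^{-1/2})\le e^{-ct^2}$ (possibly after adjusting constants) and then to union bound.

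The cleanest route is to bound the entries of $\bQ$ themselves. A standard fact is that for a uniform vector $\bm u$ on $S^{n-1}$ and a fixed coordinate, $\Pr(|\bm u[k]|>s/\sqrt n)\le 2e^{-cs^2}$. This follows from Lévy concentration, since $\bm u\mapsto \bm u[k]$ is $1$-Lipschitz with median $0$; alternatively, represent $\bm u=\bm g/\|\bm g\|$ with $\bm g$ a Gaussian vector and use a chi-square lower tail bound on $\|\bm g\|$. Taking $s=t$ and a union bound over all $n^2$ entries gives $\max_{i,k}|\bQ[i,k]|\le t n^{-1/2}$ with probability at least $1-2n^2e^{-ct^2}$.

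On this event the naive estimate $|\bH_{\rom}[i,j]|\le\sum_k|\bm q_i[k]||\bm q_j[k]|\le 1$ is too weak, so we will use the signs. Conditional on $\bQ$, $\bH_{\rom}[i,j]=\sum_k \bD[k,k]a_k$ with $a_k=\bm q_i[k]\bm q_j[k]$, which is a Rademacher sum. On the event, $\sum_k a_k^2\le \max_k \bm q_i[k]^2\sum_k\bm q_j[k]^2\le t^2/n$. Hoeffding's inequality then gives $\Pr(|\bH_{\rom}[i,j]|>t^2n^{-1/2}\mid\bQ)\le 2\exp(-t^4/(2t^2))=2e^{-t^2/2}$. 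A union bound over the $n^2$ pairs, together with the failure probability of the entry event, gives a total failure probability of at most $4n^2e^{-\min(c,1/2)t^2}$.

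The only remaining issue, and the one small obstacle, is the factor $4$ in front: the statement asks for exactly $n^2e^{-ct^2}$. We absorb it by shrinking $c$. For $t\ge t_0$, $4e^{-c't^2}\le e^{-ct^2}$ with $c<c'$. For $t<t_0$, choosing $c$ small enough that $n^2e^{-ct_0^2}\ge 1$ for all $n\ge 1$ makes the bound vacuous. The key steps are therefore sphere coordinate concentration, conditional Hoeffding using the independence of $\bD$ from $\bQ$, and a union bound, and none of them should present real difficulty.
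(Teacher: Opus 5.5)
Your proposal is correct and follows the paper's argument essentially step for step: sub-Gaussian tails for the entries of $\bQ$ plus a union bound, then Hoeffding's inequality conditional on $\bQ$ with variance proxy $\sum_k \bQ[i,k]^2\bQ[j,k]^2\le \max_{i,k}\bQ[i,k]^2$ (because the rows of $\bQ$ are unit vectors), and a final union bound over the $n^2$ entries. Your extra bookkeeping for absorbing the constant $4$ is fine, except that the ``vacuous for $t<t_0$'' step needs $n\ge 2$ (for instance, take $c=c'/2$ so that $4e^{-ct_0^2}=1$); at $n=1$ the claim as literally stated fails for $t<1$, which is a harmless defect of the statement rather than of your argument.
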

    \begin{proof}
    Since
    every entry of $\bQ$ is $O(n^{-1/2})$-subgaussian, by a union bound\[\max_{i,j\in [n]} |\bQ[i,j]|\le t n^{-\frac 12}\] holds with probability at least $1-n^2 e^{-\Omega(t^2)}$. Next, we have $\bH_{\textnormal{\rom}}[i,j] = \sum_{k=1}^n \bD[k,k] \bQ[i,k] \bQ[j,k]$, which, conditioned on $\bQ$, is a sum of independent
    random variables.
    By Hoeffding's bound, any fixed
    entry of $\bH_{\textnormal{\rom}}$ is $O(\sigma)$-subgaussian with parameter
    \[
        \sigma^2 \defeq \sum_{k=1}^n \bQ[i,k]^2 \bQ[j,k]^2\le \max_{i,j\in [n]} \bQ[i,j]^2\,,
    \]
    since every row of $\bQ$ has $\ell_2$-norm $1$. The conclusion follows from a union bound over all entries. 
    \end{proof}

    Fix $\alpha\in\calA$. Let $E_n$ denote the event~\cref{eq:event-small-norm},
    with $t = n^{o(1)}$. By the law of total expectation, we decompose
    \[
        \frac 1n \E w_\alpha(\bm \Pi \bH_{\textnormal{\rom}} \bm \Pi)
        =
        \frac 1n \E\!\left[w_\alpha(\bm \Pi \bH_{\textnormal{\rom}} \bm \Pi)\mid E_n\right]
        \Pr(E_n)
        +
        \frac 1n \E\!\left[w_\alpha(\bm \Pi \bH_{\textnormal{\rom}} \bm \Pi)\mid E_n^c\right]
        \Pr(E_n^c)\,.
    \]
    The left-hand side converges to the traffic distribution of the \rrom
    evaluated at $\alpha$. Moreover, since
    $\|\bm \Pi \bH_{\textnormal{\rom}} \bm \Pi\|\le 1$, we may crudely bound the
    second term by
    \[
        \frac 1n \E \left[w_\alpha(\bm \Pi \bH_{\textnormal{\rom}} \bm \Pi) \mid E_n^c\right]\cdot \Pr(E_n^c)\le n^{|V(\alpha)|-1} \Pr(E_n^c)\underset{n\to\infty}{\longrightarrow } 0\,.
    \]
    Since $\Pr(E_n)\underset{n\to\infty}{\longrightarrow} 1$, we deduce that
    \[
        \lim_{n\to\infty} \frac 1n \E \left[w_\alpha(\bm \Pi \bH_{\textnormal{\rom}} \bm \Pi) \mid E_n\right] = \lim_{n\to\infty} \frac 1n \E w_\alpha(\bm \Pi \bH_{\textnormal{\rom}} \bm \Pi)\,.
    \]
    Finally, on the event $E_n$, the matrix $\bH_{\textnormal{\rom}}$ satisfies the
    assumptions of~\Cref{thm:universality-orthogonal}. Consequently, the traffic
    distribution of punctured delocalized orthogonal matrices coincides with that
    of the \rrom, as desired.
\end{proof}

As a consequence of the above argument, the traffic distribution of the \rrom is specified implicitly as the solution to the following equations:
\begin{enumerate}
    \item For every $\alpha\in\calA\setminus \calE$, $\frac 1n \E w_\alpha(\bA) \underset{n\to\infty}{\longrightarrow}0$.
    \item For every $\alpha\in \calE\setminus \calC$, $\frac 1n \E z_\alpha(\bA) \underset{n\to\infty}{\longrightarrow}0$.
    \item For every $\sigma\in\calC$, $\frac 1n \E w_\sigma(\bA)\underset{n\to\infty}{\longrightarrow} 1$ if all cycles of $\sig$ are even and 0 otherwise.
\end{enumerate}
These equations determine a unique traffic distribution by~\cref{lem:eqDiffMobius}.
It is possible to give an explicit but much more complicated description using the Weingarten calculus, which we do in~\Cref{sec:puncturedWeingarten}.
However, the above characterization is arguably the conceptually clearer one, and we emphasize that it involves both the $w$- and $z$-bases.

We note also as a point of reference that the last part, the limiting values of cactuses in the $w$-basis, are the same as those for the (unpunctured) \rom, as follows from combining~\Cref{claim:rom-cumulants} with~\Cref{lem:diagonal-w-z}, and corresponds simply to the moments of the Rademacher distribution being 1 for moments of even order and 0 for ones of odd order.

\subsection{The fundamental theorem of graph polynomials}

The main proof of~\Cref{thm:universality-new} throughout the rest of the section relies on the
``fundamental theorem of graph polynomials'' of Bai and Silverstein~\cite{bai2010spectral}.
This result can be used to easily bound
2-edge-connected graph polynomials 
expressed in the $w$-basis, which is one reason that it is convenient to restrict to such diagrams in our definition of the weak cactus property.
The proof of the fundamental theorem uses a spectral bound on tensor powers of $\bA$; see \cite{mingo2012sharp} for another related result.

\begin{theorem}[{\cite[Theorems A.31 and A.32]{bai2010spectral}}]\label{thm:fundamental-theorem}
    For every $n\ge 1$, 
    $\alpha\in \calE\cup \calE_1\cup \calE_2$
    and collection of $n\times n$ symmetric matrices 
    $\bm \calA = (\bA_e)_{e\in E(\alpha)}$,
    \begin{alignat*}{2}
        \frac 1 n |w_\alpha(\bm \calA)|&\le \prod_{e\in E(\alpha)} \|\bA_e\| \quad&&\text{if $\alpha\in\calE$}, \\
        \|\bm w_\alpha(\bm \calA)\|_\infty&\le \prod_{e\in E(\alpha)} \|\bA_e\| \quad&&\text{if $\alpha\in\calE_1$}, \\
        \|\bW_\alpha(\bm \calA)\|&\le \prod_{e\in E(\alpha)} \|\bA_e\| \quad&&\text{if $\alpha\in\calE_2$}.
    \end{alignat*}
\end{theorem}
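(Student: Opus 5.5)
The plan is to prove the statement first for the matrix case $\alpha\in\calE_2$ and then derive the scalar and vector cases from it.

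\emph{Reduction to the matrix case.} A vector diagram with root $r$ gives $\bw_\alpha(\bm\calA)[i]=\bW_{\alpha'}(\bm\calA)[i,i]$, where $\alpha'$ is $\alpha$ with both roots placed at $r$; hence $\|\bw_\alpha\|_\infty\le\|\bW_{\alpha'}\|$. A scalar diagram gives $w_\alpha=\Tr \bW_{\alpha'}$ after rooting at an arbitrary vertex twice; hence $\frac1n|w_\alpha|\le\|\bW_{\alpha'}\|$. Rooting a 2-edge-connected graph does not change its edge structure, so it suffices to prove
\[
\|\bW_\alpha(\bm\calA)\|\le\prod_e\|\bA_e\|\qquad\text{for 2-edge-connected }\alpha\in\calE_2 .
\]

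\emph{Matrix case, strategy.} I would follow the Bai--Silverstein strategy and write $\bW_\alpha$ as a product of operators acting on tensor powers of $\R^n$. Choose an ordering $v_1=r_1,\dots,v_m$ of the vertices, ending with $r_2$ (when $r_1=r_2$, treat the root as one vertex and use $u^\top \bW v$ testing). Sweep through the vertices layer by layer. At step $k$ the state is a tensor indexed by the labels of the vertices already processed that still have unprocessed incident edges. Each edge $e=\{u,v\}$ acts as $\bA_e$ on one tensor leg. Identifying labels, or splitting a leg into two copies, uses the diagonal ``copy'' map $\delta:\R^n\to\R^n\otimes\R^n$, $e_i\mapsto e_i\otimes e_i$, or its adjoint. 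Both $\delta$ and $\delta^*$ have operator norm at most $1$ in the relevant direction: $\|\delta x\|_2=\|x\|_2$, while $\delta^*$ restricted to the states that occur has norm at most $1$.

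The danger is the summation over an internal vertex, that is, contraction with $\mathbf 1$. This has norm $\sqrt n$ and would destroy the bound.

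\emph{Main obstacle: avoiding $\mathbf 1$-contractions.} 2-edge-connectivity should be used to choose an ordering in which no all-ones vector ever appears. Concretely, I would induct on the number of edges using an ear decomposition. By Robbins' theorem, every 2-edge-connected graph has a strongly connected orientation; equivalently, it is built from a cycle by repeatedly adding ears, which are paths whose two endpoints are existing vertices (possibly equal, giving closed ears).

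For an ear with internal vertices $x_1,\dots,x_\ell$ attached at $u,v$, its contribution is the entrywise factor $(\bA_{e_1}\cdots\bA_{e_{\ell+1}})[\varphi(u),\varphi(v)]$. So $\bW_\alpha$ is $\bW_{\alpha_0}$ for a smaller 2-edge-connected diagram $\alpha_0$ with an extra edge labeled by the product matrix $\bB=\prod\bA_{e_j}$, and $\|\bB\|\le\prod\|\bA_{e_j}\|$. This reduces the problem to ears with no internal vertex, i.e.\ single edges (or loops) added between existing vertices.

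So the key lemma is: adding an edge $\bB$ between existing vertices $u,v$ of $\alpha_0$ satisfies $\|\bW_{\alpha_0+e}\|\le\|\bB\|\,\|\bW_{\alpha_0}\|$-type bounds. This is not true verbatim, since it is a Hadamard-type product (Schur product). To handle that, I would use the tensor formulation. View $\bW_\alpha$ as a bilinear form $\langle x,\bW_\alpha y\rangle=\sum_\varphi x_{\varphi(r_1)}y_{\varphi(r_2)}\prod_e \bA_e[\cdots]$. Apply Cauchy--Schwarz after splitting the graph along a cut. 2-edge-connectivity guarantees every cut has at least two edges, so each side receives an honest operator rather than a $\mathbf 1$ vector.

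The base case is a single cycle through the roots, which is a product of matrices and bounded by submultiplicativity. Schur products with $\|\bA\circ\bB\|\le\|\bA\|\|\bB\|$, and diagonal extractions $\|\diag \bB\|\le\|\bB\|$, handle multi-edges and loops. I expect the bookkeeping of this cut/Cauchy--Schwarz induction to be the hard step, and would otherwise cite Bai--Silverstein's tensor argument directly.
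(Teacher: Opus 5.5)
Both of your reductions pass through the matrix case with \emph{coincident} roots, and that is where the proposal has a genuine gap. When $r_1=r_2=r$, the definition gives $\bW_{\alpha'}(\bm{\calA})[i,j]=\mathbf 1_{i=j}\,\bw_\alpha(\bm{\calA})[i]$. So $\bW_{\alpha'}=\operatorname{diag}(\bw_\alpha)$ and $\|\bW_{\alpha'}\|=\|\bw_\alpha\|_\infty$ exactly. Your step ``$\|\bw_\alpha\|_\infty\le\|\bW_{\alpha'}\|$'' is therefore not a reduction at all: the coincident-root matrix case \emph{is} the vector case.

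Your own matrix-case argument does not supply this case. The parenthetical ``treat the root as one vertex and use $u^\top\bW v$ testing'' is not an argument. More broadly, the inductive step is missing. After contracting degree-2 ears into product edges, you still have to add edges between interior vertices, which (as you note) is not a Schur product. Your cut/Cauchy--Schwarz replacement would need a multi-leg induction hypothesis, namely operators from a root into tensor powers indexed by cut endpoints, together with a connectivity condition that survives the doubling Cauchy--Schwarz produces. None of that is set up.

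Citing Bai--Silverstein as a fallback does not rescue this either. The paper states that the cited result covers only scalar and matrix diagrams, and it gives a separate argument for the vector case, which is exactly your coincident-root case. Your derivation of the scalar case via a trace has the same dependence, but that case can simply be cited.

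The missing idea is the paper's short derivation of the vector case from the scalar one. For $q\ge 1$, $\|\bw_\alpha(\bm{\calA})\|_{2q}^{2q}=w_{\alpha^{\oplus 2q}}(\bm{\calA})$, where $\alpha^{\oplus 2q}$ is obtained by grafting $2q$ copies of $\alpha$ at the root and then forgetting the root. Grafting 2-edge-connected graphs at a vertex keeps them 2-edge-connected. The scalar bound therefore gives $\|\bw_\alpha\|_{2q}^{2q}\le n\bigl(\prod_{e\in E(\alpha)}\|\bA_e\|\bigr)^{2q}$, i.e.\ $\|\bw_\alpha\|_\infty\le\|\bw_\alpha\|_{2q}\le n^{1/(2q)}\prod_{e}\|\bA_e\|$. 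Letting $q\to\infty$ at fixed $n$ removes the $n^{1/(2q)}$ factor. With this in hand, the coincident-root matrix case follows immediately as a diagonal matrix. The scalar case and the matrix case with distinct roots are then cited, as in the paper.
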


\noindent The result of~\cite{bai2010spectral} only covers scalar and matrix diagrams, but we provide a quick reduction
of the vector case to the scalar case.

\begin{proof}[Proof of vector case of~\cref{thm:fundamental-theorem}.]
    For all $q \ge 1$, we can diagrammatically express $\norm{\bw_\al(\bm \calA)}_{2q}^{2q}$ as the diagram formed by merging $2q$ copies of $\al$ at the root, and then forgetting the identity of the root to obtain a scalar diagram.
    Let $\al_{2q} = \alpha^{\oplus 2q}$ denote this diagram.
    The graph $\al_{2q}$ remains 2-edge-connected, therefore by the scalar case of the result we have:
    \[
        \norm{\bw_\al(\bm \calA)}_{2q}^{2q} = w_{\al_{2q}}(\bm \calA) \leq n \cdot \left(\prod_{e \in E(\al)} \norm{\bA_e}\right)^{2q}\,.
    \]
    Taking $q \to \infty$ with $n$ fixed, we obtain $\norm{\bw_\alpha (\bm \calA)}_\infty \leq \prod_{e \in E(\al)} \norm{\bA_e}$\,.
\end{proof}

We will apply the fundamental theorem by decomposing a general graph into its 2-edge-connected components,
which are joined together by a tree of bridge edges.
Decomposing diagrams into their 2-edge-connected components is also a fundamental idea in physics, where a 2-edge-connected Feynman diagram is called a ``1-particle-irreducible diagram''.

\subsection{Main structural lemma: Open cactus decomposition}

To prove the weak cactus property of~\Cref{thm:universality-new}, we begin by observing that
any 2-edge-connected non-cactus graph contains 
three edge-disjoint
paths between some pair of vertices.
How can we quantify that such a graph is a cactus plus excess edges?
We answer this question
by introducing the {\em open cactus decomposition}.
Our main structural result is that one can identify an ``extra'' open cactus subgraph inside any 2-edge-connected graph which is not a cactus, in the sense that the subgraph can be removed without spoiling 2-edge-connectedness.

\begin{proposition}
    \label{prop:open-cactus-decomposition}
    For any $\alpha\in\calE_1\setminus \calC_1$, there exist distinct $s,t\in V(\alpha)$ and
    an induced subgraph $\beta$ of $\al$ such that
    \begin{enumerate}
        \item $\beta$ is an open cactus with endpoints $\{s,t\}$.
        \item $\alpha\left[V(\alpha)\setminus (V(\beta)\setminus \{s,t\})\right]$ 
        is 2-edge-connected.
        \item $\rt(\al)\notin V(\beta)\setminus \{s,t\}$.
    \end{enumerate}
\end{proposition}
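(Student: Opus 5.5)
The plan is to work with the block (2-vertex-connected component) structure of $\alpha$ and an ear decomposition of a well-chosen block. Since $\alpha\in\calE_1$ has no bridges, every block of $\alpha$ is either a single loop, a 2-cycle formed by a double edge, a longer simple cycle, or a 2-vertex-connected multigraph that is not a cycle. Being a cactus is equivalent to every block being a cycle. So $\alpha\notin\calC_1$ means that some block is not a cycle. Call such blocks \emph{bad}.

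\textbf{Step 1: choose a deepest bad block.} Root the block-cut tree of $\alpha$ at $\rt(\alpha)$. If $\rt(\alpha)$ is not an articulation point, root it at the unique block containing $\rt(\alpha)$. Choose a bad block $B$ with no bad block among its descendants. Let $p\in V(B)$ be its attachment vertex towards the root: the parent articulation point, or $\rt(\alpha)$ itself if $B$ is the root block. For every $v\in V(B)\setminus\{p\}$, the union $H_v$ of all descendant blocks hanging at $v$ is connected and consists only of cycle blocks. It also contains no bridge. Hence $H_v$ is a (possibly trivial) cactus, and $H_v\cap H_{v'}=\emptyset$ for $v\neq v'$. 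Also $\rt(\alpha)\notin H_v\setminus\{v\}$.

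\textbf{Step 2: choose an ear.} Since $B$ is 2-vertex-connected, it has an open ear decomposition $B = C_0\cup P_1\cup\dots\cup P_m$ that starts from an arbitrary cycle. Take $C_0$ to be a cycle through $p$. Because $B$ is not a cycle, $m\ge 1$. Let $P=P_m=(u_1=s,u_2,\dots,u_k=t)$ be the last ear. Then $s\neq t$, and the interior vertices $u_2,\dots,u_{k-1}$ are new vertices of degree $2$ in $B$, so they are not on $C_0$; in particular none of them is $p$. Moreover, $B$ minus the interior of $P$ (or minus the edge $P$ if $k=2$) is still 2-vertex-connected, being $C_0\cup P_1\cup\dots\cup P_{m-1}$.

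\textbf{Step 3: define $\beta$ and check the properties.} Let $\beta$ be the base path $P$ with the trivial cactuses at $s,t$ and the cactus $H_{u_i}$ attached at each interior $u_i$. Then $\beta$ is an open cactus with endpoints $\{s,t\}$, which is property 1.

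For property 3, the interior vertices of $\beta$ are the $u_i$ with $1<i<k$ and the vertices of the $H_{u_i}$. None of these is $\rt(\alpha)$: the $u_i$ avoid $p$, and $\rt(\alpha)$ either equals $p$ or lies on the root side of $p$.

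For property 2, deleting the interior of $\beta$ leaves $B$ minus the interior of $P$ together with all other blocks, which are untouched and still glued along the block-cut tree. Each remaining block is bridgeless, and a connected graph whose blocks are all bridgeless is 2-edge-connected.

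For inducedness, the vertices $u_2,\dots,u_{k-1}$ have no neighbours outside $P$ in $B$, and the $H_{u_i}$ meet the rest of $\alpha$ only at $u_i$. So no extra edges run between interior vertices of $\beta$ and the rest of $\alpha$.

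\textbf{Main obstacle.} I expect the fiddly point to be degenerate multigraph cases when checking inducedness. The problem arises when the last ear is a single edge $\{s,t\}$ ($k=2$) and $s,t$ carry further parallel edges or other edges of $B$: then $\alpha[\{s,t\}]$ contains more than the base path.

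My first attempt is to choose the ear decomposition so that the last ear has an interior vertex whenever possible, by adding single-edge ears as early as possible. If $B$ is then just a bundle of $\ge 3$ parallel $s$--$t$ edges, I would take $\beta$ to be one edge and read "induced" as "the subgraph spanned by the edges of $\beta$". The removal property still holds there, since at least two parallel edges remain. Related checks are that loops and double edges count as cycles, and that every remaining edge of $B$ between $s$ and $t$ survives the deletion. These need care but no new ideas.
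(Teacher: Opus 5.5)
Your argument is correct and uses the same mechanism as the paper: take the last ear of an ear decomposition whose initial cycle passes through the root side, and attach the cactuses hanging at its interior vertices. The organization differs, though.

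\textbf{How the two routes compare.} The paper never localizes to a block. It deletes loops and repeatedly prunes leaf blocks that are cycles, moving the root to the articulation vertex whenever the root is pruned. It then applies its own Robbins-style ear lemma for 2-edge-connected graphs (keeping the root off the interior of the last ear) to the whole pruned graph, and uses the pruning to rule out a closed last ear. You instead pick a deepest non-cycle block $B$ of the block--cut tree rooted at $\rt(\alpha)$ and take Whitney's open ear decomposition of $B$ from a cycle through $p$. This has three payoffs:
\begin{itemize}
\item $s\neq t$ is automatic;
\item the hanging cactuses are read off directly as the descendant pieces $H_{u_i}$;
\item root avoidance reduces to $u_i\neq p$.
\end{itemize}
The price is reliance on the block--cut tree and on Whitney's theorem for multigraphs (loops as their own blocks, digons as cycles). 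The paper's version is self-contained, and its ear lemma is reused later to split a vertex of a 2-edge-connected graph.

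\textbf{The degenerate case is misdiagnosed.} Your last paragraph treats the inducedness problem as special to $k=2$, but literal inducedness can fail for $k\ge 3$ as well. Take $B$ to be the theta graph formed by an edge $\{s,t\}$ and the paths $s$--$a$--$t$ and $s$--$b$--$t$. Your preference for a last ear with an interior vertex gives $P=s$--$b$--$t$, and then $\alpha[V(\beta)]$ contains $\{s,t\}\notin E(\beta)$.

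Worse, if $\alpha$ is a triple edge, no open cactus in it is an induced subgraph at all. So the statement can only mean the following: every edge incident to a vertex of $V(\beta)\setminus\{s,t\}$ belongs to $\beta$, and property 2 means deleting those vertices \emph{and} the edges of $\beta$ (which also covers $k=2$). That is exactly what your Step 3 proves, and what the paper's proof delivers. It is also what is used downstream, where the open cactus is always removed edge-wise.

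You should therefore drop the attempt to force an interior vertex into the last ear, and state this reading once. The maneuver cannot always succeed anyway, e.g.\ for $B=K_4$, where every vertex has degree $3$.
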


\begin{figure}[ht]
    \centering
    \includegraphics[width=0.35\linewidth]{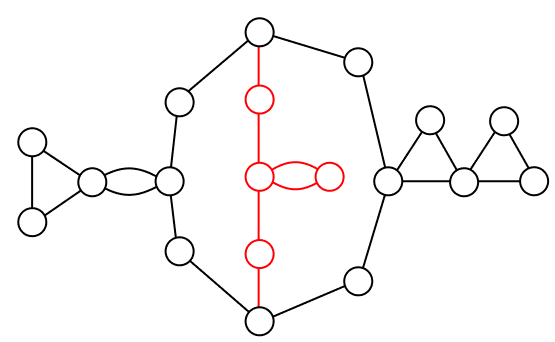}
    \caption{Example for~\cref{prop:open-cactus-decomposition} of a 2-edge-connected graph which is not a cactus. If the open cactus in red is removed, the graph remains 2-edge-connected.}
    \label{fig:2ec}
\end{figure}

\noindent To prove~\Cref{prop:open-cactus-decomposition},
we will consider the last ear
in an {\em ear decomposition} of $\alpha$. We
prove a small variant of the classical
ear decomposition (see~\cite{robbins1939theorem} or~\cite[\S 5.3]{bondyMurty}) which lets us exclude a specified vertex from the internal vertices of the last ear.

\begin{lemma}
    \label{lem:ear-decomposition-lemma}
    Let $\alpha\in\calE_1$ be 2-edge-connected with at least 2
    vertices.
    There exists a path $\pi=(u_1, \ldots, u_k)$ in $\alpha$ with $k\ge 2$
    such that:
    \begin{enumerate}
        \item Each internal vertex $u_2, \ldots, u_{k-1}$ has degree 2 in $\alpha$.
        \item Each internal vertex $u_2, \ldots, u_{k-1}$ satisfies $u_i\neq \rt(\al)$.
        \item $u_1, \ldots, u_{k}$ are pairwise distinct, except possibly $u_1=u_k$.
        \item Removing internal vertices
        and edges of $\pi$ from $\alpha$
        leaves $\alpha$ 2-edge-connected.
    \end{enumerate}
\end{lemma}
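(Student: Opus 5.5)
We will derive the statement from the classical ear decomposition theorem of Robbins: a graph with at least two vertices is 2-edge-connected if and only if it can be built from a single cycle by repeatedly attaching ears. An ear is a path, or a closed path, whose endpoints lie in the current graph and whose internal vertices are new. The last ear $\pi=(u_1,\dots,u_k)$ of any such decomposition already satisfies Conditions 1, 3 and 4. Its internal vertices were new when it was added and no later ears attach to them, so they have degree 2 in $\alpha$. It is a path or closed path by construction. Removing it returns the previous stage of the decomposition, which is 2-edge-connected. The new work is Condition 2, namely ensuring the root is not internal to the last ear. We also must handle the degenerate base case where $\alpha$ is a single cycle (loops and multiedges allowed).

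The plan is to induct on $|E(\alpha)|$ and choose the ear decomposition adaptively. First suppose $\alpha$ is a cycle, including the case of two vertices joined by a double edge. Take $\pi$ to be a single edge $(u_1,u_2)$ of that cycle. It has no internal vertices, so Conditions 1–3 hold trivially. But removing an edge from a cycle breaks 2-edge-connectivity, so this choice fails Condition 4. Instead we take $\pi$ to be the whole cycle starting and ending at the root, with $u_1=u_k=\rt(\alpha)$. The internal vertices are then non-root and have degree 2. Removing them leaves the single vertex $\rt(\alpha)$, which we regard as (trivially) 2-edge-connected, consistent with how the lemma will be used in \Cref{prop:open-cactus-decomposition}.

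Now suppose $\alpha$ is not a cycle. Start the ear decomposition from a cycle $C_0$ that contains $\rt(\alpha)$; one exists because a 2-edge-connected graph with at least 2 vertices has every vertex on a cycle. Then extend greedily. Standard ear-decomposition theory says that any 2-edge-connected subgraph $H\subsetneq\alpha$ can be extended by an ear: take an edge leaving $H$, or an edge inside $V(H)$ not in $H$, and follow a path back to $H$, which 2-edge-connectivity guarantees. Every internal vertex of an ear added after $C_0$ is new, so it differs from $\rt(\alpha)\in V(C_0)$. Since $\alpha$ is not a cycle, at least one ear is added after $C_0$, and the final one gives the desired $\pi$ with $k\ge 2$. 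Its internal vertices have degree 2 and are non-root. Removing it leaves the penultimate stage, which is 2-edge-connected.

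The main obstacle is the existence of an extending ear at each stage. Concretely, we need a path leaving $H$ and returning to $V(H)$ whose internal vertices lie outside $H$, possibly a closed path returning to the same vertex. Starting from an edge $e=\{x,y\}$ with $x\in V(H)$, $e\notin E(H)$, we use that $e$ is not a bridge. Hence $\alpha-e$ contains a path from $y$ to $V(H)$, and we truncate it at the first vertex of $H$ it hits. If $y\in V(H)$, then $e$ itself is the ear. One subtlety is that this path must avoid $x$ only in the sense of being internally disjoint from $H$, which the truncation ensures. Multiedges and loops are handled the same way, since a loop at a vertex of $H$ is an ear with $u_1=u_k$ and $k=2$.
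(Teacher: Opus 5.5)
Your proposal is correct and follows essentially the paper's route. You seed the ear decomposition with a cycle through $\rt(\alpha)$, so every later ear has only non-root internal vertices, and take the last ear; the paper adds open ears until the subgraph is spanning and then uses any leftover edge, which is just one ordering of your ears. Your separate treatment of the case where $\alpha$ is itself a cycle (taking $\pi$ to be the whole cycle closed at the root) is a worthwhile addition, since the paper's argument relies on a previous stage $\alpha_{t-1}$ that does not exist in that case.
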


\begin{proof}[Proof of~\Cref{lem:ear-decomposition-lemma}]
    Consider the following sequence $(\alpha_t)_{t\ge 0}$ of 2-edge connected subgraphs of $\alpha$:
    \begin{enumerate}
        \item Start from $\alpha_0$ being 
        any cycle of $\alpha$ containing $\rt(\al)$.
        \item Let $t\ge 0$. If $\alpha_t$ spans all vertices of $\alpha$, then stop.
        \item Otherwise, there exists
        $\{u_1,u_2\}\in E(\alpha)$ such that $u_1\in V(\alpha_t)$ and $u_2\notin V(\alpha_t)$. Since $\alpha$ is
        2-edge-connected, there exists a
        simple path
        $(u_2, \ldots, u_k)$ in $\alpha\setminus\{\{u_1,u_2\}\}$
        such that
        $u_i\notin V(\alpha_t)$ for all $2\le i\le k-1$, and $u_k\in V(\alpha_t)$.
        Set 
        \[
            \alpha_{t+1}=(V(\alpha_t)\cup \{u_2, \ldots, u_{k-1}\}, E(\alpha_t) \cup \{\{u_i,u_{i+1}\} : 1\le i<k\})\,.
        \]
    \end{enumerate}
    For any $t\ge 0$, $\alpha_t$ is 2-edge-connected.
    Therefore, if at the end of the algorithm $V(\alpha_t)=V(\alpha)$ but $E(\alpha_t)\neq E(\alpha)$, then any
    edge in $E(\alpha)\setminus E(\alpha_t)$
    is a length-1 path that satisfies the conclusion of the lemma.
    Otherwise, this means that $\alpha$ is
    obtained from
    $\alpha_{t-1}$ (which is 2-edge-connected) by adding a path of
    internal degree-2 vertices in $\alpha$
    which must all 
    be distinct from $\rt(\al)\in V(\alpha_0)\subseteq V(\alpha_{t-1})$. This concludes the proof.
\end{proof}

\begin{proof}[Proof of~{\Cref{prop:open-cactus-decomposition}}]
    Starting with the graph $\alpha$, consider the following procedure:
    \begin{enumerate}
        \item Delete all self-loops in $\alpha$.
        \item If no leaf 2-vertex-connected
        component (i.e., a 2-vertex-connected
        component meeting the rest of the graph
        at a single articulation point)
        consists of a single cycle,
        then stop.
        \item Otherwise, choose an arbitrary
        such component. Let $v$ be the articulation point connecting this component to the rest of graph. Delete all edges 
        of this component
        from the graph.
        \item Delete newly isolated vertices; exactly
        one vertex of the component remains, namely $v$. Since
        $\alpha\notin \calC_1$, the procedure
        does not delete the entire graph.
    
        \item If the root was removed in Step 4, set $v$ as the new root of the diagram.
        \item Return to Step 1.
    \end{enumerate}
    Call $\beta\in\calA_1$ the
    resulting rooted graph. Note that $\beta$
    is still 2-edge-connected, so
    by~\Cref{lem:ear-decomposition-lemma}, we can find a path
    $\pi = (u_1, \ldots, u_k)$ in $\beta$
    with internal degree-2 vertices. $\pi$ 
    cannot be a cycle because of our
    initial step of removing cyclic 2-vertex-connected components.
    Therefore, $\pi$ is a simple
    path and the root of $\beta$ is not an
    internal vertex of $\pi$.

    \begin{observation}\label{obs2-open}
        For $2\le i<k$, let $\sigma_i$ be the connected
        component of $u_i$ in $\alpha\setminus E(\pi)$. Then $\alpha' \defeq \pi\cup \sigma_2\cup \ldots \cup \sigma_{k-1}$ is
        an open cactus in $\alpha$ with endpoints $u_1,u_k$. Moreover, $\rt(\alpha)$
        is not an internal vertex of the open cactus.
    \end{observation}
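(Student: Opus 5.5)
The plan is to check the two claims of the Observation directly from the structure of the reduced graph $\beta$ and the procedure that produced it. The graph $\alpha$ is recovered from $\beta$ by undoing the deletions: re-attaching self-loops and leaf 2-vertex-connected components that are single cycles. Each such component meets the current graph at a single articulation point. Iterating, everything deleted forms a family of cactuses, each hanging off a single vertex of $\beta$. Concretely, every deleted edge lies in a union of cycles that were glued in a tree-like fashion and attached at one vertex of $\beta$. Self-loops are cycles of length one, so they fit the same description.

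First, for each internal vertex $u_i$ with $2\le i<k$, I describe $\sigma_i$. In $\beta$, the vertex $u_i$ has degree $2$ and both of its edges lie on $\pi$. So in $\alpha\setminus E(\pi)$, the only edges at $u_i$ are deleted ones. The component $\sigma_i$ therefore consists exactly of the deleted cactuses hanging at $u_i$, together with the cactuses hanging recursively at their vertices. This is a connected union of cycles glued along a tree structure, hence a cactus (possibly the single vertex $u_i$).

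I also need the $\sigma_i$ to be pairwise vertex-disjoint and disjoint from $u_1,u_k$. Each hanging cactus meets the rest of the graph only at its attachment vertex. The attachment vertex of everything in $\sigma_i$ is $u_i$, and the $u_i$ are distinct by condition 3 of \Cref{lem:ear-decomposition-lemma}; we have already excluded the cycle case $u_1=u_k$. Hence different $\sigma_i$ cannot share vertices, and $\sigma_i$ cannot contain $u_1$ or $u_k$. With $\beta_1=\{u_1\}$, $\beta_k=\{u_k\}$ and $\beta_i=\sigma_i$, \Cref{def:open-cactus} then shows that $\alpha'$ is an open cactus with base path $\pi$ and endpoints $u_1,u_k$.

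Finally, for the root, I trace Step 5 of the procedure. The root is only ever moved from a deleted component to that component's articulation point $v$. So the root of $\beta$ is the vertex at which the cactus containing $\rt(\alpha)$ is hung, or $\rt(\alpha)$ itself. By \Cref{lem:ear-decomposition-lemma}, the root of $\beta$ is not an internal vertex of $\pi$.

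Suppose for contradiction that $\rt(\alpha)$ lies in some $\sigma_i$. Then the chain of re-rootings from $\rt(\alpha)$ stays inside $\sigma_i$ and ends at $u_i$, so the root of $\beta$ would be $u_i$. That is a contradiction. Hence $\rt(\alpha)$ is not an internal vertex of $\alpha'$.

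The main obstacle is the bookkeeping that the deleted pieces hang at single vertices of the final $\beta$. A component deleted early may be attached at a vertex that is later deleted itself. This is handled by induction over the deletion steps, with the invariant that every removed edge belongs to a cactus attached at exactly one surviving vertex.
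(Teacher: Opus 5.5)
Your proof is correct and follows the paper's route. Re-attaching the deleted self-loops and cyclic leaf blocks shows that each $\sigma_i$ is exactly the cactus hanging at the degree-$2$ vertex $u_i$ of $\beta$, and the root-transfer rule of Step~5 would force $\rt(\beta)=u_i$ if $\rt(\alpha)$ lay in some $\sigma_i$, contradicting \Cref{lem:ear-decomposition-lemma}; you simply spell out the bookkeeping that the paper's two-line proof leaves implicit (vertex-disjointness of the hanging cactuses, the trivial end cactuses $\{u_1\},\{u_k\}$, and the root-tracking invariant).
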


    \begin{proof}
        $\pi$ is a simple
        path in $\beta$, and adding back loops and
        cyclic 2-vertex-connected components we removed from $\alpha$, we obtain an open cactus.
        The recursive pruning procedure we used 
        to transfer
        the root ensures that $\rt(\alpha)$
        is not in any of the cyclic 2-vertex-connected
        components that are added to $\pi$.
    \end{proof}

    \begin{observation}\label{obs1-open}
        $\alpha\left[V(\alpha)\setminus (V(\alpha')\setminus \{u_1,u_k\})\right]$
        is 2-edge-connected.
    \end{observation}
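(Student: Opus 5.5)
The plan is to compare the graph $\alpha'' \defeq \alpha\left[V(\alpha)\setminus (V(\alpha')\setminus \{u_1,u_k\})\right]$ with the pruned graph $\beta$ produced by the procedure. We then transfer 2-edge-connectivity from $\beta$ minus the ear $\pi$ back to $\alpha''$ by re-attaching the pieces that were deleted during pruning.

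\textbf{Step 1: the remainder of $\beta$.} By item 4 of \Cref{lem:ear-decomposition-lemma}, the graph $\beta^- \defeq \beta \setminus (E(\pi)\cup\{u_2,\dots,u_{k-1}\})$ is 2-edge-connected. Every vertex of $\beta$ other than the internal vertices of $\pi$ lies in $V(\alpha'')$. Every edge of $\beta$ other than those of $\pi$ has both endpoints outside $\{u_2,\dots,u_{k-1}\}$, because those internal vertices have degree $2$ in $\beta$. Hence $\beta^-$ is a subgraph of $\alpha''$.

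\textbf{Step 2: undo the pruning.} Consider the pieces deleted from $\alpha$ during the procedure: self-loops, and leaf 2-vertex-connected components that are single cycles. These pieces form a forest of ``hanging cactuses'', each attached to the surviving graph at a vertex of $\beta$. Let $\sigma_v$ denote the hanging cactus attached at $v\in V(\beta)$.

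For $v\in\{u_2,\dots,u_{k-1}\}$, the piece $\sigma_v$ is exactly the component $\sigma_i$ that was absorbed into the open cactus $\alpha'$. These pieces are removed in $\alpha''$. For every other $v$, the piece $\sigma_v$ is vertex-disjoint from $\alpha'$ except at $v$. To justify this, I would check that distinct hanging cactuses meet $\beta$ in distinct attachment points and never overlap one another.

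Thus $\alpha''$ is $\beta^-$ with hanging cactuses glued at single vertices. The open cactus $\alpha'$ is induced in $\alpha$, so $\alpha''$ contains no edge from an internal vertex of $\pi$ or of some $\sigma_i$ to the remaining vertices. Consequently $\alpha''$ has no extra edges beyond this description.

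\textbf{Step 3: conclude.} Gluing a 2-edge-connected graph (a cactus, a cycle, or a loop) onto a 2-edge-connected graph at a single vertex preserves 2-edge-connectivity. The reason is that a bridge of the union would have to be a bridge in one of the parts, since the glued parts share only a cut vertex. Inducting over the hanging cactuses in reverse order of deletion then shows that $\alpha''$ is 2-edge-connected.

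\textbf{Main obstacle.} The delicate point is the bookkeeping in Step 2. We must make sure the iterated pruning really produces hanging pieces that are cactuses attached at single vertices of $\beta$. We must also make sure that the definition of $\sigma_i$ as the connected component of $u_i$ in $\alpha\setminus E(\pi)$ does not accidentally swallow vertices of $\beta^-$.

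The second concern is resolved as follows. Any path from $u_i$ to $\beta^-$ that avoids $E(\pi)$ would have to leave $u_i$ through a non-$\pi$ edge. In $\beta$, $u_i$ has degree 2, and both of those edges lie on $\pi$. So every other edge at $u_i$ belongs to a pruned piece, and pruned pieces meet $\beta$ only at their attachment vertex. Therefore such a path cannot exist.
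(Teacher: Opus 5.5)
Your proposal is correct and is the paper's own argument: the ear lemma makes $\beta$ minus the internal vertices of $\pi$ 2-edge-connected, and gluing the pruned cycles and self-loops back at single vertices preserves 2-edge-connectivity. Your bookkeeping on which pieces are absorbed into $\alpha'$ and which are re-attached spells out what the paper leaves implicit. (One harmless nit: when $k=2$, $\pi$ is a single edge with no internal vertices, so $\alpha''=\alpha$ also contains the edge $\{u_1,u_2\}$ beyond $\beta^-$ and the re-attached pieces, but adding an edge cannot create a bridge.)
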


    \begin{proof}
        By~\Cref{lem:ear-decomposition-lemma},
        $\beta\left[V(\beta)\setminus \{u_2, \ldots, u_{k-1}\}\right]$ is 2-edge-connected.
        Adding 2-vertex-connected cyclic components
        to this graph preserves 2-edge-connectivity.
    \end{proof}
    \noindent
    \Cref{obs2-open} and~\Cref{obs1-open} conclude
    the proof of~\Cref{prop:open-cactus-decomposition}.
\end{proof}

\subsection{The effect of puncturing}

The main result of this subsection is:

\begin{proposition}
    \label{prop:to-punctured}
    Let
    $\bH \in \R^{n \times n}_\sym$ such that $\|\bH\|\le 1$ and $\bm u\in\R^n$ be a unit vector. Denote by $\bA = (\bI - \bm u \bm u^\top) \bH (\bI - \bm u\bm u^\top)$.
    Then for any open cactus $\alpha\in\calA_2$,
    \[
        \|\bW_\alpha(\bA) - \bW_\alpha(\bH)\|_{\textnormal F}\le |E(\alpha)|\cdot \|\bm A - \bm H\|_\frob\le 3|E(\alpha)|\,.
    \]
\end{proposition}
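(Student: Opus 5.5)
The second inequality is elementary and I would do it first. Writing $\bP = \bm u\bm u^\top$, we have
\[
\bA - \bH = -\bP\bH - \bH\bP + \bP\bH\bP .
\]
Each term has rank one, with Frobenius norm $\|\bH\bm u\|_2$, $\|\bH\bm u\|_2$ and $|\bm u^\top\bH\bm u|$ respectively. Each of these is at most $1$ because $\|\bH\|\le 1$, so $\|\bA-\bH\|_\frob\le 3$. Also $\|\bA\|\le\|\bH\|\le 1$, since $\bI-\bP$ is a projection. Hence every matrix we plug into edges below has operator norm at most $1$.

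For the first inequality I would telescope over the edges. Order $E(\alpha)=\{e_1,\dots,e_m\}$ and let $\bm\calA_k$ be the labeling (as in \Cref{def:non-uniform-label}) that puts $\bA$ on $e_1,\dots,e_k$ and $\bH$ on the remaining edges. By multilinearity of $\bW_\alpha$ in the edge labels, $\bW_\alpha(\bm\calA_k)-\bW_\alpha(\bm\calA_{k-1})$ equals $\bW_\alpha$ evaluated with $\bD\defeq\bA-\bH$ on $e_k$ and matrices of norm $\le 1$ on all other edges. It therefore suffices to prove the following auxiliary claim, by induction on the number of edges. Suppose every edge carries a matrix of operator norm $\le 1$, except a single edge $e$ carrying an arbitrary symmetric $\bD$. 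Then
\[
\text{(a) } \|\bW_\gamma\|_\frob\le\|\bD\|_\frob \text{ for every open cactus } \gamma,
\qquad
\text{(b) } \|\bw_\sigma\|_2\le\|\bD\|_\frob \text{ for every } \sigma\in\calC_1 .
\]
Summing (a) over the $m=|E(\alpha)|$ telescoping terms gives the proposition.

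\emph{Proof plan for (a).} Use the factorization already exploited in \Cref{lem:diagonal-hadamard}:
\[
\bW_\gamma=\diag(\bm c_1)\,\bM_1\,\diag(\bm c_2)\,\bM_2\cdots\bM_{k-1}\,\diag(\bm c_k).
\]
Here the $\bM_j$ are the base-path edge matrices, and $\bm c_i=\bw_{\beta_i}$ is the vector cactus hanging at $u_i$, rooted there. Every vector cactus is 2-edge-connected. So for each $i$ not containing $e$, the vector case of \Cref{thm:fundamental-theorem} gives $\|\bm c_i\|_\infty\le 1$, i.e.\ $\|\diag(\bm c_i)\|\le1$. There are two cases.
\begin{itemize}
\item If $e$ is on the base path, then $\bW_\gamma=\bX\bD\bY$ with $\|\bX\|,\|\bY\|\le1$. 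The bound $\|\bX\bD\bY\|_\frob\le\|\bX\|\,\|\bD\|_\frob\,\|\bY\|$ finishes this case.
\item If $e$ lies in some $\beta_i$, then $\bW_\gamma=\bX\diag(\bm c_i)\bY$ with $\|\bX\|,\|\bY\|\le 1$, so $\|\bW_\gamma\|_\frob\le\|\bm c_i\|_2$. Since $\beta_i$ has fewer edges than $\gamma$, part (b) applied to $\beta_i$ gives $\|\bm c_i\|_2\le\|\bD\|_\frob$.
\end{itemize}

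\emph{Proof plan for (b).} Decompose $\sigma$ at its root $r$ as a graft (entrywise product) of the pieces consisting of each cycle through $r$ together with everything hanging off it. Self-loops at $r$ are handled the same way, giving a diagonal factor.
\begin{itemize}
\item The pieces not containing $e$ are 2-edge-connected vector diagrams, so each has $\ell_\infty$-norm at most $1$ by \Cref{thm:fundamental-theorem}. Hence $\|\bw_\sigma\|_2$ is at most the $\ell_2$-norm of the one piece containing $e$.
\item That piece equals $\diag(\bW_{\gamma'})$, where $\gamma'$ is the open cactus obtained by splitting $r$ on that cycle. Therefore its $\ell_2$-norm is at most $\|\bW_{\gamma'}\|_\frob$.
\item This is bounded by $\|\bD\|_\frob$ via (a), applied to $\gamma'$. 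Note that $\gamma'$ has the same number of edges as the piece, so the induction must be arranged with (b) for $\sigma$ calling (a) for $\gamma'$, and (a) calling (b) only on strictly smaller cactuses.
\end{itemize}

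The main obstacle is the bookkeeping in this mutual induction. Specifically, one has to make sure that the well-founded order really decreases: (a) on $\gamma$ calls (b) on a proper hanging sub-cactus, while (b) calls (a) on an open cactus of the same size that has no further (b)-step at its root. One also has to handle degenerate cases: loops, a length-one base path, and $e$ being the only edge. I expect all of these to go through, using only submultiplicativity and the fundamental theorem.
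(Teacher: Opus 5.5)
Your proposal is correct and follows the paper's proof: the same edge-by-edge telescoping, the same reduction to the single-defect bound $\|\bW_\alpha(\bm\calA)\|_{\textnormal F}\le\|\bA_{e^*}\|_{\textnormal F}$ (the paper's \Cref{lem:low-rank}), and the same rank-one expansion of $\bA-\bH$ giving the constant $3$. The paper proves that lemma by peeling cactuses and path edges off the endpoints, and, when the defect edge lies in a hanging cactus, splitting the root to obtain a smaller open cactus and inducting on the number of edges; your one-shot factorization together with the mutual induction between (a) and (b) does exactly the same work.
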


We deduce in the following that puncturing does not change the diagonal distribution.
In particular, matrices such as the \rom and the \rrom have the same diagonal distribution.

\begin{corollary}
    \label{cor:cactus-puncturing-same}
    Let $\bH$ and $\bA$ be as in~\Cref{prop:to-punctured}. Then for any $\sigma\in\calC_1$
    \[
        \|\bw_\sigma(\bH) - \bw_\sigma(\bA)\|_2 \le O(1)\,,
    \]
    and for any $\sigma\in \calC$,
    \[
        \frac 1n |w_\sigma(\bH) - w_\sigma(\bA)| \le O\left(\frac 1 {\sqrt n}\right)\,.
    \]
\end{corollary}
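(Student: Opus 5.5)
We deduce the corollary directly from \Cref{prop:to-punctured} by rewriting cactus polynomials as diagonals of open cactus matrices. First take $\sigma\in\calC_1$ with root $r$. If $\sigma$ has a single vertex and no edges, then $\bw_\sigma(\bH)=\bw_\sigma(\bA)=\mathbf 1$ and there is nothing to prove. Otherwise we split the root $r$ into two copies, as in the proof of \Cref{thm:universality-orthogonal}. The edges of the cycles through $r$ are distributed so that the result $\sigma'$ is an open cactus whose endpoints are the two copies of $r$. Then $\bw_\sigma(\bM)=\diag(\bW_{\sigma'}(\bM))$ for any symmetric $\bM$, viewing the diagonal as a vector.

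Concretely, we pick one cycle $\rho$ through $r$ and open it at $r$: one edge of $\rho$ at $r$ goes to the first copy and the other edge to the second copy, so $\rho$ becomes the base path. All other cactus pieces hanging at $r$ are attached to the first copy. Summing over labelings with both copies labeled $i$ recovers $\bw_\sigma(\bM)[i]$. If $\rho$ is a self-loop at $r$, we handle it as a length-one base path $\{u_1,u_2\}$ with $u_1,u_2$ both corresponding to $r$, which is still an open cactus by \Cref{def:open-cactus} with $k=2$.

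With this, the first bound reads
\[
\|\bw_\sigma(\bH)-\bw_\sigma(\bA)\|_2=\|\diag(\bW_{\sigma'}(\bH)-\bW_{\sigma'}(\bA))\|_2\le \|\bW_{\sigma'}(\bH)-\bW_{\sigma'}(\bA)\|_\frob\le 3|E(\sigma)|=O(1),
\]
where the last inequality is \Cref{prop:to-punctured}. The bound uses only $\|\bH\|\le 1$, as assumed there.

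For the scalar case $\sigma\in\calC$, we root $\sigma$ at an arbitrary vertex to get $\sigma_1\in\calC_1$. Then $w_\sigma(\bM)=\langle \mathbf 1,\bw_{\sigma_1}(\bM)\rangle$, so by Cauchy--Schwarz
\[
\tfrac1n|w_\sigma(\bH)-w_\sigma(\bA)|\le \tfrac1n\|\mathbf 1\|_2\,\|\bw_{\sigma_1}(\bH)-\bw_{\sigma_1}(\bA)\|_2\le \tfrac{O(1)}{\sqrt n}.
\]

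The only step requiring care is checking that the root-splitting always produces a legitimate open cactus, in particular in the degenerate self-loop case and when several cycles meet at the root. This is just bookkeeping with \Cref{def:open-cactus}.
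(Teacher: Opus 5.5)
Your proposal is correct and follows the paper's proof. The paper also splits the root so that $\bw_\sigma$ becomes the diagonal of an open cactus matrix, bounds that diagonal by the Frobenius norm using \Cref{prop:to-punctured}, and gets the scalar case by Cauchy--Schwarz against $\mathbf 1$. Opening exactly one cycle at the root is a slightly more careful version of the paper's instruction to reassign root edges so that both copies have degree at least one. That instruction, read literally, could split two cycles at once and then fails to give an open cactus.
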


\begin{proof}[Proof of~\Cref{cor:cactus-puncturing-same} from~\Cref{prop:to-punctured}]
    Except for the case where $\sigma \in \cC_1$ has one vertex (in which case the statement holds because the diagonal entries are bounded), $\rt(\sigma)$ has degree $\ge 2$. Create
    two copies $r_1,r_2$ of $\rt(\sigma)$ and re-assign the edges
    incident to $\rt(\sig)$ to $r_1$ or $r_2$ in such a way that $r_1$ and $r_2$
    have degree at least $1$. The resulting
    graph is an open cactus $\alpha$ with endpoints $r_1$ and $r_2$ such that merging
    these endpoints yields back $\sigma$.
    Hence,
    \[
        \|\bw_\sigma(\bH) - \bw_\sigma(\bA)\|_2 = \|\textnormal{diag}(\bW_\alpha(\bH)) - \textnormal{diag}(\bW_\alpha(\bA))\|_{\textnormal F}\le O(1)\,.
    \]
    The second statement then follows from Cauchy-Schwarz:
    \[
        |w_\sigma(\bH)-w_\sigma(\bA)| = |\langle \bm 1, \bw_\sigma(\bH)-\bw_\sigma(\bA)\rangle| \le \sqrt n \cdot  \|\bw_\sigma(\bH) - \bw_\sigma(\bA)\|_2\le O(\sqrt n)\,.
    \]
    This concludes the proof.
\end{proof}

However, $\bH$ and its punctured version $\bA$ may  
{\em not} have the same traffic distribution, even on scalar open
cactuses.
Thus, the diagonal distribution (i.e., the values of cactus diagrams) is not sensitive to the behavior of $\bH$ in any single direction $\bu$, while some diagrams in the traffic distribution \emph{are} sensitive to the behavior in the $\bm 1$ direction.

\begin{example}[Puncturing of the Walsh-Hadamard matrix]\label{ex:puncture-hadamard}
    Let $\bH^{(n)}$ be the normalized
    Walsh-Hadamard matrices (\Cref{def:hadamard}). Then for the 2-path diagram $\alpha$ (which is an open cactus),
    \[
        \frac 1n (w_\alpha(\bH) - w_\alpha(\bA)) = \frac 1n \langle \bm 1, (\bH^2 - \bA^2) \bm 1\rangle  \underset{n\to\infty}{\longrightarrow} 1\,.
    \]
    This does not contradict~\Cref{prop:to-punctured}: $\bE = \bW_\alpha(\bH) - \bW_\alpha(\bA)$ indeed satisfies
    \[
        \sum_{i,j=1}^n \bE[i,j]^2 \le O(1)\quad \text{ and }\quad \left|\sum_{i,j=1}^n \bE[i,j] \right| = \Omega(n)\,.
    \]

\end{example}

In general, as the following example demonstrates, the off-diagonal structure of the error matrix
$\bE = \bW_\alpha(\bH) - \bW_\alpha(\bA)$ in~\Cref{prop:to-punctured} may
be intricate. 
In the following example, $\bE$ has
entries of magnitude $\Omega(1)$, even though its
Frobenius norm remains bounded.

\begin{example}[Puncturing of the DST matrix]
        Let $\bH^{(n)}$ be the discrete sine transform matrices (\Cref{def:dst}).
        Then for any fixed odd $i\ge 1$, the 
        normalized sum of the $i$th row of
        $\bH^{(n)}$ is
        \[
            \frac {1} {\sqrt n} \sum_{j=1}^n \bH[i,j] = (\sqrt 2+o(1)) \int_0^1 \sin(\pi i t) \,\d t \underset{n\to\infty}{\longrightarrow} \frac {2 \sqrt 2} {i\pi }\,.
        \]
        Consider the 2-path diagram $\alpha$. While the
        off-diagonal entries of $\bW_\alpha(\bH) = \bH^2$
        vanish (since $\bH$ is a symmetric
        orthogonal matrix), on the other hand, for any
        fixed distinct odd numbers $i,j\ge 1$, 
        \[
            \bW_\alpha(\bA)[i,j] = (\bA^2)[i,j]\underset{n\to\infty}{\longrightarrow} -\frac {8} {ij \pi^2}\,,
        \]
        which is $\Omega(1)$ for constant $i\neq j$.
\end{example}

\noindent The proof of~\Cref{prop:to-punctured}
relies on expanding $\bA$ in terms of
$\bm u\bm u^\top$ and $\bH$. All rank-1 terms
can be neglected thanks to the following
lemma:

\begin{lemma}
    \label{lem:low-rank}
    Let $\alpha$ be an open cactus, $e^*\in E(\alpha)$, and $\bm \calA = (\bA_e)_{e\in E(\alpha)}$
    be a collection of matrices such
    that $\|\bA_e\|\le 1$ for all
    $e\in E(\alpha)\setminus \{e^*\}$. Then,
    \[
        \|\bW_\alpha(\bm \calA)\|_{\textnormal F}\le \|\bA_{e^*}\|_{\textnormal F}\,.
    \]
\end{lemma}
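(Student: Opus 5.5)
Since $e^*$ lies either on the base path or inside one of the hanging cactuses, I would factor $\bW_\alpha(\bm\calA)$ as a product of matrices and use $\|\bm X\bm Y\|_{\textnormal F}\le \|\bm X\|\,\|\bm Y\|_{\textnormal F}$ (and the symmetric version) repeatedly. As in the proof of \Cref{lem:diagonal-hadamard}, write the base path $(u_1,\dots,u_k)$ with hanging cactuses $\beta_1,\dots,\beta_k$. Split each $\beta_i$ at $u_i$ into an open cactus $\beta_i'$ (when $\beta_i$ is a single vertex its factor is $\bI$). This gives
\[
\bW_\alpha(\bm\calA)=\diag(\bW_{\beta_1'})\,\bA_{e_1}\,\diag(\bW_{\beta_2'})\,\bA_{e_2}\cdots \bA_{e_{k-1}}\,\diag(\bW_{\beta_k'}),
\]
where $e_i=\{u_i,u_{i+1}\}$ and each $\bW_{\beta_i'}$ uses the edge-matrices restricted to $\beta_i$.

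\textbf{Case 1: $e^*=e_j$ is a base-path edge.} Every other factor is either $\bA_{e_i}$ with $\|\bA_{e_i}\|\le 1$ or $\diag(\bW_{\beta_i'})$ with $\|\diag(\bW_{\beta_i'})\|\le \|\bW_{\beta_i'}\|_\infty$-entrywise bound $\le 1$. That last bound holds because each diagonal entry of $\bW_{\beta_i'}$ equals an entry of $\bw_{\beta_i}(\bm\calA)$ with $\beta_i\in\calE_1$, so the vector case of \Cref{thm:fundamental-theorem} gives $\|\bw_{\beta_i}\|_\infty\le \prod\|\bA_e\|\le 1$. Submultiplicativity then gives $\|\bW_\alpha\|_{\textnormal F}\le \|\bA_{e^*}\|_{\textnormal F}$.

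\textbf{Case 2: $e^*$ lies in some $\beta_j$.} All other factors still have operator norm $\le 1$, so it suffices to show $\|\diag(\bw_{\beta_j}(\bm\calA))\|_{\textnormal F}=\|\bw_{\beta_j}(\bm\calA)\|_2\le \|\bA_{e^*}\|_{\textnormal F}$, for a rooted cactus $\beta_j$ containing $e^*$. I will prove this by induction on the cactus. The cycle $\rho$ of $\beta_j$ containing $e^*$ can be cut open at the vertex $v$ closest to the root, with $v$ possibly the root itself. Then $\bw_{\beta_j}[i]$ is a product of $\bw$-values of sub-cactuses, each bounded by $1$ in $\ell^\infty$ by \Cref{thm:fundamental-theorem}, times a factor coming from the open cactus $\gamma$ obtained by opening $\rho$ at $v$ together with everything hanging off $\rho$.

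If $v$ is the root, that factor is the entry $\diag(\bW_\gamma)[i]$. Since $\|\diag(\bm M)\|_{\textnormal F}\le \|\bm M\|_{\textnormal F}$, this reduces to the lemma for the smaller open cactus $\gamma$, which contains $e^*$; so the argument is an induction on $|E(\alpha)|$. If $v$ is not the root, it is connected to the root through other cycles. In that case I expand $\bw_{\beta_j}[i]$ as a sum over the label of $v$ weighted by entries of matrices of norm $\le 1$. The main obstacle is then bounding $\|\bm M\,\diag(\bW_\gamma)\,\bm 1'\|$-type expressions, since the path from the root to $v$ goes around a cycle rather than along a single path.

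To avoid this, I would instead choose the splitting differently and re-root: any cactus vector $\bw_{\beta_j}[i]$ equals $\diag(\bW_{\beta''})[i]$, where $\beta''$ is the open cactus obtained by splitting the root so that the two halves of one root-cycle become the two ends. The recursive diagonal/product factorization along that open cactus again has all factors of norm $\le 1$ except one, and that one is either $\bA_{e^*}$ itself or a $\diag$ of a smaller structure containing $e^*$. Induction on the number of edges, with base case a single edge $e^*$ where the bound is trivial, closes the argument. The only facts used are submultiplicativity, $\|\diag(\bm M)\|_{\textnormal F}\le\|\bm M\|_{\textnormal F}$, and the $\ell^\infty$ bound from \Cref{thm:fundamental-theorem}.
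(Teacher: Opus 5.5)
Your argument is correct and is essentially the paper's proof. The paper also peels off the norm-$\le 1$ pieces: hanging cactuses via the fundamental theorem and the Hadamard/Frobenius bound, and base-path edges via $\|\bm M_1\bm M_2\|_{\textnormal F}\le\|\bm M_1\|_{\textnormal F}\|\bm M_2\|$. This continues until either only $e^*$ remains or one is left with the rooted cactus containing $e^*$; the paper then discards the other root cycles, opens the remaining root cycle into an open cactus with strictly fewer edges, and inducts. Your one-shot matrix factorization is a compact form of that pruning. The abandoned ``cut at the vertex closest to the root'' detour is unnecessary, since your re-rooting step already covers it.
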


\begin{proof}
    We first
    run a pruning procedure that iteratively removes parts of $\alpha$ not containing
    $e^*$, without decreasing the
    Frobenius norm of $\bW_\alpha(\bm \calA)$
    during the procedure.
    To this end, we use repeatedly the standard inequalities:

    \begin{claim}    
    \label{claim:frobenius-product}
        $\|\bmM_1 \bmM_2\|_\frob\le \|\bmM_1\|_\frob \|\bmM_2\|$.
    \end{claim}
    
    \begin{claim}
    \label{claim:frobenius-hadamard}
    $\|\bmM_1 \odot \bmM_2\|_\frob\le \|\bmM_1\|_\frob\cdot  \max_{1\le i,j\le n} \left|\bmM_2[i,j]\right|\le \|\bmM_1\|_\frob \|\bmM_2\|$, where $\odot$ denotes entrywise or Hadamard product.
    \end{claim}
    
    Initially, let $u_{\textnormal L}$
    be one of the endpoints of $\alpha$.
    \begin{enumerate}
        \item If $e^*$ belongs to a
        cactus hanging from $u_{\textnormal L}$, then stop.
        \item Otherwise, remove
        the cactus hanging from $u_{\textnormal L}$ from the diagram. Using~\Cref{claim:frobenius-hadamard} and~\Cref{thm:fundamental-theorem} 
        (the spectral norm of the cactus matrix
        diagram with a double root at $u_{\textnormal L}$ is at most 1), this does not 
        decrease
        the Frobenius norm. 
        \item At this point, $u_{\textnormal L}$ must have degree equal to 1 in the current graph. If $e$ is the edge 
        adjacent to $u_{\textnormal L}$, then stop. 
        \item Otherwise, remove the edge
        adjacent to $u_{\textnormal L}$.
        By~\Cref{claim:frobenius-product} and the assumption, this does not decrease the
        Frobenius norm. Set $u_{\textnormal L}$ to be the
        vertex that was adjacent to
        $u_{\textnormal L}$, and go
        back to the first step.
    \end{enumerate}
    Then, apply the symmetric procedure
    from the other endpoint $u_{\textnormal R}$ of $\alpha$. At
    this point, there are two cases. If $u_{\textnormal L} \neq u_{\textnormal R}$, then the resulting graph
    must consist of the single edge
    $e^* = \{u_{\textnormal L}, u_{\textnormal R}\}$, so we get
    the desired upper bound on the Frobenius norm. Therefore, we
    assume from now on that $u_{\textnormal L} = u_{\textnormal R}$.
    
    The resulting graph must be a cactus rooted at $u_{\textnormal L} = u_{\textnormal R}$, and $e^*$ is
    one of the edges of this cactus.
    If there are several cycles
    incident to $u_{\textnormal L}$,
    we use again~\Cref{claim:frobenius-hadamard} and~\Cref{thm:fundamental-theorem} to remove all such cycles
    not containing $e^*$ without
    decreasing the Frobenius norm.
    
    Finally, we bound the Frobenius norm of
    the diagonal cactus matrix rooted at
    $u_{\textnormal L}$ by the
    Frobenius norm of an open cactus
    obtained by creating two copies
    of the root and turning the unique
    cycle hanging at $u_{\textnormal L}$ into a simple path between
    these two copies (we used a similar procedure
    in~\Cref{cor:cactus-puncturing-same}). We claim that this open cactus has strictly less edges
    than the one we started with
    before running the pruning procedure. Indeed,
    the base path had at least one edge, which
    was removed during the pruning stage when
    $u_{\textnormal L} = u_{\textnormal R}$ at the end. We
    conclude by induction on the
    number of edges of the open cactus.
\end{proof}

\begin{proof}[Proof of {\Cref{prop:to-punctured}}]
    We replace iteratively $\bm H$ by
    $\bm A$ in the graph polynomial $\bm W_\alpha(\bm H)$: let $e_1, \ldots, e_{|E(\alpha)|}$
    be the edges of $\alpha$, and write
    \[
        \bm W_\alpha(\bm A) - \bm W_\alpha(\bm H) = \sum_{i=1}^{|E(\alpha)|} \bm W_\alpha(\bm \calA_{i})\,,
    \]
    where $\bm \calA_i[e_j] = \bm H$ if $j<i$, $\bm \calA_i[e_j] = \bm A$ if $j>i$, and $\bm \calA_i[e_i] = \bm A - \bm H$.
    For each $i\in [|E(\alpha)|]$, we apply~\Cref{lem:low-rank} with $e^* = e_i$.
    We have $\|\bm A\|\le 1$ and $\|\bm H\|\le 1$
    so the assumptions of the lemma are satisfied, and we deduce
    \[
        \|\bm W_\alpha(\bm \calA_i)\|_\frob\le \|\bm A - \bm H \|_\frob\,,
    \]
    and by the triangle inequality
    \[
        \|\bm W_\alpha(\bm A) - \bm W_\alpha(\bm H)\|_\frob\le |E(\alpha)|\cdot \|\bm A - \bm H\|_\frob\,.
    \]
    Finally, we have
    \[
        \bA - \bH = \langle \bu,\bH \bu\rangle \bu\bu^\top - (\bH \bu\bu^\top + \bu\bu^\top \bH)\,.
    \]
    Since $\|\bH\|\le 1$ and $\bu$ is a unit vector, we have $|\langle \bu, \bH \bu\rangle|\le 1$ and $\|\bH \bu\|_2\le 1$, so $\|\bm A - \bm H\|_{\textnormal F}\le 3$.
\end{proof}

\subsection{Support of the \texorpdfstring{$z$}{z}-basis}

Let $\bH=\bmH^{(n)}$ be a family of matrices
satisfying~\cref{eq:assNorm,eq:assOpen}
and $\bmA=\bmA^{(n)}$ be their puncturing.
The main result of this subsection is
that $\bA$ and $\bH$ satisfy the weak cactus
property, that is, their traffic 
distribution in the $z$-basis is supported on
cactuses and graphs with bridges.

\begin{proposition}
    \label{prop:Z-support}
    For any $\alpha\in \calE\setminus \calC$,
    \[
        \frac 1n |z_\alpha(\bH)|\le O\left(\eps + \frac 1 {\sqrt n}\right)\quad\text{and}\quad \frac 1n |z_\alpha(\bA)|\le O\left(\eps + \frac 1 {\sqrt n}\right)\,.
    \]
\end{proposition}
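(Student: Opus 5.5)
The plan is to induct on $|V(\alpha)|$ (and then on $|E(\alpha)|$), removing at each step the ``extra'' open cactus provided by \Cref{prop:open-cactus-decomposition}.

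\textbf{Setup.} Given $\alpha\in\calE\setminus\calC$, root it arbitrarily, viewing it as an element of $\calE_1\setminus\calC_1$. \Cref{prop:open-cactus-decomposition} gives distinct $s,t$ and an open cactus $\beta$ with endpoints $s,t$ such that $\gamma\defeq\alpha[V(\alpha)\setminus(V(\beta)\setminus\{s,t\})]$ is 2-edge-connected. View $\gamma$ as a matrix diagram rooted at $(s,t)$. Then
\[
z_\alpha(\bH)=\sum_{\varphi:V(\alpha)\hookrightarrow[n]}\prod_{e\in E(\gamma)}\bH[\cdot]\prod_{e\in E(\beta)}\bH[\cdot].
\]

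\textbf{Step 1: relax injectivity.} Relax the injectivity of $\varphi$ in two stages, using \Cref{claim:mobius}.
\begin{enumerate}
\item Relax it on the internal vertices of $\beta$. The fully relaxed term replaces $\beta$ by the matrix $\bW_\beta(\bH)$ evaluated at $(\varphi(s),\varphi(t))$.
\item Relax it on $V(\gamma)$, keeping only the constraint $\varphi(s)\neq\varphi(t)$.
\end{enumerate}
Each correction term is a $z$-polynomial of a proper contraction $\alpha_P$. By \Cref{lem:contract-2ec} it is still 2-edge-connected, and it has fewer vertices.
\begin{itemize}
\item If $\alpha_P\notin\calC$, the induction hypothesis bounds it by $O(\eps+n^{-1/2})$.
\item If $\alpha_P\in\calC$, I expect this can only happen when $s$ and $t$ get identified with each other (or internal vertices of $\beta$ with vertices of $\gamma$). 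I would not use induction there. Instead I would re-run the same argument with a different ear, or bound such terms jointly with the main term below, since they again have the form ``$\gamma$ glued to an open-cactus-type matrix at a pair of vertices''.
\end{itemize}

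\textbf{Step 2: the main term as a pairing.} The main term is the pairing
\[
\sum_{i\neq j}\bW_\gamma^{\,s\ne t}(\bH)[i,j]\,\bW_\beta(\bH)[i,j]=\langle \bM,\bW_\gamma(\bH)\rangle,
\]
where $\bM$ is the off-diagonal part of $\bW_\beta(\bH)$. By \cref{eq:assOpen}, $\max|\bM[i,j]|\le\eps$. By \Cref{thm:fundamental-theorem} applied to $\gamma\in\calE_2$ we have $\|\bW_\gamma\|\le1$. Also $\|\bW_\gamma\|_\frob^2=w_{\gamma\,\text{doubled at }(s,t)}\le n$, since gluing two copies of $\gamma$ at $s$ and $t$ stays 2-edge-connected. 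So the goal is $|\langle\bM,\bW_\gamma\rangle|\le O(\eps n)$.

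\textbf{Step 3: the punctured matrix.} For $\bA$ I would run the same argument and compare with $\bH$ edge by edge. By \Cref{prop:to-punctured}, $\|\bW_\beta(\bA)-\bW_\beta(\bH)\|_\frob\le3|E(\beta)|$. Cauchy--Schwarz against $\|\bW_\gamma(\bA)\|_\frob\le\sqrt n$ then gives an additive error $O(\sqrt n)$, i.e.\ $O(n^{-1/2})$ after dividing by $n$. This is exactly the second term in the claimed bound. Note that $\|\bA\|\le1$, so $\bA$ also satisfies \cref{eq:assNorm}.

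\textbf{Main obstacle.} The hardest step is getting $O(\eps n)$ rather than $O(\eps n^{3/2})$ for the main $\bH$ term in Step 2. Entrywise $\eps$ combined with a Frobenius bound on $\bW_\gamma$ loses a factor $\sqrt n$. My first attempt is to not separate $\beta$ from $\gamma$ at the level of matrices, but to apply \Cref{claim:frobenius-hadamard} inside an open-cactus expansion of $\gamma$ itself. The idea is to write $\gamma$ plus the edge $\{s,t\}$ as a 2-edge-connected diagram in which the edge $\{s,t\}$ carries the matrix $\bM$, and then bound it via a trace/nuclear-norm argument:
\[
|\langle\bM,\bW_\gamma\rangle|\le\|\bW_\gamma\|\,\|\bM\|_*.
\]
I would then try to show $\|\bM\|_*\lesssim\eps n$ using that $\bM$ is a Hadamard-masked product of contractions. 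If that fails, the fallback is to strengthen the induction hypothesis to a statement about $\|\bW\|$ of 2-edge-connected non-cactus matrix diagrams with entrywise-small edge labels.
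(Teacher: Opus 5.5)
\textbf{Gap in Step 2.} The central estimate of Step 2 is the whole content of the proposition, and the route you propose cannot establish it.

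The nuclear-norm bound already fails in the simplest case. Take $\alpha$ a triple edge, $\beta$ one of its three edges, and $\bH$ Walsh--Hadamard. Then $\bm M=\bH-\mathrm{diag}(\bH)$ has $\|\bm M\|_*\ge n-\sqrt n$, while $\|\bW_\gamma\|=\|\bH^{\odot 2}\|=1$. So the bound $\|\bW_\gamma\|\,\|\bm M\|_*$ only gives $\frac1n|z_\alpha|=O(1)$. In particular $\|\bm M\|_*\lesssim\eps n=n^{1/2+o(1)}$ is false.

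More fundamentally, the facts you retain about $\gamma$ cannot suffice: $\|\bW_\gamma\|\le 1$ and $\|\bW_\gamma\|_{\textnormal F}\le\sqrt n$ are also satisfied by $\bm M/\|\bm M\|$, which pairs with $\bm M$ to give order $n$. Your fallback, an induction on operator norms of diagrams with entrywise-small edge labels, hits the same obstruction. An entrywise bound $\eps$ on a label gives no control of its operator norm (the off-diagonal part of $\bH$ has norm about $1$), and operator norms are exactly what \Cref{thm:fundamental-theorem} consumes.

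\textbf{The missing idea: a self-bounding argument.} The paper exploits the structure of $\alpha$ itself, as follows.

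First, root $\alpha$ at a vertex not interior to $\beta$, which \Cref{prop:open-cactus-decomposition} allows. Bound $\frac1n|w^{s\neq t}_\alpha|\le\frac1{\sqrt n}\|\bw^{s\neq t}_\alpha\|_2$.

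Next, expand $\|\bw^{s\neq t}_\alpha\|_2^2$ as two copies of $\alpha$ glued at the root. Split off the open cactus of only one copy. The result is $\sum_{i\neq j}\bW_\beta[i,j]\,\bW_{\mathrm{rest}}[i,j]$, where the rest carries the constraint $s\neq t$ on the second copy and contains that entire second copy of $\alpha$ hanging at the root.

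Hölder with $\max_{i\neq j}|\bW_\beta(\bH)[i,j]|\le\eps$ costs $\eps\sum_{i,j}|\bW_{\mathrm{rest}}[i,j]|$. \Cref{lem:abs-value-pushed} bounds this $\ell^1$ norm by $\sqrt n\,\|\bw^{s\neq t}_\alpha\|_2$, i.e.\ by the unknown itself rather than the trivial $n^{3/2}$. Its proof is Cauchy--Schwarz over a separating vertex set, followed by \Cref{thm:fundamental-theorem} on lifts that remain 2-edge-connected by \Cref{lem:structural-2ec}.

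The puncturing error is handled as in your Step 3, now using $\|\bW_{\mathrm{rest}}\|_{\textnormal F}\le\|\bw^{s\neq t}_\alpha\|_2$. Altogether $\|\bw^{s\neq t}_\alpha\|_2^2\le O(1+\eps\sqrt n)\,\|\bw^{s\neq t}_\alpha\|_2$, which recovers exactly the lost $\sqrt n$.

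\textbf{Minor point on Step 1.} Your worry there is unfounded. As long as $s$ and $t$ are kept distinct, every contraction $\alpha_P$ contains three edge-disjoint $s$--$t$ paths: two in the contracted, still 2-edge-connected $\gamma$, and one from the image of the base path of $\beta$. So $\alpha_P$ is never a cactus, even when interior vertices of $\beta$ merge into $\gamma$. The induction therefore covers every correction term, which is how the paper's reduction to $w^{s\neq t}_\alpha$ closes.
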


The fundamental theorem of graph polynomials can be used to show that these quantities are $O(1)$ (after converting between the $z$ and $w$-bases).
The idea of~\Cref{prop:Z-support} is
to isolate an open cactus in $\alpha$ by~\Cref{prop:open-cactus-decomposition}
and apply~\Cref{assumption:punctured}
to gain an additional $\eps$
factor.

We emphasize that analogous bounds in the
$w$-basis are false in general;
summation over some distinct indices
is necessary to prove~\Cref{prop:Z-support}.
We prove that, using the notation in~\cref{sec:polynomial-definitions}:

\begin{lemma}    
    \label{prop:Z-support-vector}
    Let $\alpha\in\calE_1 \setminus \calC_1$ and
    let $s,t$ be the endpoints of an open
    cactus in $\alpha$ satisfying the guarantees
    of~\Cref{prop:open-cactus-decomposition}. Then
    \begin{align}
        \frac 1 {\sqrt n} \|\bw_\alpha^{s\neq t}(\bA)\|_2 \le O\left(\eps + \frac 1 {\sqrt n}\right)\quad\text{and}\quad \frac 1 {\sqrt n} \|\bw_\alpha^{s\neq t}(\bH)\|_2\le O\left(\eps + \frac 1 {\sqrt n}\right)\,.\label{eq:vector-extension}
    \end{align}
\end{lemma}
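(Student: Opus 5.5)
The plan is to split $\alpha$ along the open cactus $\beta$ given by \Cref{prop:open-cactus-decomposition}. Let $\gamma \defeq \alpha[V(\alpha)\setminus(V(\beta)\setminus\{s,t\})]$ be the remainder, which is 2-edge-connected and contains $\rt(\alpha)$, $s$, $t$. View $\gamma$ as a tensor-type diagram with the three marked vertices $\rt(\alpha), s, t$, and $\beta$ as the matrix diagram $\bW_\beta$ rooted at $(s,t)$. Then
\[
\bw_\alpha^{s\neq t}(\bM)[r]=\sum_{i\neq j}\bT_\gamma(\bM)[r,i,j]\,\bW_\beta(\bM)[i,j],
\]
where $\bT_\gamma[r,i,j]$ is $\gamma$ with root labelled $r$ and $s,t$ labelled $i,j$, and $\bM\in\{\bH,\bA\}$. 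Write $\bW_\beta^{\mathrm{off}}$ for $\bW_\beta$ with its diagonal zeroed. The constraint $i\neq j$ is exactly what lets us replace $\bW_\beta$ by $\bW_\beta^{\mathrm{off}}$. Hence $\bw_\alpha^{s\ne t}$ equals $\bw_{\gamma'}$, where $\gamma'$ is $\gamma$ with one extra edge $\{s,t\}$ carrying the matrix $\bW_\beta^{\mathrm{off}}(\bM)$ and all other edges carrying $\bM$. Since $\gamma$ is already 2-edge-connected, $\gamma'$ is too, so it is a legitimate input to \Cref{thm:fundamental-theorem}.

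The vector case of \Cref{thm:fundamental-theorem} gives $\|\bw_{\gamma'}\|_\infty\le \|\bW_\beta^{\mathrm{off}}(\bM)\|\prod_e\|\bM\|\le\|\bW^{\mathrm{off}}_\beta(\bM)\|$. This uses $\|\bM\|\le1$, which holds for $\bA$ as well since $\|\bm\Pi\|\le 1$. Consequently $\frac1{\sqrt n}\|\bw^{s\ne t}_\alpha\|_2 \le \|\bW^{\mathrm{off}}_\beta(\bM)\|$, and it remains to show that the spectral norm of the off-diagonal part of an open cactus matrix is $O(\eps+n^{-1/2})$. An $\ell_\infty$ entry bound alone only gives $\|\cdot\|\le n\eps$, which is far too weak, so more care is needed here. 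I would instead work with $\|\bw^{s\ne t}_\alpha\|_2$ directly: square it, giving a scalar diagram made of two copies of $\gamma'$ glued at the root, which contains two copies of the off-diagonal $\beta$-edge. Then bound by Frobenius norms via the Hadamard/product inequalities (\Cref{claim:frobenius-product,claim:frobenius-hadamard}), peeling off all other edges with spectral norm $\le 1$ as in \Cref{lem:low-rank}. The goal is an estimate
\[
\|\bw_\alpha^{s\ne t}\|_2^2\lesssim \|\bW_\beta^{\mathrm{off}}\|_{\infty,\mathrm{entry}}\cdot n \;+\; \|\bW_\beta^{\mathrm{off}}\|_\frob^2 \cdot O(1).
\]

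For $\bH$, the off-diagonal entries are at most $\eps$ by \cref{eq:assOpen}. The Frobenius norm of $\bW_\beta(\bH)$ should be $O(\sqrt n)$ with an $\eps\sqrt n$ improvement on the off-diagonal part. I would attempt this by expanding $\|\bW^{\mathrm{off}}_\beta\|_\frob^2=\Tr(\bW_\beta\bW_\beta^\top)-\|\mathrm{diag}\,\bW_\beta\|^2$: the trace is a cactus ($w$-basis) quantity, and \cref{eq:assOne} controls the diagonal fluctuations. For $\bA$, I would reduce to $\bH$ using \Cref{prop:to-punctured}, $\|\bW_\beta(\bA)-\bW_\beta(\bH)\|_\frob\le 3|E(\beta)|$. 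This $O(1)$ Frobenius error contributes exactly the $O(1/\sqrt n)$ term after dividing by $\sqrt n$, via the same Frobenius-peeling argument applied to the difference, which is again a 2-edge-connected diagram with one distinguished edge.

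The main obstacle is the middle step: converting the entrywise control \cref{eq:assOpen} into a bound on the whole diagram without losing factors of $n$. The fundamental theorem only sees spectral norms, while the hypothesis is entrywise. My first attempt is to exploit that $\gamma$ is 2-edge-connected, so there are two edge-disjoint $s$–$t$ paths in $\gamma$ besides $\beta$. Combining them with $\beta$ shows $\alpha$ contains three edge-disjoint paths. Using one copy of $\bW_\beta^{\mathrm{off}}$ entrywise (cost $\eps$) and the other two paths via Cauchy–Schwarz as Frobenius/spectral factors should then yield the $\eps\sqrt n$ scale for $\|\bw^{s\ne t}_\alpha\|_2$.
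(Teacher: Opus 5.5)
Your rewriting of $\bw_\alpha^{s\neq t}$ as a 2-edge-connected diagram $\gamma'$ with one edge carrying $\bW_\beta^{\mathrm{off}}$ is correct. However, there is a genuine gap: neither route you build on it can give the stated rate.

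\textbf{A counterexample to both routes.} Take $\alpha$ to be a root $r$ and a vertex $v$ joined by three parallel edges, with the extra open cactus $\beta$ a single edge $\{r,v\}$; this is a legitimate choice in \Cref{prop:open-cactus-decomposition}. Let $\bH$ be a Walsh--Hadamard matrix, which satisfies \cref{eq:assNorm,eq:assOpen} with $\eps=n^{-1/2+o(1)}$. Then $\bW_\beta(\bH)=\bH$, so
\[
\|\bW^{\mathrm{off}}_\beta(\bH)\|\ge 1-n^{-1/2}, \qquad \|\bW^{\mathrm{off}}_\beta(\bH)\|_\frob^2=n-1,
\]
whereas $\|\bw^{s\ne t}_\alpha(\bH)\|_2\le 1$. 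This has three consequences:
\begin{enumerate}
\item The spectral reduction fails for a structural reason, not for lack of care: the quantity you need to be small is of order one.
\item The claimed ``$\eps\sqrt n$ improvement'' in the off-diagonal Frobenius norm is false.
\item Your target estimate $\|\bw_\alpha^{s\neq t}\|_2^2\lesssim \eps n+\|\bW^{\mathrm{off}}_\beta\|_\frob^2$ is vacuous.
\end{enumerate}
Even the first term alone is too weak. The bound $\|\bw_\alpha^{s\neq t}\|_2^2\lesssim\eps n$ gives only $\frac1{\sqrt n}\|\bw_\alpha^{s\ne t}\|_2\lesssim\sqrt\eps$, i.e.\ $n^{-1/4+o(1)}$, rather than $O(\eps+n^{-1/2})$. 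The sharp form $\|\bw_\alpha^{s\ne t}(\bA)\|_2\le O(1+\eps\sqrt n)$ is what \Cref{lem:AW-2ec} uses downstream.

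\textbf{The missing idea: a self-bounding inequality.} Square and extract only \emph{one} copy of $\beta$:
\[
\|\bw^{s\neq t}_\alpha(\bmM)\|_2^2=\sum_{i\ne j}\bm R[i,j]\,\bW_\beta(\bmM)[i,j].
\]
Here $\bm R$ is the lift of $\alpha$ at its root with one copy of $\beta$ deleted except for its endpoints, rooted at that copy's endpoints. Crucially, $\bm R$ still contains an intact copy of $\alpha$ with its $s\neq t$ constraint, so it is controlled by the unknown $X\defeq\|\bw_\alpha^{s\ne t}(\bmM)\|_2$ itself:
\begin{itemize}
\item $\sum_{i,j}|\bm R[i,j]|\le\sqrt n\,X$, using two edge-disjoint $s$--$t$ paths in $\gamma$, Cauchy--Schwarz and a lift (\Cref{lem:abs-value-pushed}). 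This is what your last paragraph gestures at.
\item $\|\bm R\|_\frob^2=\langle\bw_\alpha^{s\ne t},\bW_{\beta'}\bw_\alpha^{s\ne t}\rangle\le X^2$, where $\beta'$ is the lift of $\gamma$ at $\{s,t\}$. It is 2-edge-connected by guarantee 2 of \Cref{prop:open-cactus-decomposition}, so \Cref{thm:fundamental-theorem} applies.
\end{itemize}
Now write $\bW_\beta(\bmM)=\bW_\beta(\bH)+(\bW_\beta(\bmM)-\bW_\beta(\bH))$. Apply \cref{eq:assOpen} with H\"older to the first piece, and \Cref{prop:to-punctured} with Cauchy--Schwarz to the second. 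This gives $X^2\le(\eps\sqrt n+O(1))X$, hence $X\le\eps\sqrt n+O(1)$.

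Your handling of the puncturing error via \Cref{prop:to-punctured} is the right ingredient, but it too only works through this Frobenius self-bound. The essential point is to keep $X$ on the right-hand side rather than replacing it by the trivial bound $O(\sqrt n)$. That replacement is exactly what turns $\eps\sqrt n\,X$ into your $\eps n$.
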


The constraint
$s\neq t$ in~\cref{eq:vector-extension} ensures that we only use off-diagonal
entries of the open cactuses in the graph 
polynomial.
These are the only entries assumed to be small in~\Cref{assumption:punctured} (and indeed,
the diagonal entries of $\bW_\al(\bH)$ can be large,
for example, in the 2-path diagram).

\begin{proof}[Proof of~\Cref{prop:Z-support} from~\Cref{prop:Z-support-vector}]
    Let $\bmM\in \{\bmA, \bmH\}$ and
    $s,t$ be two distinct vertices of $\alpha$ to be fixed later. Using M\"{o}bius inversion
    (\Cref{claim:mobius})
    recursively, we can expand
    \[
        z_\alpha(\bmM) = c_{\alpha} w_\alpha^{s\neq t}(\bmM) + \sum_{\beta \prec \alpha} c_{\beta} z_{\beta}(\bmM)\,,
    \]
    for some constant coefficients $c_\beta \in \R$. Since all $\beta\prec \alpha$ are
    2-edge-connected by~\Cref{lem:contract-2ec} and have
    strictly less vertices than $\alpha$, by induction on
    the number of vertices of $\alpha$,
    it suffices to prove:
    \begin{align}
        \frac 1n |w^{s\neq t}_\alpha(\bmM)|\le O\left(\eps + \frac 1 {\sqrt n}\right)\,.\label{eq:vector-scalar-last}
    \end{align}
    But~\cref{eq:vector-scalar-last} follows
    from~\Cref{prop:Z-support-vector}: pick $s,t$
    to be the endpoints of an open cactus decomposition provided by~\Cref{prop:open-cactus-decomposition}, so that by Cauchy-Schwarz
    \[
        \frac 1n |w^{s\neq t}_\alpha(\bmM)| = \frac 1 n \left|\langle \bw_\alpha^{s\neq t}(\bmM),\bm1\rangle\right|\le \frac 1 {\sqrt n} \|\bw_\alpha^{s\neq t}(\bmM)\|_2\le O\left(\eps + \frac 1 {\sqrt n}\right)\,,
    \]
    which concludes the proof.
\end{proof}

We now move to the proof of~\Cref{prop:Z-support-vector}. A useful concept will be the following
graphical interpretation of squaring the polynomial expressed by a diagram:

\newcommand\lift{\textnormal{Lift}}

\begin{definition}[Lift]
    \label{def:lift}
    Let $\alpha\in\calA$ and $T\subseteq V(\alpha)$. Let $S_1$ and
    $S_2$ be two new disjoint sets of size $|V(\alpha)|-|T|$ (also disjoint from $V(\alpha)$). For $i\in \{1,2\}$, let $p_i$ be a bijection between $V(\alpha)\setminus T$ and $S_i$, which is extended to $V(\alpha)$
    by $p_i(u)=u$ for
    all $u\in T$.

    The
    {\em lift} of $\alpha$ with respect to $T$
    is the graph $\lift_T(\alpha)$ with
    \[
    V(\lift_T(\alpha)) = T\cup S_1\cup S_2\,,\quad
        E(\lift_T(\alpha)) = \{\{p_i(u), p_i(v)\} : i\in \{1,2\}, \{u, v\}\in E(\alpha)\}\,.
    \]
\end{definition}

\begin{claim}
    \label{claim:lift}
    Let $\alpha\in\calA_2$  with roots
    $(s,t)$, and $T\subseteq V(\alpha)$ be such that $\{s,t\}\subseteq T$. Then for any $\bmM\in \R_\sym^{n\times n}$,
    \[
        \bW_{\lift_T(\alpha)}(\bmM)[i,j] = \sum_{\substack{\varphi\colon T\to [n]\\\varphi(s)=i,\varphi(t)=j}} \left(\sum_{\varphi\colon V(\alpha)\setminus T\to [n]} \prod_{\{u,v\}\in E(\alpha)} \bmM[\varphi(u),\varphi(v)]\right)^2\,.
    \]
\end{claim}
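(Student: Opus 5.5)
This is a direct unfolding of the definitions, and I would prove it by expanding the square on the right-hand side and matching terms with the defining sum of $\bW_{\lift_T(\alpha)}(\bmM)[i,j]$.

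First fix a labeling $\varphi\colon T\to[n]$ with $\varphi(s)=i$ and $\varphi(t)=j$, and write the inner sum as $F(\varphi)=\sum_{\psi\colon V(\alpha)\setminus T\to[n]}\prod_{\{u,v\}\in E(\alpha)}\bmM[(\varphi\cup\psi)(u),(\varphi\cup\psi)(v)]$. Squaring gives
\[
F(\varphi)^2=\sum_{\psi_1,\psi_2\colon V(\alpha)\setminus T\to[n]}\ \prod_{\{u,v\}\in E(\alpha)}\bmM[(\varphi\cup\psi_1)(u),(\varphi\cup\psi_1)(v)]\,\bmM[(\varphi\cup\psi_2)(u),(\varphi\cup\psi_2)(v)]\,.
\]

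Next I use the bijections $p_1,p_2$ from \Cref{def:lift}. Pairs $(\psi_1,\psi_2)$ correspond bijectively to labelings $\chi\colon S_1\cup S_2\to[n]$ via $\chi(p_k(u))=\psi_k(u)$. Together with $\varphi$ on $T$, this gives a bijection between triples $(\varphi,\psi_1,\psi_2)$ and labelings $\Phi\colon V(\lift_T(\alpha))=T\cup S_1\cup S_2\to[n]$. Under this bijection, $(\varphi\cup\psi_k)(u)=\Phi(p_k(u))$ for all $u\in V(\alpha)$, because $p_k$ is the identity on $T$.

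The product over $E(\alpha)$ of the two factors indexed by $k=1,2$ is then exactly $\prod_{k\in\{1,2\}}\prod_{\{u,v\}\in E(\alpha)}\bmM[\Phi(p_k(u)),\Phi(p_k(v))]$. This is the product over the edge multiset $E(\lift_T(\alpha))$, counted with multiplicity. Edges with both endpoints in $T$ are duplicated, which is consistent because they appear once in each copy.

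Finally, the roots of $\lift_T(\alpha)$ are $(s,t)\subseteq T$, so the constraint $\Phi(s)=i,\ \Phi(t)=j$ is the same as $\varphi(s)=i,\ \varphi(t)=j$. Summing over $\varphi$ recovers \Cref{def:w-basis} for $\bW_{\lift_T(\alpha)}(\bmM)[i,j]$.

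The only point needing care is the edge-multiset convention: the edge set in \Cref{def:lift} must be read as a multiset, with one copy of each edge of $\alpha$ per $k\in\{1,2\}$, so that edges inside $T$ appear twice. I would state this explicitly. Otherwise there is no real obstacle.
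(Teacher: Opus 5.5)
Your proof is correct and matches the paper's intent: the paper states this claim without proof as an immediate consequence of the definition of the lift, and your bijection between $(\varphi,\psi_1,\psi_2)$ and labelings of $T\cup S_1\cup S_2$ is exactly that unfolding. Your remark that the edge set of $\mathrm{Lift}_T(\alpha)$ must be read as a multiset, so that edges with both endpoints in $T$ appear twice, is the right convention and is needed for the identity to hold.
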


\begin{lemma}
    \label{lem:structural-2ec}
    Let $\alpha\in\calE$, let $\pi$ be a connected
    subgraph of $\alpha$, let $\alpha_1$ be any connected component
    of $\alpha\setminus E(\pi)$, and let $\alpha_2$ the graph spanned by $E(\alpha)\setminus E(\alpha_1)$.
    Then for all $j\in \{1,2\}$, 
    $\lift_{V(\alpha_1)\cap V(\alpha_2)}(\alpha_j)$ is 2-edge-connected.
\end{lemma}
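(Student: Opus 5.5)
The plan is to use the characterization that a connected multigraph is 2-edge-connected if and only if every edge lies on a closed walk that traverses it exactly once (equivalently, on a cycle). I will first check that $\lift_T(\alpha_j)$ is connected, where $T \defeq V(\alpha_1)\cap V(\alpha_2)$. Then I will show that every edge of the lift lies on such a closed walk, by transporting a cycle of $\alpha$ through that edge into the lift.

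\emph{Connectivity.} $\alpha_1$ is connected by definition. For $\alpha_2$, note that every connected component of $\alpha\setminus E(\pi)$ that carries an edge is a subgraph of $\alpha$. Since $\alpha$ is connected, each such component other than $\alpha_1$ shares a vertex with $\pi$. Hence $\alpha_2$, which is $\pi$ together with these components, is connected. Moreover $T\neq\emptyset$ whenever both $\alpha_1$ and $\alpha_2$ are nonempty, again because $\alpha$ is connected; the degenerate cases where one of them is empty are trivial. Each copy $p_i(\alpha_j)$ is connected, and the two copies share the nonempty set $T$, so $\lift_T(\alpha_j)$ is connected.

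\emph{Key structural fact.} For each $j\in\{1,2\}$, a vertex incident both to an edge of $\alpha_j$ and to an edge of $E(\alpha)\setminus E(\alpha_j)$ must lie in $T$. This holds because $E(\alpha)\setminus E(\alpha_1)=E(\alpha_2)$ and $E(\alpha)\setminus E(\alpha_2)=E(\alpha_1)$.

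\emph{Every edge of the lift lies on a cycle.} Take an edge of the lift; by symmetry it is $p_1(e)$ for some $e=\{u,v\}\in E(\alpha_j)$. Since $\alpha\in\calE$, the edge $e$ lies on a simple cycle $C$ of $\alpha$. There are two cases.
\begin{itemize}
\item If $C\subseteq \alpha_j$, then $p_1(C)$ is a closed walk in the lift that uses $p_1(e)$ exactly once.
\item Otherwise, consider the maximal sub-path $P$ of $C$ that contains $e$ and uses only edges of $\alpha_j$. Its two ends $a,b$ are vertices where $C$ switches to edges outside $\alpha_j$. By the structural fact, $a,b\in T$, so they are fixed by both $p_1$ and $p_2$. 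Then $p_1(P)$ followed by the reverse of $p_2(P)$ is a closed walk from $a$ back to $a$. It uses $p_1(e)$ exactly once, since the $p_2$-copy uses only $p_2$-edges; when both endpoints of $e$ lie in $T$, the two copies are distinct parallel edges, so this still holds. (This argument covers $a=b$ as well, although simplicity of $C$ forces $a\neq b$.)
\end{itemize}
In either case $p_1(e)$ is not a bridge, so the lift is 2-edge-connected.

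The main point needing care is the bookkeeping of the lift construction. One must check that edges of $\alpha_j$ with both endpoints in $T$ indeed produce two distinct parallel edges in $\lift_T(\alpha_j)$, as in \Cref{def:lift}, so that the closed walk above traverses $p_1(e)$ only once. One must also check that self-loops, which are never bridges, cause no trouble.
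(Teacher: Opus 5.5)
Your proof is correct and follows essentially the same route as the paper: first show that $\alpha_2$ is connected through $\pi$, then take a cycle of $\alpha$ through $e$, keep its maximal stretch inside $\alpha_j$, and close that stretch up by gluing its $p_1$- and $p_2$-copies at the endpoints, which lie in $T$. Your version is also a bit more careful than the paper's in three places:
\begin{itemize}
\item You choose the maximal subpath by \emph{edges} of $\alpha_j$ rather than by vertices in $V(\alpha_j)$.
\item You argue with closed walks that traverse $p_1(e)$ exactly once. The paper instead asserts that the two lifted copies are disjoint and form a simple cycle, which fails when interior vertices of the subpath lie in $T$.
\item You check that $T\neq\emptyset$, which is needed for the lift to be connected.
\end{itemize}
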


\begin{proof}
    First, $\alpha_1$ is connected by definition. Since $\alpha$ is connected, every connected component
    in $(V(\alpha), E(\alpha)\setminus E(\pi))$ must be connected to
    $\pi$. Together with the fact
    that $\pi$ itself is connected,
    we get that $\alpha_2$ is connected. In particular, the lifts
    of $\alpha_1$ and $\alpha_2$ are connected.

    Fix $j\in \{1,2\}$ and an edge $e'$ in the lift of $\alpha_j$. We need to show that $e'$ belongs to at least one simple cycle in the lift of $\alpha_j$.
    There exist $i\in \{1,2\}$ and
    $e=\{x,y\}\in E(\alpha_j)$ such that $e' = \{p_i(x),p_i(y)\}$ (where $p_1,p_2$ are the lift maps from~\Cref{def:lift}). Since $\alpha$ is 2-edge-connected, $e$ belongs to a simple cycle
    in $\alpha$. Consider the
    longest subpath of this cycle
    containing $e$ and consisting 
    only of vertices in $V(\alpha_j)$.
    If this subpath is the entire cycle,
    then we have found a cycle containing $e$ in
    $\alpha_j$, and so a cycle containing $e'$ in its lift.
    Otherwise, the endpoints of this path
    must be in $V(\alpha_1)\cap V(\alpha_2)$.
    The images of this
    path through the lift maps
    $p_1$ and $p_2$ are
    disjoint, so their union forms a 
    cycle in the lift of $\alpha_j$ containing $e'$.
\end{proof}

\begin{lemma}
    \label{lem:abs-value-pushed}
    Let $\alpha\in \calE_2$ have two distinct roots. Let $\beta$
    be a leaf 2-vertex-connected component of $\alpha$
    (i.e., removing internal vertices of $\beta$ leaves $\alpha$
    connected) that does not contain the roots
    of $\alpha$.
    We view $\beta\in\calE_1$ as a vector
    diagram rooted at the articulation point
    connecting $\beta$ to the rest of $\alpha$.
    For any distinct $s',t'\in V(\beta)$ and $\bmM\in \R^{n\times n}_\sym$
    such that $\|\bmM\|\le 1$,
    \[
        \sum_{i,j=1}^n \left|\bW_\alpha^{s'\neq t'}(\bmM)[i,j]\right|\le \sqrt n \cdot \|\bw^{s'\neq t'}_\beta(\bmM)\|_2\,.
    \]
\end{lemma}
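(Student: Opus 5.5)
Let $v$ be the articulation point at which $\beta$ meets the rest of $\alpha$, and let $\gamma\in\calA_2$ be the matrix diagram spanned by $E(\alpha)\setminus E(\beta)$, with the same roots $(r_1,r_2)$ as $\alpha$. Since $\beta$ contains neither root and meets the rest of $\alpha$ only at $v$, the constraint $\varphi(s')\neq\varphi(t')$ involves only vertices of $\beta$, and the sum defining $\bW_\alpha^{s'\neq t'}$ factors at $v$:
\[
    \bW_\alpha^{s'\neq t'}(\bmM)[i,j] = \sum_{k=1}^n \bw^{s'\neq t'}_\beta(\bmM)[k]\cdot \bW_{\gamma^{(v)}}(\bmM)[i,j;k]\,,
\]
where $\bW_{\gamma^{(v)}}(\bmM)[i,j;k]$ denotes the sum over labelings of $\gamma$ with $r_1\mapsto i$, $r_2\mapsto j$, $v\mapsto k$. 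I will bound $\sum_{i,j}|\cdot|$ by pulling absolute values inside the sum over $k$, and then apply Cauchy--Schwarz in $k$:
\[
    \sum_{i,j}\left|\bW_\alpha^{s'\neq t'}(\bmM)[i,j]\right| \le \|\bw^{s'\neq t'}_\beta(\bmM)\|_2\cdot\Big(\sum_k \Big(\sum_{i,j}|\bW_{\gamma^{(v)}}(\bmM)[i,j;k]|\Big)^2\Big)^{1/2}.
\]
It then suffices to show that $\bm g[k]\defeq\sum_{i,j}|\bW_{\gamma^{(v)}}(\bmM)[i,j;k]|$ satisfies $\|\bm g\|_2\le\sqrt n$, and for that it is enough to prove $\|\bm g\|_\infty\le 1$ uniformly in $k$.

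For the bound on $\bm g$, the first thing I will establish is the structure of $\gamma$. Since $\alpha$ is 2-edge-connected and $\beta$ is a leaf block attached at the cut vertex $v$, the diagram $\gamma$ is still 2-edge-connected, because removing a leaf block does not create bridges. Now fix $k$ and view $\gamma$ as a diagram with three marked vertices $r_1,r_2,v$. A sum of absolute values over the two free indices $i,j$ is not directly controlled by the fundamental theorem (\Cref{thm:fundamental-theorem}), which only gives operator-norm and $\ell_\infty$ bounds. My plan is therefore to go through Cauchy--Schwarz and the lift (\Cref{claim:lift}, \Cref{lem:structural-2ec}): squaring converts the entrywise sum into a lifted diagram, and that lifted diagram is 2-edge-connected, so the fundamental theorem bounds it by $\prod\|\bmM\|\le 1$.

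The main obstacle is the extra factor of $n$ coming from the free root indices. If I bound $\sum_{i,j}$ naively by $n\cdot\max$, I lose too much; what I actually need is a bound of order $n$ in total after the sum over $k$, i.e.\ $\sum_k \bm g[k]^2\le n$. I will try to obtain this by treating $i$ and $j$ as summed-over non-root vertices of the lifted diagram: the quantity $\sum_k \bm g[k]^2$ is dominated, via Cauchy--Schwarz in $(i,j)$, by a scalar $w$-polynomial of a 2-edge-connected diagram built from two copies of $\gamma$ glued at $v$. That scalar polynomial is at most $n$ by the scalar case of \Cref{thm:fundamental-theorem}. I expect the delicate point to be justifying that the Cauchy--Schwarz step does not introduce an extra $n^2$ factor from the $(i,j)$ sum. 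If this fails, I would fall back on splitting $\gamma$ along its bridge-free structure between $r_1,r_2$ and using \Cref{claim:frobenius-product}-type bounds.
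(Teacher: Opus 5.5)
\textbf{There is a genuine gap, and it is at your very first inequality.} Pulling the absolute value inside the sum over the label $k$ of the attachment vertex $v$ throws away cancellation that the statement depends on. As a result, the sufficient conditions you reduce to ($\|\bm g\|_\infty\le 1$, or $\|\bm g\|_2\le\sqrt n$) are false.

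Here is a counterexample. Take $\gamma$ to be the triangle on $r_1,v,r_2$, let $\beta$ be a double edge between $v$ and a new vertex $u$ with $s'=v$, $t'=u$, and let $\bm M$ be the normalized Walsh--Hadamard matrix.
\begin{itemize}
\item Then $|\bW_{\gamma^{(v)}}(\bm M)[i,j;k]| = |\bm M[i,k]\bm M[k,j]\bm M[i,j]| = n^{-3/2}$, so $\bm g[k]=\sqrt n$ for every $k$ and $\|\bm g\|_2=n$.
\item Also $\bw^{s'\neq t'}_\beta(\bm M)[k]=1-\bm M[k,k]^2=1-\frac1n$.
\item Your intermediate quantity is therefore $\sum_k|\bw^{s'\neq t'}_\beta[k]|\,\bm g[k]\approx n^{3/2}$.
\item The target is only $\sqrt n\,\|\bw^{s'\neq t'}_\beta\|_2\approx n$.
\item The true left-hand side is about $\sqrt n$, since $\bm M D\bm M=(1-\frac1n)\bm I$ for $D=\mathrm{diag}(\bw^{s'\neq t'}_\beta)$.
\end{itemize}
So the extra factor you anticipated in the Cauchy--Schwarz over $(i,j)$ is real, not an artifact. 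No later bound on $\bm g$, including the Frobenius-type fallback you mention, can rescue the argument once the absolute value has been taken termwise in $k$.

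\textbf{The missing idea} is to choose carefully which labels the absolute value is pushed through, and then apply Cauchy--Schwarz over all of those labels jointly, splitting the edges of $\alpha$ into two sides. The paper proceeds as follows.
\begin{itemize}
\item Take an $r_1$--$r_2$ path $\pi$. Let $\alpha_1$ be the component of $r_1$ in $\alpha\setminus E(\pi)$, let $\alpha_2$ be the graph spanned by the remaining edges, and set $S=V(\alpha_1)\cap V(\alpha_2)\ni r_1,r_2$.
\item Once the labels on $S$ are fixed, the summand factors over the two sides. The absolute value is taken only over labelings of $S$; the other vertices of each side (in general including $v$) are summed inside.
\item Cauchy--Schwarz over labelings of $S$, together with \Cref{claim:lift} and \Cref{lem:structural-2ec}, turns the side without $\beta$ into a 2-edge-connected scalar lift. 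This is at most $n$ by \Cref{thm:fundamental-theorem}.
\item The side with $\beta$ becomes the quadratic form $\langle\bw^{s'\neq t'}_\beta,\bW_{\alpha'}\bw^{s'\neq t'}_\beta\rangle\le\|\bw^{s'\neq t'}_\beta\|_2^2$. Here $\alpha'$ is the (2-edge-connected) lift of $\alpha_1$ with $\beta$ removed, rooted at the two copies of $v$.
\end{itemize}
In the triangle example this is just $\sum_{i,j}|\bm M[i,j]|\,|(\bm M D\bm M)[i,j]|\le\|\bm M\|_{\mathrm F}\,\|\bm M D\bm M\|_{\mathrm F}\le\sqrt n\,\|D\|_{\mathrm F}$. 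The sum over $k$ stays inside $\bm M D\bm M$, which is exactly what your factorization at $v$ gives up.
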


\begin{proof}
    Let $(s,t)$ be the roots of $\alpha$.
    Since $\alpha$ is 2-edge-connected, there
    exist two edge-disjoint simple paths 
    between $s$
    and $t$. Let $\pi$ be one of them. Let
    $\alpha_1$ be the connected component of $s$
    in 
    $(V(\alpha),E(\alpha)\setminus E(\pi))$, and
    $\alpha_2$ be the graph spanned by $E(\alpha)\setminus E(\alpha_1)$ (including only the vertices incident with one of these edges).
    Finally, let
    $S = V(\alpha_1)\cap V(\alpha_2)$.

    \begin{claim}
        $\{s,t\}\subseteq S$.
    \end{claim}

    \begin{proof}
        On the one hand, $E(\pi)\subseteq E(\alpha_2)$ and $\{s,t\}$
        are the endpoints of $\pi$, so $\{s,t\}\subseteq V(\alpha_2)$. On the other
        hand, $s\in V(\alpha_1)$ by definition, and there is an $s$--$t$ path
        in $\alpha\setminus E(\pi)$, so $t\in V(\alpha_1)$.
    \end{proof}

    \begin{claim}
        \label{obs:makes-sense}
        For any $\{u,v\}\in E(\alpha)$ with
        $u\in V(\alpha_1)$ and $v\in V(\alpha_2)$, we have $u\in S$ or $v\in S$.
    \end{claim}

    \begin{proof}
        Suppose that
        $v\notin V(\alpha_1)$. 
        Since $u \in V(\alpha_1)$, $u$ is connected to $s$ by edges of $E(\alpha) \setminus E(\pi)$, and since $v \notin V(\alpha_1)$, $v$ is not connected to $s$ by these edges.
        But, $\{u, v\} \in E(\alpha)$, so it must be that $\{u, v\} \in E(\pi)$.
        And, $E(\pi) \subseteq E(\alpha_2)$, so
        $u\in V(\alpha_2)$.
    \end{proof}
    
    As $\pi$ is a simple path and $\beta$ is connected to the rest of $\alpha$ at an articulation vertex, $\pi$
    does not contain any edge of $\beta$, so it
    must be that either $E(\beta)\subseteq E(\alpha_1)$ or $E(\beta)\subseteq E(\alpha_2)$. Assume without loss of generality that this holds for $\alpha_1$ (the argument will be exactly symmetric for $\alpha_2$, as we will only use the fact that these subgraphs satisfy the conclusion of~\Cref{lem:structural-2ec}).
    In particular, we then have $s^{\prime}, t^{\prime} \in V(\alpha_1)$.

    We first use the triangle
    inequality to push the absolute value inside
    the sum over labelings of vertices in $S$:
    \begingroup
    \allowdisplaybreaks
    \begin{align}
        &\sum_{\varphi(s),\varphi(t)=1}^n \left|\bW_\alpha^{s'\neq t'}(\bmM)[\varphi(s),\varphi(t)]\right|\nonumber\\
        &\le \sum_{\varphi\colon S \to [n]} \left|\sum_{\substack{\varphi\colon V(\alpha)\setminus S\to [n]\\\varphi(s')\neq \varphi(t')}} \prod_{\{u,v\}\in E(\alpha)} \bmM[\varphi(u),\varphi(v)]\right|\label{eq:tmpcs-crazy}\\
        &=\sum_{\varphi\colon S \to [n]} \left| \prod_{j=1}^2 \sum_{\substack{\varphi\colon V(\alpha_j)\setminus S\to [n]\\\varphi(s')\neq \varphi(t')\text{ if $j = 1$}}} \prod_{\{u,v\}\in E(\alpha_j)} \bmM[\varphi(u),\varphi(v)]\right|\nonumber\\
            &\le \left[\prod_{j=1}^2 \sum_{\varphi\colon S \to [n]} \left(  \sum_{\substack{\varphi \colon V(\alpha_j)\setminus S\to [n]\\\varphi(s')\neq \varphi(t')\text{ if $j = 1$}}} \prod_{\{u,v\}\in E(\alpha_j)} \bmM[\varphi(u),\varphi(v)]\right)^2\right]^{\frac 12}\,,\label{eq:cs-crazy}
    \end{align}
    \endgroup
where we applied Cauchy-Schwarz in the second inequality. Note that~\cref{eq:tmpcs-crazy} is
well-defined by~\Cref{obs:makes-sense}.

By~\Cref{lem:structural-2ec} and~\Cref{claim:lift}, the term
for $j = 2$ in~\cref{eq:cs-crazy} is a 2-edge-connected
graph polynomial, so by~\Cref{thm:fundamental-theorem} and the assumption $\|\bmM\|\le 1$, this term is bounded by
\[
    \sum_{\varphi\colon S \to [n]} \left(  \sum_{\substack{\varphi \colon V(\alpha_2)\setminus S\to  [n]}} \prod_{\{u,v\}\in E(\alpha_2)} \bmM[\varphi(u),\varphi(v)]\right)^2\le n\,.
\]
We now switch to the term $j = 1$ in~\cref{eq:cs-crazy}. This graph polynomial
can be interpreted as
\[
    \sum_{\varphi\colon S \to [n]} \left(  \sum_{\substack{\varphi \colon V(\alpha_1)\setminus S \to [n]\\\varphi(s')\neq \varphi(t')}} \prod_{\{u,v\}\in E(\alpha_1)} \bmM[\varphi(u),\varphi(v)]\right)^2 = \langle \bw_\beta^{s'\neq t'}(\bmM), \bW_{\alpha'}(\bmM) \bw_\beta^{s'\neq t'}(\bmM)\rangle\,,
\]
where $\alpha'$ is the lift of 
$\alpha\left[V(\alpha)\setminus (V(\beta)\setminus \{r\})\right]$
with respect to $S$ (here $r$ denotes the root of $\beta$, the articulation vertex connecting $\beta$ to the rest of $\alpha$),
and we add two roots in $\alpha'$ at the two
copies of $r$ created during the lift operation.

Hence,
\[
    \sum_{\varphi\colon S \to [n]} \left(  \sum_{\substack{\varphi \colon V(\alpha_1)\setminus S\to [n]\\\varphi(s')\neq \varphi(t')}} \prod_{\{u,v\}\in E(\alpha_j)} \bmM[\varphi(u),\varphi(v)]\right)^2 \le \|\bW_{\alpha'}(\bmM)\| \cdot \|\bw_\beta^{s'\neq t'}(\bmM)\|_2^2 \,.
\]
Note that $\alpha'$ is 2-edge-connected by~\Cref{lem:structural-2ec}, so that $ \|\bW_{\alpha'}(\bmM)\|\le 1$ by~\Cref{thm:fundamental-theorem}. Putting everything together, we obtain
\[
    \sum_{i,j=1}^n \left|\bW_\alpha^{s'\neq t'}(\bmM)[i,j]\right|\le \sqrt n \cdot \|\bw_\beta^{s'\neq t'}(\bmM)\|_2\,,
\]
as desired.
\end{proof}

\begin{proof}[Proof of~\Cref{prop:Z-support-vector}]
    Let $\bmM\in \{\bmA, \bmH\}$.
    Consider $\beta\in\calA_2$ defined by:
    \begin{enumerate}
        \item Start from the lift of $\alpha$
        with respect to its root. 
        Let $p_1$ and $p_2$ be the lift maps.
        \item Delete the edges and
        internal vertices of the image under $p_1$ of the
        open cactus in $\alpha$.
        \item Root the resulting graph at $p_1(s)$ and $p_1(t)$.
    \end{enumerate}
    Recall that $s$ and $t$ are the endpoints of the ``extra'' open cactus in $\alpha$.
    Thus, $\beta$ is, in short, $\alpha$ grafted to its mirror image at the roots, with just \emph{one} of the copies of that extra open cactus deleted except for its endpoints, and those endpoints made the roots of the matrix diagram $\beta$.
    See~\Cref{fig:lift-proof} for an illustration of this and the rest of the proof.
    
    Let $\sigma$ be the image of the open cactus in
    $\alpha$ under the lift map $p_2$, and let $s'$ and $t'$ be the images
    of the endpoints of this open cactus through the lift map $p_2$.
    Thus $s^{\prime}$ and $t^{\prime}$ are the mirror images of the vertices chosen to be the roots of $\beta$ above.
     We can then rewrite
    \begin{align*}
        &\|\bw^{s\neq t}_\alpha(\bmM)\|_2^2 \\
        &= \sum_{\substack{i,j=1\\i\neq j}}^n  \bmW_\beta^{s'\neq t'}(\bmM)[i,j] \bmW_\sigma(\bmM)[i,j]\numberthis \label{eq:lift-proof1}\\
            &= \sum_{\substack{i,j=1\\i\neq j}}^n  \bmW_\beta^{s'\neq t'}(\bmM)[i,j]   \bmW_\sigma(\bmH)[i,j] +  \sum_{\substack{i,j=1\\i\neq j}}^n  \bmW_\beta^{s'\neq t'}(\bmM)[i,j]  (\bmW_\sigma(\bmM) - \bmW_\sigma(\bmH))[i,j]\\
            &\le \max_{1\le i<j\le n} \left|\bW_\sigma(\bmH)[i,j]\right| \sum_{i,j=1}^n \left|\bmW_\beta^{s'\neq t'}(\bmM)[i,j]\right| + \|\bmW_\sigma(\bmM) - \bmW_\sigma(\bmH)\|_{\frob}  \|\bmW_\beta^{s'\neq t'}(\bmM)\|_{\frob}\,, \numberthis\label{eq:lift-proof2}
    \end{align*}
    using H{\"o}lder on the first term 
    and Cauchy-Schwarz on the second.
    We further bound the first term with~\Cref{assumption:punctured} and~\Cref{lem:abs-value-pushed}:
    \[
        \max_{1\le i<j\le n} \left|\bW_\sigma(\bmH)[i,j]\right|\cdot \sum_{i,j=1}^n \left|\bmW_\beta^{s'\neq t'}(\bmM)[i,j]\right|\le \eps \sqrt n \cdot \|\bw_\alpha^{s\neq t}(\bmH)\|_2\,.
    \]
    For the second term, observe that
    by~\Cref{prop:to-punctured}, we know that
    the change due to puncturing is small in Frobenius norm, i.e.,
    $\|\bmW_\sigma(\bmM) - \bmW_\sigma(\bmH)\|_{\frob}\le O(1)$ for $\bmM\in \{\bmA, \bmH\}$. Moreover, in the other factor,
    $\|\bmW_\beta^{s'\neq t'}(\bmM)\|_{\frob}^2$
    is nothing but the lift of $\beta$ with
    respect to $\{p_1(s), p_1(t)\}$. This lift
    can be interpreted as:
    \begin{align*}
        \|\bmW_\beta^{s'\neq t'}(\bmM)\|_{\frob}^2 = \langle \bmw_\alpha^{s\neq t}(\bmM), \bW_{\beta'} (\bmM) \bmw_\alpha^{s\neq t}(\bmM)\rangle\,, \numberthis\label{eq:lift-proof3}
    \end{align*}
    where $\beta'$ is the lift of 
    $\alpha\left[V(\alpha)\setminus (V(\sigma)\setminus \{s,t\})\right]$ with
    respect to $\{s,t\}$. By the guarantees of~\Cref{prop:open-cactus-decomposition},
    $\alpha\left[V(\alpha)\setminus (V(\sigma)\setminus \{s,t\})\right]$ is already
    2-edge-connected, and therefore 
    so is $\beta'$. As a result, by~\Cref{thm:fundamental-theorem},
    \begin{align*}
        \|\bmW_\sigma(\bmM) - \bmW_\sigma(\bmH)\|_{\frob} \cdot \|\bmW_\beta^{s'\neq t'}(\bmM)\|_{\frob}&\le O(1)\cdot \|\bW_{\beta'} (\bmM)\|^{\frac 12} \|\bw_\alpha^{s\neq t}(\bmM)\|_2\\
        &\le O(1)\cdot \|\bw_\alpha^{s\neq t}(\bmM)\|_2\,.
    \end{align*}
We obtain
\[
    \|\bw^{s\neq t}_\alpha(\bmM)\|_2^2\le O(1 + \eps \sqrt n)\cdot \|\bw^{s\neq t}_\alpha(\bmM)\|_2\,,
\]
and the result follows after rearranging the inequality.
\end{proof}

\begin{figure}[ht!]
    \centering
    \vspace{-0.8cm}
    \begin{subfigure}[t]{0.45\linewidth}
        \centering
        \begin{minipage}[c][3.8cm][c]{\linewidth}
            \centering
            \includegraphics[width=0.4\linewidth]{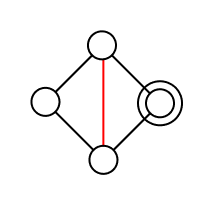}
            \vspace{-1cm}
        \end{minipage}
        \caption{Example of vector diagram $\al$ with extra open cactus in red}
    \end{subfigure}
    \begin{subfigure}[t]{0.5\linewidth}
        \centering
        \begin{minipage}[c][3.8cm][c]{\linewidth}
            \centering
            \includegraphics[width=0.5\linewidth]{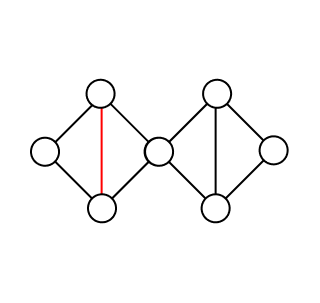}
            \vspace{-1cm}
        \end{minipage}
        \caption{Lift of $\alpha$ at the root (\cref{eq:lift-proof1})}
    \end{subfigure}

    \vspace{-0.5cm}
    \begin{subfigure}[t]{0.45\linewidth}
    \centering
    \begin{minipage}[c][4.2cm][c]{\linewidth}
        \centering
        \includegraphics[width=0.6\linewidth]{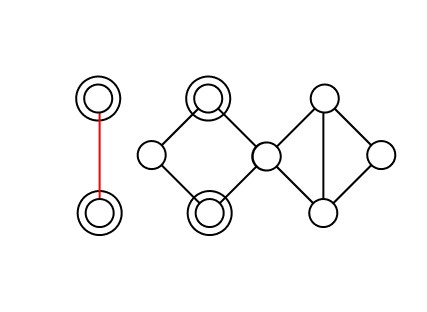}
    \end{minipage}
    \vspace{-1cm}
    \caption{Separate out open cactus (\cref{eq:lift-proof2})}
    \end{subfigure}
    \begin{subfigure}[t]{0.5\linewidth}
    \centering
    \begin{minipage}[c][4.2cm][c]{\linewidth}
        \centering
        \includegraphics[width=0.6\linewidth]{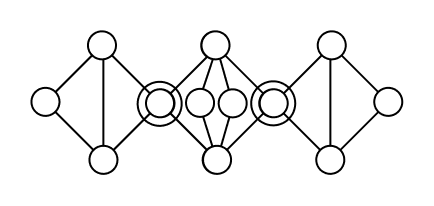}
    \end{minipage}
    \vspace{-1cm}
    \caption{Lift again. The left and right sides are copies of $\al$ while the inner part is 2-edge-connected (\cref{eq:lift-proof3}).}
    \end{subfigure}
    \vspace{1em}
    \caption{Illustration of the main diagrammatic manipulations in the proof of~\cref{prop:Z-support-vector}.}
    \label{fig:lift-proof}
\end{figure}

\subsection{Support of the \texorpdfstring{$w$}{w}-basis}

In this subsection we prove the second part
of~\Cref{thm:universality-new}:

\begin{proposition}
    \label{prop:W-support}
    Suppose that $\bH$ satisfies~\cref{eq:assNorm,eq:assOpen,eq:assOne}. Then for any $\alpha\in \calA\setminus \calE$,
    \[
        \frac 1n |w_\alpha(\bA)|\le \frac 1 {\sqrt n} \cdot (1 + \eps \sqrt n)^{O(1)}\,.
    \]
\end{proposition}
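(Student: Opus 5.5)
We decompose $\alpha\in\calA\setminus\calE$ into its tree of 2-edge-connected components joined by bridges. Since $\alpha$ has at least one bridge, the bridge tree has at least two leaves. The plan is to extract one leaf component $\gamma$ attached by a single bridge $\{x,y\}$, with $x$ in the rest of the graph and $y\in V(\gamma)$. View $\gamma$ as a vector diagram in $\calE_1$ rooted at $y$, and the remainder $\alpha'$ (the rest of $\alpha$ together with the bridge) as a vector diagram rooted at $x$. This gives the factorization
\[
w_\alpha(\bA)=\langle \bw_{\alpha_{\rm rest}}(\bA),\, \bA\, \bw_{\gamma}(\bA)\rangle ,
\]
where $\alpha_{\rm rest}$ is $\alpha$ with $\gamma$ and the bridge removed, rooted at $x$. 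The key observation is that $\bA=\bm\Pi\bH\bm\Pi$, so $\bA\bw_\gamma(\bA)=\bA\,\bm\Pi\bw_\gamma(\bA)$. Hence
\[
|w_\alpha(\bA)|\le \|\bw_{\alpha_{\rm rest}}(\bA)\|_2\cdot\|\bm\Pi\bw_\gamma(\bA)\|_2 .
\]
The target bound $\frac1n|w_\alpha|\le n^{-1/2}(1+\eps\sqrt n)^{O(1)}$ then follows from two estimates: (a) $\|\bm\Pi\bw_\gamma(\bA)\|_2\le (1+\eps\sqrt n)^{O(1)}$ for every $\gamma\in\calE_1$, and (b) $\|\bw_\beta(\bA)\|_2\le \sqrt n(1+\eps\sqrt n)^{O(1)}$ for every $\beta\in\calA_1$.

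For (b), we induct on the number of bridges of $\beta$. If $\beta\in\calE_1$, the fundamental theorem (\Cref{thm:fundamental-theorem}) gives $\|\bw_\beta(\bA)\|_\infty\le1$, hence $\|\bw_\beta(\bA)\|_2\le\sqrt n$. In general, $\bw_\beta(\bA)$ is the Hadamard product of the root's 2-edge-connected component, which has $\ell_\infty$ norm at most $1$, with vectors of the form $\bA\bw_{\beta_i}(\bA)$ for the subtrees hanging off bridges at the root component. More precisely, it is a 2-edge-connected multi-rooted polynomial contracted against such vectors. Each factor $\bA\bw_{\beta_i}(\bA)=\bA\bm\Pi\bw_{\beta_i}(\bA)$ has $\ell_2$ norm at most $\|\bm\Pi\bw_{\beta_i}\|_2$. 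By the same argument as (a), applied recursively, this is $(1+\eps\sqrt n)^{O(1)}$, which is in fact smaller than the $\sqrt n$ we need. The difficulty is that Hadamard products of $\ell_2$-bounded vectors need $\ell_\infty$ or higher $\ell_p$ control. I would handle this by bounding $\ell_{2q}$ norms via the grafting trick from the vector case of \Cref{thm:fundamental-theorem}, and using $\|\bA\bm v\|_\infty\le\|\bm v\|_2$ for the bridge factors. Since $\ell_\infty\le\ell_2$, the bridge factors are each bounded by $(1+\eps\sqrt n)^{O(1)}$ in sup norm, and the $\sqrt n$ comes only from the final summation.

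For (a), the central step, I would relate $\bA$ back to $\bH$ on cactuses and use \cref{eq:assOne}, with two cases. If $\gamma\in\calC_1$ is a cactus, then $\|\bm\Pi\bw_\gamma(\bA)\|_2\le\|\bm\Pi\bw_\gamma(\bH)\|_2+\|\bw_\gamma(\bA)-\bw_\gamma(\bH)\|_2$. The first term is at most $\eps\sqrt n$ by \cref{eq:assOne}, and the second is $O(1)$ by \Cref{cor:cactus-puncturing-same}. If $\gamma\in\calE_1\setminus\calC_1$, we expand $\bw_\gamma$ into the $z$-basis and then recursively into $w^{s\neq t}$ terms, as in the proof of \Cref{prop:Z-support}. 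The contracted diagrams are 2-edge-connected (\Cref{lem:contract-2ec}) and have fewer vertices, so we can induct. The non-cactus pieces are bounded in $\ell_2$ by $O(1+\eps\sqrt n)$ using \Cref{prop:Z-support-vector}, and the projection $\bm\Pi$ only decreases the norm. What remains are cactus pieces, which are handled by the first case.

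I expect the main obstacle to be the vector bound in (b) for bridged diagrams with several branches at a single vertex. There the naive Hadamard product bounds lose powers of $\sqrt n$, and we need a careful choice of which factor takes the $\ell_2$ norm and which the $\ell_\infty$ norm. My first attempt is an induction that always keeps the deepest leaf branch in $\ell_2$ and all others in $\ell_\infty$, using $\|\bA\bm\Pi\bm v\|_\infty\le\|\bm\Pi\bm v\|_2$. The polynomial dependence $(1+\eps\sqrt n)^{O(1)}$ absorbs one factor per bridge.
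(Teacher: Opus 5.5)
Your factorization across a leaf bridge, the inequality $|w_\alpha(\bA)|\le\|\bw_{\alpha_{\rm rest}}(\bA)\|_2\,\|\bm\Pi\bw_\gamma(\bA)\|_2$, and estimate (a) are all fine. Estimate (a) is essentially the paper's \Cref{lem:AW-cactus,lem:AW-2ec}. In the non-cactus case it is cleaner to write $\bw_\gamma=\bw_\gamma^{s\neq t}+\bw_{\gamma'}$, with $\gamma'$ obtained by merging $s$ and $t$, and induct on $|V(\gamma)|$. Detouring through the $z$-basis leaves you with pieces $\bz_\sigma$, $\sigma\in\calC_1$, which your cactus case (stated for $\bw_\sigma$) does not directly cover.

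The gap is in (b). Bounding $\|\bw_\beta(\bA)\|_\infty$ for a bridged $\beta$ requires $\|\bA\bw_{\beta_i}(\bA)\|_\infty\le\|\bm\Pi\bw_{\beta_i}(\bA)\|_2\le(1+\eps\sqrt n)^{O(1)}$ for branches $\beta_i$ that themselves contain bridges. This does not follow from ``the same argument as (a)'': \cref{eq:assOne}, \Cref{cor:cactus-puncturing-same} and \Cref{prop:Z-support-vector} only concern 2-edge-connected diagrams.

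What you actually need is the paper's \Cref{lem:AW-all}, namely $\|\bA\bw_\beta(\bA)\|_2\le(1+\eps\sqrt n)^{O(1)}$ for every $\beta\in\calA_1$. Its inductive step needs an \emph{operator-norm} bound for the root 2-edge-connected component acting on one branch vector, while the other branches enter as diagonal weights at possibly several distinct vertices. Branches sharing a single vertex are the easy Hadamard case. Attachments at distinct vertices are the real difficulty, and the grafting trick cannot handle them, because it only produces $\ell_\infty$ bounds and never keeps a branch in $\ell_2$.

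The paper resolves this with \Cref{lem:new-2ec-construction}. Split each attachment vertex into two vertices joined by a new edge labelled $\mathrm{diag}(\bA\bw_{\beta_i}(\bA))$; this preserves 2-edge-connectivity. The matrix case of \Cref{thm:fundamental-theorem} then yields $\|\bA\bw_\beta(\bA)\|_2\le\|\bA\bw_{\beta_1}(\bA)\|_2\prod_{i\ge 2}\|\bA\bw_{\beta_i}(\bA)\|_\infty$, and $\|\cdot\|_\infty\le\|\cdot\|_2$ closes the induction over the bridge tree. Your heuristic of keeping one branch in $\ell_2$ and the rest in $\ell_\infty$ is the right one, but this construction is the missing idea that turns it into a proof.

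Once \Cref{lem:AW-all} is in hand, your choice of which side of the bridge absorbs the $\sqrt n$ is also the more expensive one. Since $\bA$ is symmetric, $w_\alpha(\bA)=\langle\bw_\gamma(\bA),\bA\bw_{\alpha_{\rm rest}}(\bA)\rangle$. Hence $|w_\alpha(\bA)|\le\|\bw_\gamma(\bA)\|_2\,\|\bA\bw_{\alpha_{\rm rest}}(\bA)\|_2\le\sqrt n\,(1+\eps\sqrt n)^{O(1)}$, using only $\|\bw_\gamma(\bA)\|_\infty\le 1$ from \Cref{thm:fundamental-theorem} for the 2-edge-connected leaf. This is the paper's proof. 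There, (b) disappears entirely and (a) survives only as the base case of the induction. Your arrangement instead needs \Cref{lem:AW-all} for every branch of the remainder, plus a vertex-weighted version of the grafting argument for the outer sup-norm bound.
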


These calculations are simpler than the previous ones, but this is also the point in the proof of~\Cref{thm:universality-new} where puncturing is essential (note that it was not used at all in the previous section, and indeed those results apply equally well to the original $\bH$ or the puncturing $\bA$).

But, without puncturing, the values of graph polynomials that contain bridges can fail to be universal.
For instance, when $\alpha$ is the degree-$d$ star,
Walsh--Hadamard matrices $\bH = \bH^{(n)}$ satisfy
$w_\alpha(\bH) = \Theta(n^{d/2})$, so the limiting traffic distribution does not even exist when $d\ge 3$.
As~\Cref{prop:W-support} shows, puncturing effectively forces all such diagrams to vanish in the traffic distribution.

To prove~\Cref{prop:W-support}, we will isolate a
bridge edge in the graph, and show by induction over the tree of 2-edge-connected components that:

\begin{lemma}
    \label{lem:AW-all}
    For all $\alpha\in \calA_1$,
    \[
       \|\bA \bw_\alpha(\bA)\|_2 \le (1 + \eps \sqrt n)^{O(1)}\,.
    \]
\end{lemma}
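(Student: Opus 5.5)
We will prove the bound by strong induction on the number of edges of $\alpha$, following the tree of 2-edge-connected components of $\alpha$ as the remark before the lemma suggests. Throughout we use $\|\bA\|\le 1$, since $\bA=\bm\Pi\bH\bm\Pi$ and $\|\bm\Pi\|\le 1$.

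\textbf{Step 1: root-grafted decomposition.} Let $r=\rt(\alpha)$, and split $\alpha$ at $r$ into pieces $\alpha = \alpha_1\oplus\cdots\oplus\alpha_m$. Each piece has the property that $r$ lies in a single block of its own structure. Then $\bw_\alpha(\bA)$ is the entrywise product $\bw_{\alpha_1}(\bA)\odot\cdots\odot\bw_{\alpha_m}(\bA)$, and there are two kinds of pieces.
\begin{itemize}
\item \emph{Bridge pieces.} Here $r$ has a single pendant bridge to a subdiagram $\gamma\in\calA_1$, so $\bw_{\alpha_j}(\bA)=\bA\bw_\gamma(\bA)$. By induction, $\|\bA\bw_\gamma(\bA)\|_2\le(1+\eps\sqrt n)^{O(1)}$. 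This vector is \emph{small in $\ell_2$}: it is $O(1)$ rather than $O(\sqrt n)$.
\item \emph{Pieces whose root block is 2-edge-connected.} Here the root sits in a 2-edge-connected component $\kappa$, with further trees hanging off $\kappa$ through bridges at its vertices. We want to show these vectors have controlled $\ell_\infty$ norm, or else decompose as a constant vector plus an $\ell_2$-small part.
\end{itemize}

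\textbf{Step 2: 2-edge-connected pieces via hanging bridges.} Inside $\kappa$, every hanging tree contributes a vertex weight of the form $\bA\bw_\gamma(\bA)$, which is $\ell_2$-bounded by induction. If $\kappa$ carries no hanging trees, then $\kappa\in\calE_1$, and Theorem~\ref{thm:fundamental-theorem} gives $\|\bw_\kappa(\bA)\|_\infty\le1$. If some vertex $v$ of $\kappa$ carries a hanging-tree weight $\bm g$ with $\|\bm g\|_2$ controlled, we view $\kappa$ as a matrix diagram rooted at $(r,v)$, with the other vertex weights absorbed as diagonal matrices. When those weights are bounded, the piece equals $\bW\bm g$ with $\|\bW\|\le1$ by the matrix case of the fundamental theorem, since merging the two roots keeps the diagram 2-edge-connected.

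Weights that are not $\ell_\infty$-bounded require Hölder-type splitting: use one $\ell_2$ factor, and bound the rest in $\ell_\infty$ whenever possible. In a product of several $\ell_2$-small vectors, all but one factor must be bounded by $\ell_\infty\le\ell_2$, which loses nothing. This is the source of the $(1+\eps\sqrt n)^{O(1)}$ growth.

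\textbf{Step 3: the case with no bridge piece at the root.} Then $\bw_\alpha(\bA)$ is a product of cactus-like and 2-edge-connected vectors, each of which has $\ell_\infty$ norm $O(1)$ but may have $\ell_2$ norm of order $\sqrt n$. Here the multiplication by $\bA$ and the puncturing are essential. We first reduce to cactus vectors $\bw_\sigma$ with $\sigma\in\calC_1$, together with errors of size $O(\eps\sqrt n)$. To do this, we isolate extra open cactuses as in Lemma~\ref{prop:Z-support-vector}, which bounds non-cactus 2-edge-connected vectors in $\ell_2$ (after Möbius conversion to $w$) by $O(1+\eps\sqrt n)$. For a cactus, $\bw_\sigma(\bA)=\bw_\sigma(\bH)+O_{\ell_2}(1)$ by Corollary~\ref{cor:cactus-puncturing-same}. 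Moreover, $\bw_\sigma(\bH)=c\bm1+\bm\Pi\bw_\sigma(\bH)$ with $\|\bm\Pi\bw_\sigma(\bH)\|_2\le\eps\sqrt n$ by \eqref{eq:assOne}. Products of such vectors have the same form, constant times $\bm1$ plus an $\ell_2$ error of size $(1+\eps\sqrt n)^{O(1)}$, using $\ell_\infty$ bounds on the other factors. Finally, $\bA\bm1=0$ kills the constant part, and $\|\bA\|\le1$ controls the rest.

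\textbf{Main obstacle.} I expect the hardest step to be Step 3, specifically the reduction of 2-edge-connected non-cactus root pieces to the form $c\bm1$ plus a small remainder. Lemma~\ref{prop:Z-support-vector} controls $\bw^{s\ne t}$, so we will need the recursive Möbius expansion to keep all contracted terms within $\calE_1$. We will also need to verify that the $\ell_\infty$ bounds survive the passage between the $z$ and $w$ forms.
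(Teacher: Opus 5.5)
Your proposal is correct and follows essentially the paper's proof. The cactus case goes through $\bA\bm 1=0$, \eqref{eq:assOne} and Corollary~\ref{cor:cactus-puncturing-same}; the 2-edge-connected case through Proposition~\ref{prop:open-cactus-decomposition} and Lemma~\ref{prop:Z-support-vector}; and the bridged case through the fundamental theorem on the root's 2-edge-connected component, with one hanging vector $\bA\bw_\gamma(\bA)$ as the $\ell_2$ input and the others as diagonal weights bounded by their $\ell_2$ norms (the paper builds a single matrix diagram instead of first splitting at the root, which is immaterial). The obstacle you flag needs no M{\"o}bius inversion: $\bw_\alpha=\bw_\alpha^{s\neq t}+\bw_{\alpha'}$, where $\alpha'$ merges $s$ and $t$ and stays in $\calE_1$ by Lemma~\ref{lem:contract-2ec}, so induction on the number of vertices reduces directly to the cactus case.
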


\begin{proof}[Proof of~\Cref{prop:W-support} from~\Cref{lem:AW-all}]
    Decompose $\alpha=\alpha_1 \sqcup \alpha_2 \sqcup \{u,v\}$, where $\{u,v\}\in E(\alpha)$ is a bridge edge,
    $\alpha_1\in\calE_1$ is
    rooted at $u$, and
    $\alpha_2\in\calA_1$ is rooted at $v$. Then,
    \[
        |w_\alpha(\bA)| = |\langle \bw_{\alpha_1}(\bA), \bA \bw_{\alpha_2}(\bA)\rangle|\le \|\bw_{\alpha_1}(\bA)\|_2 \|\bA \bw_{\alpha_2}(\bA)\|_2 \le \sqrt n \cdot (1 + \eps \sqrt n)^{O(1)}\,,
    \]
    using~\Cref{thm:fundamental-theorem} on
    the first term
    and~\Cref{lem:AW-all} on the second.
\end{proof}

We prove~\Cref{lem:AW-all} by first treating 
the cactus special case
(\Cref{lem:AW-cactus}), then the 
2-edge-connected special case
(\Cref{lem:AW-2ec}), and then finally the general case by the induction mentioned above.

\begin{lemma}
    \label{lem:AW-cactus}
    For any $\alpha\in\calC_1$,
    \[
        \left\|\bA \bw_\alpha(\bA)\right\|_2\le O(1 + \eps \sqrt n)\,.
    \]
\end{lemma}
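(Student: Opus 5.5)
We bound $\|\bA \bw_\alpha(\bA)\|_2$ by splitting $\bw_\alpha(\bA)$ into its mean component along $\bm 1$ and its fluctuation. Since $\bA = \bm\Pi \bH \bm\Pi$ and $\bm\Pi \bm 1 = 0$, we have
\[
\bA \bw_\alpha(\bA) = \bA\, \bm\Pi \bw_\alpha(\bA), \qquad\text{so}\qquad \|\bA \bw_\alpha(\bA)\|_2 \le \|\bA\|\cdot\|\bm\Pi \bw_\alpha(\bA)\|_2 \le \|\bm\Pi \bw_\alpha(\bA)\|_2,
\]
using $\|\bA\|\le \|\bH\|\le 1$ from~\cref{eq:assNorm}. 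The all-ones component is thus killed for free, and the task reduces to showing $\|\bm\Pi \bw_\alpha(\bA)\|_2 \le O(1+\eps\sqrt n)$ for every $\alpha\in\calC_1$.

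Next we replace $\bA$ by $\bH$ inside the cactus polynomial. By~\Cref{cor:cactus-puncturing-same}, $\|\bw_\alpha(\bA)-\bw_\alpha(\bH)\|_2\le O(1)$. Since $\bm\Pi$ is a contraction,
\[
\|\bm\Pi \bw_\alpha(\bA)\|_2 \le \|\bm\Pi \bw_\alpha(\bH)\|_2 + O(1).
\]
Assumption~\cref{eq:assOne} gives $\|\bm\Pi\bw_\alpha(\bH)\|_2 \le \eps\sqrt n$ for $\alpha\in\calC_1$. Combining the three displays yields $\|\bA\bw_\alpha(\bA)\|_2\le O(1+\eps\sqrt n)$.

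The only points needing care are the conventions. The single-vertex cactus gives $\bw_\alpha=\bm 1$, for which $\bA\bm 1 = 0$ trivially. Self-loops, which appear as diagonal entries, are covered by \Cref{cor:cactus-puncturing-same}, whose proof reduces them to open cactuses via \Cref{prop:to-punctured}. I expect the main (mild) obstacle to be checking that \Cref{cor:cactus-puncturing-same} applies verbatim with $\bu = \bm 1/\sqrt n$ and $\|\bH\|\le 1$. It does, since \Cref{prop:to-punctured} only needs $\bu$ to be a unit vector and $\|\bH\|\le 1$, both of which hold here.
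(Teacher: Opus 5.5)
Your proof is correct and is essentially the paper's argument. The paper splits $\bw_\alpha(\bA)$ into $(\bw_\alpha(\bA)-\bw_\alpha(\bH)) + \bm\Pi\bw_\alpha(\bH) + \frac1n\langle\bm 1,\bw_\alpha(\bH)\rangle\bm 1$ and uses $\bA\bm 1=0$, $\|\bA\|\le 1$, \Cref{cor:cactus-puncturing-same} and \cref{eq:assOne}; you apply $\bA=\bA\bm\Pi$ first and then split, which uses the same ingredients in the same way.
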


\begin{proof}
    We first decompose $\bw_\alpha(\bmA)$ as:
    \[
        \bw_\alpha(\bmA) =  (\bw_\alpha(\bmA) - \bw_\alpha(\bmH)) + \bm \Pi \bw_\alpha(\bmH) + \frac 1n \langle \bm 1, \bw_\alpha(\bmH)\rangle \bm 1\,.
    \]
    Since $\bmA \bm 1 = 0$ and $\|\bmA \|\le 1$
    by assumption, by the triangle inequality we have
    \[
        \|\bmA \bw_\alpha(\bmA)\|_2 \le \|\bw_\alpha(\bmA) - \bw_\alpha(\bmH)\|_2 + \|\bm \Pi \bw_\alpha(\bmH)\|_2\,.
    \]
    By~\Cref{cor:cactus-puncturing-same}, the first
    term is $O(1)$, and by our assumption in~\cref{eq:assOne}, the second term is at most $\eps \sqrt n$.
\end{proof}

\begin{lemma}
    \label{lem:AW-2ec}
    For all $\alpha\in\calE_1$,
    \[
        \|\bA \bw_\alpha(\bA)\|_2\le O(1 + \eps \sqrt n)\,.
    \]
\end{lemma}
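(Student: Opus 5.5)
We prove \Cref{lem:AW-2ec} by induction on the number of vertices (or edges) of $\alpha\in\calE_1$, reducing to the cactus case \Cref{lem:AW-cactus}. If $\alpha\in\calC_1$ we are done. Otherwise, we pass to the $z$-basis and pick out an extra open cactus. By M\"obius inversion (\Cref{claim:mobius}), $\bw_\alpha(\bA)=\sum_{\beta\preceq\alpha}c_{\beta,\alpha}\bz_\beta(\bA)$. Each $\beta$ is 2-edge-connected by \Cref{lem:contract-2ec}. It is cleaner, however, to work with the single-constraint version.

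Choose $s,t$ as in \Cref{prop:open-cactus-decomposition}, and write
\[
\bw_\alpha(\bA)=\bw_\alpha^{s\neq t}(\bA)+\bw_{\alpha_{\{s,t\}}}(\bA),
\]
where $\alpha_{\{s,t\}}$ is $\alpha$ with $s,t$ identified. The merged diagram is again in $\calE_1$ and has one fewer vertex, so the induction hypothesis gives $\|\bA\bw_{\alpha_{\{s,t\}}}(\bA)\|_2\le O(1+\eps\sqrt n)$. For the remaining term we use $\|\bA\|\le1$:
\[
\|\bA\bw_\alpha^{s\neq t}(\bA)\|_2\le\|\bw_\alpha^{s\neq t}(\bA)\|_2\le \sqrt n\cdot O\!\left(\eps+\tfrac1{\sqrt n}\right)=O(1+\eps\sqrt n),
\]
where the middle inequality is \Cref{prop:Z-support-vector}, which applies since $\alpha\in\calE_1\setminus\calC_1$ and $s,t$ come from \Cref{prop:open-cactus-decomposition}. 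Adding the two bounds closes the induction. The base case is a one-vertex diagram, which is a cactus (only self-loops), and \Cref{lem:AW-cactus} covers it.

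The main point to check is that the induction is well-founded and stays inside $\calE_1$. Identifying $s$ and $t$ preserves 2-edge-connectivity (\Cref{lem:contract-2ec}, applied in its rooted version), and the root remains a vertex, so $\alpha_{\{s,t\}}\in\calE_1$. The merged diagram may be a cactus, in which case \Cref{lem:AW-cactus} applies directly. Otherwise we recurse, and \Cref{prop:open-cactus-decomposition} supplies a new pair $s,t$ for it. The number of vertices strictly decreases at each step, and the constants depend only on $\alpha$, so the $O(\cdot)$ bounds accumulate to $O_\alpha(1+\eps\sqrt n)$.
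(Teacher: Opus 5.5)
Your proof is correct and is essentially the paper's argument: induction on $|V(\alpha)|$ with \Cref{lem:AW-cactus} as the base case. In the inductive step you split $\bw_\alpha(\bA)=\bw_\alpha^{s\neq t}(\bA)+\bw_\beta(\bA)$ at the endpoints of the extra open cactus, apply induction to the merged diagram $\beta\in\calE_1$, and bound the remaining term using \Cref{prop:Z-support-vector} together with $\|\bA\|\le 1$ (the opening M\"obius-inversion remark is never used and can be dropped).
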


\begin{proof}
    We proceed by induction on $|V(\alpha)|$.
    For $\alpha\in\calC_1$ (in particular, if $\alpha$ has only one vertex, which is our base case), the claim
    follows from~\Cref{lem:AW-cactus}.
    For $\alpha\in \calE_1\setminus \calC_1$, 
    we apply~\Cref{prop:open-cactus-decomposition}: there is an open cactus $\sigma$
    induced in $\alpha$ such that removing the
    internal vertices and edges from $\sigma$
    leaves $\alpha$ rooted and 2-edge-connected.
    Let $\{s,t\}$ be the endpoints of $\sigma$,
    and $\beta$ be the graph obtained from $\alpha$ by merging $s$ and $t$. Then, we
    can decompose:
    \[
        \bw_\alpha(\bA) = \bw_\alpha^{s\neq t}(\bA) + \bw_\beta(\bA)\,.
    \]
    On the one hand, $\beta\in\calE_1$ by~\Cref{lem:contract-2ec} 
    and has strictly less vertices than $\alpha$,
    so by induction
    \[
        \|\bA \bw_\beta(\bA)\|_2\le O(1 + \eps \sqrt n)\,.
    \]
    On the other hand, by~\Cref{prop:Z-support-vector},
    \[
        \|\bw_\alpha^{s\neq t}(\bmA)\|_2\le O(1 + \eps \sqrt n)\,.
    \]
    Putting everything together and using $\|\bmA\|\le 1$ and the triangle inequality, we obtain
    \[
        \|\bA \bw_\alpha(\bA)\|_2\le O(1 + \eps \sqrt n)\,,
    \]
    which concludes the induction.
\end{proof}

\begin{lemma}
    \label{lem:new-2ec-construction}
    Let $\al\in\calE$, $v \in V(\al)$, and
    $S$ the set of edges adjacent to $v$ in
    $\alpha$.
    Then there exists $\beta\in\calE$ and $v_1, v_2 \in V(\beta)$ such that
    \begin{align*}
        V(\beta) &= (V(\alpha)\setminus \{v\}) \cup \{v_1, v_2\}\,,\\
        E(\beta) &= (E(\alpha)\setminus S)\cup \phi(S)\cup \{\{v_1, v_2\}\}\,,
    \end{align*}
    where $\phi(e)\in \{\{v_1, u\}, \{v_2, u\}\}$ for all $e=\{v,u\}\in S$.
\end{lemma}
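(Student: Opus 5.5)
We must show that a vertex $v$ of a 2-edge-connected graph can be split into two vertices $v_1,v_2$ joined by a new edge, with the old edges at $v$ distributed between them, so that the result stays 2-edge-connected. The plan is to reduce 2-edge-connectivity to a cut condition and then choose the distribution of edges carefully. Recall that a connected multigraph is 2-edge-connected iff every nonempty proper vertex subset $X$ has $|\partial X|\ge 2$. Self-loops at $v$ are harmless: put every loop at $v$ onto $v_1$ as a loop. Any split is connected as long as each $v_i$ receives at least one non-loop edge, or $v_2$ is attached through the new edge $\{v_1,v_2\}$. We also handle the degenerate case separately. If $v$ has no non-loop edges, then $\alpha$ is the single vertex $v$. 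In that case we assign all edges to $v_1$ and would need $\{v_1,v_2\}$ to be doubled, which a single edge cannot do. So the intended reading must allow this case trivially, or we use the convention that the lone vertex case is excluded; I would check the intended usage. (If $\deg v\ge 2$ with only loops, the same issue arises.) Henceforth assume $v$ has at least two non-loop edges, which holds for any 2-edge-connected graph with $\ge 2$ vertices.

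\textbf{Choice of split.} Take the non-loop edges at $v$. Assign exactly one of them, say $e_0=\{v,u_0\}$, to $v_2$, and all others (and the loops) to $v_1$. Then $v_2$ has degree exactly 2 in $\beta$, with edges $\{v_2,u_0\}$ and $\{v_1,v_2\}$. So $\beta$ is obtained from $\alpha$ by subdividing $e_0$ (inserting $v_2$) and then adding the edge $\{v_1,v_2\}$, where $v_1$ plays the role of $v$. Subdividing an edge preserves 2-edge-connectivity, since cycles through $e_0$ become cycles through the two new edges. Adding an edge never destroys 2-edge-connectivity, because $\{v_1,v_2\}$ lies on the cycle formed with the path $v_2u_0\cdots v_1$ that avoids it. That path exists because the subdivided graph is connected even after removing the edge $\{v_2,u_0\}$... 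I will verify this precisely by the cut criterion instead.

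\textbf{Cut verification.} Let $X$ be a nonempty proper subset of $V(\beta)$. If $v_1,v_2$ lie on the same side, contracting $\{v_1,v_2\}$ maps $X$ to a proper subset of $V(\alpha)$ with the same crossing edges, so $|\partial X|\ge 2$. If they are separated, with say $v_1\in X$ and $v_2\notin X$, the edge $\{v_1,v_2\}$ crosses. If $u_0\in X$, the edge $\{v_2,u_0\}$ also crosses and we are done. Otherwise $u_0\notin X$, and we consider $X$ as a subset of $V(\alpha)$ with $v\in X$. It is proper because it omits $u_0$. Its boundary in $\alpha$ has size $\ge2$, and it includes $e_0$. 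Hence at least one other edge of $\alpha$ crosses it, and that edge also crosses $X$ in $\beta$, since its image is unchanged or attached at $v_1\in X$. This gives $|\partial X|\ge 2$. The main (minor) obstacle is only the bookkeeping of loops and the degenerate one-vertex case.
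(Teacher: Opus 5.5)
Your proof is correct, and it takes a different, more elementary route than the paper.

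\textbf{The paper's route.} It reuses the ear decomposition from the proof of \Cref{lem:ear-decomposition-lemma}. It splits $v$ inside the ear that first introduces $v$, so that this ear now passes through $v_1$, the new edge, and $v_2$. Every later ear ending at $v$ is then reattached to $v_1$ or $v_2$ arbitrarily. The result still has an ear decomposition and is therefore 2-edge-connected, so this shows a whole family of splits is admissible.

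\textbf{Your route.} You use that the lemma only asks for existence and choose the most degenerate split, giving $v_2$ degree $2$. Then $\beta$ is exactly $\alpha$ with $e_0$ subdivided and $v$ renamed $v_1$. Your cut verification is complete, including the contraction case and the case $u_0\notin X$. The paper's extra flexibility is not needed downstream. In the proof of \Cref{lem:AW-all} the new edge carries a diagonal label, so any valid split gives the same matrix polynomial, and your split serves equally well.

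\textbf{Two small remarks.} First, the sentence saying $\beta$ arises by ``subdividing $e_0$ and then adding the edge $\{v_1,v_2\}$'' double-counts. Subdividing $e_0$ at $v_2$ already creates both $\{v_1,v_2\}$ and $\{v_2,u_0\}$, and adding another copy would not give $\beta$. Without that phrase, the fact that subdivision preserves 2-edge-connectivity (a cycle through $e_0$ becomes a cycle through both halves) already proves the lemma in one line.

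Second, your worry about the degenerate case is justified. The edgeless one-vertex graph belongs to $\mathcal{E}$ under the paper's definitions. For it, the only edge of $\beta$ is the bridge $\{v_1,v_2\}$, so the statement needs $\alpha$ to have an edge; the paper's ear-decomposition proof implicitly assumes this too. If $v$ carries only loops, however, the failure is an artifact of your convention of keeping loops at $v_1$. Sending one loop across as an edge $\{v_1,v_2\}$ yields a double edge, which is 2-edge-connected.
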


\begin{proof}
    We use the ear decomposition construction
    from the proof of~\Cref{lem:ear-decomposition-lemma}.
    Consider the step of the ear decomposition which adds $v$. During this step, $v$ is a new interior vertex of a path or cycle added to $\al$.
    We define $\beta$ by splitting $v$ into two vertices $v_1, v_2$ with a new edge between them.
    When other ears attach to $v$ in $\al$, we can attach them to either $v_1$ or $v_2$ in $\beta$.
    This process yields an ear decomposition for $\beta$, hence $\beta$ is also 2-edge-connected.
\end{proof}

\begin{proof}[Proof of~\Cref{lem:AW-all}]
    We proceed by induction on the number of
    2-edge-connected components in $\alpha$. If $\alpha$ is
    2-edge-connected, then the result 
    follows by~\Cref{lem:AW-2ec}. We assume
    from now on that $\alpha$ is not 2-edge-connected.

    Let $C$ be the 2-edge-connected component of the root of $\alpha$. Let $\beta_1, \ldots, \beta_k$ 
    ($k\ge 1$) be the 
    connected components disjoint from $\beta$ in 
    the graph obtained
    after removing $E(C)$ and all bridges
    incident to $C$. We root $\beta_i$ at the
    (unique) vertex of $V(\beta_i)$ adjacent to $\beta$. We also consider
    $u_1\in V(\alpha)$, the unique vertex in $V(C)$
    that is adjacent to $V(\beta_1)$.

    Let $\beta\in\calA_2$ be the graph obtained
    from $\alpha$ by adding a second root at $u_1$, and deleting $V(\beta_1)$, $E(\beta_1)$, and the bridge between $u_1$ and $\beta_1$.
    Then, for $i = 2, \ldots, k$, we
    iteratively apply the graph transformation from~\Cref{lem:new-2ec-construction},
    label the new edge $e=\{v_1,v_2\}$ by $\bm A_e = \text{diag}(\bm A \bm w_{\beta_i}(\bmA))$, and transfer the old labels for all
    other edges. In this way, we obtain
    a 2-edge-connected graph $\beta'\in\calE_2$ and a family of matrices $\bm \calA = (\bm A_{e})_{e\in E(\beta')}$ such that \[\bW_{\beta'}(\bm \calA) = \bW_{\beta}(\bA)\,.\] All involved matrices $\bA_e\in\bm \calA$ are either $\bm A$ or of the form 
    $\text{diag}(\bm A \bm w_{\beta_i}(\bmA))$ for some $i\in \{2, \ldots, k\}$, so they
    satisfy
    $\|\bA_e\|\le (1 + \eps \sqrt n)^{O(1)}$ by induction.
    Next, applying~\Cref{thm:fundamental-theorem}, we get
    \[
        \|\bW_\beta(\bA)\| = \|\bW_{\beta'}(\bm \calA)\|\le (1 + \eps \sqrt n)^{O(1)}\,.
    \]
    As a result,
    \[
        \|\bA \bw_\alpha(\bA)\|_2 = \|\bA \bW_\beta(\bA) \bA \bw_{\beta_1}(\bA)\|_2\le \|\bA\| \cdot \|\bW_\beta(\bA)\| \cdot \|\bA \bw_{\beta_1}(\bA)\|_2 \le (1 + \eps \sqrt n)^{O(1)}\,,
    \]
    using again induction on $\|\bA \bw_{\beta_1}(\bA)\|_2$. This concludes the induction.
\end{proof}

\subsection{Putting everything together\texorpdfstring{: Proof of~\Cref{thm:universality-new}}{}}

\begin{proof}[Proof of~\Cref{thm:universality-new}]
    The first part
    follows from~\Cref{prop:Z-support}, and  the second part
    follows from~\Cref{prop:W-support}.
    For the third part, suppose that $\bH$ satisfies~\cref{eq:assNorm,eq:assOpen,eq:assOne} with $\eps^{(n)} = n^{-\frac 12 + o(1)}$. Summarizing, we know that:
    \begin{enumerate}
        \item For all $\sigma\in\calC$, $\frac 1n w_\sigma(\bA)\underset{n\to\infty}{\longrightarrow} m_\sigma\in \R$ by
        assumption.
        \item For all $\alpha\in\calE\setminus \calC$, $\frac 1n z_\alpha(\bA)\underset{n\to\infty}{\longrightarrow} 0$ by the first part.
        \item For all $\alpha\in\calA\setminus \calE$, $\frac 1n w_\alpha(\bA)\underset{n\to\infty}{\longrightarrow} 0$ by the second part.
    \end{enumerate}
    By~\Cref{lem:eqDiffMobius}, the traffic distribution
    of $\bA$ then exists and is uniquely determined
    by $\{m_\sigma : \sigma\in \calC\}$, completing the proof.
\end{proof}

\section{From Diagrams to Asymptotic GFOM Dynamics}
\label{sec:dynamics}

The traffic distribution captures the limiting behavior of all scalar-valued,
permutation-invariant polynomials. In this section, we show how to leverage this
information to derive the limiting empirical laws of vector-valued,
permutation-invariant polynomials. Our main application is a description of the
limiting dynamics of GFOM.

We will mostly work under the assumption that the input matrices $\bA = \bA^{(n)}$ satisfy the strong cactus property, which we recall is the statement that $\frac{1}{n}\E_\bA z_{\alpha}(\bA) \to 0$ as $n \to \infty$ for all non-cactus $\alpha$ (i.e., all $\alpha \in \cA \setminus \cC$, a statement about \emph{scalar} graph polynomials).
In~\Cref{sec:dynamics-reg} we will briefly suspend this assumption to discuss punctured matrices, so as to connect to the setting of~\Cref{sec:universality}.

We tackle two tasks in this section:
\begin{enumerate}
    \item First, we study the joint asymptotic limit of the empirical distributions of the vector diagrams $\bmz_\al(\bA)$ over $\al \in \calA_1$. 
    Assuming the strong cactus property, we show that only the small subset of \emph{treelike} $\al \in \cT_1$ are asymptotically nonzero in the $z$-basis, in a sense to be made precise below.
    We then show that the asymptotic algebra of the treelike diagrams is isomorphic to a \emph{Wick algebra}, an algebra defined by a family of Gaussian random variables.
    This will give a precise version of \Cref{prop:convergence-distribution}.
    \item Second, we work with the asymptotic limit of treelike diagrams to identify a generalized Onsager correction, derive a treelike Approximate Message Passing algorithm, and prove its state evolution over arbitrary input matrices having the strong cactus property and a limiting diagonal distribution.
    This will give a precise version of \Cref{thm:intro-onsager}.
\end{enumerate}

\subsection{Asymptotic limit of the vector diagrams}
\label{sec:vector-limit}

In this section, given a family $(X_i)_{i\in I}$ and
$J\subseteq I$, we will write as a shortcut $X_J = (X_j)_{j\in J}$. 

Recall that $\cC_1$ denotes the set of rooted cactuses and $\cT_1 \subseteq \cA_1$ denotes the set of rooted trees with hanging cactuses.
We call the diagrams in $\cT_1$ \emph{treelike}, and we call \emph{Gaussian trees} the subset of diagrams $\cG_1 \subseteq \cT_1$ such that the root has degree exactly $1$ after removing hanging cactuses.

\begin{definition}[Type]
    For each $\tau\in\cT_1$, let
    $\type(\tau) \in \N^{\cG_1\cup \calC_1}$, where $\type(\tau)_\alpha$ count the
    number of copies of $\alpha\in \calG_1\cup \calC_1$ attached to the root of $\tau$,
    with the additional convention that
    $\type(\tau)_\alpha=0$ for all
    $\alpha\in\calG_1$ that has cactuses hanging at the root.
\end{definition}

The following theorem identifies the limiting distribution of $\bz_{\calA_1}(\bA)$ under the strong cactus property.
We refer the reader to~\Cref{sec:convergence-process} for the definition of convergence in distribution for random elements indexed by countably infinite index sets.

\begin{theorem}
\label{thm:convergence-distribution}
    Assume that $\bA=\bA^{(n)}$ satisfies~\cref{eq:tightnessNorm}, has the strong cactus property, and a limiting diagonal distribution.
    Then,
    \[
        \samp(\bmz_{\cA_1}(\bA)) \tod Z_{\cA_1}^{\infty}\,,
    \]
    where $Z_{\calA_1}^{\infty} \in \R^{\calA_1}$ is a random variable satisfying the following properties:
    \begin{enumerate}
        \item $Z_{\alpha}^\infty = 0$ for all non-treelike $\alpha$.

        \item Conditioned on $Z^\infty_{\calC_1}$, $Z^\infty_{\calG_1}$ is a centered
        Gaussian process with covariance $\bm \Sigma^\infty$ from~\cref{eq:covariance}.
        
        \item Let $\He$ denote the Wick product (\Cref{def:wick}). Then
        for every $\tau\in\calT_1$,
        \[
            Z^\infty_\tau
            = \He_{\type(\tau)}(Z^\infty_{\calG_1}\,;\,\bm \Sigma^\infty)\cdot \prod_{\sigma\in \calC_1} (Z^\infty_{\sig})^{\type(\tau)_\sigma}\,.
        \]
    \end{enumerate}
\end{theorem}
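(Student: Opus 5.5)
We will prove \Cref{thm:convergence-distribution} by the method of moments, followed by a tightness/uniqueness argument to upgrade moment convergence to convergence in distribution.

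\textbf{Reduction to scalar diagrams.} Fix finitely many $\alpha_1,\dots,\alpha_k\in\calA_1$ and exponents. A joint moment of $\samp(\bz_{\calA_1}(\bA))$ is
\[
\frac 1n\sum_i \prod_j \bz_{\alpha_j}(\bA)[i]^{p_j} \;=\; \frac1n\Big\langle \bm 1, \textstyle\prod_j \bz_{\alpha_j}(\bA)^{p_j}\Big\rangle.
\]
Take the Hadamard product of $z$-basis vectors. Grafting at the root gives $\bz_{\alpha_1}\cdots\bz_{\alpha_m}=\sum_P \bz_{(\alpha_1\oplus\cdots\oplus\alpha_m)_P}$, where $P$ ranges over partitions identifying only vertices from different factors, and never identifying the root with a nonroot vertex. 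Forgetting the root then gives a finite combination of scalar $z_\beta(\bA)/n$. By the strong cactus property, in the limit only the terms with $\beta\in\calC$ survive, and their values are fixed by the diagonal distribution via \Cref{lem:diagonal-w-z}. So every mixed moment converges to
\[
\sum_{P:\ (\oplus\alpha_j^{p_j})_P\in\calC} k_{(\oplus\alpha_j^{p_j})_P}.
\]
This is a purely combinatorial count: the number of ways of gluing the rooted copies into a cactus.

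\textbf{Identifying the limit.} The core of the proof is to show that this count equals the corresponding moment of the proposed $Z^\infty$. It has three parts.
\begin{enumerate}
\item If some $\alpha_j$ is not treelike, then no identification makes the glued graph a cactus. Indeed, a 2-edge-connected block of $\alpha_j$ that is not a cactus stays non-cactus. Likewise, a cycle of $\alpha_j$ through a bridge structure can only be closed by an identification, and such an identification creates an edge lying on two cycles. Together with $\E\frac1n\|\bz_\alpha\|_2^2\to0$, obtained from the same count, this gives $Z^\infty_\alpha=0$.
\item For treelike diagrams, the identifications producing cactuses must pair up the Gaussian-tree branches. Two branches $\gamma,\gamma'\in\calG_1$ coming from different factors glue into a cycle structure; this defines the covariance
\[
\bm\Sigma^\infty[\gamma,\gamma'] := \sum_{\text{gluings of }\gamma,\gamma'\text{ into a cactus}} k_{(\cdot)},
\]
which is \cref{eq:covariance} conditional on the hanging-cactus data. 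Branches from the same factor cannot be paired, since the $z$-basis forbids identifying them. This is exactly the matching rule of \Cref{prop:wick} with self-matchings removed, which is the Wick-product structure in item 3.
\item Cactuses hanging at the root identify freely with the root, so they contribute the mixed moments of the cactus variables $Z^\infty_{\calC_1}$. These are determined by $\cD$ and act as conditioning variables.
\end{enumerate}
For the factorization of the count into (cactus moments) $\times$ (Wick pairings), I would first establish a bijection between cactus-forming partitions $P$ and pairs consisting of a perfect matching of the Gaussian branches across factors together with a partition of the root cactuses. The hard part is checking that cycles formed by pairing two branches do not interact with one another or with the root cactuses. This should follow because, in a cactus, each cycle through the root is disjoint from the others away from the root.

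\textbf{Upgrading moments to distribution.} Moment convergence of every finite-dimensional projection yields convergence in distribution (\Cref{sec:convergence-process}) once the limit law is moment-determinate. Conditionally on $Z^\infty_{\calC_1}$, the limit consists of Gaussians and polynomials in them. The cactus variables are bounded by \Cref{thm:fundamental-theorem}, using tightness of $\|\bA\|$ via the bound $\|\bw_\sigma\|_\infty\le\|\bA\|^{|E|}$. Hence the moments grow at most like Gaussian ones, Carleman's condition holds, and the law is determined by its moments. Tightness \cref{eq:tightnessNorm} is also needed to truncate on $\{\|\bA\|\le K\}$ when passing from expected moments to the law. For bounded continuous test functions, I would combine moment convergence on this event with the standard approximation argument.

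I expect the main obstacle to be the combinatorial bijection in the second step, namely proving that the treelike gluings reproduce exactly the Wick product with covariance $\bm\Sigma^\infty$ (with the correct $\type$ convention for branches carrying root cactuses).
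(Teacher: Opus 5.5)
Your reduction of joint moments to counts of cactus-forming gluings, your argument that non-treelike factors rule out every cactus, and the pairing of Gaussian branches across factors correspond to \Cref{lem:non-treelike}, \Cref{lem:homeomorphic} and \Cref{lem:product-gaussian}. The gap is in your final step.

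You claim that, conditionally on the cactus variables, all moments of the limit grow at most like Gaussian moments. From this you conclude that Carleman's condition holds and that the method of moments applies to every finite-dimensional projection of $\samp(\bm z_{\mathcal{A}_1}(\bm A))$. This is false for the non-Gaussian treelike coordinates. Suppose $\tau\in\mathcal{T}_1$ carries $d=\sum_{\gamma\in\mathcal{G}_1}\type(\tau)_\gamma\ge 3$ Gaussian branches at its root. Then $Z^\infty_\tau$ is conditionally a degree-$d$ Wick polynomial in Gaussians, and its $2p$-th moment in general grows like $(cp)^{dp}$. This violates \cref{eq:subexp-growth-condition}, and Carleman's series converges. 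For example, take a GOE input and let $\tau$ be three single edges grafted at the root. Then $Z^\infty_\tau=G^3-3G$ with $G\sim\mathcal{N}(0,1)$. Its density decays only like $e^{-c|y|^{2/3}}$, so by Krein's criterion its law is moment-indeterminate. So moment convergence alone cannot identify the limit of the one-dimensional marginal $\samp(\bm z_\tau)$, and your argument as written does not prove the theorem.

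The missing idea is that these coordinates are polynomial functions of moment-determinate ones. The paper runs the method of moments only on $\samp(\bm z_{\mathcal{G}_1\cup\mathcal{C}_1})$. This vector is conditionally Gaussian given bounded cactus variables, so the growth condition holds, and its joint moments come from \Cref{lem:treelike-gaussian}. Every other treelike $\bm z_\tau$ is handled by the finite-$n$ identity of \Cref{lem:treelike-hermite}. That identity writes $\bm z_\tau$ as $\mathrm{He}_{\type(\tau)}(\bm z_{\mathcal{G}_1};\bm\Sigma)\prod_{\sigma}\bm z_\sigma^{\type(\tau)_\sigma}$ plus non-treelike diagrams, where $\bm\Sigma$ from \cref{eq:covariance-finite} is itself a combination of cactus vectors. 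The remainder vanishes in $L^2$ by \Cref{lem:non-treelike}, and the continuous mapping theorem with Slutsky's lemma finishes. Within your framework you could instead use your moment identities to show $\mathbb{E}\,\samp\bigl((\bm z_\tau-F_\tau)^2\bigr)\to 0$ for this polynomial $F_\tau$. Either way, some such reduction is needed.

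Separately, correct your covariance. Your $\sum k_{(\cdot)}$ is a deterministic number, but under the non-factorizing strong cactus property $Z^\infty_{\mathcal{C}_1}$ can be random (\Cref{ex:block-structured}). The limit attached to a cactus-forming gluing is $k$ of the grafted cactus $\bigoplus_i\sigma_i\oplus\bigoplus_{xy\in M}\gamma_{P_{xy}}$. By \Cref{lem:prod-cactus-cactus} and the strong cactus property, this equals the mixed moment $\mathbb{E}\bigl[\prod_i Z^\infty_{\sigma_i}\prod_{xy\in M}Z^\infty_{\gamma_{P_{xy}}}\bigr]$. Summing gives $\mathbb{E}\bigl[\prod_i Z^\infty_{\sigma_i}\sum_{M}\prod_{xy\in M}\bm\Sigma^\infty[\gamma_x,\gamma_y]\bigr]$ with the random $\bm\Sigma^\infty$ of \cref{eq:covariance}. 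This is a single expectation of a product, not a product of cactus moments and Wick pairings.
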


\Cref{thm:convergence-distribution} shows how
the limiting algebra $Z^\infty_{\calA_1}$
of permutation-invariant, vector-valued polynomials in
$\bA$ can be derived from $Z^\infty_{\calC_1}$.
Although we have not specified the description of the law of $Z^\infty_{\calC_1}$, it is fully
determined by the limiting diagonal distribution of $\bA$. For example, when $\bA$ further
satisfies the
{\em factorizing} strong cactus property, $Z^\infty_{\calC_1}$ is deterministic:

\begin{proposition}
    \label{prop:factorizing-deterministic}
    If $\bA$ satisfies the factorizing strong 
    cactus property and~\cref{eq:tightnessNorm},
    then the conclusion of~\Cref{thm:convergence-distribution} holds with the additional property that for every
    $\sigma\in\calC_1$,
    \[
        Z^\infty_{\sigma} = \prod_{\rho\in\cyc(\sigma)} \left(\lim_{n\to\infty} \frac 1n \E z_\rho(\bA)\right)\,.
    \]
\end{proposition}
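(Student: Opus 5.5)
Since \Cref{thm:convergence-distribution} already gives convergence of $\samp(\bmz_{\cA_1}(\bA))$ to some $Z^\infty_{\cA_1}$, the only new content is that each $Z^\infty_\sigma$, $\sigma \in \calC_1$, is the deterministic constant $k_\sigma \defeq \prod_{\rho\in\cyc(\sigma)} \kappa_{|\rho|}$. The plan is to show that the empirical distribution of the entries of $\bz_\sigma(\bA)$ concentrates at $k_\sigma$ by computing its first two moments. Then $\samp(\bz_\sigma(\bA))$ converges in distribution to the point mass at $k_\sigma$, and by uniqueness of limits in distribution $Z^\infty_\sigma = k_\sigma$ almost surely. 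Jointly over finitely many $\sigma$, the limit of $Z^\infty_{\calC_1}$ is then a point mass.

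\emph{First moment.} We have $\E\,\frac1n\sum_i \bz_\sigma(\bA)[i] = \frac1n\E z_{\sigma^\circ}(\bA)$, where $\sigma^\circ\in\calC$ is $\sigma$ with its root forgotten. Since $\bz_\sigma$ already sums over injective labelings, no partition terms appear. By the factorizing strong cactus property this converges to $\prod_{\rho\in\cyc(\sigma)}\kappa_{|\rho|} = k_\sigma$.

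\emph{Second moment.} Write $\frac1n\sum_i \bz_\sigma(\bA)[i]^2 = \frac1n\sum_i \bw_{\sigma\oplus\sigma}$-type sums. More precisely, the Hadamard square of $\bz_\sigma$ is a sum over labelings that are injective on each copy of $\sigma$ and agree at the root. Expanding according to which non-root vertices of the two copies collide, this equals $\sum_{P}\frac1n z_{(\sigma\oplus\sigma)_P}(\bA)$. Here $P$ ranges over partial matchings identifying vertices of copy 1 with vertices of copy 2, with the roots already merged. The term with no extra identifications is $\sigma\oplus\sigma$, a cactus whose cycles are two copies of $\cyc(\sigma)$, so its expectation tends to $k_\sigma^2$ by factorization. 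Every other term identifies some further vertex $u$ of copy 1 with a vertex $v$ of copy 2 while the roots are already identified. This creates, between the root and the merged vertex, two internally disjoint routes through the two copies, so the resulting graph has an edge lying on two distinct simple cycles (or gains a new cycle through the root). Such a graph is not a cactus, and its contribution vanishes by the strong cactus property. Hence $\E\frac1n\sum_i \bz_\sigma[i]^2 \to k_\sigma^2$, so the expected empirical variance $\E\frac1n\sum_i(\bz_\sigma[i]-k_\sigma)^2\to 0$, and Chebyshev/Markov gives $\samp(\bz_\sigma(\bA))\to k_\sigma$ in probability.

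The main obstacle is the combinatorial claim in the second-moment step: every nontrivial identification between the two root-glued copies of a cactus produces a non-cactus. I would argue it via the block structure. In $\sigma\oplus\sigma$ the root is an articulation point separating the copies. Identifying $u$ (copy 1) with $v$ (copy 2), where at least one of them is not the root, merges a root-to-$u$ path in copy 1 with a root-to-$v$ path in copy 2 into a cycle, and this cycle shares an edge with a cycle of $\sigma$ unless both paths are trivial. One degenerate case needs care: when the paths pass only through bridges. This cannot happen, because cactuses are bridgeless, so each edge already lies on a cycle of $\sigma$ and now lies on a second one. Further contractions only preserve the non-cactus property by the same argument applied to the first identification. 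With this lemma in hand the proof is complete, and the joint statement over $\calC_1$ follows coordinatewise.
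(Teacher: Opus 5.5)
Your proposal is correct and follows the paper's proof essentially step for step. You take the first moment from the factorizing property, get the second moment by expanding $\bz_\sigma(\bA)^2$ in the $z$-basis and showing that only the unmerged graft $\sigma\oplus\sigma$ is a cactus, then conclude from vanishing variance and uniqueness of the limit in distribution. For the combinatorial step, the paper's \Cref{lem:prod-cactus-cactus} uses edge-disjoint paths: merging creates four edge-disjoint paths between the root and the merged vertex, and further merges cannot destroy them because those two vertices always lie in different blocks. This is the cleaner justification for your remark about further contractions, which as stated is not true in general, since vertex identifications can turn a non-cactus into a cactus.
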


\begin{proof}
    Let $\sigma\in\calC_1$.
    The first moment of
    $\samp(\bz_\sigma(\bA))$ is
    \begin{align}
        \E \samp(\bz_\sigma(\bA)) = \frac 1n \sum_{i=1}^n \E_\bA \bz_\sigma(\bA)[i] = \frac 1n \E_\bA z_{\sigma_0}(\bA)\,,\label{eq:first-prop53}
    \end{align}
    where $\sigma_0$ is the unrooted version of $\sigma$. As $n\to\infty$,~\cref{eq:first-prop53} converges to the deterministic
    constant
    \[
        \kappa_{\sigma_0} \defeq \lim_{n\to\infty} \frac 1n \E_\bA z_{\sigma_0}(\bA) = \prod_{\rho\in\cyc(\sigma_0)} \left(\lim_{n\to\infty} \frac 1n \E z_\rho(\bA)\right)
    \]
    by the factorizing cactus property.
    
    We now switch to the second
    moment,
    \[
        \E \samp(\bz_\sigma(\bA))^2 =  \frac 1n \E_\bA \sum_{i=1}^n  \bz_\sigma(\bA)[i]^2\,.
    \]
    Expand the scalar polynomial $\sum_{i=1}^n \bz_\sigma(\bA)[i]^2$
    in the $z$-basis. The support of that
    expansion is
    the set of diagrams that can be
    obtained by grafting two copies
    of $\sigma$ at the root and merging
    pairs of vertices across the two
    different copies. By the strong cactus property, it suffices to find
    which cactuses can be obtained in
    this way. By~\Cref{lem:prod-cactus-cactus}, the only
    cactus that can occur in this way
    has no merging, and it contributes
    $\kappa_{\sig_0}^2$ by the factorizing cactus property.
    Thus,
    \[
        \E \samp(z_\sig(\bA))^2 = \kappa_{\sig_0}^2 + o(1) = \left(\E \samp(z_\sig(\bA)\right)^2 + o(1)\,.
    \]

    We showed that $\samp (z_\sigma(\bA))$
    converges to the desired 
    deterministic quantity in expectation,
    and furthermore that
    its variance converges to $0$.
    This implies that it converges to
    the constant in distribution. By
    unicity of the limit in distribution,
    $Z_\sigma^\infty$
    equals that constant almost surely.
\end{proof}

However, if we drop the factorizing cactus property assumption,
the variables $Z^\infty_{\calC_1}$ may no longer be deterministic. 
For example, this can be the case
when $\bA$ is a block-structured matrix as
in~\Cref{sec:wigner-block}:
 
 \begin{example}\label{ex:block-structured}
    Let $\bm A_1^{(n)}$ and $\bm A_2^{(n)}$
    be two $n\times n$ matrices
    satisfying the assumptions of~\Cref{thm:convergence-distribution}. Define
    the $2n\times 2n$ matrix,
    \[
        \bmA^{(2n)} = \begin{bmatrix} \bmA_1^{(n)} & \bm 0\\\bm 0 & \bmA_2^{(n)} \end{bmatrix}
    \]
    From the block-diagonal structure, for any $\alpha\in \calA_1$,
    \[
        \bmz_{\alpha}(\bmA)[i] = \begin{cases} \bmz_{\alpha}(\bmA_1)[i] & \text{if $i\in [n]$}\\
        \bmz_{\alpha}(\bmA_2)[i-n] & \text{if $i\in [2n]\setminus [n]$}\end{cases}
    \]
    Hence, the law of $Z^\infty_{\calC_1}(\bmA)$ is a
    uniform mixture of the law of $Z^\infty_{\calC_1}(\bmA_1)$
    and that of $Z^\infty_{\calC_1}(\bmA_2)$.
\end{example}

\noindent We will prove a generalization of~\Cref{ex:block-structured} later; see~\Cref{lem:block-matrix-cactus-limits}.

In~\Cref{ex:block-structured}, the randomness of $Z^\infty_{\calC_1}$
may be viewed as coming solely from the $\samp(\cdot)$ operator,
but this is not always the case.
For instance, our model also captures orthogonally invariant distributions
that do not satisfy the traffic concentration property:

\begin{example}\label{ex:finetti}
    Let $(\lambda_n)_{n\ge 1}$
    be an exchangeable sequence of random variables in $[-1,1]$ and consider
    \[\bA^{(n)} = (\bQ^{(n)})^\top \diag(\lambda_1, \ldots, \lambda_n) \bQ^{(n)}\,,\]
    for Haar-distributed matrices $\bQ^{(n)} \in O(n)$, independent from $(\lambda_n)$. By de Finetti's
    theorem, there exists a latent random
    probability measure $\mu$ almost
    surely supported on $[-1,1]$ such that
    conditionally on $\mu$, $\lambda_1,\lambda_2,\ldots$ are i.i.d.\ with common law $\mu$.
    By~\Cref{thm:moments}, $\bA^{(n)}$ satisfies the strong
    cactus property conditionally on
    $\mu$, so it also satisfies the strong
    cactus property unconditionally.

    Applying~\Cref{thm:convergence-distribution} and \Cref{prop:factorizing-deterministic},
    we get that conditionally on $\mu$,
    $\samp(z_{\calC_1}(\bA))$ converges in
    distribution to
    \begin{align}
        \left(\prod_{\rho\in\cyc(\sigma)} \kappa_{|\rho|}(\mu)\right)_{\sigma\in \calC_1}\,,\label{eq:random-cumulants}
    \end{align}
    where $(\kappa_q(\mu))_{q\ge 1}$
    are the free cumulants of $\mu$.
    Therefore, {unconditionally}, 
    $\samp(z_{\calC_1}(\bA))$ converges
    in distribution to the {\em random}
    quantity~\cref{eq:random-cumulants}.
\end{example}

\noindent Note that \Cref{ex:block-structured,ex:finetti} do not contradict~\Cref{prop:factorizing-deterministic} because in these
examples,
$\bA^{(n)}$
typically does not 
satisfy the factorizing
cactus property.

\subsubsection{Non-treelike diagrams are asymptotically negligible}
\label{sec:treelike}

The remainder of~\Cref{sec:vector-limit} is dedicated to
the proof of~\Cref{thm:convergence-distribution}. In the whole proof, we drop the dependence
of $z_\alpha$ and $w_\alpha$ on $\bA$ 
to lighten notation.
We start by proving that
non-treelike diagrams are negligible.

\begin{lemma}
    \label{lem:non-treelike}
    Suppose that $\bA$ satisfies the
    strong cactus property. Then for each non-treelike $\alpha$,
    \[
        \samp(\bz_\alpha)\overset{L^2}{\longrightarrow} 0\,.
    \]
\end{lemma}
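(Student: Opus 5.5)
Since $L^2$ convergence to $0$ means $\E\,\samp(\bz_\alpha)^2 = \frac1n \E \sum_{i=1}^n \bz_\alpha[i]^2 \to 0$, I will compute the second moment diagrammatically and show it vanishes. The entrywise square $\bz_\alpha[i]^2$ is the product of two copies of $\bz_\alpha$ sharing the root label $i$. Summing over $i$ gives the scalar polynomial obtained from $\alpha\oplus\alpha$, the graft of two copies of $\alpha$ at the root, with the constraint that labels are injective within each copy but may coincide across copies. Expanding in the $z$-basis (the same Möbius-type reasoning as in \Cref{claim:wtoz}, but merging only pairs of vertices across the two copies) gives
\[
\sum_{i=1}^n \bz_\alpha[i]^2 \;=\; \sum_{P} z_{(\alpha\oplus\alpha)_P},
\]
where $P$ ranges over partial matchings identifying non-root vertices of the first copy with non-root vertices of the second copy. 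This is a finite sum whose number of terms does not depend on $n$. By the strong cactus property, $\frac1n\E z_{(\alpha\oplus\alpha)_P}\to0$ unless $(\alpha\oplus\alpha)_P$ is a cactus. The lemma therefore reduces to a purely combinatorial claim: if $\alpha\in\calA_1$ is not treelike, then no such cross-merging of $\alpha\oplus\alpha$ is a cactus.

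To prove the combinatorial claim, I will first characterize treelike diagrams. A connected rooted graph is treelike exactly when every 2-edge-connected component that is not a single vertex is a cactus, and when every cactus component lies in a hanging position: the cactus components must not lie on a path from the root to another part of the graph through bridges. Equivalently, after pruning hanging cactuses, what remains is a tree. Thus $\alpha$ fails to be treelike in one of two ways.

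\emph{Case (a): some 2-edge-connected component of $\alpha$ is not a cactus.} Two edge-disjoint copies of this structure survive in $\alpha\oplus\alpha$. Merging vertices cannot destroy the property "some edge lies on two distinct simple cycles"; this is the same mechanism as in \Cref{lem:contract-2ec}, since contractions preserve the edge-disjoint paths. So the result is not a cactus.

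\emph{Case (b): a cycle $\rho$ lies between the root and some bridge $e$.} A cactus has no bridges, so in a cactus merging every bridge must be covered by a cycle. Take a bridge $e$ of the first copy lying farthest from the root. For $e$ to lie on a cycle, the subtree beyond $e$ must be merged with vertices of the second copy, creating a path back to the root through the second copy. Combining this with the cycle $\rho$ through which the path must pass, some edge then lies on two simple cycles, contradicting the cactus property.

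\textbf{Main obstacle.} The hard step is making case (b) rigorous in general. When $\alpha$ has several non-hanging cycles and multiple bridges, I need to show that every cross-merging producing a bridgeless graph necessarily glues two cycles along an edge. I would first attempt an induction on the number of bridges of $\alpha$. In the base case $\alpha$ is 2-edge-connected and non-cactus, which is covered by case (a). In the inductive step I would look at an extremal bridge as described above. A cleaner alternative route uses the prior result \Cref{lem:prod-cactus-cactus} (the only cactus obtainable from two grafted cactuses is the one with no merging) and generalizes its argument to arbitrary rooted graphs: merging across copies creates cycles that pass through the root, and every such cycle must be edge-disjoint from all the others.
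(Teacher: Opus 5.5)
Your proposal follows the paper's proof. The paper also expands $\bz_\alpha^2$ in the $z$-basis and kills every term by the strong cactus property, via \Cref{claim:only-trees}. That claim is proved through exactly your two cases (three edge-disjoint paths between two vertices, or a bridge that cannot reach the root through bridges alone) and your ``new path through the other copy'' mechanism.

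The step you call the main obstacle needs no induction, because the argument is local. Take any bridge $e$ that cannot reach the root through bridges alone. Do not take the farthest bridge overall: it may reach the root through bridges, and then merging beyond it legitimately closes a cycle. For such an $e$, some 2-edge-connected component $K$ on its way to the root is entered and left at distinct vertices $a\neq b$, and $K$ supplies two edge-disjoint $a$--$b$ paths. A third, edge-disjoint from them, is the trail that goes from $a$ out across $e$ to a vertex $x$ merged with some $y$ of the second copy (such an $x$ must exist, as you note), then through the second copy to the root, then through the first copy back to $b$. Cross-copy merges never identify two vertices of the same copy, so $a\neq b$ survives. Hence $(\alpha\oplus\alpha)_P$ has three edge-disjoint paths between two distinct vertices and is not a cactus, whatever else $P$ merges.

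The same injectivity (the first copy embeds into every $(\alpha\oplus\alpha)_P$) is the correct justification in case (a). Your blanket claim that merging cannot destroy an edge lying on two simple cycles is false: identifying the two branch vertices of a theta graph gives a bouquet of cycles, which is a cactus. Also, \Cref{lem:contract-2ec} concerns 2-edge-connectivity, which, unlike your property, does survive arbitrary merges.
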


\begin{proof}
    By definition, we have
    \begin{align*}
        \E \samp(\bz_\alpha)^2 &= \frac 1n \sum_{i=1}^n \E \left[\left(\bz_\alpha^{ 2}\right)[i]\right]
    \intertext{By~\Cref{claim:only-trees}, we can expand $\bz_\alpha^{ 2}$ in the
    $\bz$-basis to obtain, for some constant coefficients
    $c_\beta$}
        &=  \frac 1 n \sum_{i=1}^n  \sum_{\beta\in \calA_1\setminus \calT_1} c_\beta \E \bz_\beta[i]\\
        &= \frac 1n \sum_{\beta\in \calA_0\setminus \calT_0} c_\beta' \E z_\beta
    \end{align*}
    for some other constant coefficients $c_\beta'$. Since no diagram in
    $\calA_0\setminus \calT_0$ is a cactus, by the strong
    cactus property, we get $\E \samp(\bz_\alpha)^2\underset{n\to\infty}\longrightarrow 0$, as desired.
\end{proof}

\subsubsection{Asymptotic limit of the treelike diagrams}
\label{sec:asymptotic-space}

Next, we analyze the treelike diagrams. All results in~\Cref{sec:asymptotic-space} are purely
combinatorial, meaning that they hold for arbitrary
$\bA\in\R^{n\times n}_\sym$.

The covariance of treelike diagrams is
defined in terms of {\em homeomorphic
matchings} between them. We start by defining this new concept.

\begin{definition}[Core]
    Let $\tau\in\calT_1$.
    Define $\textnormal{core}(\tau)$ to be
    the rooted tree obtained from
    $\tau$ by
    \begin{enumerate}
        \item Removing all hanging cactuses.
        \item Removing all non-root
        degree-2 vertices and the two edges they are incident with, and adding back a new edge between their two neighbors.
    \end{enumerate}
\end{definition}
\noindent
Note that the vertex set $V(\core(\tau))$ may be identified with a subset of $V(\tau)$, even though the second rule may lead to edges being present in $\core(\tau)$ that do not exist in $\tau$.

\begin{definition}[Homeomorphic matchings]\label{def:homeomorphic}
    Let $\tau_1,\tau_2\in\calT_1$. We
    say that a partial matching
    $P\subseteq V(\tau_1)\times V(\tau_2)$ of $\tau_1$ and $\tau_2$ is
    {\em homeomorphic} if
    \begin{enumerate}
        \item $(\textnormal{root}(\tau_1),\textnormal{root}(\tau_2))\in P$.
        \item Restricted to $V(\textnormal{core}(\tau_1))\times V(\textnormal{core}(\tau_2))$, $P$ is a rooted graph isomorphism between $\textnormal{core}(\tau_1)$ and $\textnormal{core}(\tau_2)$.

        \item Let $\{u,u'\}\in E(\textnormal{core}(\tau_1))$,
        let $(u=u_1, \ldots, u_k=u')$ be the path between $u$ and $u'$ in $\tau_1$. Let $v=P(u)$,  $v'=P(u')$, and $(v=v_1,\ldots,v_\ell=v')$ be the
        path between $v$ and $v'$ in $\tau_2$.
        Then there is no matching edge between
        $\{u_1, \ldots, u_k,v_1,\ldots,v_\ell\}$
        and its complement.
        Moreover,
        for all $(u_i,v_j)\in P$ and $(u_{i'},v_{j'})\in P$, we have $i\le i' \iff j\le j'$ (the matching restricted to the vertices in the paths is non-crossing).

        \item No inner vertices from the 
        hanging cactuses are matched.
    \end{enumerate}
    We denote by $H(\tau_1,\tau_2)$ the set
    of homeomorphic matchings between
    $\tau_1$ and $\tau_2$. 
\end{definition}

\noindent This definition
is motivated by the following lemma 
stating that, when 
computing the covariance of two treelike
diagrams, the matchings giving rise to
cactuses are precisely the homeomorphic ones.

\begin{lemma}
    \label{lem:homeomorphic}
    Let $\tau_1,\tau_2\in\calT_1$ and $\tau = \tau_1 \sqcup \tau_2$.
    For any matching $P\subseteq V(\tau_1)\times V(\tau_2)$ such that $(\roott(\tau_1), \roott(\tau_2)) \in P$, we have $\tau_P\in\calC_1$ if and only
    if $P\in H(\tau_1,\tau_2)$.
\end{lemma}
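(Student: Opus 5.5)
We prove both directions by analyzing the block/cycle structure of $\tau_P$, where $\tau_P$ is formed by gluing $\tau_1$ and $\tau_2$ at their roots and then identifying each pair in $P$. The guiding fact is that a connected graph is a cactus if and only if every 2-vertex-connected block is a single cycle (or a loop), i.e., every edge lies on exactly one simple cycle. Hanging cactuses of $\tau_1,\tau_2$ are already cactus blocks. Every other edge of $\tau_i$ is a bridge of $\tau_i$, since it lies on the tree part. The question is therefore exactly when identifications along $P$ make every tree edge lie on exactly one cycle.

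\emph{($\Leftarrow$)} Let $P\in H(\tau_1,\tau_2)$. Condition 2 pairs each core edge $\{u,u'\}$ of $\tau_1$ with a core edge $\{v,v'\}$ of $\tau_2$. The corresponding paths $(u_1,\dots,u_k)$ and $(v_1,\dots,v_\ell)$ have matched endpoints and so close up into a cycle. The extra matches inside this pair of paths are non-crossing (Condition 3), so they split this cycle into a chain of smaller cycles glued at the identified vertices. That is a non-crossing contraction of a cycle, and such a contraction is a cactus, exactly as in the proof of \cref{lem:diagonal-w-z}.

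By Condition 3 no vertex on these paths is matched outside the pair. Different core-edge pairs therefore share only core vertices, and the core vertices themselves are identified via a tree isomorphism. Hence the pieces are glued together along the tree structure of $\core(\tau_1)$, i.e., at single articulation points. Hanging cactuses attach at one vertex each, and their inner vertices are unmatched by Condition 4, so they remain pendant cactus blocks. A tree of cactuses glued at articulation points is a cactus, so $\tau_P\in\calC_1$.

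\emph{($\Rightarrow$)} Suppose $\tau_P$ is a cactus. We establish the conditions of $H(\tau_1,\tau_2)$ in turn.

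\textbf{Condition 4.} First, no inner vertex of a hanging cactus is matched. If one were, the path through the cactus from its attachment point to that vertex, together with a path in the other tree, would create a second cycle through some cactus edge. I will argue this via edge-disjoint paths: two distinct matched vertices give three edge-disjoint routes between some pair of vertices, whereas in a cactus any two vertices are joined by at most two edge-disjoint paths.

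\textbf{Every tree edge is covered once.} Next, every tree edge of $\tau_1$ must lie on a cycle, since cactuses are bridgeless. A cycle through a tree edge of $\tau_1$ must use vertices of $\tau_2$, so its endpoints in $\tau_1$ are matched. Conversely, each tree edge must lie on exactly one cycle. So the matched vertices on the unique $\tau_1$-path between two matched vertices must correspond to a single "parallel" path in $\tau_2$.

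\textbf{Core isomorphism.} This forces the following. Branch vertices of $\core(\tau_1)$ (degree $\ge3$) and leaves of the tree part must be matched, since otherwise some branch edge is a bridge. Moreover, the induced correspondence must preserve tree adjacency between consecutive matched branch points, which yields the rooted isomorphism of cores (Condition 2).

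\textbf{Non-crossing and no cross-matches.} Finally, a crossing within a path pair, or a match leaving the pair, produces an edge on two cycles. The argument is again a three edge-disjoint paths argument, and this gives Condition 3.

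The main obstacle is the $(\Rightarrow)$ direction, specifically showing that the core correspondence is a tree isomorphism. The difficulty is to rule out matchings that match degree-2 path vertices to branch vertices, or that "shortcut" the tree. I would handle it by induction on the number of core edges. The induction peels off a leaf path of $\core(\tau_1)$: use the cactus property to show that its leaf endpoint must be matched to a leaf endpoint of $\tau_2$, and that the two leaf paths form a closed block attached at one vertex. Remove both and recurse.
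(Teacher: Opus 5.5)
Your $(\Leftarrow)$ direction is fine; it is the ``straightforward check'' the paper leaves to the reader. The $(\Rightarrow)$ direction, however, is a list of assertions, and one stated justification is false: an unmatched branch vertex need not create a bridge. Take $\tau_1$ to be root--$p$ with two leaf children $a,b$ of $p$, take $\tau_2$ to be the root with two leaf children $a',b'$, and let $P=\{(\text{root},\text{root}),(a,a'),(b,b')\}$. Then $\tau_P$ is bridgeless (a theta graph between $p$ and the root), yet it is not a cactus. The real obstruction is three edge-disjoint paths. Suppose $x$ is an unmatched branch vertex with matched leaves $\ell\leftrightarrow\ell'$ and $d\leftrightarrow d'$ in two different child subtrees, and let $q$ be the lowest common ancestor of $\ell',d'$ in $\tau_2$. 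Then $x$ and $q$ are joined by two routes through the two child subtrees and a third through the roots.

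More seriously, the step you identify as the crux is the whole content of the lemma, and ``use the cactus property'' does not prove it. That step is the claim that, in the leaf-peeling induction, the leaf is matched to a leaf and the two leaf paths close up at matched core parents with nothing else attached. (With intermediate rungs the two leaf paths form a chain of cycles, not one block, so it is cleaner to peel one cycle at a time.) You need a bottom-up analogue of the paper's claim that each root cycle contains exactly one further identified pair.

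Start from matched leaves $\ell\leftrightarrow\ell'$; a leaf matched to a non-leaf $m$ yields three edge-disjoint paths via a matched leaf below $m$. Let $a$ be the first proper ancestor of $\ell$ in $\tau_1$, and $b$ the first proper ancestor of $\ell'$ in $\tau_2$, that is matched or has degree $\neq 2$. Both are matched by the argument above. Suppose $a\leftrightarrow w$ with $w\neq b$ while $z\leftrightarrow b$. Then the segment $a\cdots\ell=\ell'\cdots b$, the $\tau_1$-path from $a$ to $z$, and the $\tau_2$-path from $w$ to $b$ are three edge-disjoint routes between the images of $a$ and $b$. Hence $a\leftrightarrow b$, the cycle is pendant, and removing it leaves a cactus.

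When re-attaching, you must also handle the change of cores, e.g.\ $a$ dropping to degree $2$. The induction hypothesis forces matched vertices to be both core or both non-core and leaves to map to leaves, hence $\deg a=\deg b$. That equality is what lets the core isomorphism and the order-preservation along the subdivided core path lift back.

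With these pieces your bottom-up induction is a valid alternative to the paper's top-down one. The paper pairs the children of the two roots into $k$ root cycles and shows each contains exactly one further identified pair $v_i^1\leftrightarrow v_i^2$. It then shows all matches below $v_i^1$ land below $v_i^2$ and inducts on those subtrees.
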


\noindent In particular, if $\tau_1, \tau_2 \in \cC_1$, only the matching $P = \{(\roott(\tau_1), \roott(\tau_2))\}$ creates a cactus $\tau_P$.
We are now ready to describe the algebra of
treelike diagrams:

\begin{lemma}\label{lem:product-gaussian}
    For all $\gamma_1,\ldots,\gamma_\ell\in\calG_1\cup \calC_1$,
    \begin{align}
        \prod_{j=1}^\ell \bz_{\gamma_j} - \sum_{M\in\calM(\ell)} \sum_{\substack{P_{uv}\in H(\gamma_u,\gamma_v)\\\forall uv\in M}} \bz_{\bigoplus_{uv\in M}\gamma_{P_{u,v}} \,\oplus\, \bigoplus_{u\notin M} \gamma_u} \in \spn(\bz_{\calA_1\setminus \calT_1})\,,\label{eq:product-gaussian}
    \end{align}
    where $\oplus$ denotes the grafting at the root.
\end{lemma}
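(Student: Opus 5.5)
The plan is to expand the left-hand side of \cref{eq:product-gaussian} in the $z$-basis and sort the resulting diagrams into treelike and non-treelike ones. Let $\gamma = \gamma_1 \oplus \cdots \oplus \gamma_\ell$ be the grafting at the root. The entrywise product $\prod_j \bz_{\gamma_j}$ already equals a sum over labelings that are injective inside each copy $\gamma_j$, with all roots labeled by the same index. Resolving the coincidences between different copies exactly as in \Cref{claim:wtoz} gives
\[
\prod_{j=1}^\ell \bz_{\gamma_j} \;=\; \sum_{P} \bz_{\gamma_P}\,,
\]
where $P$ ranges over partitions of $V(\gamma)$ whose blocks contain at most one non-root vertex from each copy. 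It therefore suffices to characterize the $P$ for which $\gamma_P \in \calT_1$. I claim these are exactly the partitions of the following form: choose a matching $M \in \calM(\ell)$, choose for each $uv \in M$ a homeomorphic matching $P_{uv} \in H(\gamma_u,\gamma_v)$, and make no other identifications. The corresponding diagram is $\bigoplus_{uv\in M}(\gamma_u\sqcup\gamma_v)_{P_{uv}} \oplus \bigoplus_{u\notin M}\gamma_u$. Once this is shown, all remaining terms lie in $\spn(\bz_{\calA_1\setminus\calT_1})$ and the lemma follows.

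To organize the argument, I will use the \emph{interaction graph} $I_P$ on $[\ell]$, in which $j \sim k$ whenever $P$ identifies a non-root vertex of $\gamma_j$ with one of $\gamma_k$. Suppose a connected component $K$ of $I_P$ has $|K|\ge 2$. For each $j\in K$ there is a path inside $\gamma_j$ from the root to some merged vertex, and this path leaves the root along one edge of $\gamma_j$. Concatenating such paths through merged vertices produces a cycle through the root of $\gamma_P$. For $\gamma_j\in\calG_1$, this cycle uses the unique tree edge at the root.

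In a treelike diagram, every cycle lies in a cactus hanging off the tree. Hence the whole merged piece $\gamma_K := (\bigoplus_{j\in K}\gamma_j)_{P|_K}$ must be a cactus hanging at the root. I will then show that $|K|\ge 3$ is impossible. The idea is that if $K$ is connected with at least three members, either some root edge lies on two distinct simple cycles, or the 2-connected block at the root has more edges than a single cycle; in both cases $\gamma_K$ is not a cactus. Two routes are available here. The first is to take a spanning tree of $I_P$ restricted to $K$ and exhibit two edge-disjoint-in-part cycles sharing a root edge. The second is to contract first along one pair and then invoke \Cref{lem:homeomorphic} on the resulting rooted pair together with the third copy.

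So every component of $I_P$ is either a singleton or a pair $\{u,v\}$. For a pair, $\gamma_K = (\gamma_u\sqcup\gamma_v)_{P_{uv}}$ with the roots identified, and \Cref{lem:homeomorphic} says this is in $\calC_1$ if and only if $P_{uv}\in H(\gamma_u,\gamma_v)$. The same lemma handles the cases involving cactuses: for a cactus $\gamma_u$, only the trivial root matching yields a cactus. That trivial matching is a singleton component, i.e.\ no merging, so cactus factors are never genuinely paired. Finally, I need to check that grafting at the root the resulting cactus pieces together with the unmerged $\gamma_u$'s yields a treelike diagram. This holds because treelike diagrams are closed under $\oplus$ and cactuses at the root are allowed. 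This gives exactly the sum subtracted in \cref{eq:product-gaussian}, with multiplicity one for each $(M,(P_{uv}))$.

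The step I expect to be the main obstacle is the exclusion of components with $|K|\ge 3$, together with the careful justification that every cycle created through the root forces $\gamma_K$ to be a root-hanging cactus rather than some other treelike configuration. Both require a clean combinatorial description of cycles in $\gamma_P$ in terms of $I_P$.
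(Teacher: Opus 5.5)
Your skeleton matches the paper's: expand $\prod_j \bz_{\gamma_j}=\sum_P \bz_{\gamma_P}$ over partitions that are injective on each copy, show that in any treelike $\gamma_P$ the copies interact only in disjoint pairs, and apply \Cref{lem:homeomorphic} to each pair. However, the step you defer as ``the main obstacle'' is a genuine gap. Ruling out interaction components with $|K|\ge 3$ is the real content of the lemma, and you name two routes without carrying out either.

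The setup you would rely on is also false as stated. You assume that the path in $\gamma_j\in\mathcal{G}_1$ from the root to a merged vertex leaves along the unique tree edge $e_j$. That fails when the merged vertex lies in a cactus hanging at the root of $\gamma_j$. It also says nothing about a cactus factor $\gamma_j\in\mathcal{C}_1$ sitting in a component of size $\ge 3$. Your appeal to \Cref{lem:homeomorphic} for cactus factors only covers components that are already known to be pairs.

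The paper closes this with two short claims.
\begin{enumerate}
\item Every internal vertex of a hanging cactus, in particular every non-root vertex of a cactus factor, must be a singleton block. Suppose such a $u$ is merged with a vertex $v$ of another copy, and let $u'$ be the attachment vertex of the cycle through $u$. Then $u$ and $u'$ are joined by three edge-disjoint paths: the two arcs of the cycle, and the path from $v$ to the root in the other copy followed by the path back down to $u'$. So $\gamma_P\notin\mathcal{T}_1$. This lets you delete all hanging cactuses and assume each $\gamma_j$ is a tree whose root has exactly one child.
\item The interaction graph is a matching. Suppose $\gamma_2$ meets $\gamma_1$ at $v\sim u$ and $\gamma_3$ at $v'\sim w$, and let $v''$ be the lowest common ancestor of $v,v'$ in $\gamma_2$. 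It is not the root, since the root has a single child. Then $v''$ is joined to the root by three edge-disjoint paths: up inside $\gamma_2$; down to $v$ and up through $\gamma_1$; down to $v'$ and up through $\gamma_3$.
\end{enumerate}

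Your route (a) can also be completed, but only after the reduction in the first claim. Take $j$ of degree $\ge 2$ in $I_P$, with neighbours $i$ and $k$. From the two merge points you can extract two distinct simple cycles through $e_j$, one also using $e_i$ and the other $e_k$. Two distinct simple cycles sharing an edge always give three edge-disjoint paths between the endpoints of an ear of one cycle relative to the other.

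Some such argument has to be written out. As it stands, the proposal reduces the lemma to its hardest case and stops there.
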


\noindent The proofs of~\Cref{lem:homeomorphic,lem:product-gaussian} are deferred to~\Cref{sec:combinatorics}.
Note that the error in~\Cref{lem:product-gaussian}
is measured in terms of
non-treelike diagrams. 

By inverting \cref{eq:product-gaussian},
we can formulate the algebra of treelike diagrams in the language of {\em Wick products} (\cref{def:wick}).

\begin{corollary}\label{lem:treelike-hermite}
    For all $\tau\in\calT_1$,
    \begin{align}
        \bz_{\tau} - \He_{\type(\tau)}(\bz_{\calG_1}\,;\bm \Sigma) \prod_{\sigma\in\calC_1} (Z^\infty_{\sigma})^{\type(\tau)_\sigma} \in \spn(\bz_{\calA_1\setminus \calT_1})\,, \label{eq:reformulated-hermite}
    \end{align}
    where for all $\gamma,\gamma'\in\cG_1$, we defined
    the ``finite-$n$'' covariance matrix
    \begin{align}
        \bm \Sigma[\gamma,\gamma'] \defeq \sum_{P\in H(\gamma, \gamma')} \bz_{\gamma_P}\,.\label{eq:covariance-finite}
    \end{align}
\end{corollary}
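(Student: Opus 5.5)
The plan is to deduce \cref{eq:reformulated-hermite} from \Cref{lem:product-gaussian} by induction on the total size $|\type(\tau)| = \sum_\alpha \type(\tau)_\alpha$ of the type. Throughout, all identities are taken modulo the subspace $\spn(\bz_{\calA_1\setminus\calT_1})$, which we write as $\equiv$. We also interpret $Z^\infty_\sigma$ in the statement as the finite-$n$ vector $\bz_\sigma$ for $\sigma\in\calC_1$, since the lemma is purely combinatorial.

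\textbf{Step 1: reduction to root types.} Every $\tau\in\calT_1$ is the grafting at the root of the pieces counted by $\type(\tau)$. These pieces are Gaussian trees $\gamma\in\calG_1$ (the root branches, each carrying its own hanging cactuses away from the root) and cactuses $\sigma\in\calC_1$ hanging at the root. We must check that this decomposition is well defined and that the convention ``$\type(\tau)_\gamma=0$ for $\gamma$ with cactuses at the root'' makes it unique. Given that, $\bz_\tau$ is the $\bz$-polynomial of $\bigoplus_j \gamma_j \oplus \bigoplus_k \sigma_k$, where the $\gamma_j,\sigma_k$ list the type with multiplicity.

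\textbf{Step 2: separate the cactus factors.} Apply \Cref{lem:product-gaussian} to the list $(\gamma_1,\dots,\gamma_m,\sigma_1,\dots,\sigma_p)$. By the remark after \Cref{lem:homeomorphic}, a matching $M$ that pairs a cactus with anything contributes only through the trivial root-root matching. Indeed, $H(\sigma,\cdot)$ must identify only roots, and root identification is already accounted for by grafting. I will argue that such pairs give either a non-treelike diagram or a double-counted term that is excluded. I expect this bookkeeping to be the main obstacle, and the first thing to try is to check the definition of $H$ on cactuses carefully. If it cannot be made to vanish, I will instead apply the lemma only to the Gaussian pieces, times a single cactus $\bigoplus_k\sigma_k$. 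The intended consequence is
\[
\bz_\tau \equiv \bz_{\bigoplus_j\gamma_j}\cdot\prod_k \bz_{\sigma_k},
\]
obtained by inverting the lemma when $m=0$ and proceeding similarly in general.

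\textbf{Step 3: Gaussian part by induction.} For the pure Gaussian graft $\bigoplus_{j\le m}\gamma_j$, \Cref{lem:product-gaussian} reads
\[
\prod_j \bz_{\gamma_j} \equiv \sum_{M\in\calM(m)} \sum_{(P_{uv})} \bz_{\bigoplus_{uv\in M}(\gamma_u\sqcup\gamma_v)_{P_{uv}}\oplus\bigoplus_{u\notin M}\gamma_u}.
\]
Each merged diagram $(\gamma_u\sqcup\gamma_v)_{P_{uv}}$ is a rooted cactus by \Cref{lem:homeomorphic}. Hence each term with $|M|\ge1$ is a treelike diagram of strictly smaller Gaussian type, with extra cactuses at the root. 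By the induction hypothesis combined with Step 2, such a term equals $\He_{U(M)}(\bz_{\calG_1};\bm\Sigma)\prod_{uv\in M}\bz_{(\gamma_u\sqcup\gamma_v)_{P_{uv}}}$ modulo $\equiv$. Summing over $P_{uv}$ produces $\prod_{uv\in M}\bm\Sigma[\gamma_u,\gamma_v]$ by \cref{eq:covariance-finite}.

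We then need to show that this triangular system is solved exactly by the Wick product. Corollary~\ref{lem:wick-recursion} states the same recursion formally: $\prod X_{i_j}=\sum_M\prod\Sigma\,\He_{U(M)}$. Since both sides satisfy the same triangular recursion, with the leading term $M=\emptyset$ being $\bz_{\bigoplus\gamma_j}$ and $\He_{\type}$ respectively, uniqueness gives $\bz_{\bigoplus\gamma_j}\equiv\He_{\type}(\bz_{\calG_1};\bm\Sigma)$.

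One subtlety is that $\bm\Sigma$ is a vector (entries live in the commutative algebra of $\bz$'s). Its products with $\bz$'s are again grafting at the root. This is legitimate modulo $\equiv$ by Step 2, and multiplying an element of $\spn(\bz_{\calA_1\setminus\calT_1})$ by any $\bz_\beta$ stays in that span, which follows from \Cref{claim:only-trees}-type reasoning.

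Combining with the cactus factors from Step 2 gives \cref{eq:reformulated-hermite}.
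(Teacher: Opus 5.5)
Your Step~3 is the paper's own argument, and it is fine. The paper inducts on $|V(\tau)|$ where you induct on the Gaussian part of the type, but the mechanics are identical: expand $\prod_j \bz_{\gamma_j}$ by \Cref{lem:product-gaussian}, apply the induction hypothesis to every term with $M\neq\varnothing$, collect $\sum_{P\in H(\gamma_u,\gamma_v)}\bz_{\gamma_P}$ into $\bm\Sigma[\gamma_u,\gamma_v]$, and match the result against the Wick recursion of \Cref{lem:wick-recursion}.

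The gap is Step~2, which you leave hedged, and your worry there is justified. Suppose you feed two cactuses $\sigma,\sigma'$ into \Cref{lem:product-gaussian} as literally stated. The root-only matching lies in $H(\sigma,\sigma')$, by \Cref{lem:homeomorphic} and the remark after it. So the pair $M=\{\{1,2\}\}$ produces $\bz_{\sigma\oplus\sigma'}$ a second time, on top of the $M=\varnothing$ term. Hence ``inverting the lemma when $m=0$'' yields the wrong coefficient.

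The paper sidesteps this by declaring that cactuses are never matched; its proof of \Cref{lem:product-gaussian} only treats $\gamma_j\in\calG_1$. Your fallback (one combined cactus in the list) avoids the double count. However, it still relies on a case of the lemma the paper does not prove. It also does not give the splitting $\bz_{\bigoplus_k\sigma_k}\equiv\prod_k\bz_{\sigma_k}$ that you need.

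The clean fix is to prove Step~2 directly. The claim is: for $\sigma\in\calC_1$ and any $\alpha\in\calA_1$,
$\bz_\sigma\cdot\bz_\alpha-\bz_{\sigma\oplus\alpha}\in\spn(\bz_{\calA_1\setminus\calT_1})$.
Every term of the product other than the plain graft identifies some non-root vertex $u$ of $\sigma$ with a non-root vertex of $\alpha$. The merged vertex then has three edge-disjoint paths to the root: two inside the 2-edge-connected $\sigma$, and a third through $\alpha$. These paths survive any further identifications, so the diagram is non-treelike by \Cref{claim:non-treelike}(ii).

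Iterate this over the cactus pieces at the root. Use that $\spn(\bz_{\calA_1\setminus\calT_1})$ is closed under multiplication by any $\bz_\beta$ (\Cref{claim:only-trees}). This gives $\bz_\tau\equiv\bz_{\bigoplus_j\gamma_j}\prod_k\bz_{\sigma_k}$. Folded into the induction hypothesis, it also supplies the product form $\prod_{uv\in M}\bz_{\gamma_{P_{uv}}}$ of the cactus factor that Step~3 uses.
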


\begin{proof}
    We proceed by induction on the number
    of vertices of $\tau$. First,~\cref{eq:reformulated-hermite} trivially holds
    if $\tau$ has one vertex, which proves
    the base case.
    Now, suppose that $\tau = \gamma_1\oplus \ldots \oplus \gamma_\ell$ is the
    grafting at the root of $\gamma_1,\ldots,\gamma_{\ell}\in\calG_1\cup \calC_1$.
    By~\Cref{lem:product-gaussian},
    \begin{align*}
        \bmz_\tau + \sum_{\substack{M\in \calM(\ell)\\M\neq \varnothing}} \sum_{\substack{P_{uv}\in H(\gamma_u,\gamma_v)\\\forall uv\in M}} \bz_{\bigoplus_{uv\in M}\gamma_{P_{u,v}} \,\oplus\, \bigoplus_{u\notin M} \gamma_u} - \prod_{j=1}^\ell \bmz_{\gamma_j}\in \spn(\bmz_{\calA_1\setminus \calT_1})\,.
    \end{align*}
    Applying the induction hypothesis and
    using additivity of types, we have:
    \begin{align*}
        \bz_{\bigoplus_{uv\in M}\gamma_{P_{u,v}} \,\oplus\, \bigoplus_{u\notin M} \gamma_u} - \prod_{uv\in M} \bmz_{\gamma_{P_{uv}}} \prod_{\substack{u\notin M\\\gamma_u\in \calC_1}} \bmz_{\gamma_u} \cdot \He_{\sum_{\substack{u\notin M\\\gamma_u\in\calG_1}} \type(\gamma_u)}(\bmz_{\cG_1}\,;\,\bm \Sigma)\in \spn(\bmz_{\calA_1\setminus \calT_1})\,.
    \end{align*}
    Since cactuses are not matched
    by homeomorphic matchings by
    definition, the
    product over cactuses $\bmz_{\gamma_u}$ is over all
    $u$ such that $\gamma_u\in \calC_1$, which is independent of $M$ and can be 
    factorized out. Therefore,
    in the rest of the proof
    we assume that $\gamma_i\in \calG_1$ for all $i\in [\ell]$.
    Using~\Cref{claim:non-treelike},
    we obtain
    \begin{equation}
         z_\tau + \sum_{\substack{M\in \calM(\ell)\\M\neq \varnothing}} \prod_{uv\in M} \underbrace{\sum_{P\in H(\gamma_u,\gamma_v)} \bmz_{\gamma_P}}_{\bm \Sigma[\gamma_u,\gamma_v]} \cdot  \He_{\sum_{u\notin M} \type(\gamma_u)} (\bmz_{\cG_1}\,;\, \bm \Sigma) - \prod_{j=1}^\ell \bmz_{\gamma_j}\in \spn(\bmz_{\calA_1\setminus \calT_1})\,.\label{eq:summary-wick}
    \end{equation}
    By the recursive formula of the
    Wick products (\Cref{lem:wick-recursion}),
    \begin{equation}
        \sum_{\substack{M\in \calM(\ell)\\M\neq \varnothing}} \prod_{uv\in M} \bm \Sigma[\gamma_u,\gamma_v] \cdot  \He_{\sum_{u\notin M} \type(\gamma_j)} (\bmz_{\cG_1}\,;\, \bm \Sigma) + \He_{\type(\tau)}(\bmz_{\cG_1}\,;\,\bm \Sigma)  = 
         \prod_{j=1}^\ell \bmz_{\gamma_j}\,.\label{eq:wick-application}
    \end{equation}
    Combining~\cref{eq:summary-wick,eq:wick-application} concludes the proof.
\end{proof}

Finally, if we reduce~\Cref{lem:product-gaussian} 
modulo the larger class of non-cactus diagrams
(which are the negligible diagrams {\em in expectation} under
the strong cactus property),
we deduce that the 
joint moments of the diagrams in
$\cG_1$ have an asymptotically Gaussian structure.

\begin{corollary}\label{lem:treelike-gaussian}
    For all $\gamma_1,\ldots,\gamma_\ell\in \calG_1$ and $\sigma_1,\ldots,\sigma_k\in\calC_1$,
     \[
        \prod_{i=1}^k \bmz_{\sigma_i} \left[\prod_{j=1}^\ell \bz_{\gamma_j}  - \sum_{M\in \PM(\ell)} \prod_{xy\in M} \bm \Sigma[\gamma_x,\gamma_y] \right]\in\spn(\bz_{\calA_1\setminus \calC_1})\,,
     \]
     where $\bm \Sigma$ is defined in~\cref{eq:covariance-finite}.
\end{corollary}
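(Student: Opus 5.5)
The plan is to start from \Cref{lem:product-gaussian} applied to the family $\gamma_1,\ldots,\gamma_\ell,\sigma_1,\ldots,\sigma_k$, and then reduce modulo $\spn(\bz_{\calA_1\setminus\calC_1})$ instead of modulo non-treelike diagrams. Since $\calC_1\subseteq\calT_1$, we have $\spn(\bz_{\calA_1\setminus\calT_1})\subseteq\spn(\bz_{\calA_1\setminus\calC_1})$, so the error term of \cref{eq:product-gaussian} is already negligible.

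First, cactuses admit no homeomorphic matchings with anything except the trivial root matching. By \Cref{lem:homeomorphic} and the remark following it, a cactus $\sigma_i$ matched to a Gaussian tree would have to match the core of the tree isomorphically to the one-vertex core of $\sigma_i$. That is impossible since $\gamma\in\calG_1$ has a root of degree $1$ in its core. I would check this directly against \Cref{def:homeomorphic}, and if needed absorb such matchings into the convention that cactuses are not matched, as already used in the proof of \Cref{lem:treelike-hermite}.

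It therefore suffices to consider matchings $M\in\calM(\ell)$ among the $\gamma_j$. The surviving terms are
\[
\bz_{\bigoplus_{uv\in M}\gamma_{P_{uv}}\oplus\bigoplus_{u\notin M}\gamma_u\oplus\bigoplus_i\sigma_i}.
\]
If some $u\notin M$, the grafted diagram contains an unmatched Gaussian tree hanging at the root. That tree has a bridge, namely its root edge, which lies on the tree part; hence the diagram is not a cactus and the term lies in $\spn(\bz_{\calA_1\setminus\calC_1})$. Only perfect matchings $M\in\PM(\ell)$ survive. For those, each $\gamma_{P_{uv}}$ is a cactus by \Cref{lem:homeomorphic}, and grafting cactuses at a common root gives a cactus. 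In the $z$-basis, the grafted polynomial differs from the entrywise product $\prod_{uv}\bz_{\gamma_{P_{uv}}}\prod_i\bz_{\sigma_i}$ only by terms in which vertices from different pieces are identified. Each such term is a contraction of a grafting of cactuses, and it is non-cactus except for the trivial identification, by \Cref{lem:prod-cactus-cactus} (as used in \Cref{prop:factorizing-deterministic}).

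This yields, modulo $\spn(\bz_{\calA_1\setminus\calC_1})$,
\[
\prod_i\bz_{\sigma_i}\prod_j\bz_{\gamma_j}\equiv\prod_i\bz_{\sigma_i}\sum_{M\in\PM(\ell)}\prod_{uv\in M}\bm\Sigma[\gamma_u,\gamma_v],
\]
using \cref{eq:covariance-finite}. Here products of $\bz$ vectors are entrywise, and the product over $M$ matches the product over $uv$ of the sums over $P_{uv}$.

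The main obstacle is the bookkeeping in passing between $\bz$ of a grafted diagram and the entrywise product of the $\bz$'s, since the products of $z$-basis vectors introduce extra contraction terms. I would handle it as follows: write each product in the $z$-basis via \Cref{claim:wtoz}-type expansion over cross-piece identifications, and argue with \Cref{lem:prod-cactus-cactus} that only the non-identifying contraction yields a cactus. If the lemma only covers two factors, I would induct on the number of factors.
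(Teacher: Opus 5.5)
Your proposal is correct and is essentially the paper's argument. You drop the non-treelike error of \Cref{lem:product-gaussian}, discard non-perfect $M$ because an unmatched Gaussian tree leaves a bridge at the root, and factor the grafted cactuses via a multi-factor version of \Cref{lem:prod-cactus-cactus}. The only difference is that the paper applies \Cref{lem:product-gaussian} to the $\gamma_j$ alone and multiplies by $\prod_i \bz_{\sigma_i}$ afterwards, using \Cref{lem:prod-cactus-other} to keep non-cactus terms non-cactus; this avoids the cactus--cactus root-matching convention you had to invoke.
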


\begin{proof}
    Every non-treelike term in~\cref{eq:product-gaussian}
    is a fortiori not a cactus. Also, the only
    cactuses in the subtracted term occur
    when $M$ is a perfect matching. In other words,
    \[
        \prod_{i=1}^k \bmz_{\sigma_i}\left[\prod_{j=1}^\ell \bmz_{\gamma_j} - \sum_{M\in \PM(\ell)} \sum_{\substack{P_{uv}\in H(\gamma_u, \gamma_v)\\\forall uv\in M}} \bmz_{\bigoplus_{uv\in M} \gamma_{P_{uv}}}\right]\in \spn(\bmz_{\calA_1\setminus \calC_1})\,.
    \]
    Therefore, by~\Cref{claim:only-trees}, we deduce
    \[
        \bmz_{\bigoplus_{uv\in M} \gamma_{P_{uv}}} - \prod_{uv\in M} \bmz_{\gamma_{P_{uv}}}\in \spn(\bmz_{\calA_1\setminus \calC_1})\,,
    \]
    and the desired statement follows.
\end{proof}

\subsubsection{Proof of~\Cref{thm:convergence-distribution}}
\label{sec:proof-convergence}

\begin{claim}
    \label{claim:existence-joint-moments}
    Suppose that the traffic distribution
    of $\bA$ exists. Then, for any
    $\alpha_1, \ldots, \alpha_k\in\calA_1$,
    the sequence $\E \samp(\bz_{\alpha_1} \cdots \bz_{\alpha_k})$
    converges as $n\to\infty$.
\end{claim}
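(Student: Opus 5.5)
The plan is to reduce the joint moment to a finite linear combination of normalized scalar graph polynomials, and then invoke existence of the traffic distribution. First, the entrywise product of vector $z$-polynomials lies in the span of vector $z$-polynomials. Indeed, writing each $\bz_{\alpha_j}$ in the $w$-basis via Möbius inversion (\Cref{claim:mobius}, which applies verbatim to rooted diagrams, since partitions of $V(\alpha_j)$ inherit the root), we get
\[
\bz_{\alpha_1}\cdots\bz_{\alpha_k}=\sum_{\beta_j\preceq\alpha_j} \prod_j c'_{\beta_j,\alpha_j}\, \bw_{\beta_1}\cdots \bw_{\beta_k} = \sum_{\beta_j\preceq\alpha_j} \prod_j c'_{\beta_j,\alpha_j}\, \bw_{\beta_1\oplus\cdots\oplus\beta_k},
\]
using that the Hadamard product of $w$-vectors is the $w$-vector of the grafting at the root. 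This is a finite sum, and its coefficients do not depend on $n$.

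Next, each term is averaged over the coordinates. For any $\gamma\in\calA_1$,
\[
\E\samp(\bw_\gamma)=\frac1n\sum_{i=1}^n\E\bw_\gamma(\bA)[i]=\frac1n\E w_{\gamma_0}(\bA),
\]
where $\gamma_0\in\calA$ is $\gamma$ with its root forgotten. The diagram $\gamma_0$ is connected, because grafting connected diagrams at a common root keeps them connected. By the assumed existence of the traffic distribution, each of these quantities converges to $\calD(\gamma_0)$. Summing finitely many convergent sequences with $n$-independent coefficients then gives convergence of $\E\samp(\bz_{\alpha_1}\cdots\bz_{\alpha_k})$.

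There is no real obstacle here. The only point to check is that the Möbius coefficients are independent of $n$ and that the vector version of \Cref{claim:mobius} holds. Both follow from \Cref{claim:wtoz} for vector diagrams, because the poset of partitions does not depend on $n$. Alternatively, one can expand directly in the $z$-basis: the product $\bz_{\alpha_1}\cdots\bz_{\alpha_k}$ equals a sum over partitions merging vertices across copies, giving $\sum_P \bz_{(\alpha_1\oplus\cdots\oplus\alpha_k)_P}$. Converting each resulting $z$-term to the $w$-basis by \Cref{claim:mobius} then reaches the same conclusion.
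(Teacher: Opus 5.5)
Your proposal is correct and is essentially the paper's argument. Both rewrite the averaged joint moment as a finite, $n$-independent linear combination of normalized scalar graph polynomials of connected diagrams coming from the root-grafted diagram $\alpha_1\oplus\cdots\oplus\alpha_k$, and then invoke existence of the traffic distribution. The paper expands directly in the $z$-basis over quotients of this grafted diagram (your alternative route) and leaves the M\"obius conversion to the $w$-basis implicit; your main route does that conversion first on each factor and then uses that Hadamard products of $w$-vectors correspond to grafting.
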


\begin{proof}
    This is straightforward, as
    \[
        \E \samp(\bz_{\alpha_1} \cdots \bz_{\alpha_k}) = \frac 1n \EE \sum_{i=1}^n \bz_{\alpha_1}[i] \cdots \bz_{\alpha_k}[i],
    \]
    and the inner polynomial is a scalar polynomial of $\bA$ that can be expanded in the $z$-basis of scalar diagrams as a linear combination of various quotients of the scalar diagram formed by forgetting the identity of the root in $\alpha_1 \oplus \cdots \oplus \alpha_k$.
\end{proof}

\noindent \Cref{claim:existence-joint-moments}
implies in particular 
that the sequence $\samp(\bz_{\calA_1})$ is
tight. In the rest of the proof, we 
show that the limit in distribution
actually exists and characterize it.
The following lemma is a direct consequence
of the fundamental theorem of graph
polynomials.

\begin{lemma}
    \label{lem:2ec-bounded}
    If $\|\bA\|\le O(1)$, then for each $\alpha\in\calE_1$, there
    exists $C_\alpha>0$ such that
    $|\samp(\bz_\alpha)|\le C_\alpha$.
\end{lemma}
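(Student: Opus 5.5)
The plan is to reduce the bound on $\bz_\alpha$ to bounds on $\bw$-basis vectors of 2-edge-connected diagrams, which are controlled by \Cref{thm:fundamental-theorem}. Concretely, $\samp(\bz_\alpha)=\bz_\alpha[i]$ for a uniformly random $i$, so it suffices to show $\|\bz_\alpha(\bA)\|_\infty\le C_\alpha$ deterministically, whenever $\|\bA\|\le K$ for a constant $K$.

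First, apply M\"obius inversion (\Cref{claim:mobius}) in its vector version, which holds by the same argument since partitions of $V(\alpha)$ act on rooted diagrams with the root inherited by its block. This gives
\[
\bz_\alpha(\bA)=\sum_{\beta\preceq\alpha} c'_{\beta,\alpha}\,\bw_\beta(\bA),
\]
with integer coefficients independent of $n$ and finitely many terms. Each $\beta\preceq\alpha$ is a vertex-quotient of $\alpha$, so by \Cref{lem:contract-2ec} (applied to the underlying unrooted graph) every $\beta$ is again 2-edge-connected, i.e.\ $\beta\in\calE_1$.

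Second, apply the vector case of \Cref{thm:fundamental-theorem} to each term, with every edge labeled by $\bA$. This yields $\|\bw_\beta(\bA)\|_\infty\le\|\bA\|^{|E(\beta)|}=\|\bA\|^{|E(\alpha)|}\le K^{|E(\alpha)|}$. The triangle inequality then gives
\[
\|\bz_\alpha(\bA)\|_\infty\le \Big(\sum_{\beta\preceq\alpha}|c'_{\beta,\alpha}|\Big)K^{|E(\alpha)|}=:C_\alpha,
\]
which depends only on $\alpha$ and $K$.

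The only point needing care is that M\"obius inversion and \Cref{lem:contract-2ec} transfer to rooted diagrams. Both are purely combinatorial facts about quotients of the vertex set, and the root plays no role in 2-edge-connectivity, so no real obstacle is expected. In the application, where only tightness \cref{eq:tightnessNorm} is assumed, one works on the event $\|\bA\|\le K$, whose probability is at least $1-\varepsilon$.
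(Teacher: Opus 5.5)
Your proposal is correct and follows essentially the same route as the paper: M\"obius inversion to write $\bz_\alpha$ as a finite combination of $\bw_\beta$ over quotients $\beta\preceq\alpha$, which remain in $\calE_1$ by \Cref{lem:contract-2ec}. You then bound each term with the vector case of \Cref{thm:fundamental-theorem} and conclude with the triangle inequality. Your remark about working on the event $\|\bA\|\le K$ under mere tightness is not needed for the lemma itself; the paper handles that separately via \Cref{lem:truncation}.
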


\begin{proof}
    By~\Cref{claim:mobius} and~\Cref{lem:contract-2ec}, we can
    expand for some coefficients $c_\beta = c_\beta(\alpha)\in\R$,
    \[
        \bz_\alpha = \sum_{\beta\in\calE_1} c_\beta \bw_\beta\,.
    \]
    By~\Cref{thm:fundamental-theorem},
    it holds for every $\beta\in\calE_1$
    that $\|\bw_\beta\|_\infty\le \|\bA\|^{|E(\beta)|}$, which is at most
    $O_\alpha(1)$ by assumption.
    The lemma follows by the triangle inequality.
\end{proof}

\begin{lemma}
    \label{lem:cactus-converge}
    Suppose that the traffic distribution
    of $\bA$ exists and that~\cref{eq:tightnessNorm} holds. Then,
    $\samp(\bz_{\calC_1})$
    converges in distribution to some
    stochastic process $Z^\infty_{\calC_1}$.
\end{lemma}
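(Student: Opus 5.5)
Since each $\bz_\sigma$ for $\sigma\in\calC_1$ is a finite combination of $\bw_\beta$ with $\beta\in\calE_1$, the plan is a method-of-moments argument made rigorous by uniform boundedness. By the definition of convergence in distribution for processes indexed by countable sets (\Cref{sec:convergence-process}), it suffices to prove convergence in distribution of every finite-dimensional projection $\samp(\bz_{\sigma_1},\dots,\bz_{\sigma_k})$ with $\sigma_1,\dots,\sigma_k\in\calC_1$, and to check that these limits are consistent. Consistency is automatic: if $(\sigma_1,\dots,\sigma_k)$ is a sub-collection of a larger finite family, then the projection of the limit for the larger family is the limit for the sub-collection.

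Fix such $\sigma_1,\dots,\sigma_k$. Every cactus is 2-edge-connected, so $\sigma_j\in\calE_1$.
\begin{enumerate}
\item \textbf{Moments converge.} By \Cref{claim:existence-joint-moments}, every joint moment $\E\samp(\bz_{\sigma_1}^{a_1}\cdots\bz_{\sigma_k}^{a_k})$ converges as $n\to\infty$. Here $\bz_{\sigma}^{a}$ denotes the entrywise power, which is itself a product of vector diagrams.
\item \textbf{Truncation to bounded norm.} Tightness~\cref{eq:tightnessNorm} only gives $\|\bA\|\le K$ with probability at least $1-\varepsilon$. On the event $E_K=\{\|\bA\|\le K\}$, \Cref{lem:2ec-bounded} (rescaling by $K$) gives
\[ |\samp(\bz_{\sigma_j})|\le C_{\sigma_j}(K) \quad\text{deterministically}. \]
So on $E_K$ the vector $\samp(\bz_{\sigma_1},\dots,\bz_{\sigma_k})$ lives in a fixed compact box.
\item \textbf{Identify the limit.} The sequence is tight, so take any subsequential limit $\mu$. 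I would show $\mu$ is determined by the limiting moments $m_{a}$ from step 1.
\begin{itemize}
\item \emph{$\mu$ is compactly supported.} For bounded continuous $\varphi$ vanishing on the box $B_K$, we get $\limsup_n |\E\varphi(\samp(\cdot))|\le\|\varphi\|_\infty\,\varepsilon$. Letting $\varepsilon\to0$ with $K$ growing gives $\mu(\bigcup_K B_K)=1$. That alone does not bound the support, so I would instead use the moments. Uniform integrability of $\samp(\bz_\sigma)^{2a}$ is given by the convergence of the $4a$-th moment. Hence $\int x^{a}\,d\mu=m_a$ for all multi-indices $a$.
\item \emph{Growth bound on moments.} The limiting moments satisfy
\[ |m_a|\le \limsup_n \E\frac1n w_{\alpha}(\bA), \]
where $\alpha$ is the scalar diagram obtained by grafting the cactuses and forgetting the root. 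After Möbius expansion, these are controlled by traffic-distribution values of 2-edge-connected diagrams with $O(|a|)$ edges.
\item \emph{Determinacy.} It remains to show $m_a\le C^{|a|}$ for some $C$, which makes $\mu$ compactly supported and therefore moment-determinate. The bounded-norm case gives exactly this with $C$ depending on $K$: on $E_K$ the moments are at most $\prod_j C_{\sigma_j}(K)^{a_j}$. The complement of $E_K$ has to be handled separately.
\end{itemize}
\end{enumerate}

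The main obstacle is this last point: the fundamental theorem controls $\E[\cdot\,\mathbf 1_{E_K}]$ but not the contribution of $E_K^c$. I would first try to avoid moments on $E_K^c$ entirely. Condition on $E_K$, use that the conditional sample laws live in a compact box, and note that the truncated moments $\E[\samp(\cdot)^a\mathbf 1_{E_K}]$ differ from $m_a$ by a quantity that is small uniformly only if traffic moments are uniformly integrable. If this fails, I would instead define $Z^\infty_{\calC_1}$ directly as the subsequential limit and prove uniqueness through the compact-box characterization. Any two subsequential limits agree on every $\varphi$ that is a polynomial on $B_K$, up to an $O(\varepsilon)$ error. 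Letting $\varepsilon\to0$ and applying Stone–Weierstrass on each box then identifies all subsequential limits, which gives convergence.
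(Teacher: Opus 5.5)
\textbf{There is a genuine gap, and it cannot be closed: the statement is false under tightness alone.}

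Your Steps 1--2, together with the bounded-norm part of Step 3, reproduce the paper's argument for the case $\sup_n\|\bA^{(n)}\|\le K$ almost surely. That argument gets convergence of moments from \Cref{claim:existence-joint-moments} and uniform boundedness from \Cref{lem:2ec-bounded}, then applies the method of moments. Your uniform-integrability remark is also correct: every subsequential limit has the moments $m_a$.

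The gap is the passage from tightness to that bounded case. What is missing there is moment determinacy of the limit, and your fallback does not supply it. You claim that two subsequential limits $\mu_1,\mu_2$ agree on polynomials $P$ on $B_K$ up to $O(\varepsilon)$, but the implied constant depends on $P$. The error is of order $\sup_{B_K}|P|\,\varepsilon+\sum_i\int_{B_K^c}|P|\,d\mu_i$. Since $\varepsilon\to0$ forces $K\to\infty$, the Stone--Weierstrass approximants change with $K$, and nothing controls their size off the box.

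Here is a counterexample. Let $\mu$ be the log-normal law and $\nu\neq\mu$ a law with the same moments (multiply the density by $1+\frac12\sin(2\pi\log x)$). Let $\bA^{(n)}=c_n\bI$ with $c_n\sim\mu$ for even $n$ and $c_n\sim\nu$ for odd $n$.
\begin{itemize}
\item For connected $\alpha$, $w_\alpha(c_n\bI)=n\,c_n^{|E(\alpha)|}$, so $\frac1n\E w_\alpha(\bA)=\int x^{|E(\alpha)|}\,d\mu$ for every $n$. The traffic distribution therefore exists.
\item $\|\bA^{(n)}\|=c_n$ takes only two laws, so it is tight.
\item For the rooted one-vertex cactus $\sigma$ with a single self-loop, $\samp(\bz_\sigma(\bA))=c_n$, which does not converge in distribution.
\end{itemize}
Since $z_\alpha(c_n\bI)=0$ whenever $|V(\alpha)|\ge2$, this example even has the strong cactus property. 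So it also contradicts \Cref{thm:convergence-distribution} under the tightness assumption.

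The paper's proof has the same hole. It reduces to the bounded case via \Cref{lem:truncation}, but that lemma's first hypothesis is convergence conditionally on $\{\|\bA\|\le K\}$. Verifying this would require the conditioned sequence to have a traffic distribution, which does not follow from the unconditional one. In the example above, the conditional laws of $c_n$ alternate between the normalized truncations of $\mu$ and $\nu$.

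What can actually be proved is one of the following:
\begin{itemize}
\item The almost surely bounded case, which your argument already handles.
\item The tight case, if you assume convergence of the traffic moments conditionally on $\{\|\bA\|\le K\}$ for every $K$.
\item The general case under a growth bound $\bigl|\lim_n\frac1n\E w_\alpha(\bA)\bigr|\le C^{|E(\alpha)|}$ on 2-edge-connected $\alpha$. This yields Carleman's condition: write $\bz_\sigma=\sum_{\beta\in\calE_1}c_\beta\bw_\beta$, use $\frac1n\E\|\bw_\beta\|_{2p}^{2p}=\frac1n\E w_{\beta^{\oplus 2p}}(\bA)$, and apply Minkowski.
\end{itemize}
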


\begin{proof}    
    First, assume that $\sup_{n\ge 1} \|\bm A^{(n)}\|\le K$ holds almost surely, for some universal constant $K>0$. All the moments of $\samp(\bz_{\calC_1})$ converge
    by~\Cref{claim:existence-joint-moments}.
    Since cactuses are 2-edge-connected, by~\Cref{lem:2ec-bounded}, all random
    variables $\samp(\bz_\alpha)$
    for $\alpha\in\calC_1$ are uniformly
    bounded in $n$. Hence, the moments satisfy the growth condition~\cref{eq:subexp-growth-condition}, so that
    $\samp(\bz_{\calC_1})$ converges
    in distribution by~\Cref{cor:bounded-limit-existence}. 
    Finally, if we assume~\cref{eq:tightnessNorm} rather
    than uniform boundedness, the result
    can be deduced from the latter case
    using~\Cref{lem:truncation}.
\end{proof}

\begin{proof}[Proof of~\Cref{thm:convergence-distribution}]
In the rest of the proof, we assume that
the assumptions of~\Cref{thm:convergence-distribution} are satisfied. 
We start by analyzing convergence of the subtracted term from~\Cref{lem:treelike-gaussian}. By convergence in distribution of the cactuses (\Cref{lem:cactus-converge}) and the continuous mapping theorem, we have for any 
$\gamma_1,\ldots,\gamma_\ell\in\calG_1$ and $\sigma_1,\ldots,\sigma_k\in \calC_1$,
\begin{align}
    \samp\left(\prod_{i=1}^k \bmz_{\sigma_i} \sum_{M\in \PM(\ell)} \prod_{xy\in M} \bm \Sigma[\gamma_x,\gamma_y]\right) \tod \prod_{i=1}^k Z^\infty_{\sigma_i} \sum_{M\in \PM(k)} \prod_{xy\in M} \bm \Sigma^\infty[\gamma_x,\gamma_y]\,,\label{eq:gaussian-cmt}
\end{align}
where we defined, for any $\gamma_1,\gamma_2\in\calG_1$, the ``limiting'' covariance matrix
\begin{align}
    \bm \Sigma^\infty[\gamma_1,\gamma_2] \defeq  \sum_{P\in H(\gamma_1, \gamma_2)} Z^\infty_{\gamma_P}\,.\label{eq:covariance}
\end{align}
Since all joint moments converge
by~\Cref{claim:existence-joint-moments},
the sequence
of random
variables on the left-hand side of~\cref{eq:gaussian-cmt} is uniformly
integrable. So we also get convergence 
of the mean,
\[
    \E \samp\left(\prod_{i=1}^k \bmz_{\sigma_i} \sum_{M\in \PM(\ell)} \prod_{xy\in M} \bm \Sigma[\gamma_x,\gamma_y]\right) \underset{n\to\infty}\longrightarrow \E\left[\prod_{i=1}^k Z^\infty_{\sigma_i} \sum_{M\in \PM(k)} \prod_{xy\in M} \bm \Sigma^\infty[\gamma_x,\gamma_y]\right]\,.
\]   
Combining with~\Cref{lem:treelike-gaussian} and the strong cactus property,
\begin{align}
    \E \samp\left(\prod_{i=1}^k \bmz_{\sigma_i} \prod_{j=1}^\ell \bz_{\gamma_j}\right) \underset{n\to\infty}\longrightarrow \E\left[ \prod_{i=1}^k Z^\infty_{\sigma_i} \sum_{M\in \PM(\ell)} \prod_{xy\in M} \bm \Sigma^\infty[\gamma_x,\gamma_y]\right]\,.\label{eq:covariance-converge}
\end{align}
The right-hand side of~\cref{{eq:covariance-converge}}
coincides with the moments of $Z^\infty_{\calG_1\cup \calC_1}$. Recall that the law of
$Z^\infty_{\calG_1}$ satisfies that after sampling
$Z^\infty_{\calC_1}$ from its marginal
(which is bounded almost surely by~\Cref{lem:2ec-bounded}), then $Z^\infty_{\calG_1}$
conditioned on $Z^\infty_{\calC_1}$ is a Gaussian
process with covariance kernel given by~\cref{eq:covariance}. This
object satisfies the moment growth
condition~\cref{eq:subexp-growth-condition}. So~\Cref{cor:bounded-limit-existence}
applies and we obtain convergence in
distribution of $\samp(\bz_{\calG_1\cup \calC_1})$
to $Z^\infty_{\calG_1\cup \calC_1}$.

By~\Cref{lem:non-treelike}, the non-treelike diagrams converge in $L^2$ to 0,
so by Slutsky's lemma, we obtain joint
convergence in distribution, except for the remaining treelike, non-Gaussian trees. By~\Cref{lem:treelike-hermite}, these are continuous images
of cactuses and non-treelike diagrams, so
by the continuous mapping theorem, all
diagrams
converge jointly in distribution to $Z^\infty_{\calA_1}$.
\end{proof}

\subsection{The treelike AMP algorithm}

Now we turn to studying the dynamics of GFOM operations.

\begin{definition}[Asymptotic state]\label{def:asymptotic-state}
    Let $(\bm x_i)_{i\in \calI}$ be a
    family of random vectors,
    $\bm x_i\in \R^{n}$.
    We say that a stochastic process $(X_i)_{i\in \calI}$
    is the asymptotic state of $(\bm x_i)_{i\in \calI}$ if, for any $k\ge 1$, $i_1, \ldots, i_k\in \calI$, and any bounded continuous
    or polynomial function $\varphi:\R^k\to \R$,
    \begin{equation}
        \lim_{n\to\infty} \frac 1n \sum_{j=1}^n \E\varphi(\bm x_{i_1}[j], \ldots, \bm x_{i_k}[j]) = \E\varphi(X_{i_1}, \ldots, X_{i_k})\,.\label{eq:defAsymptoticState}
    \end{equation}
\end{definition}

\noindent \Cref{def:asymptotic-state} requires in particular for $(\bm x_i)_{i\in \mathcal I}$ to converge in distribution to $(X_i)_{i\in \calI}$. 
As with convergence in distribution in general, this suffers from the caveat that the law of the limit in distribution of $(X_i)_{i\in \calI}$ is unique, but the probability space on which the limit $(X_i)_{i\in \calI}$ is realized is not.
Thus when we speak of ``the asymptotic state'' we refer to a specific law, not a specific collection of random variables.
Nonetheless, the sampling procedure in \Cref{thm:convergence-distribution} suggests a natural
way to sample an asymptotic state of the
iterates of a pGFOM, since, provided we know how to sample from $Z_{\calC_1}^{\infty}$ (which we must address on a case-by-case basis), the other $Z_{\alpha}^{\infty}$ are conditionally Gaussian or deterministic functions thereof.

Translating the limiting variables $Z_{\alpha}^{\infty}$ from \Cref{thm:convergence-distribution} to a construction of an asymptotic state, we find:
\begin{lemma}\label{claim:cvImpliesState}
    Assume that $\bA = \bA^{(n)}$ satisfies the assumptions of~\Cref{thm:convergence-distribution}.
    Let 
    \begin{equation}
        \bm x = \sum_{\alpha\in \calA_1} c_\alpha \bmz_\alpha(\bmA)\label{eq:assumeState}
    \end{equation}
    for
    some finitely supported coefficients
    $(c_\alpha)_{\alpha\in \calA_1}$. Then,
    \begin{equation}
        X \defeq \sum_{\alpha\in \calA_1} c_\alpha Z^\infty_\alpha\label{eq:asymptState}
    \end{equation}
    is the asymptotic state of $\bm x$.
    Moreover, if $\bm x_t$ is of the form~\cref{eq:assumeState} for any $t\ge 1$ and $X_t$
    is correspondingly defined as in~\cref{eq:asymptState}, then $(X_t)_{t\ge 1}$ is the asymptotic
    state of $(\bm x_t)_{t\ge 1}$. 
\end{lemma}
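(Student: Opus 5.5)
The plan is to deduce the lemma from \Cref{thm:convergence-distribution} via the continuous mapping theorem, plus a uniform-integrability argument for the polynomial test functions. Fix $i_1,\dots,i_k$. Each $\bm x_{i_j}$ is a finite linear combination of $\bz_\alpha(\bA)$. So there is a finite set $F\subseteq\calA_1$ and a linear map $L:\R^F\to\R^k$, independent of $n$, with
\[
(\bm x_{i_1}[j],\dots,\bm x_{i_k}[j]) = L\big((\bz_\alpha(\bA)[j])_{\alpha\in F}\big) \quad\text{for all } j\in[n].
\]
Hence, for $\varphi:\R^k\to\R$, we can rewrite the left-hand side of~\cref{eq:defAsymptoticState} as $\E\,\varphi\circ L(\samp(\bz_F(\bA)))$. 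Here the expectation is over both $\bA$ and the uniform index. The limit object is $\E\,\varphi\circ L(Z^\infty_F)$, with $X_{i_j}=\sum_\alpha c_{i_j,\alpha}Z^\infty_\alpha$ read off from the same $L$.

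\textbf{Bounded continuous $\varphi$.} By \Cref{thm:convergence-distribution}, $\samp(\bz_{\calA_1}(\bA))\tod Z^\infty_{\calA_1}$. Convergence in distribution in the countable product space is equivalent to convergence of every finite-dimensional projection (\Cref{sec:convergence-process}), so $\samp(\bz_F(\bA))\tod Z^\infty_F$. The function $\varphi\circ L$ is bounded and continuous, so~\cref{eq:defAsymptoticState} follows directly.

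\textbf{Polynomial $\varphi$.} Convergence in distribution plus uniform integrability gives convergence of expectations. Here $\varphi\circ L$ is a polynomial in the coordinates $\samp(\bz_\alpha)$, $\alpha\in F$. Its square is again such a polynomial, namely a linear combination of $\samp(\bz_{\alpha_1}\cdots\bz_{\alpha_m})$. By \Cref{claim:existence-joint-moments}, each of these has a convergent, hence bounded, expectation. This requires the traffic distribution to exist, which follows from the strong cactus property together with the existence of the diagonal distribution (via \Cref{lem:diagonal-w-z} and M\"obius inversion). The second moments of $\varphi\circ L(\samp(\bz_F))$ are therefore uniformly bounded, which gives uniform integrability. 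So $\E\,\varphi\circ L(\samp(\bz_F))\to\E\,\varphi\circ L(Z^\infty_F)$; this limit is finite by Fatou's lemma.

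\textbf{The family $(\bm x_t)_{t\ge1}$.} Any finite subfamily is covered by the argument above with the appropriate finite $F$, so the second statement follows from the first.

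\textbf{Main obstacle.} The only delicate step is the uniform integrability in the polynomial case. It is handled by applying \Cref{claim:existence-joint-moments} to higher joint moments rather than only to the moments of the limit, since convergence in distribution alone does not control polynomial test functions.
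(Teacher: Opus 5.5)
Your proof is correct and takes essentially the paper's route. Bounded continuous test functions follow from \Cref{thm:convergence-distribution} and the continuous mapping theorem; for polynomial test functions, the tails are controlled by the uniformly bounded second moments that come from \Cref{claim:existence-joint-moments} and the existence of the traffic distribution. The paper does the tail control by an explicit truncation with Cauchy--Schwarz and Markov, which is a hand-written version of your uniform-integrability step.
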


\noindent We emphasize that the index set
$t\ge 1$ is independent of $n$, and so our
results hold for all fixed iterates $t$
independent of $n$, in the limit $n\to\infty$.

\begin{proof}
    The statement for bounded continuous test
    functions $\varphi$ follows from~\Cref{thm:convergence-distribution} and
    the continuous mapping theorem.
    For polynomial $\varphi$, we proceed by a truncation argument.
    Let $S_n \defeq \samp(\bm x_1, \ldots, \bm x_t)$ and $S \defeq (X_1, \ldots, X_t)$. Fix a cutoff $K>0$ and consider
    any bounded continuous function $\varphi_K$ such
    that $|\varphi_K|\le |\varphi|$, $\varphi_K(s) = \varphi(s)$ for all
    $\|s\|_2\le K$ and $\varphi_K(s)=0$ for all
    $\|s\|_2>2K$ (standard approximations show that such a function exists). First, $\left|\E \varphi_K(S_n)-\E \varphi_K(S)\right|$ converges to $0$
    as $n\to\infty$
    by the bounded continuous case. Next,
    \begin{align*}
        \left|\E \varphi(S_n) - \E \varphi_K(S_n)\right|&\le \E \left[|\varphi(S_n)| \mathbf 1_{\|S_n\|_2>K}\right]\tag*{(Definition of the truncated function)}\\
        &\le (\E \varphi(S_n)^2)^{\frac 12} \Pr(\|S_n\|_2>K)^{\frac 12}\tag*{(Cauchy-Schwarz inequality)}\\
        &\le (\E \varphi(S_n)^2)^{\frac 12} \frac {(\E \|S_n\|_2^2)^{\frac 12}} {K}\tag*{(Markov inequality)}
    \end{align*}
    Note that these quantities are respectively equal to
    \[
        \E \varphi(S_n)^2 = \frac 1n \sum_{i=1}^n \varphi(\bm x_1[i], \ldots, \bm x_t[i])^2\quad\text{and} \quad\E \|S_n\|_2^2 = \sum_{s=1}^t \frac 1n  \sum_{i=1}^n \bm x_s[i]^2\,,
    \]
    which both converge as $n\to\infty$ by existence
    of the traffic distribution, and in particular are bounded uniformly in $n$. Hence, there exists $C>0$ independent of $n$ and $K$ such that
    \[\limsup_{n\to\infty} \left|\E \varphi(S_n) - \E \varphi_K(S_n)\right|\le \frac C K\,,\] and the
    same holds for $\E \varphi(S)-\E \varphi_K(S)$ 
    by the same argument, using the fact that all moments exist in the space generated by $Z_{\calA_1}^\infty$. We obtain $\limsup_{n\to\infty} \left|\E \varphi(S_n) - \E \varphi_K(S_n)\right|\le 2C/K$, and the claim
    follows from taking the limit $K\to\infty$.
\end{proof}

By~\Cref{prop:gfom-polynomial}, the iterates
of a pGFOM are of the form~\cref{eq:assumeState}, so they have an asymptotic state. By definition, these asymptotic states determine the
limiting distribution of any (bounded continuous or polynomial) observable.
Motivated by this,
we introduce a family of approximate message passing algorithms whose
asymptotic states are conditionally Gaussian.

\begin{theorem}[Treelike AMP]
\label{thm:full-onsager}
Assume that $\bA=\bA^{(n)}$ satisfies the assumptions of \Cref{thm:convergence-distribution}. 
Let $f_t : \R \to \R$ be polynomial functions.\footnote{For ease of exposition $f_t$ is assumed to be ``memoryless'', meaning that it only takes the most recent $\bx_{t}$ as input.} Define:
\begin{equation}
    \begin{aligned}
    \bmx_0 &= \bm{1}\,,\qquad 
    \bmx_t = \bA \bmf_{t-1}
      - \sum_{s=0}^{t-1} \bmb_{s,t} \cdot \bmf_s\,,\\
    \bmb_{s,t}[i]
    &\defeq \sum_{\substack{i_s,\ldots,i_{t-1}\in[n] \textnormal{ distinct}\\i_s=i}} \left(\prod_{r=s+1}^{t-1} \bm A[i_{r-1}, i_{r}] \bm f'_{r}[i_{r}] \right)\bA[i_{t-1},i_s]\,,\\
    \bmf_t &\defeq f_t(\bmx_t)\,, \qquad
    \bmf'_t \defeq f'_t(\bmx_t)\,,\qquad \bm f_0 = \bm 1\,.\label{eq:tree-amp}
    \end{aligned}
\end{equation}
Then $\bx_t \in \linspan(\bmz_{\cG_1 \cup (\cA_1 \setminus \cT_1)}(\bA))$. Therefore, the asymptotic state of $(\bm x_t)_{t\ge 1}$ defined
in~\cref{eq:asymptState}
is a centered Gaussian process conditionally on $Z^\infty_{\calC_1}$.
\end{theorem}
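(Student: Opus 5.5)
We prove by induction on $t$ the exact (finite-$n$) statement $\bx_t \in \linspan(\bmz_{\cG_1 \cup (\cA_1\setminus\cT_1)}(\bA))$; the conditional Gaussianity then follows from \Cref{claim:cvImpliesState} and \Cref{thm:convergence-distribution}. In the limit the non-treelike terms vanish (item 1), and the $\cG_1$ terms are jointly centered Gaussian given $Z^\infty_{\calC_1}$ (item 2). The case $t=1$ is immediate: $\bx_1 = \bA\bm 1 - \bmb_{0,1}$ with $\bmb_{0,1}[i]=\bA[i,i]$. Here $\bA\bm 1 = \bmz_{\text{edge}} + \bmz_{\text{loop}}$, the loop is a cactus, and it is cancelled by the Onsager term, leaving the single edge, which lies in $\cG_1$.

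For the inductive step, write $\bmf_{t-1} = f_{t-1}(\bx_{t-1})$. Since $f_{t-1}$ is a polynomial and, by induction, $\bx_{t-1}$ is a combination of $\cG_1$ and non-treelike diagrams, we can expand powers of $\bx_{t-1}$ as graftings at the root. By \Cref{lem:treelike-hermite} and \Cref{lem:product-gaussian}, $\bmf_{t-1}$ is then a combination of treelike diagrams $\tau = \gamma_1\oplus\cdots\oplus\gamma_\ell\oplus(\text{cactuses})$, plus non-treelike diagrams. Any product involving a non-treelike factor stays non-treelike.

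The core computation is the action of $\bA$ on a $\bz$-diagram. We have $\bA\bmz_\tau = \bmz_{\tau^+} + \sum_{v} \bmz_{\tau^+_{v}}$, where $\tau^+$ attaches a new root joined by an edge to the old root, and $\tau^+_v$ identifies the new root with a vertex $v\in V(\tau)$. The term $\tau^+$ is in $\cG_1$ (or non-treelike if $\tau$ was). Identifying the new root with $v$ creates a cycle through the path from the old root to $v$ together with the new edge. The result is treelike only when the closed path becomes a hanging cactus piece at the new root. This happens exactly when $v$ lies on a chain of degree-2 tree vertices descending from one branch $\gamma_j$ of the old root, so that the cycle new-root $\to$ old root $\to\cdots\to v$ is a cactus cycle hanging at the new root (which is $v$). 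All other identifications, for example those involving vertices of degree at least 3, produce non-treelike diagrams. The surviving treelike terms are the ones with cactuses hanging at the root, and these are what must be cancelled.

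The main obstacle is matching these surviving root-cactus terms exactly with $\sum_s \bmb_{s,t}\cdot\bmf_s$. The plan is to note that the distinct-index sum $\bmb_{s,t}$ is itself, diagrammatically, a closed cycle $i_s\to i_{t-1}\to\cdots\to i_{s+1}\to i_s$ whose intermediate vertices carry the vector diagrams of $\bmf'_r$. Substituting the expansions of $\bmf'_r$ and multiplying by $\bmf_s$ at the root, the chain rule $\frac{d}{dx}f(x)$ should correspond combinatorially to selecting one $\gamma$-branch of $\bx_r$ in $f_r(\bx_r)$. Iterating this selection reproduces exactly the "one branch chain" structure that closes into a cycle at the new root. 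Carrying this out requires unfolding $\bx_{t-1}=\bA\bmf_{t-2}-\cdots$ recursively, so that each branch in $\cG_1$ is seen as an edge to a vertex carrying $\bmf_{t-2}$. Consequently, a descending degree-2 path of length $t-1-s$ ends at a vertex carrying $\bmf_s$. I would verify by induction that $\bA\bmf_{t-1}$ restricted to root-cycle terms equals $\sum_s \bmb_{s,t}\bmf_s$ modulo non-treelike diagrams. Distinctness of indices in $\bmb_{s,t}$ corresponds to using the $z$-basis, and collisions in products go to non-treelike diagrams by \Cref{lem:product-gaussian}. The expected difficulty is bookkeeping the Wick/Hermite structure: derivatives of Wick products versus plain polynomial derivatives. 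Here I would use that $f'_r$ differentiates the polynomial in $\bx_r$, and that by \Cref{lem:treelike-hermite} monomials in $\bx_r$ differ from Wick products only by terms involving $\bm\Sigma$, which are cactus-rooted and whose counts are consistent with the chain rule.
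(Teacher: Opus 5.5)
Your picture of the mechanism is right, but the proposal stops exactly where the content of the theorem lies, and the induction as you set it up cannot supply that step.

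The identity ``root-cycle terms of $\bA\bmf_{t-1}$ $\eqinf\sum_s\bmb_{s,t}\cdot\bmf_s$'' is only announced. Moreover, the hypothesis $\bx_{t-1}\in\linspan(\bmz_{\cG_1\cup(\cA_1\setminus\cT_1)}(\bA))$ says nothing about which decorations sit along the degree-$2$ chains below the root. That is precisely what has to be matched against the decorated cycles in $\bmb_{s,t}$.

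The missing idea is a stronger invariant, namely \Cref{lem:memory-formula}: $\bx_t\eqinf\sum_{s<t}\bmB_{s,t}\bmf_s^{\neq1}$ and $\bmf_t\eqinf\bmc_t\cdot\bx_t+\bmf_t^{\neq1}$. Here $\bmB_{s,t}$ is the self-avoiding walk carrying $\bmc_r=\cactus(f'_r(\bx_r))$ at its internal vertices. The second formula is \Cref{lem:f-linear}, which follows in a few lines from \Cref{lem:product-gaussian}: the only Gaussian terms in $\bx^\ell$ are those with all factors but one perfectly matched into cactuses. So the Wick/Hermite-derivative bookkeeping you worry about never arises.

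With this invariant, write $\bmC_t$ for the diagonal matrix with entries $\bmc_t$. Then $\bA\bmC_t\bmB_{s,t}$ splits into $\bmB_{s,t+1}$ plus returns to $i_r$, $s\le r\le t$:
\begin{itemize}
\item a return to $i_s$ gives $\bmb_{s,t+1}\cdot\bmf_s^{\neq1}$;
\item the diagonal of $\bA$ acting on $\bmf_t^{\neq1}$ gives $\bmb_{t,t+1}\cdot\bmf_t^{\neq1}$;
\item a return to $i_r$ with $r>s$ factors as $\bmb_{r,t+1}\cdot(\bmC_r\bmB_{s,r}\bmf_s^{\neq1})$.
\end{itemize}
Regrouping by $r$ gives $\sum_r\bmb_{r,t+1}\cdot\bigl(\bmC_r\sum_{s<r}\bmB_{s,r}\bmf_s^{\neq1}+\bmf_r^{\neq1}\bigr)\eqinf\sum_r\bmb_{r,t+1}\cdot\bmf_r$. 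This is exactly the Onsager term, once you note that $\bmf'_r$ may be replaced by $\bmc_r$ inside $\bmb_{r,t+1}$.

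Second, the structural claim you do make fails at the decisive place. For a tree vertex $v$ other than the old root, identifying the new root with $v$ gives a treelike diagram iff two conditions hold: the old root has a single tree branch, and every vertex strictly between the old root and $v$ has tree-degree $2$. The degree of $v$ itself is unrestricted. So identifications at the end of a chain, where a piece of $\bmf_s^{\neq1}$ with root tree-degree $\ge 2$ hangs, are treelike and non-Gaussian. They are exactly the terms that $\bmb_{s,t}\cdot\bmf_s^{\neq1}$ removes.

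Concretely, take $f_1(x)=x^2$ and $f_2(x)=x$. Then $\bmf_1$ contains the cherry rooted at $w$, so $\bx_2$ contains the diagram in which the root is joined by an edge to $w$, and $w$ has degree $3$. In $\bA\bmf_2$, identifying the new root with $w$ produces a $2$-cycle grafted with a cherry at the root. This diagram is treelike and is cancelled by $\bmb_{1,3}\cdot\bmf_1$.

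Your assertion that identifications at vertices of degree at least $3$ are non-treelike throws these terms away. The bookkeeping then cannot balance against $\sum_s\bmb_{s,t}\cdot\bmf_s$, which does contain them. Conversely, if the old root carries two or more tree branches $\gamma_j$, no chain return survives at all; only the self-loop does. Your phrase ``descending from one branch $\gamma_j$'' suggests otherwise.
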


\noindent To prove~\Cref{thm:full-onsager}, motivated by the results in \cref{sec:vector-limit}, we introduce
the following handy notations:
\newcommand\gauss{\textnormal{gaussian}}

\begin{definition}[Equality modulo non-treelike diagrams]\label{def:eqinf}
    For $\bm x,\bm y\in \spn(\bmz_{\calA_1})$, we write $\bm x \eqinf \bm y$ if $\bm x - \bm y\in \spn(\bmz_{\calA_1\setminus \calT_1})$.
    We denote by $\cactus(\bmx)$ the
    projection of $\bmx$ onto the span
    of the cactus diagrams $\cC_1$,
    and by $\gauss(\bmx)$ the
    projection of $\bmx$ onto the span
    of the Gaussian diagrams $\cG_1$.
\end{definition}

The iterates of the treelike AMP
algorithm~\cref{eq:tree-amp} are engineered to 
asymptotically generate a self-avoiding walk.
That is, whenever the algorithm performs a matrix multiplication operation,
the Onsager correction terms in \cref{eq:tree-amp} (the subtracted terms involving $\bb_{s,t}$) are chosen to subtract
off the terms in the resulting diagram expansion which (1)~are treelike and (2)~revisit an existing vertex in any diagram.

\begin{example}[Self-avoiding walk]
For intuition, consider
the case of~\Cref{thm:full-onsager}
where $f_t(x) = x$.
Let $\pi_t$ be the $t$-path
diagram and $\rho_t$ the $t$-cycle
diagram. We can expand exactly:
\begin{align*}
    \bA \bmz_{\pi_t} &= \bmz_{\pi_{t+1}} + \sum_{s = 0}^t \bmz_{\rho_{s+1} \oplus \pi_{t-s}}\,.
\end{align*}
For each term on the right-hand side, we have the approximate factorization (by~\Cref{lem:product-gaussian}) $\bmz_{\rho_{s+1} \oplus \pi_{t-s}} \eqinf \bmz_{\rho_{s+1}} \cdot \bmz_{\pi_{t-s}}$,
which holds up to non-treelike terms.
Then, we define a self-avoiding version of power iteration by:
\[
    \bmx_0 = \bm{1}, \qquad \bmx_{t+1} = \bA \bmx_t - \sum_{s = 0}^t \bmz_{\rho_{s+1}} \cdot \bmx_{t-s}\,.
\]
By construction, we have $\bmx_t \eqinf \bmz_{\pi_t}$ and therefore, assuming the conditions of \cref{thm:convergence-distribution}, the asymptotic state $X_t$ of $\bm x_t$ is Gaussian.
\end{example}

To analyze a general iteration in the proof of~\Cref{thm:full-onsager}, we separate the diagram expansion of $\bmf_t$ into its linear and nonlinear parts:
\begin{equation*}\label{eq:linear-nonlinear}
    \bmf_t = \underbrace{\sum_{\gam \in \cG_1} c_\gam \bmz_\gam(\bA)}_{=: \bmf^1_t} + \underbrace{\sum_{\tau \in \cT_1 \setminus \cG_1} c_\tau \bmz_\tau(\bA)}_{=: \bmf^{\neq 1}_t} + \sum_{\al \in \cA_1 \setminus \cT_1} c_\al \bmz_\al(\bA)\,.
\end{equation*}

\noindent We call $\bmf^1_t = \gauss(\bmf_t)$ the ``linear part'' since it should be thought of as the degree-1 part of the Hermite expansion
of $\bmf_t$ with respect to the Gaussian vectors $\bmz_{\cG_1}(\bA)$,
while $\bmf^{\neq 1}_t$ equals all of the other components of the Hermite expansion.
More precisely, when $\bmf_t$ is of the form $f_t(\bmx_{t})$ for some Gaussian vector $\bmx_{t}$, which is the situation for AMP, then $\bmf^{1}_t$ has the following simple form.

\begin{lemma}\label{lem:f-linear}
    Let $\bmx \in \linspan(\bmz_{\cG_1})$ and let $f : \R \to \R$ be a polynomial.
    Then,
    \[
        \gauss(f(\bm x)) \eqinf \cactus(f'(\bm x)) \cdot \bm x\,.
    \]
\end{lemma}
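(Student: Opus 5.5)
By linearity in $f$, it suffices to treat monomials $f(x)=x^k$. For $k=0$ both sides are $0$: $\gauss(\bm 1)=0$, since the single-vertex diagram is a cactus, and $f'=0$. So fix $k\ge 1$. Write $\bm x=\sum_{\gamma\in\cG_1}c_\gamma\bmz_\gamma$ and expand $\bm x^k$ multilinearly. This gives a sum of products $\bmz_{\gamma_1}\cdots\bmz_{\gamma_k}$ with $\gamma_j\in\cG_1$.

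Each such product is expanded by \Cref{lem:product-gaussian}. Modulo $\spn(\bmz_{\cA_1\setminus\cT_1})$, the product equals a sum over matchings $M\in\calM(k)$ and homeomorphic matchings $P_{uv}$ of the diagram
\[\bigoplus_{uv\in M}\gamma_{P_{uv}}\oplus\bigoplus_{u\notin M}\gamma_u .\]
By \Cref{lem:homeomorphic}, each $\gamma_{P_{uv}}$ is a cactus in $\cC_1$. The grafted diagram is therefore treelike: it is a grafting at the root of $|U(M)|$ Gaussian trees together with some cactuses.

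The key point is identifying which of these terms lie in $\cG_1$. A grafted diagram lies in $\cG_1$ exactly when the root has degree one after removing hanging cactuses. That happens precisely when $M$ leaves exactly one index $u$ unmatched. It also requires care about how the cactuses glued at the root are counted, since a Gaussian tree with a cactus hanging at its root is still in $\cG_1$ by \Cref{def:treelike}.

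I will handle this bookkeeping via \Cref{lem:treelike-hermite}. In the Wick-product language, $\bz_\tau$ is $\He_{\type(\tau)}$ times a product of cactus factors, and $\gauss$ extracts exactly the terms with $|\type(\tau)|=1$. This uses the convention in the definition of $\type$ that Gaussian trees carrying root cactuses are decomposed. I expect this convention-matching, namely checking that the projection $\gauss$ onto $\cG_1$ corresponds to the degree-one Wick component with the cactus factors multiplied in, to be the main technical obstacle.

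Granting it, the degree-one part of $\bm x^k$ is obtained as follows. Choose the unmatched index $u$ ($k$ choices by symmetry). Perfectly match the remaining $k-1$ copies of $\bm x$. Each perfect matching contributes a product of factors $\sum_{P}\bmz_{\gamma_P}=\bm\Sigma[\gamma,\gamma']$, which are cactus vectors. This yields
\[\gauss(\bm x^k)\eqinf k\Big(\sum_{M\in\PM(k-1)}\prod_{ab\in M}\bm\Sigma(\bm x,\bm x)\Big)\cdot\bm x,\]
where $\bm\Sigma(\bm x,\bm x)=\sum_{\gamma,\gamma'}c_\gamma c_{\gamma'}\bm\Sigma[\gamma,\gamma']$ is a cactus vector.

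It remains to identify the prefactor. Applying the same expansion to $f'(\bm x)=k\bm x^{k-1}$ and projecting onto cactuses keeps only the terms with $U(M)=\varnothing$, i.e.\ perfect matchings. This is exactly \Cref{lem:treelike-gaussian}, which gives
\[\cactus(k\bm x^{k-1})\eqinf k\sum_{M\in\PM(k-1)}\bm\Sigma(\bm x,\bm x)^{|M|}.\]
Both sides therefore agree, which proves the lemma. This is the Gaussian integration-by-parts identity $\E[f(X)X]=\E[f'(X)]\,\E[X^2]$ written at the level of diagrams.
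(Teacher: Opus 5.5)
This is essentially the paper's proof: expand $\bm x^k$ with \Cref{lem:product-gaussian}, keep the terms with exactly one unmatched factor (you correctly identify these as the $\calG_1$ terms), and compare them with the perfect-matching terms of $\cactus(k\bm x^{k-1})$. The step you ``grant'' needs no Wick or $\type$ bookkeeping. All that is used is $\bz_{\sigma\oplus\gamma}\eqinf\bz_\sigma\cdot\bz_\gamma$ for $\sigma\in\calC_1$ and $\gamma\in\calG_1$, which is \Cref{lem:product-gaussian} with two factors, since $H(\sigma,\gamma)=\varnothing$: the core of $\sigma$ is a single vertex, while the core of $\gamma$ is not. One small fix: \Cref{lem:treelike-gaussian} only holds modulo non-cactus diagrams, which is too coarse once you multiply by $\bm x$. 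So either keep the prefactor as the grafted cactuses $\bz_{\bigoplus_{uv\in M}\gamma_{P_{uv}}}$, as the paper does, or note that products of cactus vectors equal their grafting up to non-treelike terms.
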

\begin{proof}
    Suppose that $f(x) = x^\ell$ for some integer $\ell\ge 0$ (the general case follows by linearity). By~\Cref{lem:product-gaussian},
    the product of
    diagrams in $\calG_1$
    yields a diagram in $\calG_1$ only
    when every diagram except one is matched. Formally, write $\bmx = \sum_{\gamma\in\calG_1} c_\gamma \bmz_{\gamma}$, then by~\Cref{lem:product-gaussian},
    \begin{align}
        \bmf^1 \eqinf \ell \sum_{\gamma_1,\ldots,\gamma_\ell\in \calG_1} c_{\gamma_1}\ldots c_{\gamma_{\ell}} \sum_{M\in \PM(\ell-1)} \sum_{\substack{P_{uv}\in H(\gamma_u,\gamma_v)\\\forall uv\in M}} \bz_{\bigoplus_{uv\in M}\gamma_{P_{u,v}} \oplus \gamma_\ell} \,.\label{eq:l-power}
    \end{align}
    Viewing $\bigoplus_{uv\in M}\gamma_{P_{u,v}}$ as a fixed cactus, by~\Cref{lem:product-gaussian},
    every term on the right-hand side
    satisfies
    \begin{align}
        \bz_{\bigoplus_{uv\in M}\gamma_{P_{u,v}} \oplus \gamma_\ell} \eqinf \bz_{\bigoplus_{uv\in M}\gamma_{P_{u,v}}} \cdot \bmz_{\gamma_\ell}\,.\label{eq:factorize-inter}
    \end{align}
    Applying~\Cref{lem:product-gaussian}
    one last time, it remains to observe 
    that the cactus part of $f'(\bm x) = \ell \bm x^{\ell-1}$ is
    \begin{align}
        \cactus(f'(\bmx)) \eqinf \ell \sum_{\gamma_1,\ldots,\gamma_{\ell-1}\in \calG_1} c_{\gamma_1} \ldots c_{\gamma_{\ell-1}} \sum_{M\in \PM(\ell-1)} \sum_{\substack{P_{uv}\in H(\gamma_u,\gamma_v)\\\forall uv\in M}} \bz_{\bigoplus_{uv\in M}\gamma_{P_{u,v}}}\,.\label{eq:l-power-minus}
    \end{align}
    Combining~\cref{eq:l-power,eq:factorize-inter,eq:l-power-minus} yields the desired claim.
\end{proof}

The next key~\cref{lem:memory-formula} derives an explicit asymptotic formula for the AMP iterates: $\bmx_t$ is generated by taking a self-avoiding walk from each nonlinear term $\bmf_s^{\neq 1}$.

\begin{definition}\label{def:self-avoiding}
Let $\bmc_t = \cactus(f'_t(\bmx_t))$.
Define the \emph{self-avoiding walk matrix} $\bmB_{s,t}$ generated by the iteration between time $s$ and $t$ to be:
\begin{align*}
    \bmB_{s,t}[i,j] &\defeq \sum_{\substack{i_{s},\dots, i_{t} \in [n] \textnormal{ distinct}\\i_{s} = j,\; i_{t} = i}} 
    \bmA[i_s, i_{s+1}] \cdots \bmA[i_{t-1},i_t] \cdot \bm c_{s+1}[i_{s+1}] \cdots \bm c_{t-1}[i_{t-1}]\,.
\end{align*}
\end{definition}

\noindent Recalling \Cref{def:open-cactus}, $\bmB_{s,t}$
is a linear combination of
open cactus matrices in the $z$-basis (up to non-treelike terms which arise from intersections involving the $\bm c_{s+i}$).
For example, $\bmB_{t-1,t}$ equals $\bA$ with the diagonal elements set to zero.
We note the analogy between $\bmb_{s,t}$ and $\bmB_{s,t}$, which contain similar self-avoiding walks that return to the start and do not return to the start, respectively.

\begin{lemma}\label{lem:memory-formula}
    Define $\bmx_t, \bmf_t$ by \cref{eq:tree-amp} and let $\bmc_t = \cactus(f'_t(\bmx_t))$. Then for $t \geq 1$:
    \begin{align*}
        \bmx_t \eqinf \sum_{s = 0}^{t-1} \bmB_{s,t} \bmf_s^{\neq 1} \quad \text{and} \quad \bmf_t \eqinf \bmc_t \cdot \sum_{s=0}^{t-1}  \bmB_{s,t} \bmf_s^{\ne 1} + \bmf_{t}^{\neq 1}\,.
    \end{align*}
\end{lemma}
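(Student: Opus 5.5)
We argue by strong induction on $t\ge 1$, proving both formulas simultaneously; the second follows from the first. Indeed, once $\bmx_t \eqinf \sum_{s<t}\bmB_{s,t}\bmf^{\neq 1}_s$ is known, the right-hand side lies (modulo non-treelike diagrams) in $\spn(\bmz_{\cG_1})$. This is because $\bmB_{s,t}$ applied to any treelike vector appends a fresh self-avoiding path whose interior vertices carry cactuses $\bm c_r$ and avoid the earlier vertices, so the root has degree $1$ after removing hanging cactuses. Therefore $\bmf_t = \bmf^1_t + \bmf^{\neq1}_t + (\text{non-treelike})$, and \Cref{lem:f-linear} gives $\bmf^1_t = \gauss(f_t(\bmx_t)) \eqinf \bm c_t\cdot \bmx_t$, which is the second formula. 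A small point must be checked here: \Cref{lem:f-linear} requires $\bmx_t$ to lie exactly in $\spn(\bmz_{\cG_1})$, but we only know this modulo non-treelike terms. We handle it by noting that products involving a non-treelike factor remain non-treelike (\Cref{claim:only-trees}), so $f_t(\bmx_t) \eqinf f_t(\gauss(\bmx_t))$.

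For the first formula, write $\bmx_t = \bA\bmf_{t-1} - \sum_{s=0}^{t-1}\bmb_{s,t}\cdot \bmf_s$ and substitute the inductive expression $\bmf_{t-1} \eqinf \bm c_{t-1}\cdot\sum_{s<t-1}\bmB_{s,t-1}\bmf^{\neq1}_s + \bmf^{\neq1}_{t-1}$. The base case $t=1$ has $\bmf_0=\bm 1=\bmf^{\neq1}_0$ and $\bmx_1 = \bA\bm 1 - \bmb_{0,1}$, where $\bmb_{0,1}=\diag(\bA)$ and $\bmB_{0,1}=\bA-\diag(\bA)$. The core computation is a diagrammatic expansion of $\bA\cdot \bm y$ for a treelike vector $\bm y$. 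Multiplying by $\bA$ attaches a new edge from a new root $i$ to the old root $j$, and we split according to whether $i$ equals a vertex already present. The case where $i$ is a fresh vertex, applied to $\bm c_{t-1}\cdot\bmB_{s,t-1}\bmf^{\neq 1}_s$, produces exactly $\bmB_{s,t}\bmf^{\neq1}_s$ (extending the self-avoiding walk by one step), and applied to $\bmf^{\neq1}_{t-1}$ it produces $\bmB_{t-1,t}\bmf^{\neq1}_{t-1}$. The case where $i$ coincides with an existing vertex is treelike only if $i$ coincides with a vertex on the self-avoiding walk path back to its start, closing a cycle; otherwise a non-tree cycle is created through branching structure, and \Cref{lem:product-gaussian} together with \Cref{lem:homeomorphic} show such terms are non-treelike. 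The closing-cycle terms, after factoring via \Cref{lem:product-gaussian}, take the form (cycle cactus at the root)$\,\cdot\,(\text{rest of diagram})$, and the rest of the diagram is precisely the treelike part of $\bmf_s$ at the walk's starting time $s$.

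The main obstacle is matching these returning terms exactly with the Onsager subtraction $\bmb_{s,t}\cdot\bmf_s$. The definition of $\bmb_{s,t}$ uses the true derivatives $\bmf'_r$, not the cactus parts $\bm c_r$, and it multiplies the full $\bmf_s$ rather than $\bmf^{\neq 1}_s$. We would proceed as follows. First, expand $\bmb_{s,t}$ as a sum of rooted cycles decorated by $\bmf'_r$ at the interior vertices. Second, argue that after multiplication by $\bmf_s$ and reduction modulo non-treelike diagrams, only the cactus projection $\bm c_r$ of each $\bmf'_r$ survives: the Gaussian parts of $\bmf'_r$ hanging off a cycle vertex would create non-tree cycles unless they are matched, and matched pairs are precisely what is counted in the cactus part. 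Third, identify the linear parts: a return to the root of a piece $\bmB_{s',s}\bmf^{\neq1}_{s'}$ inside $\bmf^1_s = \bm c_s\cdot \bmx_s$ corresponds to the $\bmf_s$ term with its $\bm c_s$ factor absorbed as a decoration at vertex $i_s$, so the full $\bmf_s$ is correctly accounted for. This bookkeeping amounts to a bijection between closed self-avoiding returns in the expansion of $\bA\bmf_{t-1}$ and the terms of $\sum_s \bmb_{s,t}\cdot\bmf_s$, and we expect it to be the delicate part.
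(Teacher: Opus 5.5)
Your proposal is correct and is essentially the paper's proof. The second formula comes from the first via \Cref{lem:f-linear}, and $\bm A\bm f_{t-1}$ splits into the self-avoiding extensions $\bm B_{s,t}\bm f_s^{\neq 1}$ plus returns to a walk vertex $i_r$. After replacing $\bm f'$ by $\bm c$ inside $\bm b_{r,t}$, each return factors as $\bm b_{r,t}\cdot \bm C_r \bm B_{s,r}\bm f_s^{\neq 1}$ (with $\bm C_r=\operatorname{diag}(\bm c_r)$), and summing over $s<r$ together with $\bm b_{r,t}\cdot\bm f_r^{\neq 1}$ gives $\bm b_{r,t}\cdot\bm f_r$ by the inductive formula; this is exactly the ``bijection'' you flag as delicate, and the paper handles it in a few lines.

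One small correction: $\bm B_{s,t}$ applied to an arbitrary treelike vector need not stay Gaussian. For example, $\bm B_{s,s+1}$ applied to the single-edge diagram produces a $2$-cycle. So your argument that $\bm x_t$ is Gaussian modulo non-treelike terms must use that the diagrams in $\bm f_s^{\neq 1}$ have root degree $\neq 1$, as the paper notes in the proof of \Cref{thm:full-onsager}.
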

\begin{proof}
    First, note that for a fixed $t$, the second equation follows from the first:
    \begin{align*}
        \bmf_t &\eqinf \bmf_t^1 + \bmf_t^{\neq 1}\\
        &\eqinf \bc_t  \cdot \bmx_t + \bmf_t^{\neq 1}  \tag{\cref{lem:f-linear}} \\
        &\eqinf \bc_{t} \cdot \sum_{s = 0}^{t-1}  \bmB_{s,t} \bmf_s^{\neq 1} + \bmf_{t}^{\neq 1} \tag{first equation and~\Cref{claim:only-trees}}
    \end{align*}
    To establish the equations, we use induction on $t$. In the base case $t = 1$ we have $\bmx_1 = \bA \bmf_0 - \bmb_{0,1}\cdot \bmf_0 = \bmB_{0,1} \bmf_0$ as needed.
    Now, assume that the formulas
    hold for $0, \ldots, t$.
    Denote by $\bmC_t$ the diagonal matrix with entries $\bmc_t$.   
    The equation for $\bmf_t$ implies:
    \begin{align*}
        \bA \bmf_{t} &\eqinf \sum_{s = 0}^{t-1} \bA \bmC_{t} \bmB_{s, t} \bmf_s^{\neq 1}  + \bA \bmf_{t}^{\neq 1} \numberthis \label{eq:Af_t}
    \end{align*}
    If we expand the matrix product $\bA \bC_{t} \bB_{s,t}$ we can partition the sum based on whether the matrix $\bA$ revisits a vertex already on the walk:
    \begingroup
    \begin{align}
        (\bA\bC_{t} \bB_{s, t})[i,j] &= \sum_{k=1}^n \bmA[i,k] \bm c_t[k] \sum_{\substack{i_{s},\dots, i_{t} \in [n] \textnormal{ distinct}\\i_{s} = j,\; i_{t} = k}} 
    \bmA[i_s, i_{s+1}] \cdots \bmA[i_{t-1},i_t] \cdot \bm c_{s+1}[i_{s+1}] \cdots \bm c_{t-1}[i_{t-1}]\nonumber\\
        &= \sum_{\substack{i_{s},\dots, i_{t+1} \in [n] \textnormal{ distinct}\\i_{s} = j,\; i_{t+1} = i}} 
    \bmA[i_s, i_{s+1}] \cdots \bmA[i_{t},i_{t+1}] \cdot \bm c_{s+1}[i_{s+1}] \cdots \bm c_{t}[i_{t}]\label{eq:first-term-gaussian}\\
    &+ \sum_{r=s}^t \sum_{\substack{i_{s},\dots, i_{t} \in [n] \textnormal{ distinct}\\i_{s} = j,\; i_{r} = i}} 
    \bmA[i_s, i_{s+1}] \cdots \bmA[i_{t-1},i_t] \cdot \bmA[i_r,i_t] \cdot \bm c_{s+1}[i_{s+1}] \cdots \bm c_{t}[i_{t}]\,.\label{eq:second-term-gaussian}
    \end{align}

    The first term~\cref{eq:first-term-gaussian} is self-avoiding and equals $\bmB_{s,t+1}[i,j]$.
    In the second term~\cref{eq:second-term-gaussian}, the term $r=s$ is diagrammatically a cycle
    and is equal to $\bmb_{s,t+1}[i]$ when $i=j$, and $0$ otherwise:

    \begin{claim}\label{lem:bst-c}
    We have:
    \[
        \bmb_{s,t} \eqinf \left(\sum_{\substack{i_s,\ldots,i_{t-1}\in[n] \textnormal{ distinct}\\i_s=i}}
        \bA[i_s,i_{s+1}] \cdots \bA[i_{t-2},i_{t-1}] \bA[i_{t-1},i_s] \cdot \bmc_{s+1}[i_{s+1}] \cdots \bmc_{t-1}[i_{t-1}]\right)_{i\in [n]}\,.
    \]
    \end{claim}
    \begin{proof}
        The only difference between this formula and the definition of $\bb_{s,t}$ is that the vectors $\bmf'_t$ at the internal vertices of the cycle have been replaced by $\bmc_t$.
        This holds up to non-treelike terms since placing a non-cactus diagram at any internal vertex of the cycle will create only non-treelike diagrams.
    \end{proof}    
    \noindent The remaining terms in~\cref{eq:second-term-gaussian} are a cycle and a path joined together at vertex $r$:
    
    \begin{claim}\label{claim:factorize-inside-path-cycle}
        Let $r\in \{s+1, \ldots, t\}$. For $i\in [n]$, let
        \[
            \bm u[i] =         \sum_{\substack{i_{s},\dots, i_{t} \in [n] \textnormal{ distinct}\\i_{r} = i}} 
    \bmA[i_s, i_{s+1}] \cdots \bmA[i_{t-1},i_t] \cdot \bmA[i_r,i_t] \cdot \bm c_{s+1}[i_{s+1}] \cdots \bm c_{t}[i_{t}] \bmf_s^{\neq 1}[i_s]\,.
        \]
        Then $\bm u \eqinf \bm b_{r,t+1} \cdot \bm C_r \bm B_{s,r} \bmf_s^{\neq 1}$.
    \end{claim}

    \begin{proof}
        By expanding definitions, we can conveniently interpret
        \[
            \bm b_{r,t+1} \cdot \bm C_r \bm B_{s,r} \bmf_s^{\neq 1}[i] = \sum_{\substack{i_{s},\dots, i_{t} \in [n]\\
            i_s,\ldots,i_{r} \textnormal{ distinct}\\
            i_r,\ldots,i_{t}\textnormal{ distinct}\\i_{r} = i}} 
    \bmA[i_s, i_{s+1}] \cdots \bmA[i_{t-1},i_t] \cdot \bmA[i_r,i_t] \cdot \bm c_{s+1}[i_{s+1}] \cdots \bm c_{t}[i_{t}] \bmf_s^{\neq 1}[i_s]\,.
        \]
        Since the diagram induced on
        $\{i_r, \ldots, i_t\}$ is
        a cycle, any intersection between the
        vertices $\{i_s, \ldots, i_r\}$
        and $\{i_r, \ldots, i_t\}$
        would create a non-treelike 
        diagram.
    \end{proof}
    
    Plugging~\Cref{claim:factorize-inside-path-cycle} in to~\cref{eq:Af_t}, we have:
    \begin{align*}
        \bA \bmf_t &\eqinf \sum_{s = 0}^{t-1} \bB_{s,t+1} \bmf_s^{\neq 1} + \sum_{s=0}^{t-1}\bmb_{s,t+1} \cdot \bmf_s^{\neq 1} + \sum_{s=0}^{t-1}\sum_{r=s+1}^t \bmb_{r,t+1}\cdot (\bmC_r \bmB_{s,r} \bmf^{\neq 1}_s)+ \underbrace{\bA \bmf_t^{\neq 1}}_{= \bB_{t,t+1} \bmf_t^{\neq 1} + \bmb_{t,t+1} \cdot \bmf_t^{\neq 1}}\\
        &= \sum_{s = 0}^{t} \bB_{s,t+1} \bmf^{\neq 1}_s + \sum_{r = 0}^t \bmb_{r,t+1} \cdot \left(\bmC_r \sum_{s = 0}^{r-1} \bmB_{s,r}\bmf_s^{\neq 1} + \bmf_r^{\neq 1} \right)\\
        &\eqinf \sum_{s = 0}^{t} \bB_{s,t+1} \bmf^{\neq 1}_s + \sum_{r = 0}^t \bmb_{r,t+1} \cdot \bmf_r
    \end{align*}
    \endgroup
    The last equality is the inductive formula for $\bmf_r$.
    The Onsager correction subtracts off the second sum, leaving only the desired first sum for $\bmx_{t+1}$.
\end{proof}

\begin{proof}[Proof of \cref{thm:full-onsager}.]
    We prove the following purely combinatorial claim about~\cref{eq:tree-amp}:
    $\bm x_t$ is in
    the span of non-treelike
    diagrams and {\em Gaussian} treelike
    diagrams.  
    By~\Cref{claim:cvImpliesState}, this will
    imply
    that conditioned on $Z^\infty_{\calC_1}$, the asymptotic state of $(\bm x_t)_{t\ge 1}$ is a centered Gaussian process, as desired.

    To show the claim, 
    we start from the conclusion of~\cref{lem:memory-formula}:
    \[
        \bmx_t \eqinf \sum_{s=0}^{t-1}\bB_{s,t}\bmf_s^{\neq 1}\,.
    \]
    The diagrams in $\bB_{s,t}\bmf_{s}^{\neq 1}$ are obtained by: (1)~choose a diagram from $\bmf_{s}^{\neq 1}$, (2)~choose a cactus diagram from $\bc_r$ at each internal vertex of $\bB_{s,t}$
    (i.e.\ each internal vertex along a path of length $t-s$),
    (3)~multiply these diagrams together.
    Since none of the diagrams in $\bmf_s^{\neq 1}$ have degree 1 at the root by definition,
    the only treelike terms in the product are formed by grafting the diagrams together without intersections.
    In particular, the root is the endpoint of the path in $\bB_{s,t}$ and has degree 1.
    This concludes the proof.
\end{proof}

\subsubsection{Covariance structure of treelike AMP}

While~\Cref{thm:full-onsager} shows that the treelike AMP iterates are
asymptotically Gaussian, it does not identify their covariance. We calculate the covariance ``combinatorially'' by calculating the cactus diagrams appearing in
the expansion of $\langle \bmx_s, \bmx_t\rangle$.

\begin{proposition}\label{lem:cov-cactus}
    Let $\bm x_t$ follow the iteration~\cref{eq:tree-amp}. Then for any $s,t\ge 1$,
    \begin{equation}\label{eq:cov-finite}
        \bm x_s \cdot \bm x_t - \sum_{s'=0}^{s-1} \sum_{t'=0}^{t-1} \bm B_{s'st't} (\bm f_{s'} \cdot \bm f_{t'}) \in \spn(\bm z_{\calA_1\setminus \calC_1})\,,
    \end{equation}
    where for $0\le s'\le s, 0\le t'\le t$, we define the matrix $\bmB_{s'st't} \in \R^{n\times n}$ by
    \begin{equation*}
        \bm B_{s'st't}[i,j] \defeq \sum_{\substack{i_{s'},\ldots,i_{s}, j_{t'}, \ldots, j_{t}\in [n]\\
        \textnormal{distinct except}\\
        i_{s'}=j_{t'}=j,\, i_s=j_t=i}} \prod_{r=s'}^{s-1} \bm A[i_r,i_{r+1}] \prod_{r=t'}^{t-1} \bm A[j_r,j_{r+1}] \prod_{r=s'+1}^{s-1} \bm c_r[i_r] \prod_{r=t'+1}^{t-1} \bm c_r[j_r]\,.
    \end{equation*}
\end{proposition}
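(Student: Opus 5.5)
Starting from \Cref{lem:memory-formula}, we have $\bm x_t \eqinf \sum_{t'=0}^{t-1}\bm B_{t',t}\bm f_{t'}^{\neq 1}$, and likewise for $\bm x_s$. Since the product of a non-treelike diagram with anything remains non-treelike (\Cref{claim:only-trees}), and non-treelike diagrams are in particular non-cactus, we may replace $\bm x_s\cdot\bm x_t$ by $\sum_{s',t'}(\bm B_{s',s}\bm f^{\neq1}_{s'})\cdot(\bm B_{t',t}\bm f^{\neq1}_{t'})$ modulo $\spn(\bm z_{\calA_1\setminus\calC_1})$. The task is then to determine the cactus part of each such product. This is a computation in the spirit of \Cref{lem:homeomorphic,lem:product-gaussian}: we multiply two vector diagrams and keep only the vertex identifications that produce a cactus.

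Next, expand each term diagrammatically. A diagram in $\bm B_{s',s}\bm f^{\neq1}_{s'}$ is a self-avoiding path $i_{s'},\dots,i_s$ rooted at $i_s$. It carries cactus decorations $\bm c_r$ at the internal vertices and a diagram of $\bm f^{\neq1}_{s'}$ at the far endpoint $i_{s'}$; by \Cref{lem:memory-formula}, up to non-treelike terms that diagram is itself treelike and its root has degree $\neq 1$. Grafting two such diagrams at the root creates two paths leaving the common root, and a cactus can arise only if the two paths are glued into cycles. The key claim is a path-merging argument in the style of \Cref{lem:homeomorphic}. In any identification producing a cactus, the two base paths must be identified only at their far endpoints $i_{s'}=j_{t'}$, forming a single cycle of length $(s-s')+(t-t')$. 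No internal vertex of one path may be matched to an internal vertex of the other, and no decorations may be matched, because either would create an edge on two cycles or a bridge. There is one further subtlety: the paths could instead be matched so that they partly coincide and fold into a doubled path. That is exactly the homeomorphic-matching situation, so it needs checking. Here the self-avoiding and distinctness constraints in $\bm B$, together with the fact that the endpoint diagrams have root degree $\neq1$, should force the only cactus-producing identifications to be the cycle closures described above. A degenerate case is a doubled edge where the two paths run along the same vertices; this is absorbed because $\bm B_{s'st't}$ allows the two paths to share both endpoints with any lengths.

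Once the two far endpoints $i_{s'}=j_{t'}=j$ are identified, the two endpoint diagrams (from $\bm f^{\neq1}_{s'}$ and $\bm f^{\neq1}_{t'}$) are grafted at $j$. Their product equals $\bm f^{\neq1}_{s'}\cdot\bm f^{\neq1}_{t'}$ modulo non-treelike terms, and only the cactus part of this product contributes to a cactus overall. We must then show that this cactus part coincides with that of $\bm f_{s'}\cdot\bm f_{t'}$. By \Cref{lem:memory-formula}, $\bm f_{s'}=\bm f^{\neq1}_{s'}+\bm c_{s'}\cdot(\text{Gaussian})$. Cross terms involving a Gaussian factor with root degree $1$ can reach a cactus only by matching that factor's path against the other factor's path. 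Such a matching would either produce one of the other $(s'',t'')$ terms already counted in the sum, or leave a bridge. Verifying that nothing is double-counted at this step is what I expect to be the main obstacle, and the bookkeeping depends on the convention for $\bm f$ versus $\bm f^{\neq1}$. The first approach I would try is to prove the statement inductively in $s+t$ instead. Writing $\bm f_{s'}\cdot\bm f_{t'}$ and $\bm x_{s'}\cdot\bm x_{t'}$ recursively, the Gaussian contributions should reassemble exactly into the sums over longer paths. Finally, summing over $s',t'$ gives exactly $\sum\bm B_{s'st't}(\bm f_{s'}\cdot\bm f_{t'})$, with the $\bm c_r$ decorations retained at the internal vertices, which proves \cref{eq:cov-finite}.
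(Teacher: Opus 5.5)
\textbf{There is a genuine gap.} Your starting point is right: expand both iterates via \Cref{lem:memory-formula} and keep only the cactus-producing identifications. The key structural claim, however, is false: the two base paths need not be glued only at their far endpoints. Take $s=t=2$ and $f_1(x)=x$. Then $\bm c_1=\bm 1$, $\bm f_1=\bm x_1$ is a single Gaussian edge, $\bm f_1^{\neq 1}=0$, and $\bm x_2\eqinf \bm B_{0,2}\bm f_0$ is the length-$2$ path $r,a_1,a_0$ rooted at $r$. Let the second copy be $r,b_1,b_0$. In $\bm x_2\cdot\bm x_2$, identifying $a_1=b_1$ and $a_0=b_0$ gives a path of two doubled edges, which is a cactus. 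Nothing in your argument excludes this fold. The distinctness constraints in $\bm B_{0,2}$ act only within a single path. The end diagram here, $\bm f_0=\bm 1$, does have root degree $\neq 1$, so that condition does not exclude it either.

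For the same reason your final step fails. Modulo non-cactus terms, $\bm f_{s'}\cdot\bm f_{t'}$ and $\bm f^{\neq 1}_{s'}\cdot\bm f^{\neq 1}_{t'}$ differ by $(\bm c_{s'}\cdot\bm x_{s'})\cdot(\bm c_{t'}\cdot\bm x_{t'})$; the cross terms pair a Gaussian factor with a non-Gaussian one and have no cactus part. In the example with $s'=t'=1$, the cactus part of this difference is the rooted doubled edge, not zero. The two errors happen to compensate: the fold above is exactly the $(s',t')=(1,1)$ term of the right-hand side, coming from $\bm x_1\cdot\bm x_1$. But proving that compensation is precisely the bookkeeping you flag as the main obstacle and leave open.

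The missing idea, which is how the paper argues, is to index each cactus-producing identification by the \emph{first} vertex at which the two paths meet, walking from the root, say $i_{s'}=j_{t'}$. You instead index by the far endpoints carrying the $\bm f^{\neq 1}$ diagrams. The first meeting point is unique, so nothing is double counted.
\begin{itemize}
\item Above the meeting point, the two path segments form one cycle of length $(s-s')+(t-t')$ with $\bm c_r$ at its internal vertices, i.e.\ $\bm B_{s'st't}$. As you correctly note, the meeting cannot occur inside a decoration or at a non-root vertex of an $\bm f^{\neq 1}$ diagram.
\item At the meeting vertex, each side contributes either $\bm f^{\neq 1}_{s'}$, if its path ends there, or $\bm c_{s'}$ times a continuation $\bm B_{s'',s'}\bm f^{\neq 1}_{s''}$ with $s''<s'$.
\item Summed over $s''$, these continuations give $\bm x_{s'}$ by \Cref{lem:memory-formula}, up to non-cactus terms where a continuation re-enters the cycle.
\end{itemize}
So the factor hanging at the meeting vertex is $(\bm c_{s'}\cdot\bm x_{s'}+\bm f^{\neq 1}_{s'})\cdot(\bm c_{t'}\cdot\bm x_{t'}+\bm f^{\neq 1}_{t'})$, which is $\eqinf\bm f_{s'}\cdot\bm f_{t'}$ by \Cref{lem:f-linear}. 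In this formulation the folded configurations are automatically the cactus part of the $\bm x_{s'}\cdot\bm x_{t'}$ piece. No induction on $s+t$ or separate cancellation argument is needed.
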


When we apply this lemma, we will average \cref{eq:cov-finite}
over the coordinates $i \in [n]$ and over $\bA$.
Since the error terms are all in the span of non-cactus diagrams,
all error terms converge to 0 by the strong cactus property.
On the other hand,
the average of the term $\bmx_s \cdot \bmx_t$ converges to the covariance $\E[X_sX_t]$
which we want to calculate.
The subtracted terms involving the $\bB_{s't'st}$ matrices converge to limits depending on the asymptotic values of the cactuses $Z_{\cC_1}^\infty$.
For some settings (such as \cref{prop:factorizing-deterministic}), the values $Z_{\cC_1}^\infty$ are deterministic.
For other settings, the values $Z_{\cC_1}^\infty$ are random, and we will condition on them in order to obtain the conditional covariance.

\begin{proof}
    Since $\bm x_s$ and $\bm x_t$ have
    degree exactly one at the root, in
    order to form a cactus in $\bm x_s\cdot \bm x_t$, the paths
    from the root of $\bm x_s$ and $\bm x_t$ in the expansion from~\Cref{lem:memory-formula} must meet at some point. This
    intersection
    cannot happen at a vertex from $\bm f_{s'}^{\neq 1}$ or $\bm f_{t'}^{\neq 1}$ (that would create edges in two cycles).
    Let
    $s'\in \{0,\ldots,s-1\}$ and $t'\in \{0,\ldots,t-1\}$ denote the integers
    such that the first intersection
    corresponds to the indices
    $i_{s'}$ (for $\bm x_s$) and $i_{t'}$ (for $\bm x_t$) in~\Cref{def:self-avoiding}. Then,
    we can decompose
    \[
        \bm x_s\cdot \bm x_t - \sum_{s'=0}^{s-1} \sum_{t'=0}^{t-1} \bm B_{s'st't} ((\bm c_{s'} \cdot \bm x_{s'} + \bm f_{s'}^{\neq 1}) \cdot (\bm c_{t'} \cdot \bm x_{t'} + \bm f_{t'}^{\neq 1}))\in \spn(\bm z_{\calA_1\setminus \calC_1})\,,
    \]
    and the conclusion follows from
    the equality (\Cref{lem:f-linear}) $\bm f_s\eqinf \bm c_s \cdot \bm x_s + \bm f_s^{\neq 1}$.
\end{proof}

\noindent The 
cactus expansion of $\bm B_{s'st't} (\bm f_{s'} \cdot \bm f_{t'})$ can be obtained explicitly by
combining a cycle of length $s-s'+t-t'$
along the edges of $\bm B_{s'st't}$,
a cactus from $\bm c_r$ hanging at every vertex $r$ in the cycle, and a homeomorphic
matching of the tree components of $\bm f_{s'}$ and $\bm f_{t'}$ (\Cref{def:homeomorphic}). 

\subsection{Examples of state evolution}\label{sec:se-examples}

In this section, we specialize~\cref{thm:full-onsager}
to obtain a more explicit description
of the state evolution of the
treelike AMP algorithm for several concrete matrix models. 

\begin{notation}
    \label{not:empirical}
    For a vector $\bm x\in \R^n$, we will use the following notation for empirical averages:
    \[
        \langle \bm x \rangle \defeq \frac 1n \sum_{i=1}^n x_i\,.
    \]
\end{notation}

Technically, most algorithms in this section are not pGFOM since they
calculate empirical averages.
Assuming that the traffic distribution concentrates and the vector $\bm x$ lies in the diagram basis,
then the empirical average $\langle{\bm x}\rangle$ concentrates,
and we can replace $\langle{\bm x}\rangle$ by its limit $\E X$ without changing the asymptotic state of the algorithm.
This is formally proven in \cref{lem:asymptotic-state-empirical-average}.

\subsubsection{Orthogonally invariant random matrices}
\label{sec:treelike-oamp}

In the special case that $\bA$ is drawn from an orthogonally invariant random matrix ensemble,
the treelike AMP algorithm recovers the orthogonal AMP algorithm of Fan~\cite{fan2022approximate},
giving a new proof of this result.

\begin{theorem}[State evolution for orthogonally invariant matrices]\label{thm:orthogonal-amp}
    Let $\bA = \bA^{(n)} \in \R_\sym^{n \times n}$ be an orthogonally invariant random matrix converging in tracial moments in $L^2$ to a probability measure with free cumulants $(\kappa_q)_{q\ge 1}$. Assume $\bm A$ satisfies~\cref{eq:tightnessNorm}.
    Let $f_t : \R \to \R$ be polynomial functions and define the iteration
    \begin{alignat*}{2}
        \bmx_0 &= \bm{1}, \qquad &\bmx_t &= \bA f_{t-1}(\bm x_{t-1}) - \sum_{s = 0}^{t-1} \kappa_{t-s} \left(\prod_{r=s+1}^{t-1} \langle f'_r(\bm x_r)\rangle \right)f_s(\bm x_s)\quad \forall t\ge 1\,. \numberthis\label{eq:rotation-amp}
    \end{alignat*}
    Then,
    the asymptotic state of $(\bm x_t)_{t\ge 1}$ is a centered Gaussian process $(X_t)_{t\ge 1}$ with covariance
    \begin{align*}
        \E\left[X_s X_t\right] &= \sum_{s' = 0}^{s-1} \sum_{t' = 0}^{t-1} \kappa_{s-s'+t-t'} \left(\prod_{r = s'+1}^{s-1} \E f'_r(X_r) \right) \left(\prod_{r=t'+1}^{t-1} \E f'_r(X_r)\right) \E\left[f_{s'}(X_{s'}) f_{t'}(X_{t'})\right]\quad \forall s,t\ge 1\,,
    \end{align*}
    with $X_0\defeq 1$.
\end{theorem}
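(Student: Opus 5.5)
The plan is to realize \cref{eq:rotation-amp} as a small perturbation of the treelike AMP of \Cref{thm:full-onsager} and then evaluate the general covariance formula of \Cref{lem:cov-cactus} under the factorizing strong cactus property. By \Cref{thm:moments}, $\bA$ has the factorizing strong cactus property with free cumulants $(\kappa_q)$, and by \Cref{lemma:full-factorization} its traffic distribution concentrates. Together with~\cref{eq:tightnessNorm}, this puts us under the assumptions of \Cref{thm:convergence-distribution}. \Cref{prop:factorizing-deterministic} then gives $Z^\infty_\sigma=\prod_{\rho\in\cyc(\sigma)}\kappa_{|\rho|}$ deterministically for every $\sigma\in\calC_1$. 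So the conditional Gaussian process of \Cref{thm:full-onsager} is an unconditional centered Gaussian process.

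The first step is to compare the Onsager terms. Run the treelike AMP~\cref{eq:tree-amp} with the same $f_t$ and call its iterates $\tilde\bx_t$. I will show by induction on $t$ that replacing $\bmb_{s,t}$ by the scalar $\kappa_{t-s}\prod_{r=s+1}^{t-1}\E f'_r(X_r)$ does not change the asymptotic state. Two substitutions are needed.
\begin{itemize}
\item By \Cref{lem:bst-c}, $\bmb_{s,t}\eqinf$ the self-avoiding $(t-s)$-cycle rooted at $i$ with the cactus vectors $\bmc_r$ hung at its internal vertices. This is a combination of $\calC_1$ diagrams, so its asymptotic value is deterministic by \Cref{prop:factorizing-deterministic}.
\item Because the cactus value factorizes over cycles, this value equals $\kappa_{t-s}\prod_r \lim\langle \bmc_r\rangle$. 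Moreover $\lim\langle\bmc_r\rangle=\E f'_r(X_r)$, since $f'_r(\bx_r)-\bmc_r$ consists of non-cactus diagrams, whose averages vanish.
\end{itemize}
Finally, the empirical averages $\langle f'_r(\bx_r)\rangle$ appearing in~\cref{eq:rotation-amp} can be replaced by their deterministic limits via \cref{lem:asymptotic-state-empirical-average}.

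To turn this into an equality of asymptotic states I will argue directly on diagram expansions. The difference $\bx_t-\tilde\bx_t$ is a finite sum of terms of the form (scalar or cactus vector minus its limit) times a polynomial in earlier iterates. The point to check is that such products still have vanishing empirical moments. For the orthogonally invariant case this holds because each factor converges in $L^2$ to a constant and all moments remain bounded; traffic concentration provides both facts.

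I expect this perturbation step to be the main obstacle. The difficulty is that the errors propagate through the nonlinearities $f_t$ and through multiplication by $\bA$ at later times. I will control this by induction on $t$, using the boundedness of the relevant moments (\Cref{claim:existence-joint-moments}) and $\|\bA\|=O(1)$ on a high-probability event (\Cref{lem:truncation}).

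For the covariance, apply \Cref{lem:cov-cactus}, average over coordinates, and take $n\to\infty$. The non-cactus error terms vanish by the strong cactus property. It remains to evaluate $\langle \bmB_{s'st't}(\bmf_{s'}\cdot\bmf_{t'})\rangle$. Its cactus expansion, as described after \Cref{lem:cov-cactus}, is a cycle of length $s-s'+t-t'$ with cactuses $\bmc_r$ hung at the internal vertices, joined at one vertex to the cactus part of $\bmf_{s'}\cdot\bmf_{t'}$. By factorization this limit equals
\[
\kappa_{s-s'+t-t'}\prod_{r=s'+1}^{s-1}\E f'_r(X_r)\prod_{r=t'+1}^{t-1}\E f'_r(X_r)\,\E[f_{s'}(X_{s'})f_{t'}(X_{t'})],
\]
where the last factor is $\lim\langle\bmf_{s'}\cdot\bmf_{t'}\rangle$, and $X_0=1$ because $\bmf_0=\bm 1$. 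Summing over $s',t'$ gives the stated formula.
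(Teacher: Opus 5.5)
Your proposal is correct and follows essentially the same route as the paper. You use \Cref{thm:moments} and \Cref{prop:factorizing-deterministic} to obtain the factorizing strong cactus property with deterministic cactus limits. You then match \cref{eq:rotation-amp} to the treelike AMP of \Cref{thm:full-onsager} by replacing $\bm b_{s,t}$ with its constant limit $\kappa_{t-s}\prod_{r}\E f'_r(X_r)$, and the expectations with empirical averages (\Cref{lem:asymptotic-state-empirical-average}, \Cref{lemma:full-factorization}). Finally you evaluate \Cref{lem:cov-cactus} by factorizing over the cycle of $\bm B_{s'st't}$, the hanging cactuses $\bm c_r$, and the cactus part of $\bm f_{s'}\cdot\bm f_{t'}$.

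The only difference is how rigorously the replacement step is justified. The paper simply notes that the cactuses in $\bm b_{s,t}$ persist to the end and contribute their constant value. You instead propose an explicit finite-$n$ perturbation induction. In that induction, $\|\bm A\|$ only controls second moments, so to propagate errors through $\bm A$ and the $f_t$ you will need to combine the $L^2$ smallness of the difference with its bounded higher moments.
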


\begin{proof}
    By~\cref{thm:moments}, $\bA$ satisfies the factorizing strong cactus property and its diagonal distribution exists,
    so the assumptions of~\Cref{thm:convergence-distribution} and \Cref{thm:full-onsager} are satisfied.
    Therefore, the treelike AMP algorithm in \cref{eq:tree-amp} has Gaussian asymptotic state.
    
    We now specialize the Onsager correction term in~\cref{eq:tree-amp} to
    this model. The term $\bm b_{s,t}$ is represented by a cycle of length
    $t-s$, with $f'_r(\bm x_r)$ attached to the $r$th vertex of the cycle for each
    $s<r<t$. By~\Cref{claim:only-trees}, we
    only need to look at treelike contributions in $\bm b_{s,t}$.
    Because
    of the base cycle, these are only cactuses, obtained by attaching
    cactuses from $(f'_r(\bm x_r))_{s<r<t}$
    along the base cycle. 
    By~\Cref{prop:factorizing-deterministic},
    $\bm b_{s,t}$ has constant asymptotic
    state equal to
    $\kappa_{t-s}\prod_{r=s+1}^{t-1}\E f'_r(X_r)$.
    The cactuses in $\bm b_{s,t}$ persist until the end of the algorithm,
    so that they will eventually contribute this value towards the asymptotic state.
    Hence it does not affect the asymptotic state to replace $\bm b_{s,t}$ immediately by its limiting constant value.
    
    Moreover, by~\Cref{lem:asymptotic-state-empirical-average} and \cref{lemma:full-factorization}, we may replace $\E f'_r(X_r)$ by the empirical average
    $\langle f'_r (\bm x_r)\rangle$ to obtain~\cref{eq:rotation-amp} without affecting the asymptotic state.
    Now the asymptotic state $X_t$ of~\cref{eq:rotation-amp} matches that of~\cref{eq:tree-amp}, and we may apply~\cref{thm:full-onsager} to deduce that $X_t$ is Gaussian.

    To calculate the covariance $\E\left[X_sX_t\right]$, we average \cref{lem:cov-cactus} over the coordinates $i \in [n]$ and take the limit $n \to \infty$.
    On the right side of \cref{eq:cov-finite},
    the cycle of $\bB_{s'st't}$ contributes $\kappa_{s-s'+t-t'}$ and the hanging diagrams $f'_r(\bm x_r)$ inside $\bB_{s'st't}$ contribute
    $\E f'_r(X_r)$ by the factorizing cactus property.
    The cactuses in $f_{s'}(\bm x_{s'}) \cdot f_{t'}(\bm x_{t'})$ contribute $\E\left[f_{s'}(X_{s'})f_{t'}(X_{t'})\right]$, which establishes the desired recurrence.
\end{proof}

\noindent Note that this proof only
uses the strong factorizing cactus
property and the concentration of the
traffic distribution, which explains
why~\Cref{thm:orthogonal-amp} also holds
for non-orthogonally invariant matrix
models such as Wigner matrices (\Cref{sec:traffic-wigner}).

\subsubsection{Punctured random and deterministic matrices}
\label{sec:dynamics-reg}

The punctured matrices studied in~\Cref{sec:universality} do not satisfy the strong cactus property, so we cannot directly apply~\Cref{thm:full-onsager} to derive an AMP iteration for them. However, a reduction allows us to derive the state evolution of punctured orthogonally invariant random matrices from that of their unpunctured counterparts. These matrices are central because, by~\Cref{thm:universality-new}, they provide an intermediate step in deriving the state evolution of sequences of punctured {\em deterministic} matrices satisfying~\Cref{assumption:punctured}.

Note that a GFOM run on a punctured matrix must be initialized with a random vector $\bmx_0 \sim \calN(\bm{0}, \bI)$, rather than $\bm{x}_0 = \bm{1}$, to avoid triviality.

\begin{theorem}[State evolution for punctured matrices]\label{prop:hadamard-explicit-state}
    Let $\bH = \bH^{(n)} \in \R_\sym^{n \times n}$ be a sequence of orthogonally invariant random matrices satisfying~\cref{eq:tightnessNorm} and converging in tracial moments in $L^2$ to a probability measure with free cumulants $(\kappa_q)_{q\ge 1}$. 
    Let $\bA$ denote the puncturing of $\bH$ (\Cref{def:puncturing}).
    Let $f_t : \R \to \R$ be polynomial functions with $f_0(x)=x$, and consider the pGFOM:
    \begin{alignat*}{2}
        \bm x_0 &\sim \calN(\bm 0, \bm I)\,,\quad &\bm x_t &= \bm A { f_{t-1}(\bm x_{t-1})} - \sum_{s=0}^{t-1} \kappa_{t-s} \left(\prod_{r=s+1}^{t-1} \langle f_r'(\bm x_r)\rangle\right) (f_s(\bm x_s) - \langle f_s(\bm x_s)\rangle \bm{1})\quad \forall t\ge 1\,.\label{eq:ampPunctured}
    \end{alignat*}
    Then for any $t\ge 1$ and any polynomial $\varphi:\R^t \to \R$, we have
    \[
        \lim_{n\to\infty}  \E_{\bm H,\bm x_0} \langle \varphi(\bm x_1, \ldots, \bm x_t)\rangle = \E \varphi(X_1, \ldots, X_t)\,,
    \]
    where $(X_t)_{t\ge 1}$ is a centered Gaussian process with covariance given by
    \begin{align*}
        \E\left[X_s X_t\right] &= \sum_{s' = 0}^{s-1}\sum_{t'=0}^{t-1} \kappa_{s-s'+t-t'} \left(\prod_{r= s'+1}^{s-1} \E f'_r(X_r) \right) \left(\prod_{r=t'+1}^{t-1} \E f'_r(X_r)\right) \E \left[\overline{F_{s'}} \, \overline{F_{t'}}\right]\quad \forall s,t\ge 1\,,\\
        \overline{F_0} &\defeq 1\,,\quad \overline{F_t} \defeq f_t(X_t) - \E f_t(X_t)\quad \forall t\ge 1\,.
    \end{align*}
\end{theorem}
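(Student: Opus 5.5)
The plan is to reduce the punctured iteration to the unpunctured orthogonal AMP of \Cref{thm:orthogonal-amp} run on $\bH$ itself, with centered nonlinearities, and to show the two differ only by a vanishing multiple of $\bm 1$.

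\textbf{Rewriting the iteration.} Write $\bar{\bm g}_s \defeq f_s(\bm x_s) - \langle f_s(\bm x_s)\rangle \bm 1 = \mathbf\Pi f_s(\bm x_s)$. Since $\bA = \mathbf\Pi\bH\mathbf\Pi$ and $\mathbf\Pi \bm v = \bm v - \langle \bm v\rangle \bm 1$, the iteration in the statement is exactly
\[
\bm x_t = \widetilde{\bm x}_t - \langle \bH \bar{\bm g}_{t-1}\rangle \bm 1,\qquad
\widetilde{\bm x}_t \defeq \bH \bar{\bm g}_{t-1} - \sum_{s=0}^{t-1} \kappa_{t-s}\Big(\prod_{r=s+1}^{t-1}\langle f_r'(\bm x_r)\rangle\Big)\bar{\bm g}_s .
\]
Because $\langle \bar{\bm g}_s\rangle = 0$ exactly, taking empirical averages gives $\langle \bH\bar{\bm g}_{t-1}\rangle = \langle \widetilde{\bm x}_t\rangle$. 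So the scalar shift equals the empirical mean of $\widetilde{\bm x}_t$. The iteration for $\widetilde{\bm x}_t$ is the orthogonal AMP \cref{eq:rotation-amp} on $\bH$ with nonlinearities $g_s(x) = f_s(x) - \E f_s(X_s)$. By \cref{lem:asymptotic-state-empirical-average} and concentration (\cref{lemma:full-factorization}), replacing the empirical centering constants $\langle f_s(\bm x_s)\rangle$ by their deterministic limits does not change asymptotic states. Note also that $g_s' = f_s'$.

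\textbf{Gaussian initialization.} \Cref{thm:orthogonal-amp} is stated with $\bm x_0 = \bm 1$, so it must be extended to $\bm x_0 \sim \calN(\bm 0,\bI)$ with $f_0(x) = x$. I would do this by rerunning the diagrammatic argument of \Cref{sec:dynamics} with diagrams whose vertices may carry factors of $\bm x_0$. Taking the expectation over $\bm x_0$ by Wick's lemma pairs these leaves, which reduces everything to ordinary diagrams in $\bH$. In the resulting treelike analysis, the $\bm x_0$-leaves behave like an extra Gaussian tree with variance $1$.

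In the covariance formula of \Cref{thm:orthogonal-amp}, the $s'=0$ term involved $f_0(X_0) = 1$. Here it becomes $\E[X_0^2] = 1$ when paired with itself. Terms pairing $X_0$ with some $\overline{F}_{t'}$, $t' \ge 1$, contribute through $\E[X_0\overline F_{t'}]$; I expect these to vanish by the structure of the cactus expansion, which is consistent with the statement's convention $\overline F_0 = 1$. For $s' \ge 1$ the term is $\E[\overline F_{s'}\overline F_{t'}]$. This yields the stated covariance for the asymptotic state of $(\widetilde{\bm x}_t)$, which is a centered Gaussian process.

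\textbf{Removing the shift by induction on $t$.} Suppose $\bm x_r = \widetilde{\bm x}_r + \delta_r \bm 1$ for $r < t$, with scalars $\delta_r \to 0$ in $L^2$ and all empirical moments bounded. Then $\bar{\bm g}_s$ and $\langle f_r'(\bm x_r)\rangle$ differ from their counterparts computed from $\widetilde{\bm x}$ by terms of the form (polynomial in $\delta$) $\times$ (vector with bounded empirical moments). Using $\|\bH\| = O(1)$, which follows from tightness after truncation as in \Cref{lem:truncation}, these perturbations stay $o(1)$ in normalized $\ell_2$. Next, $\delta_t = -\langle\widetilde{\bm x}_t\rangle$ converges in $L^2$ to $\E X_t = 0$, since the limit is centered and the traffic distribution concentrates. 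Finally, an $o(1)$ shift in normalized $\ell_2$ does not change the limits of $\E\langle\varphi(\cdot)\rangle$ for polynomial $\varphi$, by Cauchy--Schwarz together with the uniform moment bounds from \Cref{claim:existence-joint-moments}.

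\textbf{Main obstacle.} I expect the hardest step to be this perturbation and moment control. The shifts $\delta_r$ are random and correlated with $\bH$, so the bounds need $L^2$ control of products of $\delta$'s with high empirical moments. I would handle this with Hölder's inequality and the uniform moment bounds. The Gaussian-initialization extension should be routine bookkeeping.
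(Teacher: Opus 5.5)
Your plan is correct in outline but organizes the two reductions differently from the paper. The paper never extends \Cref{thm:orthogonal-amp} to a Gaussian start. It first compares the punctured iteration with an unpunctured one, $\bm y_t$, that keeps the exact Onsager vectors $\bm b_{s,t}$. It writes the difference as diagrams whose polynomial coefficients are each divisible by some $\delta_s=\langle\bH\bm\Pi f_s(\bm y_s)\rangle$, and proves $\delta_s\to0$ in $L^2$ by label parity (leaves carry $p=1$, so the relevant cactus terms average to zero over $\bm y_0$). It then closes with one Cauchy--Schwarz against a factor bounded by traffic concentration, using no norm bounds. Next it swaps $\bm y_0$ for $\bm u_0=\bm 1$: surviving cactus terms carry labels in $\{0,2\}$, so $\E_{\bm y_0}$ equals evaluation at $\bm 1$. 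Finally it applies \Cref{thm:orthogonal-amp} verbatim, which is where $\overline{F_0}=1$ comes from.

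Your identity $\bm x_t=\bm\Pi\widetilde{\bm x}_t$, which makes the shift exactly $-\langle\widetilde{\bm x}_t\rangle$, is a genuine simplification: $\delta_t\to0$ becomes centeredness of the limit plus concentration. The cost is redoing state evolution with a random start. That includes a proof, not just an expectation, that $\E[X_0\overline{F}_{t'}]=0$ for $t'\ge1$. This follows from the same parity bookkeeping (\Cref{claim:initialization-structure,lem:initialization-expectation}): multiplying by the singleton $\bm x_0$ leaves an odd label at the vertex where it sits.

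Two repairs are needed.
\begin{itemize}
\item As you define it, $\widetilde{\bm x}_t$ depends on the true iterates, and a shift $\delta_r\bm 1$ stops being constant after passing through $f_s$ and $\bH$. Your induction must therefore compare $\bm x_r$ with a separately defined autonomous iterate $\hat{\bm x}_r$, under the hypothesis $\frac1n\|\bm x_r-\hat{\bm x}_r\|_2^2\to0$ in every $L^p$.
\item \Cref{lem:truncation} gives convergence in distribution, not convergence of polynomial moments. Getting $\|\bH\|=O(1)$ that way needs an extra uniform-integrability step. It is simpler to drop norms and use the paper's divisibility bookkeeping, where concentration gives $\delta_s\to0$ in every $L^p$.
\end{itemize}
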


\noindent By~\Cref{thm:universality-orthogonal}, the conclusion of~\Cref{prop:hadamard-explicit-state} also holds for any sequence of deterministic matrices satisfying the delocalization assumption~\Cref{assumption:punctured} and having a limiting diagonal distribution that factorizes over cycles (that is, matches the diagonal distribution of some orthogonally invariant random matrix ensemble).
In particular, the conclusion holds for the Walsh-Hadamard matrices and the Discrete Cosine and Sine Transform matrices, for which the $\kappa_q$ are the free cumulants of the \rom  (\cref{eq:freeCumulantROM}).

The proof of~\Cref{prop:hadamard-explicit-state} proceeds by reducing to the following iteration on the original, non-punctured matrix, initialized at the all-ones vector:
\begin{alignat}{2}
    \bm u_0 &= \bm 1\,,\qquad
    \bm u_t
    = \bm H \overline{\bm f_{t-1}}
    - \sum_{s=0}^{t-1} \bm b_{s,t} \cdot \overline{\bm f_s} \quad \forall t\ge 1\,,\nonumber\\
    \text{where } {\bm b}_{s,t}[i]
    &\defeq \sum_{\substack{i_s,\ldots,i_{t-1}\in[n] \textnormal{ distinct}\\i_s=i}}
    \left(\prod_{r=s+1}^{t-1} \bm H[i_{r-1}, i_{r}] \bm f'_{r}[i_{r}] \right)
    \bH[i_{t-1},i_s]\quad \forall t> s\ge 0\,,\label{eq:backtrackPunctured}\\
    \overline{\bm f_0} &\defeq \bm u_0\,,\quad \overline{\bm f_t} \defeq \bm \Pi f_t(\bm u_t)\quad \forall t\ge 1\,.\nonumber
\end{alignat}

\begin{lemma}\label{lem:switchInit}
    For any $t\ge 1$ and any polynomial $\varphi:\R^t \to \R$,
    \begin{align*}
        \lim_{n\to\infty} \E_{\bm H,\bm x_0} \langle \varphi(\bm x_1, \ldots, \bm x_t)\rangle &= \lim_{n\to\infty} \E_{\bm H} \langle \varphi(\bm u_1, \ldots, \bm u_t)\rangle\,.
    \end{align*}
\end{lemma}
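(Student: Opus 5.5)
The plan is to compare the two sides through their diagrammatic expansions. Fix $t$ and a polynomial $\varphi$; by linearity it is enough to treat monomials, so both $\langle \varphi(\bx_1,\dots,\bx_t)\rangle$ and $\langle \varphi(\bu_1,\dots,\bu_t)\rangle$ are finite linear combinations of scalar polynomials. For the left-hand side, I would first replace the empirical averages $\langle f'_r(\bx_r)\rangle$ and $\langle f_s(\bx_s)\rangle$ in the iteration by their deterministic limits. This should be justified, as in the proof of \Cref{thm:orthogonal-amp}, by \Cref{lem:asymptotic-state-empirical-average} together with concentration (\Cref{lemma:full-factorization}); I will assume these transfer to the punctured, Gaussian-initialized setting. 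After this replacement, $\bx_t$ is a genuine pGFOM and expands as a linear combination of vector graph polynomials in $\bA$. The only difference from the setting of \Cref{sec:dynamics} is that each leaf carries a factor $\bx_0[\cdot]$ rather than $1$. The right-hand side expands in the same way, using $\bH$ and the projections $\bm\Pi$, which sit on the edges feeding into $\bH$.

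\textbf{Averaging over $\bx_0$.} Next I would take $\E_{\bx_0}$ using the Wick lemma (\Cref{lem:wick}). Since $\E\,\bx_0[i]\bx_0[j]=\delta_{ij}$, each Wick pairing of the $\bx_0$-leaves identifies the paired leaf vertices. Every term then becomes a scalar $w$-basis graph polynomial in $\bA$ with no remaining external vector. Because $\bA=\bm\Pi\bH\bm\Pi$ and $\bm\Pi\bm 1=0$, I would then rewrite each term as a polynomial in $\bH$, with $\bm\Pi$ inserted on the edges adjacent to the former leaves.

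\textbf{Matching the two expansions.} The key step is to match the resulting diagrams with those of the $\bu$-expansion, and I would do this at the level of asymptotically non-negligible diagrams. On the $\bx$ side, \Cref{thm:universality-new} (via \Cref{lem:eqDiffMobius}) says that diagrams with bridges vanish for the punctured matrix in the $w$-basis. Hence only pairings that make the merged diagram 2-edge-connected survive. In those diagrams the paired $\bx_0$-leaves play exactly the role that the grafted copies of $\bm 1$ play on the $\bu$ side once $\bm\Pi$ is applied to $f_t(\bu_t)$. Concretely, $\overline{\bm f_t}=\bm\Pi f_t(\bu_t)$ subtracts the mean, and the mean-subtraction inside the $\bx$-iteration, $f_s(\bx_s)-\langle f_s(\bx_s)\rangle\bm 1$, is what produces the same centering $\overline{F_s}$ in the limit. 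The initial factor $\overline{\bm f_0}=\bm 1$, paired with $\bx_0$ and $\kappa$-weighted cycles, should contribute the covariance entry $\E[\overline{F_0}\,\overline{F_0}]=1$ on both sides. I would organize the comparison through the common limiting object given by \Cref{lem:cov-cactus} and \Cref{thm:full-onsager}, applied to $\bH$, which has the factorizing strong cactus property. That is, I would show that both sides converge to the moments of the same conditionally Gaussian process by computing their cactus (resp.\ 2-edge-connected) contributions.

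\textbf{Main obstacle.} I expect the hardest part to be the bookkeeping in this matching step. One has to check that the Onsager terms $\kappa_{t-s}\prod_r\langle f'_r\rangle$ on the punctured side exactly cancel the same backtracking diagrams that $\bb_{s,t}$ cancels on the $\bH$ side, and that puncturing only affects diagrams containing a bridge towards a $\bm 1$ or $\bx_0$ leaf. For this I would first try proving an intermediate statement: modulo diagrams that vanish in the limit (bridged diagrams in the $w$-basis for $\bA$, and non-cactus diagrams in the $z$-basis for $\bH$), the expansions of $\bx_t$ and $\bu_t$ agree after identifying paired $\bx_0$-leaves with $\bm 1$-leaves. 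I would prove this by induction on $t$, mirroring the proof of \Cref{lem:memory-formula}. Throughout, \Cref{prop:to-punctured} would be used to control the change from $\bH$ to $\bA$ on open cactuses.
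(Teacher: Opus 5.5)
The Wick averaging over $\bm x_0$ is fine, but the matching step is asserted rather than proved, and all the content of the lemma lies there. The tools you invoke cannot supply it. After averaging you are working with the traffic distribution of $\bm A=\bm\Pi\bm H\bm\Pi$, which lacks the strong cactus property. So the treelike calculus (\Cref{thm:full-onsager}, \Cref{lem:cov-cactus}, and the backtracking cancellation you want to check for the Onsager terms) is unavailable on the $\bm x$ side. Showing directly that the $\bm x$-moments are those of the Gaussian process is exactly \Cref{prop:hadamard-explicit-state}, which this lemma is used to prove, so that fallback is circular. Your preliminary replacement of $\langle f_s(\bm x_s)\rangle$ and $\langle f'_r(\bm x_r)\rangle$ by limits has the same problem: \Cref{lem:asymptotic-state-empirical-average} and \Cref{lemma:full-factorization} require, respectively, the strong cactus property and orthogonal invariance, and $\bm A$ has neither.

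The other ingredients do not help either. \Cref{thm:universality-new} is proved under \Cref{assumption:punctured}, which the paper verifies only for delocalized orthogonal matrices and the \textsf{ROM}, not for a general orthogonally invariant $\bm H$ with merely tight norm. Even granted, it only tells you which $\bm A$-diagrams vanish, not how the surviving ones compare with the $\bm H$-diagrams of the $\bm u$-expansion. Saying that paired $\bm x_0$-leaves ``play the role'' of grafted copies of $\bm 1$ is not a diagram-level correspondence. \Cref{prop:to-punctured} only bounds open cactus matrices by $O(1)$ in Frobenius norm. That is negligible for $\frac1n$-normalized cactus values but says nothing about diagrams with leaves or bridges, where puncturing changes values at leading order (\Cref{ex:puncture-hadamard}). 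Finally, $\bm\Pi$ is not confined to edges next to former leaves: every copy of $\bm A$ carries it. For example, in $\bm A^2=\bm\Pi\bm H\bm\Pi\bm H\bm\Pi$ the inner projection contributes $-\frac1n\bm H\bm 1\bm 1^\top\bm H$.

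What is missing is an exact reduction to $\bm H$ that never needs the punctured traffic distribution. Since $\bm A\bm f=\bm H\bm\Pi\bm f-\langle\bm H\bm\Pi\bm f\rangle\bm 1$, the paper compares $\bm x_t$ with the intermediate iteration $\bm y_t=\bm H\bm\Pi f_{t-1}(\bm y_{t-1})-\sum_{s<t}\bm b_{s,t}\cdot\bm\Pi f_s(\bm y_s)$, with $\bm y_0\sim\calN(\bm 0,\bm I)$; $\bm b_{s,t}$ is replaced by its constant limit using concentration for $\bm H$. The discrepancy is a polynomial each of whose coefficients is divisible by some $\delta_s=\langle\bm H\bm\Pi f_s(\bm y_s)\rangle$, and $\delta_s\to0$ in $L^2$. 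The reason is that, expanding into diagrams whose vertices carry powers $p(v)$ of $\bm y_0$, leaves of treelike terms carry $p(v)=1$. The cactus contributions to $\frac1{n^2}\langle\bm H\bm 1,\bm\Pi f_s(\bm y_s)\rangle^2$ arise by closing the edge of $\bm H\bm 1$ onto such a leaf, so they have zero Gaussian expectation. Cauchy--Schwarz then kills the discrepancy.

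The switch from $\bm y_0$ to $\bm u_0=\bm 1$ is then direct, after dropping $\bm\Pi$ on $f_0(\bm y_0)=\bm y_0$ since $\langle\bm y_0\rangle\to0$. Non-cactus terms vanish by the strong cactus property of $\bm H$. The surviving cactus terms have $p(v)\in\{0,2\}$, whose Gaussian moments equal those of $\bm 1$. Everything happens inside the traffic distribution of $\bm H$, where the strong cactus property and concentration hold.
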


\noindent The proof of~\Cref{lem:switchInit}
is deferred to~\Cref{appendix:initialization}.

\begin{proof}[Proof of~\Cref{prop:hadamard-explicit-state}]
    We apply~\Cref{thm:orthogonal-amp} to $\bm u_t$ after replacing iteratively each occurrence of $\bm \Pi f_t(\bm u_t)$ by $f_t(\bm u_t) - \E\left[f_t(U_t)\right] \cdot \bm 1$ (where $U_t$ is the asymptotic state of $\bm u_t$ as predicted by~\cref{thm:orthogonal-amp}). By~\Cref{lem:asymptotic-state-empirical-average}, this transformation does not change the asymptotic state of $\bm u_t$.
    The state evolution formula for polynomial test functions then transfers to $\bm x_t$ by~\Cref{lem:switchInit}.
\end{proof}

\subsubsection{Block-structured random matrices}

\newcommand{\Community}{\textnormal{Community}}

Our final example is the class of block-structured matrices whose blocks satisfy the factorizing strong cactus property, which we introduced in~\Cref{sec:wigner-block}.
As anticipated in~\Cref{ex:block-structured}, these matrices do not
themselves
satisfy the factorizing strong
cactus property.
Therefore, we start by describing the random
limit $Z^\infty_{\calC_1}$.

\begin{lemma}\label{lem:block-matrix-cactus-limits}
    Let $q \in \N$. For $r,c\in [q]$, let $\bA_{r,c} = \bA_{r,c}^{(n)} \in \R^{\frac nq \times \frac nq}_\sym$ be a sequence of symmetric random matrices such that $\bm A_{r,c} = \bm A_{c,r}$.
    Let $\bA \in \R^{n \times n}_\sym$ be the block matrix with blocks $(\bA_{r,c})_{r,c \in [q]}$.
    Assume that each $\bm A_{r,c}$ satisfies the factorizing strong cactus property,
    and $(\bm A_{r,c})_{1 \leq r\leq c \leq q}$
    are asymptotically traffic independent. Let $(\kappa^{\{r,c\}}_\ell)_{\ell\ge 1}$ be the limiting
    free cumulants of $\bm A_{r,c}$.
    Then,
    \[
        (\block(i), \bmz_{\cC_1}(\bm A)[i]) \tod  (R, Z_{\cC_1}^\infty(R))\,, \qquad i \sim \Unif([n])\,, \quad R \sim \Unif([q])\,,
    \]
    where  the (deterministic) sequence $Z_{\cC_1}^\infty(r)$ for $r\in [q]$ is defined recursively by:
    \begin{enumerate}[(i)]
        \item For the singleton cactus, $Z^\infty_{\mathrm{singleton}}(r) \defeq 1$.
        
        \item Suppose $\sigma \in \mathcal C_1$ is rooted at a vertex $u_1$ of degree $2$. Let $(u_1, \ldots, u_{\ell})$ be
        the cycle incident to the root.
        Let
        $\sigma_2, \dots, \sigma_{\ell} \in \mathcal C_1$ be the rooted cactuses
        attached to the vertices of the cycle. Then
        \[
            Z^\infty_\sigma(r) \defeq
            \begin{cases}
                \displaystyle
                \sum_{c\in [q]} \left[\kappa_\ell^{\{r,c\}}
                \prod_{\substack{k=2\\k\textnormal{ odd}}}^{\ell} Z^\infty_{\sigma_k}(r) \prod_{\substack{k=2\\k\textnormal{ even}}}^{\ell} Z^\infty_{\sigma_k}(c)\right]
                & \text{if $\ell$ is even} \\
                \displaystyle
                \kappa_\ell^{\{r,r\}}
                \prod_{k=2}^{\ell} Z^\infty_{\sigma_k}(r)
                & \text{if $\ell$ is odd}
            \end{cases}
        \]
    
        \item If $\sigma \in \mathcal C_1$ decomposes as
        $\sigma = \bigoplus_{k=1}^\ell \sigma_k$, then $Z^\infty_\sigma(r) \defeq \prod_{k=1}^\ell Z^\infty_{\sigma_k}(r)$.
    \end{enumerate}
    In particular, the law of the limit $Z_{\calC_1}^{\infty}$ is $\Unif(\{Z_{\calC_1}^{\infty}(r): r \in [q]\})$.
\end{lemma}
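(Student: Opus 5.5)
The plan is the method of moments applied to the pair $(\block(i), \bmz_{\cC_1}(\bA)[i])$. Both coordinates are bounded: the block label trivially, and the cactus coordinates by \Cref{lem:2ec-bounded} once we truncate via \cref{eq:tightnessNorm} and \Cref{lem:truncation}. So by \Cref{cor:bounded-limit-existence} it suffices to show one thing. For every $r\in[q]$ and cactuses $\sigma_1,\dots,\sigma_k\in\cC_1$, we need
\[
\frac 1n \E \sum_{i:\block(i)=r} \prod_{j}\bmz_{\sigma_j}(\bA)[i] \longrightarrow \frac 1q \prod_j Z^\infty_{\sigma_j}(r).
\]
Grafting gives $\prod_j \bmz_{\sigma_j}[i] = \bmz_{\sigma_1\oplus\cdots\oplus\sigma_k}[i]$ plus a combination of $\bmz$'s of diagrams obtained by merging vertices across different $\sigma_j$. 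By \Cref{lem:homeomorphic} (cactuses admit only the trivial homeomorphic matching), all those merged diagrams are non-cactus. Their contribution is therefore negligible: \Cref{prop:block-matrix-strong-cactus} gives the strong cactus property for $\bA$, and the restriction $\block(i)=r$ only selects one term of a sum to which the same bound applies. This reduces everything to first moments $\frac 1n\E\sum_{\block(i)=r}\bmz_\sigma[i]$ for a single $\sigma\in\cC_1$, with $\sigma$ grafted at the root. It also shows that rule (iii) is compatible with the product structure.

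To compute these first moments I would follow the proof of \Cref{prop:block-matrix-strong-cactus}. Decompose $\bmz_\sigma[i]$ over block colourings $\chi:V(\sigma)\to[q]$ with $\chi(\rt)=r$, and relabel vertices inside their blocks. This writes the quantity as $\frac 1n\cdot \frac nq\cdot\frac qn\, \E z_{\sigma_\chi}((\bA_{r,c}))$ in coloured-diagram form; the factor $1/q$ counts the fraction of roots lying in block $r$. By traffic independence (\Cref{def:traffic independence}), the limit of $\frac qn\E z_{\sigma_\chi}$ is $\prod_{\calC\in\CC(\sigma_\chi)}\kappa(\calC)$ when $\GCC(\sigma_\chi)$ is a tree, and $0$ otherwise. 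Each $\bA_{r,c}$ has the factorizing strong cactus property, so a colour component contributes the product of its free cumulants when it is itself a cactus, and zero otherwise. The combinatorial step is to show that the surviving colourings are exactly those in which every cycle of $\sigma$ is monochromatic in its edge colour. An edge between blocks $a,b$ carries the colour $\{a,b\}$. Along a cycle, the colour $\{r,c\}$ is constant precisely when the vertex blocks alternate $r,c,r,c,\dots$. For odd length this forces $c=r$; for even length the blocks alternate between $r$ and a free choice $c$. This gives the cases of rule (ii).

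The step I expect to be the main obstacle is exactly this claim: that a cycle whose edges use two or more colours forces $\GCC$ to contain a cycle, or else forces some colour component to be a non-cactus.

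\begin{itemize}
\item A multicoloured cycle splits into monochromatic arcs. These arcs lie in colour components joined consecutively through distinct vertices, so they close a cycle in $\GCC(\sigma_\chi)$, unless two arcs belong to the same colour component.
\item In that exceptional case, the component together with the other arcs of the cycle again closes a cycle in $\GCC$.
\end{itemize}

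Given the claim, every colour component is a union of whole cycles of $\sigma$ glued along the cactus tree structure. It is therefore a cactus, and $\GCC$ is a tree because $\sigma$ is a cactus. The contribution then factorizes as $\prod_{\rho\in\cyc(\sigma)}\kappa^{\{\cdot\}}_{|\rho|}$ summed over admissible colourings. Organising this sum recursively from the root cycle outward reproduces (i)–(iii):
\begin{itemize}
\item the root cycle chooses $c$, or is forced to $c=r$ when odd;
\item the vertices at odd positions inherit block $r$ and those at even positions inherit block $c$;
\item the hanging cactuses recurse with their root in that block.
\end{itemize}
Identifying the conditional limit given $R=r$ as the point mass $Z^\infty_{\cC_1}(r)$ finishes the argument, and the uniform-mixture statement follows.
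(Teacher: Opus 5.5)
The gap is in the step where, for a fixed block colouring $\chi$, you ``relabel vertices inside their blocks'' and obtain $\E z_{\sigma_\chi}((\bA_{r,c}))$. The paper's proof makes the same identification.

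Injectivity of $V(\sigma)\hookrightarrow[n]$ forces distinct within-block positions only for vertices of the \emph{same} colour. Two vertices of different colours may sit at the same position of $[n/q]$. So the $\chi$-term is really $\sum_P z_{(\sigma_\chi)_P}((\bA_{r,c}))$, summed over partitions $P$ of $V(\sigma)$ whose classes contain at most one vertex of each colour. Traffic independence must then be applied to every quotient $(\sigma_\chi)_P$, not only to trivial $P$.

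Under the stated hypotheses these extra terms need not vanish, and the lemma as written is false. Take $q=2$, $\bA_{1,1}=\bA_{2,2}=0$ and $\bA_{1,2}=\bA_{2,1}=c\bI_{n/2}$ with $c\neq 0$.
\begin{itemize}
\item Each block has the factorizing strong cactus property: $c\bI$ has $\kappa_1=c$ and all other free cumulants $0$.
\item These deterministic permutation-invariant matrices are traffic independent, by \Cref{thm:male-traffic-independence} or by direct check.
\item For the rooted $2$-cycle $\sigma$, $\bz_\sigma(\bA)[i]=\sum_{j\neq i}\bA[i,j]^2=c^2$ for every $i$, while rule (ii) predicts $\kappa_2^{\{r,1\}}+\kappa_2^{\{r,2\}}=0$.
\end{itemize}
The missing mass is exactly the quotient that identifies the two vertices of the $2$-cycle (blocks $1$ and $2$, same position). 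That quotient is one vertex carrying two $\bA_{1,2}$-loops, worth $(\kappa_1^{\{1,2\}})^2=c^2$.

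The same example gives $\frac1n z_{\mathrm{edge}}(\bA)=c$ for the single-edge diagram. So the strong cactus property of $\bA$, which you import from \Cref{prop:block-matrix-strong-cactus} to reduce joint moments to first moments, also fails; that proposition's proof makes the same identification. The step you flagged as the main obstacle (killing multicoloured cycles via $\GCC$) is fine; the real difficulty is here.

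An extra hypothesis is therefore needed. A natural one, satisfied by the block GOE and community applications, is that the off-diagonal blocks $\bA_{r,c}$, $r\neq c$, have vanishing odd free cumulants. Under it, every quotient term with nontrivial $P$ vanishes, for any connected diagram:
\begin{itemize}
\item A nonzero limit forces $\GCC$ of $(\sigma_\chi)_P$ to be a tree with cactus colour components, hence $\sigma_P$ is a cactus with monochromatic cycles.
\item Take a shortest path in $\sigma$ joining two distinct vertices $p,p'$ of a common class of $P$. Minimality makes it meet every other class at most once and avoid the class of $p$ in its interior, so it projects onto a single cycle of $\sigma_P$.
\item That cycle is monochromatic and joins blocks $\chi(p)\neq\chi(p')$, so its colour is $\{\chi(p),\chi(p')\}$. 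Its edges alternate between these two blocks, so it has odd length and contributes a factor $\kappa^{\{\chi(p),\chi(p')\}}_{\mathrm{odd}}=0$.
\end{itemize}
With this added, your analysis of the trivial-$P$ terms and the recursion go through, and the same argument repairs \Cref{prop:block-matrix-strong-cactus}.

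Separately, \cref{eq:tightnessNorm} is not among the lemma's hypotheses, but you do not need it. The conditional limit given $\block(i)=r$ is a point mass, so convergence of the first two conditional moments already gives $L^2$ convergence; this is how the paper concludes.
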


\noindent The proof is deferred to~\Cref{app:block-matrix}. By unicity of the limit in distribution,~\Cref{lem:block-matrix-cactus-limits}, together with~\Cref{thm:convergence-distribution}, determines
the law of $Z^\infty_{\calA_1}$.
The joint convergence with $\block(i)$ clarifies the source of the randomness
of $Z^\infty_{\calC_1}$: it arises because of the random choice of the block an entry belongs to in the $\samp(\cdot)$ operation.

Using~\Cref{lem:block-matrix-cactus-limits},
we can specialize the treelike
AMP iteration and its state evolution
to concrete block-structured models.
We start with block GOE matrices (\cref{def:blockgoe}).
A family of AMP iterations for such matrices was derived in~\cite{rangan2011generalized, javanmard2013state}. As we will discuss below, these iterations
have the same asymptotic state as
treelike AMP.

\begin{theorem}[State evolution for the block GOE model]\label{thm:block-goe-state}
    Let $\bA \sim \bGOE(n, \bm\Sigma)$, where  $\bm \Sig \in \R_{\ge 0}^{q \times q}$ is a symmetric matrix.
    Given polynomial functions $f_t : \R \to \R$, let
    \begin{alignat}{2}
        \bmx_0 &= \bm{1}\,,\quad \bm x_1 = \bm A f_0(\bm x_0)\,, \quad &\bmx_t &= \bA f_{t-1}(\bm x_{t-1}) - (\bm A^{\odot 2}  f_{t-1}'(\bm x_{t-1})) \cdot f_{t-2}(\bm x_{t-2})\quad \forall t\ge 2\,.\label{eq:amp-block-goe}
    \end{alignat}
    Then the asymptotic state of $(\bm x_t)_{t\ge 1}$
    is a mixture $\frac 1q\sum_{r \in [q]} \mu_r$ where $\mu_r$ denotes the law of a centered Gaussian process $(X_t)_{t \geq 1}$ with covariance kernel $\bm \Gamma_r$ defined recursively by
    \[
        \bm \Gamma_{r}[s,t] = \sum_{c\in  [q]}\bigg[\bSig[r,c] \E_{(X_{T})_{T \ge 1} \sim \mu_c} \left[f_{s-1}(X_{s-1})f_{t-1}(X_{t-1})\right]\bigg]\quad \forall r \in [q]\,,\quad  \forall s,t\ge 1\,,
    \]
    with $X_0\defeq 1$.
\end{theorem}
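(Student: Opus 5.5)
The plan is to derive this as a specialization of \Cref{thm:full-onsager} and \Cref{lem:cov-cactus}, using \Cref{lem:block-matrix-cactus-limits} to identify the random cactus limits. First, the block GOE satisfies the hypotheses. Each block $\bA_{r,c}$ is a (rescaled) GOE, so by \Cref{thm:goe} it has the factorizing strong cactus property with only $\kappa_2^{\{r,c\}} = q\bSig[r,c]$ nonzero (up to the normalization $n/q$ versus $n$). The blocks are independent and $S_{n/q}$-invariant, and their traffic distributions concentrate. Hence \Cref{thm:male-traffic-independence} gives traffic independence. \Cref{prop:block-matrix-strong-cactus} then yields the strong cactus property, and tightness of $\|\bA\|$ is standard. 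So \Cref{thm:convergence-distribution} and \Cref{thm:full-onsager} apply. By \Cref{lem:block-matrix-cactus-limits}, $Z^\infty_{\calC_1}$ is a deterministic function of a uniform block label $R$. Conditioning on $R=r$ will produce the mixture.

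Second, we compare \cref{eq:amp-block-goe} with the treelike AMP \cref{eq:tree-amp}. Since only $\kappa_2$ is nonzero, every cactus containing a cycle of length $\neq 2$ has zero limit. Therefore $\bmb_{s,t}$ is asymptotically negligible unless $t-s=2$. For $s=t-2$, $\bmb_{t-2,t}[i]=\sum_{j\ne i}\bA[i,j]^2 f'_{t-1}(\bmx_{t-1})[j]$, which is $(\bA^{\odot 2}f'_{t-1}(\bmx_{t-1}))[i]$ up to the diagonal term $j=i$. That diagonal term is $O(1/n)$ per coordinate and negligible. We also need $\bmb_{t-1,t}$, the loop $\bA[i,i]$, to be negligible. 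It is a 1-cycle with $\kappa_1=0$, and the sampled value $\bA[i,i]=O(n^{-1/2})$ tends to $0$ in $L^2$. The step requiring most care is making these replacements rigorous. The corrections are products of vectors whose asymptotic states are degenerate or zero, and the errors can propagate through later iterations. Following the argument in the proof of \Cref{thm:orthogonal-amp}, I would argue that the corrections enter $\bmx_t$ only through cactus factors. A cactus factor with zero limit in $L^2$, multiplied by anything with bounded moments, stays negligible. By induction on $t$, this shows that \cref{eq:amp-block-goe} and \cref{eq:tree-amp} have the same asymptotic state. That state is therefore conditionally Gaussian given $Z^\infty_{\calC_1}$, equivalently given $R$.

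Third, for the covariance I would apply \Cref{lem:cov-cactus}. The kernel $\bmB_{s'st't}$ is a cycle of length $s-s'+t-t'$, which must equal $2$, forcing $s'=s-1$ and $t'=t-1$. No internal $\bmc_r$ factors appear. The cycle is the double edge $\bA[i,j]^2$ with $j\ne i$, summed against $f_{s-1}f_{t-1}$ evaluated at $j$. Averaging over $i$ in block $r$, the sum over $j$ in block $c$ contributes $\bSig[r,c]$ times the empirical average of $f_{s-1}(\bmx_{s-1})f_{t-1}(\bmx_{t-1})$ over block $c$. This is the same mechanism as the even-cycle case (ii) of \Cref{lem:block-matrix-cactus-limits} with $\ell=2$. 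That average converges to $\E_{\mu_c}[f_{s-1}(X_{s-1})f_{t-1}(X_{t-1})]$. Here I need the joint convergence of $(\block(i),\samp)$ from \Cref{lem:block-matrix-cactus-limits}, extended to the Gaussian part, so that conditional on $R=c$ the law is $\mu_c$. This gives the stated recursion for $\bm\Gamma_r$, with $X_0=1$ handling the case $s'=t'=0$.
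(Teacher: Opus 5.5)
Your proposal follows the paper's proof essentially step for step. You verify the hypotheses via \Cref{thm:goe}, \Cref{thm:male-traffic-independence} and \Cref{prop:block-matrix-strong-cactus}. You reduce to treelike AMP by discarding every $\bm b_{s,t}$ with $t-s\neq 2$, since cycles of length $\neq 2$ carry $\kappa_\ell=0$, and you also discard the diagonal part of $\bm b_{t-2,t}$. You then get the covariance from \Cref{lem:cov-cactus}, where only $s'=s-1$, $t'=t-1$ survives, and the $\ell=2$ case of \Cref{lem:block-matrix-cactus-limits} produces the mixture over $R$. Your analytic phrasing of negligibility (zero-limit cactus factors times bounded moments, propagated by induction) is an acceptable substitute for the paper's remark that these cycles and self-loops ``cannot be destroyed by later operations''. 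You also rightly note that conditioning on $R$ needs the joint convergence with $\block(i)$ to extend beyond the cactus coordinates, which the paper passes over.

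What does not hold as written is the constant. With blocks of size $n/q$ and entry variance $\bm\Sigma[r,c]/n$, the limiting second free cumulant of $\bA_{r,c}$ is $\kappa_2^{\{r,c\}}=\bm\Sigma[r,c]/q$, not $q\bm\Sigma[r,c]$. For $\block(i)=r$,
\[
\sum_{\substack{j\neq i\\ \block(j)=c}}\bA[i,j]^2\,g[j]\;\approx\;\frac{\bm\Sigma[r,c]}{n}\sum_{\block(j)=c} g[j]\;=\;\frac{\bm\Sigma[r,c]}{q}\cdot\frac{q}{n}\sum_{\block(j)=c} g[j]\,,
\]
so the block-$c$ sum contributes $\bm\Sigma[r,c]/q$ times the block-$c$ average, not $\bm\Sigma[r,c]$ times it. Your two claims are also mutually inconsistent: plugging $q\bm\Sigma[r,c]$ into case (ii) would give $q\bm\Sigma[r,c]$.

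Carried through correctly, your argument yields $\bm\Gamma_r[s,t]=\frac1q\sum_{c}\bm\Sigma[r,c]\,\E_{\mu_c}[f_{s-1}(X_{s-1})f_{t-1}(X_{t-1})]$. This already shows at $s=t=1$: $\bm x_1=f_0(1)\bA\bm 1$ gives $\E\bm x_1[i]^2=f_0(1)^2\frac1q\sum_c\bm\Sigma[r,c]$ exactly, whereas the displayed recursion gives $f_0(1)^2\sum_c\bm\Sigma[r,c]$.

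The paper's own proof makes the same identification $\kappa_2^{\{r,c\}}=\bm\Sigma[r,c]$ when it applies case (ii). Compare \Cref{thm:communitySimple}, where GOE blocks of size $n/q$ with entry variance $1/n$ are correctly assigned $\kappa_2=1/q$. So the displayed formula is correct only after rescaling, for instance if block entries have variance $q\bm\Sigma[r,c]/n$. You should carry the factor $\frac1q$ rather than match the display.
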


\begin{proof}
    By the discussion after~\Cref{thm:male-traffic-independence}, $\bA$ has a traffic distribution and satisfies the strong cactus
    property\footnote{One can also verify that it satisfies~\cref{eq:tightnessNorm}. But note that this was only needed in the proof of~\Cref{thm:convergence-distribution} to ensure the existence of the limit $Z^\infty_{\calC_1}$, which we established directly in~\Cref{lem:block-matrix-cactus-limits}.}, so it satisfies
    the assumption of~\Cref{thm:full-onsager}.
    We consider the treelike AMP iteration $\bm x_t$
    in~\cref{eq:tree-amp} applied to
    $\bm A$.
    We show that this iteration
    has the same asymptotic state
    as~\cref{eq:amp-block-goe} by
    simplifying the Onsager correction term.
    
    The free cumulants of the GOE are $0$ except for $\kappa_2$, so by~\cref{lem:block-matrix-cactus-limits}, the only asymptotically non-negligible cactuses are those such that every cycle is a 2-cycle.
    For any
    $s < t-2$, $\bm b_{s,t}$ contains an injective cycle
    of length larger than $2$ that cannot be
    destroyed by later operations. For $s=t-2$,
    we have:
    \begin{align*}
        \bmb_{t-2,t}[i] &= \sum_{\substack{j = 1\\j \neq i}}^n \bA[i,j]^2 \bmf'_{t-1}[j] = (\bm A^{\odot 2} \bm f_{t-1}')[i] - \bm A[i,i]^2 \bm f_{t-1}'[i]\,.
    \end{align*}
    Both $\bm A[i,i]^2 \bm f_{t-1}'[i]$ and
    $\bm b_{t-1,t}$ contain a self-loop that also
    cannot be destroyed by later operations.
    In conclusion, the treelike AMP algorithm from~\cref{eq:tree-amp} and the iteration in~\cref{eq:amp-block-goe} are equal up to negligible diagrams.
    By~\Cref{thm:full-onsager},
    the asymptotic state $(X_t)_{t\ge 1}$ of $(\bmx_t)_{t\ge 1}$ in~\cref{eq:amp-block-goe} exists and
    is Gaussian conditionally on
    $Z^\infty_{\calC_1}$, and so, in the construction from~\Cref{lem:block-matrix-cactus-limits}, it is Gaussian conditionally on the random variable $R$. 

    Next, we specialize the covariance formula given by~\Cref{lem:cov-cactus}. Since
    only cactuses of 2-cycles are nonzero in the traffic distribution of $\bm A$ (this may be induced from~\Cref{lem:block-matrix-cactus-limits}), only the term
    for $s'=s-1$ and $t'=t-1$ is non-negligible
    in the expansion of $\frac 1 n \E \bm x_s \cdot \bm x_t$ given by~\Cref{lem:cov-cactus}. The expansion into cactuses of that
    term is obtained by grafting together a 2-cycle at the root, and cactuses of 2-cycles
    from $\bm f_{s-1}$ and $\bm f_{t-1}$ at the
    child of the root. 
    Applying the recursive formula for $Z^\infty_{\calC_1}(r)$ in~\Cref{lem:block-matrix-cactus-limits}, we obtain:
    \begin{align*}
        \E \left[X_s X_t\mid R=r\right]
        &=  \sum_{c\in [q]} \bm \Sigma[r,c] \E\left[f_{s-1}(X_{s-1}) f_{t-1}(X_{t-1})\mid R=c\right]\qquad \forall r\in [q]\,.
    \end{align*}
    Thus, we have shown that, conditionally on $R\sim \Unif([q])$, $(X_t)_{t\ge 1}$
    is a Gaussian process with the required covariance. The result follows by taking $\mu_r$ to be the law of $(X_t)_{t\ge 1}$ conditionally
    on $R=r$.
\end{proof}

To illustrate the modularity of
our approach, we also study a different
block-structured matrix
model whose blocks
are not all GOE.

\begin{theorem}[State evolution for the community model]\label{thm:communitySimple}
    Let $\bm M\in \R_{\sym}^{\frac n q\times \frac n q}$ be
    an orthogonally invariant
    random matrix converging in
    tracial moments to a probability measure with free cumulants $(\kappa_q)_{q\ge 1}$
    such that $\kappa_2 = \frac 1 q$.
    Let $\bm A$ be the random symmetric $n\times n$ matrix with blocks $(\bA_{r,c})_{r,c \in [q]}$ given by $\bA_{1,1} = \bM$ and 
    for all $1\le r\le c\le q$ with $(r,c)\neq (1,1)$, 
    $\bA_{r,c}$ are i.i.d. $\frac n q\times \frac n q$ GOE matrices with entries of variance $\frac 1n$ (and we set $\bm A_{r,c} =\bm A_{c,r}$).
    
    Let $\bm x_t$ be the treelike AMP iteration~\cref{eq:tree-amp} run
    on $\bm A$ with arbitrary polynomial
    nonlinearities.
    Then the asymptotic state $(X_t)_{t\ge 1}$ of $(\bm x_t)_{t\ge 1}$ is the mixture $(1-\frac 1q)\mu_0 + \frac 1 q \mu_1$, where $\mu_i$ is the law
    of a centered Gaussian process
    $(X_t)_{t\ge 1}$ with covariance kernel $\bm \Gamma_i$
    defined recursively by, for
    all $s,t\ge 1$:
    \begin{align*}
        & \bm \Gamma_0[s,t] = \E\left[F_{s-1}F_{t-1}\right]\,\\
        & \bm \Gam_1[s,t] = \E\left[F_{s-1}F_{t-1}\right]  + \underset{(s',t') \neq (s-1,t-1)}{\sum_{s'=0}^{s-1} \sum_{\substack{t'=0}}^{t-1}} \kappa_{s-s'+t-t'} 
        \left(\prod_{r=s'+1}^{s-1} \E_{\mu_1}F'_r\right) \left(\prod_{r=t'+1}^{t-1}\E_{\mu_1} F'_r \right) \E_{\mu_1}\left[F_{s'}F_{t'}\right]\,,\\
        & F_t := f_t(X_t), \qquad F'_t := f'_t(X_t), \qquad X_0 = 1\,,
    \end{align*}
    where $\E_{\mu_1}$ denotes
    expectation with respect to $(X_t)_{t\ge 1}\sim \mu_1$.
\end{theorem}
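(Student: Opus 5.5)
The plan is to follow the template of the proof of \Cref{thm:block-goe-state}, with \Cref{lem:block-matrix-cactus-limits} supplying the law of $Z^\infty_{\calC_1}$.

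\textbf{Step 1: hypotheses.} The blocks are independent, permutation-invariant (after a harmless random relabelling), and have concentrating traffic distributions. For the GOE blocks this is \Cref{thm:goe}; for $\bM$ it is \Cref{thm:moments} together with \cref{lemma:full-factorization}. \Cref{thm:male-traffic-independence} then gives asymptotic traffic independence. \Cref{prop:block-matrix-strong-cactus} gives the strong cactus property for $\bA$, and each block has the factorizing strong cactus property. Tightness of $\|\bA\|$ follows from the tightness of the block norms. So \Cref{thm:convergence-distribution}, \Cref{thm:full-onsager} and \Cref{lem:cov-cactus} all apply, and the asymptotic state of treelike AMP is centered Gaussian conditionally on $Z^\infty_{\calC_1}$.

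\textbf{Step 2: the cactus limits.} A GOE block of $\frac nq\times\frac nq$ with entries of variance $\frac1n$ has $\kappa_2^{\{r,c\}}=\frac1q$ in the normalization of \Cref{lem:block-matrix-cactus-limits}, where cumulants are computed relative to $\frac nq$. All its other cumulants vanish. The block $\bM$ has $\kappa^{\{1,1\}}_\ell=\kappa_\ell$, with $\kappa_2=\frac1q$ by assumption. Hence every block has the same second cumulant $\frac1q$, and only the $(1,1)$ block has higher cumulants.

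From the recursion in \Cref{lem:block-matrix-cactus-limits}, $Z^\infty_\sigma(r)$ depends on $r$ only through the indicator $r=1$. I would prove this by induction on $\sigma$, showing that $Z^\infty_\sigma(r)$ takes one value $Z^{(0)}_\sigma$ for $r\neq1$ and possibly another value $Z^{(1)}_\sigma$ for $r=1$. For a cycle of length $\ell\ge3$ rooted at $r$:
\begin{itemize}
\item if $r\neq1$, every block touching the root is GOE, so the term vanishes;
\item if $r=1$, only $c=1$ survives (only the $(1,1)$ block has $\kappa_\ell\neq0$), giving $\kappa_\ell$ times the product of the $Z^{(1)}$ values of the attached cactuses.
\end{itemize}
For a 2-cycle, the sum $\sum_c \frac1q Z^\infty_{\sigma_2}(c)$ averages over $c$ and is independent of $r$. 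The conditional law therefore has two atoms, $R=1$ with probability $\frac1q$ and $R\neq1$ with probability $1-\frac1q$. This yields the mixture $(1-\frac1q)\mu_0+\frac1q\mu_1$.

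\textbf{Step 3: the covariance.} Condition on $R$ and average \cref{eq:cov-finite} from \Cref{lem:cov-cactus}. The cycle formed by the two paths in $\bB_{s'st't}$ has length $s-s'+t-t'$. Its root vertex lies in block $R$.

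\emph{The term $(s',t')=(s-1,t-1)$.} This is a 2-cycle. Its contribution is $\sum_c\frac1q\,\E[F_{s-1}F_{t-1}\mid R=c]$, which equals the unconditional $\E[F_{s-1}F_{t-1}]$ under the mixture. It appears in both $\bm\Gamma_0$ and $\bm\Gamma_1$.

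\emph{Longer cycles.} These survive only when $R=1$, and then all their vertices are forced into block $1$. They contribute $\kappa_{s-s'+t-t'}$ times the factors $\E[F'_r\mid R=1]$ hanging along the cycle and $\E[F_{s'}F_{t'}\mid R=1]$, i.e.\ expectations under $\mu_1$. This gives exactly $\bm\Gamma_1$.

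\textbf{Main obstacle.} The delicate point is justifying the forcing claims from the recursion of \Cref{lem:block-matrix-cactus-limits}. Cactuses hanging along a long cycle are evaluated at $r=1$. Cactuses hanging off a 2-cycle are averaged over $c$. The cross terms must combine into the unconditional expectation $\E[F_{s-1}F_{t-1}]$ appearing in $\bm\Gamma_0$.

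I would first verify the two-valued induction carefully, treating odd and even $\ell$ separately. Odd cycles are forced to $c=r$ by the recursion; for $r\neq1$ they vanish because GOE odd cumulants are zero. Then I would redo the covariance bookkeeping of the proof of \Cref{thm:block-goe-state} with this structure in hand.
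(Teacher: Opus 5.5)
Your proposal is correct and follows essentially the same route as the paper: \Cref{lem:block-matrix-cactus-limits} makes $Z^\infty_{\calC_1}(r)$ identical for all $r>1$, and specializing \Cref{lem:cov-cactus} conditionally on $R=1$ versus $R>1$ gives the two-cycle term averaged over blocks via $\kappa_2=\frac1q$, plus longer cycles confined to block $1$. The paper gets the two-valuedness from the symmetry of the cumulant structure under relabeling blocks $2,\dots,q$ rather than your explicit induction. One small caveat is that $\|\bM\|$ is not assumed tight, so you cannot deduce tightness of $\|\bA\|$ from the blocks; instead note, as in the footnote to the proof of \Cref{thm:block-goe-state}, that tightness is only needed to obtain the limit $Z^\infty_{\calC_1}$, which \Cref{lem:block-matrix-cactus-limits} supplies directly.
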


\begin{proof}
    The assumptions of~\Cref{lem:block-matrix-cactus-limits,thm:communitySimple} are satisfied. All
    blocks except the one in position $(1,1)$ have the same
    free cumulants (the GOE free cumulants, normalized so that $\kappa_2 = \frac 1 q$). Therefore, in the construction of~\Cref{lem:block-matrix-cactus-limits}, we have $Z^\infty_{\calC_1}(r) = Z^\infty_{\calC_1}(s)$ for all $r,s>1$. Let $\mu_0$ (resp. $\mu_1$) be
    the law of the asymptotic state
    $(X_t)_{t\ge 1}$ of the treelike AMP iteration $(\bm x_t)_{t\ge 1}$ conditioned on $R>1$ (resp. $R=1$). By~\Cref{thm:full-onsager}, both $\mu_0$ and $\mu_1$ are the laws of centered Gaussian processes.
    It remains to specialize the formula of \Cref{lem:cov-cactus} for their covariance 
    to the present setting. 
    
    Conditionally
    on $R>1$ (that is, outside the community), only 2-cycles
    at the root contribute to $Z^\infty_{\calC_1}$. Thus, by
    combining the strong cactus property,~\Cref{lem:cov-cactus},
    and~\Cref{lem:block-matrix-cactus-limits}, we obtain
    \begin{align*}
        \E \left[X_s X_t\mid R >1\right] &= \frac 1q \left(\E \left[f_{s-1}(X_s) f_{t-1}(X_t) \mid R = 1\right] + (q-1) \E \left[f_{s-1}(X_s) f_{t-1}(X_t) \mid R > 1\right]\right)\\
        &= \E \left[f_{s-1}(X_s) f_{t-1}(X_t)\right]\,.
    \end{align*}
    
    Conditionally on $R=1$ (that is, inside
    the community), we also obtain 
    a contribution of $\E \left[f_{s-1}(X_s) f_{t-1}(X_t)\right]$ from the term
    $s'=s-1$ and $t'=t-1$ in~\Cref{lem:cov-cactus} (again using the normalization $\kappa_2 = \frac 1q$ inside the community).
    For all of the remaining terms $s',t'$,
    when $s-s' + t-t'$ is an even
    integer larger than $2$, we obtain
    a contribution only from $c=1$
    in~\Cref{lem:block-matrix-cactus-limits}, namely
    \[
        \kappa_{s-s'+t-t'} \E\left[ F_{s'} F_{t'}\mid R=1\right]\prod_{r=s'+1}^{s-1} \E\left[ F'_{r}\mid R=1\right]\prod_{r=t'+1}^{t-1} \E\left[ F'_{r}\mid R=1\right]\,.
    \]
    When $s-s'+t-t'$ is odd,~\Cref{lem:block-matrix-cactus-limits} yields exactly the same expression as the even case.
    Altogether, we obtain the recursion
    \begin{align*}
        &\qquad\E\left[X_s X_t\mid R=1\right] = \E \left[F_{s-1} F_{t-1}\right] + \\
        &\underset{(s',t') \neq (s-1,t-1)}{\sum_{s'=0}^{s-1} \sum_{\substack{t'=0}}^{t-1}} \kappa_{s-s'+t-t'} \E\left[ F_{s'} F_{t'}\mid R=1\right] \prod_{r=s'+1}^{s-1} \E\left[ F'_{r}\mid R=1\right]\prod_{r=t'+1}^{t-1} \E\left[ F'_{r}\mid R=1\right] \,.
    \end{align*}
    These are the desired covariance
    formulas for $\mu_0$ and $\mu_1$, and the mixing weights
    of the events $(R=1)$ and $(R>1)$ are indeed $\frac 1q$ and $1-\frac 1 q$, respectively.
\end{proof}

\subsubsection{Further extensions}

There are several possible technical extensions of the methods we have developed here,
whose full development is left for future work.

First,~\Cref{lem:block-matrix-cactus-limits} applies to general orthogonally invariant distributions within the blocks, not just the GOE. In principle, one can then derive a corresponding state evolution formula mechanically for non-identically distributed orthogonally invariant blocks with arbitrary free cumulants, although the resulting expression is quite complicated.

Second, for technical reasons, we assumed that the blocks are square and symmetric, so that we could work with undirected graphs. The results of~\cite{male2020traffic,cebron2024traffic} extend to general matrices, and our techniques should also extend to the setting of varying block sizes and asymmetric matrices, leading to non-uniform mixtures in the recursion for the covariance kernel.

One caveat of the treelike AMP algorithm
is that the Onsager correction term
in~\cref{eq:tree-amp} is not obviously efficient
to compute in practice.\footnote{The $\bm b_{s,t}$ can be approximated with high probability to negligible error for all
$0\le s<t\le T$ in time $2^{O(T)}\poly(n)$ using the color coding
technique~\cite{colorCoding, heavyTailedWigner}, but the exponential
dependence on $T$ makes this
algorithm impractical to implement
for large $T$.}
On the other hand,
the vectors $\bb_{s,t}$ have asymptotically constant entries in many settings,
so that the Onsager correction can be replaced by a simpler asymptotically equivalent term, like in \Cref{thm:orthogonal-amp,prop:hadamard-explicit-state}.
This should also hold for block-structured models, as in the generalized AMP algorithm of Javanmard and Montanari~\cite{javanmard2013state}.
For example, \cref{eq:amp-block-goe} is expected to be asymptotically equivalent to:
\begin{alignat*}{2}
    \bmx_0 &= \bm{1}\,, \qquad &\bmx_t &= \bA f_{t-1}(\bm x_{t-1}) - \bmb_{t-2,t} \cdot f_{t-2}(\bm x_{t-2})\,,\\
     && \bmb_{t-2,t}[i] &= \sum_{c = 1}^q \matSig[\block(i), c] \langle f'(x_{t-1}) \cdot \bm{1}_{\block = c} \rangle\,,
\end{alignat*}
where $\bm{1}_{\block = c} \in \{0,1\}^{n}$ indicates the entries in block $c \in [q]$.
The treelike AMP algorithm for~\cref{thm:communitySimple} is expected to be asymptotically equivalent to:
\begin{alignat*}{2}
    \bmx_0 &= \bm{1}\,, \qquad &\bmx_t &= \bA \bmf_{t-1} - \langle \bmf'_{t-1} \rangle \bmf_{t-2}
    - \sum_{\substack{s = 0\\ s \neq t-2}}^{t-1} \kappa_{t-s} \left(\prod_{r=s+1}^{t-1} \langle \bmf_r' \cdot \bm{1}_{\block = 1} \rangle \right) \bmf_s \cdot \bm{1}_{\block = 1}\,, \\
    \bmf_{t} &\defeq f_t(\bmx_t)\,, \qquad &\bmf'_t &\defeq f'_t(\bx_t)\,,
\end{alignat*}
where $\bm{1}_{\block = 1} \in \{0,1\}^n$ indicates the entries in the first block.
Because these expressions involve the blockwise indicators
$\bm{1}_{\block = c}$, they could be represented and analyzed using an extended
diagram basis in which certain indices are constrained to lie in a prescribed
block. We leave the full development of this extension to future work.

A final open question is to characterize traffic distributions satisfying the
(not necessarily factorizing) strong cactus property. Sequences of block matrices with orthogonally invariant
blocks provide one general construction of matrices with the strong cactus
property. If a sequence of matrices has the strong cactus property, must its
traffic distribution arise as the limit (in an appropriate sense) of traffic
distributions of block matrices with orthogonally invariant blocks (allowing the number of blocks to tend to infinity)?

\bibliographystyle{alpha}
{\footnotesize\bibliography{references}}

@article{etingof2024mathematical,
  title={Mathematical ideas and notions of quantum field theory},
  author={Etingof, Pavel},
  journal={arXiv preprint arXiv:2409.03117},
  year={2024}
}

@book{billingsleyProbabilityBook,
  title={Probability and measure},
  author={Billingsley, Patrick},
  year={1995},
  publisher={John Wiley \& Sons},
  edition={Third}
}

@article{cotler2017black,
  title={Black holes and random matrices},
  author={Cotler, Jordan S. and {Gur-Ari}, Guy and Hanada, Masanori and Polchinski, Joseph and Saad, Phil and Shenker, Stephen H. and Stanford, Douglas and Streicher, Alexandre and Tezuka, Masaki},
  journal={Journal of High Energy Physics},
  volume={2017},
  number={5},
  pages={1--54},
  year={2017},
  publisher={Springer}
}

@article{saad2019jt,
  title={{JT gravity as a matrix integral}},
  author={Saad, Phil and Shenker, Stephen H. and Stanford, Douglas},
  journal={arXiv preprint arXiv:1903.11115},
  year={2019}
}

@article{cebron2024traffic,
  title={{Traffic distributions and independence II: Universal constructions for traffic spaces}},
  author={C{\'e}bron, Guillaume and Dahlqvist, Antoine and Male, Camille},
  journal={Documenta Mathematica},
  volume={29},
  number={1},
  pages={39--114},
  year={2024}
}

@article{lovig2025universality,
  title={On Universality of Non-Separable Approximate Message Passing Algorithms},
  author={Lovig, Max and Wang, Tianhao and Fan, Zhou},
  journal={arXiv preprint arXiv:2506.23010},
  year={2025}
}

@article{robbins1939theorem,
  title={A theorem on graphs, with an application to a problem of traffic control},
  author={Robbins, Herbert Ellis},
  journal={The American Mathematical Monthly},
  volume={46},
  number={5},
  pages={281--283},
  year={1939},
  publisher={JSTOR}
}

@book{bai2010spectral,
  title={Spectral analysis of large dimensional random matrices},
  author={Bai, Zhidong and Silverstein, Jack W.},
  volume={20},
  year={2010},
  publisher={Springer}
}

@article{mingo2012sharp,
  title={Sharp bounds for sums associated to graphs of matrices},
  author={Mingo, James A. and Speicher, Roland},
  journal={Journal of Functional Analysis},
  volume={262},
  number={5},
  pages={2272--2288},
  year={2012},
  publisher={Elsevier}
}

@article{cirone2024graph,
  title={Genus Expansion for Non-linear Random Matrix Ensembles with Applications to Neural Networks},
  author={Cirone, Nicola Muca and Hamdan, Jad and Salvi, Cristopher},
  journal={arXiv preprint arXiv:2407.08459},
  year={2024}
}

@book{male2020traffic,
  title={Traffic distributions and independence: permutation invariant random matrices and the three notions of independence},
  author={Male, Camille},
  volume={267:1300},
  year={2020},
  publisher={American mathematical society}
}

@article{dudeja2024spectral,
  title={Spectral universality in regularized linear regression with nearly deterministic sensing matrices},
  author={Dudeja, Rishabh and Sen, Subhabrata and Lu, Yue M.},
  journal={IEEE Transactions on Information Theory},
  year={2024},
  publisher={IEEE}
}

@article{marinari1994replicaII,
  title={{Replica field theory for deterministic models. II. A non-random spin glass with glassy behaviour}},
  author={Marinari, Enzo and Parisi, Giorgio and Ritort, Felix},
  journal={Journal of Physics A: Mathematical and General},
  volume={27},
  number={23},
  pages={7647},
  year={1994},
  publisher={IOP Publishing}
}

@article{marinari1994replicaI,
  title={Replica field theory for deterministic models: I. Binary sequences with low autocorrelation},
  author={Marinari, Enzo and Parisi, Giorgio and Ritort, Felix},
  journal={Journal of Physics A: Mathematical and General},
  volume={27},
  number={23},
  pages={7615},
  year={1994},
  publisher={IOP Publishing}
}

@article{diFrancesco1995gravity,
  title={{2D gravity and random matrices}},
  author={Di Francesco, Philippe and Ginsparg, Paul and Zinn-Justin, Jean},
  journal={Physics Reports},
  volume={254},
  number={1-2},
  pages={1--133},
  year={1995},
  publisher={Elsevier}
}

@book{AGZ-2010-RandomMatrices,
  title={An introduction to random matrices},
  author={Anderson, {Greg W.} and Guionnet, Alice and Zeitouni, Ofer},
  year={2010},
  publisher={Cambridge University Press}
}

@article{gueddari2025approximate,
  title={{Approximate Message Passing for General Non-Symmetric Random Matrices}},
  author={Gueddari, Mohammed-Younes and Hachem, Walid and Najim, Jamal},
  journal={Journal of Theoretical Probability},
  year={2026},
  volume = 39
}

@article{parisi1995mean,
  title={Mean-field equations for spin models with orthogonal interaction matrices},
  author={Parisi, Giorgio and Potters, Marc},
  journal={Journal of Physics A: Mathematical and General},
  volume={28},
  number={18},
  pages={5267},
  year={1995},
  publisher={IOP Publishing}
}

@inproceedings{jones2025fourier,
  title={Fourier Analysis of Iterative Algorithms},
  author={Jones, Chris and Pesenti, Lucas},
  booktitle={52nd International Colloquium on Automata, Languages, and Programming (ICALP 2025)},
  pages={102--1},
  year={2025},
  organization={Schloss Dagstuhl--Leibniz-Zentrum f{\"u}r Informatik}
}

@article{petersenEquivalence,
    title = {{On the relation between the multidimensional moment problem and the one-dimensional moment problem}},
    author = {Petersen, {L.C.}},
    year = 1982,
    volume = 51,
    journal = {Mathematica Scandinavica},
    pages = {361--366}
}

@article{mcLaughlin,
    title = {{Asymptotics of the partition function for random matrices via Riemann-Hilbert techniques and applications to graphical enumeration}},
    author = {Ercolani, {Nicholas M.} and McLaughlin, {Kenneth}},
    year = 2003,
    journal = {International Mathematics Research Notices},
    volume = 2003,
    pages = {755--820},
    issue = 14
}

@article{garouf,
    title = {{Analyticity of the planar limit of a matrix model}},
    author = {Garoufalidis, Stavros and Popescu, Ionel},
    year = 2013,
    journal = {Annales Henri Poincar\'{e}},
    volume = 14,
    pages = {499--565}
}

@article{tHooft1974planar,
title = {A planar diagram theory for strong interactions},
journal = {Nuclear Physics B},
volume = {72},
number = {3},
pages = {461-473},
year = {1974},
issn = {0550-3213},
doi = {https://doi.org/10.1016/0550-3213(74)90154-0},
url = {https://www.sciencedirect.com/science/article/pii/0550321374901540},
author = {Gerard 't Hooft}
}

@article{brezin1978planar,
  title={Planar diagrams},
  author={Br{\'e}zin, Edouard and Itzykson, Claude and Parisi, Giorgio and Zuber, Jean-Bernard},
  journal={Communications in Mathematical Physics},
  volume={59},
  pages={35--51},
  year={1978},
  publisher={Springer}
}

@article{wang2022universality,
  title={Universality of approximate message passing algorithms and tensor networks},
  author={Wang, Tianhao and Zhong, Xinyi and Fan, Zhou},
  journal={Annals of Applied Probability},
  volume = 34,
  issue = 4,
  pages = {3943--3994},
  year={2022}
}

@article{dudeja2023universality,
  title={Universality of approximate message passing with semirandom matrices},
  author={Dudeja, Rishabh and Lu, Yue M. and Sen, Subhabrata},
  journal={Annals of Probability},
  volume={51},
  number={5},
  pages={1616--1683},
  year={2023},
  publisher={Institute of Mathematical Statistics}
}

@book{charbonneau2023spin,
  title={{Spin Glass Theory and Far Beyond: Replica Symmetry Breaking after 40 Years}},
  author={Charbonneau, Patrick and Marinari, Enzo and Parisi, Giorgio and Ricci-Tersenghi, Federico and Sicuro, Gabriele and Zamponi, Francesco and M{\'e}zard, Marc},
  year={2023},
  publisher={World Scientific}
}

@article{takeuchi2019rigorous,
  title={Rigorous dynamics of expectation-propagation-based signal recovery from unitarily invariant measurements},
  author={Takeuchi, Keigo},
  journal={IEEE Transactions on Information Theory},
  volume={66},
  number={1},
  pages={368--386},
  year={2019},
  publisher={IEEE}
}

@article{ma2017orthogonal,
  title={{Orthogonal AMP}},
  author={Ma, Junjie and Ping, Li},
  journal={IEEE Access},
  volume={5},
  pages={2020--2033},
  year={2017},
  publisher={IEEE}
}

@article{gerbelot2022rigorous,
  title={{Rigorous Dynamical Mean-Field Theory for Stochastic Gradient Descent Methods}},
  author={Gerbelot, Cedric and Troiani, Emanuele and Mignacco, Francesca and Krzakala, Florent and Zdeborov{\'a}, Lenka},
  journal={SIAM Journal on Mathematics of Data Science},
  year={2024},
  volume = 6,
  issue = 2,
  pages = {400--427}
}

@article{rangan2016inference,
  title={{Inference for generalized linear models via alternating directions and Bethe free energy minimization}},
  author={Rangan, Sundeep and Fletcher, Alyson K. and Schniter, Philip and Kamilov, Ulugbek S.},
  journal={IEEE Transactions on Information Theory},
  volume={63},
  number={1},
  pages={676--697},
  year={2016},
  publisher={IEEE}
}

@inproceedings{rangan2011generalized,
  title={Generalized approximate message passing for estimation with random linear mixing},
  author={Rangan, Sundeep},
  booktitle={IEEE International Symposium on Information Theory (ISIT 2011)},
  pages={2168--2172},
  year={2011},
  organization={IEEE}
}

@article{donoho2009message,
  title={Message-passing algorithms for compressed sensing},
  author={Donoho, David L. and Maleki, Arian and Montanari, Andrea},
  journal={Proceedings of the National Academy of Sciences},
  volume={106},
  number={45},
  pages={18914--18919},
  year={2009},
  publisher={National Acad Sciences}
}

@article{LW-2022-NonAsymptoticAMPSpiked,
  title={A non-asymptotic framework for approximate message passing in spiked models},
  author={Li, Gen and Wei, Yuting},
  journal={arXiv preprint arXiv:2208.03313},
  year={2022}
}

@article{javanmard2013state,
  title={State evolution for general approximate message passing algorithms, with applications to spatial coupling},
  author={Javanmard, Adel and Montanari, Andrea},
  journal={Information and Inference: A Journal of the IMA},
  volume={2},
  number={2},
  pages={115--144},
  year={2013},
  publisher={OUP}
}

@article{barbierSpatial,
    title = {{Approximate message-passing with spatially coupled structured operators, with applications to compressed sensing and sparse superposition codes}},
    author = {Barbier, Jean and Schulke, Christophe and Krzakala, Florent},
    year = 2015,
    journal = {Journal of Statistical Mechanics: Theory and Experiment}
}

@inproceedings{vila2015adaptive,
  title={Adaptive damping and mean removal for the generalized approximate message passing algorithm},
  author={Vila, Jeremy and Schniter, Philip and Rangan, Sundeep and Krzakala, Florent and Zdeborov{\'a}, Lenka},
  booktitle={IEEE International Conference on Acoustics, Speech and Signal Processing (ICASSP 2015)},
  pages={2021--2025},
  year={2015},
  organization={IEEE}
}

@article{fan2022approximate,
  title={Approximate message passing algorithms for rotationally invariant matrices},
  author={Fan, Zhou},
  journal={Annals of Statistics},
  volume={50},
  number={1},
  pages={197--224},
  year={2022},
  publisher={Institute of Mathematical Statistics}
}

@article{zhong2024approximate,
  title={Approximate message passing for orthogonally invariant ensembles: Multivariate non-linearities and spectral initialization},
  author={Zhong, Xinyi and Wang, Tianhao and Fan, Zhou},
  journal={Information and Inference: A Journal of the IMA},
  volume={13},
  number={3},
  pages={iaae024},
  year={2024},
  publisher={Oxford University Press}
}

@article{MR-2015-NonNegative,
  title={Non-negative principal component analysis: message passing algorithms and sharp asymptotics},
  author={Montanari, Andrea and Richard, Emile},
  journal={IEEE Transactions on Information Theory},
  volume={62},
  number={3},
  pages={1458--1484},
  year={2015},
  publisher={IEEE}
}

@inproceedings{DMR-2014-ConePCA,
  title={Cone-constrained principal component analysis},
  author={Deshpande, Yash and Montanari, Andrea and Richard, Emile},
  booktitle={Advances in Neural Information Processing Systems (NIPS 2014)},
  pages={2717--2725},
  year={2014},
}

@incollection{montanari2012graphical,
  author    = {Andrea Montanari},
  title     = {{Graphical models concepts in compressed sensing}},
  booktitle = {{Compressed Sensing: Theory and Applications}},
  editor    = {Yonina C. Eldar and Gitta Kutyniok},
  pages     = {394--438},
  year      = {2012},
  publisher = {Cambridge University Press},
}

@article{bao2025leave,
  title={A leave-one-out approach to approximate message passing},
  author={Bao, Zhigang and Han, Qiyang and Xu, Xiaocong},
  journal={Annals of Applied Probability},
  volume={35},
  number={4},
  pages={2716--2766},
  year={2025},
  publisher={Institute of Mathematical Statistics}
}

@article{rangan2019convergence,
  title={On the convergence of approximate message passing with arbitrary matrices},
  author={Rangan, Sundeep and Schniter, Philip and Fletcher, Alyson K. and Sarkar, Subrata},
  journal={IEEE Transactions on Information Theory},
  volume={65},
  number={9},
  pages={5339--5351},
  year={2019},
  publisher={IEEE}
}

@inproceedings{subsamplingJavanmard,
  author       = {Adel Javanmard and
                  Andrea Montanari},
  title        = {Subsampling at information theoretically optimal rates},
  booktitle    = {Proceedings of the International Symposium on Information
                  Theory ({ISIT} 2012)},
  pages        = {2431--2435},
  publisher    = {{IEEE}},
    year = 2012
}

@article{rush2018finite,
  title={Finite sample analysis of approximate message passing algorithms},
  author={Rush, Cynthia and Venkataramanan, Ramji},
  journal={IEEE Transactions on Information Theory},
  volume={64},
  number={11},
  pages={7264--7286},
  year={2018},
  publisher={IEEE}
}

@article{rangan2019vector,
  title={Vector approximate message passing},
  author={Rangan, Sundeep and Schniter, Philip and Fletcher, Alyson K.},
  journal={IEEE Transactions on Information Theory},
  volume={65},
  number={10},
  pages={6664--6684},
  year={2019},
  publisher={IEEE}
}

@article{donoho2013information,
  title={Information-theoretically optimal compressed sensing via spatial coupling and approximate message passing},
  author={Donoho, David L. and Javanmard, Adel and Montanari, Andrea},
  journal={IEEE Transactions on Information Theory},
  volume={59},
  number={11},
  pages={7434--7464},
  year={2013},
  publisher={IEEE}
}

@article{opper2016theory,
  title={{A theory of solving TAP equations for Ising models with general invariant random matrices}},
  author={Opper, Manfred and {\c{C}}akmak, Burak and Winther, Ole},
  journal={Journal of Physics A: Mathematical and Theoretical},
  volume={49},
  number={11},
  pages={114002},
  year={2016},
  publisher={IOP Publishing}
}

@article{zdeborova2016statistical,
  title={Statistical physics of inference: Thresholds and algorithms},
  author={Zdeborov{\'a}, Lenka and Krzakala, Florent},
  journal={Advances in Physics},
  volume={65},
  number={5},
  pages={453--552},
  year={2016},
  publisher={Taylor \& Francis}
}

@book{mezard1987spinglasstheoryandbeyond,
  title={{Spin glass theory and beyond: An Introduction to the Replica Method and Its Applications}},
  author={M{\'e}zard, Marc and Parisi, Giorgio and Virasoro, Miguel Angel},
  volume={9},
  year={1987},
  publisher={World Scientific}
}

@article{bolthausen2014iterative,
  title={{An Iterative Construction of Solutions of the TAP Equations for the Sherrington-Kirkpatrick Model}},
  author={Bolthausen, Erwin},
  journal={Communications in Mathematical Physics},
  volume={325},
  number={1},
  pages={333--366},
  year={2014},
  publisher={Springer}
}

@article{montanari2022statistically,
    author = {Montanari, Andrea and Wu, Yuchen},
    title = {{Statistically optimal first-order algorithms: a proof via orthogonalization}},
    journal = {Information and Inference: A Journal of the IMA},
    volume = {13},
    year = 2024
}

@inproceedings{montanari2021optimization,
  author       = {Andrea Montanari},
  title        = {{Optimization of the Sherrington-Kirkpatrick Hamiltonian}},
  booktitle    = {60th {IEEE} Annual Symposium on Foundations of Computer Science ({FOCS}
                  2019)},
  pages        = {1417--1433},
  publisher    = {{IEEE}},
  year         = {2019},
}

@article{chen2021universality,
  title={Universality of approximate message passing algorithms},
  author={Chen, Wei-Kuo and Lam, Wai-Kit},
  journal={Electronic Journal of Probability},
  volume={26},
  pages={1--44},
  year={2021}
}

@article{bayati2011dynamics,
  title={The dynamics of message passing on dense graphs, with applications to compressed sensing},
  author={Bayati, Mohsen and Montanari, Andrea},
  journal={IEEE Transactions on Information Theory},
  volume={57},
  number={2},
  pages={764--785},
  year={2011},
  publisher={IEEE}
}

@article{feng2022unifying,
  title={{A Unifying Tutorial on Approximate Message Passing}},
  author={Feng, Oliver Y. and Venkataramanan, Ramji and Rush, Cynthia and Samworth, Richard J.},
  journal={Foundations and Trends in Machine Learning},
  volume={15},
  number={4},
  pages={335--536},
  year={2022},
  publisher={Now Publishers, Inc.}
}

@article{montanari2022equivalence,
  title={Equivalence of approximate message passing and low-degree polynomials in rank-one matrix estimation},
  author={Montanari, Andrea and Wein, Alexander S.},
  journal={Probability Theory and Related Fields},
  year={2025},
  volume= 191,
  pages = {181--233}
}

@inproceedings{ivkov2023semidefinite,
  author       = {Misha Ivkov and
                  Tselil Schramm},
  title        = {Semidefinite Programs Simulate Approximate Message Passing Robustly},
  booktitle    = {Proceedings of the 56th Annual {ACM} Symposium on Theory of Computing ({STOC} 2024)},
  pages        = {348--357},
  publisher    = {{ACM}},
  year         = {2024}
}

@article{bayati2015universality,
  title={Universality in polytope phase transitions and message passing algorithms},
  author={Bayati, Mohsen and Lelarge, Marc and Montanari, Andrea},
 journal = {Annals of Applied Probability},
 number = {2},
 pages = {753--822},
 publisher = {Institute of Mathematical Statistics},
 volume = {25},
 year = {2015}
}

@inproceedings{celentano2020estimation,
  title={The estimation error of general first order methods},
  author={Celentano, Michael and Montanari, Andrea and Wu, Yuchen},
  booktitle={Conference on Learning Theory (COLT 2020)},
  pages={1078--1141},
  year={2020},
  organization={PMLR}
}

@book{MezardMontanari,
  title={Information, Physics, and Computation},
  author={M{\'e}zard, Marc and Montanari, Andrea},
  year={2009},
  publisher={Oxford University Press}
}

@article{AMS20:pSpinGlasses,
  title={Optimization of mean-field spin glasses},
  author={Alaoui, Ahmed El and Montanari, Andrea and Sellke, Mark},
  journal={Annals of Probability},
  volume={49},
  number={6},
  pages={2922--2960},
  year={2021},
  doi={10.1214/21-AOP1519}
}

@article {Holyer81:EdgePartitioning,
    AUTHOR = {Holyer, Ian},
     TITLE = {The {NP}-completeness of some edge-partition problems},
  JOURNAL = {SIAM Journal on Computing},
    VOLUME = {10},
      YEAR = {1981},
    NUMBER = {4},
     PAGES = {713--717},
      ISSN = {0097-5397},
   MRCLASS = {68C25 (68E10)},
  MRNUMBER = {635429},
MRREVIEWER = {V. B. Alekseev},
       DOI = {10.1137/0210054},
       URL = {https://doi.org/10.1137/0210054},
}

@book {Janson:GaussianHilbertSpaces,
    AUTHOR = {Janson, Svante},
     TITLE = {Gaussian {H}ilbert spaces},
    SERIES = {Cambridge Tracts in Mathematics},
    VOLUME = {129},
 PUBLISHER = {Cambridge University Press, Cambridge},
      YEAR = {1997},
     PAGES = {x+340},
      ISBN = {0-521-56128-0},
   MRCLASS = {60G35 (60H15 62M20 81T08)},
  MRNUMBER = {1474726},
MRREVIEWER = {Amarjit Budhiraja},
       DOI = {10.1017/CBO9780511526169},
       URL = {https://doi.org/10.1017/CBO9780511526169},
}

@article{MFCKMZ-2019-PlefkaExpansionOrthogonalIsing,
  title={High-temperature expansions and message passing algorithms},
  author={Maillard, Antoine and Foini, Laura and Castellanos, Alejandro Lage and Krzakala, Florent and M{\'e}zard, Marc and Zdeborov{\'a}, Lenka},
  journal={Journal of Statistical Mechanics: Theory and Experiment},
  volume={2019},
  number={11},
  pages={113301},
  year={2019},
  publisher={IOP Publishing}
}

@article{Banica-2010-OrthogonalWeingartenFormula,
  title={{The orthogonal Weingarten formula in compact form}},
  author={Banica, Teodor},
  journal={Letters in Mathematical Physics},
  volume={91},
  number={2},
  pages={105--118},
  year={2010},
  publisher={Springer}
}

@article{CS-2006-HaarMeasureMoments,
  title={Integration with respect to the {Haar} measure on unitary, orthogonal and symplectic group},
  author={Collins, Beno{\^\i}t and {\'S}niady, Piotr},
  journal={Communications in Mathematical Physics},
  volume={264},
  number={3},
  pages={773--795},
  year={2006},
  publisher={Springer}
}

@inproceedings{KMW-2024-TensorCumulantsInvariantInference,
  author={Kunisky, Dmitriy and Moore, Cristopher and Wein, Alexander S.},
  booktitle={65th Annual Symposium on Foundations of Computer Science (FOCS 2024)}, 
  title={Tensor Cumulants for Statistical Inference on Invariant Distributions}, 
  year={2024},
  volume={},
  number={},
  pages={1007-1026},
}

@book{bondyMurty,
  title={{Graph Theory}},
  author={Bondy, Adrian and Murty, {U.S.R.}},
  year={2008},
  publisher={Springer}
}

@book{NS-2006-LecturesCombinatoricsFreeProbability,
  title={Lectures on the combinatorics of free probability},
  author={Nica, Alexandru and Speicher, Roland},
  volume={13},
  year={2006},
  publisher={Cambridge University Press}
}

@article{Rota-1964-Foundations,
  title={On the foundations of combinatorial theory: {I}. {Theory} of {M{\"o}bius} functions},
  author={Rota, Gian-Carlo},
  journal={Probability Theory and Related Fields},
  volume={2},
  number={4},
  pages={340--368},
  year={1964},
  publisher={Springer}
}

@article{abbara2020universality,
  title={On the universality of noiseless linear estimation with respect to the measurement matrix},
  author={Abbara, Alia and Baker, Antoine and Krzakala, Florent and Zdeborov{\'a}, Lenka},
  journal={Journal of Physics A: Mathematical and Theoretical},
  volume={53},
  number={16},
  pages={164001},
  year={2020},
  publisher={IOP Publishing}
}

@article{CO-2019-TAPEquationAMPInvariant,
  title={{Memory-free dynamics for the Thouless-Anderson-Palmer equations of Ising models with arbitrary rotation-invariant ensembles of random coupling matrices}},
  author={{\c{C}}akmak, Burak and Opper, Manfred},
  journal={Physical Review E},
  volume={99},
  number={6},
  pages={062140},
  year={2019},
  publisher={APS}
}

@article{schniter2020simple,
  title={{A simple derivation of AMP and its state evolution via first-order cancellation}},
  author={Schniter, Philip},
  journal={IEEE Transactions on Signal Processing},
  volume={68},
  pages={4283--4292},
  year={2020},
  publisher={IEEE}
}

@phdthesis{pesentiThesis,
  title={Algorithms beyond the union bound: polynomial optimization and discrepancy theory},
  author={Pesenti, Lucas},
  year={2026},
  school={Bocconi University}
}

@article{colorCoding,
    title = {Color-coding},
    author = {Alon, Noga and Yuster, Raphael and Zwick, Uri},
    journal = {Journal of the ACM},
    volume = 42,
    issue = 4,
    pages = {844--856},
    year = 1995
}

@inproceedings{heavyTailedWigner,
    title = {{Estimating Rank-One Spikes from Heavy-Tailed Noise via Self-Avoiding Walks}},
    author = {Ding, Jingqiu and Hopkins Samuel and Steurer, David},
    booktitle = {Advances in Neural Information Processing Systems (NeurIPS 2020)},
    volume = {33},
    pages = {5576--5586},
    year = 2020
}

\appendix

\newpage
\section{Traffic Distributions via Feynman Diagrams}
\label{sec:feynman-physics}

One of our motivations is to connect graph polynomials with the celebrated Feynman diagram technique from physics.
In quantum field theory, Feynman diagram expansion is used to reduce matrix integrals into graphical calculations.
We show in this section that this method can (heuristically) derive the traffic distribution of orthogonally invariant distributions (\cref{thm:moments}).

The matrix model that we consider in this section is specified by a {\em potential
function} $V:\R\to\R$, and has partition function
\begin{equation}\label{eq:action}
    Z \defeq \int_{\mathcal M}\d\bA\ e^{-\frac n2\Tr V(\bA)}\,,
\end{equation}
where $\mathcal M \defeq \R^{n\times n}_{\sym}$ is the space of symmetric $n\times n$
matrices. Equivalently, this is the partition function of the random matrix
$\bA\in\mathcal M$ sampled from the probability measure
$\mu_V(\bA)\propto \exp(-\frac n2\Tr V(\bA))$, which is a special case of an
orthogonally invariant distribution (\cref{sec:rot-inv}).

In physics, matrix integrals such as~\cref{eq:action} are viewed as a
$0$-dimensional theory: the variable is a matrix, and the partition
function is a finite-dimensional integral rather than a functional integral over
fields on space-time. The large-$n$ expansion of such integrals is organized into diagrammatic
contributions indexed by {\em Feynman diagrams}.
\begin{enumerate}
    \item In the limit $n\to\infty$, only planar diagrams contribute at leading
    order, an observation going back to foundational work of 't~Hooft~\cite{tHooft1974planar, brezin1978planar}. Related planarity phenomena also appear in
    mathematics, for example in the connections between large random matrices and
    non-crossing pairings.

    \item In special scaling limits of the potential
    with $n$, the Feynman diagram expansion can be interpreted in terms of physical theories such as 2D
    gravity and certain string-theoretic models~\cite{diFrancesco1995gravity, cotler2017black, saad2019jt}.
\end{enumerate}

The combinatorial approach in this paper fits naturally into this perspective. First,
our results are formulated in the large-$n$ limit, and the dominant combinatorial
objects in that limit are planar, as in the 't~Hooft limit. Second, we show that our
$w$- and $z$-polynomials are planar dual to the Feynman diagrams
traditionally used in physics. Third, while the Feynman diagram method is based on perturbative expansion around the GOE potential $V(x) = x^2/2$, our rigorous results~\cref{thm:moments,thm:convergence-distribution} still remain valid
beyond the radius of convergence for perturbative methods.

We present 
in this section the traditional approach
for computing~\cref{eq:action} based on
Feynman diagrams. The argument is ``combinatorially rigorous''
(true at the level of generating functions),
but not sufficient to rigorously derive the probabilistic conclusions.

\subsection{Calculation of the free energy}

For now, we restrict to the case where the potential in~\cref{eq:action} is $V(\bA) = \frac 12 \bA^2 + \frac{g}{4}\bA^4$, where the \emph{coupling constant} $g$ measures the strength of the quartic interaction in the model. Such potentials appear in string theory, statistical physics (the $\lambda\phi^4$ theory), and the theory of integrable systems. The quartic term $\frac g4 \Tr(\bA^4)$ can be viewed as a correction term to the \goe model, for which $Z_\goe = \int_\calM \d\bA \exp(-n\Tr(\bA^2)/4)$.

The idea of the Feynman diagram technique is to perturbatively expand this correction
term, reducing to a problem on Gaussian variables.
We illustrate this by computing
the free energy
of the quartic model, namely the quantity $\ln Z$ (this example can be found in physics textbooks).
For an observable quantity $\cO$,
we write 
$\langle \cO \rangle := \E_{\bA \sim \mu_V}[\cO]$, and 
$\langle \cO \rangle_{\goe} := \E_{\bA \sim \goe}[\cO]$.
We have
\begin{align*}
    Z &= \int_{\calM}\d\bA \exp(-\frac n4 \Tr(\bA^2) - \frac{gn}{8}  \Tr(\bA^4))\\
    &= Z_\goe\cdot \left\langle \exp(-\frac{gn}{8} \Tr(\bA^4))\right\rangle_\goe\,.
\end{align*}
A simple calculation shows that $Z_\goe = 2^{\frac{n}{2}}\left(\frac{2\pi}{n}\right)^{\frac{n(n+1)}{4}}$. We Taylor expand the remaining part and integrate term-by-term:
\begin{align}
    \left\langle \exp\left(-\frac{gn}8 \Tr(\bA^4)\right)\right\rangle_\goe &= \sum_{s = 0}^\infty \frac{1}{s!}\left(-\frac{gn}8 \right)^s\left\langle\Tr(\bA^4)^s\right\rangle_\goe\,. \label{eq:taylor-exp}
\end{align}
The quantities $\langle \Tr(\bA^4)^s\rangle_\goe$ on the right-hand side are expectations over Gaussian random variables, and can be computed by Wick's lemma (\cref{lem:wick}) to be a sum over all \emph{Wick contractions} between the variables (in graph-theoretic terms, a sum over all perfect matchings).
The \emph{propagator} for a single contraction with a \goe matrix is
the covariance of the Gaussians,
\begin{equation}\label{eq:propagator}
    \langle\bA[i,j]\bA[k,\ell]\rangle_\goe = \frac 1n \delta_{ik}\delta_{j\ell} + \frac 1n \delta_{i\ell}\delta_{jk}\,,\quad \textnormal{where $\delta_{ij} \defeq \begin{cases}1 & \textnormal{if $i=j$}\\0 & \textnormal{otherwise}\end{cases}$}\,.
\end{equation}

A \emph{Feynman diagram} represents a combinatorial type of Wick contractions.
In the graphical notations of this paper, we would visualize each $\Tr(\bA^4)$ as a square, with Wick contraction having the effect of gluing together edges of the squares.
The 't Hooft double line notation, which is more common in physics, represents each $\Tr(\bA^4)$ as a vertex with four incident double edges.
These representations are dual
to each other (in the sense of planar
duality); see~\cref{fig:feynman-diagrams} for comparison.

\begin{figure}[ht]
    \centering
    \begin{subfigure}[t]{0.45\linewidth}
        \centering
        \includegraphics[width=0.79\linewidth,valign=t]{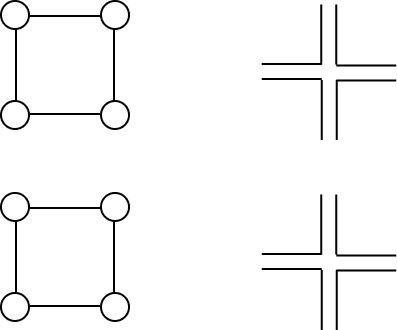}

        \vspace{5mm}
        
        \caption{$\Tr(\bA^4)^2$ represented as two squares in our notation, compared to the 't Hooft double line notation.}
        \label{fig:feynman-left}
    \end{subfigure}
    \hfill
    \begin{subfigure}[t]{0.45\linewidth}
        \centering
        \includegraphics[width=\linewidth,valign=t]{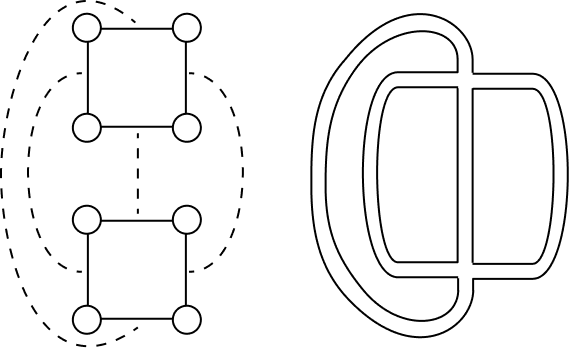}

        \vspace{3mm}
        
        \caption{One of the Wick contractions appearing in $\langle \Tr(\bA^4)^2\rangle_\goe$. The edges of the squares are glued together according to the matching to make a ``pillow''.}
        \label{fig:feynman-right}
    \end{subfigure}

    \caption{Our Feynman diagram notation vs.\ the 't Hooft double line notation.}
    \label{fig:feynman-diagrams}
\end{figure}

The delta functions in the propagator enforce that the vertices of the squares have a consistent index $i$ when the edges of the squares are glued together.
Note that the propagator in~\cref{eq:propagator} for the \goe model allows $\bA[i,j], \bA[k,\ell]$ to be glued in either orientation (in contrast to the Gaussian Unitary Ensemble which would only have one term).
Therefore, we define a Feynman diagram for the \goe to be an \emph{oriented} perfect matching between the edges of the squares.
For each Feynman diagram $\gam$, the contribution of $\gam$ to~\cref{eq:taylor-exp} is:
\begin{enumerate}[(i)]
    \item a factor $n$ per vertex of $\gam$, since each vertex holds an index from $[n]$ which is summed over in $\Tr(\bA^4)$.
    \item a factor $\frac 1n$ per paired edge of $\gam$ from the propagator,~\cref{eq:propagator}.
    \item a factor $-\frac{gn}{8}$ per square face of $\gam$ from~\cref{eq:taylor-exp}. There is also an overall factor of $\frac{1}{|F(\gam)|!}$ where $|F(\gam)|$ equals the number of square faces in $\gam$.
\end{enumerate}

For example, the $s=1$ term in~\cref{eq:taylor-exp} is
\begin{equation*}
    \left(-\frac{gn}{8}\right)\cdot \langle \Tr(\bA^4)\rangle_\goe = \left(-\frac g 8\right) \cdot  \left(2\cdot n^2 + 5 \cdot n + 5 \right)\,.
\end{equation*}
The Feynman diagrams are enumerated in~\cref{fig:feynman-square}.

\begin{figure}[ht]
    \centering
    \includegraphics[width=0.9\linewidth]{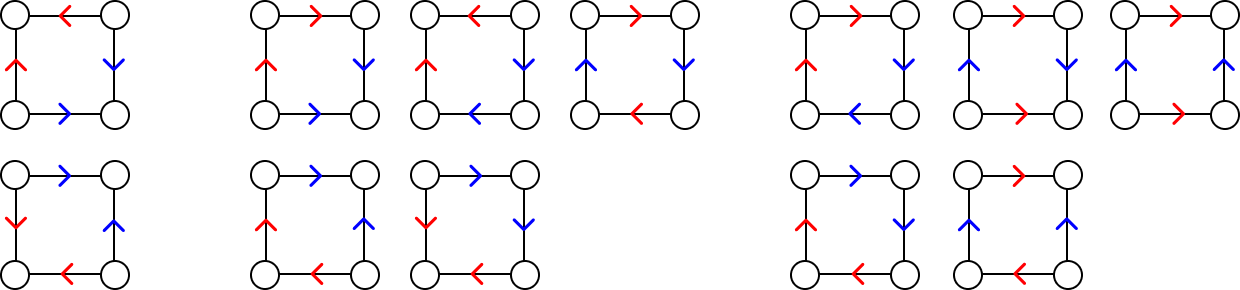}
    \caption{The 12 Feynman diagrams associated to $\langle \Tr(\bA^4)\rangle_\goe$. The two red edges are matched in the orientation specified by the arrows, and similarly for the blue edges. Gluing either of the left two diagrams results in a ``taco''. Gluing the remaining diagrams results in degenerate polyhedra. After gluing, the ``tacos'' have 3 vertices, the middle diagrams have 2 vertices, and the right diagrams have 1 vertex, respectively.}
    \label{fig:feynman-square}
\end{figure}

For a given Feynman diagram $\gam$, the total factor of $n$ is $|V(\gam)| - |E(\gam)| + |F(\gam)| =: \chi(\gam)$ which is the \emph{Euler characteristic} of the polyhedron $\gam$.
In total, we obtain a Feynman diagram expansion for the partition function,
\begin{equation}\label{eq:feynman-fml-1}
    Z = Z_{\goe} \sum_{\gam \in \Gam} \frac{1}{|F(\gam)|!}\left(-\frac{g}{8}\right)^{|F(\gam)|} n^{\chi(\gam)}\,,
\end{equation}
where $\Gam$ is the set of Feynman diagrams, the set of polyhedra built from square faces. Formally, $\Gam = \sqcup_{s \geq 0} \Gam_s$, where $\Gam_s$ is the set of oriented perfect matchings between the edges of $s$ squares.

Taking the logarithm has the effect of restricting the summation to \emph{connected} Feynman diagrams; this is the linked cluster theorem in quantum field theory~\cite[Section~3.5]{etingof2024mathematical}.
We obtain:
\begin{equation}\label{eq:feynman-free-energy}
    \ln \left(\frac Z {Z_\goe}\right) = \sum_{\gam \in \Gam_c} \frac{1}{|F(\gam)|!}\left(-\frac{g}{8}\right)^{|F(\gam)|} n^{\chi(\gam)}\,,
\end{equation}
where $\Gam_c \subseteq \Gam$ are connected Feynman diagrams.

\subsubsection{Asymptotic limit \texorpdfstring{$n \to \infty$}{n -> infty}}

As $n \to \infty$,~\cref{eq:feynman-free-energy} significantly simplifies because only the \emph{planar} diagrams survive, i.e.\ polyhedra $\gam$ with ``no holes,'' which have the maximum possible Euler characteristic among connected graphs ($\chi(\gam) = 2$).
This foundational observation goes back to 't~Hooft~\cite{tHooft1974planar}.\footnote{
    't~Hooft studies unitarily invariant matrix models instead of orthogonally
    invariant ones.
    He takes a further step by sending $g\to 0$ at the rate $\Theta(1/\sqrt{n})$, i.e., fixing $\lambda = g^2n$ to be constant. His claim is that $\lambda$ is the only parameter characterizing the physical properties of observables in the large-$n$ limit, and by taking $\lam \to \infty$ one gains some intuition on the physical phenomena of strongly interacting particles.
    The limit $g \to 0$ is less interesting for us, since the traffic distribution (hence also the spectrum) is asymptotically the same as the GUE whenever $g = o(1)$.
} We obtain, at first order,
\begin{equation}
    \frac{1}{n^2}\ln \left(\frac Z {Z_\goe}\right) = \sum_{\substack{\gam \in \Gam_c\\\text{planar}}} \frac{1}{|F(\gam)|!}\left(-\frac{g}{8}\right)^{|F(\gam)|} + O(n^{-2})\,.\label{eq:thooft}
\end{equation}

In summary,
the Feynman diagram method shows that the non-Gaussian component of the matrix model can be replaced by a
generating function for graphs/surfaces which, in the $n\to \infty$ limit,
restricts to a generating function for planar graphs/surfaces with genus 0.
This restriction leads to significant simplifications in diagrammatic calculations,
in the same way as our cactus property and treelike property in the rest of the paper.

\subsection{Calculation of general observables\texorpdfstring{: Argument for~\cref{thm:moments}}{}}

We now assume that the potential $V(\bA)$ has the general form $V(\bA)=\frac 12 \bA^2 + \sum_{j\geq 3}c_j\bA^j$ (arbitrary coefficients on $\bA$ and $\bA^2$ can be handled by centering and rescaling, respectively).
We compute the traffic distribution of $\bA$,
which consists of all $S_n$-invariant observables of $\bA$. The $z$-polynomials are a basis for these observables where, for each multigraph $\al$,
\begin{equation*}
    \frac 1n\langle z_\al(\bA) \rangle = \frac 1n \sum_{i : V(\al) \embeds [n]}\left\langle\prod_{\{u,v\} \in E(\al)} \bA[i(u), i(v)]\right\rangle\,.
\end{equation*}
Separating out the Gaussian part of the action from the higher-order interactions:
\begin{align*}
    \frac 1n\langle z_\al(\bA) \rangle = \frac 1n \cdot \frac{\left\langle z_\al(\bA) \exp(-\sum_{j \geq 3} c_j n \Tr(\bA^j)) \right\rangle_\goe}{\left\langle \exp(-\sum_{j \geq 3} c_j n \Tr(\bA^j)) \right\rangle_\goe}\,.
\end{align*}

The dual Feynman diagrams are built from polygons with $j\ge 3$ sides, each of which comes with a factor of $-c_j$, generalizing the situation from the previous section.
A small generalization of the argument
shows that the denominator is
\begin{equation}\label{eq:denom}
    \left\langle \exp(-\sum_{j \geq 3} c_j n \Tr(\bA^j)) \right\rangle_\goe = \sum_{\gam \in \Gam} \left(\prod_{j \geq 3} \frac{(-c_j)^{|F_j(\gam)|}}{|F_j(\gam)|!}\right) n^{\chi(\gam)}\,,
\end{equation}
where $|F_j(\gam)|$ denotes the number of $j$-sided faces in $\gam$.

The numerator can also be calculated diagrammatically.
The Wick contractions go between a collection of polygons as well as the additional edges $\bA[i,j]$ in $z_\al(\bA)$\,.
Let $\Gam(\al)$ be the set of Feynman diagrams, visualized as polyhedra built on a set of ``boundary'' edges $\al$.
Then

\begin{equation}\label{eq:num}
    \left\langle z_\al(\bA) \exp(-\sum_{j \geq 3} c_j n \Tr(\bA^j)) \right\rangle_\goe = \sum_{\gam \in \Gam(\al)} \left(\prod_{j \geq 3} \frac{(-c_j)^{|F_j(\gam)|}}{|F_j(\gam)|!}\right) n^{\chi(\gam)}\cdot(1-O(n^{-1}))\,.
\end{equation}

Note $\al$ is considered a boundary and does not count towards the faces $F_j(\gam)$.

To enforce that the labels of $z_\al(\bA)$ are injective, we remove from $\Gam(\al)$ any matching which causes two vertices of $\al$ to have the same label.
The factor $1-O(n^{-1})$ arises because each vertex is summed over $n - O(1)$ indices to maintain injectivity, instead of precisely $n$ which we had previously.

We obtain the final result by
dividing~\cref{eq:num} by~\cref{eq:denom}.
This has the effect of restricting to the set of connected Feynman diagrams $\Gam_c(\al) \subseteq \Gam(\al)$ by an alternate version of the linked cluster theorem.
The final Feynman diagram formula is:
\begin{align}
    \frac 1n \langle z_\al(\bA)\rangle &= \sum_{\gam \in \Gam_c(\al)} \left(\prod_{j \geq 3}\frac{\left(-c_j\right)^{|F_j(\gam)|}}{|F_j(\gam)|!} \right) \cdot n^{\chi(\gam)-1}\cdot(1-O(n^{-1}))\,. \label{eq:feynman-alpha}
\end{align}

\begin{remark}
An alternative approach to the calculation would be to first symmetrize $z_\al(\bA)$ over $O(n)$ 
which is the symmetry group of the matrix model (and is larger than $S_n$),
then to plug in the values of the $O(n)$-invariant observables (the trace polynomials).
We find it simpler to Taylor expand the action directly.
\end{remark}

\subsubsection{Asymptotic limit \texorpdfstring{$n\to\infty$}{n -> infty}}

In the asymptotic limit $n \to \infty$,
the only diagrams in~\cref{eq:feynman-alpha} with constant-order magnitude are those such that $\al$ is a cactus graph, and $\gam$ consists of polyhedra with genus 0 attached to each cycle of the cactus, which has $\chi(\gam) = 1$.
We prove this combinatorially in the forthcoming~\cref{lem:combinatorially-dominant}.

The large-$n$ combinatorial summation factors over the cycles of the cactus, since the genus-0 polyhedra on each cycle can be chosen independently.
We obtain
\begin{equation*}
    \frac 1n \langle z_\al(\bA) \rangle = \begin{cases}
        \displaystyle\prod_{\sig \in \cycles(\al)} \frac 1n \langle z_\sig(\bA)\rangle  + O(n^{-1}) &\text{ if $\al$ is a cactus}\\
        O(n^{-1}) & \text{otherwise}
    \end{cases}
\end{equation*}
The limiting value $\frac 1n \langle z_\sig(\bA)\rangle$ of the $q$-cycle diagram $\sig$ is equal to $\kappa_{q} + O(n^{-1})$ by the moment/free cumulant relation~\cref{eq:freeCumulantRelation}. Thus,~\cref{eq:feynman-alpha} recovers~\cref{thm:moments}.

\begin{lemma}\label{lem:combinatorially-dominant}
    Let $\al \in \calA$ be a connected multigraph, and let $\gam \in \Gam_c(\al)$.
    Then $\chi(\gam) = 1$ if and only if $\al$ is a cactus and $\gam$ consists of genus-0 polyhedra attached to each cycle of $\al$.
\end{lemma}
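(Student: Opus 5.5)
The plan is to compute $\chi(\gam)=|V(\gam)|-|E(\gam)|+|F(\gam)|$ for a connected Feynman diagram $\gam\in\Gam_c(\al)$. Here $\gam$ is a connected 2-complex whose faces are the interaction polygons, with the edges of $\al$ left as unglued boundary edges; in the dual picture the edges of $\al$ become the boundary of the surface. The first step is to make this precise. I would form a closed surface $\widehat\gam$ by capping off each boundary component with a disk. Each boundary component is a closed walk in $\al$; it is closed because every Wick contraction pairs two edges, so the boundary is a union of closed curves. Then
\[
\chi(\gam)=\chi(\widehat\gam)-b(\gam)\le 2\,c(\widehat\gam)-b(\gam),
\]
where $b(\gam)$ is the number of boundary circles and $c$ is the number of connected components. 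Since $\gam$ is connected, $c=1$, and so $\chi(\gam)\le 2-b(\gam)$. This is not yet enough, because $b$ can be $1$ while $\al$ is not a cactus.

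The key step is to use the injectivity constraint on the labels of $\al$ more carefully. Because labels are injective, no two vertices of $\al$ are identified in $\gam$. I would therefore regard $\gam$ as obtained by gluing pieces onto the fixed graph $\al$ embedded in it. Let $\gam_1,\dots,\gam_k$ be the components of $\gam$ after cutting along $\al$, that is, after removing $\al$'s edges and splitting at $\al$'s vertices. Each $\gam_j$ is a surface whose boundary consists of closed walks in $\al$. Vertex, edge and face counts then give
\[
\chi(\gam)=\sum_j \chi(\gam_j)-\Big(\text{contributions of }\al\text{ counted repeatedly}\Big)=\sum_j\chi(\gam_j)-(k-1)-\big(|E(\al)|-|V(\al)|+1\big)+\text{(correction)}.
\]
The cleaner route is to count the boundary walks directly. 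Every edge of $\al$ is traversed exactly twice by the boundary walks of the pieces, once from each side in the gluing, except that $\al$ is itself boundary, so it lies on exactly one piece side. Each piece has $\chi(\gam_j)\le 1$, with equality iff it is a disk. Assembling gives $\chi(\gam)\le |V(\al)|-|E(\al)|+k$, where $k$ is the number of disks, and each disk's boundary is a closed walk in $\al$ using distinct edges. To reach $\chi=1$ one needs $k=|E(\al)|-|V(\al)|+1$, the cycle rank of $\al$. This means $\al$'s edges are partitioned into $k$ closed trails whose number equals the cycle rank. The combinatorial lemma I would then prove is that a connected graph whose edge set decomposes into exactly (cycle rank)-many closed trails must have every trail a simple cycle with no two sharing an edge, and the cycles must form a tree structure, which is exactly a cactus. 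I would prove it by induction, removing a trail and checking that the cycle rank drops by at least $1$, with equality iff the trail is a simple cycle meeting the rest in one vertex.

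The converse direction is direct. For a cactus with a planar disk attached to each cycle, the disks meet only at articulation points, so the Euler characteristics add: $\chi=\sum_{\text{cycles}}1-(\#\text{cycles}-1)$ from the shared vertices. Applying inclusion–exclusion along the tree of cycles shows $\chi=1$. I expect the main obstacle to be the bookkeeping in the inequality $\chi(\gam)\le |V(\al)|-|E(\al)|+k$, specifically handling polygons that touch $\al$ at vertices but not along edges, and pieces with several boundary walks. Both of these lower $\chi$, but proving that requires carefully formalizing the cut-along-$\al$ decomposition.
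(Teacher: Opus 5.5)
Your route is the paper's. You bound $\chi(\gamma)$ by $|V(\alpha)|-|E(\alpha)|+C$, with $C$ the number of edge-disjoint closed trails of $\alpha$ bounding disks, and then show $C\le |E(\alpha)|-|V(\alpha)|+1$ with equality iff $\alpha$ is a cactus. The paper states the first step without argument (maximizers come from a maximum cycle partition with a genus-0 polyhedron on each cycle), so your cut-along-$\alpha$ decomposition is a real improvement. Let $S_1,\dots,S_m$ be the surfaces obtained by gluing polygons only along polygon--polygon pairs, with $b_j\ge 1$ boundary circles. Injectivity of the labels gives $|V(\gamma)|=|V(\alpha)|+\sum_j|V_{\mathrm{int}}(S_j)|$, hence an exact formula for $\chi(\gamma)$.

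For the second step the paper does not induct on trails. It deletes one edge from each of the $C$ edge-disjoint cycles and notes the graph stays connected, since the rest of each cycle still joins the deleted edge's endpoints. This gives $|E(\alpha)|-C\ge |V(\alpha)|-1$, with equality iff the remainder is a spanning tree. That is shorter than your induction, where removing a trail can disconnect the graph. There the correct equality case is that the vertices of the removed cycle lie in distinct components of the remainder.

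There is one concrete omission in your bookkeeping. You model each edge of $\alpha$ as an unglued polygon side, but in $\Gamma_c(\alpha)$ Wick contractions may also pair two edges of $\alpha$ directly. By injectivity these are two parallel edges or two loops at one vertex. Take $\alpha$ a 2-cycle and no polygons: then $\chi(\gamma)=2-1=1$, but cutting along $\alpha$ leaves no pieces, so your bound $|V(\alpha)|-|E(\alpha)|+k$ gives $0$. These are exactly the GOE-type terms and must be counted as degenerate disks.

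The correct identity is $\chi(\gamma)=|V(\alpha)|-|E(\alpha)|+|E_c|+\sum_j\chi(S_j)$, where $E_c$ is the set of such $\alpha$--$\alpha$ pairs. Combine it with $\chi(S_j)\le 2-b_j\le b_j$. This gives $\chi(\gamma)\le |V(\alpha)|-|E(\alpha)|+T$, where $T=|E_c|+\sum_j b_j$ is the number of closed trails in the edge partition of $\alpha$ induced by all boundary circles and $\alpha$--$\alpha$ pairs. Equality holds iff every $S_j$ is a disk.

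This also fills your unjustified step from $\chi(\gamma)=1$ to ``the disk boundaries partition $E(\alpha)$''. Non-disk pieces are then excluded by the equality case rather than by a separate argument. With this amendment your proof is complete, and more careful than the paper's.
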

\begin{proof}
The only $\al$ for which $\Gam_c(\al)$ is nonzero are the Eulerian $\al$,
since a polyhedron $\gam \in \Gam_c(\al)$ with boundary $\al$ must have a boundary which is a union of cycles.
Therefore, it remains to argue about Eulerian graphs $\al$.

For Eulerian $\al$, the $\gam \in \Gam_c(\al)$ which maximize the quantity $\chi(\gam) = |V(\gam)| - |E(\gam)| + |F(\gam)|$ are given by decomposing $\al$ into the maximum number of simple cycles, then attaching a genus 0 polyhedron to each cycle.
This achieves $\chi(\gam) = |V(\al)| - |E(\al)| + C$ where $C$ is the number of cycles.\footnote{Note that the computational problem of, given an Eulerian graph $\al$, compute a partition of $E(\al)$ into the maximum number of cycles, is NP-hard \cite{Holyer81:EdgePartitioning}.}

We argue that:
\begin{equation}\label{eq:max-cycle-partition}
    |V(\al)| - |E(\al)| + C \leq 1
\end{equation}
for all Eulerian graphs $\al$ and this is achieved if and only if $\al$ is a cactus.
Fix a maximum cycle partition of $\al$. The $C$ cycles are edge-disjoint so we can remove one edge from each one while maintaining that the graph is connected. Let $\al'$ be the resulting graph. Then $|V(\al')| - |E(\al')| = |V(\al)| - |E(\al)| + C$. Since $\al'$ is still connected we have $|V(\al')| - |E(\al')| \leq 1$.
The final inequality is an equality if and only if $\al'$ is a tree and hence $\al$ is a cactus.
This proves~\cref{eq:max-cycle-partition} and completes the lemma.
\end{proof}

\subsection{Mathematical comments on the Feynman diagram method}
\label{app:feynman-formal}

The Feynman diagram method is not mathematically rigorous, with (in our opinion)
the main obstruction being that intermediate summations such as~\cref{eq:feynman-fml-1,eq:num,eq:feynman-free-energy} are divergent.
The Euler characteristic grows with the
number of disconnected polyhedra, but the method proceeds anyway to divide out the disconnected polyhedra,
which ultimately yields a convergent summation in~\cref{eq:thooft} (for sufficiently small values of the coupling constant $g \geq 0$).

The Feynman diagram method is a {perturbative expansion}
because it holds for sufficiently small perturbations of the \goe density,
up to the radius of convergence of the Feynman diagram summations~\cite{mcLaughlin,garouf}.
On the other hand,~\cref{thm:moments} holds beyond the radius of convergence of the Feynman diagram expansion in~\cref{eq:feynman-alpha},
so it would be impossible to prove the theorem using a perturbative expansion alone.

\section{Traffic Distributions via Weingarten Calculus}
\label{app:weingarten}

We now present different tools and calculations for the traffic distributions of orthogonally invariant matrices based on the \emph{Weingarten formula} for the moments of entries of Haar-random orthogonal matrices.
These essentially follow the ideas of similar calculations by \cite{cebron2024traffic}, but use the version of the Weingarten formula for the orthogonal group, which we review below.

\subsection{Weingarten formula for orthogonal matrices}

For $\bm i = (i_1, \dots, i_k)$ and a perfect matching $\alpha \in \PM([k])$, define
\[ \delta_{\alpha}(\bm i) = \begin{cases} 1 & \text{if } i_u = i_v \text{ for all } \{u, v\} \in \alpha, \\ 0 & \text{otherwise.} \end{cases} \]
The Weingarten calculus expresses the moments of the Haar measure on $O(n)$ in terms of a certain ``Weingarten function'' $W_n(\al, \beta)$ on pairs of matchings.

\begin{lemma}[Weingarten formula]
    Let $\bQ \sim O(n)$ be a Haar-random orthogonal matrix. There exists a function $W_n: \PM([k])^2 \to \R$ such that
    \[
        \E_{\bQ \sim O(n)} \left[\bQ[i_1,j_1]\cdots \bQ[i_k,j_k]\right] = \sum_{\alpha, \beta \in \PM([k])} W_n(\alpha, \beta) \delta_{\alpha}(\bm i) \delta_{\beta}(\bm j).
    \]
\end{lemma}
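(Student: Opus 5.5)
The plan is to identify the moment tensor with an orthogonal projection and then use the first fundamental theorem of invariant theory for $O(n)$. Let $O(n)$ act on $(\R^n)^{\otimes k}$ diagonally by $\rho(\bQ) = \bQ^{\otimes k}$, and set
\[
    \bm P \defeq \E_{\bQ \sim O(n)} \bQ^{\otimes k}\,.
\]
Its entry in position $(\bm i, \bm j)$ is exactly $\E[\bQ[i_1,j_1]\cdots \bQ[i_k,j_k]]$, so it suffices to express $\bm P$ in the required form. If $k$ is odd, $\PM([k]) = \varnothing$, and the moment vanishes because $-\bQ$ has the same law as $\bQ$. So we may assume $k$ is even.

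\textbf{Step 1: $\bm P$ is the orthogonal projection onto the invariants.} By left and right invariance of Haar measure, $\bQ_0^{\otimes k}\bm P = \bm P = \bm P\,\bQ_0^{\otimes k}$ for every $\bQ_0 \in O(n)$. Averaging this identity over $\bQ_0$ gives $\bm P^2 = \bm P$. Since $\bQ^\top$ has the same law as $\bQ$, we get $\bm P^\top = \bm P$, so $\bm P$ is an orthogonal projection. Its image consists of $O(n)$-invariant tensors by the first identity. Conversely, it contains every invariant tensor $\bm v$, because then $\bm P\bm v = \E[\bQ^{\otimes k}\bm v] = \bm v$. Hence $\bm P$ is the orthogonal projection onto $\big((\R^n)^{\otimes k}\big)^{O(n)}$.

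\textbf{Step 2: identify the invariant subspace.} For each $\alpha \in \PM([k])$, define
\[
    \bm e_\alpha \defeq \sum_{\bm i} \delta_\alpha(\bm i)\, e_{i_1}\otimes\cdots\otimes e_{i_k}\,.
\]
Each $\bm e_\alpha$ is a tensor product of copies of the invariant form $\sum_i e_i \otimes e_i$ (placed in the positions prescribed by $\alpha$), so it is invariant. The first fundamental theorem of invariant theory for $O(n)$ (Brauer, Weyl) states that these vectors span the invariant subspace. I expect this to be the main nontrivial input, and I would cite it rather than reprove it. Note that for $n < k/2$ the $\bm e_\alpha$ are linearly dependent, so the argument must not assume a basis.

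\textbf{Step 3: write the projection via a Gram pseudo-inverse.} Let $\bm E$ be the matrix whose columns are the $\bm e_\alpha$. Its Gram matrix is
\[
    \bm G[\alpha,\beta] = \langle \bm e_\alpha, \bm e_\beta\rangle = n^{\#\text{cycles of } \alpha\cup\beta}\,.
\]
The orthogonal projection onto the column span of $\bm E$ is $\bm E\,\bm G^{+}\bm E^\top$, where $\bm G^+$ is the Moore--Penrose pseudo-inverse. This formula holds even when $\bm E$ has dependent columns. It can be checked via the singular value decomposition of $\bm E$, or by verifying that the expression is symmetric, idempotent, fixes each $\bm e_\alpha$, and has range inside $\operatorname{span}(\bm e_\alpha)$.

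Setting $W_n \defeq \bm G^+$ and reading off the $(\bm i,\bm j)$ entry, using $\bm e_\alpha[\bm i] = \delta_\alpha(\bm i)$, gives
\[
    \E_{\bQ \sim O(n)}\big[\bQ[i_1,j_1]\cdots \bQ[i_k,j_k]\big] = \sum_{\alpha,\beta\in\PM([k])} W_n(\alpha,\beta)\,\delta_\alpha(\bm i)\,\delta_\beta(\bm j)\,,
\]
which is the claimed formula. In particular, $W_n$ depends only on $n$ and $k$.
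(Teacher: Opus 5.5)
Your proof is correct, and it is the standard Collins--\'Sniady argument that the paper relies on: the moment tensor $\E\,\bQ^{\otimes k}$ is the orthogonal projection onto the $O(n)$-invariant tensors, the first fundamental theorem says these are spanned by the Brauer pairing tensors, and $W_n$ is taken to be the pseudo-inverse of the Gram matrix $n^{|\cyc(\alpha,\beta)|}$ (the paper does not prove this lemma itself but cites \cite{CS-2006-HaarMeasureMoments,Banica-2010-OrthogonalWeingartenFormula}). Your use of the Moore--Penrose pseudo-inverse also correctly covers the regime $n < k/2$, where the Gram matrix is singular and the identity does not determine $W_n$ uniquely.
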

\noindent
See \cite{CS-2006-HaarMeasureMoments,Banica-2010-OrthogonalWeingartenFormula} for an explicit definition of $W_n(\al, \beta)$.
We will only be interested in asymptotics for $k$ constant and $n \to \infty$, for which the approximations below will suffice.

When $k$ is odd, $\PM([k]) = \varnothing$, so the right-hand side above is zero, and indeed the left-hand side is easily seen to be zero without invoking the Weingarten formula, because $\bQ$ has the same law as $-\bQ$.
So, the only interesting case is $k$ even.
In that case, we give $\PM([k])$ the structure of a metric space, where $\Delta(\alpha, \beta)$ is defined as the minimum number of \emph{swap operations} needed to reach $\beta$ from $\alpha$ (a swap replaces pairs $\{a, b\}$, $\{c, d\}$ with pairs $\{a, c\}$, $\{b, d\}$).
It is easy to check that $\Delta$ is a metric (indeed, it is the distance on a certain graph structure defined on $\PM([k])$).
Further, write $\cyc(\alpha, \beta)$ for the set of even cycles formed by the disjoint union of $\alpha$ and $\beta$.
Then, it is easy to show the alternative characterization
\[ \Delta(\alpha, \beta) = \frac{k}{2} - |\cyc(\alpha, \beta)|\,. \]
As a sanity check, $|\cyc(\alpha, \beta)| \leq \frac{k}{2}$ with equality achieved if and only if $\alpha = \beta$, which is precisely the case $\Delta(\alpha, \beta) = 0$.

For $\alpha, \beta \in \PM([k])$, let $\cP(\alpha, \beta)$ be the set of geodesic paths from $p$ to $q$ in $\PM([k])$, i.e., of sequences $\alpha = \gamma_0, \gamma_1, \dots, \gamma_t = \beta$ with $\gamma_i \neq \gamma_{i + 1}$ for all $i = 0, \dots, t - 1$ and with $\sum_{i = 0}^{t - 1} \Delta(\gamma_i, \gamma_{i + 1}) = \Delta(\alpha, \beta)$.
For such a path $P = (\gamma_0, \dots, \gamma_t)$, write $|P| \colonequals t$.
Then, we define
\begin{align*}
\mu(\al, \beta) 
&= \sum_{P \in \cP(\alpha, \beta)} (-1)^{|P|}
\intertext{This may be viewed as a M{\"o}bius function of the partially ordered set whose chains are geodesics from a given ``base'' matching $p$ to each other matching. An explicit formula from \cite{CS-2006-HaarMeasureMoments} is}
\mu(\al, \beta) &= \prod_{C \in \cyc(\alpha, \beta)} (-1)^{\frac{|C|}{2} - 1}\Cat\left(\frac{|C|}{2} - 1\right) \numberthis \label{eq:cycle-mobius}
\end{align*}
where $\Cat(\cdot)$ are the Catalan numbers.
The key asymptotic for the Weingarten function for our purposes is then the following:

\begin{proposition}[\cite{CS-2006-HaarMeasureMoments}]
\label{lem:weirgarten-asymptotic}
    For a fixed $k$ and $\alpha, \beta \in \PM([k])$, as $n \to \infty$ we have
    \[ W_n(\alpha, \beta) = n^{-k + \cyc(\alpha, \beta)} \left(\mu(\alpha, \beta) + O(n^{-1})\right)\,. \]
\end{proposition}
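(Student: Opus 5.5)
The plan is to take the Weingarten asymptotic of \Cref{lem:weirgarten-asymptotic} from the inversion of the Gram matrix of the matching vectors. Let $\bm G_n \in \R^{\PM([k]) \times \PM([k])}$ be the Gram matrix with entries $\bm G_n[\alpha,\beta] = \sum_{\bm i \in [n]^k} \delta_\alpha(\bm i)\delta_\beta(\bm i) = n^{|\cyc(\alpha,\beta)|}$. The standard Collins--{\'S}niady characterization is that $W_n = \bm G_n^{-1}$. This follows from two facts: the Haar moment tensor is the orthogonal projection onto the $O(n)$-invariants in $(\R^n)^{\otimes k}$, and those invariants are spanned by the vectors $(\delta_\alpha(\bm i))_{\bm i}$, which are linearly independent once $n \ge k$. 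I would take this as the starting point. The whole task then reduces to the asymptotic inversion of the explicit matrix $\bm G_n$.

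First, factor $\bm G_n = n^{k/2}(\bm I + \bm E_n)$. Here $\bm E_n[\alpha,\beta] = n^{-\Delta(\alpha,\beta)}$ for $\alpha \neq \beta$ and $\bm E_n[\alpha,\alpha] = 0$, using $|\cyc(\alpha,\beta)| = \frac k2 - \Delta(\alpha,\beta)$. Since $k$ is fixed, $\|\bm E_n\| = O(n^{-1})$, so for large $n$ the Neumann series converges:
\[
W_n = n^{-k/2}\sum_{t\ge 0} (-1)^t \bm E_n^t .
\]
Expanding the entries gives $(\bm E_n^t)[\alpha,\beta] = \sum n^{-\sum_i \Delta(\gamma_i,\gamma_{i+1})}$. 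The sum runs over walks $\alpha=\gamma_0,\dots,\gamma_t=\beta$ with consecutive elements distinct.

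Second, I apply the triangle inequality for the metric $\Delta$. Every walk has total length at least $\Delta(\alpha,\beta)$, with equality exactly for the geodesic paths in $\cP(\alpha,\beta)$. The number of steps $t$ in a geodesic is at most $\Delta(\alpha,\beta)$. For a walk of total length $L$, we also have $t \le L$, since each step costs at least $1$. Hence there are only finitely many walks of each given length. The walks of length $\Delta+1$ or more then contribute $O(n^{-\Delta-1})$ in total, once the tail of the series is controlled by $\|\bm E_n\|^t$. This gives
\[
W_n(\alpha,\beta) = n^{-k/2-\Delta(\alpha,\beta)}\Bigl(\sum_{P\in\cP(\alpha,\beta)}(-1)^{|P|} + O(n^{-1})\Bigr).
\]
Since $-k/2 - \Delta = -k + |\cyc(\alpha,\beta)|$ and the sum is $\mu(\alpha,\beta)$ by definition, this is the claim.

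The step I expect to need the most care is the error bookkeeping: the number of non-geodesic walks grows with their length, so the correction must be shown to be uniformly $O(n^{-\Delta-1})$. I would handle this by splitting the series at $t \le T$ and $t > T$ for a fixed $T$ (say $T = k$). Walks with $t > T$ have length at least $t$ and total weight at most $(Cn^{-1})^t |\PM([k])|^t$, which gives a geometric tail of order $n^{-T-1}$, which is $O(n^{-\Delta-1})$ since $\Delta(\alpha,\beta) \le k/2 - 1 < T$. The finitely many walks with $t \le T$ are handled directly. The explicit product formula \cref{eq:cycle-mobius} for $\mu$ is not needed for this statement; it only identifies the constant.
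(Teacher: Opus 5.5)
Your proof is correct, and it is essentially the standard Collins--{\'S}niady argument that the paper cites for this statement without reproducing it. That argument identifies $W_n=\bm G_n^{-1}$, expands $(\bm I+\bm E_n)^{-1}$ as a Neumann series over walks in $\PM([k])$, and notes that the walks of minimal total length $\Delta(\alpha,\beta)$ are exactly the geodesic paths in $\cP(\alpha,\beta)$, which under the paper's definition may take steps of any length; your split of the series at $T=k$ correctly gives the uniform $O(n^{-\Delta(\alpha,\beta)-1})$ error.
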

\noindent
Note that the maximum possible scaling of this quantity is $n^{-k/2}$, which corresponds to the fact that with high probability the entries of $\bQ$ are all roughly of order $n^{-1/2}$.

\subsection{M{\"o}bius inversion on non-crossing partitions}

Recall that $\NC(k)$ is the partially ordered set of non-crossing partitions, i.e., those whose parts do not cross when drawn as a partition of vertices of the $k$-cycle.
We review some standard properties of this partially ordered set; see, e.g., \cite{NS-2006-LecturesCombinatoricsFreeProbability} for a standard reference.

Each non-crossing partition $\pi \in \NC(k)$ has a natural dual partition, called the \emph{Kreweras complement} and denoted $K(\pi)$.
On the cycle graph $C_k$, this may be viewed as the maximal non-crossing partition of the midpoints of the \emph{edges} of $C_k$ that does not cross the boundaries of $\pi$.
Alternatively, one may view both partitions as placed on a single cycle graph of twice the size, $C_{2k}$, on alternating sets of vertices.
We show this viewpoint with an example in~\cref{fig:kreweras}.
The map $K: \NC(k) \to \NC(k)$ is easily checked to be an involution.

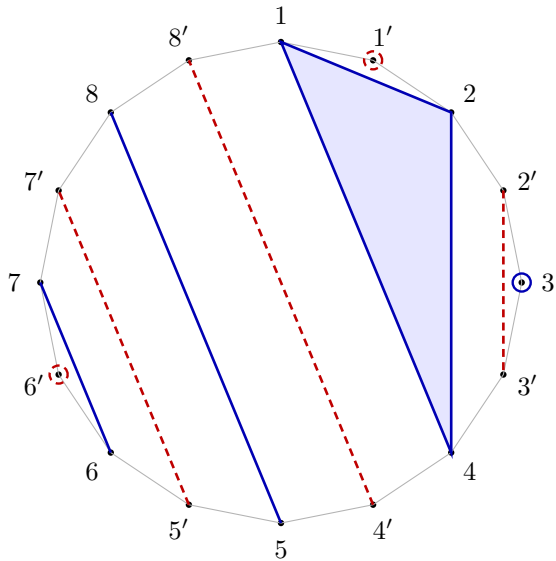
\begin{figure}[!htbp]
\begin{center}
\begin{tikzpicture}[scale=1, every node/.style={font=\small}]
  \def\R{3.2}
  \def\Rlab{3.55}

  \foreach \name/\ang/\lab in {
    u1/90/{$1$},   p1/67.5/{$1'$},
    u2/45/{$2$},   p2/22.5/{$2'$},
    u3/0/{$3$},    p3/-22.5/{$3'$},
    u4/-45/{$4$},  p4/-67.5/{$4'$},
    u5/-90/{$5$},  p5/-112.5/{$5'$},
    u6/-135/{$6$}, p6/-157.5/{$6'$},
    u7/180/{$7$},  p7/157.5/{$7'$},
    u8/135/{$8$},  p8/112.5/{$8'$}
  }{
    \coordinate (\name) at (\ang:\R);
    \fill (\name) circle (1.2pt);
    \node at (\ang:\Rlab) {\lab};
  }

  \draw[gray!60]
    (u1)--(p1)--(u2)--(p2)--(u3)--(p3)--(u4)--(p4)--
    (u5)--(p5)--(u6)--(p6)--(u7)--(p7)--(u8)--(p8)--cycle;

  \filldraw[fill=blue!10, draw=blue!70!black, line width=1pt]
    (u1)--(u2)--(u4)--cycle;
  \draw[blue!70!black, line width=1pt] (u5)--(u8);
  \draw[blue!70!black, line width=1pt] (u6)--(u7);
  \draw[blue!70!black, line width=0.9pt] (u3) circle[radius=0.12];

  \draw[red!75!black, densely dashed, line width=1pt] (p2)--(p3);
  \draw[red!75!black, densely dashed, line width=1pt] (p4)--(p8);
  \draw[red!75!black, densely dashed, line width=1pt] (p5)--(p7);
  \draw[red!75!black, densely dashed, line width=0.9pt] (p1) circle[radius=0.12];
  \draw[red!75!black, densely dashed, line width=0.9pt] (p6) circle[radius=0.12];
\end{tikzpicture}
\end{center}
\caption{An illustration of the Kreweras complement operation on non-crossing partitions. The parts of a partition $\pi \in \NC(8)$ are drawn in blue, and the parts of the Kreweras complement $K(\pi) \in \NC(8)$ in red.}
\label{fig:kreweras}
\end{figure}

We give $\NC(k)$ the usual partial ordering of refinement of partitions, written $\pi \psdleq \rho$, using that a refinement of a non-crossing partition remains non-crossing.
This partial ordering has a minimal element $\underline{0} \in \NC(k)$, the partition where every block is a singleton, and a maximal element $\underline{1} \in \NC(k)$, the partition with just one block.
The Kreweras complement is an \emph{anti-isomorphism} of this ordering: it is a bijection that reverses the ordering, i.e.\ $K(\pi) \preceq K(\rho)$ if and only if $\pi \succeq \rho$.
In particular, $K(\underline{0}) = \underline{1}$ and $K(\underline{1}) = \underline{0}$.

The \Mobius\ function for the $\NC(k)$ poset gives values $\mu(\pi, \rho)$ for each pair $\pi \preceq \rho$.
The Kreweras complement interacts with the \Mobius\ function in the following way that will be crucial for our purposes:
\begin{equation}\label{eq:kreweras-mobius}
\mu(\underline{0}, \pi) = \mu(K(\pi), \underline{1})\,. 
\end{equation}
Further, evaluations of the \Mobius\ function as on the left-hand side may be expanded as products over the blocks of $\pi$, and the factors turn out to be the same as the combinatorial quantities appearing in~\cref{eq:cycle-mobius}; there is a combinatorial explanation for this coincidence but we will just need to use that this indeed occurs:
\[ \mu(\underline{0}, \pi) = \prod_{A \in \pi} (-1)^{|A| - 1} \Cat(|A| - 1)\,. \]

Note that, applying \Mobius inversion to \cref{eq:free-cumulant}, we obtain an explicit formula for the free cumulants in terms of the moments, as mentioned earlier in the main text: if $m_k$ are the moments of a probability measure, then the free cumulants $\kappa_k$ are
\begin{equation} \label{eq:free-cumulant-explicit}
\kappa_k = \sum_{\pi \in \NC(k)} \mu(\pi, \underline{1}_k) \prod_{A \in \pi} m_{|A|}\,. 
\end{equation}

\subsection{Tracial moments concentration}

The result of~\cite{cebron2024traffic} also assumes the following formula for the joint moments of the various trace powers of a matrix, that we also use in our proof.
We show that it follows from our assumptions.

\begin{lemma}[Tracial moments concentration]\label{lem:factorization}
    Let $\bA = \bA^{(n)} \in \R^{n \times n}_\sym$ be  random  matrices that converge in tracial moments in $L^2$ to some $\mu$. Then for any cycle diagrams
    $\rho_1, \ldots, \rho_k$,
    \[
        \lim_{n\to\infty} \E\left[\prod_{j=1}^k \frac 1n w_{\rho_j}(\bmA)\right] = \prod_{j=1}^k \lim_{n\to\infty}  \E \frac 1n w_{\rho_j}(\bmA)\,.
    \]
\end{lemma}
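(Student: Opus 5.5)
We need $\lim \E[\prod_j X_j^{(n)}] = \prod_j m_{q_j}$, where $X_j^{(n)} \defeq \frac1n w_{\rho_j}(\bA) = \frac1n \Tr(\bA^{q_j})$, $q_j = |\rho_j|$, and $m_q = \int x^q\,\d\mu(x)$. By assumption, each $X_j^{(n)}$ converges in $L^2$ to the deterministic constant $m_{q_j}$. The plan is to telescope the product and control each error term by Hölder's inequality. Convergence in $L^2$ to a constant alone does not give convergence of products of $k\ge 3$ factors, so I also need uniform moment bounds. I will get these from the same hypothesis, using that higher powers of traces are dominated by traces of higher powers.

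\emph{Step 1 (uniform moment bounds).} For each fixed $q$ and $p$, I claim $\sup_n \E|X^{(n)}_q|^{p} < \infty$. By Jensen's inequality applied to the empirical spectral measure,
\[
\Big|\tfrac1n \Tr \bA^q\Big|^{p} \le \tfrac1n \sum_i |\lambda_i|^{qp} \le \tfrac1n \Tr \bA^{2qp}
\]
(taking $p$ even without loss of generality). The right-hand side is a single normalized trace. It converges in $L^2$, hence in $L^1$, so its expectation is bounded uniformly in $n$.

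\emph{Step 2 (telescoping).} Write
\[
\prod_j X_j - \prod_j m_{q_j} = \sum_{l=1}^k \Big(\prod_{j<l} m_{q_j}\Big)\,(X_l - m_{q_l}) \prod_{j>l} X_j .
\]
Bound the expectation of each term by the generalized Hölder inequality:
\[
\Big|\E\Big[(X_l - m_{q_l}) \prod_{j>l} X_j\Big]\Big| \le \|X_l - m_{q_l}\|_{L^2} \prod_{j>l} \|X_j\|_{L^{2(k-l)}} .
\]
The first factor tends to $0$ by $L^2$ convergence. The remaining factors are bounded by Step 1. Hence every term tends to $0$.

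\emph{Step 3 (conclusion).} Each single expectation satisfies $\E X_j \to m_{q_j}$, so the right-hand side of the lemma equals $\prod_j m_{q_j}$, which matches the limit obtained in Step 2.

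The only mild obstacle is Step 1: recognizing that $L^2$ convergence of \emph{all} tracial moments supplies the higher-moment control through Jensen's inequality on the spectrum. No orthogonal invariance is needed for this lemma.
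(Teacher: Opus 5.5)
Your proof is correct and is essentially the paper's argument. You isolate one factor that converges to its limit in $L^2$, and you control the other factors uniformly by Jensen's inequality on the empirical spectral measure, which bounds powers of a normalized trace $T_q=\frac1n\Tr\bA^q$ by a single normalized trace. The paper does this by expanding around $\E T_q$ over nonempty subsets, applying Cauchy--Schwarz, and merging products through $\prod_q T_q^2\le T_{2\sum q}$; your telescoping with generalized H\"older avoids that merging step.

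One slip in Step~1: the inequality $\frac1n\sum_i|\lambda_i|^{qp}\le\frac1n\Tr\bA^{2qp}$ is false when the eigenvalues are small (e.g.\ all $\lambda_i=\tfrac12$, $q=1$, $p=2$ gives $\tfrac14\le\tfrac1{16}$). For $p$ even you simply have the equality $\frac1n\sum_i|\lambda_i|^{qp}=\frac1n\Tr\bA^{qp}$, which is all you need.
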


\begin{proof}
    Let us write $T_q = T_q^{(n)} \defeq \frac 1n \Tr \bm A^q$. For
    any finite multiset of integers $\mathcal Q$, we can expand
    \[
        \E \left[\prod_{q\in \calQ} T_{q}\right] - \prod_{q\in \calQ} \E T_{q} = \sum_{\varnothing \neq \calQ'\subseteq \calQ} \E\left[\prod_{q\in \calQ'} (T_{q} - \E T_{q})\right] \prod_{q\in \calQ\setminus \calQ'} \E T_{q}\,.
    \]
    Our goal is now to show that each term in the
    sum over $\mathcal Q'$ converges to $0$ as $n\to\infty$. Fix $\mathcal Q'\subseteq \mathcal Q$ such that $\mathcal Q'\neq\varnothing$, and select an arbitrary element $q_0\in \mathcal Q'$. By Cauchy-Schwarz, we have
    \begin{equation}
        \left(\E\left[\prod_{q\in \calQ'} (T_{q} - \E T_{q})\right]\right)^2 \le \E (T_{q_0} - \E T_{q_0})^2 \cdot \E \left[\prod_{q\in \calQ'\setminus \{q_0\}} (T_q - \E T_q)^2\right]\,.\label{eq:tracialBound}
    \end{equation}
    We know that $\E (T_{q_0} - \E T_{q_0})^2$
    converges to $0$ as $n\to\infty$
    by the $L^2$ tracial moments convergence assumption.  For the remaining product of expectations from~\cref{eq:tracialBound}, we apply
    the bound $T_q^2\le T_{2p}^{q/p}$ for all $q\le p$ to get: for all $\mathcal Q''\subseteq \mathcal Q'$,
    \[
        \prod_{q\in \mathcal Q''} T_q^2\le T_{2\sum_{q\in \mathcal Q''} q}\,.
    \]
    Therefore, all terms in the expansion of~\cref{eq:tracialBound} can be bounded
    by products of terms of the form $\E T_q$ for
    $q\in\N$. These are all bounded as $n\to\infty$, since convergence in $L^2$ 
    also implies convergence in expectation.
    Together, we deduce
    \[
        \lim_{n\to\infty} \E \left[\prod_{q\in \calQ} T_{q}\right] = \prod_{q\in \calQ} \lim_{n\to\infty}\E T_{q}\,,
    \]
    which is equivalent to the desired statement.
\end{proof}

\begin{remark}
    \label{rem:cdm-factorization}
    This property is a statement about concentration of the tracial moments.
    For an example where it does not hold, one can take $\bA^{(n)} = a\bm I_n$ for $a \sim \Unif(\{\pm 1\})$, in which case $\lim_{n \to \infty}\EE \frac{1}{n}\Tr(\bA) = \lim_{n \to \infty} \EE \frac{1}{n}\Tr(\bA^3) = 0$, while $\lim_{n \to \infty} \EE [\frac{1}{n}\Tr(\bA) \cdot \frac{1}{n}\Tr(\bA^3)] = 1$.
\end{remark}
\noindent
We will further show below that analogous formulas hold for joint moments of elements of the $w$- and $z$-bases of polynomials, not just the cycle diagrams.

\subsection{Traffic distribution of orthogonally invariant matrices}

We now prove \cref{thm:moments} by computing the traffic distribution of an orthogonally invariant matrix $\bA$, which we recall consists of the limits of expressions of the form $\frac{1}{n}\EE z_{\alpha}(\bA)$ for $\alpha \in \cA_0$.

First, for a graph $\alpha = (V(\alpha), E(\alpha))$, define $\mathrm{HE} = \mathrm{HE}(\alpha)$ to be the set of \emph{half-edges} in $\alpha$, a set of size $|\mathrm{HE}| = 2|E|$ which may be identified with pairs $(v, \{v, w\})$ for each choice of $v \in V$ and $\{v, w\} \in E(\alpha)$.
Then, to $\alpha$ itself is associated a distinguished perfect matching $\widetilde{\alpha} \in \PM(\mathrm{HE})$, which matches each pair $(v, \{v, w\})$ and $(w, \{v, w\})$ of half-edges that correspond to the same edge of $\alpha$ (this is the perfect matching that would realize $\alpha$ under the configuration model).

We say that a matching $\beta \in \calM(\mathrm{HE})$ is \emph{$\alpha$-local} if all of its matches are between half-edges of the form $(v, e_1)$, $(v, e_2)$, i.e., between pairs of half-edges associated to the same \emph{vertex} (rather than the same \emph{edge}) for $\widetilde{\alpha}$.
Let $\Loc(\alpha) \subseteq \PM(\mathrm{HE}(\alpha))$ be the set of all $\alpha$-local matchings.
Note that $\Loc(\alpha) \neq \varnothing$ if and only if $\alpha$ is Eulerian, i.e., if every vertex has even degree.

At the heart of the matter is the distance between $\widetilde{\alpha}$ and the set $\Loc(\alpha)$, which is minimized precisely by the cactus graphs $\alpha\in\calC$:
\begin{proposition}
    \label{prop:cactus-loc-dist}
    For any graph $\alpha$, not necessarily connected, all of whose connected components are Eulerian, we have
    \[ \Delta(\widetilde{\alpha}, \Loc(\alpha)) = \min_{\beta \in \Loc(\alpha)} \Delta(\widetilde{\alpha}, \beta) \geq |V(\alpha)| - |\conn(\alpha)|\,, \]
    with equality if and only if every connected component of $\alpha$ is a cactus.
    Further, in that case, there is a unique $\beta \in \Loc(\alpha)$ achieving equality, which is the (unique) such $\beta$ that matches pairs of half-edges belonging to the same cycle in $\alpha$.
\end{proposition}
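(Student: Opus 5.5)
We reduce to the connected case and then use the cycle-counting formula $\Delta(\widetilde\alpha,\beta) = |E(\alpha)| - |\cyc(\widetilde\alpha,\beta)|$. Here $k = |\mathrm{HE}| = 2|E|$, so $k/2 = |E|$. Both $\widetilde\alpha$ and any $\beta\in\Loc(\alpha)$ respect the decomposition of $\mathrm{HE}(\alpha)$ into components. Hence $\cyc(\widetilde\alpha,\beta)$ and $\Delta$ are additive over components, and $\Loc(\alpha)$ is the product of the $\Loc$ sets of the components. It therefore suffices to show that for connected Eulerian $\alpha$ and every $\beta\in\Loc(\alpha)$,
\[ |\cyc(\widetilde\alpha,\beta)| \le |E(\alpha)| - |V(\alpha)| + 1, \]
with equality exactly when $\alpha$ is a cactus and $\beta$ is the cycle-respecting local matching.

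The key step is to interpret $\beta$ as a transition system. At each vertex $v$, $\beta$ pairs the half-edges at $v$, so it says how a walk entering $v$ along one edge leaves along another. Alternately following $\widetilde\alpha$ (traverse an edge) and $\beta$ (turn at a vertex) traces closed walks in $\alpha$. Each cycle of $\widetilde\alpha\cup\beta$ is exactly one such closed walk, and these walks partition $E(\alpha)$. So $|\cyc(\widetilde\alpha,\beta)| = C_\beta$, the number of closed trails in this edge-decomposition of $\alpha$ into closed trails. Every closed trail can be split further into simple cycles, so $C_\beta \le C$, where $C$ is the maximum number of parts in a partition of $E(\alpha)$ into simple cycles. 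By the argument in the proof of \cref{lem:combinatorially-dominant}, $|V|-|E|+C\le 1$: delete one edge from each cycle, which keeps the graph connected, and count. This gives the inequality.

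For the equality case, we need $C_\beta = |E|-|V|+1$. Then every inequality above is tight. First, the graph remaining after deleting one edge per cycle is a tree, which forces $\alpha$ to be a cactus. Second, each closed trail of $\beta$ is already a simple cycle, since otherwise splitting it would give more cycles than $C\le |E|-|V|+1$ permits. In a cactus, the decomposition of $E$ into simple cycles is unique: every edge lies in exactly one simple cycle, and a part of the partition containing an edge must be that cycle. At each vertex $v$, the transitions must then pair the two half-edges at $v$ lying on the same cycle. This determines $\beta$ uniquely as the cycle-respecting matching. Conversely, for a cactus this $\beta$ has $C_\beta=|\cyc(\alpha)| = |E|-|V|+1$, because a cactus has cyclomatic number equal to its number of cycles. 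So equality is attained.

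The main obstacle I expect is verifying that the correspondence between cycles of $\widetilde\alpha\cup\beta$ and closed trails is exact. Two degenerate situations need checking. For self-loops, both half-edges of the loop sit at the same vertex; they may be matched to each other by $\beta$, forming a 2-cycle of length one trail. For multiedges, two parallel edges form a 2-cycle as a simple cycle. I would handle both by treating half-edges as formal objects and checking that the bijection still holds. The metric identity $\Delta = k/2-|\cyc|$ is taken from the text.
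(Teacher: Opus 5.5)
Your proposal is correct and follows essentially the same route as the paper. You reduce to connected components, use $\Delta(\widetilde{\alpha},\beta) = |E| - |\cyc(\widetilde{\alpha},\beta)|$, read $\cyc(\widetilde{\alpha},\beta)$ as a partition of $E(\alpha)$ into closed trails, and bound the number of parts by $|E|-|V|+1$, with equality forcing $\alpha$ to be a cactus whose unique cycle decomposition pins down $\beta$. The only difference is in the counting step: the paper orders the parts so that each meets the union of its predecessors and counts the increments of $|E|-|V|+1$, whereas you refine into simple cycles and reuse the edge-deletion argument of \cref{lem:combinatorially-dominant}. Your explicit distinction between closed trails and simple cycles is, if anything, slightly more careful than the paper's phrase ``cycle cover''.
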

\begin{proof}
    It suffices to consider $\alpha$ connected; the general case follows by considering each connected component separately.

    We may rewrite
    \begin{align*}
    \Delta(\widetilde{\alpha}, \Loc(\alpha)) 
    &= \min_{\beta \in \Loc(\alpha)} \Delta(\widetilde{\alpha}, \beta) = |E| - \max_{\beta \in \Loc(\alpha)} |\cyc(\widetilde{\alpha}, \beta)|
    \end{align*}
    and therefore it suffices to show that, for all $\alpha$-local matchings of half-edges $\beta$, we have
    \[ |\cyc(\widetilde{\alpha}, \beta)| \stackrel{\text{(?)}}{\leq} |E| - |V| + 1\,. \]
    The set of cycles in the disjoint union of $\widetilde{\alpha}$ and an $\alpha$-local $\beta$ is equivalently the number of cycles in a cycle cover of $\alpha$ (i.e., a partition of its edges into cycles).

    The bound is tight for cycles.
    Suppose $C_1, \dots, C_k$ is a cycle cover of some connected multigraph $\alpha$.
    Since $\alpha$ is connected, it is possible to order the $C_i$ such that $C_{i + 1}$ has a vertex in common with the union of $C_1, \dots, C_i$ for each $i = 1, \dots, k - 1$.
    Adding each successive $C_i$ then increases $|E| - |V| + 1$ by at least 1, so the bound follows.
    If the bound is tight, then in the above ordering $C_{i + 1}$ must have exactly one vertex in common with the union of $C_1, \dots, C_i$, and thus is a cactus.
    In that case, there is only one cycle cover, and thus the minimizer $\beta$ is unique and must be as specified in the statement.
\end{proof}

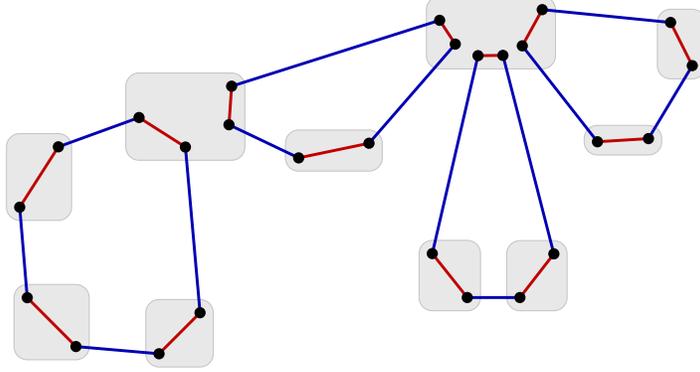
\begin{figure}
\begin{center}
\begin{tikzpicture}[
  x=1cm,y=1cm,
  halfedge/.style={circle,fill=black,inner sep=1.5pt},
  alphamatch/.style={draw=blue!70!black,line width=1.1pt},
  betamatch/.style={draw=red!75!black,line width=1.1pt},
  blob/.style={fill=gray!18,draw=gray!45,rounded corners=5pt,inner sep=5pt}
]

\coordinate (c)  at ( 0.0,  3.0);
\coordinate (a1) at (-4.1,  1.7);
\coordinate (b1) at (-2.0,  0.7);
\coordinate (a2) at (-0.9, -0.9);
\coordinate (b2) at ( 1.0, -0.9);
\coordinate (a3) at ( 3.0,  2.7);
\coordinate (b3) at ( 1.8,  0.7);
\coordinate (x)  at (-6.3,  0.9);
\coordinate (y)  at (-6.1, -1.5);
\coordinate (z)  at (-3.8, -1.7);

\coordinate (hcAone)  at ($(c)!7mm!(a1)$);
\coordinate (hcBone)  at ($(c)!7mm!(b1)$);
\coordinate (haonec)  at ($(a1)!7mm!(c)$);
\coordinate (haoneb)  at ($(a1)!7mm!(b1)$);
\coordinate (hb1a1)   at ($(b1)!6mm!(a1)$);
\coordinate (hb1c)    at ($(b1)!6mm!(c)$);

\coordinate (hcAtwo)  at ($(c)!7mm!(a2)$);
\coordinate (hcBtwo)  at ($(c)!7mm!(b2)$);
\coordinate (hatwoc)  at ($(a2)!6mm!(c)$);
\coordinate (hatwob)  at ($(a2)!6mm!(b2)$);
\coordinate (hb2a2)   at ($(b2)!6mm!(a2)$);
\coordinate (hb2c)    at ($(b2)!6mm!(c)$);

\coordinate (hcAthree) at ($(c)!7mm!(a3)$);
\coordinate (hcBthree) at ($(c)!7mm!(b3)$);
\coordinate (hathreec) at ($(a3)!6mm!(c)$);
\coordinate (hathreeb) at ($(a3)!6mm!(b3)$);
\coordinate (hb3a3)    at ($(b3)!6mm!(a3)$);
\coordinate (hb3c)     at ($(b3)!6mm!(c)$);

\coordinate (haonex)  at ($(a1)!6mm!(x)$);
\coordinate (haonez)  at ($(a1)!6mm!(z)$);
\coordinate (hxa1)    at ($(x)!6mm!(a1)$);
\coordinate (hxy)     at ($(x)!6mm!(y)$);
\coordinate (hyx)     at ($(y)!6mm!(x)$);
\coordinate (hyz)     at ($(y)!6mm!(z)$);
\coordinate (hzy)     at ($(z)!6mm!(y)$);
\coordinate (hza1)    at ($(z)!6mm!(a1)$);

\begin{scope}[on background layer]
  \node[blob,fit=(hcAone)(hcBone)(hcAtwo)(hcBtwo)(hcAthree)(hcBthree)] {};
  \node[blob,fit=(haonec)(haoneb)(haonex)(haonez)] {};
  \node[blob,fit=(hb1a1)(hb1c)] {};
  \node[blob,fit=(hatwoc)(hatwob)] {};
  \node[blob,fit=(hb2a2)(hb2c)] {};
  \node[blob,fit=(hathreec)(hathreeb)] {};
  \node[blob,fit=(hb3a3)(hb3c)] {};
  \node[blob,fit=(hxa1)(hxy)] {};
  \node[blob,fit=(hyx)(hyz)] {};
  \node[blob,fit=(hzy)(hza1)] {};
\end{scope}

\draw[alphamatch] (hcAone)   -- (haonec);
\draw[alphamatch] (haoneb)   -- (hb1a1);
\draw[alphamatch] (hb1c)     -- (hcBone);

\draw[alphamatch] (hcAtwo)   -- (hatwoc);
\draw[alphamatch] (hatwob)   -- (hb2a2);
\draw[alphamatch] (hb2c)     -- (hcBtwo);

\draw[alphamatch] (hcAthree) -- (hathreec);
\draw[alphamatch] (hathreeb) -- (hb3a3);
\draw[alphamatch] (hb3c)     -- (hcBthree);

\draw[alphamatch] (haonex)   -- (hxa1);
\draw[alphamatch] (hxy)      -- (hyx);
\draw[alphamatch] (hyz)      -- (hzy);
\draw[alphamatch] (hza1)     -- (haonez);

\draw[betamatch] (hcAone)    -- (hcBone);
\draw[betamatch] (hcAtwo)    -- (hcBtwo);
\draw[betamatch] (hcAthree)  -- (hcBthree);

\draw[betamatch] (haonec)    -- (haoneb);
\draw[betamatch] (haonex)    -- (haonez);

\draw[betamatch] (hb1a1)     -- (hb1c);
\draw[betamatch] (hatwoc)    -- (hatwob);
\draw[betamatch] (hb2a2)     -- (hb2c);
\draw[betamatch] (hathreec)  -- (hathreeb);
\draw[betamatch] (hb3a3)     -- (hb3c);
\draw[betamatch] (hxa1)      -- (hxy);
\draw[betamatch] (hyx)       -- (hyz);
\draw[betamatch] (hzy)       -- (hza1);

\foreach \p in {
  hcAone,hcBone,haonec,haoneb,hb1a1,hb1c,
  hcAtwo,hcBtwo,hatwoc,hatwob,hb2a2,hb2c,
  hcAthree,hcBthree,hathreec,hathreeb,hb3a3,hb3c,
  haonex,haonez,hxa1,hxy,hyx,hyz,hzy,hza1}
  \node[halfedge] at (\p) {};
\end{tikzpicture}
\end{center}
\caption{An illustration of the matchings involved in~\cref{prop:cactus-loc-dist} and the arguments afterwards. Gray regions represent vertices of a cactus graph $\alpha$, three triangles joined at a vertex with one 4-cycle attached to one of those triangles at a different vertex. This graph has $|V| = 10$ and $|E| = 13$. Black dots in those regions represent half-edges of $\alpha$. In blue, we draw the matching $\widetilde{\alpha}$ realizing the graph $\alpha$; if the gray regions are each contracted to a point and only blue edges are retained, then the resulting graph is the cactus $\alpha$. In red, we draw the unique $\alpha$-local matching $\beta$ that maximizes $|\cyc(\widetilde{\alpha}, \beta)| = |E| - |V| + 1 = 4$; its being $\alpha$-local corresponds to making matches only within the gray regions.}
\label{fig:matchings}
\end{figure}

We now proceed to \Cref{thm:moments} by calculating the traffic distribution of a sequence of orthogonally invariant random matrices $\bA = \bA^{(n)}$.
We could view this as $\bA = \bQ\bD\bQ^{\top}$ for a Haar-distributed orthogonal $\bQ$ and some random diagonal $\bD$, but actually this will not be necessary.
Instead, let us take a perspective similar to the calculations in, for instance, \cite{KMW-2024-TensorCumulantsInvariantInference}, which we believe is useful in general.
Our idea will be to average the $z_{\alpha}(\bA)$ over a random rotation $\bQ$ drawn independently of $\bA$.
Regardless of the structure of $\bA$, this defines another family of polynomials:
\[ \bar{z}_{\alpha}(\bA) \colonequals \E_{\bQ} z_{\alpha}(\bQ\bA\bQ^{\top})\,. \]
If $\bQ$ is drawn from Haar measure, then the $\bar{z}_{\alpha}$ will be orthogonally invariant polynomials, a greater symmetry than permutation invariance of $z_{\alpha}$.
In particular, since the invariants of matrices under the $O(n)$ action are generated by traces of matrix powers, $\bar{z}_{\alpha}$ will be a polynomial in these.

\begin{proof}[Proof of \Cref{thm:moments}]
    Let $\bQ \in O(n)$ be Haar-distributed and independent of $\bA$, and let $\alpha = (V, E)$ be a graph.
    As above, write $\mathrm{HE} = \mathrm{HE}(\alpha)$ for the set of half-edges.
    We start by directly expanding the averaged polynomial $\bar{z}$ introduced above:
\begin{align*}
    &\bar{z}_{\alpha}(\bA) \\
    &= \E_{\bQ} z_{\alpha}(\bQ\bA\bQ^{\top}) \\
    &= \E_{\bQ} \sum_{i: V \into [n]} \prod_{\{v, w\} \in E} (\bQ\bA\bQ^{\top})[i(v),i(w)] \\
    &= \E_{\bQ} \sum_{i: V \into [n]} \prod_{\{v, w\} \in E} \left(\sum_{j_1, j_2 = 1}^n \bQ[i(v),j_1] \bA[j_1,j_2] \bQ[i(w),j_2]\right) \\
    &= \E_{\bQ} \sum_{\substack{i: V \into [n] \\ j: \mathrm{HE} \to [n]}} \prod_{\{v, w\} \in E} \bQ[i(v), j(v, \{v, w\})] \bQ[i(w), j(w, \{v, w\})] \bA[j(v, \{v, w\}), j(w, \{v, w\})] \\ 
    &= \sum_{\substack{i: V \into [n] \\ j: \mathrm{HE} \to [n]}} \left( \E_{\bQ} \prod_{\{v, w\} \in E} \bQ[i(v), j(v, \{v, w\})] \bQ[i(w), j(w, \{v, w\})] \right) \\ &\hspace{2.35cm} \cdot \prod_{\{v, w\} \in E}\bA[j(v, \{v, w\}), j(w, \{v, w\})] 
    \intertext{Here, we may use the Weingarten calculus, viewing the matchings involved as matchings of half-edges, provided that we view $i: V \to [n]$ as extended to $i^{\prime}: \mathrm{HE} \to [n]$ by $i'(v, e) \colonequals i(v)$, i.e., labelling a half-edge by the vertex involved. This gives:}
    &= \sum_{\substack{i: V \into [n] \\ j: \mathrm{HE} \to [n]}} \sum_{\beta, \gamma \in \PM(\mathrm{HE})} W(\beta, \gamma) \delta_{\beta}(i^{\prime}) \delta_{\gamma}(j) \prod_{\{v, w\} \in E}\bA[j(v, \{v, w\}), j(w, \{v, w\})] \\
    &= \sum_{j: \mathrm{HE} \to [n]} \sum_{\beta, \gamma \in \PM(\mathrm{HE})} \left(\sum_{i: V \into [n]} \delta_{\beta}(i^{\prime})\right) W(\beta, \gamma) \delta_{\gamma}(j) \prod_{\{v, w\} \in E}\bA[j(v, \{v, w\}), j(w, \{v, w\})]
    \intertext{The summation over $i$ is zero unless $\beta \in \Loc(\alpha)$, and in that case each choice of $i$ contributes 1, for a total of $n^{|V|}(1 + O(n^{-1}))$. So, we have}
    &= \left(1 + O\left(\frac{1}{n}\right)\right)n^{|V|} \sum_{\gamma \in \PM(\mathrm{HE})} \left(\sum_{\beta \in \Loc(\alpha)} W(\beta, \gamma)\right) \\
    &\hspace{3.95cm}\sum_{j:\textnormal{ HE} \to [n]} \delta_{\gamma}(j) \prod_{\{v, w\} \in E}\bA[j(v, \{v, w\}), j(w, \{v, w\})]
    \intertext{The remaining summation may be grouped into summations over the cycles in the disjoint union of $\gamma$ and $\widetilde{\alpha}$, which gives}
    &= \left(1 + O\left(\frac{1}{n}\right)\right)n^{|V|} \sum_{\gamma \in \PM(\mathrm{HE})} \left(\sum_{\beta \in \Loc(\alpha)} W(\beta, \gamma)\right) \prod_{C \in \cyc(\widetilde{\alpha}, \gamma)} \Tr(\bA^{\frac{|C|}{2}})
    \intertext{Now, we may use the asymptotic formula in \cref{lem:weirgarten-asymptotic} and normalize the traces to get}
    &= \left(1 + O\left(\frac{1}{n}\right)\right)n^{|V|} \sum_{\gamma \in \PM(\mathrm{HE})} \\
    &\hspace{1cm} \left(\sum_{\beta \in \Loc(\alpha)} \left(1 + O\left(\frac{1}{n}\right)\right)n^{-|\mathrm{HE}| + \cyc(\beta, \gamma)} \mu(\beta, \gamma)\right) \prod_{C \in \cyc(\widetilde{\alpha}, \gamma)} \Tr(\bA^{\frac{|C|}{2}}) \\
    &= \left(1 + O\left(\frac{1}{n}\right)\right)n^{|V|} \sum_{\gamma \in \PM(\mathrm{HE})} \\
    &\hspace{1cm} \left(\sum_{\beta \in \Loc(\alpha)} \left(1 + O\left(\frac{1}{n}\right)\right)n^{-|\mathrm{HE}| + \cyc(\beta, \gamma) + \cyc(\widetilde{\alpha}, \gamma)} \mu(\beta, \gamma)\right) \prod_{C \in \cyc(\widetilde{\alpha}, \gamma)} \frac{1}{n}\Tr(\bA^{\frac{|C|}{2}}) \\
    &= \left(1 + O\left(\frac{1}{n}\right)\right)n^{|V|} \sum_{\gamma \in \PM(\mathrm{HE})} \\
    &\hspace{1cm}\left(\sum_{\beta \in \Loc(\alpha)} \left(1 + O\left(\frac{1}{n}\right)\right)n^{-\Delta(\beta, \gamma) - \Delta(\widetilde{\alpha}, \gamma)} \mu(\beta, \gamma)\right) \prod_{C \in \cyc(\widetilde{\alpha}, \gamma)} \frac{1}{n}\Tr(\bA^{\frac{|C|}{2}})\,.
\end{align*}
Let us pause to notice that we have achieved our initial goal, expressing the orthogonally invariant polynomial $\bar{z}_{\alpha}(\bA)$ as a polynomial in traces of powers of $\bA$.
We now use that, if $\bA$ was orthogonally invariant to begin with, then
\[ \E_{\bA} z_{\alpha}(\bA) = \E_{\bA} \bar{z}_{\alpha}(\bA) \]
and continue to determine the right-hand side as $n \to \infty$.

By the triangle inequality, we have
\[ \Delta(\beta, \gamma) + \Delta(\widetilde{\alpha}, \gamma) \geq \Delta(\beta, \widetilde{\alpha}) \geq \Delta(\widetilde{\alpha}, \Loc(\alpha)) \geq |V| - 1\,. \]
Therefore, under our assumptions, all terms are negligible as $n \to \infty$ except for those where equality is achieved throughout above.

By~\Cref{prop:cactus-loc-dist}, we then find that if $\alpha$ is not a cactus then
\[ \lim_{n \to \infty} \frac{1}{n}\E_{\bA} z_{\alpha}(\bA) = 0\,. \]
So, suppose that $\alpha$ is a cactus.
Then, using the factorization property (\cref{lem:factorization}), we have in the limit that
\begin{align*}
    \lim_{n \to \infty} \frac{1}{n}\EE_{\bA} z_{\alpha}(\bA) 
    &= \sum_{\substack{\beta \in \Loc(\alpha) \\ \Delta(\beta, \widetilde{\alpha}) = |V| - 1}} \,\,\, \sum_{\substack{\gamma \in \PM(\mathrm{HE}) \\ \Delta(\beta, \gamma) + \Delta(\gamma, \widetilde{\alpha}) = \Delta(\beta, \widetilde{\alpha})}} \mu(\beta, \gamma) \prod_{C \in \cyc(\widetilde{\alpha}, \gamma)} m_{|C| / 2}
    \intertext{where $m_k$ are the spectral moments. Letting $\eta$ be the $\alpha$-local matching of half-edges belonging to the same cycle around each vertex, by the uniqueness clause of~\Cref{prop:cactus-loc-dist} we further have that only the term $\beta = \eta$ contributes, giving}
    &= \sum_{\substack{\gamma \in \PM(\mathrm{HE}) \\ \Delta(\eta, \gamma) + \Delta(\eta, \widetilde{\alpha}) = \Delta(\eta, \widetilde{\alpha})}} \mu(\eta, \gamma) \prod_{C \in \cyc(\widetilde{\alpha}, \gamma)} m_{|C| / 2}
    \intertext{Suppose there are $k$ cycles in $\alpha$. Then, $|\cyc(\eta, \widetilde{\alpha})| = k$, and, rewriting the condition on $\gamma$ in terms of cycle counts and using the explicit formula for the \Mobius\ function from~\cref{eq:cycle-mobius}, we have}
    &= \sum_{\substack{\gamma \in \PM(\mathrm{HE}) \\ |\cyc(\eta, \gamma)| + |\cyc(\gamma, \widetilde{\alpha})| = |E| + k}} \prod_{C \in \cyc(\eta, \gamma)} (-1)^{\frac{|C|}{2} - 1} \Cat\left(\frac{|C|}{2} - 1\right) \prod_{C \in \cyc(\widetilde{\alpha}, \gamma)} m_{|C| / 2}
    \intertext{Now, we use that all $\gamma$ appearing in the sum must only match half-edges belonging to the same cycle. Since $\eta$ and $\widetilde{\alpha}$ both have this property also, the various sets of cycles above all form partitions of the cycles in $\alpha$. Thus, the entire sum factorizes over the cycles of $\alpha$. Further, those $\gamma$ that are in the sum have both the partitions of $\cyc(\eta, \gamma)$ and $\cyc(\gamma, \widetilde{\alpha})$ corresponding to non-crossing partitions of each cycle of $\alpha$, and these two non-crossing partitions are Kreweras complements of one another.
    Putting together all these combinatorial observations, we find:}
    &= \prod_{C \in \cyc(\alpha)}\left(\sum_{\pi \in \mathrm{NC}(|C|)} \prod_{A \in K(\pi)} (-1)^{|A| - 1} \Cat(|A| - 1) \cdot \prod_{B \in \pi} m_{|B|}\right)\,.
    \intertext{Now we use \cref{eq:kreweras-mobius} and \cref{eq:free-cumulant-explicit}
    to complete the proof:}
    &= \prod_{C \in \cyc(\alpha)}\left(\sum_{\pi \in \mathrm{NC}(|C|)} \mu(\underline{0}_{|C|}, K(\pi)) \cdot \prod_{B \in \pi} m_{|B|}\right) \\
    &= \prod_{C \in \cyc(\alpha)}\left(\sum_{\pi \in \mathrm{NC}(|C|)} \mu(\pi, \underline{1}_{|C|}) \cdot \prod_{B \in \pi} m_{|B|}\right) \\
    &= \prod_{C \in \cyc(\alpha)} \kappa_{|C|}\,,
\end{align*}
where we have at last identified the free cumulants, completing the calculation.
\end{proof}

We also note that, by exactly the same argument but using the disconnected case of \Cref{prop:cactus-loc-dist}, we may equally well calculate suitably normalized limits of the values of \emph{disconnected} diagrams in the $z$-basis, which factorize over their connected components:
\begin{proposition} \label{prop:invariant-disconnected}
    Let $\bA = \bA^{(n)} \in \R^{n \times n}_{\sym}$ be a sequence of orthogonally invariant random matrices that converge in tracial moments in $L^2$ to a probability measure $\mu$.
    Let $\calD$ denote their limiting traffic distribution, which exists by \Cref{thm:moments} and is given by the explicit formula stated there.
    Then, for all $k \geq 1$ and $\alpha_1, \dots, \alpha_k \in \cA$,
    \[ \lim_{n \to \infty} \frac{1}{n^k} \E z_{\alpha_1 \sqcup \cdots \sqcup \alpha_k}(\bA) = \prod_{i = 1}^k \calD(\alpha_i)\,. \]
\end{proposition}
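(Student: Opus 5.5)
We plan to run the same Weingarten computation as in the proof of \Cref{thm:moments}, now applied to the disconnected graph $\alpha = \alpha_1 \sqcup \cdots \sqcup \alpha_k$, and to track the power of $n$ against the normalization $n^{-k}$. Writing $V, E, \mathrm{HE}$ for the vertex, edge and half-edge sets of $\alpha$, orthogonal invariance gives $\E z_\alpha(\bA) = \E \bar z_\alpha(\bA)$. The expansion in that proof never used connectivity, so it yields
\[
\E \bar z_\alpha(\bA) = (1+O(n^{-1})) \sum_{\gamma, \beta} \mu(\beta,\gamma)\, n^{|V| - \Delta(\beta,\gamma) - \Delta(\widetilde\alpha,\gamma)}\, \E\Big[\prod_{C \in \cyc(\widetilde\alpha,\gamma)} \tfrac1n \Tr(\bA^{|C|/2})\Big],
\]
where $\beta$ ranges over $\Loc(\alpha)$ and $\gamma$ over $\PM(\mathrm{HE})$.

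If some component $\alpha_i$ is not Eulerian, then $\Loc(\alpha)=\varnothing$, so both sides vanish, using \Cref{lem:eulerian} on the right. Otherwise the triangle inequality gives $\Delta(\beta,\gamma)+\Delta(\widetilde\alpha,\gamma) \ge \Delta(\widetilde\alpha,\Loc(\alpha))$. By the disconnected case of \Cref{prop:cactus-loc-dist}, this is at least $|V| - k$, with equality only when every $\alpha_i$ is a cactus. The normalized traces have bounded joint expectations, by the moment bound argument in \Cref{lem:factorization}. Hence after dividing by $n^k$ every term is $O(n^{-1})$ unless equality holds throughout. So the limit is $0$ as soon as some $\alpha_i$ is not a cactus, which matches the product, since then $\calD(\alpha_i)=0$ in the $z$-basis convention of \Cref{thm:moments}.

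When every $\alpha_i$ is a cactus, the uniqueness clause of \Cref{prop:cactus-loc-dist} forces $\beta=\eta$, the cycle-local matching. The geodesic condition then forces $\gamma$ to match only half-edges within a common cycle of a common component. Two things follow:
\begin{enumerate}
\item The Möbius weight $\mu(\eta,\gamma)$ factors over cycles, by \cref{eq:cycle-mobius}.
\item The trace product splits into groups indexed by the cycles.
\end{enumerate}
We then apply \Cref{lem:factorization} to replace the expectation of the product of normalized traces by the product of the moments $m_q$. This is the one place where the $L^2$ tracial assumption is needed, and we regard it as the key step, though it is already available.

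After these reductions the sum factors over all cycles of all components. The Kreweras computation from \Cref{thm:moments} then gives $\prod_{C}\kappa_{|C|}=\prod_i \calD(\alpha_i)$. The only bookkeeping to check is that the exponent $|V| - (|V|-k)$ equals exactly $k$, and that the $(1+O(n^{-1}))$ correction from injectivity of $i: V \into [n]$ across all components is still uniform.
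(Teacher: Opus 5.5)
Your proposal is correct and is the argument the paper intends: the paper's proof is a one-line remark that the Weingarten computation from \Cref{thm:moments} goes through unchanged for the disconnected graph $\alpha_1 \sqcup \cdots \sqcup \alpha_k$, with the disconnected case of \Cref{prop:cactus-loc-dist} supplying the lower bound $|V|-k$ that matches the $n^{-k}$ normalization. Your write-up fills in the details the paper leaves implicit, namely the non-Eulerian case, the boundedness of the joint trace moments, the restriction of $\gamma$ to within-cycle matches on the geodesic, and the use of \Cref{lem:factorization}.
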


\subsection{Concentration of traffic observables}

As a corollary, we may also conclude that the traffic distribution is concentrated in the sense of \Cref{def:traffic-concentration}.
This also extends {\cite[Theorem 4.7]{cebron2024traffic}} to orthogonally invariant distributions.
\begin{lemma}
    \label{lemma:full-factorization}
    Let $\bmA=\bmA^{(n)}$ be orthogonally invariant random matrices that converge in tracial moments in $L^2$ to a probability measure $\mu$.
    Then the traffic distribution concentrates for $\bA$ (in the sense of \cref{def:traffic-concentration}).
\end{lemma}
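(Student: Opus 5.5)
We need $\lim \E \prod_j \frac1n w_{\alpha_j}(\bA) = \prod_j \calD(\alpha_j)$. By \Cref{lem:concentration-z}, it is equivalent to prove the statement in the $z$-basis: $\lim \E \prod_j \frac1n z_{\alpha_j}(\bA) = \prod_j \lim \frac1n \E z_{\alpha_j}(\bA)$. The plan is to expand the product of injective sums into disjoint-union diagrams plus merged diagrams, and then apply \Cref{prop:invariant-disconnected} to the former and a scaling bound to the latter.

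First, $\prod_{j=1}^k z_{\alpha_j}(\bA)$ is a sum over tuples of injective labelings $i_j: V(\alpha_j)\hookrightarrow[n]$ without restriction across different $j$. Partition these according to which vertices in different $\alpha_j$'s receive equal labels. This gives
\[ \prod_{j} z_{\alpha_j}(\bA) = z_{\alpha_1\sqcup\cdots\sqcup\alpha_k}(\bA) + \sum_{P} z_{(\alpha_1\sqcup\cdots\sqcup\alpha_k)_P}(\bA)\,, \]
where $P$ ranges over the nontrivial partitions that only identify vertices across distinct components (each block meets each $\alpha_j$ at most once). This is a finite sum independent of $n$. The leading term, after dividing by $n^k$, converges to $\prod_j \calD(\alpha_j)$ by \Cref{prop:invariant-disconnected}, where the $\calD(\alpha_j)$ are the $z$-basis limits from \Cref{thm:moments}.

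It remains to show that each merged term satisfies $\frac{1}{n^k}\E z_{\beta}(\bA)\to 0$ for $\beta=(\sqcup\alpha_j)_P$. Since $P$ is nontrivial, $\beta$ has at most $k-1$ connected components, say $\beta_1,\dots,\beta_m$ with $m\le k-1$. Applying \Cref{prop:invariant-disconnected} to $\beta_1\sqcup\cdots\sqcup\beta_m$ gives $\frac{1}{n^m}\E z_\beta(\bA)\to\prod_i\calD(\beta_i)$, which is finite. Hence $\frac1{n^k}\E z_\beta(\bA) = O(n^{m-k})\to 0$.

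\textbf{Main obstacle.} \Cref{prop:invariant-disconnected} must hold with the $n^{-(\#\text{components})}$ normalization for an arbitrary disconnected diagram, including the non-Eulerian and non-cactus components that can appear after merging. This is the disconnected case of the Weingarten computation. The key inequality is $\Delta(\beta,\gamma)+\Delta(\widetilde\alpha,\gamma)\ge |V|-|\conn(\alpha)|$ from \Cref{prop:cactus-loc-dist}, which yields $\E z_\alpha = O(n^{|\conn(\alpha)|})$. It must be combined with \Cref{lem:factorization} so that products of normalized traces stay bounded, via the bound $T_q^2\le T_{2p}^{q/p}$ used there. Non-Eulerian components give exactly zero by \Cref{lem:eulerian}, so only the Eulerian case requires care. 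Granting this, the identity combined with \Cref{lem:concentration-z} completes the proof.
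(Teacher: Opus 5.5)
Your proof is correct and is essentially the paper's argument: pass to the $z$-basis via \Cref{lem:concentration-z}, expand $\prod_j z_{\alpha_j}(\bA)$ as $z_{\alpha_1\sqcup\cdots\sqcup\alpha_k}(\bA)$ plus merged diagrams with at most $k-1$ connected components, and apply \Cref{prop:invariant-disconnected} to both kinds of terms. The concern in your last paragraph is already covered by that proposition as stated, since it allows arbitrary connected $\alpha_i \in \calA$, not only Eulerian or cactus ones.
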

\begin{proof}
    Let $k\ge 2$ and $\alpha_1,\ldots,\alpha_k\in \calA$. Then, by \Cref{lem:concentration-z}, it suffices to show the concentration property in the $z$-basis, namely that:
    \begin{align*}
        \lim_{n\to\infty} \E\left[\prod_{i=1}^k \frac 1n z_{\alpha_i}(\bmA)\right] 
        \stackrel{\text{(?)}}{=} \lim_{n\to\infty} \prod_{i=1}^k \E \frac 1n z_{\alpha_i}(\bmA)\,.
    \end{align*}

    Note that, upon expanding the summations in the $z$-basis polynomials, we have
    \[ z_{\alpha_1}(\bA) \cdots z_{\alpha_k}(\bA) = z_{\alpha_1 \sqcup \cdots \sqcup \alpha_k}(\bA) + z_{\beta_1}(\bA) + \cdots + z_{\beta_M}(\bA)\,, \]
    where $\alpha_1 \sqcup \cdots \sqcup \alpha_k$ is the disjoint union, while the $\beta_i$ are various graphs formed by identifying subsets of the vertices of this disjoint union according to different non-trivial partitions of the vertices, provided that no two vertices of the same $\alpha_j$ are identified.
    In particular, all $\beta_i$ have at most $k - 1$ connected components.
    Therefore, by \Cref{prop:invariant-disconnected}, we have
    \[ \lim_{n \to \infty} \frac{1}{n^k} z_{\beta_i}(\bA) = 0 \]
    for all $i \in [M]$.
    Thus, 
    \[ \lim_{n \to \infty} \frac{1}{n^k}\E\left[\prod_{i=1}^k z_{\alpha_i}(\bmA)\right] = \lim_{n \to \infty} \frac{1}{n^k}\E[z_{\alpha_1 \sqcup \cdots \sqcup \alpha_k}(\bA)]\,, \]
    and the result then follows by \Cref{prop:invariant-disconnected}.
\end{proof}

\subsection{Traffic distribution of punctured orthogonally invariant matrices}
\label{sec:puncturedWeingarten}

Since the \rrom plays an important role in our main results, let us sketch how similar calculations can give an explicit combinatorial description of its traffic distribution, and indeed that of the puncturing of any orthogonally invariant random matrices.
Recall that in the main text we relied entirely on the implicit description of this traffic distribution via \Cref{lem:eqDiffMobius}.
The closed form we give below is completely explicit, but, being in terms of a rather complicated summation over matchings, seems less useful than the implicit one.

We follow the notation from the proof in the previous section.
Additionally, for a graph $\alpha$ and a matching $\beta$ of the half-edges of $\alpha$, we write $\loc(\beta)$ for the set of edges of $\beta$ that go between half-edges of the same vertex of $\alpha$, and $\nonloc(\beta)$ for the set of edges of $\beta$ that go between half-edges of different vertices of $\alpha$.
Recall also that $\widetilde{\alpha}$ is the matching of half-edges of $\alpha$ corresponding to the edges actually in the graph $\alpha$.
\begin{theorem}
    Let $\bA = \bA^{(n)} \in \R^{n \times n}_\sym$ be a sequence of orthogonally invariant random matrices that converges in tracial moments in $L^2$ to a probability measure $\mu$.
    Write $m_k$ for the $k$th moment of $\mu$ and $\bm\Pi = \bm\Pi^{(n)} = \bm I - \frac{1}{n}\bm 1 \bm 1^{\top}$.
    Then, for all $\al \in \calA$,
    \[ \lim_{n \to \infty} \frac{1}{n}\EE_{\bA} z_{\alpha}(\bm \Pi\bA\bm \Pi) = \sum_{\substack{\beta \in \PM(\mathrm{HE}(\alpha)) \\ \alpha \sqcup \nonloc(\beta) \text{ is a cactus}}} (-1)^{|\nonloc(\beta)|} \sum_{\substack{\gamma \in \PM(\mathrm{HE}(\alpha)) \\ \Delta(\beta, \gamma) + \Delta(\gamma, \widetilde{\alpha}) = \Delta(\beta, \widetilde{\alpha})}} \mu(\beta, \gamma) \prod_{C \in \cyc(\widetilde{\alpha}, \gamma)} m_{|C|}\,. \]
\end{theorem}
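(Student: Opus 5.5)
The plan is to rerun the Weingarten computation from the proof of \Cref{thm:moments}, now with each edge carrying $\bm\Pi\bA\bm\Pi$ instead of $\bA$. Write $\bm\Pi = \bI - \bm u\bm u^\top$ with $\bm u = \bm 1/\sqrt n$. Since $\bm\Pi$ is not $O(n)$-invariant, the cleanest route is to rotate $\bm u$ rather than $\bA$. Because $\bA$ has the same law as $\bQ^\top\bA\bQ$, we have
\[
\E\, z_\alpha(\bm\Pi\bA\bm\Pi) = \E\, z_\alpha\bigl(\bQ\bm\Pi'\bA\bm\Pi'\bQ^\top\bigr), \qquad \bm\Pi' = \bI - \bQ^\top\bm u\bm u^\top\bQ .
\]

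First, expand each edge factor $(\bm\Pi\bA\bm\Pi)[a,b]$ as $\sum_{j,j'} \bm\Pi[a,j]\,\bA[j,j']\,\bm\Pi[j',b]$. Then write $\bm\Pi[a,j] = \delta_{aj} - \frac1n$ and split each half-edge into two cases.
\begin{itemize}
\item In the ``local'' case, the Kronecker delta attaches the half-edge to its vertex label, exactly as in \Cref{thm:moments}.
\item In the ``$-\frac1n$'' case, the half-edge index is freed and summed over all of $[n]$.
\end{itemize}
Summing over the freed indices turns the affected half-edges into contractions with $\bm 1$, so $\bA$ acts on the all-ones direction there.

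Second, average over the conjugation by a Haar $\bQ$ independent of $\bA$, as in the proof of \Cref{thm:moments}. The vertex labels $i$ appear only through the local half-edges, while the freed half-edges appear only through $\bm u$. Applying the Weingarten formula to all the $\bQ$ entries therefore produces a matching $\beta$ of half-edges with two kinds of pairs. Pairs in $\loc(\beta)$ join half-edges at the same vertex and are forced by $\delta_\beta(i')$. Pairs in $\nonloc(\beta)$ arise when two $\bQ^\top\bm u$ factors pair up; each such pair costs a factor $-\frac1n\cdot n = -1$ and effectively glues the two half-edges. This is the source of the sign $(-1)^{|\nonloc(\beta)|}$.

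Third, repeat the power counting. The exponent of $n$ is $|V| - \Delta(\beta,\gamma) - \Delta(\gamma,\widetilde\alpha)$, adjusted by the number of free summations lost to nonlocal pairs. The key new combinatorial input is a version of \Cref{prop:cactus-loc-dist} in which $\Loc(\alpha)$ is replaced by matchings whose nonlocal part is allowed. I would show that the minimal distance, corrected for the extra connectivity created by nonlocal pairs, is at least $|V|-1$, with equality iff the graph $\alpha\sqcup\nonloc(\beta)$ is a cactus. The argument is the cycle-cover counting used there, applied to the graph augmented by the nonlocal edges. On the tight terms, the triangle inequality for $\Delta$ gives geodesic $\gamma$ with $\mu(\beta,\gamma)$ from \Cref{lem:weirgarten-asymptotic}. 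Then \Cref{lem:factorization} replaces products of normalized traces by the moments $m$, which yields the stated formula.

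The main obstacle is the bookkeeping in the second and third steps. One must show that the free sums over half-edge labels hit by $-\frac1n$, combined with the Weingarten deltas, produce exactly the claimed power of $n$. In particular, nonlocal pairs must neither gain nor lose powers beyond what the cactus condition on $\alpha\sqcup\nonloc(\beta)$ accounts for. One must also show that the non-injectivity error remains $O(n^{-1})$.
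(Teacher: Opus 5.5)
Your architecture is the paper's. Note that $\bQ\bm\Pi'\bA\bm\Pi'\bQ^\top=\bm\Pi\bQ\bA\bQ^\top\bm\Pi$ identically, so your rotation trick is exactly the paper's conjugate-then-puncture setup. Your later steps also match: the Weingarten sum over $(\beta,\gamma)$; the bound $\Delta(\widetilde\alpha,\beta)+|\nonloc(\beta)|\ge |V|-1$ via cycle covers of the Eulerian graph $\alpha\sqcup\nonloc(\beta)$, with equality iff it is a cactus; the geodesic $\gamma$; and \Cref{lem:factorization}.

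The gap is in the step you flag as the main obstacle. The mechanism you sketch for it is not right. Take a $\beta$-pair $\{h,h'\}$ at distinct vertices and split $\bm\Pi[a,k]=\delta_{ak}-\frac1n$ per half-edge.
\begin{itemize}
\item Local--local is killed by injectivity of the vertex labels.
\item Each of the two mixed assignments (one local, one free) pins the free index and contributes $-\frac1n$.
\item Free--free, i.e.\ your ``two $\bQ^\top\bm u$ factors pair up,'' contributes $(-\frac1n)^2\cdot n=+\frac1n$.
\end{itemize}
So the case you single out as the source of nonlocal pairs has the wrong sign. The sign $(-1)^{|\nonloc(\beta)|}$ appears only after adding the mixed terms, which your description omits. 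The net factor per nonlocal pair is $-\frac1n$, not $-1$, and no freed summation survives to pay for it. Local pairs likewise pick up $O(1/n)$ corrections from the free cases.

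The missing idea is not to split per half-edge at all. After applying Weingarten, sum the intermediate index along each $\beta$-pair:
\[
\sum_{k}\bm\Pi[a,k]\,\bm\Pi[b,k]=\bm\Pi^2[a,b]=\bm\Pi[a,b].
\]
Writing $v_h$ for the vertex carrying half-edge $h$, the entire sum over injective labels then collapses exactly to
\[
\sum_{i\colon V\hookrightarrow[n]}\ \prod_{\{h,h'\}\in\beta}\bm\Pi\bigl[i(v_h),i(v_{h'})\bigr]=n^{\underline{|V|}}\Bigl(1-\tfrac1n\Bigr)^{|\loc(\beta)|}\Bigl(-\tfrac1n\Bigr)^{|\nonloc(\beta)|}.
\]
This is the paper's $z_{G(\beta)}(\bm\Pi)$. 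Combined with \Cref{lem:weirgarten-asymptotic} and the trace normalization, the $(\beta,\gamma)$ term is of order $n^{|V|-1-\Delta(\beta,\gamma)-\Delta(\gamma,\widetilde\alpha)-|\nonloc(\beta)|}$. Its leading coefficient is $(-1)^{|\nonloc(\beta)|}\mu(\beta,\gamma)$ times the product of moments.

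Each nonlocal pair therefore costs $n^{-1}$. This is compensated not by a freed sum but by the fact that $\Delta(\widetilde\alpha,\beta)$ can drop to $|V|-1-|\nonloc(\beta)|$. Your inequality plus the triangle inequality for $\Delta$ then shows that exactly the terms in the stated formula survive. The non-injectivity error is just $n^{\underline{|V|}}/n^{|V|}=1+O(1/n)$, so the ``bookkeeping obstacle'' disappears.
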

\begin{proof}
Following the same calculations as in the proof of \Cref{thm:moments} above but now applied to $\bm\Pi \bQ\bA\bQ^{\top}\bm\Pi$, we instead find:
\begin{align*}
    &\frac{1}{n}\EE_{\bQ, \bA} z_{\alpha}(\bm\Pi\bQ\bA\bQ^{\top}\bm\Pi) \\
    &= \frac{1}{n}\EE_{\bA}\sum_{\beta, \gamma \in \PM(\mathrm{HE})} W_n(\beta, \gamma) \cdot \prod_{C \in \cyc(\widetilde{\alpha}, \gamma)} \Tr(\bA^{|C|}) \cdot z_{G(\beta)}(\bm\Pi)
    \intertext{where $G(\beta)$ denotes the graph formed by ``wiring together'' the matching of half-edges $\beta$ (so that, for example, $G(\widetilde{\alpha}) = \alpha$). Note that here if we replaced $\bm\Pi$ by $\bm I$, we would get $z_{G(\beta)}(\bm I) = \mathbf 1_{\beta \in \Loc(\alpha)} n^{|V|}(1+O(n^{-1}))$, compatible with the previous calculation in the proof of \Cref{thm:moments}, and indeed the above is true for an arbitrary symmetric matrix $\bm\Pi$, not only the particular projection we are concerned with. But, in our particular case, since $\bm\Pi$ is constant on the diagonal and on the off-diagonal, we have}
    &= \frac{1}{n}\sum_{\beta, \gamma \in \PM(\mathrm{HE})} W_n(\beta, \gamma) \cdot \EE_{\bA}\prod_{C \in \cyc(\widetilde{\alpha}, \gamma)} \Tr(\bA^{|C|}) \cdot n^{\underline{|V|}}\left(-\frac{1}{n}\right)^{|\nonloc(\beta)|} \left(1 - \frac{1}{n}\right)^{|\loc(\beta)|}
    \intertext{and now by the same asymptotics as before,}
    &= \sum_{\beta, \gamma \in \PM(\mathrm{HE})} 
    \left(1 + O\left(\frac{1}{n}\right)\right) n^{-\Delta(\widetilde{\alpha}, \gamma) - \Delta(\beta, \gamma) - |\nonloc(\beta)| + |V| - 1} \mu(\beta, \gamma) (-1)^{|\nonloc(\beta)|}
     \prod_{C \in \cyc(\widetilde{\alpha}, \gamma)} m_{|C|}
\end{align*}   

We claim that, for any connected $\alpha$ realized by the matching $\widetilde{\alpha}$ of its half-edges, and any other matching $\beta$ of the half-edges of $\alpha$, we have
\[ \Delta(\widetilde{\alpha}, \beta) + |\nonloc(\beta)| \geq |V| - 1\,. \]
As before, this is equivalent to having
\[ |\cyc(\widetilde{\alpha}, \beta)| \leq |E| + |\nonloc(\beta)| - |V| + 1\,. \]
Consider an ancillary graph $\alpha^{\prime}$ constructed by adding edges to $\alpha$ for each non-local match in $\beta$.
This graph is still connected, by parity considerations it must be Eulerian, and it has a total of $|E| + |\nonloc(\beta)|$ edges.
$|\cyc(\widetilde{\alpha}, \beta)|$ is now the size of a cycle cover of $\alpha^{\prime}$, and the claim then follows by the bounds from the proof of~\Cref{prop:cactus-loc-dist} applied to $\alpha^{\prime}$.

We also again have by the triangle inequality that
\[ \Delta(\widetilde{\alpha}, \gamma) + \Delta(\beta, \gamma) \geq \Delta(\widetilde{\alpha}, \beta)\,. \]
Thus, all terms in the sum above are of at most constant order.
Further, those of constant order are those where the exponent of $n$ is zero, which are those where the above bound is tight.
By the characterization in~\Cref{prop:cactus-loc-dist}, this is precisely when $\alpha^{\prime}$ as formed above is a cactus, and the stated result follows after rearranging.
\end{proof}

\section{Convergence of Stochastic Processes}
\label{sec:convergence-process}

In~\Cref{sec:dynamics}, we deal
with convergence in distribution of
stochastic processes indexed by countably
infinite set, intended as weak convergence in the
product topology. Equivalently, this
means that every finite-dimensional
marginal converges in distribution.

\begin{definition}
    Let $\calA$ be a countable
    set. For random variables $(\bx^{(n)})_{n\ge 1}$ and
    $\bx^\infty$ taking values in $\R^\calA$, we say
    that $\bx^{(n)}$ converges in distribution to
    $\bx^\infty$ and write
    \[
        \bx^{(n)}\tod \bx^\infty
    \]
    if, for every $k\ge 1$ and $\alpha_1, \ldots,\alpha_k\in \calA$, we have
    \[
        (x^{(n)}_{\alpha_1},\ldots,x^{(n)}_{\alpha_k})\tod (X^\infty_{\alpha_1},\ldots X^{\infty}_{\alpha_k})\,.
    \]
\end{definition}

\noindent To show convergence in distribution, we will use the method of moments~\cite[Theorems 29.4, 30.1, 30.2]{billingsleyProbabilityBook}.
The following theorem follows from
Carleman's conditions on 
moment-determinacy of a distribution on
$\R$, combined with~\cite{petersenEquivalence}.

\begin{theorem}[Method of moments]\label{cor:bounded-limit-existence}
    Let $(\bx^{(n)})_{n\ge 1}$ be a sequence
    of stochastic processes indexed by a
    countable set $\calA$. Assume
    that 
    \begin{enumerate}
        \item All joint moments converge: for any
        $k\ge 1$ and $\alpha_1, \ldots, \alpha_k\in \calA$,
        the limit of the joint moments
    \begin{align}
        \lim_{n\to\infty} \E \left[\prod_{i=1}^k x^{(n)}_{\alpha_i}\right]\label{eq:joint-moments}
    \end{align} 
    exists.
        \item All marginals are subexponential: for every $\alpha\in\calA$, there exists $C_\alpha>0$ such that
        for all $p\ge 1$,
        \begin{align}
            \lim_{n\to\infty} \E \left(x^{(n)}_\alpha\right)^{2p}\le (C_\alpha p)^{2p}\,.\label{eq:subexp-growth-condition}
        \end{align}
    \end{enumerate}
    Then $\bx^{(n)}$ converges in distribution to the unique law on $\R^{\cal A}$ with
    moments given by~\cref{eq:joint-moments}.
\end{theorem}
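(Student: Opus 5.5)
The statement to prove is the method of moments for countably indexed processes. The plan is to reduce everything to finite-dimensional marginals, since convergence in distribution on $\R^{\calA}$ is by definition convergence of every finite marginal. Existence and uniqueness of the limiting law then follow from Kolmogorov's extension theorem, once we know the finite-dimensional limits are consistent.

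Fix $\alpha_1,\dots,\alpha_k\in\calA$ and set $\by^{(n)}=(x^{(n)}_{\alpha_1},\dots,x^{(n)}_{\alpha_k})$. The argument for this vector has three steps.

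\textbf{Tightness.} Condition 1 with the moment $\E (x^{(n)}_{\alpha_i})^2$ shows these second moments are bounded in $n$. By Markov's inequality, the sequence $\by^{(n)}$ is therefore tight in $\R^k$.

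\textbf{Moments of subsequential limits.} Take any subsequential limit $\by$. Every joint moment of degree $d$ is uniformly integrable, because the moments of degree $2d$ are bounded (again by condition 1, using Hölder to pass from pure to mixed moments). Hence the joint moments of $\by$ equal the limits $m(\alpha_{i_1},\dots,\alpha_{i_d})$ from \cref{eq:joint-moments}. By Fatou's lemma, each coordinate of $\by$ satisfies $\E Y_i^{2p}\le (C_{\alpha_i}p)^{2p}$.

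\textbf{Uniqueness.} It remains to show that a law on $\R^k$ is determined by these moments. From the bound, $\sum_p (\E Y_i^{2p})^{-1/(2p)}\ge \sum_p 1/(C_{\alpha_i}p)=\infty$, so each one-dimensional marginal satisfies Carleman's condition and is moment-determinate. By Petersen's theorem \cite{petersenEquivalence}, a law on $\R^k$ whose one-dimensional marginals are all determinate is itself determinate. So all subsequential limits coincide, and $\by^{(n)}$ converges in distribution.

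The limiting finite-dimensional laws are consistent under projection, since they are limits of projections of the same vectors. Kolmogorov's extension theorem therefore produces a law on $\R^{\calA}$ with these marginals. It is the unique such law, and hence the unique law with moments \cref{eq:joint-moments}, because finite-dimensional cylinders determine the law on the product $\sigma$-algebra.

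The main obstacle is the multivariate determinacy. Carleman's condition for each marginal does not by itself obviously control mixed moments, which is exactly why the step relies on Petersen's theorem. A secondary point needing care is the passage from the $\lim$ in \cref{eq:subexp-growth-condition} to a bound on the limit variables; Fatou's lemma, or simply the equality of moments established in the second step, handles this.
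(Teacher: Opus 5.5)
Your proposal is correct and follows the same route the paper indicates: tightness and uniform integrability from the convergent moments, Carleman's condition for each one-dimensional marginal, Petersen's theorem \cite{petersenEquivalence} to obtain determinacy of the finite-dimensional laws, and then the standard method-of-moments argument. Your use of Kolmogorov's extension theorem, to pass from consistent finite-dimensional limits to a unique law on $\R^{\calA}$, simply makes explicit a step the paper leaves implicit.
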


\begin{lemma}[Truncation]\label{lem:truncation}
    Let $(x_n)_{n\ge 1}$ and $(y_n)_{n\ge 1}$
    be sequences of random variables such that
    \begin{enumerate}
        \item For any $K>0$, conditionally
        on $|x_n|\le K$, $(y_n)_{n\ge 1}$ converges
        in distribution.
        \item $(x_n)_{n\ge 1}$ is tight, i.e., $\sup_{n\ge 1} \Pr(|x_n|>K)\underset{K\to\infty} \longrightarrow 0$.
    \end{enumerate}
    Then, $(y_n)_{n\ge 1}$ converges in distribution.
\end{lemma}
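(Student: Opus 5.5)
The plan is a standard conditioning-and-tightness argument. Write $E_K^{(n)} = \{|x_n|\le K\}$ and let $\mu_K$ be the limiting law of $y_n$ conditionally on $E_K^{(n)}$, which exists by the first assumption. It suffices to show that $\E\varphi(y_n)$ converges for every bounded continuous $\varphi$ (say $\|\varphi\|_\infty\le 1$), and that the resulting limit functional is a probability law. For tightness, the limiting laws $\mu_K$ will suffice.

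Fix such a $\varphi$ and $\varepsilon>0$. By tightness of $(x_n)$, choose $K_0$ such that $\sup_n \Pr(|x_n|>K)\le\varepsilon$ for all $K\ge K_0$. Decompose
\[
\E\varphi(y_n) = \E[\varphi(y_n)\mid E_K^{(n)}]\Pr(E_K^{(n)}) + \E[\varphi(y_n)\mathbf 1_{(E_K^{(n)})^c}]\,.
\]
The second term is at most $\varepsilon$ in absolute value, uniformly in $n$. In the first term, the conditional expectation converges to $\int\varphi\,\d\mu_K$, and $\Pr(E_K^{(n)})\in[1-\varepsilon,1]$.

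This gives $\limsup_n \E\varphi(y_n) - \liminf_n \E\varphi(y_n) \le 4\varepsilon$. Since $\varepsilon$ is arbitrary, $\E\varphi(y_n)$ converges, say to $L(\varphi)$. The same estimate also gives $|L(\varphi)-\int\varphi\,\d\mu_K|\le 2\varepsilon$ for every $K\ge K_0(\varepsilon)$.

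It remains to identify $L$ with a probability measure. The laws $\mu_K$ are Cauchy in the bounded-Lipschitz metric as $K\to\infty$, by the uniform estimate above. Moreover, $(y_n)$ is tight: given $\delta$, pick $K$ with $\Pr(|x_n|>K)\le\delta$, then pick $M$ such that $\mu_K([-M,M]^c)\le\delta$. By the portmanteau theorem, $\limsup_n \Pr(|y_n|>M\mid E_K^{(n)})\le\delta$, and the finitely many remaining $n$ can be absorbed by enlarging $M$. Prokhorov's theorem then gives subsequential weak limits. Each such limit $\nu$ satisfies $\int\varphi\,\d\nu = L(\varphi)$ for all bounded continuous $\varphi$, so the limit is unique and $y_n\tod\nu$.

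The main subtlety is this identification step, namely checking that $L$ is represented by a measure rather than merely being a limit of expectations. It is handled exactly by the tightness and Prokhorov argument above. The same proof works verbatim for processes indexed by a countable set, by applying it to each finite-dimensional marginal.
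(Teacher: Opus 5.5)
Your proposal is correct and is essentially the paper's argument: tightness of $(y_n)$ from tightness of $(x_n)$ together with the conditional convergence, then Prokhorov's theorem, then uniqueness of subsequential limits because $|\E\varphi(y_n)-\E[\varphi(y_n)\mid |x_n|\le K]|\le 2\|\varphi\|_\infty\Pr(|x_n|>K)$ forces $\E\varphi(y_n)$ to converge. You merely do the Cauchy step before the tightness step, and you get conditional tightness via $\mu_K$ and the portmanteau theorem rather than directly from convergence in distribution. One small fix: the portmanteau theorem bounds $\limsup$ only on closed sets, so choose $M$ with $\mu_K(\{|y|\ge M\})\le\delta$ rather than $\mu_K([-M,M]^c)\le\delta$; your bounded-Lipschitz Cauchy remark is never used and can be dropped.
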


\begin{proof}
    First, we prove:

    \begin{claim}\label{claim:y-tight}
        $(y_n)_{n\ge 1}$ is tight.
    \end{claim} 

    \begin{proof}
        For any $K,L>0$, we have $\Pr(|y_n|>L)\le \Pr(|y_n|>L \mid |x_n|\le K) + \Pr(|x_n|> K)$. Pick $K$ large enough so that the second term is bounded by $\eps$ uniformly in $n$. $(y_n)_{n\ge 1}$ is tight conditionally on $|x_n|\le K$, so there exists $L>0$ large enough so 
        that the first term is also bounded by $\varepsilon$ uniformly in $n$.
    \end{proof}

    \noindent By~\Cref{claim:y-tight} and Prokhorov's
    theorem, it remains to show that
    every subsequence of $(y_n)_{n\ge 1}$ that converges in distribution, converges
    to the same limit.
    Fix $f:\R\to \R$ to be a bounded continuous
    function and $\eps>0$. Then, by the law of total
    expectations, for any $n\ge 1$,
    \begin{align*}
        \left|\E f(y_n) - \E \left[f(y_n) \mid |x_n|\le K\right]\right| &=  \Pr(|x_n|> K)\left(\E \left[f(y_n) \mid |x_n|> K\right] - \E \left[f(y_n) \mid |x_n|\le K\right]\right)\\
        &\le 2 \|f\|_\infty \Pr(|x_n|> K)\\
        &\le \varepsilon
    \end{align*}
    by setting $K = K(\eps)$ to be a large enough constant (with the second assumption). By the first assumption, there exists $N\ge 1$ such that
    for any $n,m\ge N$,
    \[
        \left|\E \left[f(y_n) \mid |x_n|\le K\right] - \E \left[f(y_m) \mid |x_m|\le K\right]\right|\le \eps\,.
    \]
    In turn, this implies $\left|\E f(y_n) - \E f(y_m)\right|\le 3\eps$ by the triangle inequality, so $(\E f(y_n))_{n\ge 1}$
    is a Cauchy sequence, so it converges
    as $n\to\infty$. This implies
    that every weak subsequential limit of
    $(y_n)_{n\ge 1}$ converges to the same limit,
    which concludes the proof.
\end{proof}

\subsection{Connection with convergence of the empirical distribution}

Let us also remark on certain details concerning modes of convergence that are important to the use and interpretation of~\Cref{thm:convergence-distribution}.

Recall that we ``stack'' the $\bz_{\alpha}(\bA)$ for $\alpha\in\calA_1$ into a single vector with more complicated entries, $\bz_{\cA_1}(\bA) \in (\R^{\cA_1})^n$.
Using our notation from~\Cref{sec:intro}, we then sample a random coordinate of this vector, forming a further random countably infinite vector $\samp(\bz_{\cA_1}(\bA)) \in \R^{\cA_1}$.
This contains the $i$th entry of each $\bz_{\alpha}(\bA)$, for a single shared randomly chosen $i \sim \Unif([n])$.
Define the infinite random vector $Z_{\cA_1}^{\infty}$ similarly. \Cref{thm:convergence-distribution} states that:
\begin{equation}
\samp(\bz_{\cA_1}(\bA^{(n)})) \xrightarrow[n \to \infty]{\text{(d)}} Z_{\cA_1}^{\infty}\,,
\label{eq:samp-conv-d}
\end{equation}
By the Cram\'{e}r-Wold theorem, this is equivalent to: for any bounded continuous function $\varphi$ and any finitely supported vector of coefficients $c_{\alpha}$,
\[ \lim_{n \to \infty} \E_{\bA} \frac{1}{n}\sum_{i = 1}^n \varphi\left(\sum_{\alpha \in \cA_1} c_{\alpha} \bz_{\alpha}(\bA)[i]\right) = \E_{\bA} \varphi\left(\sum_{\alpha \in \cA_1} c_{\alpha} Z_{\alpha}^{\infty} \right). \]

Alternatively, we may also make sense of this statement in terms of empirical distributions, which are just the laws of the random variables $\samp(\bx)$ discussed above.
\begin{definition}[Empirical distribution]
    For $\bm x \in \R^n$, we write $\ed(\bm x) \colonequals \frac{1}{n} \sum_{i = 1}^n \delta_{\bmx[i]}$ for the \emph{empirical distribution} of the entries of $\bm x$.
\end{definition}
\noindent
Then, $\ed(\bz_{\cA_1}(\bA))$ is a random probability measure on the space $\R^{\cA_1}$, and the random variable $\samp(\bz_{\cA_1}(\bA^{(n)}))$ is a single draw from this random probability measure.
Its law is a \emph{deterministic} probability measure on the space $\R^{\cA_1}$, which is the expectation of the random measure $\ed(\bz_{\cA_1}(\bA))$ (if $\mu$ is a random measure, then its expectation takes values $(\EE \mu)(A) = \EE[\mu(A)]$).
Thus, the above~\cref{eq:samp-conv-d} is further equivalent to the weak convergence of probability measures
\begin{equation*}
\EE \ed(\bz_{\cA_1}(\bA^{(n)})) \xrightarrow[n \to \infty]{\text{(w)}} \Law(Z_{\cA_1}^{\infty})\,.
\end{equation*}
Again by the Cram\'{e}r-Wold theorem, this is equivalent to, for any finitely supported coefficient vector of $c_{\alpha}$, having
\[ \EE \ed\left(\sum_{\alpha \in \cA_1} c_{\alpha} \bz_{\alpha}(\bA^{(n)})\right) \xrightarrow[n \to \infty]{\text{(w)}} \Law\left(\sum_{\alpha \in \cA_1} c_{\alpha} Z_{\alpha}^{\infty}\right). \]
In particular, since the output $\bx_t$ of a GFOM can be viewed in the above way, we see that the empirical distributions of $\bx_t = \bx_t(\bA)$ are related to the asymptotic states $X_t^{\infty}$ by
\[ \EE \ed(\bx_t(\bA^{(n)})) \xrightarrow[n \to \infty]{\text{(w)}} \Law(X_t^{\infty})\,. \]

Thus our results, interpreted in terms of convergence of the random empirical distributions of GFOM iterates, give convergence of the expectations of random measures.
Often it is desirable to prove stronger modes of convergence in such situations, by proving that not only do we have
\[ \lim_{n \to \infty} \E_{\bA} \frac{1}{n} \sum_{i = 1}^n \varphi(\bx_t(\bA^{(n)})[i]) = \EE \varphi(X_t^{\infty})\,, \]
but also that the random variable inside the expectation \emph{concentrates} over the randomness in $\bA$.
We do not pursue this here, because it would require introducing additional assumptions on the matrices $\bA$ involved, which may vary from application to application.
As the example discussed in~\Cref{rem:cdm-factorization} shows, this kind of concentration does not follow automatically from the convergence in expectation that we show.
An instructive example is the argument in \cite{bayati2015universality}, which uses similar proof techniques to ours, but, to show that the above kind of convergence also happens in $L^2$ uses a trick involving the entrywise independence of the Wigner matrices they work with (see their Proposition~5).

In our much more general setting, it seems reasonable to ask instead for the convergence in the definition of the traffic distribution in~\cref{eq:traffic-conv} to happen in a stronger mode such as $L^2$.
We leave the exploration of such conditions and the determination of which random matrix distributions they hold for to future work.

\section{Omitted Proofs}
\label{sec:omitted}

\subsection{Combinatorial lemmas}
\label{sec:combinatorics}

We gather here lemmas involving only
graph combinatorics.

\begin{lemma}
    \label{lem:prod-cactus-cactus}
    For all $\sigma,\sigma'\in \calC_1$ and $\bA\in\R_\sym^{n\times n}$,
    \[
        \bz_{\sigma}(\bA) \cdot \bz_{\sigma'}(\bA) - \bz_{\sigma\oplus \sigma'}(\bA)\in \spn(\bz_{\calA_1\setminus \calC_1}(\bA))\,,
    \]
    where $\sigma\oplus \sigma'\in\calC_1$ is the grafting
    of $\sigma$ and $\sigma'$ at the root.
\end{lemma}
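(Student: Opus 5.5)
The plan is to expand the entrywise product directly and sort the resulting terms by whether they are cactuses.

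\textbf{Expansion.} For a fixed root label $i$, the product $\bz_{\sigma}(\bA)[i]\cdot\bz_{\sigma'}(\bA)[i]$ is a sum over pairs of injective labelings $\varphi\colon V(\sigma)\into[n]$ and $\varphi'\colon V(\sigma')\into[n]$ with $\varphi(\rt(\sigma))=\varphi'(\rt(\sigma'))=i$. We group these pairs according to the set of coincidences $\varphi(u)=\varphi'(u')$ with $u\in V(\sigma)$, $u'\in V(\sigma')$. Such a coincidence set is a partial matching $P$ between $V(\sigma)$ and $V(\sigma')$, and it necessarily contains the pair of roots. This is the vector analogue of \Cref{claim:wtoz}, restricted to partitions that never merge two vertices of the same copy. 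It gives the exact identity
\[
\bz_\sigma\cdot\bz_{\sigma'}=\sum_{P}\bz_{(\sigma\sqcup\sigma')_P}\,.
\]
The term with $P=\{(\rt(\sigma),\rt(\sigma'))\}$ is exactly $\bz_{\sigma\oplus\sigma'}$. It remains to show that every other quotient $(\sigma\sqcup\sigma')_P$ is not a cactus.

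\textbf{Key combinatorial step.} Suppose $P$ contains an extra pair $(u,u')$ besides the roots. Since $\sigma$ is connected, it contains a path $\pi$ from $\rt(\sigma)$ to $u$. Likewise $\sigma'$ contains a path $\pi'$ from $\rt(\sigma')$ to $u'$.

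\begin{itemize}
\item We may choose the extra pair so that its two endpoints are merged with nothing else along these paths. To do this, take $u$ to be the vertex of $\sigma$, among those matched by $P$ and different from the root, that lies closest to $\rt(\sigma)$ along $\pi$.
\item In the quotient, $\pi$ and $\pi'$ then become two edge-disjoint paths between the merged root and the merged $u$. Their union therefore contains a cycle.
\item Each edge of $\pi$ already lies on a cycle inside $\sigma$. Take an edge $e$ of $\pi$. In the quotient, $e$ lies on its original cycle from $\sigma$, and also on a new cycle built from part of $\pi$ and part of $\pi'$. This new cycle uses edges of $\sigma'$, so it is distinct from the original one.
\item Hence $e$ lies on two distinct simple cycles, and the quotient is not a cactus.
\end{itemize}

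The alternative is a counting argument. A connected graph in which the edges of $k$ edge-disjoint cycles cover everything, with $|V|-|E|+k=1$, is a cactus. Merging $m\ge1$ additional vertex pairs lowers $|V|$ by $m$ while keeping $|E|$ fixed and the cycle cover of size $k$ unchanged. This gives $|V|-|E|+k<1$, and the bound from the proof of \Cref{lem:combinatorially-dominant} then shows the result is not a cactus.

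I expect this second argument to be cleaner. It does need care with one point: the merged graph stays connected, and its edge set is still partitioned by the original cycles. Both facts hold because the quotient operation preserves edges.

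\textbf{Main obstacle.} The hard part is making the non-cactus claim rigorous when merges create loops or multiedges. The Euler-characteristic count handles these uniformly, since loops and multiedges are just edges. In detail, $\sigma\sqcup\sigma'$ merged at the root is a cactus with $|V|-|E|+k=1$. Any further merge leaves $|E|$ and the cycle partition unchanged, so $|V|-|E|+k\le 0$. The bound \cref{eq:max-cycle-partition} applied to this given partition then certifies that the quotient is not a cactus. This concludes the argument.
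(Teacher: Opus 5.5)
Your proposal is correct. The expansion over partial matchings $P$ containing the root pair is the one the paper uses.

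Your first argument is essentially the paper's. The paper notes that merging a non-root $u\in V(\sigma)$ with some $u'\in V(\sigma')$ gives four edge-disjoint paths between the merged root and the merged $u$, two inside each 2-edge-connected cactus. Further merges cannot destroy these paths, and a cactus has at most two edge-disjoint paths between distinct vertices. You instead use one path per side plus the $\sigma$-cycle through an edge $e$, which places $e$ on two simple cycles.

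As written, your choice of $u$ is circular, because $\pi$ is defined in terms of $u$. Fix it as follows: take a path from $\rt(\sigma)$ to any matched non-root vertex, and let $u$ be the first matched vertex on it. Then $\pi$ has no matched interior vertex, and $\pi\cup\pi'$ is a simple cycle in the quotient. Loops and multiedges cause no trouble in this argument. Since $P$ never identifies two vertices of the same copy, the paths and cycles of $\sigma$ and $\sigma'$ stay simple.

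The counting argument is a genuinely different, global route, but its last step uses \cref{eq:max-cycle-partition} in the wrong direction. That bound concerns the \emph{maximum} number $C$ of cycles in a partition: $|V|-|E|+C\le 1$, with equality iff the graph is a cactus. Exhibiting one partition with $|V|-|E|+k\le 0$ therefore contradicts nothing by itself.

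You need two extra facts.
\begin{itemize}
\item[(a)] The $k$ original cycles remain \emph{simple} in the quotient. This holds because merges only go across the two copies. Edge preservation alone is not enough, since a cycle with two of its own vertices identified becomes a non-simple closed trail.
\item[(b)] In a cactus, the simple cycles themselves partition the edges. So any partition into simple cycles is that one and has exactly $|E|-|V|+1$ parts.
\end{itemize}
With these, a cactus quotient would have $k+m$ simple cycles, while the $k$ original ones already partition its edges. This is a contradiction for $m\ge 1$.

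The paper's connectivity argument is local and monotone under further merges. The counting argument handles all merges at once, at the cost of needing (a) and (b).
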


\begin{proof}
    In the $z$-basis expansion of
    $\bz_\sigma(\bA)\cdot \bz_{\sigma'}(\bA)$, we sum over all possible partial
    matchings of the vertices of $\sigma$
    and $\sigma'$. The empty matching 
    contributes exactly $z_{\sigma\oplus \sigma'}(\bA)$.
    Any other matching that merges some
    vertices $u\in V(\sigma)$ and $v\in V(\sigma')$ creates 4 edge-disjoint paths
    between the root and the merged vertex.
    Merging additional vertices of $\sigma$ and
    $\sigma'$ can only increase the
    number of edge-disjoint paths, so the
    resulting graphs cannot be cactuses.
\end{proof}

\begin{lemma}
    \label{lem:prod-cactus-other}
    For all $\sigma\in\calC_1$, $\alpha\in \calA_1\setminus \calC_1$ and $\bA\in\R_\sym^{n\times n}$,
    \[
        \bz_{\sigma}(\bA) \cdot \bz_{\alpha}(\bA) \in \spn(\bz_{\calA_1\setminus \calC_1}(\bA))\,.
    \]
\end{lemma}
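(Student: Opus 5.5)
We expand the product in the $z$-basis exactly as in the proof of \Cref{lem:prod-cactus-cactus}. The product $\bz_{\sigma}(\bA)\cdot\bz_{\alpha}(\bA)$ equals a sum, over all partial matchings $P$ between $V(\sigma)\setminus\{\rt(\sigma)\}$ and $V(\alpha)\setminus\{\rt(\alpha)\}$, of the vectors $\bz_{(\sigma\oplus\alpha)_P}(\bA)$. Here $(\sigma\oplus\alpha)_P$ is obtained by grafting at the root and then identifying the matched pairs. This identity is purely combinatorial: it comes from splitting the sum over pairs of injective labelings according to which labels coincide. It therefore suffices to show that no diagram $(\sigma\oplus\alpha)_P$ is a cactus.

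The key step is a monotonicity property. If a graph $\beta$ is not a cactus, then no quotient $\beta_Q$ of it is a cactus either. For the empty matching we use the fact that $\sigma\oplus\alpha$ is a cactus if and only if both $\sigma$ and $\alpha$ are: blocks of a grafting at a cut vertex are the blocks of the two pieces, and being a cactus is a blockwise condition (every block is a single cycle or a loop). Since $\alpha\notin\calC_1$, the term $P=\varnothing$ is not a cactus.

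For nonempty $P$, we argue as in \Cref{lem:prod-cactus-cactus}. Suppose some pair $(u,v)$ with $u\in V(\sigma)$ and $v\in V(\alpha)$ is merged. We may assume $\alpha$ is connected and $\sigma$ is 2-edge-connected, so $\sigma$ provides two edge-disjoint root--$u$ paths. If $\alpha$ is 2-edge-connected, it provides two more root--$v$ paths, which are edge-disjoint from the first two. The merged vertex and the root are then joined by 4 edge-disjoint paths, so some edge lies on two distinct simple cycles, and the graph is not a cactus. If $\alpha$ has a bridge, we observe that identifying vertices across the two pieces cannot make every edge of $\alpha$ lie on exactly one cycle while keeping the edges of $\sigma$ on exactly one cycle. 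A cycle through a bridge of $\alpha$ must use a path in $\sigma$ between the root and some merged vertex. But every such path already lies on $\sigma$'s own cycles, so the relevant $\sigma$-edges end up on at least two simple cycles.

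To make these arguments uniform, I would phase them through the lemma that every further merge preserves ``some edge on $\ge 2$ simple cycles or some bridge''. The cleanest route is via edge-connectivity counts: the number of edge-disjoint paths between any two vertices never decreases under identification, as in \Cref{lem:contract-2ec}. This controls the 2-edge-connected case fully. The genuine obstacle is the case where $\alpha$ has bridges, because identifications can destroy bridges. There I would track a specific bridge $e$ of $\alpha$ and the cycle created through it. That cycle must traverse $\sigma$ between two points in order to close up, and every edge of $\sigma$ on that traversal is thereby placed on a second simple cycle, besides its original cycle in $\sigma$. Alternatively, if no $\sigma$-edges are used, the cycle uses only $\alpha$-edges plus identifications inside $\alpha$. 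This is impossible, since $P$ only identifies $\sigma$-vertices with $\alpha$-vertices, so a bridge of $\alpha$ stays a bridge unless a path through $\sigma$ connects its two sides.
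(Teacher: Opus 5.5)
Your case analysis does prove the lemma in the end, but the statement you call the key step is false and should be deleted. A non-cactus can have a cactus quotient: the path with two edges is not a cactus, yet identifying its endpoints gives a $2$-cycle. This is exactly the bridge-destroying phenomenon you run into later. The ``uniform'' invariant in your last paragraph also fails under arbitrary identifications: identifying the two branch vertices of a theta graph turns a graph in which every edge lies on two simple cycles into a bouquet of cycles, i.e.\ a cactus. The quantity that is actually monotone is the number of edge-disjoint paths between two vertices that remain distinct.

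With those remarks removed, your three cases are sound. The empty matching is handled by the blockwise characterization, the $2$-edge-connected case by four edge-disjoint paths, and the bridge case by your final argument. That last argument works because in a product of two $z$-polynomials no two vertices of $\sigma$, and no two of $\alpha$, are ever identified. So every cycle of $\sigma$ remains a simple cycle of $(\sigma\oplus\alpha)_P$, and the $\alpha$-edges form an intact copy of $\alpha$. A bridge $e$ of $\alpha$ therefore either remains a bridge, or lies on a simple cycle that must use some $\sigma$-edge $f$. Then $f$ lies on two distinct simple cycles, since its original cycle in $\sigma$ avoids $e$. The $\sigma$-segment of that new cycle runs between two identified vertices and need not pass through the root, but you only need one $\sigma$-edge.

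The paper's proof shows the bridge case is no obstacle at all, because three edge-disjoint paths already suffice. For a merged pair $(u,v)$, $\sigma$ supplies two edge-disjoint root--$u$ paths, and mere connectivity of $\alpha$ supplies a third, root--$v$, path. In a cactus, two distinct vertices are joined by at most two edge-disjoint paths. Further identifications never merge the root with the merged vertex, so no nonempty matching yields a cactus, with no case split on the structure of $\alpha$.

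Your bridge argument is slightly more self-contained: it exhibits a surviving bridge or a doubly covered edge straight from the definition, without the edge-connectivity bound for cactuses, and it also covers the empty matching. The price is a case split that the three-path count makes unnecessary.
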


\begin{proof}
    The proof is similar to~\Cref{lem:prod-cactus-cactus}.
    In this case, the graph corresponding
    to the empty matching is not a cactus
    because $\alpha$ is not.
    All other matchings create at least
    3 edge-disjoint paths between the root
    and the merged vertex.
\end{proof}

\begin{lemma}\label{claim:only-trees}
    For each $\al \in \calA_1 \setminus \calT_1$ and $\beta \in \calA_1$,
    \[
        \bz_{\alpha}(\bA) \cdot \bz_{\beta}(\bA) \in \spn(\bz_{\calA_1\setminus \calT_1})\,.
    \]
\end{lemma}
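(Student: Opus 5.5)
We expand the Hadamard product $\bz_\alpha\cdot\bz_\beta$ in the $z$-basis. Because $\bz_\alpha[i]$ and $\bz_\beta[i]$ both use injective labelings pinned at $i$, their product is a sum over partial matchings $P$ that identify some vertices of $\alpha$ with vertices of $\beta$ (never two vertices of the same diagram). The roots are always matched. So
\[
\bz_\alpha\cdot\bz_\beta=\sum_{P}\bz_{(\alpha\oplus\beta)_P},
\]
where $(\alpha\oplus\beta)_P$ is the grafting at the root followed by these extra identifications. It then suffices to show that no quotient $(\alpha\oplus\beta)_P$ is treelike.

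The key structural fact we would use is a characterization of treelike diagrams via the block (2-vertex-connected component) structure. A rooted connected graph is treelike if and only if it satisfies both of the following.
\begin{enumerate}
\item Every block is either a bridge or a cycle (this includes loops and digons).
\item Every cycle block lies in a part that hangs off the tree, i.e., no bridge separates that cycle block from the root.
\end{enumerate}
Equivalently, after deleting all bridges, the component containing the root and every component away from the leaves must be single vertices, and each remaining component must be a cactus attached to the tree at exactly one point.

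The property to establish is that treelikeness is \emph{inherited downward}. If $\gamma$ is treelike and $\delta$ is a connected subgraph of $\gamma$ that contains $\rt(\gamma)$, we want $\delta$ (rooted there) to be treelike as well. We would check this directly: subgraphs of cactuses are disjoint unions of cactus pieces and trees, and the tree part restricts to a forest. The delicate point is that a subgraph may turn part of a hanging cactus into a path, which is still treelike because paths are trees. It may also make a cactus appear on a non-leaf portion. We expect this to be the main obstacle, and we may need to weaken the target to: $\delta$ treelike whenever $\delta$ is a union of \emph{whole} images of $\alpha$ edges.

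The cleaner route is different. Instead of arguing about subgraphs, we would argue about \emph{quotients}. Suppose $(\alpha\oplus\beta)_P$ were treelike. Consider the images of the edges of $\alpha$. Because $P$ identifies no two vertices of $\alpha$, the map $V(\alpha)\to V((\alpha\oplus\beta)_P)$ is injective, so $\alpha$ embeds as a subgraph containing the root. We then need to show that a rooted connected subgraph of a treelike graph, containing the root, is treelike.

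To prove that, we would handle the two pieces of $\delta$ separately.
\begin{itemize}
\item \emph{Cycles.} Every cycle of $\delta$ is a cycle of $\gamma$, hence lies in a hanging cactus of $\gamma$.
\item \emph{Non-cactus edges.} The edges of $\delta$ outside cycles form a subforest of the tree of $\gamma$ (plus cactus edges turned into bridges).
\end{itemize}
The failure mode to rule out is a cycle of $\delta$ that is not hanging in $\delta$, meaning an edge path leads beyond it away from the root. In $\gamma$, anything beyond a hanging cactus lies inside that cactus, so in $\delta$ those extra pieces are parts of the cactus. If this reasoning fails, that is, if the characterization is not preserved because a cactus partially deleted creates a bridge "past" a cycle, we would refine the definition of treelike accordingly. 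Since $\alpha\notin\calT_1$, this yields a contradiction, and we conclude that every term lies in $\spn(\bz_{\calA_1\setminus\calT_1})$.
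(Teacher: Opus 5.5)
There is a genuine gap. Your argument rests on the claim that a connected subgraph of a treelike diagram containing the root is again treelike. That claim is false, so the injective image of $\alpha$ in $(\alpha\oplus\beta)_P$ alone cannot force $\alpha\in\mathcal T_1$.

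For a counterexample, let $\gamma$ have two parallel edges between the root $r$ and a vertex $a$, and two parallel edges between $a$ and a vertex $c$. This is a cactus of $2$-cycles, so $\gamma\in\mathcal C_1\subseteq\mathcal T_1$. Delete one of the $a$--$c$ edges. What remains is a connected subgraph $\delta\ni r$ in which $\{a,c\}$ is a bridge that reaches the root only through the non-bridge edges between $r$ and $a$. Hence $\delta\notin\mathcal T_1$.

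This is exactly the failure mode you anticipate (part of a hanging cactus turning into a bridge beyond a cycle), and your fallbacks do not rescue it:
\begin{itemize}
\item $\mathcal T_1$ is fixed by the statement, so it cannot be ``refined''.
\item The image of $\alpha$ is by construction a union of whole images of $\alpha$-edges, so restricting to such subgraphs excludes nothing.
\end{itemize}
Your list characterization of treelike diagrams is also off. A cactus hanging at the end of a tree path \emph{is} separated from the root by bridges. The right condition is that no bridge lies beyond a cycle, i.e.\ every bridge is joined to the root by a path of bridges.

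The missing idea is that you must use the edges of $\beta$. They form a connected graph through the root and are edge-disjoint from the edges of $\alpha$ in the quotient. The paper says $\alpha\notin\mathcal T_1$ iff one of the following holds:
\begin{itemize}
\item[(i)] some bridge $e$ has no all-bridge path to the root;
\item[(ii)] some pair of vertices is joined by three edge-disjoint paths.
\end{itemize}
Property (ii) survives adding the edges of $\beta$ and identifying vertices. For (i) there are two cases.
\begin{itemize}
\item If $\beta$ meets no vertex on the far side of $e$, then $e$ remains a bridge, still cut off from the root by non-bridge edges.
\item If $\beta$ meets such a vertex $v$, use the fact that the route from $e$ to the root must enter and leave some $2$-edge-connected component at distinct vertices $w\neq w'$. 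These are already joined by two edge-disjoint paths inside that component. The walk from $w$ out through $e$ to $v$, then along $\beta$ to the root, then back to $w'$ avoids that component's edges. This gives (ii).
\end{itemize}
In your example, with $\alpha=\delta$: any $\beta$ reaching $c$ supplies a third edge-disjoint $a$--$r$ path, and any $\beta$ not reaching $c$ leaves $\{a,c\}$ a bridge cut off from the root. Either way the product is non-treelike, even though subgraph monotonicity fails.
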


\begin{proof}
    The non-treelike diagrams $\calA_1 \setminus \calT_1$ can be characterized as:
    \begin{claim}\label{claim:non-treelike}
    Let $\al \in \cA_1$. Then $\al \in \cA_1 \setminus \cT_1$ if and only if one of the following holds:
    \begin{enumerate}[(i)]
    \item there exists a bridge edge which does not have a path to the root using only bridge edges,
    \item or there exist a pair of vertices with three edge-disjoint paths between them.
    \end{enumerate}
    \end{claim}
    \begin{proof}[Proof of~\Cref{claim:non-treelike}]
        It is clear that either structure forbids $\al$ from being treelike. Conversely, if there are at most two edge-disjoint paths between all pairs of vertices, then the bridge edges of $\al$ go between cactuses.
        Then condition {\em (i)} characterizes whether all bridge edges are connected to the root.
    \end{proof}

    Using the claim, if $\al$ has a structure of type {\em (ii)} then this is preserved in the product terms with any $\beta$.
    Suppose then that $\al$ has a structure of type {\em (i)} and call the bridge edge $e$.
    Note that both $\al, \beta$ are connected by definition of $\cA_1$\,.
    If no descendants of $e$ intersect with $\beta$, then the type {\em (i)} structure is preserved.
    Conversely, if any descendant of $e$ intersects with $\beta$,
    then we obtain a new path from the descendant to the root through $\beta$ which is disjoint from the other edges of $\al$.
    Edge $e$ has at least one ancestor which is not a bridge edge, hence there were already two edge-disjoint paths containing this ancestor.
    Together with the new path we obtain a structure of type {\em (ii)}.
    In all cases, the product terms remain in $\calA_1 \setminus \cT_1$.
\end{proof}

\begin{proof}[Proof of~\Cref{lem:homeomorphic}]
    First, if $P$ matches an internal vertex of a hanging cactus, then it creates three edge-disjoint paths from the root to that vertex. These paths cannot be eliminated by merging other vertices, so $\tau_P$ cannot be a cactus.
    Therefore, we may assume without loss of generality that $\tau_1$ and $\tau_2$
    contain no hanging cactuses.

    It is straightforward to check that any homeomorphic matching yields a cactus. We focus on the converse. Specifically, suppose that we are
    given a matching $P$ between the vertices
    of $\tau_1$ and $\tau_2$ such that
    $\tau_P\in \calC_1$. We prove 
    $P\in H(\tau_1, \tau_2)$ by induction
    on $|V(\tau_1)|+|V(\tau_2)|$.

    For the base case, suppose that $\tau_1$
    or $\tau_2$ has only one vertex. Then
    $\tau_P$ can be a cactus only if both
    $\tau_1$ and $\tau_2$ consist of a single
    vertex.

    For the inductive step, let $u^1_1, \ldots, u^1_k$
    be the children of the root of $\tau_1$,
    and let $u^2_1, \ldots, u^2_\ell$ be the children
    of the root of $\tau_2$. A necessary
    condition for $\tau_P$ to be a cactus is
    that $k=\ell$ (and this is also necessary for
    $P$ to be a homeomorphic matching).
    Moreover, after reordering $u^2_1, \ldots, u^2_k$ if necessary, we may assume that
    for all $i\in [k]$, $u^1_i$ and $u^2_i$ lie on the
    same cycle in $\tau_P$, and that these form
    exactly $k$ distinct cycles in $\tau_P$
    incident to the root. 
    
    For each $i\in [k]$
    and $j\in \{1,2\}$,
    let $S^j_i$ denote the non-root 
    vertices of $\tau_j$ that
    are mapped under $P$
    to the same cycle of $\tau_P$ as $u^1_i,u^2_i$.
    
    \begin{claim}\label{claim:inductive-number-mappings}
        For every $i\in [k]$, there is exactly
        one vertex $v^1_i\in S^1_i$ and exactly one vertex 
        $v^2_i\in S^2_i$ that
        are mapped to the same vertex of $\tau_P$.
    \end{claim}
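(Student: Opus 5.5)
The plan is to exploit two facts: $\tau_1$ and $\tau_2$ are trees (we already reduced to the case with no hanging cactuses), and in a cactus every edge lies on exactly one simple cycle. Equivalently, no two vertices of a cactus are joined by three edge-disjoint paths (\Cref{claim:non-treelike}). Call a vertex of $\tau_P$ \emph{merged} if it is the image of a pair $(x^1,x^2)\in P$. Let $C_i$ be the cycle of $\tau_P$ through the root containing the edges $\{\roott,u^1_i\}$ and $\{\roott,u^2_i\}$. The claim then says that $C_i$ contains exactly one merged vertex other than the root.

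\emph{Existence.} First note that every edge of $\tau_P$ comes from exactly one of $\tau_1,\tau_2$. Traverse $C_i$ starting from the root along the $\tau_1$-edge $\{\roott,u^1_i\}$. If the traversal never reached a non-root vertex where it can switch from $\tau_1$-edges to $\tau_2$-edges, all edges of $C_i$ would come from $\tau_1$. Since $C_i$ passes through the root only once, this would make $C_i$ a cycle in the tree $\tau_1$, which is impossible. The only vertices where $\tau_1$- and $\tau_2$-edges meet are merged vertices. Hence $C_i$ contains a non-root merged vertex $x$, and its two preimages give $v^1_i\in S^1_i$ and $v^2_i\in S^2_i$.

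\emph{Uniqueness.} Suppose $x\neq y$ are two non-root merged vertices on $C_i$. Let $p_1(x)$ be the unique root-to-$x$ path in $\tau_1$ and $p_2(x)$ the unique root-to-$x$ path in $\tau_2$. Their union in $\tau_P$ is a closed walk through the root, and it contains a simple cycle. I would show that this cycle uses $\{\roott,u^1_i\}$. Since a cactus edge lies in a unique simple cycle, $C_i$ must then be contained in $p_1(x)\cup p_2(x)$. Now $y$ lies on $C_i$, so without loss of generality $y$ lies on $p_1(x)$. Consider three walks in $\tau_P$:
\begin{itemize}
\item the root-to-$y$ segment of $p_1(x)$;
\item the path $p_2(y)$;
\item the walk from the root along $p_2(x)$ to $x$, then back along $p_1(x)$ from $x$ to $y$.
\end{itemize}
I expect these three walks to be pairwise edge-disjoint. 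That would give three edge-disjoint root-to-$y$ paths, contradicting \Cref{claim:non-treelike}.

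\emph{Main obstacle.} The delicate step is the edge-disjointness of $p_2(y)$ from $p_2(x)$. Both start in the branch of $u^2_i$ (this uses $y\in S^2_i$ and the definition of the sets $S^j_i$), so they share an initial segment in $\tau_2$. If disjointness fails, I would first try to instead choose the pair $x,y$ whose $\tau_2$-preimages branch apart at the deepest possible point. The fallback is to argue directly that the point where $p_2(x)$ and $p_2(y)$ branch is itself a vertex of $\tau_P$ with three edge-disjoint paths to the root, obtained by combining the $\tau_1$-paths with the two $\tau_2$-branches. Either way the case analysis is only over which of $p_1(x)$, $p_2(x)$ contains $y$, and the two cases are symmetric.
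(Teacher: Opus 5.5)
Your existence argument is fine; it is the same acyclicity argument the paper uses. The uniqueness argument has a genuine gap, in two places.

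First, nothing you establish forces $p_1(x)$ to contain $\{\roott,u^1_i\}$ for an arbitrary merged $x$ on $C_i$. A priori, $C_i$ could pass through $x$ along $\tau_2$-edges while the $\tau_1$-preimage of $x$ sits below another child of the root. So the step ``the simple cycle in $p_1(x)\cup p_2(x)$ uses $\{\roott,u^1_i\}$'' only holds for a specific choice of $x$.

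Second, and more seriously, walks (b) and (c) are not edge-disjoint. The paths $p_2(y)$ and $p_2(x)$ share every $\tau_2$-edge from the root down to the lowest common ancestor of the $\tau_2$-preimages of $x$ and $y$. In exactly the situation you describe (both below $u^2_i$), they share at least $\{\roott,u^2_i\}$. So the three-path contradiction fails precisely when it is needed, and choosing the pair that branches deepest does not change this. Your fallback is also false as stated. From the branch point $b$, the two $\tau_2$-branches must return to the root through $p_1(y)$ and $p_1(x)$, and $p_1(y)\subseteq p_1(x)$ because $y$ lies on $p_1(x)$. What survives is three edge-disjoint paths between $b$ and $y$, not to the root, and even then the degenerate cases $b\in\{x,y\}$ need separate treatment.

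A clean repair avoids this bookkeeping.
\begin{itemize}
\item Let $x$ be the first non-root merged vertex met when walking along $C_i$ from the root through $\{\roott,u^1_i\}$. Every earlier vertex is unmerged, so all its incident edges come from a single tree. Hence the walked arc consists of $\tau_1$-edges and is exactly $p_1(x)$, with no merged interior vertex.
\item The interior vertices of $p_1(x)$ are unmerged $\tau_1$-vertices, so they avoid $p_2(x)$. Thus $p_1(x)\cup p_2(x)$ is a simple cycle through $\{\roott,u^1_i\}$, and it equals $C_i$ by the cactus property.
\item The same argument from $\{\roott,u^2_i\}$ gives a merged $x'$ with $C_i=p_2(x')\cup p_1(x')$, where $p_2(x')$ has no merged interior vertex.
\item Comparing the $\tau_2$-edges of $C_i$ gives $p_2(x)=p_2(x')$, hence $x=x'$. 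So every vertex of $C_i$ other than the root and $x$ is unmerged.
\end{itemize}

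Equivalently, once $C_i=p_1(x)\cup p_2(x)$, any other merged $y\in C_i$ lies inside $p_2(x)$. The $\tau_1$-branch from the point where $p_1(y)$ leaves $p_1(x)$ down to $y$ avoids $C_i$. Together with the two arcs of $C_i$, this gives three edge-disjoint paths between that point and $y$, which is what the paper's terse ``three edge-disjoint paths'' remark amounts to.
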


    \begin{proof}
        Since $\tau_1$ and $\tau_2$ are acyclic,
        creating a cycle in $\tau_P$ requires
        identifying two other vertices than the
        root. Conversely, 
        identifying more than one pair of vertices
        would create
        three edge-disjoint paths to the root
        in $\tau_P$, contradicting the fact
        that the
        latter is a cactus.
    \end{proof}

    \begin{claim}\label{claim:allows-induction}
        For each $i\in[k]$ and $j\in\{1,2\}$, every pair in $P$ incident to a vertex
        in the subtree rooted at $v_i^j$ has its other endpoint in the subtree
        rooted at $v_i^{3-j}$.
    \end{claim}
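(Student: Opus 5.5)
The plan is to argue by contradiction, using that a (bridgeless) cactus never contains two distinct vertices joined by three edge-disjoint paths. This fact is already used in \Cref{claim:non-treelike} and in the proof of \Cref{lem:prod-cactus-cactus}.

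By symmetry, take $j=1$. Suppose some vertex $x$ in the subtree of $\tau_1$ rooted at $v_i^1$ is matched by $P$ to a vertex $y\in V(\tau_2)$ that is not in the subtree rooted at $v_i^2$. If $x=v_i^1$, then by \Cref{claim:inductive-number-mappings} we must have $y=v_i^2$, a contradiction. So we may assume $x$ is a strict descendant of $v_i^1$.

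Let $w$ be the vertex of $\tau_P$ obtained by merging $v_i^1$ and $v_i^2$. The cycle of $\tau_P$ through $u_i^1,u_i^2$ is the union of two tree paths: $\pi_1$ from the root to $v_i^1$ in $\tau_1$, and $\pi_2$ from the root to $v_i^2$ in $\tau_2$. Let $b$ be the first vertex of $\pi_2$ met when walking in $\tau_2$ from $y$ toward the root; possibly $b$ is the root itself. Since $y$ is not below $v_i^2$, we have $b\neq v_i^2$. Moreover, $b$ is not identified with $w$ in $\tau_P$: otherwise either $b$ is the root, which is merged with neither $v_i^1$ nor $v_i^2$, or $b$ would give a second identified pair inside $S_i^1\cup S_i^2$, contradicting \Cref{claim:inductive-number-mappings}.

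Now we exhibit three edge-disjoint walks in $\tau_P$ between $w$ and $b$:
\begin{enumerate}[(a)]
\item the segment of $\pi_2$ from $w$ down to $b$;
\item $\pi_1$ from $w$ to the root, followed by the segment of $\pi_2$ from the root to $b$ (this second part is empty if $b$ is the root);
\item the $\tau_1$-path from $v_i^1$ down to $x$, then the identification $x=y$, then the $\tau_2$-path from $y$ up to $b$.
\end{enumerate}
The path in (c) uses only edges of $\tau_1$ strictly below $v_i^1$, which are disjoint from $\pi_1$. It also uses only edges of $\tau_2$ off $\pi_2$, by the choice of $b$. Walks (a) and (b) use disjoint segments of $\pi_2$ together with $\pi_1$.

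Further identifications by $P$ can only glue vertices, never edges, so edge-disjointness survives in $\tau_P$. Each walk contains a simple path with the same endpoints, so $w\neq b$ are joined by three edge-disjoint paths. This contradicts $\tau_P\in\calC_1$.

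The delicate point is the choice of $b$ and checking that it is genuinely distinct from $w$ in $\tau_P$. I would handle it exactly as above, via \Cref{claim:inductive-number-mappings} and the fact that the root is identified only with the root.
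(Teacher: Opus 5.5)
Your proof is correct and uses the same mechanism as the paper: exhibit three edge-disjoint paths between two distinct vertices of $\tau_P$, which a cactus cannot have. The difference lies in the choice of endpoints.

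The paper's proof only treats a partner $w^2$ lying in the subtree of some $v_{i'}^2$ with $i'\neq i$. It takes the three paths from the image of $v_i^1$ to the root: the two arcs of the $i$th cycle, plus $v_i^1\to w^1=w^2\to\roott$. As written, it does not address a partner lying in the $i$th branch of $\tau_2$ but outside the subtree of $v_i^2$, for instance on a side branch hanging off $\pi_2$. The paper only rules out that configuration afterwards, in \Cref{claim:both-core}.

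Your branch point $b$, where the $\tau_2$-path from $y$ to the root first meets $\pi_2$, handles all configurations uniformly. It reduces to the paper's argument exactly when $b$ is the root, so your version is the more complete one.

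Two minor simplifications are available. First, $b\neq w$ in $\tau_P$ is immediate, because $b$ and $v_i^2$ are distinct vertices of $\tau_2$ and $P$ never identifies two vertices of the same tree. Second, you never need the fact that the $i$th cycle equals $\pi_1\cup\pi_2$. Your three walks are built only from tree paths together with the pairs $(\roott(\tau_1),\roott(\tau_2))$, $(v_i^1,v_i^2)$ and $(x,y)$ in $P$.
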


    \begin{proof}
        Suppose for contradiction that there is a pair of $P$ between a vertex $w^1$
        in the subtree rooted at $v_i^1$ and a vertex $w^2$ in the subtree rooted at
        $v_{i'}^{2}$ for some $i'\neq i$.
        Then in $\tau_P$ there are three edge-disjoint paths from the image of
        $v_i^1$ to the root: two lie on the cycle formed by
        $S_i^1\cup S_i^2$, and the third is obtained by concatenating the path from
        $v_i^1$ to $w^1$ with the path from $w^2$
        to the root. This contradicts the fact that $\tau_P$ is a cactus.
    \end{proof}

    By~\Cref{claim:allows-induction}, we may apply the induction hypothesis for each
    $i\in[k]$ to the subtree of $\tau_1$ rooted at $v_i^1$ and the subtree of $\tau_2$
    rooted at $v_i^2$. Thus, the restriction of $P$ to these subtrees is a
    homeomorphic matching. In particular, $v_i^1$ and $v_i^2$ have the same degree.

    \begin{claim}\label{claim:both-core}
        Let $i\in[k]$. Then $v_i^1$ and $v_i^2$ are either both in the core of their
        respective trees or both outside of it. Moreover, for each $j\in\{1,2\}$, no
        vertex in $S_i^j\setminus\{v_i^j\}$ lies in the core of $\tau_j$.
    \end{claim}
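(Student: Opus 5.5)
The argument for \Cref{claim:both-core} is local: it uses the structure of the cycle through $S_i^1\cup S_i^2$ in $\tau_P$, together with the fact that the restriction of $P$ below $v_i^1,v_i^2$ is already homeomorphic by the induction hypothesis. Recall that after removing hanging cactuses, $\core(\tau_j)$ consists of the root, the leaves, and the vertices of degree at least $3$. By \Cref{claim:inductive-number-mappings}, the set $S_i^j$ is exactly the path in $\tau_j$ from $u_i^j$ down to $v_i^j$, and the cycle of $\tau_P$ is the union of the two paths $\mathrm{root}\to v_i^1$ and $\mathrm{root}\to v_i^2$, glued at the identified pair $(v_i^1,v_i^2)$.

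First I show that no vertex $w\in S_i^j\setminus\{v_i^j\}$ lies in the core; say $j=1$. Such a $w$ is not the root, and it is not a leaf because it has a child on the path toward $v_i^1$. Suppose $\deg_{\tau_1}(w)\ge 3$. Then $w$ has a child $x$ off the path, and the subtree $T_x$ rooted at $x$ is nonempty.
\begin{itemize}
\item If some vertex of $T_x$ is matched by $P$, then by the argument of \Cref{claim:allows-induction} it cannot be matched into the subtree of $v_{i'}^2$ for $i'\neq i$. Matching it elsewhere on the cycle, or into the subtree of $v_i^2$, creates a third edge-disjoint path between $w$ and the root, contradicting that $\tau_P$ is a cactus.
\item If no vertex of $T_x$ is matched, then $T_x$ survives in $\tau_P$ as a pendant tree containing a bridge edge, so $\tau_P$ is not a cactus (it must be bridgeless).
\end{itemize}
In either case we get a contradiction, so every vertex of $S_i^1\setminus\{v_i^1\}$ has degree exactly $2$ and lies outside the core. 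The same argument works for $j=2$.

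Next I show that $v_i^1$ and $v_i^2$ are either both in the core or both outside it. Neither is the root, so membership in the core is decided by the degree: a vertex is in the core iff its degree is $1$ or at least $3$. The induction hypothesis already gives $\deg(v_i^1)=\deg(v_i^2)$. This holds because the homeomorphic matching between the subtrees rooted at $v_i^1$ and $v_i^2$ restricts to a rooted isomorphism of their cores, so the two roots have the same number of children, and each $v_i^j$ has exactly one parent edge. Equal degrees give the dichotomy immediately.

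The main obstacle is making the degree statement rigorous. The point that needs care is that the root of a subtree is always treated as a core vertex in the definition of $\core$. So I must argue directly that the number of children of $v_i^j$ equals the root degree of the sub-core. This follows from the base/inductive-step convention in the proof of \Cref{lem:homeomorphic}, where the matched children of the two roots were shown to come in $k=\ell$ pairs.
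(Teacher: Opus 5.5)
Your proof is correct. The first part is the paper's argument. The inductive hypothesis makes the cores of the subtrees at $v_i^1,v_i^2$ isomorphic as rooted trees, so these two vertices have the same number of children and hence the same degree. For a non-root vertex, lying in the core just means having degree $\neq 2$. The ``obstacle'' in your last paragraph is not one: suppressing non-root degree-$2$ vertices never changes the root's degree, so the root degree of the sub-core equals the number of children of $v_i^j$, and no appeal to $k=\ell$ is needed.

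For the second part your route differs slightly. The paper notes that a core vertex $w\in S_i^j\setminus\{v_i^j\}$ has degree $>2$, so in the cactus $\tau_P$ it lies on a second cycle $\rho$. By acyclicity of $\tau_j$, $\rho$ must pass through a vertex $w'$ coming from $\tau_{3-j}$, which gives a third path. You instead take an off-path child subtree $T_x$ and split on whether $P$ touches it: if untouched it hangs off a bridge, and if touched it yields an extra path. Both show that the extra branch at $w$ must reconnect through $\tau_{3-j}$. The paper gets this from the cactus structure in one stroke; yours uses only bridgelessness plus an explicit case analysis, and also spells out why $w$ is not a leaf, which the paper leaves implicit.

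That case analysis needs one repair.
\begin{itemize}
\item Your list of locations for the partner $y'$ of a matched $y\in T_x$ is not exhaustive. The vertex $y'$ may lie in branch $i'$ outside the subtree of $v_{i'}^2$, or in a side subtree hanging off $S_i^2\setminus\{v_i^2\}$.
\item When $y'$ is in such a side subtree, or below $v_i^2$, there is in general no third edge-disjoint path from $w$ to the \emph{root}.
\end{itemize}
A uniform fix is as follows. Write $C_i$ for the cycle through $S_i^1\cup S_i^2$, and climb in $\tau_2$ from $y'$ toward the root until the first vertex $z$ whose image lies on $C_i$. Since $w$ is unmatched (by \Cref{claim:inductive-number-mappings}), $z\neq w$. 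The walk $w\to x\to\cdots\to y=y'\to\cdots\to z$ uses no edge of $C_i$. Together with the two arcs of $C_i$ it yields three edge-disjoint $w$--$z$ paths, which is impossible in a cactus. The case of $y'$ below $v_i^2$ is also excluded directly by \Cref{claim:allows-induction}. The paper's third path ``from $w'$ to the root'' needs the same stopping rule to be edge-disjoint from $C_i$.
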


    \begin{proof}
        For the first part, since $v_i^1$ and $v_i^2$ have the same degree, they are
        either both in the core or both outside the core.

        For the second part, suppose for contradiction that some
        $w\in S_i^j\setminus\{v_i^j\}$ lies in the core of $\tau_j$. Since $w$ has
        degree greater than $2$, its image in the cactus $\tau_P$ is an articulation
        vertex. Let $\rho$ be a cycle of $\tau_P$ incident to $w$ that is distinct
        from the cycle induced by $S_i^1\cup S_i^2$. Then the two neighbors of $w$ in
        $\rho$ are images of vertices of $\tau_j$. Since $\tau_j$ is acyclic, the
        cycle $\rho$ must contain a vertex $w'$ that is the image of a vertex of
        $\tau_{3-j}$. But then $\tau_P$ contains three edge-disjoint paths from $w$
        to the root: two through the cycle induced by $S_i^1\cup S_i^2$, and a third
        obtained by following $\rho$ from $w$ to $w'$ and then the path from $w'$ to
        the root. This contradicts the fact that $\tau_P$ is a cactus.      
    \end{proof}

    Let $i\in[k]$. For $j\in\{1,2\}$, let $w_i^j$ be the first descendant of
    $u_i^j$ that lies in the core of $\tau_j$. By~\Cref{claim:both-core}, there are
    only two cases:
    
    \begin{enumerate}
        \item Either $v_i^j = w_i^j$ for both $j\in\{1,2\}$. In this case, there are no
        non-core vertices to match on the path from $u_i^j$ to $v_i^j$, so the
        induced matching is empty (and hence trivially order-preserving).
        
        \item Or $v_i^j \neq w_i^j$ for both $j\in\{1,2\}$. In this case, by induction,
        the matching between $v_i^j$ and $w_i^j$ is order-preserving. Matching
        $v_i^1$ to $v_i^2$ and adding the matching from $v_i^j$ to $w_i^j$
        yields an order-preserving matching from $u_i^j$ to $w_i^j$.
    \end{enumerate}

    By induction, the restriction of $P$ induces an isomorphism between the cores of
    $\tau_1$ and $\tau_2$ within each subtree rooted at $v_i^j$. Since there is no
    core vertex on the path from $u_i^j$ to $v_i^j$ by~\Cref{claim:both-core}, these
    local isomorphisms extend to an isomorphism between the cores of $\tau_1$ and
    $\tau_2$ globally. This concludes the proof. 
\end{proof}

\begin{proof}[Proof of~\Cref{lem:product-gaussian}]
    Given $\gamma_1,\ldots,\gamma_\ell\in \calG_1$,
    we can expand
    \[
        \prod_{j=1}^\ell \bz_{\gamma_j} = \sum_P \bz_{\gamma_P}\,,
    \]
    where $P$ ranges over all partitions of
    $V(\gamma_1)\cup \ldots \cup V(\gamma_\ell)$
    such that all roots are in the same block,
    but no two vertices of the same
    $\gamma_i$ are in the same block. Suppose that ${\gamma_P}$ is treelike.

    \begin{claim}\label{claim:hanging-own}
        Every internal vertex of a hanging cactus forms a singleton block.
    \end{claim}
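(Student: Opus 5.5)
We must prove \Cref{claim:hanging-own}: in the expansion of $\prod_j \bz_{\gamma_j}$ inside the proof of \Cref{lem:product-gaussian}, whenever $\gamma_P$ is treelike, every internal (non-attachment) vertex of a hanging cactus of some $\gamma_i$ lies in a singleton block of $P$.

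The plan is a contradiction argument that follows the proof of \Cref{lem:homeomorphic}. Suppose $w$ is an internal vertex of a cactus hanging in $\gamma_i$, and $P$ places $w$ in the same block as a vertex $x$ of some $\gamma_{i'}$. The partition cannot merge two vertices of the same diagram, so $i' \neq i$. Let $a$ be the attachment vertex of the hanging cactus in $\gamma_i$. Since $w$ is internal, it lies on a cycle of that cactus, so in $\gamma_i$ alone there are two edge-disjoint paths from $w$ to $a$, both inside the hanging cactus. Extend these by the tree path from $a$ to $\rt(\gamma_i)$. This gives two walks from $w$ to the root that are edge-disjoint on the cactus part and share only tree edges of $\gamma_i$.

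For a third route, use $\gamma_{i'}$. It is connected, so it contains a path from $x$ to $\rt(\gamma_{i'})$, and this path uses only edges of $\gamma_{i'}$. In $\gamma_P$ all roots are identified, so this path runs from the image of $w$ to the root and is edge-disjoint from every edge of $\gamma_i$.

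The main obstacle is that the first two routes share the tree segment from $a$ to the root, so we do not literally have three edge-disjoint root paths. We need a pair of vertices with three edge-disjoint paths, or some other certificate of non-treelikeness, via \Cref{claim:non-treelike}. There are two cases.

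If the path in $\gamma_{i'}$ is used as a third route to $a$, concatenate it with the tree path from the root to $a$ in $\gamma_i$. The image of the cycle through $w$ and $a$ gives two edge-disjoint $w$–$a$ paths, and (root path of $\gamma_{i'}$) followed by (tree path in $\gamma_i$ from the root to $a$) gives a third. These three are pairwise edge-disjoint, because the first two lie in the hanging cactus, the $\gamma_{i'}$ part is disjoint from $\gamma_i$, and the tree segment from the root to $a$ is not in the cactus. The one caveat is the degenerate case $a=\rt(\gamma_i)$, where the tree segment is empty and the conclusion still holds.

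Therefore the images of $w$ and $a$ are joined by three edge-disjoint paths in $\gamma_P$. Further merging can only preserve such paths, as argued in \Cref{lem:contract-2ec}. Condition (ii) of \Cref{claim:non-treelike} then shows $\gamma_P \notin \calT_1$, which contradicts the assumption that $\gamma_P$ is treelike, so $w$'s block must be a singleton.
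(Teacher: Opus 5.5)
Your argument is correct and is essentially the paper's proof. Both produce three edge-disjoint paths between the merged internal vertex and an attachment vertex, two inherited from the hanging cactus and a third detouring through $\gamma_{i'}$, the shared root and the tree path, which contradicts condition (ii) of the non-treelike characterization. The paper takes the attachment vertex of the cycle containing $w$, so its two paths are literally the two arcs of one cycle; with your $a$ there need not be a single cycle through $w$ and $a$, so the two $w$--$a$ paths should rest on the 2-edge-connectivity of the cactus, as in your earlier paragraph, not on ``the cycle through $w$ and $a$''.
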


    \begin{proof}
        Suppose for contradiction that an internal vertex $u$ of a hanging cactus in
        $\gamma_1$ lies in the same block as some vertex $v$ of $\gamma_2$. Let $u'$
        be the attachment vertex of the cycle containing $u$. In ${\gamma_P}$,
        there are three edge-disjoint paths between the images of $u$ and~$u'$: two
        are inherited from $\gamma_1$, while the third is obtained by following the
        path in $\tau_2$ from $v$ to the root and then the path in $\tau_1$ from the
        root to~$u'$. This contradicts \Cref{claim:only-trees}, since ${\gamma_P}$
        is assumed to be treelike.
    \end{proof}

    By~\Cref{claim:hanging-own}, we may temporarily delete the hanging cactuses from
    $\gamma_1,\ldots,\gamma_\ell$ and then reattach them in ${\gamma_P}$; this
    does not affect whether ${\gamma_P}$ is treelike. Hence, we may assume
    without loss of generality that none of $\gamma_1,\ldots,\gamma_\ell$ contains a
    hanging cactus.

    \begin{claim}\label{claim:graph-is-matching}
        Let $M$ be the graph on $[\ell]$ with an edge between $i,j\in [\ell]$ if there exist
        $u\in V(\gamma_i)$ and $v\in V(\gamma_j)$ that lie in the same block of $P$.
        Then $M$ is a matching.
    \end{claim}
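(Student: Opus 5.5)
We must prove \Cref{claim:graph-is-matching}: in the setting of the proof of \Cref{lem:product-gaussian}, with $\gamma_1,\dots,\gamma_\ell\in\calG_1$ stripped of hanging cactuses (so each $\gamma_i$ is a rooted tree whose root has degree exactly $1$), and $P$ a partition such that $\gamma_P$ is treelike, the ``interaction graph'' $M$ on $[\ell]$ has maximum degree at most one. We argue by contradiction, using the characterization of non-treelike diagrams from \Cref{claim:non-treelike}: it suffices to exhibit, in $\gamma_P$, a pair of vertices joined by three edge-disjoint paths.

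Suppose $M$ has a vertex $i$ adjacent to two distinct $j,k$. Then some non-root vertex $u\in V(\gamma_i)$ is merged with some $v\in V(\gamma_j)$, and some non-root vertex $u'\in V(\gamma_i)$ is merged with some $w\in V(\gamma_k)$. Roots are all in one block and no two vertices of the same $\gamma_i$ share a block, so these merged vertices are non-root. Let $e_i$ be the unique edge at the root of $\gamma_i$ and $e_j,e_k$ those of $\gamma_j,\gamma_k$. Take the root $r$ of $\gamma_P$ and the image $x$ of the first vertex below $e_i$ (the child $c_i$ of the root in $\gamma_i$). We build three edge-disjoint $r$–$x$ paths. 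Path one is just $e_i$. Path two goes from $r$ down $\gamma_j$ through $e_j$ to $v$, which equals $u$, then up $\gamma_i$ from $u$ to $c_i$ (this tree path avoids $e_i$ since $u\ne$ root). Path three goes from $r$ down $\gamma_k$ via $e_k$ to $w=u'$, then up $\gamma_i$ from $u'$ to $c_i$.

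The main obstacle is edge-disjointness of paths two and three inside $\gamma_i$: the tree paths $u\to c_i$ and $u'\to c_i$ may share edges. To fix this, replace them: let $y$ be the meeting vertex of the two tree paths (the last common point on the way to $c_i$, i.e.\ the lowest common ancestor of $u,u'$ in $\gamma_i$). Reroute the pair to end at the image of $y$ instead of $x$. Path two is the segment $r\to_{\gamma_j} u \to_{\gamma_i} y$. Path three is $r\to_{\gamma_k} u'\to_{\gamma_i} y$. Path one is $r\xrightarrow{e_i} c_i\to_{\gamma_i} y$. These use disjoint edge sets within $\gamma_i$, because the three subpaths go down from $y$ toward $u$, toward $u'$, and up toward the root, each through different branches, with the degenerate case $y=u$ or $y=u'$ still fine. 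Within $\gamma_j$ and $\gamma_k$ the paths use distinct trees. Edges are preserved by merging, so these are edge-disjoint paths in $\gamma_P$ between $r$ and the image of $y$, which may coincide with $r$ only if $y$ is the root of $\gamma_i$, which is impossible since $u,u'$ are non-root and the root has degree $1$.

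Thus $\gamma_P$ has three edge-disjoint paths between two distinct vertices, so by \Cref{claim:non-treelike} it is not treelike, contradicting the assumption. Hence every vertex of $M$ has degree at most one, i.e., $M$ is a matching.
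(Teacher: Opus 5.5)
Your proof is correct and is essentially the paper's argument. The paper also takes the lowest common ancestor, in the tree that meets two others, of the two merged non-root vertices; it notes this ancestor is not the root because the root has degree $1$, and it exhibits three edge-disjoint paths from the ancestor's image to the root, contradicting \Cref{claim:non-treelike}. Your initial attempt through $c_i$ is unnecessary, since your rerouting through $y$ is exactly the argument the paper gives directly.
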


    \begin{proof}
        Suppose for contradiction that $M$ is not a matching. Then there exist
        non-root vertices $u\in V(\gamma_1)$, $v,v'\in V(\gamma_2)$, and
        $w\in V(\gamma_3)$ such that $u$ and $v$ (resp. $w$ and $v'$) lie in the same block of $P$. Let $v''$ be the lowest common
        ancestor of $v$ and $v'$ in $\gamma_2$. Since $\gamma_2\in\calG_1$, $v''$ is
        not the root of $\gamma_2$. In ${\gamma_P}$, there are three edge-disjoint paths from the image of
        $v''$ to the root: one is the inherited path from $v''$ to the root inside
        $\gamma_2$; the second follows the path in $\gamma_2$ from $v''$ to $v$ and
        then the path in $\gamma_1$ from $u$ to the root; and the third follows the
        path in $\gamma_2$ from $v''$ to $v'$ and then the path in $\gamma_3$ from
        $w$ to the root. This contradicts~\Cref{claim:only-trees}, since
        ${\gamma_P}$ is treelike by assumption.
    \end{proof}

    By~\Cref{claim:graph-is-matching}, it follows that
    \[
        \prod_{j=1}^\ell \bz_{\gamma_j} - \sum_{M\in\calM(\ell)} \sum_{\substack{P_{u,v}\\\forall  uv\in M}} \bz_{\bigoplus_{uv\in M}\gamma_{P_{u,v}} \,\oplus\, \bigoplus_{u\notin M} \gamma_u}\in \spn(\bz_{\calA_1\setminus \calT_1})\,,
    \]
    where for each edge $uv\in M$, the sum over $P_{u,v}$ ranges over all partial
    matchings between $V(\gamma_u)$ and $V(\gamma_v)$ that fix the roots.

    Finally, note that unless $P_{u,v}$ is empty, ${\gamma_{P_{u,v}}}$ cannot
    be a treelike diagram that is not a cactus. Indeed, no vertices in the
    hanging cactuses can be matched; otherwise it would create three edge-disjoint paths. Moreover, if we
    match two tree vertices, that would
    create two edge-disjoint paths to the root, and thus would force the diagram to
    be a cactus.
    Since the grafting of non-treelike diagrams is again non-treelike, the only treelike contributions arise when each factor
    ${\gamma_{P_{u,v}}}$ is a cactus. By~\Cref{lem:homeomorphic}, this forces
    $P_{u,v}$ to be a homeomorphic matching. Hence,
    \[
        \prod_{j=1}^\ell \bz_{\gamma_j} - \sum_{M\in\calM(\ell)} \sum_{\substack{P_{uv}\in H(\gamma_u,\gamma_v)\\\forall uv\in M}} \bz_{\bigoplus_{uv\in M}\gamma_{P_{u,v}} \,\oplus\, \bigoplus_{u\notin M} \gamma_u} \in \spn(\bz_{\calA_1\setminus \calT_1})\,,
    \]
    as desired.
\end{proof}

\subsection{Handling empirical averages}
\label{sec:empirical-averages}

To represent expressions involving empirical averages,
we allow the coefficients in a diagram representation
to be formal polynomials in the quantities 
$\{\langle \bmz_\al(\bA)\rangle : \al \in \cA_1\}$.
Another approach would be to use disconnected diagrams, as in \cite{jones2025fourier}.

\begin{lemma}\label{lem:asymptotic-state-empirical-average}
    Assume that $\bA = \bA^{(n)}$ satisfies the assumptions of \cref{thm:convergence-distribution},
    and furthermore, the traffic distribution concentrates for $\bA$ (\cref{def:traffic-concentration}).
    Let
    \begin{equation}\label{eq:asea-1}
        \bmx = \sum_{\al \in \cA_1} c_\al \bmz_\al(\bA)
    \end{equation}
    for some finitely supported coefficients $(c_\al)_{\al \in \cA_1}$ which are polynomials $c_\al \in \R[\calV]$ with $\calV := \{\langle{\bmz_\al(\bA)}\rangle : \al \in \cA_1\}$. Then,
    \begin{equation}\label{eq:asea-2}
        X := \sum_{\al \in \cA_1} c_\al(\E Z^\infty_{\calA_1}) \cdot Z_\al^\infty
    \end{equation}
    is the asymptotic state of $\bmx$.
    Moreover, if $\bmx_t$ is of the form \cref{eq:asea-1} for any $t \geq 1$ and $X_t$ is correspondingly defined as in \cref{eq:asea-2}, then $(X_t)_{t \geq 1}$ is the asymptotic state of $(\bmx_t)_{t \geq 1}$.
\end{lemma}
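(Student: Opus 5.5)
The plan is to compare $\bmx$ with the ``frozen'' vector $\widehat{\bmx} \defeq \sum_{\al} c_\al(\bm v^\infty)\, \bmz_\al(\bA)$. Here $\bm v^\infty$ denotes the deterministic vector of limits $v_\beta^\infty \defeq \lim_{n\to\infty}\E\langle \bmz_\beta(\bA)\rangle = \E Z^\infty_\beta$ for the finitely many $\beta$ appearing in the coefficients. These limits exist by \Cref{claim:existence-joint-moments}, and equal $\E Z^\infty_\beta$ by the moment convergence established in the proof of \Cref{thm:convergence-distribution}. The vector $\widehat{\bmx}$ has constant coefficients, so \Cref{claim:cvImpliesState} says its asymptotic state is exactly $X$ from \cref{eq:asea-2}. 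The same holds jointly for finitely many $\widehat{\bmx}_{t}$. It therefore remains to show that replacing the random scalars $\langle\bmz_\beta\rangle$ by $v^\infty_\beta$ does not change limits of $\E\langle\varphi(\bmx_{t_1},\dots,\bmx_{t_k})\rangle$.

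The first step is concentration. Note that $\langle \bmz_\beta(\bA)\rangle = \frac1n z_{\beta_0}(\bA)$, where $\beta_0$ is $\beta$ with its root forgotten. By \Cref{lem:concentration-z}, with $k=2$ and $\alpha_1=\alpha_2=\beta_0$, together with the concentration hypothesis, this quantity converges to $v_\beta^\infty$ in $L^2$. More generally, by \Cref{lem:concentration-z} every joint moment $\E\prod_j \frac1n z_{\beta_j}$ factorizes in the limit.

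The second step handles polynomial test functions $\varphi$. Expand $\E\langle \varphi(\bmx_{t_1},\dots,\bmx_{t_k})\rangle$ as a finite sum of terms of the form
\[ \E\Big[P(\langle\bmz_{\beta}\rangle_\beta)\cdot \tfrac1n\textstyle\sum_i \prod_j \bmz_{\alpha_j}[i]\Big], \]
where $P$ is a polynomial. The second factor equals $\frac1n$ times a linear combination of $z_{\gamma}$ for connected scalar diagrams $\gamma$, obtained as quotients of the grafting $\bigoplus_j\alpha_j$. So each term is a joint moment of finitely many normalized $z$-polynomials of connected diagrams, and \Cref{lem:concentration-z} factorizes it in the limit as $P(\bm v^\infty)\cdot\lim \E\frac1n\sum_i\prod_j\bmz_{\alpha_j}[i]$. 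This is exactly the corresponding term for $\widehat{\bmx}$. Hence the polynomial limits for $\bmx$ and $\widehat{\bmx}$ coincide, and by \Cref{claim:cvImpliesState} they equal $\E\varphi(X_{t_1},\dots,X_{t_k})$.

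The third step, bounded continuous $\varphi$, is where I expect the main obstacle, because moment matching alone does not immediately give weak convergence. I would use a Slutsky-type argument. Write $\samp(\bmx_t)-\samp(\widehat{\bmx}_t)=\sum_\al (c_\al(\langle\bmz\rangle)-c_\al(\bm v^\infty))\,\samp(\bmz_\al)$. The coefficient differences tend to $0$ in probability by the first step and continuity of polynomials. The factors $\samp(\bmz_\al)$ are tight, since their second moments converge. So the difference tends to $0$ in probability, and the joint convergence in distribution of $\samp(\widehat{\bmx}_{t_1},\dots)$ to $(X_{t_1},\dots)$ transfers to $\samp(\bmx_{t_1},\dots)$. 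The case of bounded continuous $\varphi$ then follows immediately. Alternatively, the polynomial case could be upgraded via the truncation argument in the proof of \Cref{claim:cvImpliesState}.
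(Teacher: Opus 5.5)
Your proof is correct and takes the same route as the paper. The polynomial case follows from the factorization of joint moments of normalized $z$-polynomials, which traffic concentration guarantees. The bounded continuous case follows from the $L^2$ convergence of the scalars $\langle \bmz_\beta(\bA)\rangle$, combined with the constant-coefficient result (\Cref{claim:cvImpliesState}) and Slutsky's lemma; your frozen vector $\widehat{\bmx}$ just makes explicit the comparison that the paper's brief proof leaves implicit.
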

\begin{proof}
    For polynomial test functions, the convergence in~\cref{eq:defAsymptoticState} follows
    directly from the concentration of the traffic distribution.
    Moreover,~\Cref{lem:traffic-l2} implies that $\frac 1n z_{\alpha}(\bm A)$ converges 
    in $L^2$ to a deterministic limit for any $\alpha\in \calA_1$. So we can
    combine~\Cref{claim:cvImpliesState} with
    Slutsky's lemma to obtain that~\cref{eq:defAsymptoticState} also holds
    for bounded continuous functions.
\end{proof}

\subsection{\texorpdfstring{Proof of~\Cref{lem:switchInit}}{Proof of the puncturing lemma}}
\label{appendix:initialization}

In this section, we prove~\Cref{lem:switchInit}. We assume throughout that $\bm H$ satisfies the assumptions of~\Cref{prop:hadamard-explicit-state}.
We will prove that $\bm x_t$ and $\bm u_t$ have the same state evolution by relating them to the following intermediate iteration:
\begin{alignat}{2}
    \bm y_0 &\sim \calN(\bm 0, \bm I)\,,\quad &\bm y_t &= \bm H \bm \Pi f_{t-1}(\bm y_{t-1}) - \sum_{s=0}^{t-1} \bm b_{s,t} \cdot (\bm \Pi  f_s(\bm y_s))\quad \forall t\ge 1\,,\label{eq:ampUnpunctured}
\end{alignat}
where $\bm b_{s,t}$ is defined in~\cref{eq:backtrackPunctured}.
Unless specified otherwise, all expectations in this section are taken with respect to both $\bm H$ and $\bm y_0$.

\Cref{thm:full-onsager} does not apply 
to $\bm y_t$ because of the Gaussian initialization $\by_0 \sim \calN(\bzero, \bI)$ (instead of $\by_0 = \bm{1}$).
To analyze this initialization, we extend the class of diagrams to {\em generalized diagrams}, that is, graphs $\alpha = (V(\alpha),E(\alpha))$ together with an additional label $p(v) \in \N$ assigned to each vertex.
The $z$-polynomial associated with a graph $\alpha$ is
\[
    z_\al(\bA, \by_0) \defeq \sum_{i : V(\al) \hookrightarrow [n]} \prod_{\{u,v\} \in E(\al)} \bA[i(u),i(v)] \prod_{v \in V(\al)} \by_0[i(v)]^{p(v)}\,.
\]
The collection of generalized vector diagrams $\calA_1(\bm y_0)$ is defined analogously.
Definitions such as $\mathcal T_1$, $\mathcal G_1$, and $\eqinf$ extend to generalized diagrams by simply ignoring the labels $p(v)$.

As in the proof of~\Cref{thm:orthogonal-amp}, one caveat is that $\bm y_t$ cannot be directly expanded as a linear combination of {\em connected} generalized vector diagrams, because the iteration involves the scalar quantity $\langle f_t(\bm y_t)\rangle$. 
We therefore proceed as in~\Cref{lem:asymptotic-state-empirical-average}, viewing the coefficients in the diagram expansion as formal polynomials in these variables whenever necessary.

Our first observation is that taking
expectation over $\bm y_0$ in the $z$-basis
turns (up to a
scaling factor) a
generalized diagram $\alpha$
into the same diagram $\alpha$ where
the labels are ignored.

\begin{lemma}\label{lem:initialization-expectation}
    For any generalized scalar diagram $\al$ (not necessarily connected) and any $\bmH\in \R^{n\times n}_\sym$,
    \begin{align*}
        \E_{\by_0} z_{\al}(\bmH, \bmy_0) &= \begin{cases}
            \left(\prod_{v \in V(\al)} (p(v) - 1)!!\right) z_{\al}(\bH) & \text{if $p(v)$ is even for every $v\in V(\alpha)$}\\
            0 & \text{otherwise}
        \end{cases}
    \end{align*}
\end{lemma}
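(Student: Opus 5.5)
The plan is to prove the identity termwise, directly from the definition of $z_\al(\bmH,\bmy_0)$, by exchanging the expectation over $\bmy_0$ with the finite sum over injective labelings. Since $\bmH$ is fixed (deterministic with respect to $\bmy_0$), linearity gives
\[
    \E_{\by_0} z_{\al}(\bmH, \bmy_0) = \sum_{i : V(\al) \hookrightarrow [n]} \prod_{\{u,v\} \in E(\al)} \bmH[i(u),i(v)] \cdot \E_{\by_0}\prod_{v \in V(\al)} \by_0[i(v)]^{p(v)}\,.
\]

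The key point is injectivity. Because $i$ is injective, the coordinates $\by_0[i(v)]$ for distinct $v\in V(\al)$ are distinct entries of $\by_0 \sim \calN(\bm 0,\bI)$. They are therefore independent standard Gaussians, and the expectation factorizes as $\prod_{v} \E[g^{p(v)}]$ with $g\sim\calN(0,1)$.

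We then use the standard Gaussian moments: $\E g^{p} = (p-1)!!$ for $p$ even (with the convention $(-1)!!=1$ for $p=0$), and $\E g^p=0$ for $p$ odd. Two cases follow.
\begin{itemize}
    \item If some $p(v)$ is odd, every term vanishes, so the expectation is $0$.
    \item If every $p(v)$ is even, each term carries the same constant $\prod_v (p(v)-1)!!$. This constant is independent of $i$, so it pulls out of the sum, and what remains is exactly $z_\al(\bmH)$ with labels ignored.
\end{itemize}

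Connectivity of $\al$ plays no role, so disconnected diagrams are covered verbatim. There is no real obstacle. The only step needing care is the remark that injectivity is what makes the vertex factors independent; the analogous statement in the $w$-basis would fail because repeated labels merge moments.
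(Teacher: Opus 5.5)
Your proposal is correct and follows the same argument as the paper. Injectivity of the labeling in the $z$-basis makes the factors $\mathbf{y}_0[i(v)]^{p(v)}$ depend on distinct, hence independent, standard Gaussian coordinates, so the expectation factorizes into the moments $(p(v)-1)!!$ (or vanishes if some $p(v)$ is odd), and the common constant pulls out of the sum to leave $z_\alpha(\mathbf{H})$.
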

    
\begin{proof}        
    In the $z$-basis, all vertices are assigned distinct labels.
    Therefore, we may take the expectation over $\by_0$ separately at each vertex, since the coordinates of $\by_0$ are independent. For each vertex $v$, we have $\E_{Z \sim \calN(0,1)} Z^{p(v)} = (p(v)-1)!!$ if $p(v)$ is even or $0$ if $p(v)$ is odd.
\end{proof}

Next, we describe structural properties of the labels $p(v)$ appearing in the diagram expansion of the iterates of the AMP iteration~\cref{eq:ampUnpunctured}.

\begin{lemma}\label{claim:initialization-structure}
    We have
    $\bmy_t \eqinf \sum_{\tau} c_\tau \bmz_\tau(\bH, \bm y_0)$ and $f_t(\bm y_t) \eqinf \sum_{\tau} c'_\tau \bmz_\tau(\bH, \bm y_0)$,
    where $c_\tau$ and $c'_\tau$ are supported on (generalized) treelike diagrams $\tau$ such that,
    for all $v \in V(\tau)$: 
    \[
        p(v) = \begin{cases}
            1 & \text{if $v$ is a leaf vertex of $\tau$}\\
            0\text{ or } 2 & \text{if $v$ is in a hanging cactus}\\
            0 & \text{otherwise}\\
        \end{cases}\,.
    \] 
    Leaves of treelike diagrams are defined after
    removing hanging cactuses.
\end{lemma}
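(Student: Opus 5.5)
We argue by induction on $t$, tracking simultaneously the diagram expansions of $\bm y_t$ and of $f_t(\bm y_t)$ modulo non-treelike diagrams. It is convenient to carry a slightly stronger invariant: $\bm y_t \eqinf \sum_\tau c_\tau \bm z_\tau(\bH,\bm y_0)$ with every $\tau$ in the generalized Gaussian class, i.e.\ the root has degree exactly one after removing hanging cactuses. The labels should satisfy the stated pattern: $p=1$ exactly at tree leaves, $p\in\{0,2\}$ inside hanging cactuses, and $p=0$ elsewhere. We also allow $t=0$ with the one-vertex diagram labelled $p=1$, a leaf by convention; this is $\bm y_0$ itself.

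For the nonlinearity step, expand $f_t(\bm y_t)$ as a polynomial in $\bm y_t$. Powers are products of generalized vector diagrams, and we expand them by \Cref{lem:product-gaussian}. That lemma is purely combinatorial and ignores labels, so it applies verbatim to generalized diagrams, with labels adding under vertex merging. The treelike terms are of two kinds: root-graftings of unmatched factors, and pairs merged by homeomorphic matchings into cactuses. Three points need checking. First, a homeomorphic matching identifies leaves of one tree with leaves of the other, since the cores are isomorphic and the matching is non-crossing along paths. Merged leaves therefore carry $p=1+1=2$ and become vertices of the new cactus. Second, internal tree vertices, with $p=0$, merge to $p=0$. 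Third, hanging-cactus vertices are never matched, so they keep $p\in\{0,2\}$. Unmatched factors keep their leaves as leaves of the grafted tree. In one special case a Gaussian factor is itself rooted at a labelled leaf, namely $\bm y_0$ grafted at the root. There the root, if it is not a leaf of the product, gets $p\ge 1$; powers of $\bm y_0$ at the root give $p=2$ when paired, matching the Wick pairing of $y_0[i]^2$. I will treat these as cactus-like one-vertex pieces hanging at the root with label $2$, and record that the root can otherwise carry only $p=1$ when it is a leaf.

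For the linear step, $\bm H\bm \Pi f_{t-1}(\bm y_{t-1})$ minus the Onsager terms, I would repeat the argument of \Cref{lem:memory-formula} and \Cref{thm:full-onsager} with labels carried along. Write $\bm \Pi f = f - \langle f\rangle\bm 1$. The $\langle f\rangle$ factor is a formal scalar coefficient, handled as in \Cref{lem:asymptotic-state-empirical-average}, so it does not alter diagram shapes. Multiplying by $\bH$ either extends a path to a new root, which gets $p=0$, or closes a cycle through existing vertices. The Onsager terms $\bm b_{s,t}\cdot\bm\Pi f_s$ cancel exactly the treelike cycle-closing terms up to $\eqinf$, as in \Cref{lem:memory-formula}. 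The surviving diagrams are therefore $\bm B_{s,t}$-paths, whose internal vertices have $p=0$ plus $\bm c_r$ cactuses, attached to $f_s^{\neq1}$ diagrams. Because every new vertex carries $p=0$, the label pattern and the root-degree-one property are preserved.

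The main obstacle is the bookkeeping in the nonlinear step. One issue is the cactus vectors $\bm c_r=\cactus(f'_r(\bm y_r))$, whose vertices must have labels in $\{0,2\}$. The other is ruling out merges that place label $1$ on a non-leaf or label $2$ at a tree vertex. I would first verify the leaf-to-leaf property of homeomorphic matchings directly from \Cref{def:homeomorphic}, via \Cref{lem:homeomorphic}.
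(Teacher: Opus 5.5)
\paragraph{Verdict.} Your route is the same as the paper's. You induct on $t$ through the memory formula $\bm y_t \eqinf \sum_{s<t}\bm B_{s,t}(\bm\Pi f_s(\bm y_s))^{\neq 1}$, and you treat the nonlinearity with \Cref{lem:product-gaussian}, with labels adding under merges. There is, however, a genuine gap, and it is shared with the paper.

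\paragraph{Where the argument fails.} The failing step is ``merged leaves therefore carry $p=1+1=2$.'' It needs every tree leaf to carry $p=1$, and your linear step does not preserve that. The nonlinear part $(\bm\Pi f_s(\bm y_s))^{\neq 1}$ contains cactus terms of two kinds:
\begin{itemize}
\item the cactus part of $f_s(\bm y_s)$, whose root has $p=0$;
\item the singleton $-\langle f_s(\bm y_s)\rangle \bm 1$, also with $p=0$.
\end{itemize}
Treating $\langle f_s\rangle$ as a formal coefficient does not make this second diagram disappear. Applying $\bm B_{s,t}$ to these terms produces Gaussian path diagrams whose endpoint has $p=0$, and that endpoint is a leaf once hanging cactuses are removed. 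A homeomorphic matching can then pair such a leaf with a $p=1$ leaf of the same core shape. This places label $1$ on a vertex inside a cactus.

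\paragraph{A concrete instance.} Take $f_1(y)=y+y^2$ and $f_2(y)=y^2$, so that $\bm c_1=\bm 1$. Then $\bm y_2$ contains, each with coefficient $1$:
\begin{itemize}
\item the length-two path $\bm B_{0,2}\bm y_0$, which ends at a $p=1$ leaf;
\item the diagram $\bm B_{1,2}\bm z_\sigma$, where $\sigma$ is the double edge whose non-root vertex has $p=2$.
\end{itemize}
Both cores are a single root--leaf edge. Hence $f_2(\bm y_2)=\bm y_2^2$ contains, with coefficient $2$, a triangle with $\sigma$ hanging at a vertex of label $1+0=1$. This is a cactus with a $p=1$ vertex, which contradicts the claimed label pattern. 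Your cactus vectors $\bm c_r$ can inherit such vertices for the same reason.

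\paragraph{Why following the paper does not help.} The paper's proof makes the identical assertion (``the values $p(v)$ at the leaves are updated from $1$ to $2$''). So the statement cannot hold as written, and no amount of label bookkeeping will prove it. The offending diagrams cancel only in aggregate, through the centered combinations $\bm B_{s,t}\bigl(\cactus(f_s(\bm y_s))-\langle f_s(\bm y_s)\rangle\bm 1\bigr)$. Showing these are asymptotically negligible is an analytic input the argument is missing. One way would be to use that cactus vectors are asymptotically constant (\Cref{prop:factorizing-deterministic}) together with traffic concentration.

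\paragraph{Minor points.} Your special case of $\bm y_0$ grafted at the root never arises, because the iteration is memoryless and $f_0(x)=x$. Also, in the $z$-basis $\bm y_0^k$ is simply the singleton with label $k$; Wick pairing only enters after taking $\mathbb{E}_{\bm y_0}$.
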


\begin{proof}
    First, the proof of~\Cref{lem:memory-formula} still goes through with the nonlinearities $g_t(y) = f_t(y) - \langle f_t(\bm y_t)\rangle$, after extending the coefficient field from $\R$ to the ring of formal polynomials in $\{\langle \bm z_{\alpha}(\bm A)\rangle : \alpha\in \calA_1\}$. Therefore, we obtain
    \begin{align}
         \bm y_t \eqinf \sum_{s=0}^{t-1} \bm B_{s,t} (\bm \Pi  f_s(\bm y_s))^{\neq 1}\,.\label{eq:extensionRandomInit}
    \end{align}
    We now argue by induction on $t$. The base case is $f_0(\bm y_0)=\bm y_0$ which is the singleton with $p(v) = 1$.
    
    Now, suppose that the claim holds for $\bm y_t$.
    The treelike diagrams appearing in $f_t(\bm y_t)$ are obtained by considering all possible products of treelike diagrams $\gam_1, \dots, \gam_\ell \in \calG_1 \cup \cC_1$ appearing in $\bm y_t$. 
    By~\Cref{lem:product-gaussian}, each such product can be written as a sum over matchings among the $\gam_i$, where each $\gam_i$ is either paired into a cactus or does not intersect any other $\gam_j$. 
    In the second case, the values $p(v)$ within $\gam_i$ are unchanged.
    In the first case, the values $p(v)$ at the leaves are updated from 1 to 2, while all other values $p(v)$ within $\gam_i$ remain unchanged.

    Moreover, no non-trivial intersection between $\bm B_{s,t}$ and $(\bm \Pi f_s(\bm y_s))^{\neq 1}$ can produce a treelike diagram. Hence, the decomposition of $\bm y_{t+1}$ given by~\cref{eq:extensionRandomInit}, together with the induction hypothesis, shows that in every treelike diagram appearing in $\bm y_{t+1}$, the condition on $p(v)$ is inherited directly from the corresponding property of $f_s(\bm y_s)$.
    This completes the induction.
\end{proof}

\begin{lemma}\label{lem:puncToUnpunc}
    For any $t\ge 1$ and any polynomial $\varphi:\R^t \to \R$,
    \begin{align*}
        \lim_{n\to\infty} \E_{\bm H,\bm x_0} \langle \varphi(\bm x_1, \ldots, \bm x_t)\rangle &= \lim_{n\to\infty} \E_{\bm H,\bm y_0} \langle \varphi(\bm y_1, \ldots, \bm y_t)\rangle\,.
    \end{align*}
\end{lemma}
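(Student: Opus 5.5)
The plan is to show that $\bm x_t$ (run on $\bA = \bm\Pi\bH\bm\Pi$ from $\bm x_0$) and $\bm y_t$ (run on $\bH$ with $\bm\Pi$ inserted before each multiplication) are close entrywise up to a vector of negligible norm. Note that $\bA f_{t-1}(\bm x_{t-1}) = \bm\Pi \bH \bm\Pi f_{t-1}(\bm x_{t-1})$. So the two iterations differ in three ways. First, there is an extra outer projection $\bm\Pi$. Second, the Onsager coefficients differ: $\kappa_{t-s}\prod_r\langle f'_r\rangle$ versus $\bm b_{s,t}$. Third, $\bm x_0$ versus $\bm y_0$ (we may couple $\bm x_0 = \bm y_0$).

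I would proceed in three steps.

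\textbf{Step 1: replace $\bm b_{s,t}$ by constants.} In \cref{eq:ampUnpunctured}, replace $\bm b_{s,t}$ by its limiting constant $\kappa_{t-s}\prod_{r}\E f_r'(Y_r)$, and then by the empirical version. As in the proof of \Cref{thm:orthogonal-amp}, the treelike part of $\bm b_{s,t}$ consists only of cactuses. By \Cref{prop:factorizing-deterministic} and \Cref{lem:asymptotic-state-empirical-average}, extended to generalized diagrams by \Cref{lem:initialization-expectation}, this does not change the asymptotic state. After Step 1 the $\bm y$ iteration reads
\[
\tilde{\bm y}_t = \bH \bm\Pi f_{t-1}(\tilde{\bm y}_{t-1}) - \sum_s c_{s,t}\,\bm\Pi f_s(\tilde{\bm y}_s),
\]
with the same scalar coefficients $c_{s,t}$ as in the $\bm x$ iteration.

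\textbf{Step 2: control the outer projection.} Compare this with $\bm x_t = \bm\Pi\bH\bm\Pi f_{t-1} - \sum_s c_{s,t}\,\bm\Pi f_s$. The two right-hand sides differ only by the rank-one term $\frac1n\langle \bm 1, \bH\bm\Pi f_{t-1}\rangle \bm 1$, i.e.\ the mean of $\bH\bm\Pi f_{t-1}$. I would show this mean tends to $0$ in $L^2$. Its expectation is an average of generalized diagrams. By \Cref{claim:initialization-structure}, the diagrams in $f_{t-1}$ have leaves labeled $p=1$, and after centering by $\bm\Pi$ the constant cactus part is removed. Unrooting therefore leaves diagrams that either carry an odd label, so their expectation vanishes by \Cref{lem:initialization-expectation}, or are non-cactus, so they are negligible by the strong cactus property of $\bH$.

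\textbf{Step 3: induction and stability.} I would argue by induction on $t$ that $\frac1n\|\bm x_t - \tilde{\bm y}_t\|_2^2 \to 0$ in expectation, with all moments of both iterates bounded. Polynomial nonlinearities are locally Lipschitz, and $\|\bH\| \le C$ holds with high probability by tightness. Together with Hölder and the uniform moment bounds from the traffic distribution, the differences then propagate: a small perturbation of the input yields a small perturbation of $\bH\bm\Pi f(\cdot)$. Convergence of the polynomial test averages $\langle\varphi\rangle$ then follows, again using Hölder and the moment bounds.

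The main obstacle is Step 3: making the propagation rigorous for polynomial, hence unbounded, nonlinearities. This requires uniform higher moment bounds on $\frac1n\sum_i \bm x_t[i]^{2k}$, which for $\bm x_t$ on the punctured matrix is circular. I would first try to avoid norms entirely. The idea is to expand both iterations exactly as polynomials in $(\bH, \bm y_0, \bm 1\bm 1^\top/n)$, expanding $\bm\Pi = \bI - \frac1n\bm 1\bm 1^\top$. One would then show diagrammatically that every term containing a factor $\frac1n\bm1\bm1^\top$ coming from the outer $\bm\Pi$ contributes $o(1)$ to $\E\langle\varphi\rangle$, by the same parity-and-cactus counting as in Step 2.
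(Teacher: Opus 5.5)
You have the right decomposition. The outer projection gives $\bm A f=\bm H\bm\Pi f-\delta\,\bm 1$ with $\delta=\langle \bm H\bm\Pi f\rangle$, and your parity/cactus reason for $\delta_s\to 0$ is the paper's \Cref{lem:correctionNegligible}. One small point there: you only argue about $\E\delta_s$. The $L^2$ claim needs $\E\delta_s^2$, which the paper handles with traffic concentration of $\bm H$ (\Cref{lemma:full-factorization}), reducing to disjoint unions of two diagrams that must each be a cactus and then carry an odd label.

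The genuine gap is Step~3: nothing in the proposal carries this $o(1)$ discrepancy through the later nonlinear steps and through $\varphi$. The stability route needs uniform bounds on $\frac1n\sum_i\bm x_t[i]^{2k}$ for the punctured iterates, to turn $\ell_2$-closeness into the higher-norm control that polynomial $f_t$ and $\varphi$ require; you leave this open. The fallback is not true as stated, because monomials carrying the outer $\frac1n\bm 1\bm 1^\top$ are not individually small. For example, take $f_1(x)=x^2$, $f_2(x)=x$ and $t=3$. The outer projection at step $3$ produces both $\frac1n\bm 1^\top\bm H\bm H\bm z_\sigma$, with $\sigma$ the $2$-cycle in the cactus part of $\bm x_1^2$, and $\langle f_1(\bm x_1)\rangle\,\frac1n\bm 1^\top\bm H\bm H\bm 1$. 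Each converges in expectation to $\kappa_2(\kappa_2+\kappa_1^2)$, which equals $1$ when $\bm H$ is drawn from the ROM; only their difference vanishes. Even after grouping such terms into full factors $\delta_s$, the cofactor $R$ multiplying $\delta_s$ is of order one. So parity counting alone does not give $\E[\delta_s R]\to 0$; a decoupling step is missing.

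The paper closes this by never estimating punctured iterates at all. It keeps $\bm b_{s,t}$ and compares the $\bm H$-iteration $\bm y_t$ with $\tilde{\bm y}_t$, the $\bm A$-iteration using the same $\bm b_{s,t}$. This way \Cref{claim:initialization-structure} applies as proved; your Step~1 replaces $\bm b_{s,t}$ first and then invokes that claim for an iteration it was not established for. The quantities $\delta_s=\langle\bm H\bm\Pi f_s(\bm y_s)\rangle$ and $m_s=\langle f_s(\bm y_s)\rangle$ are computed on the unpunctured iteration. The identity $\bm A f=\bm H f-\langle f\rangle\bm H\bm 1-\langle\bm H\bm\Pi f\rangle\bm 1$ then gives, by induction (\Cref{claim:divisibility}), that $\tilde{\bm y}_t-\bm y_t$ is a finite combination of generalized diagrams $\bm z_\alpha(\bm H,\bm y_0)$ whose formal coefficients are each divisible by some $\delta_s$. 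Consequently $\langle\varphi(\bm y_1,\dots,\bm y_t)\rangle-\langle\varphi(\tilde{\bm y}_1,\dots,\tilde{\bm y}_t)\rangle$ is a finite sum of terms $\delta_s R$. Cauchy--Schwarz gives $|\E[\delta_s R]|\le(\E\delta_s^2)^{1/2}(\E R^2)^{1/2}$, and $\E R^2=O(1)$ because $R$ is a polynomial in empirical averages of generalized diagrams in $(\bm H,\bm y_0)$. After averaging over $\bm y_0$ (\Cref{lem:initialization-expectation}), these are controlled by the traffic concentration of $\bm H$. Only then is $\bm b_{s,t}$ replaced by $\kappa_{t-s}\prod_r\langle f_r'\rangle$. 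Since every random quantity is expressed through the unpunctured $\bm H$, the circularity you were worried about never arises.
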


\begin{proof}[Proof of~\Cref{lem:puncToUnpunc}]
    An iteration involving $\bm A$ can be reduced to one involving $\bm H$ by expanding
    \begin{align}
        \bm Af_{t}(\bm y_{t}) = \bm H f_{t}(\bm y_{t}) - \langle f_{t}(\bm y_{t})\rangle \bm H \bm 1 - \langle \bm H \bm \Pi f_{t}(\bm y_{t})\rangle \bm 1\,.\label{eq:decompositionPunctured}
    \end{align}
    Set $\delta_t\defeq \langle \bm H \bm \Pi f_{t}({\bm y}_{t})\rangle$ and $m_t \defeq \langle f_t(\bm y_t)\rangle$. 
    We first compare $\bm y_t$ with the following modified iteration, which differs from $\bm x_t$ only in the formula for the Onsager correction term:
    \[
        \tilde {\bm y}_0 = \bm y_0\,,\quad \tilde {\bm y}_t = \bm A f_{t-1}(\tilde {\bm y}_{t-1}) - \sum_{s=0}^{t-1} {\bm b}_{s,t} \cdot (\bm \Pi f_s(\tilde {\bm y}_s))\,,
    \]
    where $\bm b_{s,t}$ is defined in~\cref{eq:backtrackPunctured}.

    \begin{claim}\label{claim:divisibility}
        For any $t\in \N$, we have
        \begin{align}
            \tilde {\bm y}_t-\bm y_t = \sum_{\alpha} c_{t,\alpha}(\delta_0, \ldots, \delta_{t-1}, m_0, \ldots, m_{t-1}) \bm z_{\alpha}(\bm H, \bm y_0)\,,\label{eq:idealDifference}
        \end{align}
        where the sum runs over finitely
        many generalized vector diagrams, and each $c_{t,\alpha}$ is a polynomial in $\delta_0, \ldots, \delta_{t-1}, m_0, \ldots, m_{t-1}$ that is divisible by $\delta_s$ for some $s \in \{0, \ldots, t-1\}$.
    \end{claim}

    \begin{proof}[Proof of~\Cref{claim:divisibility}]
        We argue by induction on $t$.
        For $t=0$, $\bm y_0-\tilde{\bm y}_0=0$, establishing the base case. Let $t\ge 1$ and suppose that~\cref{eq:idealDifference}
        holds for all $s< t$. 
        First, one easily verifies from the induction hypothesis that the same property~\cref{eq:idealDifference} holds for $\bm \Delta_s \defeq f_s(\tilde{\bm y}_s) - f_s(\bm y_s)$ for every $s < t$.
        By~\cref{eq:decompositionPunctured}, we can then write
        \[
            \bm A f_{t-1}(\tilde {\bm y}_{t-1}) - \bm H \bm \Pi f_{t-1}(\bm y_{t-1}) = \bm H \bm \Pi \bm \Delta_{t-1} - \delta_{t-1} \bm 1 - \langle \bm H \bm \Pi \bm \Delta_{t-1}\rangle \bm 1\,,
        \]
        and each of the three terms on the right-hand side satisfies a decomposition of the form~\cref{eq:idealDifference} by the induction hypothesis.
        Finally,  the correction terms differ by
        \[
            \sum_{s=0}^{t-1} \bm b_{s,t} \cdot (\bm \Pi f_s(\tilde {\bm y}_s)) - \sum_{s=0}^{t-1} \bm b_{s,t} \cdot (\bm \Pi f_s({\bm y}_s)) = \sum_{s=0}^{t-1} \bm b_{s,t} \cdot \bm \Pi \bm \Delta_s\,,
        \]
        which again satisfies the property~\cref{eq:idealDifference} by the induction hypothesis. Combining these observations, we conclude that $\tilde{\bm y}_t-\bm y_t$ satisfies~\cref{eq:idealDifference}, completing the induction.
    \end{proof}

    Next, fix any polynomial $\varphi:\R^t\to \R$. By~\Cref{claim:divisibility}, we have
    \begin{equation}
        \langle \varphi (\bm y_1, \ldots, \bm y_t)\rangle - \langle \varphi(\tilde {\bm y}_1, \ldots, \tilde{\bm y}_t)\rangle = \sum_{\alpha} c_{\alpha}(\delta_0, \ldots, \delta_{t-1},m_0,\ldots,m_{t-1}) \langle \bm z_{\alpha}(\bm H, \bm y_0)\rangle\,,\label{eq:divisibilityFinal}
    \end{equation}
    where the sum runs over finitely many generalized scalar diagrams, and each $c_\alpha$ is a polynomial in $\delta_0,\ldots,\delta_{t-1},m_0,\ldots,m_{t-1}$ that is divisible by some $\delta_s$. 
    In the remainder of the proof, we show that each term on the right-hand side of~\cref{eq:divisibilityFinal} converges to $0$ in expectation.
    The reason is that each coefficient $c_\alpha$ contains a factor $\delta_s$, and these quantities converge to $0$ in $L^2$:
    \begin{claim}\label{lem:correctionNegligible}
        $\langle \bm H \bm \Pi f_t(\bm y_{t})\rangle \overset{L^2}\longrightarrow 0$ for any $t\ge 1$.
    \end{claim}
    
    \begin{proof}[Proof of~\Cref{lem:correctionNegligible}]
        The claim is equivalent to the statement that $\frac{1}{n^2}\E \langle \bm H \bm 1, \bm \Pi f_t(\bm y_t)\rangle^2$ converges to $0$.
        This quantity can be expanded as a linear combination of terms of the form
        \[\frac{1}{n^2} \E \left[z_\alpha (\bm H, \bm y_0) z_\beta (\bm H, \bm y_0)\right]\,
        \]
        where $\alpha,\beta\in\calA$ both belong to the support of the expansion of
        $\langle \bm H \bm 1, \bm \Pi f_t(\bm y_t)\rangle$.
        As in the proof of~\Cref{lemma:full-factorization},
        \[
            \frac 1 {n^2} \E \left[z_\alpha (\bm H, \bm y_0) z_\beta (\bm H, \bm y_0)\right] = \frac 1{n^2} \E \left[z_{\alpha\sqcup \beta} (\bm H, \bm y_0)\right]+o(1)\,.
        \]
        Indeed, each identification of vertices across the two copies yields a connected diagram whose expectation, after normalization by $1/n^2$, converges to $0$ by the existence of the traffic distribution. This holds for every realization of $\bm y_0$, and therefore also after taking expectation over $\bm y_0$.
        
        Taking expectation over $\bm y_0$ and using~\Cref{lem:initialization-expectation}, each term either vanishes or becomes a constant multiple of $\E_{\bm H} \left[z_{\alpha\sqcup \beta} (\bm H)\right]$, where $\alpha\sqcup\beta$ is viewed as an ordinary scalar diagram obtained by ignoring the labels $p(v)$. By~\Cref{lemma:full-factorization} and the strong cactus property, the only terms that contribute to the limit are those for which both $\alpha$ and $\beta$ are cactuses.
        Viewing $\bm H\bm 1$ as a rooted tree with one edge, the cactuses in the $z$-basis expansion of $\langle \bm H \bm 1, \bm \Pi f_t(\bm y_t)\rangle$ arise when the child of $\bm H\bm 1$ is merged with a leaf of a diagram from $\bm \Pi f_t(\bm y_t)$. By~\Cref{claim:initialization-structure}, such leaves satisfy $p(v)=1$. Applying~\Cref{lem:initialization-expectation} once again, we find that each of these cactus terms has expectation $0$ over $\bm y_0$, which concludes the proof.
    \end{proof}

    After taking expectation over $\bm H$ and $\bm y_0$, any monomial appearing on the right-hand side of~\cref{eq:divisibilityFinal} has the following form for some $p_i,q_i\in \N$:
    \begin{equation}
        \E \left[\delta_s \prod_{i=0}^{t-1} \delta_i^{p_i} m_i^{q_i} \langle z_{\alpha}(\bm H, \bm z_0)\rangle\right] \le \left(\E \delta_s^2\right)^{\frac 12} \cdot \left(\E \left[\prod_{i=0}^{t-1} \delta_i^{2p_i} m_i^{2q_i} \langle z_{\alpha}(\bm H, \bm z_0)\rangle^2\right]\right)^{\frac 12}\,,\label{eq:anyTerm}
    \end{equation}
    where the inequality follows from Cauchy-Schwarz.
    
    The first factor on the right-hand side
    of~\cref{eq:anyTerm} converges to $0$ as $n\to\infty$
    by~\Cref{lem:correctionNegligible}.
    The second factor can be expanded in the $z$-basis as a finite linear combination of products of generalized $z$-diagrams. 
    Taking expectation over $\bm y_0$ and using~\Cref{lem:initialization-expectation}, each such term either vanishes or becomes a constant multiple of a product of ordinary scalar $z$-diagrams.
    By~\Cref{lemma:full-factorization}, the normalized expectation of each of these terms has a finite limit as $n\to\infty$ and in particular, is uniformly bounded in $n$. 
    Therefore, the second factor on the right-hand side of~\cref{eq:anyTerm} is bounded, and hence the right-hand side of~\cref{eq:divisibilityFinal} converges to $0$ in expectation.
    In summary, we have shown:
    \begin{equation}
        \lim_{n\to\infty} \E \langle \varphi(\bm y_1, \ldots, \bm y_t)\rangle - \E \langle \varphi(\tilde{\bm y}_1, \ldots, \tilde{\bm y}_t)\rangle = 0\,.\label{eq:semiConclusion}
    \end{equation}
    Finally, as in the proof of~\Cref{thm:orthogonal-amp}, we may use the traffic concentration property (\Cref{lemma:full-factorization}) to replace ${\bm b}_{s,t}$ by $\kappa_{t-s} \prod_{s<r<t} \langle f_r'(\tilde{\bm y}_r)\rangle$ in the iteration for $\tilde{\bm y}_t$ without affecting the asymptotic state. This yields
    \[
        \lim_{n\to\infty} \E \langle \varphi (\bm x_1, \ldots, \bm x_t)\rangle - \E \langle \varphi(\tilde {\bm y}_1, \ldots, \tilde{\bm y}_t)\rangle = 0\,.
    \]
    Combining this with~\cref{eq:semiConclusion} completes the proof.
\end{proof}

\begin{proof}[Proof of~\Cref{lem:switchInit}]
    First, we can replace every
    occurrence of $\bm \Pi f_0(\bm y_0)$
    in $\bm y_t$ by $f_0(\bm y_0)$ using
    the traffic concentration property,
    since $\langle f_0(\bm y_0)\rangle = \langle \bm y_0\rangle$, which converges to $0$ as $n\to\infty$.
    After this update, the iterates $\bm y_t$ and $\bm u_t$ have the same generalized diagram expansion as functions of their initializations $\bm y_0$ and $\bm u_0$. 
    Note that this expansion is formal in the variables $\langle \bm z_{\alpha}(\bm A)\rangle$ for $\alpha\in \calA_1(\bm y_0)$, because the puncturing operation introduces terms of the form $\langle f_t(\bm y_t)\rangle$.
    
    First, by the strong cactus property and~\Cref{lem:initialization-expectation}, all non-cactus terms in the generalized diagram expansions of $\langle\varphi(\bm y_1,\ldots,\bm y_t)\rangle$ and $\langle\varphi(\bm u_1,\ldots,\bm u_t)\rangle$ converge to $0$ in expectation.
    Second, using~\Cref{claim:initialization-structure} and extending the same argument one further step to $\varphi$,
    all cactus diagrams in the generalized diagram expansions of $\bm y_t$ and $\bm u_t$ satisfy $p(v) \in \{0,2\}$, since they have no non-root leaves (the iterates for $t\ge 1$ have no singleton component).
    Therefore, by~\Cref{lem:initialization-expectation}, the expectations of the cactus terms remain unchanged as $n\to\infty$ if we replace $\bm y_0$ by $\bm u_0=\bm 1$.

    Combining these facts with the traffic concentration property for $\bm H$ (\Cref{lemma:full-factorization}) shows that $\langle \varphi(\bm y_1, \ldots, \bm y_t)\rangle - \langle \varphi(\bm u_1, \ldots, \bm u_t)\rangle$ converges to $0$ in expectation, as desired.
\end{proof}

\subsection{\texorpdfstring{Proof of~\Cref{lem:block-matrix-cactus-limits}}{Proof of block matrix limits}}
\label{app:block-matrix}

In this section, we prove the auxiliary lemmas for block matrices.

\begin{definition}
    Let $\al \in \cC_1$ be a cactus diagram.
    For a {coloring} $\chi:V(\al)\to [q]$
    of the vertices of $\al$ with $q$ colors, we say that $\chi$ is {valid} if for every cycle $\rho =(u_1, \ldots, u_k, u_1)\in \cyc(\al)$, there exist $r,c\in [q]$ such that $\chi(u_i) = r$ when $i$ is even and $\chi(u_i) = c$ when $i$ is odd, with $r=c$ if $k$ is odd. 
    We write
    $\chi(\rho)=\{r,c\}$ in this case.
\end{definition}

Our main diagrammatic calculation for block models is the following, which gives the traffic distribution on each block:

\begin{lemma}\label{claim:insideBlock}
    Let $\bA$ be as in the setting of \cref{lem:block-matrix-cactus-limits}.
    Then for all $\al \in \cA_1$ and $r \in [q]$:
    \[
        \frac qn \sum_{\substack{i\in [n]\\\block(i)=r}} \E \bm z_\sigma(\bm A)[i] \underset{n\to\infty}{\longrightarrow} \begin{cases}\displaystyle
        \sum_{\substack{\chi:V(\al)\to [q]\\\chi \textnormal{ valid}\\\chi(\textnormal{root})=r}} \prod_{\rho \in \cyc(\al)} \kappa_{|\rho|}^{\chi(\rho)} & \text{ if }\al \in \cC_1\\
        0 &\text{ if } \al \in \cA_1 \setminus \cC_1
        \end{cases}
    \]
\end{lemma}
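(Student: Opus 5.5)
We will follow the proof of \Cref{prop:block-matrix-strong-cactus}, but restrict the root's block to $r$ and keep track of which colored diagrams survive. (The statement writes $\bm z_\sigma$; we read this as $\bm z_\al$.) Fix $\al\in\cA_1$ and $r\in[q]$. Write each index $i\in[n]$ as a pair $(\block(i),i')$ with $i'\in[n/q]$. Then split the injective sum defining $\sum_{i:\block(i)=r}\bm z_\al(\bA)[i]$ according to the coloring $\chi:V(\al)\to[q]$ with $\chi(\rt(\al))=r$. This gives
\[
\frac qn\sum_{\block(i)=r}\bm z_\al(\bA)[i]=\sum_{\substack{\chi:V(\al)\to[q]\\ \chi(\rt(\al))=r}}\frac{1}{n/q}\,\widetilde z_{\al_\chi}\big((\bA_{a,b})_{a,b}\big),
\]
where $\al_\chi$ is $\al$ with the root forgotten and each edge $\{u,v\}$ colored by $\bA_{\chi(u),\chi(v)}$.

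There is one subtlety. Injectivity in $[n]$ means only that vertices of the \emph{same} color receive distinct indices in $[n/q]$. Vertices of different colors may share an index $i'$. So $\widetilde z$ is not quite the colored $z$-polynomial. We fix this by M\"obius inversion over identifications of differently colored vertices: $\widetilde z_{\al_\chi}=\sum_{P}z_{(\al_\chi)_P}$, where $P$ ranges over partitions that merge only vertices of pairwise distinct colors. Each merge yields a colored diagram with fewer vertices, and these are finitely many.

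Next take expectations and let $n\to\infty$. Asymptotic traffic independence (\Cref{def:traffic independence}) applies to each term, and each term vanishes unless $\GCC$ is a tree. When $\GCC$ is a tree, the term equals the product over colored components of the block traffic distributions. By the strong cactus property of each block, every colored component must also be a cactus. Gluing cactuses along a tree yields a cactus, so every surviving diagram is a cactus.

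We then argue that the merged terms ($P\neq\varnothing$) never survive. A nontrivial merge identifies two vertices that are distinct in $\al_\chi$. If the merged graph is a cactus with tree-shaped $\GCC$, we expect a contradiction. Merging vertices cannot destroy 2-edge-connectivity (\Cref{lem:contract-2ec}); the converse direction, which is the point actually needed, is less clear. This is where the argument is delicate. We plan to settle it with the edge-disjoint-path counting from \Cref{lem:prod-cactus-cactus}: a merge of two vertices in the same colored component or on a common cycle creates three edge-disjoint paths, and a merge across components closes a cycle in $\GCC$. This shows that a cactus can only arise when $P$ is trivial and $\al$ itself is a cactus, giving $0$ for $\al\in\cA_1\setminus\cC_1$.

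For $\al\in\cC_1$ with $P$ trivial, we must identify which colorings $\chi$ give a tree-shaped $\GCC$ whose colored components are cycles, and compute their contributions.
\begin{itemize}
\item \textbf{Each cycle is monochromatic in block-label.} If a cycle $\rho$ used two different block labels $\bA_{a,b}$, its colored components would be paths, not cactuses. Alternatively, the cycle would close a cycle in $\GCC$; either way the term contributes $0$.
\item \textbf{Monochromatic means valid coloring.} A cycle is monochromatic in the label $\{a,b\}$ exactly when its vertex colors alternate between two values $r',c'$. For an odd cycle this forces $r'=c'$. So the surviving $\chi$ are precisely the valid colorings in the sense of the definition above.
\item \textbf{Factorization over cycles.} Each colored component is then a union of cycles glued in a tree, and $\GCC$ is automatically a tree because $\al$ is a cactus. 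The factorizing property of $\bA_{\chi(\rho)}$ gives the contribution $\prod_\rho\kappa^{\chi(\rho)}_{|\rho|}$.
\end{itemize}
Summing over valid $\chi$ with $\chi(\rt(\al))=r$ yields the stated formula.

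Two points require care. The first is the normalization: the block size is $n/q$, and the factor $\frac qn$ is exactly what makes each block's traffic limit appear with weight one. The second is confirming that traffic independence, as stated, covers the per-block index identification. We expect the main obstacle to be the inclusion–exclusion step, that is, ruling out merged diagrams.
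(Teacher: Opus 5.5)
The gap is your claim that the merged terms never survive. In fact they can, and the statement as written is false without an extra hypothesis.

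Your route is the same as the paper's: split by the block coloring $\chi$ with $\chi(\rt(\alpha))=r$, apply traffic independence, glue cactuses along the tree $\GCC$, and match monochromatic cycles with valid colorings. You are also right that injectivity in $[n]$ only forces distinct local indices within a color class. The paper's proof writes $\bm z_\alpha(\bA)=\sum_\chi \bm z_{\alpha_\chi}((\bA_{r,c})_{r,c})$ as an identity, as in \Cref{prop:block-matrix-strong-cactus}, and so silently discards exactly your merged terms.

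The path count from \Cref{lem:prod-cactus-cactus} does not transfer to this setting. Identifying two vertices of a single cycle merely splits it into two cycles, which can leave a cactus with one colored component.
\begin{itemize}
\item Take $\alpha$ to be the 2-cycle (double edge) on $\{u,v\}$ rooted at $u$, with $\chi(u)=r\neq c=\chi(v)$. Merging $u$ and $v$ gives one vertex carrying two loops labeled $\bA_{r,c}$. This term survives with value $(\kappa_1^{\{r,c\}})^2$.
\item The triple edge is not a cactus, yet it merges into three loops worth $(\kappa_1^{\{r,c\}})^3$.
\end{itemize}
Concretely, take $q=2$, $\bA_{1,1}=\bA_{2,2}=\bm 0$ and $\bA_{1,2}=\bA_{2,1}=\bI$. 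These are deterministic, have the factorizing strong cactus property, and are traffic independent. Then $\bA$ is the swap permutation, so $\bm z_\alpha(\bA)[i]=\sum_{j\neq i}\bA[i,j]^2=1$ for every $i$. The claimed limit is $\sum_c\kappa_2^{\{r,c\}}=0$. Also $\frac1n z$ of the triple edge equals $1$, which contradicts \Cref{prop:block-matrix-strong-cactus} as well.

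The statement does hold under the extra hypothesis that off-diagonal blocks have vanishing odd free cumulants, $\kappa^{\{a,b\}}_{2k+1}=0$ for $a\neq b$. This covers all of the paper's applications, since there the off-diagonal blocks are GOE. Even then, the merged terms must be killed by a parity argument rather than by path counting:
\begin{itemize}
\item If $(\alpha_\chi)_P$ survives, it is a cactus each of whose cycles lies in a single colored component.
\item Choose distinct $x,y$ in a common block of $P$ at minimal distance in $\alpha$, and a shortest path $\pi$ between them.
\item By minimality, the vertices of $\pi$ lie in pairwise distinct blocks except for its two endpoints. So $\pi$ maps to a simple cycle of the merged cactus (possibly a loop or a 2-cycle), and all its edges carry one label $\{c,d\}$.
\item Since $\chi(x)\neq\chi(y)$, this forces $c\neq d$. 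The colors then alternate along $\pi$, so $|\pi|$ is odd.
\item Hence the term contains the factor $\kappa^{\{c,d\}}_{|\pi|}=0$.
\end{itemize}
With this replacement, your remaining steps (monochromatic cycles $\Leftrightarrow$ valid colorings, factorization over cycles) go through as in the paper.
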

\begin{proof}
    We partition the sum defining $\bmz_\al(\bA)$ according to the block of each vertex, as in the proof of~\Cref{prop:block-matrix-strong-cactus}:
    \begin{align*}
        \bmz_\al(\bA) &= \sum_{\chi : V(\al) \setminus \{\text{root}\} \to [q]} \bmz_{\al_\chi}((\bA_{r,c})_{r,c \in [q]})\\
        \intertext{where $\al_\chi$ is a diagram whose edges are colored by the matrices $\bA_{r,c}$. For a fixed $r' \in [q]$, we get}
        \frac qn \sum_{\substack{i \in [n]\\ \block(i) = r'}} \E\bmz_\al(\bA)[i] &= \sum_{\chi : V(\al) \setminus \{\text{root}\} \to [q]} \frac qn \sum_{\substack{i \in [n]\\ \block(i) = r'}}\E\bmz_{\al_\chi}((\bA_{r,c})_{r,c \in [q]})[i]\,.
    \end{align*}
    By the definition of traffic independence (\Cref{def:traffic independence}), the limit as $n\to\infty$ exists for each term indexed by $\chi$ on the right-hand side.
    Hence, the limit of the left-hand side also exists.
    Arguing as in the proof of~\Cref{prop:block-matrix-strong-cactus}, we find that the limit is zero for all $\al \in \cA_1 \setminus \cC_1$.
    
    For cactus diagrams $\al \in \cC_1$, asymptotic traffic independence and the strong factorizing cactus property of the individual blocks imply that the only nonzero contributions arise when every cycle of $\al_\chi$ is monochromatic, in the sense that it involves only a single matrix $\bm A_{r,c}$. 
    This
    happens if and only if $\chi$ is a valid coloring, in which case the corresponding term contributes asymptotically
    \[\prod_{\rho\in \cyc(\sigma)} \kappa_{|\rho|}^{\chi(\rho)}\,,\] as desired.
\end{proof}

\begin{proof}[Proof of \cref{lem:block-matrix-cactus-limits}.]    
    Let $\calL_{\cC_1}(r)$ denote the values from \cref{claim:insideBlock}:
    \[
        \mathcal L_{\sigma}(r) := \sum_{\substack{\chi:V(\sig)\to [q]\\\chi \textnormal{ valid}\\\chi(\textnormal{root})=r}} \prod_{\rho \in \cyc(\sig)} \kappa_{|\rho|}^{\chi(\rho)}\,.
    \]
    We first prove that all joint moments
    of
    $\bm z_{\calC_1}(\bm A)[i]$ conditioned on $\block(i)=r$ converge to the moments
    of the deterministic sequence $Z^\infty_{\calC_1}(r)$.
    For any $\sig_1, \ldots, \sig_k\in \cC_1$, we have
    \begin{align*}
        &\frac qn \sum_{\substack{i\in[n]\\\block(i) = r}} \E \left[\bm z_{\sigma_1}(\bA)[i]\cdots \bm z_{\sigma_k}(\bA)[i]\right]\\
        =\;&\frac qn \sum_{\substack{i\in [n]\\\block(i) = r}} \E \bm z_{\sigma_1\oplus \ldots \oplus\sigma_k}(\bm A)[i] + o(1)\tag*{\textnormal{(by~\Cref{lem:prod-cactus-cactus,claim:insideBlock})}}\\
        =\;& \mathcal L_{\sig_1 \oplus \ldots \oplus \sig_k}(r) + o(1) = \prod_{j = 1}^k \mathcal L_{\sigma_j}(r) + o(1)\,. \tag*{\textnormal{(by \cref{claim:insideBlock})}}
    \end{align*}

    So it remains to prove that $\mathcal L_{\mathcal C_1}(r)$ satisfies the same recursion as $Z^\infty_{\calC_1}(r)$.
    First, one readily checks that $\mathcal L_{\textnormal{singleton}}(r) = 1$, as in {\em (i)}. Next, suppose that $\sigma$ is rooted at a vertex of degree 2, and let $\ell$ and $\sigma_1, \ldots, \sigma_{\ell-1}$ be as in {\em (ii)}. Then, by decomposing according to the value of cycle containing the root, we have
    \[
        \mathcal L_\sigma(r) =
        \begin{cases}
            \displaystyle
            \sum_{c\in [q]} \left[\kappa_\ell^{\{r,c\}}
            \prod_{\substack{k=2\\k\textnormal{ odd}}}^{\ell} \mathcal L_{\sigma_k}(r) \prod_{\substack{k=2\\k\textnormal{ even}}}^{\ell} \mathcal L_{\sigma_k}(c)\right]
            & \text{if $\ell$ is even} \\
            \displaystyle
            \kappa_\ell^{\{r,r\}}
            \prod_{k=2}^{\ell} \mathcal L_{\sigma_k}(r)
            & \text{if $\ell$ is odd}
        \end{cases}
    \]
    just like the recursion in {\em (ii)}. Similarly, {\em (iii)} follows from the
    fact that the definition of $\mathcal L_\sigma(r)$
    factorizes over graftings at the root.
    Together, this shows that $\mathcal L_{\calC_1}(r) = Z^\infty_{\calC_1}(r)$.

    Since the limit is deterministic, 
    we have shown that
    conditionally on $\block(i)=r$,
    $\bm z_{\calC_1}(\bm A)[i]$ converges
    to $Z^\infty_{\calC_1}(r)$ in $L^2$.
    Since $\block(i)\sim \Unif([q])$, it follows that $(\block(i),\bm z_{\calC_1}(\bm A)[i])$ converges in distribution
    to $(R, Z^\infty_{\calC_1}(R))$, where $R\sim \Unif([q])$.
\end{proof}

\end{document}